\let\iflabels=\iffalse 
\let\iflitnum=\iftrue 
\let\iftxfonts=\iftrue 
\let\ifindexindication=\iffalse
\let\iffilename=\iffalse 
\let\ifhyperref=\iftrue 
\let\ifcom=\iffalse 
\newcommand\grf[2]{
\includegraphics[width=#1cm]{fig/#2.pdf}}
\numberwithin{equation}{section}
\definecolor{blue}{rgb}{0,0,1}
\definecolor{red}{rgb}{1,0,0}
\definecolor{green}{rgb}{0,.6,.2}
\definecolor{purple}{rgb}{1,0,1}
\definecolor{brown}{rgb}{.59,.29,0}
\long\def\red#1\endred{{\color{red}#1}}
\long\def\blue#1\endblue{{\color{blue}#1}}
\long\def\purple#1\endpurple{{\color{purple} #1}}
\long\def\green#1\endgreen{{\color{green}#1}}
\long\def\brown#1\endbrown{{\color{brown}#1}}
\renewcommand\setminus{\smallsetminus}
\renewcommand\={\,=\,}
\newcommand\isd{:=}
\newcommand\isdd{\,\isd\,}
\newcommand\dis{\stackrel{\hbox{.}}=}
\newcommand\ddis{\,\dis\,}
\def\be#1\ee{\begin{equation}#1\end{equation}}
\def\bad#1\ead{\be\begin{aligned}#1\end{aligned}\ee}
\def\badl#1#2\eadl{\be\label{#1}\begin{aligned}#2\end{aligned}\ee}
\newtheorem{mnthm}{Theorem}
\newtheorem{thm}{Theorem}[section]
\newtheorem{prop}[thm]{Proposition}
\newtheorem{cor}[thm]{Corollary}
\newtheorem{lem}[thm]{Lemma}
\theoremstyle{definition}
\newtheorem{defn}[thm]{Definition}
\newtheorem{remark}[thm]{Remark}
\newcommand\rmrk[1]{\smallskip\par\emph{#1. }}
 \newcommand\il[2]{\label{i-#1}\indexindication{#2}{l}}
 \newcommand\ir[2]{\indexindication{$#2$}{r}}
 \newcommand\il[2]{\label{i-#1}}
 \newcommand\ir[2]{}
\newcommand\al{\alpha}
\newcommand\bt{\beta}
\newcommand\Gm{\Gamma}
\newcommand\gm{\gamma}
\newcommand\Dt{\Delta}
\newcommand\dt{\delta}
\newcommand\e{\varepsilon}
\newcommand\z{\zeta}
\newcommand\Th{\Theta}
\renewcommand\th{\vartheta}
\renewcommand\k{\kappa}
\newcommand\Ld{\Lambda}
\newcommand\ld{\lambda}
\newcommand\Mu{{\mathsf M}}
\newcommand\s{\sigma}
\newcommand\Ups{\Upsilon}
\newcommand\ups{\upsilon}
\newcommand\ph{\varphi}
\newcommand\ch{\chi}
\newcommand\ps{\psi}
\newcommand\Om{\Omega}
\newcommand\om{\omega}
\newcommand\CC{\mathbb{C}}
\newcommand\QQ{\mathbb{Q}}
\newcommand\RR{\mathbb{R}}
\newcommand\ZZ{\mathbb{Z}}
\newcommand\WW{{\mathbf W}}
\newcommand\CK{{\mathbf C}}
\newcommand\HH{{\mathbf H}}
\newcommand\XX{{\mathbf X}}
\newcommand\Z{{\mathbf Z}}
\newcommand\ii{{\mathbf i}}
\newcommand\re{{\mathrm{Re}\,}}
\newcommand\im{{\mathrm{Im}\,}}
\newcommand\SL{{\mathrm{SL}}}
\newcommand\GL{{\mathrm{GL}}}
\newcommand\SU{{\mathrm{SU}}}
\newcommand\U{{\mathrm{U}}}
\newcommand\glie{{\mathfrak g}}
\newcommand\klie{{\mathfrak k}}
\newcommand\alie{{\mathfrak a}}
\newcommand\nlie{{\mathfrak n}}
\newcommand\mlie{{\mathfrak m}}
\newcommand\hK{{\mathfrak h}}
\newcommand\km{\mathrm{k}}
\newcommand\am{\mathrm{a}}
\newcommand\nm{\mathrm{n}}
\newcommand\mm{\mathrm{m}}
\newcommand\wm{\mathrm{w}}
\newcommand\hm{\mathrm{h}}
\newcommand\X{{\mathcal X}}
\newcommand\G{{\mathbf G}}
\newcommand\Schw{{\mathcal S}}
\newcommand\Aut{{\mathrm{Aut}}}
\newcommand\sign{{\,\mathrm{Sign}\,}}
\newcommand\Four{{\mathbf{F}}}
\newcommand\Ffu{{\mathcal F}}
\newcommand\Nfu{{\mathcal N}}
\newcommand\Mfu{{\mathcal M}}
\newcommand\Vfu{{\mathcal V}}
\newcommand\Lfu{{\mathcal L}}
\newcommand\Wfu{{\mathcal W}}
\newcommand\n{{\mathbf n}}
\newcommand\Ws[1]{S_{\!#1}}
\newcommand\wo{\mathrm{O}_{\mathrm{W}}}
\newcommand\WO{{\mathfrak W}}
\newcommand\WOS{{\WO_{\mathrm{sp}}}}
\newcommand\WOI{{\WO_{\mathrm{ip}}}}
\newcommand\WOM{{\WO_{\mathrm{mp}}}}
\newcommand\WOG{{\WO_{\mathrm{gp}}}}
\newcommand\WOGM{{\WO_{\mathrm{gmp}}}}
\newcommand\II{\mathit{II}}
\newcommand\IF{\mathit{IF}}
\newcommand\FI{\mathit{FI}}
\newcommand\FF{\mathit{FF}}
\newcommand\kk{\mathbf{k}}
\newcommand\oh{{\mathrm{O}}}
\newcommand\Gf{\Gamma}
\newcommand\cmpl{{\mathrm{cmpl}}}
\newcommand\uprs{{\mathrm{ups}}}
\newcommand\aut{\mathrm{Aut}}
\newcommand\sect{\mathrm{Sect}}
\renewcommand\c{{\mathfrak c}}
\newcommand\I{\mathbf{I}}
\newcommand\me{{\!!}}
\newcommand\A{{\mathbf A}}
\newcommand\Au[1]{{\mathbf A}^{\!#1}}
\newcommand\Norm{{\mathrm{Norm}}}
\newcommand\fl{{\mathrm{FL}}}
\newcommand\Kph[4]{\;{}^{#1}\Phi^{#2}_{#3,#4}}
\newcommand\kph[4]{\;{}^{#1\!}\varphi^{#2}_{#3,#4}}
\newcommand\ldph[4]{\;{}^{#1\!}\lambda^{#2}_{#3,#4}}
\newcommand\sh[2]{S^{#1}_{#2}}
\newcommand\question[3]%
\ifcom\marginpar{\purple\tiny question #1\endpurple}%
\newcommand\rem[3]%
\ifcom\marginpar{\purple\tiny remark #1\endpurple}%
\newcommand\todo[3]%
\ifcom\marginpar{\purple\tiny to do #1\endpurple}%
\def\@oddfoot{\rm{\footnotesize{File name
\tt\flnm.tex}\quad\today\hfil\thepage}}
\let\@evenfoot\@oddfoot \addtolength{\textheight}{-.4cm}
\begin{document}



\def\flnm{rFtm-title} 

\title[Representations of SU(2,1) in Fourier term
modules] {Representations of the unitary group SU(2,1) in Fourier term
modules}
\author{Roelof W.~Bruggeman}
\address{Mathematisch In\-sti\-tuut Uni\-ver\-si\-teit Utrecht,
Post\-bus 80010, 3508 TA Utrecht, Ne\-der\-land}
\email{r.w.bruggeman@uu.nl}
\author{Roberto J.~Miatello}
\address{FAMAF-CIEM, Universidad Nacional de C\'or\-do\-ba,
C\'or\-do\-ba~5000, Argentina}
\email{miatello@famaf.unc.edu.ar}
\begin{abstract}
We study Fourier term modules on $\SU(2,1)$, which are the modules
arising in Fourier expansions of automorphic forms. Maximal unipotent
subgroups $N$ of $\SU(2,1)$ are non-abelian, and we consider the
``abelian'' Fourier term modules connected to characters of $N$, and
also the ``non-abelian'' modules described with theta functions.
Poincar\'e series for $\SU(2,1)$ have in general exponential growth. To
deal with such generalized automorphic forms we allow exponential
growth for the functions in Fourier term modules. We give a complete
description of the submodule structure of all Fourier term modules, and
discuss the consequences for Fourier expansions of automorphic forms.
\end{abstract}
\keywords{unitary group SU(2,1), Fourier-Jacobi series, Fourier term
modules, automorphic forms}
\subjclass[2010]{Primary: 11F70; Secondary: 11F55, 22E30}
\maketitle

\iffilename\thispagestyle{empty}\fi
\tableofcontents

 \newpage
 


\def\flnm{rFtm-intro}

\markboth{1. INTRODUCTION}{1. INTRODUCTION}

\section{Introduction} The real Lie group $\SU(2,1)$ is the smallest
rank one Lie group with a non-abelian unipotent subgroup. This has the
consequence that automorphic forms have expansions in which, besides
Fourier terms based on a character of the unipotent group, also terms
with theta functions occur. To investigate the real-analytic Poincar\'e
series similar to those studied in \cite{MW89}, but associated to
non-abelian representations of the unipotent subgroup, one needs to
understand the Fourier-Jacobi expansion of functions on $\SU(2,1)$ with
exponential growth near the cusps.

Here we study, for a fixed Iwasawa decomposition $G=NAK$, the spaces
$C^\infty( \Ld\backslash G)_K$ of $K$-finite functions that are
invariant under a lattice $\Ld \subset N$. We do not impose conditions
on the growth of $f(nak)$ as $a$ varies in the subgroup~$A$. As a
simplification we restrict the discussion to a collection of lattices
that is invariant under conjugation by the normalizer $NAM$ of $N$.
Elements of $C^\infty(\Ld\backslash N)_K$ can be expanded in a general
type of Fourier expansion, which is the sum of two parts: the abelian
part, with terms parametrized by characters of $\Ld \backslash N$, and
the non-abelian part, with terms parametrized by an orthogonal
collection of realizations of the Stone-von Neumann representation in
$L^2(\Ld\backslash N)$. When applying this to automorphic forms we have
to deal with the subspace
\il{CiLdGps}{$C^\infty(\Ld\backslash G)^\ps_K$}%
$C^\infty(\Ld\backslash G)^\ps_K\subset C^\infty(\Ld\backslash G)_K$ in
which the center of the universal enveloping algebra of the Lie algebra
of $\SU(2,1)$ acts according to a character~$\ps$.

The linear operator that consists of taking a term in this expansion is
an intertwining operator from the Lie algebra module
$C^\infty(\Ld\backslash G)_K^\ps$ to a submodule
$\Ffu_\Nfu^\ps\subset C^\infty(\Ld\backslash G)_K^\ps$, where $\Nfu$
denotes a character of $ N$ or a realization of the Stone-von Neumann
representation.

If $\Nfu$ corresponds to a non-trivial character or an infinite
dimensional representation of $N$, then there are inside
$\Ffu_\Nfu^\ps$ two important submodules. The submodule $\Wfu_\Nfu^\ps$
is characterized by the property of exponential decay in $a\in A$. It
is present in $\Ffu^\ps_\Nfu$ for all $\Nfu$, except for $\Nfu_0$,
corresponding to the trivial character.

We parametrize $A$ by $\am(t)$ with $t\in (0,\infty)$ such that the
exponential decay occurs as $t\uparrow \infty$. The submodule
$\Mfu^\ps_\Nfu\subset \Ffu_\Nfu^\ps$ is defined by the behavior as
$t\downarrow 0$ by prescribing a convergent expansion. For $\ps$ in a
subset of characters of the enveloping algebra, elements of
$\Mfu^\ps_\Nfu$ can be used to define absolutely convergent Poincar\'e
series. For the Fourier expansion of such Poincar\'e series
(and their meromorphic continuation) we will need both $\Mfu^\ps_\Nfu$
and $\Wfu^\ps_\Nfu$.\smallskip

Various authors have studied the modules $\Wfu^\ps_\Nfu$. One says that
an irreducible representation admits a Whittaker module if it can be
realized in $\Wfu^\ps_\Nfu$ for a non-trivial character $\Nfu$.
(For $\SU(2,1)$ the non-trivial characters of $N$ are the generic
characters.) A representation that does not admit a Whittaker model is
called non-generic. Gelbart and Piatetski-Shapiro in their study
(\cite{GPS84}) of lifting from $U_{1,1}$ to $U_{2,1}$ and L-functions,
exhibit by adelic methods many non-generic representations (called by
them hypercuspidal).

The study of non-abelian Fourier terms is important in the theory of
automorphic forms, since for many representations abelian Fourier terms
turn out to be zero. This is connected with the Gelfand-Kirillov
dimension of modules. See \cite{Kos78} and \cite{Vo78}. Explicit
Fourier expansion of automorphic forms are given by Koseki and Oda
\cite{KO95}, and Ishikawa \cite{Ish99}, \cite{Ish00}. We mention also
Bao, Kleinschmidt Nilsson, Persson and Pioline \cite{BKNPP10} with a
study of holomorphic Eisenstein series and Eisenstein series on~$G/K$,
respectively.\smallskip

In this paper we do not impose a restriction of polynomial growth. We
determine the module structure of all Fourier term modules that arise
in this way. In the reducible cases we see considerable differences in
structure between the reducible principal series, the Fourier term
modules arising from a character of $N$, and the modules arising from
infinite-dimensional representations of~$N$. As far as we know, the
results on the last two types of modules are new. On the other hand,
information on the structure of reducible principal series
representations can be found in many places, for instance \cite{Kr72},
\cite{BB83}, \cite{Coll85}.

In this paper we use concepts and notations that seem suitable for the
Lie group $\SU(2,1)$, even when some of these concepts may be
suboptimal for more general semisimple Lie groups.

The group $\SU(2,1)$ is small among general semisimple Lie groups.
Nevertheless, the wish to work explicitly leads at several places to
computations of a size that are hard to carry out by hand and are more
suitable for symbolic computation with help of a computer. We explain
and carry out these computations in the Mathematica notebook
\cite{Math}, which we consider to be a substantial complement to this
paper, and refer to the relevant sections in the text. At many other
places we checked with Mathematica computations carried out by hand;
this can be found in the notebook as well, but is often not indicated
in the text.

\subsection{Overview of the main results} We study $(\glie,K)$-modules
in the space of $K$-finite functions in $C^\infty(\Ld_\s\backslash G)$
for a standard lattice $\Ld_\s$ (Definition~\ref{def-stlatt}) in the
unipotent subgroup $N$. The Fourier expansion of elements of
$C^\infty(\Ld_\s\backslash G)_K$ on which the center of the enveloping
algebra of $\glie=\mathrm{Lie}(G)$ acts by a character $\ps$ are built
from terms in the following $(\glie,K)$-modules:\il{Ftmo}{Fourier term
module}
\begin{itemize}
\item \emph{ Abelian Fourier term modules. }\il{Ftma}{Fourier term
module, abelian}$\Ffu^\ps_\bt$ consists of functions transforming on
the left according to a character $\ch_\bt$ of~$N$, $\bt\in \ZZ[i]$, as
defined in~\eqref{chbt}.
\item \emph{Non-abelian Fourier term modules. }\il{Ftmn}{Fourier term
module, non-abelian}The group $N$ is non-abelian.\\
The Stone-von Neumann representation of $N$ leads to modules
$\Ffu^\ps_\n$ with the abbreviation \il{n-intr}{$\n$}$\n=(\ell,c,d)$.
The elements of $\Ffu^\ps_\n$ are described by use of theta-functions
on~$N$. See \S\ref{sect-lnab}.

The parameter \il{ell-intro}{$\ell$}$\ell\in \frac \s 2\ZZ_{\neq 0}$
determines a character of the center of~$N$, and the parameter
\il{c-intro}{shift parameter}$c\in \ZZ\bmod2\ell$ determines a shift in
the theta functions. The \il{mpprm-i}{metaplectic
parameter}``metaplectic parameter'' \il{d-inftyo}{$d$ metaplectic
parameter}$d\in 1+2\ZZ$ determines a character of the double cover of
the group $M\subset K$ normalizing $NA$. See \S\ref{sect-thfu},
and~\S\ref{sect-lnab}.
\end{itemize}

One can parametrize the characters $\ps$ of the center of the enveloping
algebra $ZU(\glie)$ by characters of $AM$
(with $M\subset K$ normalizing $NA$). The characters $\xi$ of $M$
correspond to integers $j_\xi$, the characters of $A$ to complex
numbers $\nu$. The Weyl group $W$ of type $\mathrm{A}_2$, isomorphic to
the symmetric group $S_{\!3}$, acts on the elements $(j,\nu)\in \CC^2$,
and the orbits of $W$ in~$\CC^2$ correspond bijectively to the
characters $\ps$ of $ZU(\glie)$. For $\SU(2,1)$ only the intersection
of these orbits with $\ZZ\times\CC$ is relevant. This intersection,
denoted \il{Wops}{$\wo(\ps)$}$\wo(\ps)$, can have from zero to six
elements. For our purpose, this intersection $\wo(\ps)$ is relevant
only if it is non-empty. See
Table~\ref{tab-parms}, p~\pageref{tab-parms}, for a further discussion.

\rmrk{$N$-trivial Fourier term modules}\il{Ftm-Ntri}{Fourier term
module, $N$-trivial}The $N$-trivial Fourier term modules $\Ffu_0^\ps$
contain modules \il{HK-intro}{$H^{\xi,\nu}_K$}$H^{\xi,\nu}_K$ in the
\il{pssum}{principal series}principal series, discussed
in~\S\ref{sect-prs}. If $\wo(\ps)$ does not contain elements of the
form $(j,0)$, then by Proposition~\ref{prop-FFu0}
\be\label{psdcp} \Ffu^\ps_0 \= \bigoplus_{(j_\xi,\nu)\in \wo(\ps)}
H^{\xi,\nu}_K\,.\ee

If $\wo(\ps)$ contains elements of the form $(j,0)$ the principal series
modules $H^{\xi,\nu}_K$ with $(j_\xi,\nu)$ in $\wo(\ps)$ do not suffice
to obtain the whole of $\Ffu_0^\ps$. See Propositions \ref{prop-iLog}
and~\ref{prop-logIP}.

\rmrk{Submodules determined by boundary behavior} In the modules
$\Ffu_\Nfu^\ps = \Ffu_\bt^\ps$ with $\bt\neq 0$ and $\Ffu_\n^\ps$, we
define in \S\ref{sect-smbh} two classes of
submodules.\il{FfuNfu-intro}{$\Ffu^\ps_\bt,\; \Ffu^\ps_\n$}
\begin{itemize}
\item $\Wfu^{\xi,\nu}_\Nfu$, consisting of functions for which
$t\mapsto f\bigl( n \am(t)k)$ has exponential decay as
$t\uparrow\infty$. \il{Wfu-intro}{$\Wfu^{\xi,\nu}_\Nfu$} (We use the
Iwasawa decomposition, and the parametrization $t\mapsto \am(t)$ of $A$
by $(0,\infty)$ given in~\eqref{Adef}.)
\item $\Mfu_\Nfu^{\xi,\nu}$, consisting of functions for which
$t\mapsto f\bigl( n \am(t)k\bigr)$ has the form
$t\mapsto t^{2+\nu} \, h(t)$ with $h$ extending holomorphically to
$\CC$. \il{Mfu-intro}{$\Mfu^{\xi,\nu}_\Nfu$}
\end{itemize}

\rmrk{Fourier term modules under generic parametrization}We consider
first a character $\ps$ of $ZU(\glie)$ represented by elements
$(j,\nu)$ such that $\nu \not\equiv j\bmod 2$, or $(j,\nu)=(0,0)$. In
the terminology used in Table~\ref{tab-parms}, p~\pageref{tab-parms},
this is called \emph{generic parametrization}. For such $\ps$ the set
$\wo(\ps)$ often consists of two element $(j,\nu)$ and $(j,-\nu)$. It
may happen that $3\nu\equiv j\bmod 2$. Then $\wo(\ps)$ is a full Weyl
group orbit with six elements. We put\ir{Wops+}{\wo(\ps)^+}
\be\label{Wops+} \wo(\ps)^+ \= \bigl\{ (j,\nu) \in \wo(\ps) \;:\;
\re\nu\geq 0 \bigr\}\,.\ee

The irreducible representations of the maximal compact subgroup $K$ are
the $(p+1)$-dimensional representations
\il{tauhp-intro}{$\tau^h_p$}$\tau^h_p$ discussed in \S\ref{sect-irrpK}.
The parameters satisfy $p\in \ZZ_{\geq 0}$, $h\equiv p \bmod 2$.

\begin{mnthm}\label{mnthm-ab-gp}Let the character $\ps$ of $ZU(\glie)$
correspond to generic parametrization. Let $\bt \in \CC^\ast$.
\begin{enumerate}
\item[i)] For each $(j,\nu) \in \wo(\ps)^+$ the submodules
$\Wfu_\bt^{\xi,\nu}$ and $\Mfu_\bt^{\xi,\nu}$ of $\Ffu^\ps_\bt$ are
irreducible $(\glie,K)$-modules isomorphic to $H^{\xi,\nu}_K$ and to
$H^{\xi,-\nu}_K$.

The $K$-types $\tau^h_p$ in $\Wfu^{\xi,\nu}_\bt$ and in
$\Mfu^{\xi,\nu}_\bt$ satisfy $|h-2j_\xi|\leq 3p$. They occur in both
modules with multiplicity one.

\item[ii)]
$\Ffu^\ps_\bt \= \bigoplus_{(j,\nu)\in \wo(\ps)^+ } \Bigl( \Wfu^{\xi_j,\nu}_\bt \oplus \Mfu_\bt^{\xi_j,\nu} \Bigr)$.
\end{enumerate}
\end{mnthm}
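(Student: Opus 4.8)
The plan is to reduce every assertion to the radial behaviour of $K$-finite vectors and then match the two boundary conditions of \S\ref{sect-smbh} against the principal series analysed in \S\ref{sect-prs}. Fix a $K$-type $\tau^h_p$. By the Iwasawa decomposition a vector of $\Ffu^\ps_\bt$ transforming under $\tau^h_p$ is determined by the radial functions $t\mapsto f\bigl(n\,\am(t)k\bigr)$ taking values in the $\tau^h_p$-isotypic space, and these in turn are indexed by the $M$-characters occurring in $\tau^h_p$. Since the restriction of $\tau^h_p$ to $M$ is multiplicity free, each $M$-character $\xi_j$ contributes at most a one-dimensional radial slot; comparing the extreme $M$-weights of $\tau^h_p$ with $\xi_j$ yields the bound $|h-2j_\xi|\le 3p$ and the multiplicity-one statement of part~(i).

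First I would write out the action of $ZU(\glie)$ on these radial slots. Because $\SU(2,1)$ has real rank one, prescribing the central character $\ps$ turns the Casimir eigenvalue relation into a second-order ordinary differential equation in $t$, while the cubic generator of $ZU(\glie)$ and the shift operators between neighbouring $K$-types fix the remaining compatibilities. The two independent solutions of this equation are separated by their boundary behaviour: exponential decay as $t\uparrow\infty$ selects the recessive (Whittaker $W$-type) solution and defines $\Wfu_\bt^{\xi,\nu}$, whereas the prescribed convergent expansion $t^{2+\nu}h(t)$ as $t\downarrow 0$ selects the solution attached to the indicial exponent $2+\nu$ and defines $\Mfu_\bt^{\xi,\nu}$.

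Next I would identify these submodules with principal series. The Jacquet integral gives an intertwining map $H^{\xi,\nu}_K\to\Ffu^\ps_\bt$ whose image is exactly the space of exponentially decaying vectors; under generic parametrization the principal series is irreducible, so this map is injective and yields $\Wfu_\bt^{\xi,\nu}\cong H^{\xi,\nu}_K$. Matching the indicial exponent $2+\nu$ with the boundary embedding of $H^{\xi,-\nu}_K$ (equivalently, applying the standard intertwining operator swapping $\nu$ and $-\nu$) identifies $\Mfu_\bt^{\xi,\nu}\cong H^{\xi,-\nu}_K$. Irreducibility of both submodules then follows from the irreducibility of the principal series, which is precisely what generic parametrization guarantees.

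Finally, for part~(ii) I would argue $K$-type by $K$-type. In each one-dimensional $M$-isotypic slot of $\tau^h_p$ the radial equation is of second order, so its solution space is two-dimensional and is spanned by the decaying ($\Wfu$) solution and the prescribed-expansion ($\Mfu$) solution; directness of their sum follows from their incompatible behaviour as $t\uparrow\infty$, namely exponential decay against an exponentially growing component. Distinct pairs $(j,\nu)\in\wo(\ps)^+$ carry distinct $M$-characters $\xi_j$, so the slots they govern are indexed disjointly, and assembling over all slots and all $K$-types reproduces exactly the $2\,|\wo(\ps)^+|$ summands on the right-hand side. The hard part will be the degenerate pair $(j,\nu)=(0,0)$ admitted by generic parametrization: there the two indicial exponents at $t\downarrow 0$ collide, a logarithmic solution appears, and one must check that the decaying Whittaker solution genuinely carries a $\log t$ term so that it stays independent of the holomorphic $\Mfu$-solution, keeping $\Ffu^\ps_\bt$ the direct sum of the two now mutually isomorphic irreducible modules $\Wfu^{\xi,0}_\bt$ and $\Mfu^{\xi,0}_\bt$.
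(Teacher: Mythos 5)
Your overall architecture (two submodules cut out by boundary behaviour, identification with principal series, then a $K$-type-by-$K$-type count for the direct sum) matches the paper's, but the load-bearing step is not carried. The gap is your multiplicity count. You treat the highest weight space $\Ffu^\ps_{\bt;h,p,p}$ as if it decomposed into independent ``radial slots'' indexed by the $M$-characters in $\tau^h_p$, each governed by a scalar second-order ODE with a two-dimensional solution space. For $\bt\neq 0$ this decoupling is false: a highest weight vector is $\ch_\bt(n)\sum_r f_r(t)\,\Kph h{p}{r}{p}(k)$ with $p+1$ components, and the eigenfunction equations coming from $C$ and $\Dt_3$ couple $f_r$ with $f_{r\pm2}$ (Lemma~\ref{lem-efrab}); moreover left translation by $M$ does not preserve $\Ffu_\bt$ at all (it moves $\bt$ to $\z^3\bt$), so the $M$-weight bookkeeping you invoke is not available. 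Consequently neither the bound $|h-2j_\xi|\le 3p$, nor multiplicity one in $\Wfu^{\xi,\nu}_\bt$ and $\Mfu^{\xi,\nu}_\bt$, nor the statement that $\dim\Ffu^\ps_{\bt;h,p,p}=2$ follows from what you wrote. The paper obtains these from the shift-operator machinery: the system is solved explicitly only in the one-dimensional $K$-type $\tau^{2j}_0$ (where it is a genuine modified Bessel equation), the upward shift operators are injective for $\bt\neq0$ (Proposition~\ref{prop-uso-ga}), the downward ones can vanish only on the sector boundaries (Proposition~\ref{prop-kdso}), and an up-and-down induction then pins the multiplicity at exactly two (Proposition~\ref{prop-dim}) and confines the $K$-types to $\sect(j)$. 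Without a substitute for this argument your parts (i) and (ii) are both unsupported, and your Jacquet-integral identification of $\Wfu^{\xi,\nu}_\bt$ with $H^{\xi,\nu}_K$ is circular: to know the integral surjects onto the space of exponentially decaying vectors you already need the multiplicity-one statement.

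Two smaller points. First, your route to the isomorphism with the principal series (Jacquet integral plus the standard intertwining operator) genuinely differs from the paper, which instead classifies both $\Wfu^{\xi,\nu}_\bt$ and $\Mfu^{\xi,\nu}_\bt$ as ``special modules'' with parameter set $[\ld_2(j_\xi,\nu);2j_\xi,0;\infty,\infty]$ and invokes Propositions~\ref{prop-scm} and~\ref{prop-psscm}; if you pursue the integral you must address convergence (it converges only for $\re\nu$ large and needs continuation) and non-vanishing. Second, in part (ii) the disjointness of the contributions of distinct $(j,\nu)\in\wo(\ps)^+$ under generic multiple parametrization is not a consequence of the $j$'s being distinct: you need the sectors $\sect(j)$ to be pairwise disjoint, which holds because the three values of $j$ are pairwise incongruent mod $3$ (Lemma~\ref{lem-gmp}) — under integral parametrization the $j$'s are also distinct but the sectors overlap. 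Your attention to the case $\nu=0$ is well placed, but it resolves immediately from the linear independence of $I_0$ and $K_0$ (the latter carries the logarithm), exactly as in Lemma~\ref{lem-WNdisj}.
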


For the non-abelian Fourier term modules $\Ffu^\ps_\n$ we need the
following integral quantity.
\be m_0(j) \= \frac16 \sign(\ell) \bigl( d-2j\bigr) - \frac12\,.\ee
We put\ir{Wops+n}{\wo(\ps)^+_\n}
\be\label{Wops+n} \wo(\ps)^+_\n \= \bigl\{ (j,\nu) \in \wo(\ps)^+\;:\;
m_0(j) \geq 0 \bigr\}\,.\ee

\begin{mnthm}\label{mnthm-nab-gp}Let the character $\ps$ of $ZU(\glie)$
correspond to generic parametrization. Let $\n$ be a parameter triple
$(\ell,c,d)$.
\begin{enumerate}
\item[i)] $\Ffu^\ps_\n$ is non-zero if and only if
$m_0(j)\in \ZZ_{\geq 0}$ for some $(j,\nu)\in \wo(\ps)^+$.
\item[ii)] For each $(j,\nu) \in \wo(\ps)^+_\n$ the submodules
$\Wfu_\n^{\xi,\nu}$ and $\Mfu_\n^{\xi,\nu}$ of $\Ffu_\n^\bt$ are
irreducible $(\glie,K)$-modules isomorphic to $H^{\xi,\nu}_K$ and to
$H^{\xi,-\nu}_K$.

The $K$-types $\tau^h_p$ in these modules have multiplicity one, and
satisfy $|h-2j_\xi|\leq 3p$.

\item[iii)]
$\Ffu^\ps_\n \= \bigoplus_{(j,\nu)\in \wo(\ps)^+ _\n } \Bigl( \Wfu^{\xi_j,\nu}_\n \oplus \Mfu_\n^{\xi_j,\nu} \Bigr)$.
\end{enumerate}
\end{mnthm}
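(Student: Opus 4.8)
The plan is to follow the template set by Theorem~\ref{mnthm-ab-gp}, replacing the character $\ch_\bt$ of $N$ by the Stone--von Neumann realization attached to $\n=(\ell,c,d)$ together with its theta-function model from \S\ref{sect-thfu} and~\S\ref{sect-lnab}. As in the abelian case I would fix a $K$-type $\tau^h_p$ and study the restriction of $f\in\Ffu^\ps_\n$ to the ray $t\mapsto\am(t)$. Acting by the generators of $\glie$ on the theta-function realization turns the requirement that the center $ZU(\glie)$ act by $\ps$ into a second-order ODE in $t$ for each occurring $K$-type, now coupled to the metaplectic data through the parameter $d$. The submodules $\Wfu^{\xi,\nu}_\n$ and $\Mfu^{\xi,\nu}_\n$ of \S\ref{sect-smbh} are then the solution spaces singled out by the two prescribed asymptotics: exponential decay as $t\uparrow\infty$, and an expansion $t^{2+\nu}\,h(t)$ with $h$ entire as $t\downarrow0$. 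Since the indicial exponents of this ODE at $t=0$ are $2\pm\nu$, and generic parametrization keeps $H^{\xi,\nu}_K$ and $H^{\xi,-\nu}_K$ irreducible (\S\ref{sect-prs}), the two solution spaces are to be identified with $H^{\xi,\nu}_K$ and $H^{\xi,-\nu}_K$ as asserted, which will give part~(ii) once the $K$-type data is matched.

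The genuinely new point, where the argument departs from that of Theorem~\ref{mnthm-ab-gp}, is the non-vanishing criterion~(i). The theta-function coefficients carry a Hermite-polynomial factor whose degree is governed by $m_0(j)$: the requirement that $f$ transform compatibly under $M$ both through the $K$-type parameter $j_\xi$ and through the metaplectic character $d$ pins this degree to exactly $m_0(j)$, and a nonzero polynomial of that degree exists precisely when $m_0(j)\in\ZZ_{\geq0}$. I expect the verification that $m_0(j)$ must be integral, rather than merely lying in $\tfrac13\ZZ$ as the formula $m_0(j)=\tfrac16\sign(\ell)(d-2j)-\tfrac12$ permits a priori, to be the main obstacle; I would extract this integrality from the $M$-compatibility of the theta-function model, so that the terms of $\wo(\ps)^+_\n$ with non-integral $m_0(j)$ automatically yield the zero module and drop out of the sum.

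For part~(ii) proper I would transport the $K$-type bookkeeping from the abelian case. The branching of $\tau^h_p$ under $M$ together with the theta-function model shows that a given $K$-type occurs in $\Wfu^{\xi,\nu}_\n$ or $\Mfu^{\xi,\nu}_\n$ at most once, and only when $|h-2j_\xi|\leq3p$; multiplicity one then follows because the second-order ODE admits a one-dimensional space of solutions with each prescribed asymptotic. Matching the full list of occurring $K$-types in each solution space with the known $K$-type content of $H^{\xi,\pm\nu}_K$ (\S\ref{sect-prs}) upgrades the $K$-type isomorphism to a $(\glie,K)$-module isomorphism, using that both modules are irreducible under generic parametrization and share the same minimal $K$-type.

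Finally, part~(iii) is a spanning-and-directness argument. For each relevant $(j,\nu)\in\wo(\ps)^+_\n$ the radial ODE is second order, so its solution space is two-dimensional per $K$-type, and the two asymptotic conditions defining $\Wfu^{\xi,\nu}_\n$ and $\Mfu^{\xi,\nu}_\n$ are complementary; hence these two submodules already exhaust $\Ffu^\ps_\n$ over that pair. That the sum over the distinct pairs in $\wo(\ps)^+_\n$ is direct follows, as in Theorem~\ref{mnthm-ab-gp}, from the distinct $A$- and $M$-characters (equivalently the distinct values of $\pm\nu$ and of $j$) under generic parametrization, which separate the summands. Combining this with part~(i), which guarantees the sum is nonempty exactly when some $m_0(j)\in\ZZ_{\geq0}$, completes the proof.
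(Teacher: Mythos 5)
Your overall strategy --- analyze the one-dimensional $K$-type $\tau^{2j}_0$, where the eigenfunction equations reduce to a Whittaker equation with parameter $m=m_0(j)$, and propagate from there --- is the same as the paper's, and your treatment of part (i), including the observation that integrality of $m_0(j)$ is forced by the metaplectic parameter $d$ in the theta-function model (the condition $(6m+3)\sign(\ell)+h-3r=d$ with $m\in\ZZ_{\geq0}$), is essentially how the paper argues via \S\ref{sect-Nnab1d} and Proposition~\ref{prop-sectors}. You still need the shift-operator descent in the proof of Proposition~\ref{prop-sectors} to see that a non-zero element in a higher $K$-type forces a non-zero element in a one-dimensional $K$-type, but that is plausibly part of the ``bookkeeping'' you say you will transport.

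The genuine gap is in your treatment of the higher $K$-types. For $p\geq1$ an element of $\Ffu^\ps_{\n;h,p,p}$ has up to $p+1$ components $f_r$, and the condition that $C$ and $\Dt_3$ act by $\ps$ yields a \emph{coupled} system relating $f_r$, $f_{r\pm2}$ and their derivatives (Lemma~\ref{lem-efrab} in the abelian case, \S\ref{sect-eieq} in the non-abelian one); it is not a single second-order ODE with indicial exponents $2\pm\nu$ and a two-dimensional solution space. The statement $\dim\Ffu^\ps_{\n;h,p,p}=2$ is Proposition~\ref{prop-dim}, and the paper obtains it only through the shift-operator machinery: injectivity of the upward shift operators (Proposition~\ref{prop-kuso}) pushes the two explicit Whittaker solutions up from $\tau^{2j}_0$, and injectivity of at least one downward shift operator within each sector (Proposition~\ref{prop-kdso}) caps the dimension. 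Your multiplicity-one claim in (ii) and your ``spanning'' step in (iii) both rest on the false two-dimensional-ODE picture. Relatedly, the irreducibility of $\Wfu^{\xi,\nu}_\n$ and $\Mfu^{\xi,\nu}_\n$ --- itself part of assertion (ii) --- is asserted but never argued; the paper gets it by exhibiting them as special modules with all structure constants $\al_\pm(a,b)$ non-zero (Lemma~\ref{lem-bfs}, Lemma~\ref{lem-alnzir}, Propositions \ref{prop-kdso} and~\ref{prop-kuso}) and then invokes the classification of irreducible special modules by their parameter sets (Propositions \ref{prop-scm} and~\ref{prop-psscm}) to identify them with $H^{\xi,\pm\nu}_K$; ``irreducible with the same minimal $K$-type'' is not by itself a proof of isomorphism without such a classification (or an appeal to the subquotient theorem). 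Finally, directness of the sum in (iii) across different $j$ comes from disjointness of the sectors $\sect(j)$ under generic parametrization, and within a fixed $j$ from the linear independence of $W_{\k,\nu/2}$ and $M_{\k,\nu/2}$ at the bottom $K$-type (Lemma~\ref{lem-WNdisj}); the summands are not separated by $A$- or $M$-characters, since elements of $\Wfu^{\xi,\nu}_\n$ do not transform by such characters under left translation.
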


Theorems \ref{mnthm-ab-gp} and~\ref{mnthm-nab-gp} summarize the results
of Sections~\ref{sect-gKm}--\ref{sect-spFtm}. The proof is completed on
p~\pageref{prfAB}.

\rmrk{Integral parametrization}A character $\ps$ of $ZU(\glie)$
corresponds to \emph{integral parametrization} if the elements
$(j,\nu) \in \wo(\ps)$ satisfy $\nu \equiv j \bmod 2$.

Under integral parametrization the structure of the $N$-trivial Fourier
term modules stays as indicated in~\eqref{psdcp}. However, the
principal series modules $H^{\xi,\nu}_K$ become reducible. Much more
generally than only for $\SU(2,1)$, one knows that all irreducible
$(\glie,K)$-modules occur as subquotients of some $H^{\xi,\nu}_K$
(Harish Chandra \cite{HCh54}), and even as submodules
(Casselman and Mili\v ci\'c \cite{CM82}).

For the other Fourier term modules the submodules $\Wfu_\Nfu^\ps$ and
$\Mfu^\ps_\Nfu$ become reducible as well. However, the way they fit
together in $\Ffu^\ps_\Nfu$ differs remarkably from the $N$-trivial
case: ``They coincide wherever they can.'' To formulate this more
precisely, we denote by \il{Ktp-intro}{$V_{h,p}$}$V_{h,p}$ the subspace
of $K$-type $\tau^h_p$ in the $(\glie,K)$-module~$V$.

\begin{mnthm}\label{mnthm-ab-ip}Let the character $\ps$ of $ZU(\glie)$
correspond to integral parametrization. Let $\bt\in \CC^\ast$.
\begin{enumerate}
\item[i)] The $(\glie,K)$-submodules $\Mfu^{\xi,\nu}_\bt$ and
$\Wfu^{\xi,\nu}_\bt$ of $\Ffu^\ps_\bt$ are reducible for each element
$(\xi,\nu)\in \wo(\ps)^+$. The $K$-types $\tau^h_p$ in these modules
have multiplicity one, and satisfy $|h-2j_\xi|\leq 3p$.
\item[ii)] If a $K$-type $\tau^h_p$ occurs in $\Mfu^{\xi,\nu}_\bt $ and
in $\Mfu^{\xi',\nu'}_\bt $ for
$(j_\xi,\nu), (j_{\xi'},\nu\}\in \wo(\ps)^+$, then
$\Mfu^{\xi,\nu}_{\bt;h,p} = \Mfu^{\xi',\nu'}_{\bt;h,p}$; and similarly
for the $\Wfu$-modules.
\item[iii)]
$\Ffu^\ps_\bt \= \Bigl( \sum_{(j,\nu)\in \wo(\ps)^+} \Mfu^{\xi,\nu}_\bt \Bigr) \oplus \Bigl( \sum_{(j,\nu)\in \wo(\ps)^+} \Wfu^{\xi,\nu}_\bt \Bigr)
$.
\item[iv)] The intersection
$\bigcap_{(j,\nu)\in \wo(\ps)^+} \Mfu^{\xi_j,\nu}_\bt$ is the unique
irreducible submodule of
$\sum_{(j,\nu)\in \wo(\ps)^+} \Mfu^{\xi,\nu}_\bt $; and the
intersection $\bigcap_{(j,\nu)\in \wo(\ps)^+} \Wfu^{\xi_j,\nu}_\bt$ is
the unique irreducible submodule of
$\sum_{(j,\nu)\in \wo(\ps)^+} \Wfu^{\xi,\nu}_\bt $.
\end{enumerate}
\end{mnthm}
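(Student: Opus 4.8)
The plan is to analyze $\Ffu^\ps_\bt$ through its $K$-isotypic components, reducing the module structure to the behaviour of radial functions on $A$. Because the character $\ch_\bt$ of $N$ is fixed, an element of $\Ffu^\ps_\bt$ is determined by its restriction to $AK$, and on a fixed $K$-type $\tau^h_p$ the radial part $t\mapsto f(n\am(t)k)$ satisfies a second-order ordinary differential equation $E_{h,p}$ whose coefficients depend only on $\ps$, $\bt$ and $(h,p)$ --- crucially, not on which representative $(j,\nu)\in\wo(\ps)$ one singles out. The multiplicity-one statement and the bound $|h-2j_\xi|\le 3p$ in part~(i) come directly from this radial analysis together with the branching of $K$-types, exactly as in the generic case of Theorem~\ref{mnthm-ab-gp}, and are insensitive to whether $\ps$ is generic or integral. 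In this picture $\Wfu^{\xi,\nu}_\bt$ is the space of solutions of $E_{h,p}$ that decay exponentially as $t\uparrow\infty$, while $\Mfu^{\xi,\nu}_\bt$ is the space of solutions of the form $t^{2+\nu}h(t)$ with $h$ entire, as in \S\ref{sect-smbh}.

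For the reducibility in part~(i) I would transport structure from the principal series. The isomorphisms $\Wfu^{\xi,\nu}_\bt\cong H^{\xi,\nu}_K$ and $\Mfu^{\xi,\nu}_\bt\cong H^{\xi,-\nu}_K$ of Theorem~\ref{mnthm-ab-gp} are realized by leading-term (boundary) maps that remain defined under integral parametrization; I would check that these maps stay isomorphisms here, so that reducibility of $\Wfu^{\xi,\nu}_\bt$ and $\Mfu^{\xi,\nu}_\bt$ follows from the reducibility of $H^{\xi,\pm\nu}_K$ at integral parameters, whose composition series is classical (\cite{Kr72}, \cite{BB83}, \cite{Coll85}). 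Equivalently, one can argue directly that the coefficients in the $\glie$-recurrences linking adjacent $K$-types vanish at the integral values of $\nu$, disconnecting the lattice of $K$-types into proper invariant pieces.

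The heart of the theorem is the coincidence statement~(ii), and here the radial reduction does the work. For the $\Wfu$-modules this is immediate: exponential decay as $t\uparrow\infty$ singles out a one-dimensional (recessive) solution space of $E_{h,p}$, so any two modules $\Wfu^{\xi,\nu}_\bt$ and $\Wfu^{\xi',\nu'}_\bt$ that both contain $\tau^h_p$ must realize the same line, giving $\Wfu^{\xi,\nu}_{\bt;h,p}=\Wfu^{\xi',\nu'}_{\bt;h,p}$. For the $\Mfu$-modules one must examine the indicial equation of $E_{h,p}$ at $t=0$: under integral parametrization the two indicial roots differ by an even integer, so that among the prescribed leading behaviours $t^{2+\nu}$ with $(j,\nu)\in\wo(\ps)^+$ only one produces a genuinely holomorphic, logarithm-free solution on a given $K$-type, the others being obstructed by a resonance. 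Hence wherever $\tau^h_p$ occurs in two $\Mfu$-modules they necessarily realize this one holomorphic solution and the isotypic lines coincide. Carrying out this resonance analysis --- identifying precisely which leading exponent survives for each $(h,p)$, and confirming that no logarithmic solution intrudes --- is the main obstacle, and the place where the explicit radial computations of the earlier sections are indispensable.

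Granting~(ii), parts~(iii) and~(iv) follow by bookkeeping on each $K$-type. Since $E_{h,p}$ is second order, $\dim\Ffu^\ps_{\bt;h,p}\le 2$; by~(ii) the regular-at-$0$ solutions contribute at most a one-dimensional space to $\sum\Mfu^{\xi,\nu}_\bt$ and the decaying-at-$\infty$ solutions at most a one-dimensional space to $\sum\Wfu^{\xi,\nu}_\bt$. I would then verify that no nonzero solution is simultaneously of $\Mfu$-type and exponentially decaying, the leading exponent at $t=0$ of a recessive solution being the non-holomorphic branch, so that the two subspaces are transverse and together span $\Ffu^\ps_{\bt;h,p}$; summing over $K$-types yields $\Ffu^\ps_\bt=\bigl(\sum\Mfu^{\xi,\nu}_\bt\bigr)\oplus\bigl(\sum\Wfu^{\xi,\nu}_\bt\bigr)$. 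For~(iv), the intersection $\bigcap_{(j,\nu)\in\wo(\ps)^+}\Mfu^{\xi_j,\nu}_\bt$ is a $(\glie,K)$-submodule; I would identify it with the common irreducible submodule shared by the reducible principal series $H^{\xi_j,-\nu}_K$, $(j,\nu)\in\wo(\ps)^+$, which is irreducible and nonzero, and then use multiplicity one within $\sum\Mfu^{\xi,\nu}_\bt$ to show that every irreducible submodule contains the lowest $K$-types of that socle and hence equals the intersection. The argument for the $\Wfu$-modules is identical.
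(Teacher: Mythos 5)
Your overall architecture (work $K$-type by $K$-type, separate behaviour at $t=0$ from behaviour at $t=\infty$, deduce iii) and iv) from ii)) matches the paper's, but the technical engine you rely on is not available. On a $K$-type $\tau^h_p$ with $p\geq 1$ a highest weight vector in $\Ffu_{\bt;h,p,p}$ has $p+1$ component functions $f_r$, $|r|\leq p$, $r\equiv p\bmod 2$, and Lemma~\ref{lem-efrab} shows that the eigenfunction equations couple $f_r$ to $f_{r\pm 2}$; there is no single second-order scalar equation $E_{h,p}$. Consequently the dimension bound $\dim\Ffu^\ps_{\bt;h,p,p}\leq 2$, the one-dimensionality of the recessive subspace used for ii) in the $\Wfu$ case, the indicial/resonance analysis proposed for ii) in the $\Mfu$ case, and the transversality argument in iii) all lack a foundation as stated. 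The paper obtains these facts by a different mechanism: the dimension statement \eqref{dbts} is proved by induction along downward shift operators (Lemma~\ref{lem-dimdso}), and the key coincidence ii) is established first at the special $K$-types lying on the intersection of two sector boundaries, where both downward shift operators vanish and the joint kernel $K_{\bt;h,p}$ is computed explicitly as the two-dimensional space spanned by the Bessel sums $\kk^I_{\bt;h,p}$, $\kk^K_{\bt;h,p}$ of Proposition~\ref{prop-nsefa}; the identifications $\kk^I_{\bt;h,p}\dis\mu^{p,0}_\bt(j_1,\nu_1)\dis\mu^{0,p}_\bt(j_2,\nu_2)$ are read off from the determining components and then propagated to all common $K$-types by the injective upward shift operators. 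You yourself flag the resonance analysis as ``the main obstacle'' but do not carry it out, and the framework in which you propose to do it is the wrong one.

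A second genuine error is the claim that reducibility in i) can be transported from the principal series via boundary maps that ``stay isomorphisms'' under integral parametrization. They do not: for example $H^{\xi_r,\nu_r}_K$ with $1\leq\nu_r\leq j_r-2$ has an irreducible submodule of holomorphic discrete series type $\IF(j_r,\nu_r)$ with minimal $K$-type $\tau^{2j_r}_0$ (see \S\ref{sect-ps3}), whereas the generic abelian modules $\Wfu^{\xi,\nu}_\bt$ and $\Mfu^{\xi,\nu}_\bt$ admit only irreducible submodules of large discrete series type $\II_+$ --- this is exactly the content of iv), and it is how the paper deduces the reducibility in i). So $\Wfu^{\xi_r,\nu_r}_\bt\not\cong H^{\xi_r,\nu_r}_K$ under integral parametrization, and an argument resting on that isomorphism would produce the wrong submodule lattice. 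Your identification in iv) of the intersection with a common submodule of principal series modules is in the right spirit, but it needs the paper's direct argument (from the highest weight of the minimal $K$-type of the intersection of sectors, reach every $K$-type of the intersection by injective upward shift operators and return by injective downward ones) rather than a transported isomorphism.
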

In the terminology discussed in~\S\ref{sect-list-irr}, the irreducible
modules in iv) are of \il{ldst-intro}{large discrete series type}large
discrete series type.

Section~\ref{sect-abip} concerns the reducible generic abelian Fourier
term modules. Most of the results in Theorem~\ref{mnthm-ab-ip} are
stated in Lemma~\ref{lem-strab}. The proof is completed on
p~\pageref{prfC}.
\medskip

The non-abelian Fourier term modules $\Ffu^\ps_\n$ present a more
complicated structure. The modules $\Mfu^{\xi,\nu}_\n$ and
$\Wfu^{\xi,\nu}_\n$ may coincide in some or in all $K$-types. So a
decomposition as in iii) of Theorem~\ref{mnthm-ab-ip} cannot hold. An
analogous decomposition holds if we define other submodules
\il{Vfu-intro}{$\Vfu^{\xi,\nu}_\n$}$\Vfu^{\xi,\nu}_\n$ to take the role
of $\Mfu$ in the decomposition. See \eqref{upsab}. The definition of
$\Vfu^{\xi,\nu}_\n$ is not intrinsic, but it serves to give us some
hold on the complications.

\begin{mnthm}\label{mnthm-nab-ip}Let the character $\ps$ of $ZU(\glie)$
correspond to integral parametrization. Let $\n=(\ell,c,d)$ be a
non-abelian parameter triple.

\begin{enumerate}
\item[i)] For each $(\xi,\nu)\in \wo(\ps)_\n^+$ the modules
$\Vfu^{\xi,\nu}_\n$, $\Wfu^{\xi,\nu}_\n$ and $\Wfu^{\xi,\nu}_\n$ are
reducible, and are in general non-isomorphic.

These modules contain, with multiplicity one, the $K$-types $\tau^h_p$
satisfying $|h-2j_\xi|\leq 3p$.

\item[ii)] Let $\X$ denote $\Vfu$, $\Wfu$ or $\Mfu$. If a $K$-type
$\tau^h_p$ occurs in $\X^{\xi,\nu}_\n$ and in $\X^{\xi',\nu'}_\n$ for
$(j_\xi,\nu) , (j_{\xi'},\nu') \in \wo(\ps)^+_\n$, then
$\X^{\xi,\nu}_{\n;h,p} = \X^{\xi',\nu'}_{\n;h,p}$.
\item[iii)]
$\Ffu^\ps_\n \= \Bigl( \sum_{(j,\nu)\in \wo(\ps)^+_\n} \Vfu^{\xi,\nu}_\n \Bigr) \oplus \Bigl( \sum_{(j,\nu)\in \wo(\ps)^+_\n} \Wfu^{\xi,\nu}_\n\Bigr)
$.
\end{enumerate}
Denote
$\Vfu^\ps_\n =  \sum_{(j,\nu)\in \wo(\ps)^+_\n} \Vfu^{\xi,\nu}_\n$ and
define $\Wfu^\ps_\n$ and $\Mfu^\ps_\n$ similarly. We put
$j_r= \max\bigl\{j\;:\; (j,\nu) \in \wo(\ps)^+\bigr\}$ and
$j_l=\min\bigl\{ (j,\nu)\in \wo(\ps)^+\bigr\}$.
\begin{enumerate}
\item[iv)] The modules $\Mfu^\ps_\n$ and $\Vfu^\ps_\n$ intersect
non-trivially in the following cases.
\begin{enumerate}
\item[a)] If $\wo(\ps)_\n^+ = \wo(\ps)^+$, then
$\Mfu^\ps_\n = \Vfu^\ps_\n$.
\item[b)] If $\wo(\ps)_\n^+ \neq \wo(\ps)^+$, and $\ell>0$, then
$\Mfu^\ps_{\n;h,p} = \Vfu^\ps_{\n;h,p}$ for all $K$-types $\tau^h_p$
that satisfy the additional condition $\bigl|h-2j_r\bigr|\leq 3p$.
\item[c)] If $\wo(\ps)_\n^+ \neq \wo(\ps)^+$, and $\ell<0$, then
$\Mfu^\ps_{\n;h,p} = \Vfu^\ps_{\n;h,p}$ for all $K$-types $\tau^h_p$
that satisfy the additional condition $\bigl|h-2j_l\bigr|\leq 3p$.
\end{enumerate}
\item[v)] The modules $\Mfu^\ps_\n$ and $\Wfu^\ps_\n$ have a non-trivial
intersection in the following cases.
\begin{enumerate}
\item[a)] If $\ell>0$, $m_0(j_l)\geq 0$ and $m_0(j)<0$ for other
$(j,\nu)\in \wo(\ps)^+$, then $\Mfu^\ps_{\n;h,p} = \Wfu^\ps_{\n;h,p}$
for all $K$-types $\tau^h_p$ that satisfy
$\bigl| h- 2j_l\bigr| \leq 3p$, and $\bigl|h-2j\bigr|>3p$ for
$(j,\nu)\in \wo(\ps)^+$, $j \neq j_l$.
\item[b)] If $\ell<0$, $m_0(j_r)\geq 0$ and $m_0(j)<0$ for other
$(j,\nu)\in \wo(\ps)^+$, then $\Mfu^\ps_{\n;h,p} = \Wfu^\ps_{\n;h,p}$
for all $K$-types $\tau^h_p$ that satisfy
$\bigl| h- 2j_r\bigr| \leq 3p$ and $\bigl|h-2j\bigr|>3p$ for
$(j,\nu)\in \wo(\ps)^+$, $j \neq j_r$.
\end{enumerate}
\end{enumerate}
\end{mnthm}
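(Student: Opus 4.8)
The plan is to transpose the architecture of Theorem~\ref{mnthm-ab-ip} to the theta realization, reducing every assertion to a computation inside a single $K$-type. The new phenomenon, absent in the abelian integral case, is that the radial solution selected by the $\Mfu$-boundary condition can \emph{terminate}: when the integral quantity $m_0(j)=\tfrac16\,\sign(\ell)(d-2j)-\tfrac12$ is a non-negative integer, the solution regular at $t\downarrow0$ becomes a polynomial (times the weight of the Stone--von Neumann realization) and hence also decays as $t\uparrow\infty$. This forced overlap of $\Mfu^{\xi,\nu}_\n$ with $\Wfu^{\xi,\nu}_\n$ is exactly what obstructs a clean $\Mfu\oplus\Wfu$ decomposition and is the reason the auxiliary modules $\Vfu^{\xi,\nu}_\n$ of~\eqref{upsab} are introduced; the whole proof is organized around making ``they coincide wherever they can'' precise.

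First I would fix a $K$-type $\tau^h_p$ with $|h-2j_\xi|\le3p$ and pass to the radial part along $A$. Since $ZU(\glie)$ acts through $\ps$, the quadratic Casimir reduces membership in $\Ffu^\ps_\n$ to a second-order ordinary differential equation in the variable $t$, while the cubic generator pins the admissible parameters to $\wo(\ps)$; the two boundary prescriptions---exponential decay as $t\uparrow\infty$ for $\Wfu^{\xi,\nu}_\n$, and the form $t^{2+\nu}h(t)$ with $h$ entire for $\Mfu^{\xi,\nu}_\n$---each cut out a one-dimensional solution space, giving the multiplicity-one statements of part~(i) and the $K$-type ranges by the same bookkeeping that underlies Theorems~\ref{mnthm-ab-gp}--\ref{mnthm-ab-ip}. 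The reducibility and generic non-isomorphism in~(i) follow because, under integral parametrization $\nu\equiv j\bmod2$, each of $\Vfu^{\xi,\nu}_\n,\Wfu^{\xi,\nu}_\n,\Mfu^{\xi,\nu}_\n$ realizes through its boundary behaviour a reducible principal series $H^{\xi,\nu}_K$, whose submodule structure is dictated by the vanishing of raising and lowering coefficients at distinguished $K$-types.

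The coincidence statements (ii), (iv) and~(v) are the core, and I would prove them $K$-type by $K$-type by comparing explicit radial solutions: two of the modules agree in $\tau^h_p$ precisely when their one-dimensional radial spaces are proportional. Part~(ii) is then immediate from this one-dimensionality, since the defining boundary behaviour of a given $\X\in\{\Vfu,\Wfu,\Mfu\}$ is intrinsic. For~(v), the overlap $\Mfu^\ps_{\n;h,p}=\Wfu^\ps_{\n;h,p}$ occurs exactly at those $K$-types where the single terminating polynomial solution lies in both; the sign of $\ell$ selects which extreme index ($j_l$ or $j_r$) carries this solution, and the inequalities $|h-2j_l|\le3p$, $|h-2j_r|\le3p$ together with $|h-2j|>3p$ for the remaining $j$ delimit its support---this is the content of the two cases of~(v) and of the hypotheses on the signs of the $m_0(j)$. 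The intersections in~(iv) are handled the same way, comparing $\Vfu$ with $\Mfu$ rather than $\Wfu$ with $\Mfu$, giving equality on the nose when $\wo(\ps)^+_\n=\wo(\ps)^+$ and agreement only on the indicated $K$-type threshold otherwise. Finally, (iii) follows once $\Vfu^{\xi,\nu}_\n$ is verified, using~(ii)--(v) and the multiplicities of~(i), to avoid all overlaps with $\sum\Wfu^{\xi,\nu}_\n$: a $K$-type count then shows the two sums meet trivially and exhaust $\Ffu^\ps_\n$.

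The main obstacle will be the explicit radial functions in the theta realization together with the ensuing case analysis. One must compute the two independent solutions for each triple $\n=(\ell,c,d)$, locate the exact $K$-types at which the terminating solution is shared between $\Mfu$ and $\Vfu$ (for~(iv)) or between $\Mfu$ and $\Wfu$ (for~(v)), and keep the sign of $\ell$ consistently aligned with the ordering $j_l\le j_r$ on $\wo(\ps)^+$. These verifications are intricate enough that, as elsewhere in the paper, I would expect to confirm the proportionality and termination conditions by symbolic computation.
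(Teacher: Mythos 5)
Your overall strategy is the paper's: work $K$-type by $K$-type, use that each of the three boundary prescriptions cuts out a one-dimensional radial space, and locate the coincidences in the degeneracies of the regular Whittaker solution ($M_{\k,s}\dis W_{\k,s}$ driving part~v), $M_{\k,s}\dis V_{\k,s}$ driving part~iv)). The paper assembles exactly these ingredients (Lemma~\ref{lem-mu-omups}, Proposition~\ref{prop-M-VW}, the dimension results, and the disjointness $\Wfu^{\xi,\nu}_\n\cap\Vfu^{\xi,\nu}_\n=\{0\}$), so the architecture is sound. Two steps, however, are asserted where real work is needed.

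First, for $p\geq1$ membership in $\Ffu^\ps_{\n;h,p,p}$ is \emph{not} governed by a single second-order ODE: the eigenfunction equations couple the components $f_r$, and the statement that every occurring $K$-type has multiplicity exactly two in $\Ffu^\ps_\n$ (hence multiplicity one in each of $\Vfu,\Wfu,\Mfu$) is the content of Proposition~\ref{prop-dim2-nab}, which rests on the inductive Lemma~\ref{lem-dimhp}: one must show that an excess dimension in a $K$-type propagates downward along injective downward shift operators until it contradicts the explicit description at the base ($p=0$ or the corner $K$-types of Proposition~\ref{prop-ik}). Your "bookkeeping as in Theorems A--C" hides this; moreover the non-injectivity of the upward shift operators in the non-abelian case forces the renormalized families $\tilde x^{p,0}_\n,\tilde x^{0,p}_\n$ of Proposition~\ref{prop-extr.na}, without which the generating sets you implicitly use can vanish identically. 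Second, your claim that part~ii) is "immediate \ldots{} since the defining boundary behaviour of a given $\X$ is intrinsic" fails for $\X=\Vfu$: the paper stresses that $\Vfu^{\xi,\nu}_\n$ is \emph{not} intrinsically defined (it depends on the choice of the branch in $V_{\k,s}$). The coincidence $\Vfu^{\xi,\nu}_{\n;h,p}=\Vfu^{\xi',\nu'}_{\n;h,p}$ instead comes from the dimension count together with the fact that no non-zero element of $U(\glie)\Vfu^{\xi,\nu}_\n$ has exponential decay (Lemma~\ref{lem-VI}), so each $\Vfu^{\xi,\nu}_{\n;h,p,p}$ is a line transverse to the line $\Wfu^\ps_{\n;h,p,p}$ inside a two-dimensional space, forcing all of them to agree. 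A related caution: the three modules are reducible but are in general \emph{not} isomorphic to reducible principal series modules; the reducibility is read off from the explicit vanishing loci of the shift operators (Lemmas~\ref{lem-usho-upsom} and~\ref{lem-kdso-na}), not from an identification with $H^{\xi,\nu}_K$.
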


In the long section~\ref{sect-nab} we study the reducible non-abelian
Fourier term modules. The figures on
pp~\pageref{fig-j3}--\pageref{fig-mj1a} illustrate the variety of
submodule structures that we meet in non-abelian Fourier term modules.
All isomorphism types of $(\glie,K)$-modules that are not in the
irreducible principal series occur in $\Ffu^\ps_\n$ for certain
combinations of the parameters. The proof of Theorem~\ref{mnthm-nab-ip}
is completed on p~\pageref{prfD}.
\bigskip

Our motivation to study Fourier term modules arose from the wish to get
grip on the full Fourier-Jacobi expansion of automorphic forms, with
both abelian and non-abelian terms. The automorphic forms that we
consider are $K$-finite functions in $C^\infty(\Gm\backslash G)$ on
which the center of the universal enveloping algebra acts according to
a character. The discrete subgroup $\Gm$ is cofinite and not cocompact.
The choice to work with standard lattices in $N$ leads to a condition
on the intersections $\Gm\cap N_\k$, where $N_\k$ is the unipotent
subgroup fixing the cusp~$\k$. See \S\ref{sect-cucond}.

In Chapter~\ref{chap-4} we apply the results in the earlier chapters to
automorphic forms, guided by the needs in our paper \cite{BM2}, devoted
to the study of Poincar\'e series on $\SU(2,1)$ and their completeness.
Besides the usual automorphic forms with at most polynomial growth at
the cusps, we consider automorphic forms with moderate exponential
growth. In~\S\ref{sect-famaf} we consider meromorphic and holomorphic
families of automorphic forms. In \S\ref{sect-sqiaf} we give the form
of the Fourier expansion of generators of irreducible modules of square
integrable automorphic forms. Expansions similar to the latter ones are
given by Ishikawa \cite{Ish99}, \cite{Ish00}; see the comparison at the
end of \S\ref{sect-sqiaf}.



\def\flnm{rFtm-chap-I}
\newcounter{tabold}
\newcounter{figold}
\setcounter{tabold}{\arabic{table}}
\setcounter{figold}{\arabic{figure}}

\renewcommand\thechapter{\Roman{chapter}} \newpage
\chapter{The Lie group SU(2,1) and subgroups}\label{chap-1}
\markboth{I. THE LIE GROUP SU(2,1)} {I. THE LIE GROUP SU(2,1)}
\setcounter{section}{1}\setcounter{table}{\arabic{tabold}}
\setcounter{figure}{\arabic{figold}} This is a preparatory chapter. We
choose a realization of the Lie group $\SU(2,1)$ inside $\SL_3(\CC)$.
The Iwasawa decomposition $NAK$ of the group $\SU(2,1)$ involves a
non-abelian unipotent subgroup $N$ and a maximal compact subgroup $K$.
The representations of these groups are considered, in Section
\ref{sect-K} for $K$, and in Section \ref{sect-N} for~$N$. All this is
known; we give a summary, as a preparation for the later chapters.

The aim of this paper is to understand the modules involved in the
Fourier expansions of functions on $\Gamma\backslash \SU(2,1)$ for
discrete subgroups $\Gm$. Section \ref{sect-ds} discusses discrete
subgroups and Fourier expansions.


\def\flnm{rFtm-I-SU}

\section{Realization of the group SU(2,1)}
\label{sect-G}\markright{2. REALIZATION OF  SU(2,1) }
We choose a realization of the group $\SU(2,1)$, and we describe the
Iwasawa decomposition, the Bruhat decomposition, and the symmetric
space associated to~$\SU(2,1)$.\medskip

The unitary group \il{su21}{$\SU(2,1)$}$\SU(2,1)$ is the group of
matrices $g\in \SL_3(\CC)$ that preserve a given hermitian form of
signature $(2,1).$ Different hermitian forms give isomorphic
realizations of $\SU(2,1)$ as a real Lie group. We use the hermitian
form $(x,y) = \bar y^t I_{2,1} x,$ with \ir{I21}{I_{2,1}}
\be \label{I21}I_{2,1}\=
\begin{pmatrix}1&0&0\\
0&1&0\\ 0&0&-1
\end{pmatrix}\,,\ee
which leads to\ir{Gdef}{G=\SU(2,1)}\il{remn}{realization of $\SU(2,1)$}
\be \label{Gdef}G\isdd\SU(2,1) = \bigl\{ g \in \SL_3(\CC)\;:\; \bar g^t
I_{2,1} g = I_{2,1}\bigr\}\,.\ee

This defines $G$ as a semi-simple Lie group of dimension $8$ with real
rank one. We choose the \il{Iwd}{Iwasawa decomposition}Iwasawa
decomposition $G=NAK,$ with the \il{mcs}{maximal compact
subgroup}maximal compact subgroup
\il{Kdef}{$K$}\ir{Kdef}{\km(\eta,\al,\bt)} \=
\badl{Kdef} &K= \bigl\{ \km(\eta,\al,\bt) \;: \; \eta,\al,\bt\in \CC,\,
|\eta|=1,\, |\al|^2+|\bt|^2=1\bigr\}\,,\\
&\km(\eta,\al,\bt) \= \km(-\eta,-\al,-\bt)\=
\begin{pmatrix}
\eta\al&\eta\bt&0\\
-\eta\bar\bt&\eta\bar \al&0\\
0&0& \eta^{-2}
\end{pmatrix}\,; \eadl
the \il{upsg}{unipotent subgroup}unipotent subgroup
\il{Ndef}{$N$}\ir{Ndef}{\nm(x+iy,r)
= \nm(x,y,r)}
\badl{Ndef}
&N= \bigl\{ \nm(b,r)\;:\; b\in \CC,\, r\in \RR\bigr\}\,,\\
&\nm(b,r) = \nm(\re b,\im b,r) =
\begin{pmatrix} 1+ir-\frac{|b|^2}2 & b&
- ir + \frac{|b|^2}2\\
-\bar b& 1 & \bar b\\
ir -\frac{|b|^2}2 &b & 1-ir+\frac{|b|^2}2
\end{pmatrix}\,; \eadl
and the connected component of $1$ in an $\RR$-split \il{sto}{split
torus}torus\il{Adef}{$A$}\ir{Adef}{\am(t)}
\badl{Adef}
&A= \bigl\{ \am(t)\;:\; t>0\bigr\}\,,\quad \am(t)=
\begin{pmatrix}\frac{t+t^{-1}}2&0& \frac{t-t^{-1}}2\\
0&1&0\\
\frac{t-t^{-1}}2&0& \frac{t+t^{-1}}2
\end{pmatrix}\,. \eadl

The group product for $N$ is
\be\label{Nmult} \nm(b,r) \nm(b_1,r_1) = \nm\bigl(b+b_1,r+r_1+\im(\bar b
b_1)
\bigr)\,.\ee
The commutative group $AM$ with\il{mztdef}{$\mm(\z)$}\il{Mdef}{$M$}
\bad M&=\bigl\{ \mm(\z)\;:\;|\z|=1\bigr\} \,\subset\, K\,,\\
\mm(\z)
&=
\begin{pmatrix}\z&0&0\\0&\z^{-2}&0\\0&0&\z
\end{pmatrix} = \km( \z^{-1/2}, \z^{3/2},0)\,, \ead
normalizes $N$:
\be \label{Nnorm}\am(t) \mm(\z) \nm(b,r)
\mm(\z)^{-1}\am(t)^{-1} = \nm\bigl( \z^3 t b,t^2 r)\,. \ee

At some places it is convenient to use $\nm(x,y,r) = \nm(x+i y,r)$, with
three real parameters.

\rmrk{Other realizations}We use the same realization as Ishikawa
\cite{Ish99}. This realization has the advantage that $K$ has the
simple form $\Biggl\{\begin{pmatrix}\ast&\ast&0\\
\ast&\ast&0\\0&0&\ast \end{pmatrix}\Biggr\}.$ Isomorphic realizations
are obtained by replacing the matrix $I_{2,1}$ in~\eqref{I21} by
$J=\bar U^t I_{2,1} U$ or $J=-\bar U^t I_{2,1} U$ with
$U\in \GL_3(\CC).$ Then we obtain the isomorphic Lie group $U^{-1}G U.$

We mention the realization used by Francsics and Lax \cite{FL}. In their
realization the group $N$ has upper triangular form. A closely related
realization is used in~\cite{BKNPP10}.

\rmrk{Rational structure} We can view $G$ as the group $\G_\RR$ of real
points of an algebraic group $\G$ over $\QQ.$ This can be done by
viewing $\SL_3$ as an algebraic group over $\QQ(i)$, and obtaining $\G$
as an algebraic subgroup of the \il{Wr}{Weil restriction}Weil
restriction $R_{\QQ(i)/\QQ} \SL_3$. See \cite[\S1.3]{We61}. The group
of rational points $\G_\QQ$ can be identified with the subgroup of
$g\in G$ that have matrix coefficients in $\QQ(i).$

With other realizations of $G,$ we can follow the same approach. This
may lead to other rational structures. In this paper, we do not
consider $\SL_3$ as an algebraic group over other imaginary quadratic
number fields.
\subsection{Symmetric space} \label{sect-symsp}
The \il{syms}{symmetric space}symmetric space corresponding to
$\SU(2,1)$ is the quotient $G/K.$ We use the realization as the upper
half-plane model: \ir{Xdef}{\X}\il{uhpm}{upper half-plane model}
\be \label{Xdef}\X = \Bigl\{ (z,u)\in \CC^2\;:\; |u|^2<\im
z\Bigr\}\,.\ee
As an analytic variety it is diffeomorphic to $NA,$ which is visible in
the left action
\be \label{na-actX}\nm(b,r)\am(t)\cdot(z,u)
= \bigl( t^2 z
+2tbu+2r+i|b|^2, tu+i\bar b\bigr)\,,\ee
which satisfies $\nm(b,r)\am(t) \cdot (i,0)=
  \bigl( t^2 i +2r + i|b|^2, i\bar b\bigr)
.$ The group $K$ leaves the point $(i,0)$ fixed. The action of general
elements $k\in K$ is complicated, except for elements of the form
$\mm(\z)$ or $\wm$ \il{wmdef}{$\wm$}
\badl{wm-actX} \wm =
&\begin{pmatrix}-1&0&0\\0&-1&0\\0&0&1\end{pmatrix}
= \km(1,-1,0)\,,\\
&\wm\cdot(z,u) = \Bigl( \frac{-1}z, \frac{-iu}z \Bigr)\,,\qquad
\mm(\z)\cdot(z,u)
= \bigl( z,\z^{-3}u\bigr)\,. \eadl

The space $\X$ inherits the complex structure of $\CC^2.$ The action of
all elements of $G$ preserves this complex structure.
\rmrk{Boundary}\il{bdss}{boundary of symmetric space} The boundary of
$\partial \X$ can be described as
\be \partial \X = \{\infty\}\sqcup \bigl\{
(z,u)\in \CC^2\:;\; |u|^2=\im z\bigr\}\,,\ee
where \il{iuhp}{$\infty\in\partial\X$}$\infty$ is the limit of $(z,u)$
as $\im z\rightarrow\infty$ while $\re z$ and $u$ stay bounded.

The point $\infty\in \partial\X$ is fixed by the parabolic subgroup
$N A M\subset G.$ We can write $\partial\X$ as the disjoint union
\be\label{sdcp} \partial\X = \{\infty\bigr\} \sqcup N \cdot\bigl(
0,0)\,.\ee
We have $(0,0)= \wm \cdot \infty.$
\medskip

Relation \eqref{sdcp} leads to the \il{Bd}{Bruhat decomposition}Bruhat
decomposition
\be G = NAM \sqcup N \wm NAM \,.\ee
With the notation\ir{hmdef}{\hm(\cdot)}
\be\label{hmdef} \hm(c) = \am\bigl(|c|\bigr)\, \mm\bigl(c/|c|\bigr)
\qquad c\in \CC^\ast\,,\ee
each element of $G$ can be written uniquely as either $g=n\hm(c),$
$n\in N$ and $c\in \CC^\ast,$ or $g=n_1\wm\, \hm(c) n_2$ with
$n_1,n_2\in N$ and $c\in \CC^\ast.$

To go from $g$ in the \il{bc}{big cell}big cell $N\wm MNA$ of the Bruhat
decomposition to the \il{Id1}{Iwasawa decomposition}Iwasawa
decomposition we use the following lemma.
\begin{lem}\label{lem-BI}
For $b\in \CC,$ $r\in \RR,$ $t>0,$ and $c\in \CC^\ast$:
\begin{align*}
\wm &\, \hm(c) \nm(b,r)\am(t) = \nm(b',r')\am(t')
k'\,,\text{ with}\\
b'&= \frac{-cb}{\bar c^2 D}\,,\qquad r'= \frac{-r}{|c|^2|D|^2}\,,\qquad
t'=\frac t{|c|\,|D|}\,,
\displaybreak[0]\\
D&=2ir+t^2+|b|^2\,,
\displaybreak[0]\\
k'&=\begin{pmatrix}
c(\bar D-2t^2)/|c|\,|D|&
-2ctb/|c|\,|D| & 0\\
2\bar c t \bar b/c\bar D& \bar c(D-2t^2)/c\bar D& 0\\
0&0& c\bar D/|c|\,|D|
\end{pmatrix}
\end{align*}
\end{lem}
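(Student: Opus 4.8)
The plan is to split the verification into two parts: first read off the unipotent and split-torus factors $\nm(b',r')\am(t')$ by evaluating both sides on the symmetric space $\X$, where the unknown compact factor disappears, and only afterwards pin down $k'$ by a direct matrix multiplication. The point of the first step is that $K$ fixes the base point $(i,0)\in\X$, so applying both sides of the asserted identity to $(i,0)$ removes $k'$:
\[
\wm\,\hm(c)\,\nm(b,r)\am(t)\cdot(i,0)\;=\;\nm(b',r')\am(t')\cdot(i,0)\;=\;\bigl({t'}^2 i + 2r' + i|b'|^2,\; i\bar b'\bigr),
\]
the last equality coming from \eqref{na-actX}. Hence it suffices to compute the left-hand orbit point and then match real part, imaginary part, and second coordinate.

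First I would evaluate the left-hand side step by step, using \eqref{na-actX}, \eqref{wm-actX} and \eqref{hmdef}. From the text, $\nm(b,r)\am(t)\cdot(i,0)=\bigl(2r+i(t^2+|b|^2),\,i\bar b\bigr)$; setting $D=2ir+t^2+|b|^2$ one observes that this first coordinate is exactly $i\bar D$, which is precisely what makes the later inversion under $\wm$ transparent. Next, $\hm(c)=\am(|c|)\mm(c/|c|)$ scales the first coordinate by $|c|^2$ and the second by $|c|(c/|c|)^{-3}$, by \eqref{wm-actX} and the $b=r=0$ case of \eqref{na-actX}; finally $\wm$ sends $(z,u)$ to $(-1/z,-iu/z)$. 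Collecting terms and simplifying with $1/\bar D=D/|D|^2$ and $|c|^2=c\bar c$, the first coordinate becomes $\bigl(-2r+i(t^2+|b|^2)\bigr)/(|c|^2|D|^2)$ and the second becomes $i\bigl(-\bar c\bar b/(c^2\bar D)\bigr)$. Matching real parts gives $r'=-r/(|c|^2|D|^2)$; the second coordinate gives $b'=-cb/(\bar c^2 D)$, hence $|b'|^2=|b|^2/(|c|^2|D|^2)$; and matching imaginary parts gives ${t'}^2+|b'|^2=(t^2+|b|^2)/(|c|^2|D|^2)$, so after subtracting $|b'|^2$ one finds ${t'}^2=t^2/(|c|^2|D|^2)$, i.e.\ $t'=t/(|c|\,|D|)$. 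These are exactly the asserted values of $b'$, $r'$, $t'$.

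With $b',r',t'$ fixed, the factor $k'$ is forced to equal $\am(t')^{-1}\nm(b',r')^{-1}\wm\hm(c)\nm(b,r)\am(t)$, and I would verify the displayed matrix by multiplying out the explicit matrices in \eqref{Kdef}, \eqref{Adef}, \eqref{hmdef}, \eqref{Ndef} together with the definition of $\wm$, simplifying repeatedly via $D\bar D=|D|^2$. The main obstacle will be precisely this last computation: the symmetric-space trick disposes of the $NA$-part painlessly, but confirming every entry of $k'$ — and in particular that it has the block-diagonal shape of \eqref{Kdef}, so that the factorization genuinely is the Iwasawa decomposition with $k'\in K$ — is a routine but bookkeeping-heavy $3\times3$ calculation, most safely checked with the Mathematica notebook \cite{Math}.
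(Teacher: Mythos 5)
Your proposal is correct and follows exactly the paper's own route: the paper also obtains $b'$, $r'$, $t'$ by applying $\wm\,\hm(c)\,\nm(b,r)\,\am(t)$ to the base point $(i,0)\in\X$ (where $k'$ acts trivially) and then determines $k'$ by a direct computation carried out in \cite[\S1b]{Math}. Your intermediate simplifications (the first coordinate being $i\bar D$, and the matching of real and imaginary parts) are all accurate.
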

\begin{proof}
We find this relation by applying $\wm\, \hm(c)\, \nm(b,r)\,\am(t)$ to
$(i,0)\in \X$. That gives the values of $b'$, $r'$ and $t'$, and then
$k_1$ by a computation. We carried out the computation in
\cite[\S1b]{Math}.
\end{proof}


\def\flnm{rFtm-I-K}


\section{Maximal compact subgroup} \markright{3. MAXIMAL COMPACT
SUBGROUP}\label{sect-K}We discuss the structure of the maximal compact
Lie subgroup $K\subset G$ chosen in~\eqref{Kdef}, and describe its
irreducible representations. We give explicit realizations of these
representations in polynomial functions on~$K$.\medskip

The description of $K$ in \eqref{Kdef} amounts to \il{Kdefa}{K}
\be\label{Kprod} K= \Bigl ( \U(1) \times \SU(2) \Bigr) \bigm/
\bigl\{\pm1 \bigr\}\,,\ee
where ${\pm 1}$ is embedded diagonally in the product. See (1), (2)
on p.~185 of \cite{Wal76}. The presence of $\U(1)$ shows that $K$ is not
simply connected. It has an infinite covering.

We denote by $K_0\subset K$ the group corresponding to $\SU(2)$. We
have, in the notation of \eqref{Kdef}: \ir{K0def}{K_0}
\be\label{K0def} K_0 \= \bigl\{ \km(1,\al,\bt)\;:\; \al,\bt\in\CC\,,\;
|\al|^2+|\bt|^2=1\bigr\}\,.\ee
The subgroup $\bigl\{ \km(\eta,1,0)\;:\; |\eta|=1\bigr\}$ of $K$
corresponding to $\U(1)$ is the center of $K$. The characters are
induced by characters of the center of $K$ that are trivial on $K_0$.
These characters are of the form
\be \label{xidef}\xi_j:\km(\eta,\al,\bt) \mapsto \eta^{-2j}\,,\ee
with $j\in \ZZ$,\il{chM}{characters of $M\subset K$} and extend to
characters of~$K$. On the subgroup $M\subset K$ these characters take
the form $\xi_j: \mm(\z) \mapsto \z^j$. \il{j}{$j = j_\xi$} If a
character $\xi$ of $K$ is given, we use \il{xi}{$\xi_j$}$j_\xi\in \ZZ$
for the corresponding integral parameter.

\subsection{Irreducible representations} \label{sect-irrpK}The
isomorphism classes \il{taup}{$\tau_p$ irreducible representation of
$\SU(2)$}$\tau_p$ of irreducible representations of $K_0$ are
parametrized by their dimension $p+1,$ with $p\in \ZZ_{\geq 0}.$ The
irreducible representations of the center of $K$ are the characters
parametrized by $h\in \ZZ.$ With the product description~\eqref{Kprod}
the isomorphism classes $\tau^h_p$ of irreducible representations of
$K$ are parametrized by $(h,p)\in \ZZ^2,$ $p\geq 0,$
$h\equiv p\bmod 2:$\ir{tauhp}{\tau^h_p \text{ irreducible
representation of $K$}}
\be\label{tauhp} \tau^h_p
\begin{pmatrix}\z^{1/2}u&0\\
0&\z^{-1}\end{pmatrix}
=
(\z^{1/2})^{-h}\, \tau_p(u)\,.\ee
We use here that each $k\in K$ can be written as
$k=\km(\z^{1/2},\al,\bt)= \km(-\z^{1/2},-\al,-\bt)$.

\rmrk{Realizations of irreducible representations of $\SU(2)$} To
realize the representations of $K_0 \cong\SU(2)$ in $C^\infty(K)$
provided with the action by right translation we start with polynomial
functions $\Kph{}prq$ on $ K_0 \cong \SU(2)$ by the identity
\ir{PhiSU}{\Kph{}prq}
\badl{PhiSU}
(ax+&c)^{(p-q)/2}\, (bx+d)^{(p+q)/2} \\
&= \sum_r \Kph{}prq\begin{pmatrix}a&b\\
c&d\end{pmatrix}\, x^{(p-r)/2}\,, \eadl
where $r$ and $q$ are in $p+2\ZZ,$ with absolute value at most $p.$ See
\cite[Chapter 6]{VK91}, where the function $t^{p}_{r,q}$ in (2),
Section 6.3.2 is a multiple of $\Kph{}prq.$

The generating function shows that right translation by each element of
$\SU(2)$ preserves the space $\sum_{r,q} \CC\, \Kph{}prq,$ where we let
both $r$ and $q$ run from $-p$ to $p$ in steps of~$2$. Right
translation
\[ \begin{pmatrix}a&b\\
c&d\end{pmatrix}\mapsto
\begin{pmatrix}a&b\\
c&d\end{pmatrix}\begin{pmatrix}\eta&0\\0&\eta^{-1}
\end{pmatrix}
= \begin{pmatrix} a\eta& b/\eta\\ c \eta
&d/\eta\end{pmatrix}\]
multiplies $\Kph{}prq$ by $\eta^{-q}.$ With use of the generating
function we can also check that left translation by
$\begin{pmatrix}\eta&0\\0&\eta^{-1}
\end{pmatrix}$ multiplies each $\Kph {}p r q$ by $\eta^{-r}.$ Since left
and right translations commute, the action of $\SU(2)$ by right
translations preserves the eigenspaces for left translation. Hence we
get $p$ invariant subspaces $\sum_q \CC \Kph{}{p}rq.$ There are $p+1$
realizations of $\tau_p$ in $C^\infty(K).$ In \S\ref{sect-klie} we give
the Lie algebra action on these spaces, which shows their
irreducibility and the isomorphism between them.

\rmrk{Realizations of irreducible representations of $K$} In view of
\eqref{tauhp} we put\ir{Kphd}{\Kph hp r q}
\be \label{Kphd} \Kph hp r q \begin{pmatrix}
\z^{1/2}u &0\\
0&\z^{-1}
\end{pmatrix} = \z^{-h/2} \, \Kph{}prq(u)\,.\ee
This is well defined for $h\equiv p\bmod 2,$ independently of the choice
of $\z^{1/2}.$ We need $p\equiv r\equiv q\bmod 2$ and $|r|,|q|\leq p.$

Left translation by $\begin{pmatrix} \eta&0&0\\0&\eta^{-1}&0\\0&0&1
\end{pmatrix}$ acts on $\Kph hp r q $ by multiplication by $\eta^{-r}.$
Right translation by elements of $K$ preserve the spaces
$\sum_ q \CC\Kph hp r q .$ This gives $p+1$ realizations
\il{tauhrp}{$\tau^h_{r,p}$ realization of $\tau^h_p$}$\tau^h_{r,p}$ of
$\tau^h_p$ in $C^\infty(K).$ The function $\Kph{2j}000$ is the
character $\xi_j$ of $K$ in~\eqref{xidef}.

\rmrk{Polynomial functions} The functions $\Kph hp r q $ are polynomial
functions in the matrix elements of $k\in K$. For $k=\km(\eta,\al,\bt)$
as in~\eqref{Kdef} we have, with $j\in \ZZ$ and
$h\equiv 1\bmod 2$:\ir{Phi-exmp}{\Kphh{p}{r}{q}}
\badl{Phi-exmp} \Kph {2j}000(k) &\= \eta^{-2j}\,,\\
\Kph h{1}{1}{1}(k)
&\= \eta^{-h}\,\bar \al\,,&\qquad \Kph h{1}{1}{-1}(k)
&\= \eta^{-h}\,\bar \bt\,,\\
\Kph h{1}{-1}{1}(k) &\=\z^{-h}\, \bt\,,& \Kph h{1}{-1}{-1}(k)
&\=\eta^{-h}\, \al\,. \eadl
Manipulations with the generating function in~\eqref{PhiSU} lead to the
multiplication relations \il{mlt}{multiplication relations for
functions on $K$} in Table~\ref{tab-mlt}.
\begin{table}[ht]
\begin{align*} \text{ For }\eta\equiv 1\bmod 2,
&\;\; \e,\z\in \{1,-1\}:\\
\Kph\eta{1}{\e}{\z} & \Kph hp r q = A_{\e,\z}(p,r) \,
\Kph{h+\eta}{p+1}{r+\e}{q+\z}\\
&\qquad\hbox{}
+ B_{\e,\z}(p,r,q) \, \Kph{h+\eta}{p-1}{r+\e}{q+\z}\,,\\
A_{\e,\z}(p,r)
&= \frac{p+\e r+2}{2(p+1)}\,,\\
B_{\e,\z}(p,r,q) &=
\begin{cases}
\frac{\e(\z p-q)}{2(p+1)}
&\text{ if } \e r\leq p-2\,,\\
0 &\text{ if }\e r=p\,.\end{cases}
\end{align*}
\caption{Multiplication relations for polynomial functions
on~$K$.}\label{tab-mlt}
\end{table}
\medskip

The polynomial functions $\Kph h{p}{r}{q}$ form an orthogonal basis of
$L^2(K)$. We normalize the \il{HmK}{Haar measure}Haar measure on $K$
such that \il{dk}{$dk$}$\int_K dk=1$. We will not need an explicit
formula for $\bigl\|\Kph hp r q \bigr\|_K$, but will use the
relation\ir{Kph-norm}{\text{norm in }L^2(K)}
\be\label{Kph-norm}
\|\Kph hp r q \|^2_K = \frac{p! } {\bigl( \frac{p+r}2\bigr)!\; \bigl(
\frac{p-r}2\bigr)!} \|\Kph h{p}{p}{q}\|^2_K\,. \ee
This can be checked using the relation
\[ \bigl ( L(\Z_{21})
\ph_1,\ph_2\bigr)_K
+ \bigl(\ph_1,L(\Z_{12})\ph_2 \bigr)_K =0 \,. \]
See Table~\ref{tab-RLdK} below.

\subsection{Lie algebra}\label{sect-klie}
A basis of the real \il{La}{Lie algebra}Lie algebra \il{klie}{$\klie$
real Lie algebra of $K$}$\klie$ of $K$ is
\bad\CK_i
&=\begin{pmatrix}i&0&0\\
0&i&0\\
0&0&-2i
\end{pmatrix}\,,&
\qquad \WW_0 &=
\begin{pmatrix}i&0&0\\
0&-i&0\\
0&0&0
\end{pmatrix}\,,\\
\WW_1 &=\begin{pmatrix}
0&1&0\\-1&0&0\\
0&0&0\end{pmatrix}\,,& \WW_2
&=
\begin{pmatrix}
0&i&0\\i&0&0\\
0&0&0\end{pmatrix}\,.
\ead\il{CKi}{$\CK_i\in \klie$}
\il{WW}{$\WW_0, \WW_1, \WW_2 \in \klie$} The element $\CK_i$ spans the
Lie algebra of the center of~$K$, the three remaining elements span the
Lie algebra of $K_0$. The exponentials are
\ir{LieKexp}{\text{exponentials of basis elements of $\klie$}}
\badl{LieKexp} \exp(t\CK_i) &= \km(e^{it},1,0)\,,&
\qquad \exp(t\WW_0)&= \km(1,e^{it},0)\,,\\
\exp( t\WW_1)&= \km(1,\cos t,\sin t)\,,& \exp(t\WW_2)&= \km(1,\cos
t,i\sin t)\,. \eadl
The element
\be \label{HHi} \HH_i = \frac 32\WW_0 - \frac12\CK_i=
\begin{pmatrix}i&0&0\\0&-2i&0\\
0&0&i\end{pmatrix} \in \klie\ee
spans the Lie algebra of $M$, and
\be \label{mHHi}\exp(t\HH_i)\= \mm(e^{it})\,. \ee

\rmrk{Actions of the Lie algebra on the polynomial functions} The action
of $\XX\in \klie$ by \il{rd}{right differentiation}right
differentiation of functions in $C^\infty(K)$ is given by
\ir{Rdiff}{\XX f = R(\XX)f}
\be\label{Rdiff} \XX f\,(g)= \partial_t
f\bigl( g\exp(t\XX)
\bigr)|_{t=0}\,.\ee
This is extended $\CC$-linearly to an action
of~\il{kliec}{$\klie_c$}$\klie_c=\CC\otimes_\RR \klie$, for instance to
$\Z_{12}=\WW_1-i\WW_2$ and
$\Z_{21}= \WW_1+i\WW_2$.\il{Z12}{$\Z_{12},\; \Z_{21}$} We write
$R(\XX)f $ for $\XX f$ in discussions where other actions by
differentiation occur as well, for instance the action by left
differentiation.

Left differentiation is the right action of $\klie$ given
by\ir{Ldiff}{L(\XX)}\il{ld}{left differentiation}
\be \label{Ldiff}L(\XX) f(g) = \partial_t
f\bigl( \exp(t\XX)g\bigr)\bigm|_{t=0}\qquad (g\in G,\; \XX\in
\klie)\,.\ee

\begin{table}
\begin{align*} R(\CK_i) \Kph hp r q & \;=\; L(\CK_i) \Kph hp r q \;=\;
- i h \, \Kph hp r q \,,\\
R(\WW_0) \Kph hp r q & \;=\; -i q \;\Kph hp r q \,,\quad L(\WW_0) \Kph
hp r q \;=\; -i r\;\Kph hp r q \,,\\
R(\Z_{21})\Kph hp r q & \;=\; (q-p)\;\Kph h{p}{r}{q+2}\,,\\
L(\Z_{21})
\Kph hp r q & \;=\;
\begin{cases}
(r-p-2) \Kph h{p}{r-2}{q}&\text{ if }r\geq 2-p\,,\\
0&\text{ if }r=-p\,,\end{cases}\\
R(\Z_{12}) \Kph hp r q & \;=\; (q+p)\;\Kph h{p}{r}{q-2}\,,\\
L(\Z_{12}) \Kph hp r q & \;=\;\begin{cases}
(r+p+2) \Kph h{p}{r+2}{q} &\text{ if }r\leq p-2\,,\\
0&\text{ if }r=p\,.
\end{cases}
\end{align*}
\caption{Actions of $\klie_c$ by left and right differentiation.}
\label{tab-RLdK}
\end{table}
Table~\ref{tab-RLdK} gives the left and right actions of the Lie algebra
on basis elements. Some of these relations are easily seen from the
definition, for instance we have seen that $\exp(t\WW_0)$ acts on
$\Kph hp r q $ under left translation by multiplication by $e^{-ir t}$,
and under right translation by multiplication by $e^{-i q t}$. This
gives the actions of~$\WW_0$. In \eqref{Kphd} we see that the center of
$K$ acts by $\km(\eta,1,0) \mapsto \z^{-h}$; this leads to the action
of $L(\CK_i)=R(\CK_i)$. The actions of $\Z_{12}$ and $\Z_{21}$ take
more computations, carried out in \cite[\S4b]{Math}.

In the formulas for $R(\Z_{21})$ and $R(\Z_{12})$ we have the factor
$q\mp p$, which becomes zero if $q$ has the value $\pm p$ for which
$\frac q2\pm 1$ threatens to be outside the range of~$q$. For left
differentiation these values of $q$ have to be treated separately.

\rmrk{Parameters and eigenvalues} We have seen that $h$ is determined by
the eigenvalue $-i h$ of $\CK_i$ in $\sum_r\tau^h_{r,p}$. The parameter
$p$ is determined by the action of the \il{Cas-K}{Casimir
element}Casimir element of the Lie algebra of $K_0$:\ir{CasK}{C_K}
\be\label{CasK}
C_K = \WW_0^2+\WW_1^2+\WW_2^2 = \WW_0^2-2i\WW_0+\Z_{12}\Z_{21} \ee
It acts in $\sum_r \tau^h_{r,p}$ by multiplication by $-p(p+2)$.


\def\flnm{rFtm-I-N}


\section{Unipotent subgroup}
\label{sect-N}\markright{4. UNIPOTENT SUBGROUP}The
group $\SU(2,1)$ is the smallest semisimple Lie group of rank one with
unipotent subgroups that are not commutative.

The representation theory of the unipotent subgroup $N$ is important for
the Fourier expansion of automorphic forms and for Poincar\'e series.
Our main aim is to give an orthonormal basis of
$L^2(\Ld_\s\backslash N)$ for a class of standard lattices
$\{\Ld_\s\;:\; \s\in \ZZ_{\geq 1}\}.$ This requires a discussion of the
Stone-von Neumann representation and its realizations by means of theta
functions. We checked many computations in this section in \cite[\S3c,
\S5]{Math}.
\medskip

The group $N$ is a realization of the \il{Hg}{Heisenberg
group}Heisenberg group. It fits into the exact sequence
\be\label{Nexs} 1\longrightarrow Z(N)
\longrightarrow N \longrightarrow \RR^2 \longrightarrow 0\,, \ee
where \il{ZN}{$Z(N)$}$Z(N) = \bigl\{ \nm(0,r)\;:\; r\in \RR\bigr\}$ is
the center of $N.$ The homomorphism $N \rightarrow \ZZ^2$ is given by
$\nm(x,y,r) \mapsto (x,y).$

The group \il{AutN}{$\Aut(N)$}$\Aut(N)$ of \il{aN}{automorphism group of
$N$}continuous automorphisms of $N$ is isomorphic to the semi-direct
product $\GL_2(\RR) \ltimes \RR^2.$ Here $\RR^2$ corresponds to the
group of interior automorphisms. Furthermore,
$\begin{pmatrix}a&b\\c&d\end{pmatrix}
\in \GL_2(\RR)$ corresponds to the outer automorphism
\be \label{oautN}
\nm(x,y,r) \mapsto \nm(a x + b y, c x + d y,(a d-b c)r)\,. \ee
Conjugation by $\am(t)\in A$ corresponds to the automorphism of~$N$
given by the matrix $\begin{pmatrix} t&0\\0&t\end{pmatrix},$ and
conjugation by $\mm(e^{i	h})$ to the matrix
$\begin{pmatrix}\cos3	h& -\sin 3	h\\   \sin3	h&\cos 3	h\end{pmatrix}.$

\rmrk{Comparison} We use the multiplication relation on $N$ given
in~\eqref{Nmult}. In \cite{Tha98} Thangavelu uses the multiplication
relation
\be [x,y,t]\,[u,v,s]= \bigl[x+u,y+v,t+s+\frac12(uy-vx)
\bigr]\,.\ee
The isomorphism
\be\label{Tdef} T :\nm(x,y,r) \mapsto [x,2y,-r]\ee
relates both realizations of the Heisenberg group.

\rmrk{Lie algebra} A basis of the real Lie algebra \il{Lie1}{Lie
algebra}\il{nlie}{$\nlie$}$\nlie$ of $N$
is\ir{XX012}{\XX_0,\XX_1,\XX_2\in \nlie}
\badl{XX012} \XX_0&=\begin{pmatrix}
\frac i2&0&-\frac i2\\0&0&0\\
\frac i2&0&-\frac i2
\end{pmatrix}\,,\quad \XX_1=\begin{pmatrix}
0&1&0\\-1&0&1\\0&1&0
\end{pmatrix}\,,\\
\XX_2&=\begin{pmatrix}
0&i&0\\i&0&-i\\0&i&0
\end{pmatrix}
\,. \eadl
The sole non-zero commutator of these elements is
\be \bigl[\XX_1,\XX_2] = - [\XX_2,\XX_1] = 4\XX_0\,. \ee

The \il{expnlie}{exponentials for basis elements of $\nlie$}exponential
map gives
\badl{expnlie} \exp(t\XX_0)& = \nm(0,t/2)\,,
\quad \exp(t\XX_1) = \nm(t,0)\,,\\
\exp(t\XX_2)&= \nm(it,0)\,. \eadl
See \cite[\S3c]{Math}.
\subsection{Characters and Stone-von Neumann representation}There are
two types of unitary irreducible representations of $N$, namely the
unitary characters, and the \il{StvNr}{Stone-von Neumann
representation}Stone-von Neumann representations, which are infinite
dimensional.

Characters of $N$ are trivial on the center, so they are characters of
$\RR^2.$ They have the form\ir{chbt}{\chi_\bt}
\be\label{chbt} \chi_\bt: \nm(b,r)
\mapsto e^{2\pi i \re(\bar \bt b)} \ee
with $\bt\in \CC.$ All other irreducible unitary representations of $N$
have infinite dimension. See \cite[Theorem 1.2.4]{Tha98}. The center
$Z(N)$ acts by multiplication by a non-trivial character of $Z(N).$ For each non-trivial central character there is only one isomorphism class of irreducible representations.

\rmrk{Schr\"odinger representation}The \il{Schrep}{Schr\"odinger
representation}Schr\"odinger representation is a realization of the
Stone-von Neumann representation in $L^2(\RR).$ It depends on a
non-trivial character
$$ \nm(0,r) \mapsto e^{i\ld r} $$
of $Z(N),$ parametrized by $\ld\in \RR^\ast.$

The Schr\"odinger representation $\pi_{\ld}\bigl(\nm(x,y,r)\bigr)$
applied to $\ph$ in the space
\il{Schsp}{$ \Schw(\RR)$}$ \Schw(\RR)\subset L^2(\RR)$ of
\il{Sf}{Schwartz function}Schwartz functions on $\RR$ is given by
\ir{pild}{\pi_\ld}
\be\label{pild}
\pi_{\ld}\bigl(\nm(x,y,r)\bigr)\ph
(\xi)
= e^{i\ld ( r-2\xi x
- xy )}\ph(\xi+y)\,.\ee
The Schwartz space is invariant under these transformations. The
operators $\pi_{\ld}(n)$ extend to $L^2(\RR)$, and determine $\pi_\ld$
as a unitary representation of $N$ in $L^2(\RR)$.
\rmrk{Comparison}Thangavelu \cite[(1.2.1)]{Tha98} uses the
representation
\be \pi_\mu^T [x,y,r] \ph (\xi) = e^{i\mu(r+\xi x+xy/2)}\,
\ph(\xi+y)\,.\ee
With $U \ph(\xi) = \ph(\xi/2)$ and the isomorphism $T$ in~\eqref{Tdef}
we have
\be \pi_{-\ld}^T\bigl( T(\nm(x,y,r)\bigr)
U \ph = U \,\pi_\ld\bigl(\nm(x,y,r)
\bigr)
\ph\,.\ee
This shows that both representations are equivalent.
\rmrk{Automorphisms}For each automorphism $A\in \Aut(N)$ the
representation $n\mapsto \pi_\ld(A n)$ is equivalent to some
Schr\"odinger representation $\pi_{\ld'}.$ So there exists a unitary
map $U_A:L^2(\RR)\rightarrow L^2(\RR)$ and a number $\ld'\in \RR^\ast$
such that
\be U_A \pi_{\ld}(An) = \pi_{\ld'}(n) U_A\,.\ee

If the automorphism $A$ corresponds to
$\begin{pmatrix}t&0\\0&t\end{pmatrix}
\in \GL_2(\RR),$ see \eqref{oautN}, then we can take
$U_A\ph(\xi)= t\ph(t\xi)$ and $\ld'=t^2 \ld.$

\rmrk{Lie algebra action} Application of \eqref{expnlie} gives the
derived represen\-ta\-tion:\ir{dpild}{d \pi_\ld}
\badl{dpild} d\pi_\ld
(\XX_0)\ph&=\frac i2\ld\,\ph\,,
&\qquad d\pi_{\ld}
(\XX_1)\ph(\xi) &= -2i\ld\, \xi\, \ph(\xi)\,,\\
d\pi_{\ld} (\XX_2)
\ph&=\ph'\,. \eadl
This derived action is well defined if $\ph$ is a Schwartz function.

\subsection{Theta functions}\label{sect-thfu} The Stone-von Neumann
representation can be realized in spaces generated by theta functions
on $N$ modulo a lattice, i.e., \il{latt}{lattice} a discrete subgroup
such that the quotient $\Ld\backslash N$ is compact.

\begin{defn}\label{def-stlatt} We denote by \il{Ldsg}{$\Ld_\s$}$\Ld_\s$
the lattice generated by
\be \nm(1,0,0)\,,\quad\nm(0,1,0)\,,\quad \nm(0,0,2/\s)\,.\ee
We call the lattices $\Ld_\s$ \il{stl}{standard lattice}standard
lattices. \end{defn}
Since the commutator of the generators $\nm(1,0,0)$ and $\nm(0,1,0)$ is
$\nm(0,0,2),$ we need the restriction that $\s\in \ZZ_{\geq 1}.$

Any lattice is isomorphic to a standard lattice by an element in
$\Aut(N)$.
\smallskip

Let $\s\in \ZZ_{\geq 1}$ and $\ld \in \RR_{\neq 0}.$ The central
character of the Schr\"odinger representation $\pi_\ld$ is
$\nm(0,0,r) \mapsto e^{ i \ld r}.$ To have it trivial on
$\Ld_\s \cap Z(N)$ we take $\ld=2\pi \ell$ with
$\ell \in \frac \s 2\ZZ_{\neq 0}.$\il{ell}{$\ell$ parameter for
character of $N$}

The space of Schwartz functions $\Schw(\RR)$ is dual to the space
$\Schw'(\RR)$ of tempered distributions. Under this duality the
Schr\"odinger $\pi_{2\pi \ell}$ on $\Schw(\RR)$ corresponds to
$\pi_{-2\pi \ell}$ on $\Schw'(\RR)$. The relations
\bad \pi_{-2\pi \ell}\bigl( \nm(1,0,0)\bigr) \dt_a&\= e^{4\pi i \ell c}
\dt_a\\
\pi_{-2\pi \ell}\bigl( \nm(0,1,0)\bigr) \dt_a &\= \dt_{a-1}\\
\pi_{-2\pi \ell}\bigl( \nm(0,0,2/\s) \bigr)\dt_a&\= e^{-4\pi i \ell
/\s}\, \dt_a
\ead
imply that the distribution
\be \mu_{\ell,c} \= \sum_{k\in \ZZ} \dt_{k+c/2\ell} \ee
is invariant under $\pi_{2\pi  \ell}(\Ld_\s)$ if we take
$c\in \ZZ\bmod 2\ell$. This motivates the definition
\be \Th_{\ell,c}(\ph)(n)\= \Th_{\ell,c}(\ph;n) \= \bigl[ \pi_{2\pi
\ell}(n) \ph , \, \mu_{\ell,c}\bigr]\ee
as a function on $\Ld_\s\backslash N$. Writing this out explicitly gives
for $\ph \in \Schw(\RR)$ the following
series\ir{Thdef}{\Th_{\ell,c}}\il{c-theta}{$c$ shift parameter for
theta functions}
\be\label{Thdef}
\Th_{\ell,c}\bigl(\ph;\nm(x,y,r)\bigr)
= \sum_{k\in \ZZ} e^{2\pi i \ell\bigl(r-x(c/\ell+2k+y)\bigr)}\,
\ph\Bigl( \frac c{2\ell}+k
+y\Bigr)\,. \ee
\il{Thf}{theta function}The decay of Schwartz functions ensures absolute
convergence of the series and of all its derivatives with respect to
the coordinates $x,$ $y$ and~$r.$ It transforms via the central
character determined by $\ld = 2\pi \ell,$ and some computations show
that it is left-invariant under multiplication by elements of $\Ld_\s.$
Actually, we have
\badl{ThNshift} \Th_{\ell,c}\bigl( \ph; \nm(1/2\ell,0,0)n\bigr)
&= e^{-\pi i c /\ell}\, \Th_{\ell,c}(\ph;n)\,,\\
\Th_{\ell,c}\bigl( \ph;\nm(0,1/2\ell,0)n\bigr)
&\ = \Th_{\ell,c+1}(\ph;n)\,. \eadl
So we have
$\Th_{\ell,c}(\ph) \in   C^\infty\bigl(\Ld_\s\backslash N\bigr).$

Since $\nm(x,y,r) \nm(t,0,0) = \nm(x+t,y,r-ty)$ we obtain
\badl{Thd1}\XX_1 \Th_{\ell,c}\bigl(\ph;\nm(x,y,r)\bigr)
&= \bigl( \partial_x - y\partial_r\bigr)
\Th_{\ell,c}\bigl(\ph;\nm(x,y,r)\bigr)\\
&=-4\pi i \ell \Th_{\ell,c}\bigl( \ph_1;\nm(x,y,r)\bigr)
\eadl
with $\ph_1 (\xi) = \xi\ph(\xi).$ Proceeding in a similar way we get
\badl{Thd2} \XX_2 \Th_{\ell,c}\bigl( \ph;\nm(x,y,r)\bigr) &\=
\Th_{\ell,c}\bigl(\ph';\nm(x,y,r)
\bigr)\,,\\
\XX_0 \Th_{\ell,c}( \ph;n) &\= \pi i \ell \Th_{\ell,c}(\ph;n)\,.\eadl
Comparison with~\eqref{dpild} shows that $\ph \mapsto \Th_{\ell,c}(\ph)$
induces an intertwining operator between $\pi_{2\pi\ell}$ and the
subspace spanned by theta functions $\Th_{\ell,c}(\ph)$, inducing a
unitary injection $L^2(\RR)\rightarrow L^2(\Ld_\s\backslash N)$. Thus
we have $2\ell$ realizations of the Stone-von Neumann representation in
the space of functions on $\Ld_\s\backslash N.$ See \cite[\S5c]{Math}.
\smallskip

Let us choose the \il{Hm2}{Haar measure}Haar measure \il{dn}{$dn$}$dn$
on $N$ as $dn= dx\, dy\, dr$ for $n=\nm(x,y,r),$ with the Lebesgue
measure on $\RR$ in each of the coordinates. With this choice, $\Ld_\s$
has covolume $\frac \s 2.$ By taking apart the summations in
\eqref{Thdef} we find
\be \bigl( \Th_{\ell,c}(\ph),
\Th_{\ell',c'}(\ps)\Bigr)_{\Ld_\s\backslash N} =
\begin{cases}
\frac 2\s\, (\ph,\ps)_\RR&\text{ if } \ell=\ell',\; c\equiv c'\bmod
2\ell\,,\\
0&\text{ otherwise}\,.
\end{cases}
\ee
So the $2|\ell|$ realizations of the Stone-von Neumann representation
are mutually orthogonal, and we have injective linear maps
$ \bigl( L^2(\RR) \bigr)^{2|\ell|} \rightarrow L^2(\Ld_\s\backslash N)$
induced by
\be
(\ph_c)_{c\bmod 2\ell} \mapsto \sqrt{\frac \s2} \sum_{c\bmod 2\ell}
\Th_{\ell,c}(\ph_c)\,.\ee
The image is contained in the subspace $L^2(\Ld_\s\backslash N)_\ell$
determined by the central character corresponding to $\ell.$ It is
known that this map is a unitary isomorphism; below we indicate an
argument. The conclusion is that the orthogonal complement of
$L^2(\Ld_\s\backslash N)_0$ in $L^2(\Ld_\s\backslash N)$ is described
by theta functions.

\rmrk{Argument for unitarity} Let
$F\in C^\infty(\Ld_\s\backslash N)_\ell$ be orthogonal to
$\Th_{\ell,c}(\ph)$ for all $c$ and $\ph.$ To see that this implies
that $F$ vanishes we consider
\be\label{fF} f(x,y) = e^{2\pi i \ell xy} F\bigl( \nm(x,y,0)\bigr)\,.\ee
Then $f(x+1,y) = f(x,y),$ and $f(x,y+1)=e^{4\pi i \ell x} f(x,y).$ The
assumption implies that
\begin{align*}
0 &= \int_{x=0}^1 \int_{y=0}^1 F\bigl( \nm(x,y,0)
\bigr)
\, \overline{\Th_{\ell,c}(\ph)\bigl(\nm(x,y,0)\bigr)}\, dy\, dx
\displaybreak[0]\\
&\= \int_{x=0}^1\int_{y=0}^1 f(x,y) \, e^{-2\pi i \ell x y} \sum_k
e^{2\pi i \ell x(c/\ell+2k+y)}\, \overline{\ph\bigl( c/2\ell+k+y)}\,
dy\,dx
\displaybreak[0]\\
&= \int_{x=0}^1 \int_{y=0}^1 f(x,y) \, \sum_k e^{2\pi i \ell
x(c/\ell+2k)}\, \overline{\ph\bigl( c/2\ell+k+y)}\, dy\, dx\\
&= \int_{x=0}^1 \sum_k \int_{y=0}^1 f(x,y+k)\, e^{2\pi i c x}
\,\overline{\ph\bigl( c/2\ell+k+y\bigr)}\, dy \, dx\\
&= \int_{x=0}^1 e^{2\pi i c x}\, \int_{y=-\infty}^\infty f(x,y) \,
\overline{\ph(y+c/2\ell)}\, dy\, dx\,.
\end{align*}
This holds for all $c\in \ZZ$ and all Schwartz functions $\ph$. In
particular, replacing $\ph$ by a translate depending on $c$, we get for
all $c\in \ZZ$ and all Schwartz functions $\ph$
\begin{align*}
0&\=\int_{x=0}^1 e^{2\pi i c x} h_{\ph}(x) \, dx\,,\\
h_{\ph}(x) &\= \int_{y=-\infty}^\infty f(x,y) \, \overline{\ph(y)}\,
dy\,.
\end{align*}
The function $h_\ph$ is continuous, $1$-periodic, and all its Fourier
coefficients vanish. So $h_\ph(x)=0$ for all $x\in \RR$. The Schwartz
functions $\ph$ are dense in $L^2(\RR)$ so for each $x\in \RR$ the
function $y\mapsto f(x,y)$ is zero. Hence $F$ vanishes.

\rmrk{Hermite basis}Normalized Hermite functions provide us with a
suitable basis of Schwartz functions to use in the theta functions. See
\cite[\S1.4]{Tha98}.

The \il{Hpol}{Hermite polynomial}Hermite polynomials $H_m$ are
determined by the identity
\[ e^{-\xi^2}\, H_m(\xi)
= (-1)^m \,\partial_\xi^m e^{-\xi^2}\,.\]
The \il{nHf}{normalized Hermite function}normalized Hermite functions
are the following Schwartz functions:\ir{hlmdef}{h_{\ell,m}}
\be\label{hlmdef} h_{\ell,m}(\xi) = 2^{2-m/2}\, |\ell|^{1/4} \,
(m!)^{-1/2}\, H_m\bigl(\sqrt{4\pi |\ell|}\, \xi\bigr) \,
e^{-2\pi|\ell|\xi^2}\ee
for $\ell \in \RR_{\neq 0}$ and $m\in \ZZ_{\geq 0}.$ For any given
$\ell\in \RR_{\neq 0}$
\be \bigl( h_{\ell,m},h_{\ell,m'} \bigr)_\RR = \dt_{m,m'}\,.\ee
\begin{table}[tp]
\[ \begin{array}{|c|c|}
\hline
m& h_{\ell,m}(\xi) \\ \hline
0 & \sqrt 2 |\ell|^{1/4}\, e^{-2\pi|\ell|\xi^2} \\
1& 4\sqrt\pi |\ell|^{3/4}\,\xi\, e^{-2\pi|\ell|\xi^2} \\
2& |\ell|^{1/4}\,\bigl( 8\pi|\ell|\xi^2-1)\, e^{-2\pi|\ell|\xi^2} \\
3&2\sqrt{\frac{2\pi}3} |\ell|^{3/4}\, \bigl(
8\pi|\ell|\xi^2-3\bigr)\,\xi\, e^{-2\pi |\ell|\xi^2}
\\ \hline
\end{array}\]
\caption{Some normalized Hermite functions.}\label{tab-herm}
\end{table}

The Hermite polynomials satisfy the relation
$H_{m+1}=2\xi \, H_m - 2m \, H_{m-1}$. This leads to relations between
Hermite function $h_{\ell,m},$ $h_{\ell,m+1},$ and $h_{\ell,m-1}.$ The
derived Schr\"odinger representation is described in
Table~\ref{tab-hermdiff}.
\begin{table}[ht]
\begin{align*} d\pi_{2\pi \ell}&(\XX_0)
h_{\ell,m} = \pi i \ell \, h_{\ell,m}\,,\\
d\pi_{2\pi\ell} & (\XX_1) h_{\ell,m} =
-4\pi i \ell \, \xi\, h_{\ell,m}\\
&= -2 i \,\sign(\ell)
\sqrt{\pi|\ell|} \,\biggl( \sqrt{\frac m2}\, h_{\ell,m-1} +
\sqrt{\frac{m+1}2}\, h_{\ell,m+1}\biggr)\,,\\
d\pi_{2\pi \ell}&(\XX_2) h_{\ell,m} = h_{\ell,m}' = 2\sqrt{\pi|\ell|}
\biggl( \sqrt{\frac m2} \, h_{\ell,m-1} - \sqrt{\frac{m+1}2}\,
h_{\ell,m+1}\biggr)\,. \end{align*}
\caption{Derived Schr\"odinger representation on Hermite
functions.}\label{tab-hermdiff}
\end{table}

Since $\ph \mapsto \Th_{\ell,c}(\ph)$ is an intertwining operator for
the action of $\nlie$ (and of $N$) we have the corresponding relations
for theta functions built with normalized Hermite functions.
\smallskip

In the direct sum decomposition
\[ L^2(\Ld_\s\backslash N) = \bigoplus_{\ell \in (\s/2)\ZZ}
L^2(\Ld_\s\backslash N)_\ell\]
according to the central character, we have orthonormal bases
\be \label{bas-Th}
\bigl\{ \sqrt{\s/2}\,\Th_{\ell,c}(h_{\ell,m})\;:\; c\bmod 2\ell,\, m\in
\ZZ_{\geq 0}\bigr\}\ee
for each summand with $\ell\neq 0.$ The character $\chi_\bt$ is trivial
on $\Ld_\s$ for $\bt\in \ZZ[i].$ An orthonormal basis of
$L^2(\Ld_\s\backslash N)_0$ is
\be \label{bas0}
\bigl \{ \sqrt{\s/2}\,\chi_\bt \;:\; \bt\in \ZZ[i]\bigr\}\,.\ee

The factor $\sqrt{\s/2}$ is caused by the choice to use
$dn = dx\, dy\,dr$ as the Haar measure on~$\Ld_\s\backslash N$, with
$n=\nm(x,y,r)$. It seems natural to work with characters and theta
functions that do not depend on~$\s$.

\rmrk{Automorphisms of $N$ and theta functions}The map
$\nm(b,r) \mapsto \nm(ib,r)$ is an outer automorphism of $N$ leaving
invariant the lattice $\Ld_\s\subset N$. It is given by
$n \mapsto \mm(-i) n \mm(i)$.
\begin{prop}\label{prop-actmi} Let $m\in \ZZ_{\geq 0}$, $\s\in \ZZ$, and
$\ell\in \frac \s2\ZZ_{\neq 0}$. The automorphism
$\nm(b,r) \mapsto \nm(ib,r)$ of $N$ induces in the
$2|\ell|$-dimensional space of theta functions with basis
$\bigl\{ \Th_{\ell,c}(h_{\ell,m)}\;:\; 0\leq c < 2|\ell|\bigr\}$ the
linear transformation determined by
\be \Th_{\ell,c}(h_{\ell,m}) \bigl( \nm(ib,r) \bigr) \=
\frac{\bigl(-i\sign(\ell)\bigr)^m}{\sqrt{2|\ell|}}\sum_{c'=0}^{2|\ell|-1}
e^{\pi i c c'/\ell} \Th_{\ell,c'} (h_{\ell,m}\bigl( \nm(b,r)
\bigr)\,.\ee
\end{prop}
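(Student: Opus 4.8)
The plan is to recognise the outer automorphism $A:\nm(b,r)\mapsto\nm(ib,r)$ as the metaplectic (Fourier) rotation and to read off the two factors on the right-hand side from the two ways a Fourier transform acts: on the Hermite vectors $h_{\ell,m}$ and on the Dirac comb defining $\mu_{\ell,c}$. First I would record the geometry of $A$. By~\eqref{oautN}, $A$ is the outer automorphism attached to $\begin{pmatrix}0&-1\\1&0\end{pmatrix}\in\GL_2(\RR)$, so $\nm(x,y,r)\mapsto\nm(-y,x,r)$; its determinant is $1$, so it fixes the central character and preserves $\ell$. Using~\eqref{expnlie}, its differential on $\nlie$ is $A_*\XX_1=\XX_2$, $A_*\XX_2=-\XX_1$, $A_*\XX_0=\XX_0$. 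By the Stone--von Neumann remark there is a unitary $U_A$ on $L^2(\RR)$ with $U_A\pi_{2\pi\ell}(An)=\pi_{2\pi\ell}(n)U_A$, that is $\pi_{2\pi\ell}(An)=U_A^{-1}\pi_{2\pi\ell}(n)U_A$.

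Starting from $\Th_{\ell,c}(\ph)(n)=[\pi_{2\pi\ell}(n)\ph,\mu_{\ell,c}]$ and moving the outer $U_A^{-1}$ onto the distribution by transposition (the pairing is bilinear), I would write
\[
\Th_{\ell,c}(\ph)\bigl(\nm(ib,r)\bigr)=\bigl[\pi_{2\pi\ell}(n)U_A\ph,\ (U_A^{-1})^t\mu_{\ell,c}\bigr].
\]
This splits the problem into two independent computations: the action of $U_A$ on the test function $h_{\ell,m}$, and the action of the transpose of $U_A^{-1}$ on the comb $\mu_{\ell,c}=\sum_k\dt_{k+c/2\ell}$.

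For the first, I would use the ladder operators built from Table~\ref{tab-hermdiff}: the combinations $a^\pm=d\pi_{2\pi\ell}(\XX_1)\pm i\sign(\ell)\,d\pi_{2\pi\ell}(\XX_2)$ raise and lower the index $m$, and the intertwining relations $U_A\,d\pi_{2\pi\ell}(\XX_2)=d\pi_{2\pi\ell}(\XX_1)U_A$, $U_A\,d\pi_{2\pi\ell}(\XX_1)=-d\pi_{2\pi\ell}(\XX_2)U_A$ conjugate $a^\pm$ into scalar multiples of themselves. Then $U_A$ commutes with the number operator $a^+a^-$, hence is diagonal on the basis $h_{\ell,m}$, and tracking the per-step scalar yields $U_A h_{\ell,m}=\kappa\,(-i\sign(\ell))^m h_{\ell,m}$ with $\kappa$ an $m$-independent constant fixed by the ground state $h_{\ell,0}$; this produces the factor $(-i\sign(\ell))^m$. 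For the second, $U_A$ is a rescaled Fourier transform, so its transpose turns the period-$1$ comb into another comb: Poisson summation together with the reindexing $n=c'+2|\ell|q$ permitted by $2\ell\in\ZZ$ rewrites $(U_A^{-1})^t\mu_{\ell,c}=\sum_{c'=0}^{2|\ell|-1}\gamma_{c,c'}\,\mu_{\ell,c'}$, the finite Fourier kernel $e^{\pi icc'/\ell}$ and the normalisation $1/\sqrt{2|\ell|}$ emerging from the Gaussian Fourier constant. Substituting both and using $[\pi_{2\pi\ell}(n)h_{\ell,m},\mu_{\ell,c'}]=\Th_{\ell,c'}(h_{\ell,m})(n)$ gives the stated identity.

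The main obstacle is the exact bookkeeping of phase and normalisation: the sign $\sign(\ell)$ and the factor $\mp i$ depend on the Fourier-transform convention, on the duality between $\pi_{2\pi\ell}$ on $\Schw(\RR)$ and $\pi_{-2\pi\ell}$ on $\Schw'(\RR)$, and on the metaplectic cocycle, while $\kappa$ and the Poisson normalisation must combine to precisely $1/\sqrt{2|\ell|}$. I would pin all of these down by carrying out the $m=0$ (pure Gaussian) case explicitly via Poisson summation, as in~\cite{Math}, and then propagate the result to all $m$ through the ladder structure.
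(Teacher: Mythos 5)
Your proposal is correct and rests on exactly the same two computational pillars as the paper's proof: a Poisson-summation/reindexing argument (with $k=c'+2\ell\kappa$) that produces the finite Fourier kernel $e^{\pi i cc'/\ell}$ and the normalisation $1/\sqrt{2|\ell|}$, and the fact that the normalised Hermite functions are eigenfunctions of the (rescaled) Fourier transform with eigenvalue $\bigl(-i\sign(\ell)\bigr)^m$, established by an explicit $m=0$ Gaussian computation plus the recursion coming from Table~\ref{tab-hermdiff}. The only real difference is organisational: the paper proves a single lemma valid for arbitrary $\ph\in\Schw(\RR)$ by Poisson-summing the explicit series \eqref{Thdef} directly, which yields the concrete operator $f_\ell\ph(\xi)=\sqrt{2|\ell|}\,\hat\ph(2\ell\xi)$ with no undetermined phase, and then separately computes $f_\ell h_{\ell,m}$ by taking Fourier transforms of the three-term Hermite recursion; you instead route the argument through the abstract Stone--von Neumann intertwiner $U_A$ and its transpose on the Dirac comb $\mu_{\ell,c}$. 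Your packaging is conceptually tidy (it makes the metaplectic-rotation interpretation visible, consistent with the discussion following \eqref{u'0}), but it defers more normalisation bookkeeping to the end, since $U_A$ is only canonical up to a phase and $\kappa$ and the comb coefficients $\gamma_{c,c'}$ must be pinned down jointly; the paper's concrete $f_\ell$ avoids that. Both resolve the ambiguity the same way, via the explicit $m=0$ case, so there is no gap.
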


We give the proof in two lemmas.
\begin{lem}Let $\ell \in \frac \s 2 \ZZ_{\neq 0}$, and
$c\in \{0,1,\ldots,2|\ell|-1\}$. For each $\ph \in \Schw(\RR)$
\be \Th_{\ell,c}(\ph)\bigl( \nm(ib,r) \bigr) \= \frac1{\sqrt{2|\ell|}}
\sum_{c'=0}^{2|\ell|-1} e^{\pi i c c'/\ell} \, \Th_{\ell,c'}\bigl(
f_\ell \ph\bigr)\bigl( \nm(b,r)\bigr)
\,,\ee
where
\be \bigl(f_\ell \ph\bigr)(\xi) \= \sqrt{2|\ell|}\, \hat \ph(2\ell
\xi)\,.\ee
\end{lem}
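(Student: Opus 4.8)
The plan is to substitute the explicit series for $\Th_{\ell,c}$ from~\eqref{Thdef}, apply Poisson summation to pass from $\ph$ to its Fourier transform, and then reorganize the resulting sum over $\ZZ$ into residue classes modulo $2|\ell|$; this last step is exactly what manufactures the finite kernel $e^{\pi icc'/\ell}$ and the family $\Th_{\ell,c'}(f_\ell\ph)$ on the right. First I would write the automorphism in real coordinates: since $b=x+iy$ gives $ib=-y+ix$, we have $\nm(ib,r)=\nm(-y,x,r)$, so~\eqref{Thdef} yields
\[
\Th_{\ell,c}(\ph;\nm(ib,r)) = e^{2\pi i\ell r}\,e^{2\pi i\ell xy}\,e^{2\pi iyc}\sum_{k\in\ZZ} e^{4\pi i\ell yk}\,\ph\Bigl(\tfrac{c}{2\ell}+k+x\Bigr)\,.
\]
The inner sum is a lattice sum of the Schwartz function $t\mapsto\ph(\tfrac{c}{2\ell}+x+t)$, twisted by the character $k\mapsto e^{4\pi i\ell yk}$.

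Next I would apply Poisson summation to this inner sum. The decay of $\ph$ guarantees absolute convergence and legitimizes the interchange (as already noted after~\eqref{Thdef}). The twist $e^{4\pi i\ell yk}$ shifts the dual variable by $2\ell y$, while the shift of the argument of $\ph$ by $\tfrac{c}{2\ell}+x$ produces a linear phase in the dual variable $n$. The outcome is a single sum $\sum_{n\in\ZZ}(\text{phase})\,\hat\ph(n\pm 2\ell y)$, the phase carrying the factors $e^{\pi inc/\ell}$ and $e^{\pm2\pi inx}$. The spurious factor $e^{2\pi iyc}$ cancels a matching term produced by Poisson, and the prefactor $e^{2\pi i\ell xy}$ combines with a further $\ell xy$-contribution to give the factor $e^{-2\pi i\ell xy}$ that also appears on the right.

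The decisive combinatorial step is then to split the index $n\in\ZZ$ by its residue $c'\in\{0,\dots,2|\ell|-1\}$ modulo $2|\ell|$, writing $n=c'+2\ell k$. Because $c,k\in\ZZ$, the correction $e^{-2\pi ick}$ in $e^{\pi icc'/\ell}=e^{\pi icn/\ell}e^{-2\pi ick}$ is trivial, so the $n$-phase $e^{\pi icn/\ell}$ factors as the discrete kernel $e^{\pi icc'/\ell}$ times a genuine $k$-sum; and the scaling $\hat\ph(n+2\ell y)=\hat\ph\bigl(2\ell(\tfrac{c'}{2\ell}+k+y)\bigr)$ is precisely $(2|\ell|)^{-1/2}(f_\ell\ph)(\tfrac{c'}{2\ell}+k+y)$ by the definition $f_\ell\ph(\xi)=\sqrt{2|\ell|}\,\hat\ph(2\ell\xi)$. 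Reassembling the $k$-sum for each fixed $c'$ reproduces $\Th_{\ell,c'}(f_\ell\ph;\nm(b,r))$ through~\eqref{Thdef}, and collecting the $\sqrt{2|\ell|}$ from $f_\ell$ against the Poisson normalization produces the stated factor $1/\sqrt{2|\ell|}$.

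The main obstacle is the bookkeeping of phases and, above all, fixing the orientation convention. The automorphism $b\mapsto ib$ has order four and corresponds on the Schr\"odinger side to a fourth root of the identity, i.e.\ to the Fourier transform up to the choice of $\mathcal F$ versus $\mathcal F^{-1}$; this choice, together with the normalization $\hat\ph(\xi)=\int_\RR\ph(t)e^{\mp2\pi i\xi t}\,dt$ consistent with $f_\ell$, is exactly what pins down the sign of $2\ell y$ in the argument of $\hat\ph$ and the sign of the kernel $e^{\pi icc'/\ell}$. I would fix these conventions once at the outset and then verify that both signs of $\ell$ are handled uniformly, since for $\ell<0$ the scaling $2\ell\xi$ reverses orientation while the residue decomposition $n=c'+2\ell k$ remains a bijection of $\ZZ$.
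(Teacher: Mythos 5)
Your proposal is correct and follows essentially the same route as the paper: write out the theta series via \eqref{Thdef} at the rotated argument, apply (a twisted form of) Poisson summation to pass to $\hat\ph$, and then split the dual index into residue classes $n=c'+2\ell k$ modulo $2|\ell|$ to produce the kernel $e^{\pi i cc'/\ell}$ and reassemble the inner $k$-sums as $\Th_{\ell,c'}(f_\ell\ph)$. The only remaining work is the phase bookkeeping you already flag, which the paper handles by quoting the identity $\sum_{k}e^{2\pi i\bt k}\ph(\al+k)=e^{-2\pi i\al\bt}\sum_{k}e^{2\pi i\al k}\hat\ph(k-\bt)$ with $\al=c/2\ell+(\text{shift})$ and $\bt=2\ell y$.
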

\begin{proof}We write out
$\th_{\ell,c}(\ph) \bigl( \nm(y- i x,r) \bigr)$ with \eqref{Thdef} and
apply the following consequence of Poisson's summation formula
\be \sum_{k\in \ZZ} e^{2\pi i \bt k} \,\ph(\al+k)
\= e^{-2\pi i \al \bt}\sum_{k\in \ZZ} e^{2\pi i \al k}\,
\hat\ph(k-\bt)\,.\ee
That leads to the relation
\begin{align*} \Th_{\ell,c}(\ph)&\bigl(\nm(y-ix,r) \bigr) \= e^{2\pi i
\ell(r - x y)} \sum_{k\in \ZZ} e^{2\pi i (c/2\ell-x) k }\, \hat \ph(k+2
\ell y)
\displaybreak[0]\\
&\= e^{2\pi i \ell(r-xy)} \sum_{c'=0}^{2|\ell|-1}\sum_{\k\in \ZZ}
  e^{2\pi i (c/2\ell-x)(c'+2\ell k)} \,\hat \ph\bigl( c'+2\ell(k+y)\bigr)
  \displaybreak[0]\\
  &\=\frac1{\sqrt{2|\ell|}} \sum_{c'=0}^{2|\ell|-1} e^{\pi i c c'/\ell}
  \,\Th_{\ell,c'}\bigl( f_\ell \ph\bigr)\bigl( \nm(x+i
  y,r)\bigr)\,.\qedhere
\end{align*}
\end{proof}

\begin{lem}For $\ell \in \frac \s2 \ZZ_{\neq 0}$ and $m\in \ZZ$
\be \label{flrel} f_\ell h_{\ell,m} \= \bigl( -i \sign(\ell) \bigr)^m\,
h_{\ell,m}\,.\ee
\end{lem}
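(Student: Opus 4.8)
The plan is to recognize $f_\ell$ as a rescaled Fourier transform and to reduce \eqref{flrel} to the classical fact that suitably normalized Hermite functions are eigenfunctions of the Fourier transform. The Poisson summation formula invoked in the preceding lemma fixes the convention $\hat\ph(\xi)=\int_\RR\ph(t)\,e^{-2\pi i t\xi}\,dt$, for which the Gaussian $e^{-\pi u^2}$ is self-dual. First I would record the eigenfunction statement in the form adapted to this convention: the functions $\psi_m(u)=H_m(\sqrt{2\pi}\,u)\,e^{-\pi u^2}$ satisfy $\hat\psi_m=(-i)^m\psi_m$. This is standard; if a self-contained argument is wanted it follows by induction on $m$, using the Hermite recursion $H_{m+1}=2\xi H_m-2m H_{m-1}$ together with the two elementary rules that, for this convention, the Fourier transform turns multiplication by $u$ into $\tfrac{-1}{2\pi i}\tfrac{d}{d\eta}$ and differentiation into multiplication by $2\pi i\eta$, the base case $m=0$ being the self-duality of the Gaussian.

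Next I would pass from $\psi_m$ to $h_{\ell,m}$ by the scaling substitution $u=\sqrt{2|\ell|}\,\xi$. A direct comparison with \eqref{hlmdef} gives $h_{\ell,m}(\xi)=C_{\ell,m}\,\psi_m\bigl(\sqrt{2|\ell|}\,\xi\bigr)$ with $C_{\ell,m}=2^{2-m/2}|\ell|^{1/4}(m!)^{-1/2}$, since that substitution sends $\sqrt{4\pi|\ell|}\,\xi$ to $\sqrt{2\pi}\,u$ and $2\pi|\ell|\xi^2$ to $\pi u^2$. The precise value of $C_{\ell,m}$ will be irrelevant, as it cancels at the end.

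Then I would apply the scaling law $\widehat{\psi(\lambda\,\cdot)}(\eta)=\lambda^{-1}\hat\psi(\eta/\lambda)$ with $\lambda=\sqrt{2|\ell|}$ to compute $\widehat{h_{\ell,m}}$, and substitute into the definition $f_\ell\ph(\xi)=\sqrt{2|\ell|}\,\hat\ph(2\ell\xi)$. The two factors $\sqrt{2|\ell|}$ cancel, and using the identity $2\ell/\sqrt{2|\ell|}=\sign(\ell)\sqrt{2|\ell|}$ one is left with $f_\ell h_{\ell,m}(\xi)=C_{\ell,m}\,\hat\psi_m\bigl(\sign(\ell)\sqrt{2|\ell|}\,\xi\bigr)$. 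Applying the eigenfunction identity $\hat\psi_m=(-i)^m\psi_m$ and then the parity relation $\psi_m(-u)=(-1)^m\psi_m(u)$, which yields $\psi_m(\sign(\ell)u)=\sign(\ell)^m\psi_m(u)$ since $\sign(\ell)=\pm1$, produces the factor $(-i\sign(\ell))^m$ and restores $C_{\ell,m}\,\psi_m\bigl(\sqrt{2|\ell|}\,\xi\bigr)=h_{\ell,m}(\xi)$, which is exactly \eqref{flrel}.

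The only point demanding care is the bookkeeping of the sign of $\ell$: the evaluation at $2\ell\xi$ inside $f_\ell$ introduces a factor $\sign(\ell)$ in the argument of $\psi_m$, and it is precisely the parity of $\psi_m$ that converts this into the correct power $\sign(\ell)^m$. Everything else is routine tracking of scaling constants, all of which cancel.
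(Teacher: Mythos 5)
Your proof is correct. It differs in organization from the paper's: the paper proves \eqref{flrel} directly by induction on $m$, checking $m=0$ explicitly from Table~\ref{tab-herm} and then taking the Fourier transform of the recursion $h_{\ell,m}'=2\sqrt{\pi|\ell|}\bigl(\sqrt{m/2}\,h_{\ell,m-1}-\sqrt{(m+1)/2}\,h_{\ell,m+1}\bigr)$ from Table~\ref{tab-hermdiff}, rescaling $\xi\mapsto 2\ell\xi$, and comparing with the relation for multiplication by $\xi$ to pass from $m,m-1$ to $m+1$. You instead factor the problem into the classical statement $\hat\psi_m=(-i)^m\psi_m$ for the normalized Hermite functions, a scaling computation, and the parity $\psi_m(-u)=(-1)^m\psi_m(u)$. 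The underlying engine is the same three-term recursion under the Fourier transform, but your decomposition isolates exactly where the factor $\sign(\ell)^m$ comes from (the evaluation at $2\ell\xi$ combined with parity), which the paper's induction leaves implicit; the paper's version has the advantage of using only the relations already tabulated in the text and never leaving the functions $h_{\ell,m}$. Your bookkeeping of the constants ($C_{\ell,m}$ cancelling, $2\ell/\sqrt{2|\ell|}=\sign(\ell)\sqrt{2|\ell|}$) checks out.
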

\begin{proof}The case $m=0$ can be checked by an explicit computation,
based on Table~\ref{tab-herm}, p~\pageref{tab-herm}. We start with the
third relation in Table~\ref{tab-hermdiff}, p~\pageref{tab-hermdiff},
and take the Fourier transforms of all terms:
\[ 2\pi i \xi \hat h_{\ell,m}(\xi) \= \sqrt{2\pi|\ell|}\Bigl( \sqrt{m}
\hat h_{\ell,m-1}(\xi) - \sqrt{m-1} \hat h_{\ell,m+1}(\xi)\Bigr)\,. \]
Replacing $\xi$ by $2\ell \xi$ we formulate this in terms of
$f_\ell h_{\ell,m}$:
\[ 4\pi i \ell \sqrt{2|\ell|} f_\ell h_{\ell,m} \= 2|\ell| \sqrt
\pi\Bigl( \sqrt m f_\ell h_{\ell,m-1} - \sqrt{m-1} f_\ell
h_{\ell,m+1}\Bigr)\,. \]
Using \eqref{flrel} for $h_{\ell,m}$ and $h_{\ell,m-1}$, and taking the
second relation in Table~\ref{tab-hermdiff} into account, we see that
\eqref{flrel} is valid for $h_{\ell,m+1}$ as well.
\end{proof}


\def\flnm{rFtm-I-dsg}


\section{Discrete subgroups} \label{sect-ds}\markright{5. DISCRETE
SUBGROUPS} Our purpose in this paper is to provide a background for the
study of functions on $G=\SU(2,1)$ that are invariant under certain
lattices in~$G$.

\subsection{Condition on the cuspidal lattices}\label{sect-cucond}
We use discrete subgroups $\Gm\subset G$ that have finite covolume and
are not cocompact. We will impose below one further
condition.\smallskip

By a \il{cu}{cusp}cusp $\c$ of $\Gm$ we mean here a parabolic subgroup
$P_\c$ such that its unipotent radical $N_\c\subset P_\c$ intersects
$\Gm$ in a lattice in \il{Nc}{$N_\c$}$N_\c.$ We use the name cusp also
for the unique point in the boundary of the symmetric space~$\X$ fixed
by~$P_\c$.\il{Pc}{$P_\c$}

All cusps are conjugate to each other by elements of $G=\SU(2,1)$.
Conjugation by $\gm\in \Gm$ results in finitely many $\Gm$-orbits of
cusps. For each cusp $\c$ we can choose elements $g\in G$ such that
$P_\c = g NAM g^{-1}.$ We impose the following additional condition on
the cofinite discrete subgroups $\Gm$ that we
consider:\il{cuc}{$\ZZ[i]$-condition on the cusps}
\begin{defn}\label{cucond} If for each cusp $\c$ there is
\il{gc}{$g_\c$}$g_\c\in G$ such that
$g_\c \Ld_{\s(\c)} g_\c^{-1} =\Gm \cap N_\c$ for
\il{sgc}{$\s(\c)$}$\s(\c)\in \ZZ_{\geq 1}$, we say that the group $\Gm$
satisfies the $\ZZ[i]$-condition on the cusps.
\end{defn}

This condition needs to be checked only for cusps in a system of
representatives of the $\Gm$-orbits of cusps, since if $\c'=\gm \c$ for
a given $\gm$, we can choose $\gm_{\c'}=\gm g_\c$.
\rmrk{Discussion}All lattices in $N$ are isomorphic to a standard
lattice $\Ld_\s$ (Definition~\ref{def-stlatt}). We discussed that the
automorphism group of $N$ is isomorphic to $\GL_2(\RR)\ltimes \RR^2$.
Interior automorphisms of $N$ give the factor $\RR^2$. Conjugation by
an element of $AM\subset G$ gives an automorphism that acts in
$N/Z(N) \cong \RR^2$ corresponding to a map $b \mapsto \al b$ for some
$\al\in \CC^\ast$ under the identification $\RR^2\cong \CC$ via
$(1,0) \leftrightarrow 1$, $(0,1)\leftrightarrow i$. See \eqref{Nnorm}.
Each standard lattice has image $\ZZ[i]$ in $\CC \cong N/Z(N)$. The
$\ZZ[i]$-condition ensures that we have to deal only with isomorphisms
of $N$ that are natural for the chosen identification $\RR^2\cong \CC$.

\rmrk{Examples}We may take the integral points in the standard
realization:\il{Gmst}{$\Gm_0$}
\be\label{Gmst} \Gm_0= G \cap \SL_3(\ZZ[i])\,.\ee
From \eqref{Ndef} we see that $\Gm_0\cap N$ consists of the elements
$\nm(b,r)$ with $b$ in the ideal $(1+\nobreak i)$ in $\ZZ[i]$ and
$r\in \ZZ.$ So $\infty$ is a cusp of~$\Gm_0$, and
$\Gm\cap N = g_\infty \Ld_4 g_\infty^{-1}$ with $g_\infty=\hm[i+1]
\= \am(\sqrt2)\,\mm(e^{\pi i/4})$. We will show in Appendix
\ref{app-ds} that the cusps form one $\Gm_0$-orbit. Hence $\Gm_0$
satisfies the $\ZZ[i]$-condition on the cusps.

Francsics and Lax \cite{FL} give explicitly a fundamental domain of a
discrete subgroup of $\SU(2,1)$ with one cusp. It is built with another
realization than we use here.
\badl{FL} & \SL_3(\ZZ[i]) \cap U_\fl^{-1} G U_\fl\,,\\
U_\fl &\=
\begin{pmatrix}\frac{-i}{\sqrt2}& 0 &\frac{1}{\sqrt2}\\
0&1&0 \\
\frac{-i}{\sqrt2} & 0 &\frac{-1}{\sqrt2}\end{pmatrix}\,,\quad \bar
U_\fl^T I_{2,1} U_\fl \= \begin{pmatrix}0&0&i\\
0&1&0\\-i&0&0\end{pmatrix}\,.
\eadl
This group, once conjugated to $G$ by $U_{\mathrm{FL}}$, satisfies the
$\ZZ[i]$-condition on the cusps. See Appendix~\ref{app-ds}.

\subsection{Fourier expansions}\label{sect-Fe1}We consider Fourier
expansions of functions on $G$ that are invariant under a standard
lattice in~$N$.\smallskip

Let \il{CLGK}{$C^\infty(\Ld_\s\backslash G)_K$
}$C^\infty(\Ld_\s\backslash G)_K,$ $\s\in \ZZ_{\geq 1},$ denote the
space of smooth functions on $G$ that are left-$\Ld_\s$-invariant on
the left and $K$-finite on the right.

Let $f\in C^\infty(\Ld_\s\backslash G)_K.$ For each $a\in A,$ $k\in K$
the function $n\mapsto f(nak)$ lies in
$C^\infty(\Ld_\s\backslash N) \subset L^2(\Ld_\s\backslash N),$ and can
be expanded in terms of the orthonormal basis in \eqref{bas-Th}
and~\eqref{bas0}:\il{Fe}{Fourier expansion}\il{fbt}{$f_\bt$}
\il{fellcm}{$f_{\ell,c,m}$}
\begin{align}
\nonumber f(nak) &= \sum_{\bt\in \ZZ[i]} \chi_\bt(n) \, f_\bt(ak)\\
\nonumber
&\qquad\hbox{}
+ \sum_{\ell \in (\s/2)\ZZ_{\neq 0}}\sum_{c\bmod 2\ell} \sum_{m\in
\ZZ_{\geq 0}} \Th_{\ell,c}(h_{\ell,m};n)\, f_{\ell,c,m}(ak)\,,
\displaybreak[0]\\
\label{Febt}
\text{where } \quad f_\bt(ak) &= \frac\s 2 \int_{\Ld_\s\backslash
N}\overline{\chi_\bt(n')}\, f(n'ak)\, dn'\,,
\displaybreak[0]\\
\label{Fen}
f_{\ell,c,m}(ak) &= \frac \s 2 \int_{\Ld_\s\backslash N}\overline{
\Th_{\ell,c}(h_{\ell,m};n')}\, f(n'ak)\, dn'\,.
\end{align}

\begin{prop}\label{prop-absconv}The Fourier expansion converges
absolutely for each function $f\in C^\infty(\Ld_\s\backslash N)_K,$ and
for all derivatives $uf$ with $u$ in the universal enveloping algebra
$U(\nlie).$\il{Fsac}{Fourier series, absolute convergence}
\end{prop}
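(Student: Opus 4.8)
The plan is to reduce the statement to the classical fact that a smooth function on a compact manifold has rapidly decaying coefficients with respect to an orthonormal basis of eigenfunctions of an elliptic, formally self-adjoint differential operator. Fix $a\in A$ and $k\in K$ and put $F(n)=f(nak)$, a function in $C^\infty(\Ld_\s\backslash N)$. The coefficients in \eqref{Febt} and \eqref{Fen} are, up to the factor $\s/2$, the $L^2(\Ld_\s\backslash N)$-inner products of $F$ with the elements of the orthonormal basis \eqref{bas-Th}, \eqref{bas0}. It suffices to show that these inner products tend to zero faster than any power of a natural size parameter, and that the number of basis vectors of bounded size grows only polynomially.

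The operator I would use is $\Delta \isd -\bigl(R(\XX_0)^2+R(\XX_1)^2+R(\XX_2)^2\bigr)\in U(\nlie)$, acting by right differentiation \eqref{Rdiff}. Because $R(\XX_0),R(\XX_1),R(\XX_2)$ are left-invariant vector fields forming a global frame on $N$ (the $\XX_i$ are a basis of $\nlie$), $\Delta$ is a second-order elliptic operator; and because $N$ is unimodular and $\Ld_\s\backslash N$ compact, each $R(\XX_i)$ is skew-adjoint on $L^2(\Ld_\s\backslash N)$, so $\Delta$ is formally self-adjoint and non-negative. The basis vectors are eigenfunctions: from the homomorphism property of $\chi_\bt$ one gets $R(\XX_0)\chi_\bt=0$, $R(\XX_1)\chi_\bt=2\pi i\,(\re\bt)\,\chi_\bt$, $R(\XX_2)\chi_\bt=2\pi i\,(\im\bt)\,\chi_\bt$, hence $\Delta\chi_\bt=4\pi^2|\bt|^2\chi_\bt$. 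For the theta functions, the intertwining property established in \eqref{Thd1}--\eqref{Thd2} together with the derived Schr\"odinger action \eqref{dpild} identifies $R(\XX_1)^2+R(\XX_2)^2$ with the harmonic-oscillator operator $\partial_\xi^2-16\pi^2\ell^2\xi^2$ on Schwartz functions, for which the $h_{\ell,m}$ are eigenfunctions (Table~\ref{tab-hermdiff}); combined with $R(\XX_0)\Th_{\ell,c}(h_{\ell,m})=\pi i\ell\,\Th_{\ell,c}(h_{\ell,m})$ this gives $\Delta\,\Th_{\ell,c}(h_{\ell,m})=\bigl(\pi^2\ell^2+4\pi|\ell|(2m+1)\bigr)\Th_{\ell,c}(h_{\ell,m})$, independently of $c$.

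With the eigenvalues $\ld_\ps\geq 0$ in hand, the decay is immediate. For a basis vector $\ps$ with $\ld_\ps\neq 0$ and any integer $P\geq 0$, self-adjointness gives $(F,\ps)=\ld_\ps^{-P}(\Delta^P F,\ps)$, so by Cauchy--Schwarz $|(F,\ps)|\leq \ld_\ps^{-P}\,\|\Delta^P F\|_{L^2}$, where the norm on the right is finite because $\Delta^P F$ is again smooth on the compact manifold $\Ld_\s\backslash N$, and is independent of $\ps$. Finally the eigenvalues grow polynomially while the multiplicities are controlled: there are $O(R^2)$ characters with $|\bt|^2\leq R$, and $O(R^{3/2})$ triples $(\ell,c,m)$ with $\pi^2\ell^2+4\pi|\ell|(2m+1)\leq R$ (for fixed $\ell$ the index $c$ ranges over $2|\ell|$ values and $m$ over $O(R/|\ell|)$ values, while $|\ell|\geq\s/2$). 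Hence $\sum_\ps \ld_\ps^{-P}<\infty$ once $P$ is large enough, so $\sum_\ps|(F,\ps)|<\infty$; since $|\chi_\bt(n)|=1$ and the theta functions are bounded on the compact quotient, the expansion converges absolutely and uniformly in $n$. The assertion for a derivative $uf$, $u\in U(\nlie)$, follows by applying the same argument to $R(u)F\in C^\infty(\Ld_\s\backslash N)$ in place of $F$; equivalently, $R(u)$ sends each basis vector to a combination of basis vectors with the same $(\ell,c)$ (resp.\ the same $\bt$) and shifted $m$, with coefficients of polynomial growth, so term-by-term differentiation preserves absolute convergence.

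The only genuine work is the eigenvalue computation of the second paragraph; the rest is the standard elliptic/Weyl-law packaging. I expect the one point needing care to be the identification of $R(\XX_1)^2+R(\XX_2)^2$ with the oscillator and the resulting eigenvalue $4\pi|\ell|(2m+1)$, since one must track the normalization of the $h_{\ell,m}$ through the intertwiner $\ph\mapsto\Th_{\ell,c}(\ph)$; this is precisely what \eqref{Thd1}--\eqref{Thd2} and Table~\ref{tab-hermdiff} are set up to supply.
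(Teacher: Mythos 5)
Your argument is essentially the paper's: you use the same elliptic operator $\XX_0^2+\XX_1^2+\XX_2^2$ on $\Ld_\s\backslash N$, the same eigenvalue computation on the characters and on the theta functions built from Hermite functions, and the same integration-by-parts device to show that the coefficients $(F,\ps)$ decay faster than any power of the eigenvalue. Where you diverge is only in the final packaging: the paper passes from $L^2$-decay of coefficients to pointwise convergence via the Sobolev inequality (sup norm controlled by the second Sobolev norm, which is controlled by $\|f\|_{L^2}$ and $\|Lf\|_{L^2}$), whereas you combine a Weyl-type count of eigenvalues with sup-norm bounds on the individual basis vectors. That route works, but the one assertion you should not wave at is that ``the theta functions are bounded on the compact quotient'': each $\Th_{\ell,c}(h_{\ell,m})$ is of course bounded, but your sum $\sum_\ps|(F,\ps)|\,\sup_n|\ps(n)|$ requires a bound on $\sup_n|\Th_{\ell,c}(h_{\ell,m};n)|$ that grows at most polynomially in $(\ell,m)$; this is true (the unfolded sum over $k$ in \eqref{Thdef} is essentially an $L^1$-norm of the rescaled Hermite function, which grows polynomially), but it needs a sentence of justification --- or, more economically, it can be replaced by the same Sobolev estimate the paper cites, applied to each basis vector, which gives $\sup|\ps|\ll 1+\ld_\ps$ for free. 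With that point supplied, your proof is complete, and your eigenvalue $4\pi|\ell|(2m+1)+\pi^2\ell^2$ is in fact the correct one (the paper's displayed eigenvalue has dropped some factors of $\pi$, which is immaterial to the argument).
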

\begin{proof}We know that the Fourier series converges in the sense of
$L^2(\Ld_\s\backslash N).$ In the coordinates
$(x,y,r) \leftrightarrow \nm(x,y,r)$ on $N$ we have the elliptic
operator
\[ L= \XX_0^2+\XX_1^2+\XX_2^2= \bigl(
\partial_x-y\,\partial_r)^2
+ (\partial_y+x\,
\partial r)^2+\frac14\partial_r^2\,.
\]
This operator acts on $f_\bt$ with eigenvalue $-4\pi^2|\bt|^2$ and on
$\Th_{\ell,c}(h_{\ell,m})$ with eigenvalue $-(4+8m+\pi|\ell|)|\ell|.$
Since $f$ is smooth we can apply partial integration as many times as
we want. In this way we obtain better and better estimates, uniform for
$ak$ in compact sets. Now we apply Sobolev theory; see for instance
\cite[Appendix 4]{La74}. The Sobolev inequality \cite[p 393]{La74}
bounds the supremum norm in terms of the second Sobolev norm, and the
basic estimate \cite[p 401]{La74} bounds the second Sobolev norm in
terms of the $L^2$-norm of $f$ and $Lf.$ This gives pointwise
convergence.

We can differentiate $n\mapsto f(nak)$ as many times as we want, and
also interchange differentiation and taking Fourier terms.
\end{proof}

For $f\in C^\infty(\Ld_\s\backslash G)_K$ and $\bt \in \ZZ[i]$ we have
\begin{align*} \chi_\bt(n) \, f_\bt(ak)
&= \frac \s 2 \int_{\Ld_\s\backslash N} \chi_\bt\bigl( n
(n')^{-1} \bigr)
f(n'ak)\, dn'
\displaybreak[0]\\
&= \frac \s 2\int_{\Ld_\s\backslash N} \chi_\bt(n')^{-1} f(n'nak)\,
dn'\,,
\end{align*}
since $\chi_\bt$ is a (unitary) character. So with
\ir{Frbtdef}{\Four_\bt}
\be \label{Frbtdef}\Four_\bt f (g) \isdd \frac \s
2\int_{\Ld_\s\backslash N} \overline{\chi_\bt(n)}\, f(n g)\, dn\,, \ee
we have $\chi_\bt(n) f_\bt(ak) = \Four_\bt f(nak).$ Since we integrate
over a compact set, the action of $\glie$ by right differentiation
commutes with the operator $\Four_\bt.$ We see also that the action of
$K$ by right translation commutes with $\Four_\bt.$ So $\Four_\bt$ is
an intertwining operator of $(\glie,K)$-modules. (See, eg Section 3.3
in \cite{Wal88}.)
\be \Four_\bt : C^\infty(\Ld_\s\backslash G)_K \rightarrow \Ffu_\bt\ee
where \il{Ffubt}{$\Ffu_\bt$}$\Ffu_\bt $ consists of the $K$-finite
elements of $C^\infty(G)$ that transform on the left according to the
character $\chi_\bt$ of~$N.$\il{Fto}{Fourier term operator}

For $\ell \in \frac \s 2 \ZZ_{\neq 0},$ $c\bmod 2\ell$, we define the
operator $\Four_{\ell,c} $ in $ C^\infty(\Ld_\s\backslash G)_K  $
by\ir{Frlcdef}{\Four_{\ell,c}}
\be\label{Frlcdef} \Four_{\ell,c} f (nak)
= \sum_{m\geq 0} \Th_{\ell,c}(h_{\ell,m};n)
\, f_{\ell,c,m}(ak)\,, \ee
with $f_{\ell,c,m}$ as in~\eqref{Fen}. Here we cannot use a simple
integral like in~\eqref{Frbtdef}. Proposition~\ref{prop-Ftit} will show
that the operator $\Four_{\ell,c}$ is an intertwining operator of
$(\glie,K)$-modules as well. There, we will also describe the
$(\glie,K)$-module $\Ffu_{\ell,c}$ such that $ \Four_{\ell,c} 
C_c^\infty(\Ld_\s\backslash G)_K\subset \Ffu_{\ell,c}$.

The Fourier expansion can be rewritten as \ir{Fe1}{\text{Fourier
expansion}}
\be \label{Fe1} f(g) = \sum_{\bt\in \ZZ[i]} \Four_\bt f(g)
+ \sum_{\ell\in (\s/2)\ZZ_{\neq 0}}\sum_{c\bmod 2\ell} \Four_{\ell,c}
f(g)\,. \ee

\rmrk{Fourier expansion of functions on $\Gm\backslash G$} Let
$\Gm\subset G$ be a cofinite discrete subgroup satisfying the
$\ZZ[i]$-condition on the cusps in Definition~\ref{cucond}, and let
$f\in C^\infty(\Gm\backslash G)_K.$ For each cusp $\c$ the function
\il{fc}{$f^\c$}$f^\c:g\mapsto f(g_\c g)$ is in
$C^\infty(\Ld_{\s(\c)}\backslash G)_K,$ with a Fourier expansion as
in~\eqref{Fe1}. The operator $f\mapsto f^\c$ commutes with the action
of $\glie$ by right differentiation. Here the $\ZZ[i]$-condition on the
cusps is essential.


\def\flnm{rFtm-chap-II}

\setcounter{tabold}{\arabic{table}}%
\setcounter{figold}{\arabic{figure}}%


\chapter{Fourier term modules}\label{chap-2}
\setcounter{section}5
\setcounter{table}{\arabic{tabold}}
\setcounter{figure}{\arabic{figold}} \markboth{II. FOURIER TERM
MODULES}{II. FOURIER TERM MODULES} The aim of this chapter is the study
of the spaces in which Fourier term operators have their image. Since
we restrict these operators to $K$-finite invariant functions their
images are in a space in which the Lie algebra $\glie$ and the group $K$
act.

In Section~\ref{sect-gKm} we discuss $(\glie,K)$-modules and in
Section~\ref{sect-exdf} we discuss how to carry out explicit
computations of the Lie algebra action in modules of tensor form with
respect to the Iwasawa decomposition. In the Sections
\ref{sect-lFtm}--\ref{sect-spFtm} we apply this to Fourier term
modules.


\def\flnm{rFtm-II-gKm}


\section{(g,K)-modules} \markright{6.
(g,K)-MODULES}\label{sect-gKm} After fixing notations for the Lie
algebra $\glie$ and its universal enveloping algebra, we turn to
$(\glie,K)$-modules. We take advantage of the relatively simple
structure of $\SU(2,1)$.

An important tool are the shift operators in
Proposition~\ref{prop-shdef}. They allow us to distill the action of
$\glie$ in a set of four operators acting on highest weight elements in
``neighboring $K$-types''. We will use them throughout Chapters
\ref{chap-2} and~\ref{chap-3} in the study of Fourier term modules.
Irreducibility of $(\glie,K)$-modules can be investigated with the use
of shift operators; see Section~\ref{sect-scm}.\medskip

The real Lie algebra \il{glie}{$\glie$}$\glie$ of $\SU(2,1)$ is a real
form of the complex Lie algebra of type $\mathrm{A}_2.$ As a vector
space it is the sum $\glie=\nlie\oplus \alie\oplus \klie$ of the Lie
algebras of the subgroups $N,$ $A$ and $K$.\il{nlie1}{$\nlie$}
\il{alie}{$\alie$}\il{klie1}{$\klie$} We gave a basis
$\{ \XX_0, \XX_1, \XX_2\}$ for $\nlie$ in \eqref{XX012}, and a basis
$\{\CK_i, \WW_0, \WW_1,\WW_2\}$ of $\klie$ in \S\ref{sect-klie}. The
group $A$ in the Iwasawa decomposition has Lie algebra
$\alie=\RR\,\HH_r,$\il{HHr}{$\HH_r$}
\be \HH_r =
\begin{pmatrix}0&0&1\\0&0&0\\1&0&0
\end{pmatrix}\,.\ee

Inside $K$ is the group
\il{M1}{$M$}$M=\bigl\{\mm(\z)\;:\; |\z|=1\bigr\}$ with Lie algebra
$\mlie = \RR \, \HH_i,$\ir{HHi}{\HH_i}
\be \HH_i = \frac 32\WW_0 - \frac12\CK_i=
\begin{pmatrix}i&0&0\\0&-2i&0\\
0&0&i\end{pmatrix}\,.\ee
It satisfies $e^{t\HH_i} = \mm(e^{it})$.\smallskip

There are two \il{Ca}{Cartan subalgebra}Cartan subalgebras of
\il{gliec}{$\glie_c$}$\glie_c= \CC\otimes_\RR \glie$ of interest in the
context of this paper. The Cartan subalgebra $\alie_c\oplus \mlie_c$ is
adapted to the Iwasawa decomposition. We may choose $\XX_1-i \XX_2$ and
$\XX_1+i\XX_2$ in $\nlie_c$ as the root vectors for the choice of
\il{spr}{simple positive roots}simple positive roots, \ir{al12}{\al_1,
\al_2}$\al_1$ and $\al_2$, respectively.
\be\label{al12}
\begin{array}{|c|cc|}\hline
&\al_1 & \al_2\\ \hline
\HH_r & 1 & 1\\
i\HH_i& -3 & 3 \\ \hline
\end{array}
\ee
The element $\XX_0\in \nlie$ is a root vector for $\al_1+\al_2$.

There is also the Cartan subalgebra \il{hK1}{$\hK$}$\hK= \CC\, \CK_i 
\oplus \CC\, \WW_0$ contained in $ \klie_c$, with root vectors as
indicated in Table~\ref{tab-rootv}.
\begin{table}[ht]
\[
\begin{array}{|rcl|c|c|}\hline
\multicolumn{3}{|c|}{\Z}& \bigl[\CK_i,\Z\bigr]& \bigl[ \WW_0,\Z\bigr]\\
\hline
\Z_{12}&=& \WW_1-i\WW_2 & 0 & 2i \Z_{12}\\
\Z_{23}&=& \frac12( - \WW_1-i\WW_2+\XX_1+i\XX_2)& 3i \Z_{23}&-i
\Z_{23}\\
\Z_{13}&=& \frac12\bigl( i\HH_i - 2 i \WW_0 + \HH_r + 2i \XX_0\bigr) &
3i\Z_{13}
& i \Z_{13}\\
\Z_{21}&=&\WW_1+i\WW_2 & 0 &
-2i \Z_{21} \\
\Z_{32}&=& \frac12\bigl(
-\WW_1+i\WW_2+\XX_1-i\XX_2\bigr)&
-3i \Z_{32}& i\Z_{32}
\\
\Z_{31}&=& \frac12\bigl(
-i\HH_i+2i\WW_0+\HH_r-2i\XX_0\bigr)&
-3i\Z_{31}&-i\Z_{31}\\
\hline
\end{array}\]
\caption{Root vectors for $\hK$.}\label{tab-rootv}
\end{table}\il{tab-rootv}{$\Z_{ij} \in \glie_c$}

The \il{uea}{universal enveloping algebra}universal enveloping algebra
\il{Uglie}{$U(\glie$)}$U(\glie)$ is the associative $\CC$-algebra
generated by $\glie.$

The center \il{ZUg}{$ZU(\glie)$}$ZU(\glie)$ of $U(\glie)$ is isomorphic
to a polynomial algebra in two variables, which can be chosen as
indicated in Table~\ref{tab-casdt}. The first generator $C$ of degree
two is the \il{Cas}{Casimir element}Casimir element.\ir{tab-casdt}{C
\in ZU(\glie))} As a second generator we use the element
\il{Dt3}{$\Dt_3$}$\Dt_3$ of degree three given in \cite[Proposition
3.1]{GPT02}.
\begin{table}[ht]
\begin{align*}
C &= \HH_r^2-4\HH_r
-\frac13\HH_i^2+4\XX_0\HH_i
-8\XX_0\WW_0
\\
&\qquad\hbox{}
+4\XX_0^2
-2\XX_1\WW_1+\XX_1^2-2\XX_2\WW_2+\XX_2^2
\\
&= -\frac13\CK_i^2 +2i\CK_i
-\WW_0^2+2i \WW_0 -\Z_{12}\Z_{21}
+4 \Z_{13}\Z_{31}+4\Z_{23}\Z_{32}\,,\\
\Dt_3& =-\frac i9 \CK_i^3 + i \CK_i\WW_0^2 +i \Z_{12}\CK_i\Z_{21}
+2i \Z_{13}\CK_i\Z_{31}
-6i \Z_{13}\WW_0\Z_{31}\\
&\qquad\hbox{}
- 6\Z_{13}\Z_{21}\Z_{32}
+2i \Z_{23}\CK_i\Z_{32}
+ 6i \Z_{23}\WW_0\Z_{32}+6 \Z_{23}\Z_{12}\Z_{31}\\
&\qquad\hbox{}
-2 \CK_i^2 +2\CK_i\WW_0
+24 \Z_{13}\Z_{31}
+24\Z_{23}\Z_{32}+8i\CK_i\,.
\end{align*}
\caption{Generators of the center of the enveloping
algebra.}\label{tab-casdt}
\end{table}

In \cite[\S3]{Math} the Lie algebra relations are defined for symbolic
manipulation. We carry out various checks, among them the centrality of
$C$ and~$\Dt_3$.

\rmrk{(g,K)-modules}A \il{gKm}{$(\glie,K)$-module}$(\glie,K)$-module has
the structure of a $\glie$-module and of a $K$-module in which the
actions of $\glie$ and $K$ are compatible, and in which each vector is
$K$-finite. See, eg, \cite[Section 3.3]{Wal88} for a discussion in a
more general context.

The group $K\subset \SU(2,1)$ is connected, and the $\glie$-action
determines the action of $K$. Nevertheless the action of $K$ gives
additional information; not every $\glie$-module can be made into a
$(\glie,K)$-module. \smallskip

Let $V$ be any $(\glie,K)$-module. For each $v\in V$ the
finite-dimensional representation generated by $Kv$ is a direct sum of
irreducible representations, each isomorphic to some representation
$\tau^h_p$ discussed in~\S\ref{sect-irrpK}. We denote by
\il{Vhp}{$V_{h,p}$}$V_{h,p}$ the submodule of $V$ consisting of a sum
of copies isomorphic to $\tau^h_p.$ This submodule is characterized by
the eigenvalues $-ih$ of $\CK_i\in \klie$ and $-p(p+2)$ of the Casimir
element $C_K$ in~\eqref{CasK}. We can characterize $V_{h,p}$ as the
intersection of the kernels of the elements
\[ \CK_i+ih \text{ and } C_K +2p+p^2 \text{ in } U(\klie)\,.\]

If we know for a given element $v\in V$ a finite set of $(h_i,p_i)$'s
such that $v\in \bigoplus_i V_{h_i,p_i}$ it is possible to give the
projection of $v$ onto a given factor $V_{h,p}$ as the image $uv$ by
an element $u\in U(\klie)$ (actually a polynomial in $\CK_i$ and
$C_K$). However, in general it is impossible to find an element in
$U(\klie)$ that works for all $v\in V.$ The best one can do is to give
a sequence of elements that work for an increasing collection of finite
sets.

Inside the component $V_{h,p}$ we have a further decomposition into
\il{wsp}{weight space} weight spaces \il{Vhpq}{$V_{h,p,q}$}$V_{h,p,q},$
characterized as the kernel of $\WW_0+iq.$ In this way
\be V_{h,p} = \bigoplus_{|q|\leq p,\; q\equiv p\bmod 2} V_{h,p,q}\,.\ee
The actions of $\Z_{21},\Z_{12}\in \klie_c$ shift the weight by $2.$ We
have seen this in the description of the action on basis functions on
$K$ in Table~\ref{tab-RLdK}, p~\pageref{tab-RLdK}; in a general
$(\glie,K)$-module this follows from the commutation relations. The
occurrence in the module $V$ of a $K$-type $\tau^h_p$ is completely
determined by the \il{hwsp}{highest weight space}highest weight space
$V_{h,p,p}.$ We use the term \il{Kwt}{weight}weight for the eigenvalues
of $i \WW_0$ in representations of $\klie$. The highest weight subspace
in $V_{h,p}$ is the kernel of $\Z_{21}$.\smallskip

The commutation relations imply that the action of each of the basis
elements $\Z_{23}, \Z_{32},$ $\Z_{13},$ and $\Z_{31}$ sends elements in
$V_{h,p,p}$ to elements in the sum of spaces $V_{h',p',q'}$ with
$|h'-h|=3,$ $|p'-p|=1,$ $|q'-p|=1.$

The action of elements of $U(\glie)$ enables us to project the result to
one of these spaces. The resulting operators are well known in the
theory of semi-simple Lie groups. Here we call them shift operators.

\begin{prop}\label{prop-shdef}There are \il{sho}{shift operator}shift
operators in each $( \glie,K)$-module $V$\il{sopS}{$\sh{\pm 3}{\pm 1}$}
\[\begin{aligned} \sh 3 1 &: V_{h,p,p}\rightarrow
V_{h+3,p+1,p+1}\,,&\quad \sh {-3}1 &: V_{h,p,p} \rightarrow
V_{h-3,p+1,p+1}\,,\\
\sh3{-1}&: V_{h,p,p} \rightarrow V_{h+3,p-1,p-1}\,,& \sh {-3}{-1} &:
V_{h,p,p} \rightarrow V_{h-3,p-1,p-1}\,,
\end{aligned}\]
defined in terms of the action of $U(\glie)$ in $V$ as indicated in
Table~\ref{tab-sho}.

The operators $\sh 3 1$ and $\sh 3{-1}$ commute, and the operators
$\sh{-3}1$ and $\sh{-3}{-1}$ commute.
\end{prop}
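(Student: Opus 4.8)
The plan is to read the four operators off the root-space description in Table~\ref{tab-rootv} and then to deduce the two commutation relations from the abelianness of the relevant root spaces together with a multiplicity-one argument. First I would locate the root vectors that move the parameter~$h$. Since $h$ is the $\CK_i$-datum (recall $\CK_i$ acts by $-ih$) and $V_{h,p,p}$ is the highest $\WW_0$-weight space, Table~\ref{tab-rootv} shows that the elements of $\glie_c$ raising $h$ by $3$ are $\Z_{31}$ and $\Z_{32}$ (both with $\ad\CK_i$-eigenvalue $-3i$), while those lowering $h$ by $3$ are $\Z_{13}$ and $\Z_{23}$. These two pairs span the two nonzero $\ad\CK_i$-eigenspaces $\plie_+$ and $\plie_-$ of $\glie_c$; because $G/K$ is Hermitian symmetric (the complex structure on $\X$), each is an abelian subalgebra, so $[\Z_{31},\Z_{32}]=0$ and $[\Z_{13},\Z_{23}]=0$. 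I would record these brackets, together with the auxiliary ones $[\Z_{31},\Z_{12}]=2\Z_{32}$, $[\Z_{21},\Z_{32}]=2\Z_{31}$ and $[\Z_{21},\Z_{12}]=-4i\WW_0$, directly from the matrix-unit presentation $\Z_{ij}\propto E_{ij}$ of $\glie_c\cong\mathfrak{sl}_3(\CC)$.

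For well-definedness I would apply the $\ad\WW_0$-weight bookkeeping of Table~\ref{tab-rootv} to $v\in V_{h,p,p}$. The vectors $\Z_{31}v$ and $\Z_{23}v$ have $\WW_0$-weight $p+1$, which is maximal and hence forces the single $K$-type $\tau^{h\pm3}_{p+1}$; so $\sh31$ and $\sh{-3}1$ are the bare root vectors, with no projection needed. By contrast $\Z_{32}v$ and $\Z_{13}v$ have weight $p-1$ and lie in $V_{h\pm3,p+1,p-1}\oplus V_{h\pm3,p-1,p-1}$; here $\sh3{-1}$ and $\sh{-3}{-1}$ are obtained by composing with the projection onto the $\tau_{p-1}$-summand, which on this two-term set of $K$-types is realized by a polynomial in $C_K$ (using the eigenvalue $-p'(p'+2)$ of $C_K$ on $\tau_{p'}$). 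Together with the normalizing scalars recorded in Table~\ref{tab-sho} this shows each operator maps $V_{h,p,p}$ into the stated target space.

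For the commutation relations I would argue as follows. Both composites $\sh31\sh3{-1}$ and $\sh3{-1}\sh31$ send $V_{h,p,p}$ to $V_{h+6,p,p}$, and since $\plie_+$ is abelian the two-fold action factors through $\mathrm{Sym}^2\plie_+\cong\tau^6_2$; by Clebsch--Gordan the $K$-type $\tau^{h+6}_p$ occurs in $\mathrm{Sym}^2\plie_+\otimes\tau^h_p$ with multiplicity one, so both composites are scalar multiples of one and the same $K$-map $V_{h,p,p}\to V_{h+6,p,p}$. The remaining, and essential, point is that these two scalars coincide. This is exactly where the normalization in Table~\ref{tab-sho} enters: a naive ``root vector followed by projection'' definition does \emph{not} commute. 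Using $[\Z_{31},\Z_{12}]=2\Z_{32}$ and $[\Z_{21},\Z_{12}]=-4i\WW_0$ one finds that the two composites differ by an explicit factor rational in $p$ (in the unnormalized version the discrepancy works out to $\tfrac{p+2}{p+1}$), so the content of the proposition is precisely that the $(h,p)$-dependent scalars in the table are chosen to cancel this discrepancy. The same computation with the abelian space $\plie_-=\langle\Z_{13},\Z_{23}\rangle$ handles the commuting of $\sh{-3}1$ and $\sh{-3}{-1}$.

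I expect the genuine work to be the last step: the multiplicity-one argument delivers proportionality essentially for free, but equality of the two scalars is the delicate part and turns on the precise normalizations. I would carry out that scalar bookkeeping through the commutator identities above and the $\mathfrak{sl}_2$-relations of the $K_0$-triple $\{\WW_0,\Z_{12},\Z_{21}\}$, checking the final coincidence symbolically in \cite{Math}, consistent with the role of the notebook elsewhere in the paper.
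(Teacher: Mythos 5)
Your construction of the four operators is essentially the paper's: $\sh31=\Z_{31}$ and $\sh{-3}1=\Z_{23}$ land in $V_{h\pm3,p+1,p+1}$ by the weight bookkeeping you describe, and $\sh{\pm3}{-1}$ are $\Z_{32}$, resp.\ $\Z_{13}$, followed by the projection onto the $\tau_{p-1}$-component computed via $C_K$. The gap is in the commutation step. You set out to prove the statement as literally printed, namely that $\sh31$ commutes with $\sh3{-1}$ and $\sh{-3}1$ with $\sh{-3}{-1}$; but that pairing is a misprint, and the claim is false. With the normalizations of Table~\ref{tab-sho} one computes on the principal series, using \eqref{shps},
\[
\sh3{-1}\,\sh31\;\kph{h}{p}{r}{p}(\nu) \= \frac{p+1}{p+2}\;\sh31\,\sh3{-1}\;\kph{h}{p}{r}{p}(\nu)\,,
\]
so the two scalars that your multiplicity-one argument makes proportional do \emph{not} coincide, and the symbolic verification you defer to the last step is exactly where this would surface. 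The discrepancy cannot be normalized away while keeping $\sh{\pm3}{-1}$ equal to the projection of the root vector, which is what the rest of the paper uses.

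What the proof actually establishes --- and what the paper relies on later, e.g.\ for the well-definedness of $v(a,b)=(\sh31)^a(\sh{-3}1)^b\,v(0,0)$ in \S\ref{sect-scm} and for the square relation \eqref{sqrel} --- is that the two \emph{upward} operators $\sh31,\ \sh{-3}1$ commute with each other, and the two \emph{downward} operators $\sh3{-1},\ \sh{-3}{-1}$ commute with each other. For these pairs the abelianness of $\plie_\pm$, which is your key structural input, is not the operative mechanism: each pair mixes the two eigenspaces ($\Z_{31}\in\plie_+$ while $\Z_{23}\in\plie_-$), and these do not commute in $\glie_c$. The upward pair commutes on highest weight spaces because $[\Z_{31},\Z_{23}]=\frac12\Z_{21}$ lies in $\klie_c$, raises the $\WW_0$-weight by $2$, and therefore annihilates $V_{h,p,p}$; the downward pair requires a separate computation with the $U(\klie)$-corrections, carried out in \cite[\S7a]{Math}. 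Your Clebsch--Gordan framework can be adapted to these pairs (for instance $\tau^{h}_{p+2}$ occurs with multiplicity one in $\plie_+\otimes\plie_-\otimes\tau^h_p$), but the proportionality and the scalar comparison then have to be redone for the correct pairing.
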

\begin{table}[tp]
\[
\renewcommand\arraystretch{1.3}
\begin{array}{|rl|c|}\hline
\sh 3 1 & : V_{h,p,p}\rightarrow V_{h+3,p+1,p+1} & \Z_{31}\\
\hline
\sh {-3} 1 & : V_{h,p,p}\rightarrow V_{h-3,p+1,p+1} & \Z_{23} \\
\hline
\sh3{-1}& : V_{h,p,p}\rightarrow V_{h+3,p-1,p-1} & \Z_{32}+ \bigl(
2(p+1)\bigr)^{-1} \Z_{12}\Z_{31}
\\
&& = p(p+1)^{-1}\Z_{32}
+(2(p+1))^{-1}\Z_{31}\Z_{12}\\ \hline
\sh{-3}{-1}& : V_{h,p,p}\rightarrow V_{h-3,p-1,p-1} & \Z_{13} - \bigl(
2(p+1)\bigr)^{-1} \Z_{12}\Z_{23}\\
&&= p(p+1)^{-1}\Z_{13} - \bigl( 2(p+1)\bigr)^{-1} \Z_{23}\Z_{12}
\\ \hline
\end{array}
\]
\caption{Description of the shift operators} \label{tab-sho}
\end{table}

\begin{proof}We define the \il{souw}{shift operator,
upward}\il{uwso}{upward shift operator}\emph{upward shift operators}
$\sh 3 1$ and $\sh {-3}1$ by the action of Lie algebra elements:
$\sh 3 1 = \Z_{31}$, and $\sh{-3}1 = \Z_{23}$. Since
$[\Z_{31},\Z_{23}]=\frac12 \Z_{21}$ raises the weight by $2$, and since
the action of $\Z_{12}$ on highest weight spaces is zero, the operators
$\sh31$ and $\sh{-3}1$ commute, provided that we show that the
operators $\sh {\pm 3}1$ send $V_{h,p,p} $ to $V_{h\pm 3,p+1,p+1}.$

To check that $\sh 3 1 V_{h,p,p} \subset V_{h+3,p+1,p+1}$ we use that
for all $K$-types
\be V_{h',p',p'} = \ker\bigl( \CK_i + i h'\bigr)\, \cap\, \ker\bigl(
\WW_0+ip'\bigr)
\cap \,\ker\bigl( C_K+p'(p'+2)\bigr)\,. \ee
We start with $v\in V_{h,p,p},$ which is in this intersection for
$(h',p')=(h,p),$ and also satisfies $\Z_{21}v=0.$ For
$w=\sh 31 v = \Z_{31}v$ we have
\begin{align*}
\CK_i w &= \CK_i \Z_{31}v = \Z_{31} \CK_iv+\bigl[ \CK_i,
\Z_{31}\bigr]v\\
&\quad =-i h w -3i \Z_{31} v = -i(h+3)
w\,,\displaybreak[0]
\\
\WW_0 w &= \Z_{31}\WW_0 v + \bigl[ \WW_0,\Z_{31}\bigr] v = -ip w -i
\Z_{31} v\\
&\quad =-i(p+1)w\,,
\displaybreak[0]\\
C_K w &= \bigl( -2i \WW_0+\WW_0^2\bigr) w
+ \Z_{12}\Z_{21}\Z_{31}v\\
& = -(p^2+4p+3) w + \Z_{12}\Bigl( \Z_{31}\, 0
+\bigl[\Z_{21},\Z_{31}\bigr] v\Bigr)
\displaybreak[0]\\
&= -(p^2+4p+3) w +\Z_{12}\bigl( 0 \bigr)
= -(p+1)(p+3)w\,.
\end{align*}
This shows that $w=\sh31 v \in V_{h+3,p+1,p+1}.$ We check that
$\sh {-3}1 V_{h,p,p}\subset V_{h-3,p+1,p+1}$ in a similar way.
\medskip

The \il{sowd}{shift operator, downward}\il{dso}{downward shift
operator}\emph{downward shift operators} $\sh 3{-1}$ and $\sh{-3}{-1}$
are based on $\Z_{32},$ respectively $\Z_{13},$ but in a more
complicated way than for the upward shift operators.

Let $v\in V_{h,p,p}.$ So it has the properties indicated above. We put
$u= \Z_{32} v.$ Then
\begin{align*}
\CK_i u &= \Z_{32}\CK_i v
+ \bigl[ \CK_i,\Z_{32}\bigr] v = -i(h+3)
u\,,\\
\WW_0 u &= \Z_{32} \WW_0 v
+ \bigl[ \WW_0,\Z_{32}\bigr] v =-i(p-1)u\,
\end{align*}
Preliminary computations suggest that $u=u_{p-1}+ u_{p+1}$ with
$u_{p\pm 1} \in V_{h+3,p\pm 1,p-1}.$ If this is true, then
\[ C_K u = -(p-1)(p+1)\, u_{p-1} -(p+1)(p+3)
\, u_{p+1}\,.\]
It turns out that the two equations for $u_{p+ 1}$ and $u_{p-1}$ have
the unique solution
\bad u_{p-1}
&= \frac1{4(p+1)} \Bigl( C_K u+(p+1)(p+3)
u \Bigr)
= u
+ \frac 1{2(p+1)} \Z_{12}\Z_{31} v\,,\\
u_{p+1}&= \frac{-1}{4(p+1)}\Bigl( C_K u
+(p^2-1) u\Bigr)
=
- \frac1{2(p+1)}\, \Z_{12}\Z_{31} v\,.
\ead
So we take
\be \sh 3{-1} v = u_{p-1} = \Z_{32}v+ \frac 1{2(p+1)} \Z_{12}\Z_{31} v
\,.\ee
We check the computations in \cite[\S7a]{Math}, also for the other case
\be \sh{-3}{-1}v\= \Z_{13}v - \frac1{2(p+1)}\Z_{12}\Z_{23}v\,,\ee
and for the relation
\be\sh{-3}{-1}\sh3{-1} v \= \sh3{-1}\sh{-3}{-1}v\,.
\iflabels\else\qedhere\fi\ee
\end{proof}

The proof has given us explicit descriptions in Table~\ref{tab-sho} of
the shift operators. In the case of the downward shift operators this
description depends on the $K$-type of the element to which the
operator is applied.

\begin{cor}Let $V$ be a $(\glie,K)$-module, and let $v\in V_{h,p,p}.$
The submodule $U(\glie)\, v$ of $V$ is equal to the space
$U(\klie) P  v $ where $P$ runs over all compositions of shift
operators.
\end{cor}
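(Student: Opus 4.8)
The plan is to verify both inclusions, the second by showing that the right-hand side is a $(\glie,K)$-submodule. Write $\Lfu=\sum_P U(\klie)\,Pv$, the sum over all finite compositions $P$ of the shift operators $\sh31,\sh{-3}1,\sh3{-1},\sh{-3}{-1}$, the empty composition included so that $v\in\Lfu$. One inclusion is immediate: by Table~\ref{tab-sho} each shift operator acts, on every highest weight space, as the action of an element of $U(\glie)$, hence maps $U(\glie)v$ into itself; since $v\in U(\glie)v$ we get $Pv\in U(\glie)v$, and as $U(\klie)\subset U(\glie)$ this gives $\Lfu\subset U(\glie)v$. For the reverse inclusion it suffices to prove that $\Lfu$ is a $(\glie,K)$-submodule, for then it is a $\glie$-submodule containing $v$ and so contains $U(\glie)v$. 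That $\Lfu$ is $K$-stable is clear, since $K$ is connected and $U(\klie)\,\Lfu=\Lfu$. As a vector space $\glie_c=\klie_c\oplus\qlie$ with $\qlie=\CC\Z_{13}\oplus\CC\Z_{31}\oplus\CC\Z_{23}\oplus\CC\Z_{32}$ the span of the four root vectors of $\hK$ lying outside $\klie_c$; since $\Lfu$ is already $\klie$-stable, it is enough to show $X\Lfu\subset\Lfu$ for each $X\in\qlie$.

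First I would record the structural fact $[\klie_c,\qlie]\subset\qlie$. This is pure weight bookkeeping from Table~\ref{tab-rootv}: $\operatorname{ad}(\CK_i)$ and $\operatorname{ad}(\WW_0)$ act by scalars on each of the four root lines, while adding the $\hK$-weight of $\Z_{12}$ or $\Z_{21}$ to the weight of one of $\Z_{13},\Z_{31},\Z_{23},\Z_{32}$ either gives the weight of another of these four (so the bracket is a multiple of that root vector) or is not a root of $\hK$ at all (so the bracket vanishes). Thus $\qlie$ is invariant under $\operatorname{ad}(\klie_c)$.

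Next I would treat a highest weight generator. Every $w=Pv$ lies in some highest weight space $V_{h',p',p'}$, because by Proposition~\ref{prop-shdef} the shift operators carry highest weight spaces to highest weight spaces. For such $w$ one has directly $\Z_{31}w=\sh31\,w\in\Lfu$ and $\Z_{23}w=\sh{-3}1\,w\in\Lfu$, while the two remaining root vectors are recovered from the downward shift operators via the identities established in the proof of Proposition~\ref{prop-shdef},
\[ \Z_{32}w=\sh3{-1}w-\bigl(2(p'+1)\bigr)^{-1}\Z_{12}\,\sh31\,w,\qquad \Z_{13}w=\sh{-3}{-1}w+\bigl(2(p'+1)\bigr)^{-1}\Z_{12}\,\sh{-3}1\,w. \]
In each the first term is again a composition of shift operators applied to $v$, and the second lies in $U(\klie)\,\Lfu$; hence $Xw\in\Lfu$ for all four $X\in\qlie$ and every highest weight generator $w=Pv$.

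Finally I would pass to a general element $u\,w$ of $\Lfu$ with $u\in U(\klie)$ and $w=Pv$. Using $[\klie_c,\qlie]\subset\qlie$, an easy induction on the degree of $u$ shows that for any $X\in\qlie$ one may write $Xu=\sum_\al u_\al X_\al$ with $u_\al\in U(\klie)$ and $X_\al\in\qlie$; that is, $X$ can be moved to the right past $U(\klie)$ at the cost of replacing it by elements of $\qlie$ with coefficients in $U(\klie)$. Then $X\,u\,w=\sum_\al u_\al (X_\al w)$, and each $X_\al w\in\Lfu$ by the previous step, so $X\,u\,w\in U(\klie)\,\Lfu=\Lfu$. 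This gives $X\Lfu\subset\Lfu$ for every $X\in\qlie$, so $\Lfu$ is $\glie$-stable and the proof is complete. The main obstacle is precisely this last transfer from highest weight generators to arbitrary elements $u\,w$: commuting a moving root vector past a general element of $U(\klie)$ is exactly where the invariance $[\klie_c,\qlie]\subset\qlie$ is indispensable, since otherwise the brackets $[\klie_c,X]$ could produce contributions outside $\qlie$ and the induction would fail to close.
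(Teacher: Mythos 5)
Your proof is correct and rests on exactly the ingredients the paper's own (very terse) proof invokes: the PBW-type splitting $\glie_c=\klie_c\oplus\qlie$ with $\qlie$ spanned by $\Z_{13},\Z_{31},\Z_{23},\Z_{32}$, the identities of Table~\ref{tab-sho} expressing these root vectors through shift operators on highest weight vectors, and the $\operatorname{ad}(\klie_c)$-invariance of $\qlie$ needed to commute root vectors past $U(\klie)$. The only difference is organizational — you show the right-hand side is a $(\glie,K)$-submodule containing $v$, whereas the paper rewrites $U(\glie)v$ directly — and your version fills in the details the paper leaves implicit.
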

\begin{proof}The statement is clear if we let $P$ run through the
products of the elements $\Z_{31},$ $\Z_{23},$ $\Z_{32}$ and $\Z_{13}.$
The relations in Table~\ref{tab-sho} allow us to rewrite the products
in terms of shift operators.\end{proof}

\begin{defn}Let $V$ be a $(\glie,K)$-module. We call a vector
$v \in V_{h,p,p}$ \il{mv}{minimal vector}\emph{minimal} if $v\neq 0,$
$\sh3{-1}v=0$ and $\sh{-3}{-1}v=0.$\end{defn}

\begin{lem}\label{lem-mv}Let $v\in V_{h,p,p}$ be a minimal vector. Then
we have:
\begin{enumerate}
\item[i)] $2(p+1) \Z_{32} v = - \Z_{12}\Z_{31}v$ and
$2(p+1)\Z_{13}v = \Z_{12}\Z_{23}v.$
\item[ii)] $\sh{-3}{-1}\sh 3 1 v -(p+1) \sh 3{-1}\sh{-3}1 v =
\frac12 (p-h)(p+1)v .$
\item[iii)] For the Casimir element in Table~\ref{tab-casdt},
p~\pageref{tab-casdt}:
\[ C v = \Bigl( \frac 13 h^2+p^2+2h+2p\Bigr)\, v
+ 4\frac{p+2}{p+1}\, \sh{-3}{-1}\sh31 v\,.\]
\item[iv)] For the central element $\Dt_3\in ZU(\glie)$ in
Table~\ref{tab-casdt}:
\begin{align*} \Dt_3 v &= \frac19 h
(h+3p+12)(h-3p+6)\, v \\
&\quad\hbox{}
+ 2 \frac{(p+2)(h-3p+6)}{p+1}\, \sh{-3}{-1}\sh31 v\,.\end{align*}
\end{enumerate}
\end{lem}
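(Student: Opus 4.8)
The first statement is immediate from the definitions. By the explicit formulas for the downward shift operators in Table~\ref{tab-sho}, applied at $K$-type $\tau^h_p$, we have $\sh3{-1}v = \Z_{32}v + \frac{1}{2(p+1)}\Z_{12}\Z_{31}v$ and $\sh{-3}{-1}v = \Z_{13}v - \frac{1}{2(p+1)}\Z_{12}\Z_{23}v$. Since $v$ is minimal both expressions vanish, and clearing denominators gives exactly the two identities in i). These relations are the device that lets us eliminate $\Z_{32}v$ and $\Z_{13}v$ in the later parts.

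For parts ii)--iv) the plan is uniform. I would apply the relevant operator to $v$, write it as a word in the root vectors $\Z_{ij}$, and reduce that word to a scalar multiple of $v$ plus a scalar multiple of the single surviving second-order term $\sh{-3}{-1}\sh31 v = \sh{-3}{-1}\Z_{31}v$. The reduction uses three ingredients: the highest-weight relations $\Z_{21}v=0$, $\CK_i v=-ihv$, $\WW_0 v=-ipv$ (so that every monomial ending in a weight-raising factor can be discarded once $\Z_{21}$ is commuted to the right); the minimality relations from i); and the commutation relations of $\glie_c$, whose brackets with $\CK_i,\WW_0$ are recorded in Table~\ref{tab-rootv} and whose remaining brackets are read off from the matrix realization, as set up in \cite[\S3]{Math}. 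Throughout one must remember that $\sh{-3}{-1}$ and $\sh3{-1}$ carry a $p$-dependent normalization, so when they are applied to the vectors $\Z_{31}v,\Z_{23}v\in V_{h\pm3,p+1,p+1}$ the formulas of Table~\ref{tab-sho} are used with $p$ replaced by $p+1$.

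For ii) this gives $\sh{-3}{-1}\sh31 v = \Z_{13}\Z_{31}v - \frac{1}{2(p+2)}\Z_{12}\Z_{23}\Z_{31}v$ and $\sh3{-1}\sh{-3}1 v = \Z_{32}\Z_{23}v + \frac{1}{2(p+2)}\Z_{12}\Z_{31}\Z_{23}v$; forming the stated combination and collapsing the commutators, using $[\Z_{31},\Z_{23}]=\frac12\Z_{21}$ and the analogous brackets, should leave only the scalar $\frac12(p-h)(p+1)$ times $v$. For the Casimir in iii) I would use the second expression for $C$ in Table~\ref{tab-casdt}: the terms $-\frac13\CK_i^2+2i\CK_i$ and $-\WW_0^2+2i\WW_0$ act on $v$ by the scalar $\frac13h^2+2h+p^2+2p$, the term $\Z_{12}\Z_{21}v$ vanishes, and substituting $\Z_{32}v=-\frac{1}{2(p+1)}\Z_{12}\Z_{31}v$ from i) turns $4\Z_{13}\Z_{31}v+4\Z_{23}\Z_{32}v$ into $\frac{p+2}{p+1}$ times $4\sh{-3}{-1}\Z_{31}v$; the single commutator identity that makes this match is $[\Z_{23},\Z_{12}]=-2\Z_{13}$ acting on $\Z_{31}v$.

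Part iv) is the main obstacle and follows the same scheme applied to the degree-three generator $\Dt_3$ of Table~\ref{tab-casdt}. Here the bookkeeping is much heavier: one must reduce cubic monomials such as $\Z_{13}\Z_{21}\Z_{32}v$ and $\Z_{23}\Z_{12}\Z_{31}v$, repeatedly commuting weight-raising factors to the right and using i) to remove $\Z_{13}v$ and $\Z_{32}v$, until everything is expressed through the $\CK_i,\WW_0$-eigenvalues and the second-order term. I expect the scalar coefficient $\frac19 h(h+3p+12)(h-3p+6)$ and the shift coefficient $2\frac{(p+2)(h-3p+6)}{p+1}$ to emerge only after substantial cancellation, so I would carry out and independently verify this reduction symbolically, as is done for the related identities in \cite[\S7a]{Math}.
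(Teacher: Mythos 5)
Your proposal is correct and follows essentially the same route as the paper: part i) read off from Table~\ref{tab-sho} together with minimality, and parts ii)--iv) by writing the composite shift operators as words in the $\Z_{ij}$ (with the $p\mapsto p+1$ shift in the normalization), reducing via the highest-weight and minimality relations, and delegating the heavy bookkeeping for iii) and iv) to symbolic computation as in \cite[\S7b]{Math}. Your explicit verification of iii) using $[\Z_{23},\Z_{12}]=-2\Z_{13}$ is a correct instance of exactly the reduction the paper performs.
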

\begin{proof}Part~i)
is a direct consequence of the description of the downward shift
operators in Table~\ref{tab-sho}, p~\pageref{tab-sho}.

For the remaining statements we use Mathematica to carry out
computations. See \cite[\S7b]{Math}. For ii)
we write out the description of the shift operators in
Table~\ref{tab-sho}, using that $\sh {\pm 3}1 v \in V_{h\pm 3,p+1}.$
Taking the relations in the universal enveloping algebra into account,
this gives
\begin{align*}\sh{-3}{-1}\sh 3 1 v
&-(p+1)
\sh 3{-1}\sh{-3}1 v \\
&= \frac12(p-h)(p+1) v - \Z_{23} \Bigl( 2(p+1) \Z_{32}
+\Z_{12}\Z_{31} \Bigr)v\,.
\end{align*}
Since $\sh3{-1}$ vanishes on $v$ this gives the formula.

The computations for iii) and iv) are carried out with Mathematica in a
similar way.
\end{proof}

\subsection{Special modules}\label{sect-scm}

\begin{defn}\label{def-scm}
By a \il{spcm}{special module}\emph{special module} with
\il{tp}{parameter set of a special module}\emph{parameter set}
$\bigl[ \mu_2;h_0,p_0;A,B\bigr]$ we mean a $(\glie,K)$-module $V$ with
the following properties:
\begin{enumerate}
\item[a)] The module $V$ is generated by a minimal vector in
$V_{h_0,p_0},$ with the usual conditions $h_0\equiv p_0\bmod 2,$
$p_0\in \ZZ_{\geq 0}.$
\item[b)] The Casimir element $C$ acts in $V$ by multiplication by
$\mu_2\in \CC.$
\item[c)] The parameter set determines the set of $K$-types in~$V$:
\be \label{absum}
V = \bigoplus_{0\leq a \leq A}\bigoplus_{0\leq b\leq B} \, \klie_c\,
\Z_{31}^a\, \Z_{23}^b \, V_{h_0,p_0,p_0}\,.\ee
The parameters $A$ and $B$ are in $\ZZ_{\geq 0} \cup\infty$. If
$A=\infty$ take $\bigoplus_{a\geq 0}$, and similarly for $b$ and~$B$.
\item[d)] All $K$-types in $V$ have multiplicity one.
\end{enumerate}
\end{defn}

In the introduction of Subsection~\ref{sect-iclirr} we quote theorems
implying that all irreducible $(\glie,K)$-modules are isomorphic to
modules found in these notes. All of these are special modules as
defined above.

\begin{prop}\label{prop-scm}Two irreducible special modules are
isomorphic if and only if their parameter sets are equal.
\end{prop}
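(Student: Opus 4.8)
The plan is to show that the parameter set $[\mu_2;h_0,p_0;A,B]$ is a complete invariant of the isomorphism class: it can be read off from intrinsic data (giving the ``only if'' direction), and it rigidly determines the $(\glie,K)$-action (giving the ``if'' direction). For the \emph{only if} direction, suppose $\phi\colon V\to V'$ is an isomorphism of irreducible special modules. Since $C$ is central it acts on each module by the scalar in property (b), and $\phi C=C\phi$ forces $\mu_2=\mu_2'$. Being $K$-equivariant, $\phi$ carries the isotypic component $V_{h,p}$ onto $V'_{h,p}$, so the two modules exhibit the same set of $K$-types $\tau^h_p$. By property (c) this set is $\{(h_0+3(a-b),\,p_0+a+b):0\le a\le A,\ 0\le b\le B\}$, and the map $(a,b)\mapsto(h,p)$ is injective (one recovers $a-b$ from $h-h_0$ and $a+b$ from $p-p_0$). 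Thus one reads off $(h_0,p_0)$ as the unique type of minimal $p$, and then $A,B$ as the largest values of $a=\tfrac12\bigl((p-p_0)+(h-h_0)/3\bigr)$ and $b=\tfrac12\bigl((p-p_0)-(h-h_0)/3\bigr)$. All five parameters therefore agree.

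For the \emph{if} direction I would construct the isomorphism from a canonical basis of highest weight vectors. Fix minimal vectors $v_0\in V_{h_0,p_0,p_0}$ and $v_0'\in V'_{h_0,p_0,p_0}$ (each space is one-dimensional by (a) and (d)) and set $v_{a,b}=\Z_{31}^a\Z_{23}^b v_0$, similarly $v_{a,b}'$. By the corollary to Proposition~\ref{prop-shdef} these span the highest weight spaces, and irreducibility together with (d) forces $v_{a,b}\neq0$ for $0\le a\le A$, $0\le b\le B$. Since $\Z_{31}=\sh31$ and $\Z_{23}=\sh{-3}1$ commute on highest weight spaces (as shown inside the proof of Proposition~\ref{prop-shdef}), the upward shifts act by $\sh31 v_{a,b}=v_{a+1,b}$ and $\sh{-3}1 v_{a,b}=v_{a,b+1}$, read as $0$ when the result leaves the rectangle. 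Define $\phi$ by $v_{a,b}\mapsto v_{a,b}'$ and extend it $K$-equivariantly over each multiplicity-one $K$-type; then $\phi$ is a $K$-isomorphism. Because $\glie_c=\klie_c\oplus\CC\Z_{31}\oplus\CC\Z_{23}\oplus\CC\Z_{32}\oplus\CC\Z_{13}$, verifying that $\phi$ respects the $\glie$-action reduces, via its $K$-equivariance and the commutation relations, to matching the action of these four root vectors on the $v_{a,b}$. The first two are given by the up-shift formulas; by Table~\ref{tab-sho} the other two differ from $\sh3{-1}$ and $\sh{-3}{-1}$ only by $\klie_c$-terms that $\phi$ already respects. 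So everything comes down to the downward shift operators.

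The crux is to show that the scalars defined by $\sh3{-1}v_{a,b}=\mu_{a,b}\,v_{a,b-1}$ and $\sh{-3}{-1}v_{a,b}=\lambda_{a,b}\,v_{a-1,b}$ depend only on the parameter set, hence coincide with their analogues for $V'$. On the bottom and left edges the target type is absent, giving $\mu_{a,0}=0$ and $\lambda_{0,b}=0$; at the corner, $v_0$ is minimal, so Lemma~\ref{lem-mv} applies directly. Rederiving the relations of Lemma~\ref{lem-mv} on an arbitrary highest weight vector (rather than a minimal one) expresses $\sh{-3}{-1}\sh31 v_{a,b}=\lambda_{a+1,b}v_{a,b}$ and $\sh3{-1}\sh{-3}1 v_{a,b}=\mu_{a,b+1}v_{a,b}$ through the type $(h,p)$, the Casimir scalar $\mu_2$, and the $\Dt_3$-eigenvalue $\mu_3$; the latter is itself pinned down by $(\mu_2,h_0,p_0)$ by evaluating parts (iii) and (iv) at $v_0$. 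Reading the resulting identities as a linear system in $\lambda_{a+1,b}$ and $\mu_{a,b+1}$, with the lower-index scalars treated as already known, yields a recursion that propagates the edge values across the entire rectangle.

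The step I expect to be the main obstacle is exactly this last one: one must verify that the linear system supplied by $C$ and $\Dt_3$ is non-degenerate at every type $(h,p)$ occurring in $V$, so that the recursion has a unique solution. This reduces to checking that a determinant assembled from coefficients such as $p+1$ and $\tfrac{p+2}{p+1}$ (which appear in Lemma~\ref{lem-mv}) never vanishes on the admissible range $p\ge p_0\ge0$; here the constraint $p_0\in\ZZ_{\ge0}$ and irreducibility (ensuring all $v_{a,b}\neq0$, so that the scalar identities are meaningful) do the work. Granting non-degeneracy, the downward scalars of $V$ and of $V'$ obey the same recursion with the same boundary data, hence agree. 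Therefore $\phi$ intertwines all four root vectors, is a $(\glie,K)$-isomorphism, and the proof concludes.
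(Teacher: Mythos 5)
Your overall architecture matches the paper's: the ``only if'' direction by reading $\mu_2$ off the central action and the remaining parameters off the set of $K$-types, and the ``if'' direction by reducing everything to the claim that the scalars $\al_\pm(a,b)$ through which the downward shift operators act on the canonical highest weight vectors $v(a,b)=(\sh31)^a(\sh{-3}1)^b v(0,0)$ are determined by the parameter set, with Lemma~\ref{lem-mv} supplying the starting values at the minimal vector. Where you diverge is in the engine of the recursion. The paper (Lemma~\ref{lem-iae}) pairs the single linear relation coming from the Casimir eigenvalue, $\al_-(a+1,b)+\al_+(a,b+1)=C_{a,b}$, with the multiplicative square relation $\al_-(a,b)\,\al_+(a-1,b)=\al_+(a,b)\,\al_-(a,b-1)$ of \eqref{sqrel}, a consequence of the commutativity of the downward shift operators that costs nothing to invert because irreducibility forces all $\al_\pm\neq 0$. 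You instead propose a $2\times 2$ linear system extracted from the eigenvalue equations of $C$ and $\Dt_3$ at each highest weight vector.

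That substitution is where your proof has a genuine gap, and you flag it yourself: you never verify that the $C$/$\Dt_3$ system is nondegenerate at every occurring $K$-type, and this is not a routine check. Unlike $C$, the element $\Dt_3$ contains the cubic terms $\Z_{13}\Z_{21}\Z_{32}$ and $\Z_{23}\Z_{12}\Z_{31}$, whose projection onto $V_{h,p,p}$ mixes in further combinations of the unknown scalars; the coefficients $p+1$ and $\tfrac{p+2}{p+1}$ you cite from Lemma~\ref{lem-mv} are valid only at a \emph{minimal} vector, where $\sh3{-1}v=\sh{-3}{-1}v=0$ has already been used to kill exactly those contributions, so they cannot simply be transplanted to a general $v(a,b)$. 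The clean second linear relation is in fact the generalization of part ii) of Lemma~\ref{lem-mv} (a pure commutator identity, independent of $\Dt_3$): the intermediate step of its proof gives, for any highest weight vector $v$ of $K$-type $\tau^h_p$, the identity $\sh{-3}{-1}\sh31 v-(p+1)\sh3{-1}\sh{-3}1 v=\tfrac12(p-h)(p+1)v-2(p+1)\sh{-3}1\sh3{-1}v$, whose right-hand side involves only already-known lower-level scalars and whose pairing with the Casimir relation has determinant $-(p+2)\neq 0$. Replacing your $\Dt_3$ equation by this identity (or by the paper's square relation) closes the recursion; as written, the decisive step of your argument is asserted rather than proved.
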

The proof will take the remainder of this subsection.

Let $V$ be a special module. Within the space $V_{h,p}$ of a $K$-type we
can use the elements $\ZZ_{12}$ and $\Z_{21}$ of $\klie_c$ to move
between the weight spaces $V_{h,p,q}$. This reduces the consideration
to the highest weight spaces $V_{h,p,p}$. Table~\ref{tab-sho},
p~\pageref{tab-sho}, implies that the highest weight vectors in $V$ are
obtained by repeated application of upward shift operators, which
commute. Choosing a minimal vector $v(0,0)\in V_{h_0,p_0,p_0}$ we
define a basis vector \il{vab}{$v(a,b)$ in
Subsection~\ref{sect-scm}}$v(a,b) = 
(\sh31)^a(\sh{-3}1)^b v(0,0)$ in the highest weight space $V_{h,p,p}$
with $h=h_0+3(a-b)$ and $p=p_0+a+b$. Then
$\bigl\{ \Z_{12}^c \, v(a,b) \;:\; 0 \leq c \leq p_0+a+b\bigr\}$ is a
basis of $V_{h,p}.$

The actions of $\klie_c$ and $K$ on the spaces $V_{h_0+3(a-b),p_0a+b}$
is completely determined by the $K$-type. For each $\XX$ in the
complementary set $Q=\bigl\{\Z_{31},\Z_{13},\Z_{32},\Z_{23}\bigr\}$ the
product $\XX \Z_{21}^c$ is a linear combination of terms
$\Z_{21}^{c(j)} \, \XX_j$ with $\XX_j \in Q.$ So the action of $\glie$
is completely determined by the action of the elements of $Q$ on the
vectors $v(a,b).$ This in turn can be expressed 
by
Table~\ref{tab-sho} in the action of the shift operators on the
$v(a,b).$ We have
\begin{align*} \sh 3 1 v(a,b) &= \begin{cases}
v(a+1,b) &\text{ if } a<A\,,\\
0 &\text{ if } a=A\,;
\end{cases}\displaybreak[0] \\
\sh{-3}1 v(a,b) &= \begin{cases}
v(a,b+1) &\text{ if } b<B\,;\\
0 &\text{ if } b=B\,;\end{cases}
\displaybreak[0]\\
\sh3{-1} v(a,b) &= \al_+(a,b) \, v(a,b-1)\,;
\displaybreak[0]\\
\sh{-3}{-1}v(a,b) &= \al_-(a,b)\, v(a-1,b)\,;
\end{align*}
with coefficients $\al_\pm(a,b)\in \CC,$ and the convention
$\al_+(0,b)=\al_-(a,0)=0.$
\be\label{sqrel} \al_-(a,b) \, \al_+(a-1,b)
= \al_+(a,b)\, \al_-(a,b-1)\qquad
(a,b\geq 1)\,.\ee
See Figure~\ref{fig-sq}.
\begin{figure}[tp]
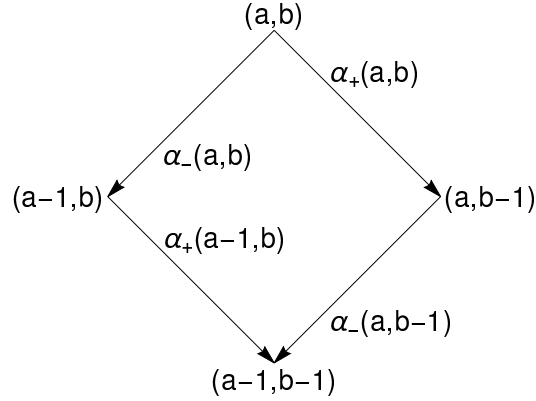

\begin{center}\grf{7}{sq}\end{center}
\caption{Square with factors for downward shift operators}
\label{fig-sq}
\end{figure}

\begin{lem}\label{lem-redmv}Suppose that a special module $V$ with
parameter set
\[[\mu_2;h_0,p_0;A,B]\]
has a non-trivial invariant subspace $W.$ Then there are $a\in [0,A+1),$
$b\in [0,B+1),$ $(a,b)\neq (0,0),$ such that $v(a,b)\in W$ is a minimal
vector in~$V.$
\end{lem}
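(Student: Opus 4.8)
The plan is first to reduce the statement to a purely combinatorial assertion about which highest weight vectors lie in $W$, and then to locate the minimal vector by a minimality argument on the $K$-type parameter $p$. Throughout I read ``non-trivial invariant subspace'' as a submodule $W$ that is both non-zero and proper, which is what the reducibility context requires.

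Since $W$ is $K$-invariant and, by Definition~\ref{def-scm}(d), every $K$-type of $V$ occurs with multiplicity one, each isotypic component $V_{h,p}$ is $K$-irreducible; hence $W \cap V_{h,p}$ is either $0$ or all of $V_{h,p}$, and $W = \bigoplus (W \cap V_{h,p})$ is a direct sum of full isotypic components. Thus $W$ is completely determined by the set
\[ \Sigma = \bigl\{ (a,b) : v(a,b) \in W \bigr\} \]
of highest weight vectors it contains, with $V_{h,p} \subseteq W$ precisely when the corresponding $(a,b) \in \Sigma$. As $W \neq 0$ the set $\Sigma$ is non-empty; and as $W$ is proper while $v(0,0)$ generates $V$ (Definition~\ref{def-scm}(a)), we have $(0,0) \notin \Sigma$, for otherwise $V = U(\glie)\,v(0,0) \subseteq W$.

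Now choose $(a,b) \in \Sigma$ minimizing $p = p_0 + a + b$, equivalently minimizing $a+b$; such a choice exists since $a+b$ ranges over a non-empty set of non-negative integers. I claim $v(a,b)$ is minimal. The two downward shift operators,
\[ \sh3{-1} v(a,b) = \al_+(a,b)\, v(a,b-1), \qquad \sh{-3}{-1} v(a,b) = \al_-(a,b)\, v(a-1,b), \]
are elements of $U(\glie)$ (up to the $U(\klie)$-factors recorded in Table~\ref{tab-sho}) and therefore preserve the submodule $W$; their images lie in $K$-types with the strictly smaller parameter $p-1$. At this minimal index each downward shift either lands outside the index rectangle, in which case it vanishes by the boundary conventions on $\al_\pm$, or it lands on $v(a,b-1)$ or $v(a-1,b)$, a highest weight vector of strictly smaller $a+b$. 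In the latter case a non-zero coefficient would force that vector into $W$, i.e.\ into $\Sigma$, contradicting minimality; hence $\al_+(a,b)=\al_-(a,b)=0$. Therefore $\sh3{-1} v(a,b) = \sh{-3}{-1} v(a,b) = 0$, so $v(a,b)$ is a minimal vector of $V$ lying in $W$, and $(a,b)\neq(0,0)$ because $(0,0)\notin\Sigma$.

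The argument is short, and I do not expect a serious obstacle; the only genuine ingredient is the multiplicity-one reduction of the first two paragraphs, which lets membership of a single highest weight vector decide $W$ on an entire $K$-type and turns invariance under the downward shifts into the combinatorial closure property used above. No appeal to the compatibility relation~\eqref{sqrel} is needed, and the decision to minimize $p$ (rather than $h$ or any other parameter) is precisely what prevents the lowering operators from escaping into a smaller, absent $K$-type.
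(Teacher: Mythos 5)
Your proof is correct and takes essentially the same route as the paper: the paper descends from an arbitrary $v(a,b)\in W$ by applying downward shift operators until both coefficients $\al_\pm$ vanish (noting the process cannot reach $(0,0)$), which is exactly your choice of the element of $\Sigma$ minimizing $a+b$. The multiplicity-one reduction you spell out in the first paragraph is the same observation the paper invokes tersely via the isotypic decomposition.
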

\begin{proof}Any $(\glie,K)$-module is the direct sum of its isotypic
subspaces, in which $K$ acts according to a fixed $K$-type. Consider
$v(a,b) \in W $ with $(a,b)\neq(0,0)$.

As long as one of $\al_-(a,b) \neq 0$ or $\al_+(a,b)\neq 0$ we find a
lower $K$-type in $W.$ This process has to stop before $(0,0)$ is
reached. That gives a minimal vector in $V$ that is an element of $W.$
\end{proof}

Since a minimal vector $v(a,b)$ in $V$ is defined by $\sh3{-1}v(a,b)=0$,
$\sh{-3}{-1}v(a,b)=0,$ we have the following direct consequence.
\begin{lem}\label{lem-alnzir}If a special module has $\al_\pm(a,b)\neq0$
for all $0\leq a<A+1,$ $0\leq b<B+1,$ then it is irreducible.
\end{lem}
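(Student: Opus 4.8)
The plan is to prove the contrapositive by invoking Lemma~\ref{lem-redmv}: I would assume that $V$ is reducible and then exhibit some downward-shift coefficient $\al_\pm(a,b)$, at an index $(a,b)\neq(0,0)$ in the allowed range, that is forced to vanish. So suppose $V$ has a non-trivial invariant subspace $W$. By Lemma~\ref{lem-redmv} there is a pair $(a,b)\neq(0,0)$ with $0\le a<A+1$ and $0\le b<B+1$ such that $v(a,b)\in W$ is a minimal vector of $V$; by the definition of minimal vector this means $\sh3{-1}v(a,b)=0$ and $\sh{-3}{-1}v(a,b)=0$.

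Next I would read off the two downward shifts from the action formulas displayed just before the lemma, namely $\sh3{-1}v(a,b)=\al_+(a,b)\,v(a,b-1)$ and $\sh{-3}{-1}v(a,b)=\al_-(a,b)\,v(a-1,b)$. Since $(a,b)\neq(0,0)$, at least one of $a\ge1$, $b\ge1$ holds. If $b\ge1$, then the target $v(a,b-1)$ is the highest-weight vector of a $K$-type occurring in $V$ (here $0\le a\le A$ and $0\le b-1<B+1$), hence nonzero by the multiplicity-one property (condition (d) of a special module together with the direct sum in (c)); consequently $\sh3{-1}v(a,b)=0$ forces $\al_+(a,b)=0$. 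Symmetrically, if $a\ge1$ then $v(a-1,b)\neq0$ and $\sh{-3}{-1}v(a,b)=0$ forces $\al_-(a,b)=0$. In either case a coefficient $\al_\pm$ at an index in the allowed range vanishes, contradicting the hypothesis; hence $V$ has no non-trivial invariant subspace and is irreducible.

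I do not expect a genuine obstacle here, since the substance lives entirely in Lemma~\ref{lem-redmv} and the present statement is the advertised direct consequence. The only point needing a little care is the bookkeeping with the boundary convention $\al_+(0,b)=\al_-(a,0)=0$: the hypothesis must be read as asserting nonvanishing exactly for those coefficients that are \emph{not} forced to be zero by this convention, that is $\al_+(a,b)$ for $b\ge1$ and $\al_-(a,b)$ for $a\ge1$. These are precisely the coefficients one meets in the two cases above, because $(a,b)\neq(0,0)$ guarantees that whichever of $a,b$ is positive selects a non-conventional coefficient, so the contradiction is always with a genuine hypothesis rather than with a convention.
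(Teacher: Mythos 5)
Your proof is correct and is exactly the argument the paper intends: the lemma is presented there as a "direct consequence" of Lemma~\ref{lem-redmv} together with the definition of a minimal vector and the formulas $\sh3{-1}v(a,b)=\al_+(a,b)v(a,b-1)$, $\sh{-3}{-1}v(a,b)=\al_-(a,b)v(a-1,b)$, which is precisely the contrapositive you spell out. Your reading of which coefficients are "genuine" ($\al_+(a,b)$ for $b\geq1$, $\al_-(a,b)$ for $a\geq1$) is the right one — it is the one forced by the displayed shift formulas and used in Lemma~\ref{lem-alzmv} — even though the convention line in the text has the indices swapped.
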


\begin{lem}\label{lem-alzmv}Let $V$ be a special module, and suppose
that $\al_+(a,b)=0$ for $b>0$ or $\al_-(a,b)=0$ for $a>0.$ Then $V$ has
a minimal vector $v(a_1,b_1)$ with $(a_1,b_1)\neq 0.$
\end{lem}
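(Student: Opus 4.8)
The plan is to exploit the quadratic relation \eqref{sqrel} among the coefficients $\al_\pm(a,b)$: a single genuine zero of a downward-shift coefficient can be propagated along a row (or column) by \eqref{sqrel} until it either meets a \emph{second} vanishing coefficient or reaches an edge of the index range, and in both situations a vector $v(a_1,b_1)$ with $(a_1,b_1)\neq(0,0)$ that is annihilated by both $\sh3{-1}$ and $\sh{-3}{-1}$ emerges. Recall that $v(a,b)$ is minimal precisely when $\sh3{-1}v(a,b)=\al_+(a,b)\,v(a,b-1)=0$ and $\sh{-3}{-1}v(a,b)=\al_-(a,b)\,v(a-1,b)=0$, and that at the left edge $a=0$ the operator $\sh{-3}{-1}$ vanishes automatically because its target lies outside the range $0\le a\le A$, while at the bottom edge $b=0$ the operator $\sh3{-1}$ vanishes for the same reason.

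The two hypotheses are interchanged by the symmetry $a\leftrightarrow b$, $\al_+\leftrightarrow\al_-$, $\sh3{-1}\leftrightarrow\sh{-3}{-1}$, under which \eqref{sqrel} is invariant, so I would treat only the first and assume $\al_+(a^\ast,b^\ast)=0$ for some $b^\ast\ge 1$. I would then descend on the first index inside the fixed row $b=b^\ast$. Supposing $\al_+(a,b^\ast)=0$ with $a\ge 1$, evaluating \eqref{sqrel} at $(a,b^\ast)$ yields
\[\al_-(a,b^\ast)\,\al_+(a-1,b^\ast)=\al_+(a,b^\ast)\,\al_-(a,b^\ast-1)=0,\]
so either $\al_-(a,b^\ast)=0$, in which case both downward operators kill $v(a,b^\ast)$ and a minimal vector in row $b^\ast\ge 1$ has been found, or $\al_+(a-1,b^\ast)=0$ and the vanishing has moved one step to the left.

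Iterating, the descent either produces a minimal vector $v(a,b^\ast)$ partway along the row, or continues to $a=0$, giving $\al_+(0,b^\ast)=0$; then $\sh3{-1}v(0,b^\ast)=\al_+(0,b^\ast)\,v(0,b^\ast-1)=0$ while $\sh{-3}{-1}v(0,b^\ast)=0$ by the edge vanishing, so $v(0,b^\ast)$ is minimal. In every case the resulting vector lies in row $b^\ast\ge 1$, hence is distinct from $v(0,0)$, and it is nonzero since the indices remain within $0\le a_1\le A$, $0\le b_1\le B$ throughout the descent. The second hypothesis is settled identically after the symmetry, the descent now running down a column to the bottom edge $b=0$. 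The only delicate point—the step I expect to be the main obstacle—is the boundary bookkeeping: one must verify that the terminal edge vector is genuinely minimal, which rests precisely on combining the automatic vanishing of one downward operator at the left or bottom edge with the propagated vanishing of the other coefficient supplied by \eqref{sqrel}.
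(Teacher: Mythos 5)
Your proposal is correct and follows essentially the same route as the paper: the square relation \eqref{sqrel} converts a zero of $\al_+$ (or $\al_-$) at one spot into the alternative ``minimal vector found here'' or ``zero propagates one step toward the edge,'' and the boundary conventions $\al_+(a,0)=\al_-(0,b)=0$ guarantee minimality when the edge is reached. Your explicit handling of the edge bookkeeping and the $a\leftrightarrow b$ symmetry matches what the paper does with its ``reflected figures'' remark, so there is nothing further to add.
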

\begin{proof}Suppose that $\al_+(a,b)=0.$ Then relation~\eqref{sqrel}
implies that \[\al_-(a,b)\, \al_+(a-1,b)\=0\,.\]
So at least one of the situations in Figure~\ref{fig-kr} occurs.
\begin{figure}[htp]
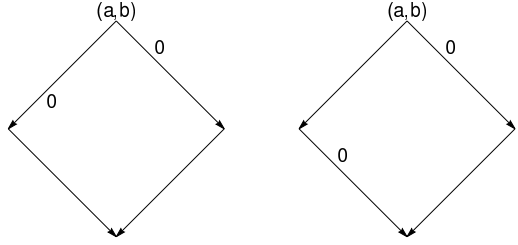

\begin{center}\grf{3}{kr1}\qquad \grf{3}{kr2}\end{center}
\caption{Squares with factors for downward shift operators.}
\label{fig-kr}
\end{figure}
If $\al_-(a,b)=0$ the vector $v(a,b)$ is a minimal vector. Otherwise,
the square relation \eqref{sqrel} leads to the relation
$\sh 3{-1}v(a-1,b)=0.$ Proceeding in this way the process cannot go on
longer than in the situation of Figure~\ref{fig-kr3}. So a minimal
vector is reached.\smallskip
\begin{figure}[htp]
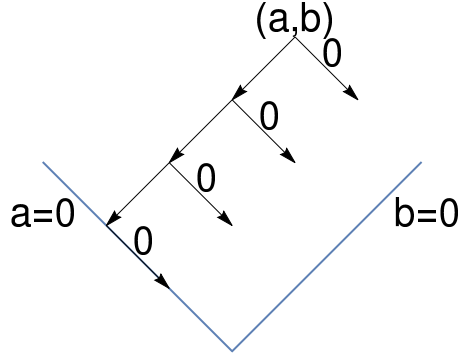

\begin{center}\grf{6}{kr3}\end{center}
\caption{Propagation of the kernel of $\sh 3{-1}.$} \label{fig-kr3}
\end{figure}

If $\sh{-3}{-1} v(a,b)=0$ we proceed similarly, with reflected figures.
\end{proof}

Let $S(a,b)$ be the square in the $(a,b)$-plane with $(a,b)$ at the top,
and $(a-1,b-1)$ at the bottom, like in Figure~\ref{fig-sq}. We call it
a zero square if at least one of $\al_+(a,b)$ and $\al_-(a,b-1)$ on the
right is zero. Then also at least one of the $\al$'s on the left is
zero, by \eqref{sqrel}.

If $S(a,b)$ is a zero square, then at least one of the adjoining squares
on the right, $S(a+1,b)$ and $S(a,b-1)$ is also a zero square. The same
holds on the left. So zero squares do not come singly, but form
connected regions, like sketched in Figure~\ref{fig-zqs}.
\begin{figure}[htp]
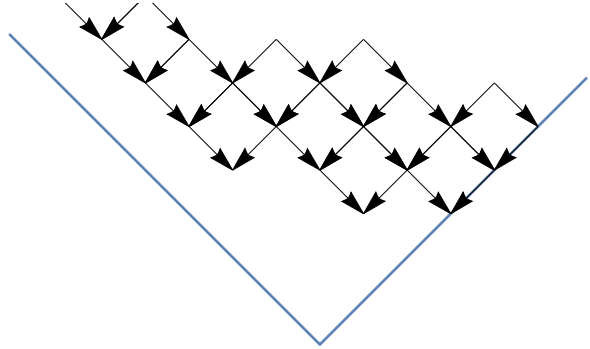

\begin{center}\grf{8}{zqs}\end{center}
\caption{A connected collection of zero squares.} \label{fig-zqs}
\end{figure}
The region may reach one of the boundary lines $b=0$ or $a=0,$ or it may
extend to infinity, parallel to a boundary line.

The factors $\al_\pm(a',b')$ corresponding to common edges of adjoining
squares are zero.

\begin{lem}\label{lem-mvred} If $v(a,b)$ with $(a,b)\neq(0,0)$ is a
minimal vector in a special module $V,$ then $V$ is reducible.
\end{lem}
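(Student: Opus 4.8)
The plan is to show that the submodule $W := U(\glie)\,v(a,b)$ generated by the given minimal vector is proper; since $v(a,b)\neq0$ it is certainly non-zero, so this exhibits $V$ as reducible. Because $V$ is a special module, its $K$-types occur with multiplicity one and are indexed by the highest weight vectors $v(a',b')$, $(a',b')\in[0,A]\times[0,B]$, the generator $v(0,0)$ spanning the highest weight space of the $K$-type $\tau^{h_0}_{p_0}$. Thus it suffices to prove that this $K$-type does not occur in $W$, i.e.\ that $v(0,0)\notin W$.

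To locate the $K$-types of $W$ I would use the Corollary to Proposition~\ref{prop-shdef}: one has $W=U(\klie)\,\{P\,v(a,b)\}$, with $P$ ranging over all compositions of shift operators. Since the highest weight vectors of $V$ are exactly the $v(a',b')$, this says that the set $R$ of indices $(a',b')$ for which $v(a',b')$ occurs in $W$ is the smallest subset of $[0,A]\times[0,B]$ containing $(a,b)$ and closed under the four shift moves: the upward moves $(a',b')\mapsto(a'+1,b')$ and $(a',b')\mapsto(a',b'+1)$ (always allowed), and the downward moves $(a',b')\mapsto(a',b'-1)$, allowed only when $\al_+(a',b')\neq0$, and $(a',b')\mapsto(a'-1,b')$, allowed only when $\al_-(a',b')\neq0$. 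The problem is therefore reduced to the combinatorial claim $(0,0)\notin R$.

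Minimality of $v(a,b)$ means precisely $\al_+(a,b)=0$ and $\al_-(a,b)=0$ (with the boundary conventions $\al_+(a,0)=\al_-(0,b)=0$ covering the cases $a=0$ or $b=0$). By the square relation~\eqref{sqrel} these two vanishings make $S(a,b)$ and $S(a,b+1)$ zero squares, and by the propagation principle established just above the lemma they lie in a connected region of zero squares that runs to a coordinate axis or to infinity (Figure~\ref{fig-zqs}). I would then argue that the zero edges bounding this region form a connected wall which every descending lattice path issuing from $(a,b)$ must cross in order to enter the region $\{(a',b'):a'<a\text{ and }b'<b\}$ (or, in the boundary cases $a=0$, resp.\ $b=0$, the half-region $b'<b$, resp.\ $a'<a$). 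Since crossing such an edge is exactly a forbidden downward move, no point of that region lies in $R$; as $(0,0)$ always belongs to it, we obtain $(0,0)\notin R$. Consequently $v(0,0)\notin W$, the submodule $W$ is proper and non-zero, and $V$ is reducible.

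The main obstacle is the geometric step in the last paragraph: turning the local vanishing $\al_\pm(a,b)=0$ into a global barrier. The delicate point is that a downward move followed by upward moves can in principle circle around, so one cannot simply track a monotone quantity such as ``$a'\geq a$ or $b'\geq b$''; the vanishing must instead be propagated along the entire staircase of zero squares (via repeated use of~\eqref{sqrel}, as in Figures~\ref{fig-kr} and~\ref{fig-kr3}) to guarantee that \emph{every} descending route into the lower-left region is blocked. Making this separation precise — equivalently, verifying that the connected zero region together with the axes disconnects $(a,b)$ from $(0,0)$ among admissible descending paths — is the crux of the argument.
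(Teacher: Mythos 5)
Your proposal is correct and takes essentially the same route as the paper's own proof: both reduce reducibility to the combinatorial claim that no composition of shift operators carries $v(a,b)$ to a non-zero multiple of $v(0,0)$, and both establish this by propagating the vanishing $\al_\pm(a,b)=0$ along the connected region of zero squares via the square relation~\eqref{sqrel}, so that every descending path must cross an interior zero edge. The ``crux'' you flag at the end is handled just as briefly in the paper, which simply observes that a path starting at the top of a zero square must pass through an interior edge of the zero region.
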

\begin{proof}Let $W\subset V$ be the submodule generated by $v(a,b).$ If
we can find a product of shift operators that sends $v(a,b)$ to a
non-zero multiple of $v(0,0),$ then $W=V.$ So we have to show that, if
starting from $v(a,b)$ we cannot reach $v(0,0)$ by shift operators,
then $W$ is a non-trivial invariant subspace.

We have $\sh 3{-1}v(a,b)=0$ and $\sh{-3}{-1}v(a,b)=0.$ So going downward
directly from $(a,b)$ yields zero. A path from $(a,b)$ to $(0,0)$
corresponding to a product of shift operators gives zero if it passes a
downward edge that is common to two adjoining zero squares. Since we
start at the top of a zero square such a path always has to go through
an interior edge of the collection of zero squares.
\end{proof}

By Lemmas \ref{lem-redmv}, \ref{lem-alzmv} and \ref{lem-mvred} we now
have equivalence between reducibility, existence of non-trivial minimal
vectors, and vanishing of at least one~$\al_\pm.$

The next step is to find more relations for the $\al$'s than those given
by the square relation~\eqref{sqrel}.

\begin{lem}\label{lem-iae}In an irreducible special module, all
coefficients $\al_\pm(a,b),$ with $0\leq a<A,$ $0\leq b<B,$ are
determined by the parameter set $[\mu_2;h_0,p_0;A,B].$
\end{lem}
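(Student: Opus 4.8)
The plan is to pin down the structure constants $\al_\pm(a,b)$ by a recursion that starts at the minimal vector $v(0,0)$ and proceeds in order of increasing $K$-type level $n=a+b$. I work throughout on the highest weight spaces $V_{h,p,p}$, which are one-dimensional by multiplicity one (Definition~\ref{def-scm}), with $h=h_0+3(a-b)$ and $p=p_0+a+b$. On such a space the two $K$-type-preserving compositions of shift operators act by scalars, namely $\sh{-3}{-1}\sh31\,v(a,b)=\al_-(a+1,b)\,v(a,b)$ and $\sh3{-1}\sh{-3}1\,v(a,b)=\al_+(a,b+1)\,v(a,b)$; these two scalars are the level-$(n+1)$ constants I want to express in terms of the parameter set and of already-known level-$n$ constants.

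I would obtain two linear relations among them by reading Lemma~\ref{lem-mv} without the minimality hypothesis. First, the commutator identity underlying part~ii), keeping the term $\Z_{23}(2(p+1)\Z_{32}+\Z_{12}\Z_{31})v=2(p+1)\al_+(a,b)\,v(a,b)$ that was discarded for minimal $v$, gives for every highest weight vector
\[
\al_-(a+1,b)-(p+1)\al_+(a,b+1)+2(p+1)\al_+(a,b)=\tfrac12(p-h)(p+1).
\]
Second, since $C$ is central it acts by $\mu_2$ on all of $V$, and on the one-dimensional space $V_{h,p,p}$ the only operators of its degree preserving the $K$-type are scalars together with $\sh{-3}{-1}\sh31$ and $\sh3{-1}\sh{-3}1$; hence there is a $U(\glie)$-identity
\[
\mu_2=\lambda_C(h,p)+\rho_C(h,p)\,\al_-(a+1,b)+\sigma_C(h,p)\,\al_+(a,b+1),
\]
with coefficients depending only on the $K$-type. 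Substituting the commutator identity into the minimal-vector formula of part~iii) to eliminate $\sh3{-1}\sh{-3}1$ pins down the only combination I actually need: $(p+1)\rho_C+\sigma_C=4(p+2)$.

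These two displayed relations form a $2\times2$ linear system for $\al_-(a+1,b)$ and $\al_+(a,b+1)$, whose right-hand sides involve only $\mu_2$, the $K$-type $(h,p)$, and the level-$n$ constant $\al_+(a,b)$. Its determinant is exactly $(p+1)\rho_C+\sigma_C=4(p+2)$, which is nonzero since $p=p_0+a+b\geq0$. I would therefore solve the system uniquely and induct on $n$: the base case $(a,b)=(0,0)$ uses the boundary value $\al_+(0,0)=0$, and the inductive step fixes each pair of level-$(n+1)$ constants from data already determined by $[\mu_2;h_0,p_0;A,B]$. In an irreducible module every $v(a,b)$ with $0\le a<A$, $0\le b<B$ is nonzero, so the scalars $\al_\pm$ are genuine structure constants, and the recursion determines all of them. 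The square relation~\eqref{sqrel} and the analogue of part~iv) for $\Dt_3$ then serve as consistency checks.

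The main obstacle I expect is bookkeeping rather than conceptual: one must carefully reinstate the terms that Lemma~\ref{lem-mv} drops under minimality and recognize that the determinant of the resulting system is precisely the combination $(p+1)\rho_C+\sigma_C$ governed by the minimal-vector Casimir formula. Once that identification is made, non-degeneracy ($=4(p+2)>0$) is automatic and the induction closes with no case distinctions.
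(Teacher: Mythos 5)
Your proof is correct, but it is not the route the paper takes, and the difference is substantive. The paper pairs the additive Casimir relation $\al_-(a+1,b)+\al_+(a,b+1)=C_{a,b}$ with the multiplicative square relation \eqref{sqrel}, and invokes irreducibility so that all $\al_\pm$ are non-zero and the square relation can be read as a known ratio $\al_-(a,b):\al_+(a,b)$. You instead pair the Casimir relation with a second \emph{linear} relation, obtained by rerunning the proof of Lemma~\ref{lem-mv}~ii) without the minimality hypothesis and keeping the discarded term $\Z_{23}\bigl(2(p+1)\Z_{32}+\Z_{12}\Z_{31}\bigr)v=2(p+1)\al_+(a,b)\,v(a,b)$; this gives, at each vertex, a $2\times 2$ system for $\bigl(\al_-(a+1,b),\al_+(a,b+1)\bigr)$ whose right-hand side involves only level-$(a+b)$ data, with determinant $(p+1)\rho_C+\sigma_C=4(p+2)\neq 0$. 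Your version buys two things. First, it nowhere divides by an $\al_\pm$, so irreducibility is not needed and the conclusion extends verbatim to arbitrary special modules. Second, the per-level linear algebra genuinely closes: on the diagonal $a+b=n$ there are $2n$ unknown coefficients, while the paper's relations supply only the $n$ sums $\al_-(a+1,b)+\al_+(a,b+1)$ (from level $n-1$) and the $n-1$ ratios $\al_-(a,b):\al_+(a,b)$, i.e.\ $2n-1$ constraints; your extra linear relation is exactly what removes the remaining degree of freedom, so your argument is, if anything, tighter than the one printed. The costs are modest: you must certify that the displayed identity in the proof of Lemma~\ref{lem-mv}~ii) holds for arbitrary highest-weight vectors with the extra term as stated (this is precisely the computation the paper already records before minimality is used), and your justification of the decomposition $Cv=\lambda_C v+\rho_C\,\sh{-3}{-1}\sh31 v+\sigma_C\,\sh3{-1}\sh{-3}1 v$ by ``the only operators of its degree preserving the $K$-type'' is loose — it is cleaner to quote the paper's own formula $C=-\WW_0^2-\tfrac13\CK_i^2-\Z_{12}\Z_{21}+4(s_{31}+s_{23})$ together with the identity $(s_{31}+s_{23})v=\bigl(\sh{-3}{-1}\sh31+\sh3{-1}\sh{-3}1+d\bigr)v$ from the paper's proof, which gives $\rho_C=\sigma_C=4$ and hence your determinant $4(p+2)$ directly, consistently with your derivation of the combination from Lemma~\ref{lem-mv}~iii).
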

\begin{proof}
We know that $v(a,b)$ satisfies $C v(a,b) = \mu_2 v(a,v).$ We write
\bad C&= - \WW_0^2 - \frac 13 \CK_i^2 - \Z_{12}\Z_{21} + 4
(s_{31}+s_{23})\,,\\
s_{31}&= \Z_{31}\Z_{13} \,,\qquad s_{23}= \Z_{23}\Z_{32}\,.\ead
The terms in $U(\klie)$ all have $v(a,b)$ as an eigenvector. So we know
that $(s_{31}+s_{23}) v(a,b) = \mu_{a,b} v(a,b)$ with an explicitly
known eigenvalue $\mu_{a,b}.$ With Mathematica it is no problem to
compute $\mu_{a,b},$ but we do not need an explicit value. A not too
complicated computation shows that there is $d\in \ZZ$ such that
\bad (s_{31}+s_{23}) v(a,b) &= \sh{-3}{-1} \sh 31v(a,b) + \sh 3{-1}
\sh{-3}1 v(a,b) + d \, v(a,b)
\\
&= \bigl( \al_-(a+1,b) + \al_+(a,b+1) \bigr)
v(a,b) + d \, v(a,b)
\,.
\ead
So $\al_-(a+1,b)+\al_+(a,b+1)$ is equal to a well-defined number
$C_{a,b}.$ With \eqref{sqrel} we now have two relations:
\bad \al_-(a,b) \, \al_+(a-1,b) &= \al_+(a,b)
\,\al_-(a,b-1)\,,\\
\al_+(a-1,b) + \al_-(a,b-1) &= C_{a-1,b-1} \ead
Since $V$ is assumed to be irreducible all factors $\al_\pm$ are
non-zero. The values of $\al_+(a-1,b)$ and $\al_-(a,b-1)$ determine the
values of $\al_+(a,b)$ and $\al_-(a,b)$ completely.

We need only start the induction. To the minimal vector $v(0,0)$ we
apply iii)
in Lemma~\ref{lem-mv} that gives the value of $\al_-(1,0).$ Then ii)
in the same lemma also gives $\al_+(0,1).$ This suffices to start the
induction.
\end{proof}

\rmrk{Proof of Proposition~\ref{prop-scm}}Implied directly by
Lemma~\ref{lem-iae}.


\def\flnm{rFtm-II-exdf}


\section{Explicit differentiation of K-finite
functions}\label{sect-exdf}\markright{7. EXPLICIT DIFFERENTIATION}From
general $(\glie,K)$-modules we turn to $(\glie,K)$-modules contained in
the space \il{CiGK}{$C^\infty(G)_K$}$C^\infty(G)_K$ of smooth
$K$-finite functions on $G.$ It is a $(\glie,K)$-module for the actions
of $\glie$ and $K$ by right differentiation and right translation.

The aim of this section is to establish explicit formulas for the
differentiation of elements of $C^\infty(G)_K$, and to implement these
formulas as Mathematica routines. We use an approach that is known for
general semisimple Lie groups, especially for functions with prescribed
left behavior under $N$. Then only the differentiation on $A$ remains
to be carried out: the radial parts of the differentiation operators.
\medskip

By the Iwasawa decomposition any element $F \in C^\infty(G)_K$ has the
form
\be\label{comps} F(nak) = \sum_{h,p,r,q} F_{h,p,r,q}(na)\, \Kph
h{p}{r}{q}(k)\,,\ee
with \il{comp}{component function}\emph{component functions}
$F_{h,p,r,q}\in C^\infty(NA).$ The summation variables run over
integers such that $h\equiv p \equiv r \equiv 1\bmod 2,$
$|r|, |q|\leq p.$ Only finitely many component functions are non-zero.
The aim in this section is to describe explicitly the action of
$\glie_c.$\smallskip

The action of any $\XX\in \klie_c$ involves only the basis elements in
Table~\ref{tab-RLdK}, p~\pageref{tab-RLdK}. We have to describe the
action of the basis elements $\Z_{13},$ $\Z_{32},$ $\Z_{23}$ and
$\Z_{31}.$ The procedure is known. Our task is just to work it out
explicitly. We will carry out the following steps:
\begin{enumerate}
\item We consider the action of $k\in K$ by conjugation on the basis
elements of $L^2(K).$
\item We relate the action by right differentiation to the following
action of $\XX\in \glie$
\be \label{Mdiff}\bigl( M(\XX) F \bigr)(nak) = \partial_t
F\bigl(na e^{t\XX} k \bigr)
\bigm|_{t=0}\,.\ee
\item We describe $M(\XX)$ on functions of the form
$nak\mapsto h(na) \,\Phi(k)$ in terms of right differentiation of $h$
and left differentiation of $\Phi.$ \end{enumerate}

\rmrk{Conjugation by elements of $K$}A direct computation in
\cite[\S8a]{Math} gives for $ k = \km(\eta,\al,\bt)\in K$
\badl{KLconj} k\Z_{13}k^{-1}&= \al\eta^3\, \Z_{13}
  - \bar\bt\eta^3 \Z_{23}\,,&\qquad k\Z_{31}k^{-1}& = \bar\al\eta^{-3}\,
\Z_{31}
- \bt \eta^{-3}\Z_{32}\,,\\
k\Z_{23}k^{-1}&= \bt\eta^3\,\Z_{13}
+ \bar\al\eta^3\,\Z_{23}\,,& k\Z_{32}k^{-1} & = \bar\bt\eta^{-3}\,
\Z_{31}
+ \al \eta^{-3}\, \Z_{32}\,. \eadl
The factors are polynomial functions on $K.$ From \eqref{Phi-exmp} we
have the following:
\badl{Phi1} \al \eta^{\pm 3} &= \Kph {\mp3}{1}{-1}{-1}(k)\,,&
\qquad \bt \eta^{\pm 3}&= \Kph{\mp3}{1}{-1}{1}(k)\,,\\
\bar \al \eta^{\pm 3} &= \Kph {\mp3}{1}{1}{1}(k)\,,&
\qquad \bar\bt \eta^{\pm 3}&=
-\Kph{\mp3}{1}{1}{-1}(k)\,. \eadl

\rmrk{Right differentiation and interior differentiation} The right
differentiation by $\XX\in \glie$ in $nak\in NAK$\ir{Rdiff1}{R(\XX) F =
\XX F}
\be \label{Rdiff1}
\XX F (nak)= R(\XX) F(nak) \= \partial_t
F\bigl( nak e^{t\XX}\bigr)\bigm|_{t=0}\ee
and the \il{idiff}{interior differentiation}\emph{interior
differentiation}\ir{Mdiff1}{M(\XX)F}
\be \label{Mdiff1}
M(\XX) F(nak) = \partial_t F\bigl( na e^{t\XX} k \bigr)\bigm|_{t=0}\ee
are related by
\be \XX F(nak) = M(k\XX k^{-1})
F(nak)\,.\ee
This relation extends to $\XX\in \glie_c$ by linearity.

We apply this with
$\XX\in \bigl\{ \Z_{31},\Z_{13},\Z_{32},\Z_{23}\bigr\}$, and get
\be\label{RM} R(\XX) F (nak) \= \sum_{ij} \ph_{ij}(k) \,
M(\Z_{ij})F(nak)\ee
with functions $\ph_{ij}$ on $K$ indicated in~\eqref{KLconj}.

By the Iwasawa decomposition we have
\be\label{IdL}
\renewcommand\arraystretch{1.3}
\begin{array}{rlll}
\Z_{13}&=i\XX_0 &+ \frac12\HH_r
&+ \frac i2
(\HH_i-2 \WW_0)\\
\Z_{31}=\overline{\Z_{13}}&=-i\XX_0
&+ \frac12\HH_r &- \frac i2
(\HH_i-2\WW_0)\\
\Z_{23}&=\frac12(\XX_1+i\XX_2)
&&- \frac12(\WW_1+i \WW_2)\\
\Z_{32}=\overline{Z_{23}}&= \frac12(\XX_1-i\XX_2)
&&- \frac12(\WW_1-i \WW_2)
\end{array}
\ee
where the elements in the columns on the right are in $\nlie_c$,
$\alie_c$, and $\klie_c$, respectively. In this way, we get for $F$ of
the form $F(nak) = b(na)\Phi(k)$, with $b\in C^\infty(NA)$ and
$\Phi=\Kph h{p}{r}{q}$ a formula
\be M(\XX) \bigl( b(na)\, \Phi(k) \bigr) \= \bigl(R(\XX_{NA})b\bigr)(na)
\, \Phi(k)
+ b(na) \, \bigl( L(\XX_K) \Phi\bigr)(k)\,.\ee

We have given the left differentiation on $K$ in Table~\ref{tab-RLdK},
p~\pageref{tab-RLdK}. To carry out the multiplication by $\ph_{ij}(k)$
in~\eqref{RM} we use the multiplication formulas in
Table~\ref{tab-mlt}, p~\pageref{tab-mlt}. In this way we have reduced
the action of the elements of
$\bigl\{ \Z_{31},\Z_{13},\Z_{32},\Z_{23}\bigr\}$ to known relations and
right differentiation on $NA$.

\rmrk{Implementation}In this way, the action by right differentiation of
(a basis of) $\glie_c$ on functions in the form \eqref{comps} can be
described in terms of right differentiation on $NA$ of the components
$F_{h,p,r,q}$ in~\eqref{comps}. Carrying out such computations we
gladly leave to a computer, since errors slip in easily into
computations by hand. Of course, it requires great care to write the
routines. The version in Section 8 of \cite{Math} gives results that we
checked in various ways. These routines are the basis for essential
computations in this paper.
In the notebook we give explanations of the way we build the routines.

\rmrk{Example}A computation in \cite[\S8e]{Math} gives for
$F(nak)=b(na)\Kph h{p}{r}{p}(k)$
\begin{align}\label{E31} \Z_{31}&\bigl(b\, \Kph h{p}{r}{q}\bigr)
= \frac1{8(p+1)} \biggl(
(2+p+r) \Bigl( (2 \HH_r - 4 i \XX_0)b \\
\nonumber
&\qquad\qquad\qquad\hbox{}
+ (h+2p-r)b \Bigr) \, \Kph {h+3}{p+1}{r+1}{q+1}
\displaybreak[0]\\
\nonumber
&\quad\hbox{}
-2(2+p-r) \bigl( (\XX_1-i\XX_2) b\bigr)\, \Kph {h+3}{p+1}{r-1}{q+1}
\displaybreak[0]\\
\nonumber
&\quad\hbox{}
-(p-q) \Bigl( (4i \XX_0-2\HH_r)b+(4-h+2p+r)b \Bigr)
\, \Kph{h+3}{p-1}{r+1}{q+1}
\displaybreak[0]\\
\nonumber
&\quad\hbox{}
+2(p-q) \bigl( (\XX_1-i\XX_2) b\bigr)
\, \Kph{h+3}{p-1}{r-1}{q+1}\biggr)\,, \end{align}
with the action of $\nlie_c\oplus \alie_c$ by right differentiation on
the function $b$ on~$NA.$

A similar formula is available for all elements in the Lie algebra. For
the elements in $\klie_c$ the formula is easy, since it involves only
$\Kph h{p}{r}{q}.$ For $\Z_{13},$ $\Z_{32}$ and $\Z_{23}$ the general
structure of the formula is the same as in~\eqref{E31}.

\rmrk{Application to shift operators} If we take $q=p$ in~\eqref{E31}
the last two terms become zero, and we arrive at a highest weight
vector in $V_{h+3,p+1,p+1}.$ Thus, we obtain the description of
$\sh 3 1\bigl( b\, \Kph h{p}{r}{q}\bigr),$ in accordance with
Proposition~\ref{prop-shdef}. The same works for $\sh{-3}1.$

The downward shift operators $\sh{\pm3}{-1}$ are based on $\Z_{32}$ and
$\Z_{13}.$ In these cases we just delete the contributions of the
$K$-type $\tau^{h\pm3}_{p-1}.$ In Proposition~\ref{prop-shdef} the
projection to a given $K$-type was given by an element of $U(\glie)$
depending on the $K$-type of the argument. It would be inefficient to
do that in the actual computations.

\begin{table}[tp]
\begin{align*}
8(p+1)& \, \sh31 \bigl( b\, \Kph h {p}{r}{p}\bigr)
\;=\; \Bigl( (2+p+r) \bigl( 2\HH_r-4i\XX_0+h+2p-r\bigr) b\\
&\quad\qquad \hbox{} \cdot
\Kph{h+3}{p+1}{r+1}{p+1}\\
&\qquad\hbox{}
- 2(2+p-r) \bigl( \XX_1-i \XX_2\bigr) b\;
\Kph{h+3}{p+1}{r-1}{p+1}\Bigr)\,,\\
8(p+1)&\sh{-3}1 \bigl( b\, \Kph h {p}{r}{p}\bigr)
\;=\; 2(2+p+r)\bigl( \XX_1+i \XX_2 \bigr)b\; \Kph{h-3}{p+1}{r+1}{p+1}\\
&\qquad\hbox{}
+(2+p-r) \bigl( 2\HH_r+4i\XX_0+2p+r-h\bigr)
b\; \Kph{h-3}{p+1}{r-1}{p+1}\,,\\
4(p+1)&\, \sh3{-1} \bigl( b\, \Kph h {p}{r}{p}\bigr)
\;=\; p \,\bigl(2\HH_r-4i \XX_0
-4+h-2p-r\bigr)b\\
&\quad\qquad\hbox{} \cdot
\Kph{h+3}{p-1}{r+1}{p-1}\\
&\qquad\hbox{}
+2p\, \bigl( \XX_1-i \XX_2 \bigr)b\; \Kph{h+3}{p-1}{r-1}{p-1}\,,\\
4(p+1)&\, \sh{-3}{-1} \bigl( b\, \Kph h {p}{r}{p}\bigr)
\;=\; -2p
(\XX_1+i \XX_2) b\, \Kph{h-3}{p-1}{r+1}{p-1}\\
&\qquad\hbox{}
+ p \,\bigl( 2 \HH_r + 4 i \XX_0 -(4+h+2p-r)
\bigr)b\; \Kph{h-3}{p-1}{r-1}{p-1}\,.
\end{align*} \il{shob}{shift operator}
\caption{The action of the shift operators on vectors in
$C^\infty(G)_K.$ See \cite[\S8f]{Math}.}\label{tab-sh}
\end{table}


\def\flnm{rFtm-II-lFtm}


\section{Large Fourier term modules}\label{sect-lFtm}\markright{8. LARGE
FOURIER TERM MODULES} We turn to the submodules of $C^\infty(G)_K$ of
our main interest: the modules $\Ffu_\bt$ and $\Ffu_{\ell,c}$ in which
the Fourier term operators $C^\infty(\Ld_\s\backslash G)_K$ in
\eqref{Frbtdef} and \eqref{Frlcdef} take their values. These modules
should contain the Fourier terms, even if we do not impose any further
conditions on the $\Ld_\s$-invariant functions.

We adapt the Mathematica routines for the shift operators to these
modules, and arrive at explicit descriptions. In the generic abelian
case, $\Ffu_\bt$ with $\bt\in \ZZ[i]\setminus\{0\}$, the upward shift
operators turn out to be injective.

In the non-abelian case the modules $\Ffu_{\ell,c}$ turn out to be a
countably infinite direct sum of submodules $\Ffu_{\ell,c,d}$.
\medskip

\begin{prop}\il{lFtm}{large Fourier term module}\il{Ftml}{Fourier term
module, large} {\rm Large Fourier term modules.}
\begin{enumerate}
\item[i)] Let $\bt\in \CC$. The subspace
$\Ffu_\bt \subset C^\infty(G)_K$ determined by the condition
$ F\bigl( n g \bigr)=\ch_\bt(n)\, F(g)$ on $F\in C^\infty(G)_K$ is a
$(\glie,K)$-submodule of $C^\infty(G)_K$. {\rm We call it a \emph{large
abelian Fourier term module}.}\il{labFtm}{large abelian Fourier term
module.}
\item[ii)] Let $\ell\in \frac\s2\ZZ_{\neq 0}$ and let
$c\in \ZZ\bmod 2\ell$. We define $\Ffu_{\ell,c}$ as the vector space
spanned by functions of the form
\be f\bigl( n\am(t)k \bigr) \= \Th_{\ell,c}(\ph;n) \, f(t)\, \Kph
h{p}{r}{q}(k)\,,\ee
where $\ph \in \Schw(\RR)$ runs over (finite) linear combinations of
normalized Hermite functions $h_{\ell,m}$ with $ \ZZ_{\geq 0}$, where
$f\in C^\infty(0,\infty)$, and where the integers $h,p,r,q$ satisfy
$h \equiv p \equiv r \equiv q \bmod 2$, $|r|\leq p$, $|q|\leq p$.
\il{Ffuellc}{$\Ffu_{\ell,c}$}The space $\Ffu_{\ell,c}$ is a
$(\glie,K)$-submodule of $C^\infty(G)_K$. {\rm We call it a \emph{large
non-abelian Fourier term module}.}\il{nablFtm}{large non-abelian
Fourier term module.}
\end{enumerate}
\end{prop}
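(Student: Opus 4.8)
The plan is to check, for each of the two spaces, stability under the two operations that make up a $(\glie,K)$-module structure on $C^\infty(G)_K$: right translation by $K$ and right differentiation by $\glie$. The guiding observation is that both of these operations act only on the right-hand argument, whereas the conditions defining $\Ffu_\bt$ and $\Ffu_{\ell,c}$ are conditions on the left behavior under $N$; since left multiplication by $N$ commutes with right multiplication by $G$, the two interact well.

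Part i) is formal. Assuming $F(ng)=\ch_\bt(n)F(g)$ for all $n\in N$, I would compute, for $k\in K$, that $(R(k)F)(ng)=F(ngk)=\ch_\bt(n)(R(k)F)(g)$, and for $\XX\in\glie$ differentiate $F(nge^{s\XX})=\ch_\bt(n)F(ge^{s\XX})$ at $s=0$ to get $(R(\XX)F)(ng)=\ch_\bt(n)(R(\XX)F)(g)$. Hence $R(k)F$ and $R(\XX)F$ still satisfy the defining condition, and as $K$-finiteness is preserved inside $C^\infty(G)_K$, the space $\Ffu_\bt$ is a $(\glie,K)$-submodule.

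For part ii) I would first record that each spanning function lies in $C^\infty(G)_K$: it is smooth and well defined via the Iwasawa diffeomorphism $N\times A\times K\to G$ (the theta series converges absolutely together with all its derivatives), and it is $K$-finite because $\Kph h{p}{r}{q}$ spans a copy of the finite-dimensional realization $\tau^h_{r,p}$. The $K$-action and the action of the compact directions are then immediate: right translation by $K$ leaves $\Th_{\ell,c}(\ph;n)f(t)$ untouched and, by Table~\ref{tab-RLdK}, sends $\Kph h{p}{r}{q}$ into $\sum_{q'}\CC\,\Kph h{p}{r}{q'}$ with $h,p,r$ fixed; and each element of $\klie_c$ (in particular $\CK_i,\WW_0,\Z_{12},\Z_{21}$) acts by right differentiation on the $K$-factor alone, again by Table~\ref{tab-RLdK}. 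Since $\glie_c=\klie_c\oplus\CC\Z_{13}\oplus\CC\Z_{31}\oplus\CC\Z_{23}\oplus\CC\Z_{32}$, only the last four directions remain.

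These four directions are the heart of the argument, and I would treat them with the apparatus of Section~\ref{sect-exdf}. By \eqref{RM} and the Iwasawa splitting \eqref{IdL}, applying $\Z_{ij}$ ($ij\in\{13,31,23,32\}$) to $b(na)\,\Kph h{p}{r}{q}(k)$ yields a finite sum of terms, each the product of a left differentiation of $\Kph h{p}{r}{q}$ (which stays among the $\Kph{}{}{}{}$) with a right differentiation on $NA$ of the component $b(na)=\Th_{\ell,c}(\ph;n)f(t)$, the $NA$-parts being combinations of $\XX_0,\XX_1,\XX_2\in\nlie$ and $\HH_r\in\alie$; eq.~\eqref{E31} and Table~\ref{tab-sh} give the explicit assembly. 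The crux is thus to show that these four right differentiations return $\Th_{\ell,c}(\ph;n)f(t)$ to the spanning set. Conjugating by $\am(t)$, so that $\am(t)e^{s\XX}=e^{s\,\mathrm{Ad}(\am(t))\XX}\am(t)$, and using \eqref{Nnorm} (which scales $\XX_1,\XX_2$ by $t$ and $\XX_0$ by $t^2$) together with the pure $N$-action \eqref{Thd1}--\eqref{Thd2}, I expect
\begin{align*}
R(\XX_0)b &= t^2\,\pi i\ell\,\Th_{\ell,c}(\ph;n)\,f(t)\,, &
R(\XX_1)b &= -4\pi i\ell\,t\,\Th_{\ell,c}(\xi\ph;n)\,f(t)\,,\\
R(\XX_2)b &= t\,\Th_{\ell,c}(\ph';n)\,f(t)\,, &
R(\HH_r)b &= \Th_{\ell,c}(\ph;n)\,t\,f'(t)\,,
\end{align*}
the last because $\am(t)e^{s\HH_r}=\am(te^s)$. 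The decisive features are that all four operations fix $\ell$ and $c$, that $\ph\mapsto\xi\ph$ and $\ph\mapsto\ph'$ preserve the span of the Hermite functions $h_{\ell,m}$ (Table~\ref{tab-hermdiff}), and that multiplication by $t,t^2$ and the map $f\mapsto tf'$ preserve $C^\infty(0,\infty)$. The main obstacle is precisely this bookkeeping — tracking the $\mathrm{Ad}(\am(t))$-scalings and confirming the invariance of $(\ell,c)$ and of the Hermite span — after which closure under $K$ and under all of $\glie_c$ shows that $\Ffu_{\ell,c}$ is a $(\glie,K)$-submodule.
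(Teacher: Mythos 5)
Your proposal is correct and follows essentially the same route as the paper: part i) by the commutativity of left and right translations, and part ii) by reducing via the Iwasawa splitting and interior differentiation to the action of $\XX_0,\XX_1,\XX_2,\HH_r$ on $NA$, then invoking the $\am(t)$-conjugation scalings together with \eqref{Thd1}--\eqref{Thd2} and Table~\ref{tab-hermdiff} to see that $(\ell,c)$ and the Hermite span are preserved. The explicit formulas you record for $R(\XX_j)b$ agree with those the paper derives in \S\ref{sect-lnab}.
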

\begin{proof}The invariance under the $(\glie,K)$-action in i)
follows from the fact that the actions on the right and on the left
commute.

In~ii) the invariance under the action of $\klie$ and $K$ is clear. The
action of the remaining basis elements $\Z_{ij}$ can be worked out by
interior differentiation, with the approach in \S\ref{sect-exdf}. Since
$A$ normalizes $N$, the action of $\nlie$ on the functions on $NA$
leads to an action of $\nlie $ on theta functions. The relations
\eqref{Thd1} and~\eqref{Thd2} show that the differentiation produces
linear combinations of theta functions with the same parameters $\ell $
and~$c$.
\end{proof}

The Fourier term operators on $C^\infty(\Ld_\s\backslash G)_K$ take
values in these large Fourier term modules. These modules are large,
since we can take $f\in C^\infty(0,\infty)$ arbitrarily. In
\S\ref{sect-Ftm} we will impose the condition that $ZU(\glie)$ acts by
multiplication by a character. With that restriction we will speak of
Fourier term modules.

When we apply the explicit differentiation of the previous paragraph we
use that if a function $b$ on $NA$ is of the form
$b\bigl( n \am(t) \bigr)= u(n)\, f(t)$, then we have, with
$\exp(x\HH_r) \= \am(e^x)$, the relation
\be \HH_r b\bigl(n \am(t) \bigr) = u(n)\, t\, f'(t)\,.\ee
For $\XX\in \nlie$ the action of $\XX$ satisfies
$\XX b \bigl(na \bigr) = \bigl ( (a\XX a^{-1})u \bigr)(n)\, f(a)$. With
\eqref{expnlie} this leads to
\badl{XNA} \XX_j b \bigl( n\am(t) \bigr)&\= t\, (\XX_j u)(n) \, f\bigl(
\am(t)\bigr)\qquad (j=1,2)\,,\\
\XX_0 b \bigl( n\am(t) \bigr)&\= t^2\,
(\XX_0 u)(n) \, f\bigl( \am(t)\bigr)\,.
\eadl
\subsection{Abelian case}
\begin{prop}\label{prop-isogab}All $(\glie,K)$-modules $\Ffu_\bt$ with
$\bt\neq 0$ are isomorphic.
\end{prop}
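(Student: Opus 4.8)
The plan is to exploit the fact that each $\Ffu_\bt$ is defined purely by a left transformation law under $N$, namely $F(ng)=\chi_\bt(n)F(g)$, and that the normalizer $AM$ of $N$ permutes the characters $\chi_\bt$ by conjugation. Since the $(\glie,K)$-module structure on $C^\infty(G)_K$ is given by right differentiation and right $K$-translation, left translation by a fixed group element commutes with all of it and is therefore an intertwining operator. Concretely, for $g_0\in G$ write $L_{g_0}F(g)=F(g_0 g)$; this is a $(\glie,K)$-module automorphism of $C^\infty(G)_K$, inverted by $L_{g_0^{-1}}$. First I would show that for a suitable $g_0\in AM$ it carries $\Ffu_\bt$ onto $\Ffu_{\bt'}$, and then that every pair $\bt,\bt'\in\CC^\ast$ is reached in this way.

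For $g_0=\am(t)\mm(\z)\in AM$ and $F\in\Ffu_\bt$, relation \eqref{Nnorm} gives $g_0\,\nm(b,r)\,g_0^{-1}=\nm(\z^3 t\,b,\,t^2 r)$, so that
\be (L_{g_0}F)\bigl(\nm(b,r)g\bigr)=F\bigl((g_0\nm(b,r)g_0^{-1})\,g_0 g\bigr)=\chi_\bt\bigl(\nm(\z^3 t\,b,t^2 r)\bigr)\,(L_{g_0}F)(g)\,.\ee
By \eqref{chbt} the factor equals $e^{2\pi i\re(\bar\bt\,\z^3 t\,b)}$, and since the real pairing $(\mu,b)\mapsto\re(\bar\mu b)$ on $\CC$ is nondegenerate, this equals $\chi_{\bt'}(\nm(b,r))$ where $\bt'$ is determined by $\bar{\bt'}=\bar\bt\,\z^3 t$, i.e. $\bt'=t\,\z^{-3}\,\bt$. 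Thus $L_{g_0}$ maps $\Ffu_\bt$ into $\Ffu_{\bt'}$; as it preserves smoothness and right $K$-finiteness and is inverted by $L_{g_0^{-1}}$, which maps $\Ffu_{\bt'}$ back into $\Ffu_\bt$, it is a $(\glie,K)$-module isomorphism $\Ffu_\bt\xrightarrow{\sim}\Ffu_{\bt'}$.

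It then remains to solve $\bt'=t\,\z^{-3}\,\bt$ for given $\bt,\bt'\in\CC^\ast$. Taking moduli forces $t=|\bt'|/|\bt|>0$, and then $\z^{-3}=(\bt'/\bt)\,|\bt|/|\bt'|$ is a prescribed point on the unit circle; since $\z\mapsto\z^{-3}$ maps the unit circle onto itself, a suitable $\z$ exists. This produces $g_0\in AM$ with $L_{g_0}\colon\Ffu_\bt\xrightarrow{\sim}\Ffu_{\bt'}$, which proves the proposition. The only point requiring any thought—and it is a triviality—is this last surjectivity of cubing on the circle; there is no genuine obstacle, the whole argument being conjugation by the torus $AM$ already recorded in \eqref{Nnorm}.
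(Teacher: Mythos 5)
Your proof is correct and follows essentially the same route as the paper's: left translation by $\am(t)\mm(\z)\in AM$, which conjugates $\chi_\bt$ into $\chi_{t\z^{-3}\bt}$ by \eqref{Nnorm} and commutes with the right $(\glie,K)$-action. The paper merely reduces to the single base case $\Ffu_1\xrightarrow{\sim}\Ffu_\bt$, while you connect arbitrary pairs directly; the content is identical.
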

\begin{proof}Any element $\bt\in \CC^\ast$ can be written as
$\bt=\z^{-3} t$ with $|\z|=1$ and $t>0$. Conjugation by
$x=\am(t)\mm(\z)$ as described in \eqref{Nnorm} transforms $\chi_1$
into $\chi_{\bt}$. So, the left translation
\[( L_x F)(g) = F(x g)\]
gives a bijective linear map $\Ffu_1 \rightarrow\Ffu_\bt.$ Since left
and right translations commute, $L_x$ is an intertwining operator.
\end{proof}

In the abelian case we have $u(n) =\ch_\bt(n)$, and
\bad \XX_1 \bigl( \ch_\bt(n)\, f(t) \bigr) &\= 2\pi i \re(\bt) \,
\ch_\bt(n)\, f(t)\,,& \XX_0 \bigl( \ch_\bt(n)\, f(t) \bigr) &\= 0\,,
\\
\XX_2 \bigl( \ch_\bt(n)\, f(t) \bigr) &\= 2\pi i \im(\bt) \,
\ch_\bt(n)\, f(t)\,,& \HH_r\bigl( \ch_\bt(n)\, f(t) \bigr) &\= t\,
\ch_\bt(n)\, f'(t)\,.
\ead
These relations can be applied to work out the differentiation
relations. In particular we get the description of the shift operators
in Table ~\ref{tab-shab}.
\begin{table}[tp]
\begin{align*}
8(p+1)& \, \sh31 \bigl( \chi_\bt f\, \Kph h {p}{r}{p}\bigr)
\;=\;\chi_\bt \, \Bigl( (2+p+r) \bigl( 2 t f'
+(h+2p-r) f \bigr)
\\
&\qquad\qquad \hbox{} \cdot
\Kph{h+3}{p+1}{r+1}{p+1}\\
&\qquad\hbox{}
- 4\pi i(2+p-r)\bar\bt \, tf\; \Kph{h+3}{p+1}{r-1}{p+1}\Bigr)\,,\\
8(p+1)&\sh{-3}1 \bigl( \chi_\bt \,f\, \Kph h {p}{r}{p}\bigr)
\;=\; \chi_\bt\,\Bigl( 4\pi i\bt(2+p+r)tf \; \Kph{h-3}{p+1}{r+1}{p+1}\\
&\qquad\hbox{}
+(2+p-r) \bigl(2 t f' +(2p+r-h)f\bigr) \;
\Kph{h-3}{p+1}{r-1}{p+1}\Bigr)\,,\\
4(p+1)&\, \sh3{-1} \bigl( \chi_\bt \, f \, \Kph h {p}{r}{p}\bigr)
\;=\;\chi_\bt\Bigl( p \bigl(2tf'
+(-4+h-2p-r)f\bigr)\\
&\qquad\qquad\hbox{} \cdot
\Kph{h+3}{p-1}{r+1}{p-1}\\
&\qquad\hbox{}
+4\pi i \bar \bt p t f\; \Kph{h+3}{p-1}{r-1}{p-1}\Bigr)
\,,\\
4(p+1)&\, \sh{-3}{-1} \bigl( \chi_\bt \, f\, \Kph h {p}{r}{p}\bigr)
\;=\; \chi_\bt\Bigl(-4\pi ip \bt t f\, \Kph{h-3}{p-1}{r+1}{p-1}\\
&\qquad\hbox{}
+ p \bigl( 2 t f' -(4+h+2p-r)f \bigr)\;
\Kph{h-3}{p-1}{r-1}{p-1}\Bigr)\,.
\end{align*}
\il{shopab}{shift operator, abelian case}
\caption{Shift operators in $\Ffu_\bt$, with
$\bt\in \ZZ[i]\setminus\{0\}$.\\
See \cite[\S9b]{Math}.} \label{tab-shab}
\end{table}

\rmrk{Kernel relations} The component functions of a highest weight
function in a given $K$-type $\tau^h_p$ are parametrized by $r$, which
runs over the finitely many values satisfying $|r|\leq p$,
$r\equiv p \bmod 2$. If the context allows it, we will
write\ir{sumr}{\sum_r = \sum_{r\equiv p(2),\; |r|\leq p}}
\be\label{sumr} \sum_r \quad\text{ instead of } \sum_{r\equiv p(2),\;
|r|\leq p}\,.\ee

A generating element of the highest weight space $\Ffu_{\bt;h,p,p}$ has
the form
\be \label{Fcona}F\bigl( n\am(t) k\bigr) \= \ch_\bt(n) \, \sum_r
f_r(t)\, \Kph h{p}{r}{p}(k)\,,\ee
with \il{co}{component}\emph{components}$f_r \in C^\infty(0,\infty)$.
The description of the shift operators shows that if $F$ is in the
kernel of a shift operator, then there are relations between components
$f_r$ and $f_{r+2}$. We call these relations \emph{kernel
relations}.\il{krel}{kernel relations} See Table~\ref{tab-krab}.
\begin{table}[htp]
\begin{align*}
\sh 3 1:\quad& 2 t f_p'+(h+p)\, f_p = 0 \,\\
&(2+p+r)(2 t f_r'+(h+2p-r) f_r = 8\pi i \bar \bt (p-r)\, t f_{r+2}\\
&\qquad \text{for } -p\leq r \leq p-2\,,\\
&\bar \bt f_{-p}=0\,;\\
\sh{-3}1:\quad& \bt f_p = 0\,,\\
&(p-r)(2 t f_{r+2}'+(2p+r+2-h) f_{r+2} = -2\pi i \bt(2+p+r)\, t f_r\\
&\qquad\text{for }-p\leq r \leq p-2\,,\\
&2t f_{-p}' +(p-h)f_{-p} = 0\,;\\
\sh3{-1}:\quad& 2t f_r'+(h-2p-r-4)
f_r = -8\pi i \bar \bt t f_{r+2}\\
&\qquad\text{for }-p\leq r \leq p-2\text{ and }p\geq1\,;\\
\sh{-3}{-1}:\quad& 2t f_{r+2}'-(2+h+2p-r) f_{r+2}= 2\pi i \bt t \, f_r\\
&\qquad \text{for }-p\leq r \leq p-2\text{ and }p\geq1\,.
\end{align*}
\caption[]{Kernel relations on $\Ffu_{\bt;h,p,p}$, with
$\bt\in \ZZ[i]\setminus\{0\}$.\\
The condition $p\geq 1$ for the downward shift operators is in
accordance with the fact that $\sh{\pm 3}{-1}$ vanishes on
one-dimensional $K$-types. See \cite[\S9c]{Math}.} \label{tab-krab}
\end{table}

\begin{samepage}
\begin{prop}\label{prop-uso-ga}Let $\bt\in \CC^\ast$.
\begin{enumerate}
\item[i)] The upward shift operators $\sh 31$ and
$ \sh{-3}1 : \Ffu_{\bt;h,p,p}\rightarrow \Ffu_{\bt;h\pm 3,p+1,p+1}$ are
injective.
\item[ii)] For each $K$-type $\tau^h_p$ the subspaces of
$\Ffu_{\bt;h,p,p}$ on which $\sh 3{-1}$ or $\sh{-3}{-1}$ vanish have
infinite dimension.
\end{enumerate}
\end{prop}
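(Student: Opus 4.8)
The plan is to read everything off the kernel relations in Table~\ref{tab-krab}, which describe precisely when a highest weight element
\[ F\bigl(n\am(t)k\bigr) = \ch_\bt(n)\sum_r f_r(t)\,\Kph h{p}{r}{p}(k) \]
of $\Ffu_{\bt;h,p,p}$ lies in the kernel of a given shift operator. For the upward operators these relations are triangular with a forced endpoint and will propagate vanishing across all components; for the downward operators they express one component as a first-order differential expression in its neighbour, leaving exactly one free smooth function.

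For i), I consider $\sh31$. The endpoint relation $\bar\bt f_{-p}=0$ together with $\bt\neq 0$ forces $f_{-p}=0$ (this already settles the case $p=0$). I then induct upward on $r$: assuming $f_r=0$, the recursion
\[ (2+p+r)\bigl(2tf_r'+(h+2p-r)f_r\bigr) = 8\pi i\bar\bt(p-r)\,tf_{r+2} \]
has vanishing left-hand side, and since $\bt\neq0$, $t>0$, and the factor $p-r\geq 2$ is nonzero for $-p\le r\le p-2$, it yields $f_{r+2}=0$. Running $r$ from $-p$ to $p-2$ annihilates every component, so $F=0$ and $\sh31$ is injective. The operator $\sh{-3}1$ is treated symmetrically: now $\bt f_p=0$ forces $f_p=0$, and the mirror recursion, whose coefficient $2+p+r\geq 2$ is again nonzero, propagates the vanishing downward from $r=p-2$ to $r=-p$.

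For ii), I take $\sh3{-1}$, whose kernel relations are
\[ 2tf_r'+(h-2p-r-4)f_r = -8\pi i\bar\bt\,tf_{r+2},\qquad -p\le r\le p-2. \]
Since $\bt\neq0$ and $t>0$ on $(0,\infty)$, each relation solves explicitly for $f_{r+2}$ as a first-order differential expression in $f_r$, which again lies in $C^\infty(0,\infty)$. Hence a completely free choice of $f_{-p}\in C^\infty(0,\infty)$ determines $f_{-p+2},\dots,f_p$ and produces a kernel element, giving an injection $C^\infty(0,\infty)\hookrightarrow\ker\sh3{-1}$; the kernel is therefore infinite-dimensional. The operator $\sh{-3}{-1}$ is the mirror image, its relations solving for $f_r$ in terms of $f_{r+2}$, so now $f_p\in C^\infty(0,\infty)$ is the free function. (When $p=0$ both downward operators vanish identically, so their kernel is all of $\Ffu_{\bt;h,0,0}\cong C^\infty(0,\infty)$, and the claim is immediate.)

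The genuine content of the statement lies in Table~\ref{tab-krab} itself, coming from the explicit differentiation of \S\ref{sect-exdf}; once those relations are in hand the argument is elementary bookkeeping. The only points needing care are checking that the coefficient multiplying the new component never vanishes in the relevant range of $r$ (this is exactly where $\bt\neq0$ and the nonvanishing of $p-r$, respectively $2+p+r$, enter), and noting that dividing by $t$ is harmless on $(0,\infty)$ so that every constructed component stays smooth. I do not expect an obstacle beyond this coefficient check.
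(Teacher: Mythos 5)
Your proof is correct and follows essentially the same route as the paper: both arguments read the statement directly off the kernel relations in Table~\ref{tab-krab}, using the forced endpoint ($\bar\bt f_{-p}=0$, resp.\ $\bt f_p=0$) plus propagation of vanishing for the upward operators, and the free choice of the extreme component for the downward ones. Your explicit verification that the coefficients $p-r$ and $2+p+r$ do not vanish in the relevant range is a welcome detail the paper leaves implicit.
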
\end{samepage}
\begin{proof} In the kernel relations for $\sh31$ in
Table~\ref{tab-krab} we see that if $\sh 3 1 F=0$ for $F$ as
in~\eqref{Fcona}, we have $f_{-p}=0$. Furthermore, since $f_{r+2}$ is
expressed in terms of $f_r$ and its derivative, we conclude that all
$f_r$ vanish, and hence $F=0$. If $\sh{-3}1 F=0$ we proceed similarly,
now starting with $f_p$. The proof of i) clearly breaks down for
$\bt=0$.

For the kernel of $\sh3{-1}$ in~ii) we can pick $f_{-p}$ arbitrarily in
$C^\infty(0,\infty)$. This determines the higher components. For
$\sh{-3}{-1}$ we start with any $f_p$ in $C^\infty(0,\infty)$.
\end{proof}

\subsection{Non-abelian case}\label{sect-lnab}In
$\Ffu_{\ell,c}$ the components are linear combinations of functions on
$NA$ of the form
\[ n\am(t) \mapsto \Th_{\ell,c}\bigl( h_{\ell,m};n)\, f(t) \]
with $f\in C^\infty(0,\infty)$ and $m\in \ZZ_{\geq 0}.$ In a given
module $\Ffu_{\ell,c}$ only $m$ varies, and we abbreviate
\il{thm}{$\th_m=\Th_{\ell,c}(h_{\ell,m})$
}$	\th_m = \Th_{\ell,c}(h_{\ell,m}).$ We obtain from the substitution
rules in Table~\ref{tab-hermdiff}, p~\pageref{tab-hermdiff}, and
\eqref{Thd1}, \eqref{Thd2}.
\begin{align*} R_{NA}(\HH_r)( \th_m f) &= \th_m\, tf'\,,\displaybreak[0]
\\
R_{NA}(\XX_0)( \th_m f) &= \pi i \ell \th_m \, t^2
f\,,\displaybreak[0]\\
R_{NA}\bigl( \sign(\ell) \XX_1+i \XX_2\bigr)( \th_m f)
&=- 2i \sqrt{2\pi|\ell|(m+1)}\, \th_{m+1}\, t f\,,\displaybreak[0]\\
R_{NA}\bigl( \sign(\ell) \XX_1-i \XX_2\bigr)( \th_m f)
&=-2 i \sqrt{2\pi|\ell|m}\, \th_{m}\, t f\,. \end{align*}

The sign of $\ell$ plays a role in these relations, hence also in the
resulting differentiation formulas and in the expressions for the shift
operators in Table~\ref{tab-shnab}. We will often write
\ir{epsdtt}{\e=\sign(\ell)}\il{dtt}{$\dt_x= (x+1)/2$}
\be \label{epsdtt}
\e\=\sign(\ell)\,,
\qquad \dt_x\= \frac{1+x}2\text{ for }x=\pm1\,.\ee
\begin{table}[tp]
\begin{align*}
8(p+1)&\sh 3 1 F \;=\; \th_m\,(2+p+r)\, \bigl( 2 t f'+
(h+2p-r+4\pi\ell t^2)f\bigr)\\
&\qquad\hbox{} \cdot
\Kph {h+3}{p+1}{r+1}{p+1}\\
&\quad\hbox{}
+4i \e \th_{m-\e} (2+p-r)\sqrt{2\pi |\ell|(m+\dt_{-\e})}\, t\, f \,
\Kph{h+3}{p+1}{r-1}{p+1}\,,\\
8(p+1)&\sh{-3}1 F \;=\; -4i\e\th_{m+\e} (2+p+r)
\sqrt{2\pi|\ell|(m+\dt_\e)}\,t\, f\\
&\qquad\hbox{} \cdot
\Kph{h-3}{p+1}{r+1}{p+1}\\
&\quad\hbox{}
+\th_m\,(2+p-r)\bigl(2 t f'-( h-2p-r+4\pi\ell t^2) f \bigr) \\
&\qquad\hbox{} \cdot
\Kph{h-3}{p+1}{r-1}{p+1}\,,\\
\frac{4(p+1)}p & \sh3{-1} F \;=\; \th_m \bigl(2 tf'-(4-h+2p+4-4\pi\ell
t^2)
f \bigr)\\
&\qquad\hbox{} \cdot
\Kph{h+3}{p-1}{r+1}{p-1}\\
&\quad\hbox{}
- 4i \e \th_{m-\e} \sqrt{2\pi|\ell|(m+\dt_{-\e})}\, t\,f \,
\Kph{h+3}{p-1}{r-1}{p-1}\,,\\
\frac{4(p+1)}p & \sh{-3}{-1} F \;=\; 4i\e
\th_{m+\e}\sqrt{2\pi|\ell|(m+\dt_\e)}\,t\, f
\,\Kph{h-3}{p-1}{r+1}{p-1}\\
&\quad\hbox{}
+ \th_m \bigl( 2 t f'-(4+h+2p-r+4\pi\ell t^2)f \bigr)\,
\Kph{h-3}{p-1}{r-1}{p-1}\,.
\end{align*}
\il{shopnab}{shift operator, non-abelian case}
\caption[]{Shift operators in $\Ffu_{\ell,c}$, For
$F=\th_m\, f(t)\, \Kph h{p}{r}{p}$.\\
$\e$ and $\dt_x$ as in \eqref{epsdtt}. See \cite[\S10b]{Math}. }
\label{tab-shnab}
\end{table}

\rmrk{Submodules} In Table~\ref{tab-shnab} we can check that the
quantity \ir{mudef}{d}
\be \label{mudef} d\isdd 3 \sign(\ell)
(2m+1)+h-3r \,\in\, 1+2\ZZ\ee
is preserved by the action of the shift operators. The action of the
elements in $\klie$ and right translation by elements of $K$ preserve
this quantity as well. So we can split $\Ffu_{\ell,c}$ into invariant
submodules
\be \label{Fcl-decomp} \Ffu_{\ell,c} = \bigoplus_{d \equiv 1\bmod 2}
\Ffu_{\ell,c,d}\,,\ee
where \il{Fn}{$\Ffu_{\ell,c,d} =  \Ffu_\n$}$\Ffu_{\ell,c,d}$ consists of
finite linear combinations of functions of the form
\[ n\am(t)k\mapsto \Th_{\ell,c}\bigl( h_{\ell,m};n\bigr)\, f(t) \, \Kph
h{p}{r}{q}(k)\]
with $(6m+3)\sign(\ell) + h-3r=d.$ If we work with fixed $\ell,$ $c$ and
$ d$ we often abbreviate $\Ffu_{\ell,c, d}$ as
$\Ffu_\n.$\il{n}{$\n=(\ell,c, d)$}

\rmrk{Metaplectic action} The splitting of $\Ffu_{\ell,c}$ as a direct
sum can be understood in greater generality (see Weil \cite{We64}, or
Ishikawa \cite[p~ 489, 490]{Ish99}) by an action of the double cover of
$M$ in the module~$\Ffu_{\ell,c}$.

Let us make this more explicit. The one-parameter group of automorphisms
$u(v): n\mapsto \mm(e^{iv}) n \mm(e^{-iv})$, with $v\in \RR$, induces
on the Lie algebra the automorphism determined by
\bad u(v) \XX_1 &\= (\cos 3t) \XX_1 + (\sin 3t) \XX_2\,,\\
u(v) \XX_2
&\= (\cos 3 t )\XX_2 - (\sin3t) \XX_1\,,\quad u(v) \XX_0 \= \XX_0\,,
\ead
and hence
\be\label{u'0} u'(0): \quad \XX_1\mapsto 3\XX_2\,,\quad \XX_2\mapsto - 3
\XX_1\,,
\quad \XX_0 \mapsto 0\,.\ee

On the other hand,
$B=\frac{1}{8\pi i \ell}\partial_\xi^2+2\pi i \ell \xi^2$
defines an operator in the Schwartz space $\Schw(\RR)$. With use of
Table~\ref{tab-hermdiff}, p~\pageref{tab-hermdiff}, we see that
\be Bh_{\ell,m} = \frac i2 (2m+1)\sign(\ell) h_{\ell,m}\,.\ee
(Checked in \cite[\S5g]{Math}.) Hence there is a group homomorphism from
$\RR$ to the unitary operators in $L^2(\RR)$ such that
$e^{v B} h_{\ell,m} = e^{i (m+1/2)\sign(\ell)v}\, h_{\ell,m}$.

With \eqref{dpild} we find for Schwartz functions $\ph$ the relations
\bad B\,d\pi_{2\pi \ell}(\XX_1)\, \ph - d\pi_{2\pi\ell}(\XX_1) B \ph
&\= - d\pi_{2\pi \ell}(\XX_2) \ph\,,\\
B\,d\pi_{2\pi \ell}(\XX_1)\ph - d\pi_{2\pi\ell}(\XX_2) B \ph
&\= d\pi_{2\pi \ell}(\XX_1) \ph\,,\\
B\,d\pi_{2\pi \ell}(\XX_0)\, \ph - d\pi_{2\pi\ell}(\XX_0) B \ph
&\= 0\,.
\ead
Comparison with \eqref{u'0} shows that for $\XX\in \nlie$
\be \partial_v e^{-3vB} d\pi_{2\pi\ell} (\XX) e^{3vB }\bigr|_{v=0} \=
d\pi_{2\pi \ell}\bigl( u'(0) \XX\bigr)\,. \ee
Integrating this, we obtain for $n\in N$
\be e^{-3v B} \pi_{2\pi\ell}(n) e^{3v B} \= \pi_{2\pi \ell} \bigl(
\mm(e^{iv}) n \mm(e^{-iv} ) \bigr)\,.\ee

We note that the right-hand side depends only on
$v\in \RR\bmod 2\pi\ZZ$. However,
$e^{2 \pi  B} h_{\ell,m} \= - h_{\ell,m}$, and $e^{-3(2\pi) B} = -1$.
So $\mm(e^{iv}) \mapsto e^{vN}$ is not defined on $M$, but can be
viewed as a function on the double cover $\tilde M$ of~$M$.\medskip

We define a group homomorphism $\tilde m$ from $\RR/\bmod 4 \pi\ZZ$ to
the operators on $\Ffu_{\ell,c}$ given on basis elements by
\[ \tilde m(v) \Bigl( \Th_{\ell,c}(\ph) \cdot f \cdot \Kph hprq\Bigr)
\= \Th_{\ell,c}\bigl( e^{3vB} \ph) \cdot
f \cdot L\bigl( \mm(e^{iv}) \bigr) \Kph h p r q\,.\]
It turns out that the elements $\Th_{\ell,c}(h_{\ell,m} )\cdot f\cdot
\Kph h p r q$ are eigenvectors of $\tilde m(v)$ with eigenvalue
\[ e^{3i(m+1/2)\sign(\ell) v}\, e^{iv(h-3r)/2} \= e^{\frac i2 v \bigl(
(6m+3)\sign(\ell) + h - 3r\bigr)} \= e^{i v d/2}\,, \]
with $d$ as in~\eqref{mudef}. So the decomposition in~\eqref{Fcl-decomp}
is the decomposition in eigen\-spaces for this action of the double
cover of~$M$. We may call $d$ the \il{mtplprm}{metaplectic
parameter}metaplectic parameter.

\rmrk{$K$-types}The $K$-types $\tau^h_p$ occurring in
$(\glie,K)$-modules can be pictured as points in the $(h/3,p)$-plane
satisfying $\frac h3 \equiv p\bmod 2,$ $p\in \ZZ_{\geq 0}.$ The shift
operators change the $K$-types by $(h/3,p) \mapsto (h/3\pm 1,p\pm 1)$
(occurrences of $\pm$ are not coupled). See Figure~\ref{fig-lattsho}.
\begin{figure}[htp]
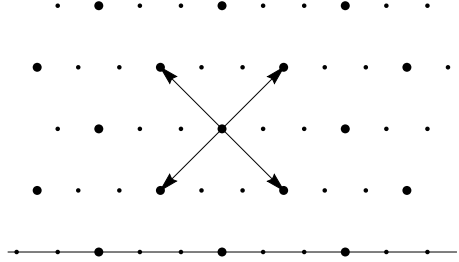

\begin{center}\grf{6}{lattsho}\end{center}
\caption{$K$-types $\tau^h_p$ depicted in the $(h/3,p)$-plane. The
arrows indicate the change of $K$-type given by the four shift
operators. Repeated application of shift operators leaves invariant the
set of thick points, which satisfy $\frac h3\equiv p\bmod 2$. }
\label{fig-lattsho}
\end{figure}

The $K$-type $\tau^h_p$ can occur in the realizations $\tau^h_{p,r}$
with $r\equiv p \bmod 2,$ $|r|\leq p.$ The definition of
$\Ffu_{\ell,c, d}$ gives the condition that
$ d= (6m+3)\sign(\ell) + h-3r.$ Since $m \in \ZZ_{\geq 0}$ this imposes
the requirement that
\begin{align*} 3r \geq 3+h- d&\text{ if }\ell>0\,,\\
3r\leq -3+h- d&\text{ if }\ell<0\,.
\end{align*} This imposes the following condition on the $K$-types:
\badl{mucond} h-3p &\leq d-3&\text{ if }
&\ell>0\,,\\
h+3p&\geq d+3 &\text{ if }&\ell<0\,. \eadl
See Figure~\ref{fig-mucond}.
\begin{figure}[htp]
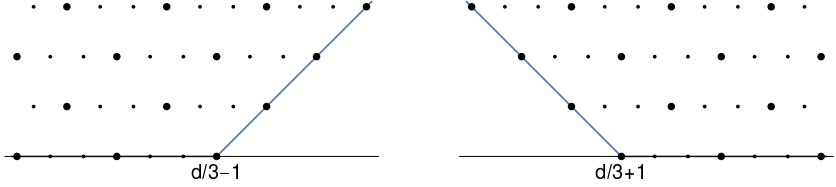

\begin{center}\hspace*{\fill}\grf{5}{posdom}
\hspace{\fill} \grf{5}{negdom}\hspace*{\fill}
\end{center}
\caption{$K$-types allowed in $\Ffu_{\ell,c, d}$ in the $(h/3,p)$-plane;
for $\ell>0$ on the left, and for $\ell<0$ on the right.
}\label{fig-mucond}
\end{figure}
This restriction is special for the modules $\Ffu_{\ell,c, d};$ in the
abelian modules $\Ffu_\bt$ all $K$-types can occur.

For highest weight elements $F\in \Ffu_{\n;h,p,p}$ we get a more
complicated decomposition into components:
\be\label{dcmp-nab}
F \= \sum_r \th_{m(h,r)}\, f_r \, \Kph hprp\,, \ee
where $r\equiv p \bmod 2$, $|r|\leq p$, with the additional condition
\be \label{r0cond} \begin{cases} r\geq r_0(h) & \text{ if }\e=1\,,\\
r \leq r_0(h) &\text{ if } \e=-1\,,
\end{cases}
\ee
and the following quantities, depending implicitly
on~$d$\il{r0def}{$r_0(h)$}\ir{mhrdef}{m(h,r)}
\be\label{mhrdef} r_0(h) \= \frac{h-d}3+\e\,,\qquad m(h,r) \= \frac
\e6(d-h+3r)-\frac12\,.\ee
The $K$-type $\tau^h_p$ does not occur in $\Ffu_\n$ if $r_0(h)>p$ if
$\e=1$, and if $r_0(h) < -p $ if $\e=-1$.

The kernel relations for the shift operators in $\Ffu_\n$ depend on the
quantity $r_0(h)$. We will work them out when we need them.

\rmrk{Fourier term operators}\il{Fto1}{Fourier term operator} The
operator $\Four_{\ell,c} :  C^\infty(\Ld_\s\backslash G)_K 
\rightarrow \Ffu_{\ell,c}$ in \eqref{Frlcdef} can be split up according
to the decomposition~\eqref{Fcl-decomp}:
\be \Four_{\ell,c} F = \sum_{ d\equiv 1\bmod 2} \Four_{\ell,c, d}
F\,.\ee
The sum is finite for each $F\in C^\infty(\Ld_\s\backslash G)_K.$
\ir{Fourmu}{\Four_{\ell,c, d}}
\badl{Fourmu} \Four_{\ell,c, d}& F (nak)
= \sum_{m,h,p,r,q} \Th_{\ell,c}\bigl( h_{\ell,m} ;n\bigr) \, \frac{\Kph
h{p}{r}{q}(k) } {\bigl\| \Kph h{p}{r}{q} \bigr\|^2}\\
&\hbox{} \cdot
\frac \s 2 \int_{n'\in \Ld_\s\backslash N} \int_{k'\in K}
\overline{\Th_{\ell,c}\bigl( h_{\ell,m};n'\bigr)}\, F(n' a k') \,
\overline{\Kph h{p}{r}{q}(k')}\, dk'\, dn'\,,
\eadl
where the sum runs over integers satisfying $m\geq 0,$
$(6m+3)\sign(\ell) + h-3r= d,$ $h\equiv p\equiv r\equiv p \bmod 2,$
$|r|\leq p,$ $|q|\leq p.$

\begin{prop}\label{prop-Ftit}The Fourier term operators
\bad \Four_\bt&:C^\infty(\Ld_\s\backslash G)_K \rightarrow \Ffu_\bt&
&(\bt\in \ZZ[i])\,,\\
\Four_{\ell,c}&: C^\infty(\Ld_\s\backslash G)_K \rightarrow
\Ffu_{\ell,c}&&
(\ell \in \frac\s 2\ZZ_{\neq 0},\; c\bmod 2\ell)\,,\\
\Four_{\ell,c, d}&: C^\infty(\Ld_\s\backslash G)_K \rightarrow
\Ffu_{\ell,c, d}&&( d\equiv 1\bmod 2)\,, \ead
are intertwining operators of $(\glie,K)$-modules.
\end{prop}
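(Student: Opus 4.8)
The plan is to verify, for each of the three operators, commutation with right translation by $K$ and with right differentiation $R(\XX)$ for $\XX\in\glie$. For the abelian operators I would argue directly from the integral formula \eqref{Frbtdef}: since $\Ld_\s\backslash N$ is compact and $f$ is smooth, differentiation passes under the integral, and because $R(\XX)$ acts in the argument $ng$ on the right whereas $\overline{\chi_\bt}$ depends on $n$ on the left, one obtains $R(\XX)\Four_\bt f=\Four_\bt R(\XX)f$; commutation with right $K$-translation holds for the same reason. So the genuine work is with the non-abelian operators, for which no single-character integral is available.

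First I would reinterpret $\Four_{\ell,c}$ as a projection in the left $N$-variable. For fixed $a$ and $k$ the function $n\mapsto f(nak)$ lies in $L^2(\Ld_\s\backslash N)$, and by \eqref{Fen} the value $\Four_{\ell,c}f(nak)$ is precisely $\bigl(P_{\ell,c}[\,n'\mapsto f(n'ak)\,]\bigr)(n)$, where $P_{\ell,c}$ is the orthogonal projection onto the closed subspace spanned by the theta functions $\Th_{\ell,c}(h_{\ell,m})$, $m\ge 0$. The whole claim for $\Four_{\ell,c}$ then reduces to showing that this pointwise projection commutes with right $K$-translation and with $R(\XX)$. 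Commutation with right $K$-translation is immediate, since $nak\,k_0=na(kk_0)$ does not disturb the $N$-argument.

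The hard part will be the commutation $P_{\ell,c}\,R(\XX)=R(\XX)\,P_{\ell,c}$, where one must check that right differentiation, which a priori mixes the $N$-, $A$- and $K$-directions, creates no leakage between different $N$-types. Here I would invoke the interior-differentiation formalism of Section~\ref{sect-exdf}. It suffices to treat $\XX\in\{\Z_{13},\Z_{31},\Z_{23},\Z_{32}\}$, since these together with $\klie_c$ span $\glie_c$ and elements of $\klie_c$ touch only the $K$-variable. For such $\XX$, formula \eqref{RM} writes $R(\XX)=\sum_{ij}\ph_{ij}(k)\,M(\Z_{ij})$ with $K$-valued coefficients, and \eqref{IdL} splits each $M(\Z_{ij})$ into an $\nlie_c\oplus\alie_c$-part acting on the $NA$-factor and a $\klie_c$-part acting on $K$. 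The key input is that the theta-function span is invariant under right $N$-translation, being a realization of the Stone-von Neumann representation, so $P_{\ell,c}$ commutes with the right $\nlie$-action; concretely, by \eqref{Thd1}, \eqref{Thd2} and the $t$-scalings \eqref{XNA} the generators $\XX_0,\XX_1,\XX_2$ carry theta functions of type $(\ell,c)$ to theta functions of the same type. All remaining pieces, namely the factor $\HH_r$ differentiating only $t$, the coefficients $\ph_{ij}(k)$, and the left $\klie_c$-action, act only on the $t$- and $k$-variables and so commute with the projection carried out in $n$. Applying this termwise to the finite $K$-type expansion \eqref{comps} of $f$ (finite because $f$ is $K$-finite) gives $P_{\ell,c}\,M(\Z_{ij})=M(\Z_{ij})\,P_{\ell,c}$, and hence the desired commutation.

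Finally, for $\Four_{\ell,c,d}$ I would argue by composition. By \eqref{Fcl-decomp} one has $\Ffu_{\ell,c}=\bigoplus_{d}\Ffu_{\ell,c,d}$ as $(\glie,K)$-modules, the metaplectic parameter $d$ of \eqref{mudef} being preserved by the shift operators, by $\klie$, and by right $K$-translation; hence the projection of $\Ffu_{\ell,c}$ onto the summand $\Ffu_{\ell,c,d}$ is itself intertwining, and $\Four_{\ell,c,d}$ is the composite of that projection with the intertwining operator $\Four_{\ell,c}$. I expect the only real obstacle to be the commutation of the previous paragraph; once the interior-differentiation reduction isolates the $\nlie$-action on the $N$-variable, the invariance of the theta-function spaces under right $N$-translation settles the matter.
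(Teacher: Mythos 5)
Your proposal is correct and follows the same overall route as the paper: handle the abelian operators directly from the integral formula, reduce the non-abelian case via the interior-differentiation formalism of Section~\ref{sect-exdf} and the Iwasawa splitting \eqref{IdL} so that only the $\nlie$-part requires work, and obtain $\Four_{\ell,c,d}$ by composing $\Four_{\ell,c}$ with the projection onto the invariant summand in \eqref{Fcl-decomp}. The one place where you genuinely diverge is the key commutation for the $\nlie$-action. The paper verifies it by hand: partial integration $(\XX_j f_1,f_2)_{\Ld_\s\backslash N}=-(f_1,\XX_j f_2)_{\Ld_\s\backslash N}$ combined with the explicit Hermite ladder relations of Table~\ref{tab-hermdiff}, followed by a reindexing of the sum over $m$. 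You instead observe that $\Four_{\ell,c}$ is, fibrewise in $(a,k)$, the orthogonal projection onto the closed right-$N$-invariant subspace $\overline{\Th_{\ell,c}(L^2(\RR))}$ of the unitary right regular representation on $L^2(\Ld_\s\backslash N)$, and such a projection automatically commutes with right translation and hence with $R(\XX)$ for $\XX\in\nlie$. This is cleaner and avoids the ladder computation entirely; what it buys the paper to do it explicitly is that the same computation is reused later for the shift-operator formulas. The only point you leave implicit is the passage from the $L^2$-commutation of the projection with right translation to the pointwise identity for derivatives of smooth functions, i.e.\ the interchange of $R(\XX)$ with the infinite sum over $m$; this is supplied by the absolute, termwise-differentiable convergence in Proposition~\ref{prop-absconv}, which you should cite at that step.
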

\begin{proof}For $\Four_\bt$ the intertwining property was already
noted; in \S\ref{sect-Fe1}.

For $f(nak) = f_{NA}(na) \, \Kph h{p}{r}{q}(k)$, we consider
$\Four_{\ell,c}$ given by
\begin{align*} \Four_{\ell,c} f (nak) &= \sum_{m\geq 0}
\Th_{\ell,c}(h_{\ell,m}; n)
\, \frac \s 2 \int_{n'\in \Ld_\s\backslash N}
\overline{\Th_{\ell,c}(h_{\ell,m};n')}\, f_{NA}(n'a) \,
dn'\\&\qquad\qquad\hbox{} \cdot \Kph h{p}{r}{q}(k)\,.
\end{align*}
It suffices to check that $\Z\bigl( \Four_{\ell,c} f \bigr) = 
\Four_{\ell,c} (\Z f)$ for all $\Z$ in a basis of $\glie_c.$

For basis elements in $\klie_c$ this is directly clear. For other basis
elements we use the discussion in \S\ref{sect-exdf}, which reduces the
question to the action by interior differentiation, between $NA$ and
$K.$ In \eqref{IdL} we give the decomposition
$\Z = \Z_\nlie+\Z_\alie+\Z_\klie$ corresponding to the Iwasawa
decomposition of~$G.$ The action of $M(\Z_\klie)$ is the same for
$\Four_{\ell,c} f$ and $f.$ We have to look at
\badl{RNA} R_{NA}(\XX_0) &= \frac12 t^2
\partial_r&
R_{NA}(\XX_1) &= t\bigl( \partial_x- y
\partial_r)\\
R_{NA}(\XX_2) &= t\bigl( \partial_y + x
\partial r)& R_{NA}(\HH_r)&= t\partial_t
\eadl
in the coordinates $(x,y,r,t) \leftrightarrow \nm(x,y,r)\am(t)$. To see
this we use \eqref{expnlie} and the fact that
$\exp(x\HH_r) \= \am(e^x)$. The action of $\HH_r$ is the same for
$\Four_{\ell,c} f$ and $f.$ For the function $f:n\mapsto f_{NA}(n a)$
with fixed $a\in A$ we have
\[ \Four_{\ell,c} f(na) = \sum_{m\geq 0} \Th_{\ell,c}(h_{\ell,m};n)
\, \Bigl( f , \Th_{\ell,c}(h_{\ell,m})
\Bigr)_{\Ld_\s\backslash N}\]
where
\[ \Bigl( f_1, f_2 \Bigr)_{\Ld_\s\backslash N} = \frac\s2
\int_{\Ld_\s\backslash N} f_1(n) \overline{f_2(n)}\, dn\,.\]
Partial integration gives for $j=0,1,2$
\[ \Bigl( \XX_j f_1, f_2 \Bigr)_{\Ld_\s\backslash N} = - \Bigl( f_1,
\XX_j f_2 \Bigr)_{\Ld_\s\backslash N}\,.\]
This gives the desired formula $ \XX_0 \Four_{\ell,c} f = 
\Four_{\ell,c} \XX_0 f,$ since $d\pi_{2\pi\ell}(\XX_0) h_{\ell,m} =
\pi i \ell h_{\ell,m};$ see \eqref{dpild}.

Let $\e=\sign(\ell).$ For the other basis elements we obtain with
Table~\ref{tab-hermdiff}, p~\pageref{tab-hermdiff}:
\begin{align*}
\Th_{\ell,c}&(h_{\ell,m} )\,\Bigl(
(\e\XX_1\mp i\XX_2) f, \Th_{\ell,c}(h_{\ell,m})
\Bigr)_{\Ld_\s\backslash N}\displaybreak[0]
\\
& =
-\Th_{\ell,c}(h_{\ell,m})\, \Bigl( f,
(\e\XX_1\pm i \XX_2)
\Th_{\ell,c}(h_{\ell,m})
\Bigr)_{\Ld_\s\backslash N}\displaybreak[0]
\\
&= -\Th_{\ell,c}(h_{\ell,m}) \,
\begin{cases}
\Bigl( f, -4i\sqrt{\pi |\ell|(m+1)}\, \Th_{\ell,c}(h_{\ell,m+1})
\Bigr)_{\Ld_\s\backslash N}& \pm=+\,,
\\
\Bigl( f, -4i \sqrt{2\pi |\ell|m} \, \Th_{\ell,c}(h_{\ell,m-1})
\Bigr)_{\Ld_\s\backslash N}&\pm=-\,;
\end{cases}
\displaybreak[0]\\
(\e X_1&\mp i \XX_2) \, \Th_{\ell,c}(h_{\ell,m}) \, \bigl( g,
\Th_{\ell,c}(h_{\ell,m}) \bigr)_{\Ld_\s\backslash N}\displaybreak[0]
\\
&= \bigl( f, \Th_{\ell,c}(h_{\ell,m})\bigr)_{\Ld_\s\backslash N} \cdot
\begin{cases}
-4i \sqrt{2\pi|\ell|m} \,\Th_{\ell,c}(h_{\ell,m-1})
&\pm =+\,,\\
-4i \sqrt{\pi|\ell|(m+1)}\,\Th_{\ell,c}( h_{\ell,m+1})
&\pm=-\,.
\end{cases}
\end{align*}
Taking the sum over $m\in \ZZ_{\geq 0}$ we obtain equality.

In this way we conclude that $\Four_{\ell,c}$ is an intertwining
operator. The subspaces $\Ffu_{\ell,c, d}$ in the decomposition
\eqref{Fcl-decomp} are invariant $(\glie,K)$-modules. Hence the
operators $\Four_{\ell,c, d}$ are intertwining operators.
\end{proof}

\subsection{Normalization of standard lattices}\label{sect-normLd}
In the proof of Proposition~\ref{prop-isogab} we used the left
translation \il{Lh}{$L_h$}$(L_h f)(g)=f(hg)$ with $h\in AM$ to get an
isomorphism of $(\glie,K)$-modules between large abelian Fourier term
modules. Here we consider left translations that preserve
$\Ld_\s$-invariance on the left.
\begin{prop}Let $\Ld_\s$ be a standard lattice. The normalizer
\il{NP}{$\Norm_P(\Ld_\s),\; \Norm_N(\Ld_\s),\, \Norm_M(\Ld_\s)$}
\be \Norm_P(\Ld_\s) \= \bigl\{ p\in NAM \;:\; p \Ld_\s p^{-1}=
\Ld_\s\bigr\}\ee
is the semi-direct product of the groups
\be \Norm_N(\Ld_s) \= \bigl\{ \nm(\bt/\s,\rho\bigr) \in N\;:\; \bt\in
\ZZ[i],\; \rho\in \RR\bigr\}\,,\ee
and
\be \Norm_M(\Ld_\s) \= \bigl\{ \mm(\z) \in M \;:\; \z^{12}=1\bigr\}\ee
\end{prop}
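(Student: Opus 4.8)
The plan is to reduce everything to an explicit normal form for $\Ld_\s$ together with the conjugation formula already recorded in \eqref{Nnorm}. First I would compute, from the multiplication rule \eqref{Nmult}, that $\nm(1,0)^m=\nm(m,0)$, $\nm(i,0)^n=\nm(ni,0)$, and $\nm(m,0)\nm(ni,0)=\nm(m+ni,mn)$, while $\nm(0,2/\s)$ is central. The commutator $[\nm(1,0),\nm(i,0)]=\nm(0,2)=\nm(0,2/\s)^\s$ lies in $\langle\nm(0,2/\s)\rangle$ (this is exactly where the hypothesis $\s\in\ZZ_{\geq1}$ is used), so every word in the generators can be normalized to $\nm(1,0)^m\nm(i,0)^n\nm(0,2/\s)^k$. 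This gives
\[
\Ld_\s=\bigl\{\nm(m+ni,\;mn+2k/\s)\;:\;m,n,k\in\ZZ\bigr\},
\]
whence the image of $\Ld_\s$ in $N/Z(N)\cong\CC$ under $\nm(b,r)\mapsto b$ is exactly $\ZZ[i]$, and $\Ld_\s\cap Z(N)=\{\nm(0,2k/\s):k\in\ZZ\}$.

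Next I would compute the conjugation action of a general element $p=\nm(b,r)\am(t)\mm(\z)\in NAM$. Combining \eqref{Nnorm} with the interior-automorphism identity $\nm(b,r)\nm(b_1,r_1)\nm(b,r)^{-1}=\nm(b_1,\,r_1+2\im(\bar b\,b_1))$, which follows directly from \eqref{Nmult}, yields
\[
p\,\nm(b_1,r_1)\,p^{-1}=\nm\bigl(\z^3 t\,b_1,\;t^2 r_1+2t\,\im(\bar b\,\z^3 b_1)\bigr).
\]
Passing to $N/Z(N)$, the induced map is multiplication by $\z^3 t$ on $\CC$, and $p\Ld_\s p^{-1}=\Ld_\s$ forces it to be a $\CC$-linear automorphism of the lattice $\ZZ[i]$, hence multiplication by a unit: $\z^3 t\in\{\pm1,\pm i\}$. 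Taking absolute values (with $|\z|=1$, $t>0$) gives $t=1$, so $\am(t)=1$, and then $\z^3\in\{\pm1,\pm i\}$, i.e.\ $\z^{12}=1$. Thus $\Norm_P(\Ld_\s)\subset N\cdot\{\mm(\z):\z^{12}=1\}$.

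With $t=1$ in hand I would then identify the two factors separately using the normal form. For $\mm(\z)$ with $\z^{12}=1$ (taking $b=0$): multiplication by the unit $\z^3$ permutes $\ZZ[i]$, and one checks the $r$-coset condition $r\equiv mn\pmod{2/\s}$ survives, the only nontrivial case being $\z^3=\pm i$, where $mn\mapsto-mn$ and the discrepancy $2mn$ lies in $(2/\s)\ZZ$ because $\ZZ\subset\tfrac1\s\ZZ$; together with the converse forced above this gives $\Norm_M(\Ld_\s)=\{\mm(\z):\z^{12}=1\}$. For $n=\nm(b,r)$ (taking $\z=1$): conjugation fixes each $\bt=m+ni$ and shifts $r$ by $2\im(\bar b(m+ni))$, and demanding this shift lie in $(2/\s)\ZZ$ for all $m,n$ pins down $b\in\tfrac1\s\ZZ[i]$ while leaving $r$ unconstrained, giving $\Norm_N(\Ld_\s)=\{\nm(\bt/\s,\rho):\bt\in\ZZ[i],\,\rho\in\RR\}$ (the freedom in $\rho$ reflecting that $\nm(0,\rho)$ is central in $N$).

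Finally I would assemble the semidirect product. Given $p\in\Norm_P(\Ld_\s)$ with $M$-component $m$, the reduction $t=1$ gives $p=nm$, so $n=pm^{-1}\in\Norm_P(\Ld_\s)\cap N=\Norm_N(\Ld_\s)$; hence $\Norm_P(\Ld_\s)=\Norm_N(\Ld_\s)\,\Norm_M(\Ld_\s)$. The intersection is trivial since $N\cap M=\{1\}$, and $\Norm_N(\Ld_\s)=\Norm_P(\Ld_\s)\cap N$ is normal in $\Norm_P(\Ld_\s)$ because $AM$ normalizes $N$; this is precisely the asserted decomposition. I expect the main obstacle to be purely bookkeeping rather than conceptual: establishing the clean normal form for $\Ld_\s$ (in particular that $\Ld_\s\cap Z(N)$ is no larger than $\langle\nm(0,2/\s)\rangle$) and then carefully tracking the $r$-coset condition $r\equiv mn\pmod{2/\s}$ through the cross term $2t\,\im(\bar b\,\z^3 b_1)$ and through the sign flips of $mn$ occurring when $\z^3=\pm i$.
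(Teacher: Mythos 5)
Your proof is correct and follows essentially the same route as the paper: project to $N/Z(N)\cong\CC$, where conjugation acts as multiplication by $\z^3 t$ and must preserve $\ZZ[i]$, forcing $t=1$ and $\z^{12}=1$, and then compute the interior conjugation to pin down $b\in\frac1\s\ZZ[i]$. You are merely more explicit than the paper in establishing the normal form $\nm(m+ni,\,mn+2k/\s)$ for $\Ld_\s$ and in verifying the sufficiency direction (the paper's ``a direct check shows these elements normalize $\Ld_\s$''), which is a welcome but not essentially different elaboration.
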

\begin{proof}Suppose that $p=\nm(\bt,\rho)\am(\tau)\mm(\z)$ normalizes
$\Ld_\s$. Then $p \nm(b,r)p^{-1} = \nm(\tau \z^3, r')$ for some
$r'\in \RR$. The projection $N\mapsto N/Z_N$ sends $\Ld_\s$
to~$\ZZ[i]$, and conjugation by $p$ descends to $b\mapsto \tau\z^3 b$
in~$\CC$. This leaves $\ZZ[i]$ invariant only if
$\tau \z^3 \in \ZZ[i]$. So $\tau=1$ and $\z^{12}=1$.

We consider the action by conjugation of $p =\nm(\bt,\rho)$ on basis
elements of $\Ld_\s$:
\bad p\nm(1,0)p^{-1}&\= \nm\bigl(\z^3, -2\im(\bt/\z^3)\bigr)\,,\\
p\nm(i,0)p^{-1}&\= \nm\bigl(i \z^3, 2\re(\bt/\z^3)\bigr)\,,\\
p\nm\bigl(0,2/\s\bigr)p^{-1} &\= \nm\bigl(0,2/\s\bigr) \,.\ead
This gives the requirement that $\s \bt \in \ZZ[i]$. A direct check
shows that these elements normalize~$\Ld_\s$.
\end{proof}

Conjugations by $\nm(0,\rho)$ or by $\mm(\z)$ where $\z$ is a third root
of unity, induce the identity on $\Ld_\s$.

In Table~\ref{tab-ltrFfu} we describe the action of elements in
$\Norm_P(\Ld_\s)$ on basis vectors for the large Fourier term modules.
\begin{table}[ht]
\[ \renewcommand\arraystretch{1.2}\begin{array}{|c|c|c|}\hline
&F= \ch_\bt f \Kph hprq & F=\Th_{\ell,c}(h_{\ell,m}) f \Kph hprq \\
\hline
n= \nm(0,\rho)& L_n F = F & L_n F = e^{2\pi i \rho \ell} F\\
n=\nm(1/\s,0) & L_n F = \ch_\bt(n) F & L_n F = e^{2\pi i c/\s} F\\
n=\nm(i/\s,0) & L_n F = \ch_\bt(n) F & L_n F =
\Th_{\ell,c+2\ell/\s}(h_{\ell,m}) f \Kph h p r q\\
m= \mm(e^{2\pi i/3}) & \multicolumn{2}{|c|}{L_m F = e^{\pi i(h-3r)/3}
F}\\
m=\mm(i) & L_m F = e^{\pi i(h-3r)/4}\,\ch_{-i\bt} f \Kph h p r q& \ast
\\ \hline
\end{array}
\]
\caption[]{Action of elements of $\Norm_P(\Ld_\s)$ on basis vectors in
$\Ffu_\bt$ and $\Ffu_{\ell,c}$. For the case marked with $\ast$
see~\eqref{Lmi}. }\label{tab-ltrFfu}
\end{table}

Left translation by an element of $\Norm_N(\Ld_\s)$ is absorbed in the
character $\ch_\bt$ in the abelian case, and handled with
\eqref{ThNshift} in the non-abelian case. Left translation by $\mm(\z)$
has the effect
\[ \mm(\z) \nm(b,r) \am(t) k \= \nm(\z^3 b,r) \; \am(t)\; \mm(\z)
k\,.\]
We use that $L(\HH_i) \Kph h p r q = i\frac{h-3r}2\, \Kph h p r q$. In
the non-abelian case we need Proposition~\ref{prop-actmi} and arrive at
the following description.
\badl{Lmi} L_{\mm(i)} \Th_{\ell,c}(h_{\ell,m}) f \Kph hprq
&\= \frac{ e^{\pi i(h-3r)/4} \bigl(i\sign(\ell)\bigr)^m
}{\sqrt{2|\ell|}}\\
&\qquad\hbox{} \cdot
\sum_{c'\bmod 2\ell} e^{-\pi i c c'/\ell} \Th_{\ell,c'}(h_{\ell,m})\,
f\, \Kph hprq\,.
\eadl

The left translation gives injective morphisms of $(\glie,K)$-modules.
In most cases left translation by the element in the table gives in the
abelian case isomorphisms $\Ffu_\bt\rightarrow \Ffu_\bt$, except for
$L_{\mm(i)}: \Ffu_\bt \stackrel{\cong}{\rightarrow} \Ffu_{-i\bt}$. In
the non-abelian case the quantity $d = 3\sign(\ell)2m+1) h-3r$ is
preserved. For $\nm(1/\s,\rho)$and $\mm(e^{2\pi i/3})$ we get
isomorphisms $\Ffu_{\ell,c,d}\rightarrow \Ffu_{\ell,c,d}$. The other
cases yield isomorphisms of the direct sum
$\oplus_{c\bmod 2\ell} \Ffu_{\ell,c,d}$ into itself.


\def\flnm{rFtm-II-Ftm}

\section{Central action} \markright{9. CENTRAL ACTION}\label{sect-Ftm}
Automorphic forms are not only $\Gm$-invariant, but also eigenfunctions
of the differential operators corresponding to elements of the center
of the enveloping algebra of $\glie.$ Since the Fourier term operators
are, by Proposition~\ref{prop-Ftit}, intertwining operators of
$(\glie,K)$-modules, the Fourier terms of automorphic forms are
elements of submodules of $\Ffu_\bt$ and $\Ffu_\n$ on which $ZU(\glie)$
acts by a character. We call the resulting submodules Fourier term
modules.

We start the study of these submodules by parametrizing the characters
of $ZU(\glie)$, and next translating the condition that $ZU(\glie)$
acts by a character into a system of coupled linear differential
equations. The coupling makes these `eigenfunction equations' too hard
to allow us to solve them explicitly, except in special cases.

Nevertheless, the explicit availability of these eigenfunction equations
is put to work in the last three subsections. We get information on the
set of $K$-types that are present in Fourier term modules, and some
information on the multiplicity; see Propositions~\ref{prop-sectors}
and~\ref{prop-dim}. We arrive at these results by Proposition
\ref{prop-kdso} and~\ref{prop-kuso}, which give necessary conditions
for shift operators to have a non-trivial kernel on a given
$K$-type.\medskip

Let $\ps$ be a character of $ZU(\glie)$. For $\Nfu=\Nfu_\bt$ of
$\Nfu=\Nfu_\n$ we define $\Ffu_\Nfu^\ps$ as the $(\glie,K)$-submodule
of functions $F\in\Ffu_\Nfu$ that satisfy $ u F= \ps(u) F$ for all
$u\in ZU(\glie)$.\il{Ffups}{$\Ffu_\bt^\ps,\, \Ffu_\n^\ps$} This
submodule is much smaller than $\Ffu_\Nfu$; we call it a
\il{Ftm}{Fourier term module}\emph{Fourier term module}. Since
$ZU(\glie)$ is a polynomial algebra in the Casimir element $C$ and the
element $\Dt_3$ of degree~$3$, the character $\ps$ is determined by its
values $\ps(C), \ps(\Dt_3)\in \CC$.

The left translations by elements normalizing $\Ld_\s$ as discussed in
\S\ref{sect-normLd} are intertwining operators of $(\glie,K)$-modules.
So they preserve the Fourier term modules.
\rmrk{Example} Let us consider the function in $\Ffu_0$ given by
\be \ph\bigl(n\am(t)k\bigr) \= t^{2+\nu} \, \Kph {2j}000(k)\ee
with $\nu \in \CC$, $j\in \ZZ$. Application of the formulas in
Table~\ref{tab-shab}, p~\pageref{tab-shab}, or a computation in
\cite[\S11c]{Math}, gives
\bad \sh 3 1 \ph &\= \bigl( 1+\frac{\nu+j}2\bigr) \,
t^{2+\nu}\Kph{2j+3}{1}{1}{1}\,,\\
\sh{-3}{-1}\sh3 1 \ph &\= \frac18\bigl(\nu^2-(j+2)^2\bigr) \ph\,. \ead
Since $\ph$ has $K$-type $\tau^{2j}_0$, it is a minimal vector in
$\Ffu_0$. With iii) and iv) in Lemma~\ref{lem-mv} we
obtain:\ir{ld23}{\ld_2(j,\nu)\,,\; \ld_3(j,\nu)}\il{ld23}{$\nu$}
\badl{ld23} C \ph &= \ld_2(j,\nu)\,\ph\,,& \qquad
\ld_2(j,\nu)&\;:=\;\nu^2-4+\frac13 j^2\,,\\
\Dt_3 \ph &= \ld_3(j,\nu)\,\ph\,,& \qquad \ld_3(j,\nu)&\;:=\;
(j+3)\bigl(\nu^2 -\tfrac19(j-6)^2\bigr)
\eadl

\rmrk{Parametrization by Weyl group orbits}The functions $\ld_2$ and
$\ld_3$ on $\CC^2$ are invariant under the transformations
\ir{S12}{\Ws1,\;\Ws2\in W}
\badl{S12} \Ws1: (j,\nu) &\mapsto \Bigl(\frac12(3\nu-j),\frac12(j+\nu)
\Bigr)\,,\\
\Ws2: (j,\nu) &\mapsto \Bigl(
-\frac12(3\nu+j), \frac12(\nu-j)\Bigr)\,. \eadl

This is in agreement with a theorem of Harish Chandra for general
reductive Lie groups, stating that characters of the center of the
enveloping algebra correspond bijectively with the Weyl group orbits of
invariant polynomial functions on a Cartan subalgebra. See, eg,
\cite{Wal88}, \S3.2, especially Theorems 3.2.3 and~3.2.4.

In our notations we use the Cartan subalgebra $\alie_c\oplus \mlie_c $,
and identify $\CC^2$ with its dual space, by letting the simple roots
in \eqref{al12} satisfy the correspondence
$\al_1 \leftrightarrow  (-3,1)$, $\al_2 \leftrightarrow (3,1)$. Then
$(j,\nu)\in \CC^2$ corresponds to the linear form that satisfies
\[ \HH_r \mapsto \nu\,,\qquad \HH_i \mapsto ij\,.\]
A linear form on the Lie algebra corresponds to a (possibly
multi-valued)
character of the group. For the linear form corresponding to
$(j,\nu) \in \CC^2$:
\[ \am(e^x) \mm(e^{iy}) \= \exp( x\HH_r+y\HH_i) \mapsto e^{ x \nu + y
(i j)}\= \bigl( e^x\bigr)^\nu\, \bigl( e^{iy}\bigr)^j\,. \]
Since $\mm(e^{2\pi i})=1$, only the
$(j,\nu) \in \ZZ\times\CC \subset \CC^2$ correspond to a character of
the group $AM$. The condition $j\in \ZZ$ would not be necessary if we
were to work with the universal covering group of $\SU(2,1)$. This is a
point where it is important to work with $(\glie,K)$-modules, and not
just $\glie$-modules.

The \il{wg}{Weyl group}Weyl group \il{W}{$W$}$W$ for $\SU(2,1)$
corresponds to the group of linear transformations of $\CC^2$ generated
by $\Ws1$ and $\Ws2$ in~\eqref{S12}. It is isomorphic to the symmetric
group $\mathrm{S}_{3}$. The general theory tells us that the characters
of $ZU(\glie)$ are parametrized by the orbits of $W$ in $\CC^2$. Not
all of these characters can occur in a $(\glie,K)$-module.
\begin{prop}Let $V$ be a $(\glie,K)$-module in which $ZU(\glie)$ acts by
multiplication by the character $\ps$. Then there exist elements
$(j,\nu)\in \ZZ\times \CC$ such that
\be \ps(C) \= \ld_2(j,\nu),\qquad \ps(\Dt_3)\=\ld_3(j,\nu)\,. \ee
If $(j_1,\nu_1)\in \CC^2$ also satisfies this relation, then
$(j_1,\nu_1)$ is in the orbit of $(j,\nu)$ under the Weyl group $W$.
\end{prop}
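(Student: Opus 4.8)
The plan is to prove existence by manufacturing a minimal vector in $V$ and reading off $(j,\nu)$ from the eigenvalue formulas of Lemma~\ref{lem-mv}, and to prove uniqueness from the reflection–group structure of $W$ on $\CC^2$. For existence, assume $V\neq 0$ (the statement concerns a non-zero module). Since every vector is $K$-finite, some $K$-type $\tau^h_p$ occurs, so there is a non-zero highest weight vector in $V_{h,p,p}$. Applying the downward shift operators $\sh3{-1}$ and $\sh{-3}{-1}$ of Proposition~\ref{prop-shdef}, which send highest weight spaces to highest weight spaces and lower $p$ by one, I descend: if both annihilate the current vector it is minimal, otherwise I apply one that does not and repeat. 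Since $p\in\ZZ_{\geq0}$ strictly decreases, the process stops at a minimal vector $v\in V_{h,p,p}$, with $h\in\ZZ$, $p\in\ZZ_{\geq0}$, $h\equiv p\bmod 2$.

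Next I extract the eigenvalues. The composition $\sh{-3}{-1}\sh31$ maps $V_{h,p,p}$ to itself, and by Lemma~\ref{lem-mv}(iii) together with $Cv=\ps(C)v$ the vector $\sh{-3}{-1}\sh31 v$ is a scalar multiple $\mu v$ of $v$. Feeding this back into parts iii) and iv) of Lemma~\ref{lem-mv} gives
\[ \ps(C)=\tfrac13 h^2+p^2+2h+2p+4\tfrac{p+2}{p+1}\mu,\qquad \ps(\Dt_3)=\tfrac19 h(h+3p+12)(h-3p+6)+2\tfrac{(p+2)(h-3p+6)}{p+1}\mu. \]
Eliminating $\mu$ yields $\ps(\Dt_3)=\tfrac19 h(h+3p+12)(h-3p+6)+\tfrac{h-3p+6}{2}\bigl(\ps(C)-\tfrac13 h^2-p^2-2h-2p\bigr)$. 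I now set
\[ j=\tfrac12(h-3p),\qquad \nu\in\CC\ \text{with}\ \nu^2=\ps(C)+4-\tfrac13 j^2. \]
Because $h\equiv p\bmod 2$ we have $h-3p\equiv h-p\equiv0\bmod2$, so $j\in\ZZ$; this is exactly where the $(\glie,K)$-structure (integrality of $h$) enters, and without it $j$ need not be integral. By construction $\ps(C)=\ld_2(j,\nu)$. The remaining identity $\ps(\Dt_3)=\ld_3(j,\nu)$ reduces, after the $\ps(C)$-terms cancel, to the purely numerical identity $(j+3)\bigl(\tfrac13 h^2+p^2+2h+2p-\tfrac49 j^2+\tfrac43 j\bigr)=\tfrac19 h(h+3p+12)(h-3p+6)$ with $j=\tfrac12(h-3p)$, which is checked by a direct computation.

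For uniqueness, the functions $\ld_2,\ld_3$ are invariant under $W$ (see \eqref{S12}), which acts on $\CC^2$ as the Weyl group of type $\mathrm{A}_2$ generated by the reflections $\Ws1,\Ws2$. By the Chevalley theorem the invariant ring $\CC[j,\nu]^W$ is a polynomial algebra on two homogeneous generators of degrees $2$ and $3$, the invariant degrees of $\mathrm{A}_2$. Since $\ld_2$ has degree $2$, $\ld_3$ has degree $3$, and they are algebraically independent (their Jacobian is a non-zero polynomial), they form such a system of basic invariants and hence generate $\CC[j,\nu]^W$. Consequently the morphism $(\ld_2,\ld_3)\colon\CC^2\to\CC^2$ realizes the categorical quotient by $W$, whose fibres are precisely the $W$-orbits. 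Therefore any $(j_1,\nu_1)\in\CC^2$ with $\ld_2(j_1,\nu_1)=\ps(C)=\ld_2(j,\nu)$ and $\ld_3(j_1,\nu_1)=\ps(\Dt_3)=\ld_3(j,\nu)$ lies in the $W$-orbit of $(j,\nu)$; this is the explicit form of the Harish-Chandra correspondence invoked below.

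The main obstacle is the existence half: one must guarantee a representative with $j\in\ZZ$, and the non-formal input is the pair of eigenvalue formulas of Lemma~\ref{lem-mv} applied to a minimal vector, together with the cancellation that lets $\nu$ absorb the $\ps(C)$-dependence and forces $j=\tfrac12(h-3p)$. For uniqueness the only care needed is the correct reflection-group input (the invariant degrees $2,3$) and the algebraic independence of $\ld_2,\ld_3$; both are routine.
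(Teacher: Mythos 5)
Your argument is correct. For existence you follow the paper's own skeleton --- descend through the downward shift operators of Proposition~\ref{prop-shdef} to a minimal vector $v\in V_{h,p,p}$, observe via Lemma~\ref{lem-mv} iii) that $\sh{-3}{-1}\sh31 v=\mu v$ for a scalar $\mu$, and combine iii) and iv) --- but you execute the algebra differently. The paper solves the two resulting equations in $(j,\nu,\mu)$ by computer, finds six solutions, and notes that two of them have $j=\frac12(h-3p)\in\ZZ$; you instead posit that representative directly, let $\nu$ be determined by $\nu^2=\ps(C)+4-\frac13 j^2$ so that $\ps(C)=\ld_2(j,\nu)$ is automatic, and reduce $\ps(\Dt_3)=\ld_3(j,\nu)$ to a polynomial identity in $h,p$ (using $h-3p+6=2(j+3)$ to cancel the $\ps(C)$-dependence). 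I checked that identity; it holds. This replaces a computer-algebra step by a short hand computation, at the price of having to guess the right representative. For uniqueness the routes genuinely differ: the paper computes the fibre of $(\ld_2,\ld_3)$ explicitly and observes that it is the orbit $W(j,\nu)$, whereas you invoke invariant theory. Your version needs one small repair: $\ld_2$ and $\ld_3$ are not homogeneous, so they are not literally basic invariants in the sense of Chevalley's theorem. But since the $W$-action on $\CC^2$ is linear, the top-degree parts of $\ld_2,\ld_3$ are homogeneous invariants of degrees $2$ and $3$, and as there are no invariants of degree $1$ one has $\ld_2=I_2+c$ and $\ld_3=aI_3+bI_2+c'$ with $a\neq0$ for homogeneous generators $I_2,I_3$; hence $\CC[\ld_2,\ld_3]=\CC[j,\nu]^W$ and the fibres of $(\ld_2,\ld_3)$ are exactly the $W$-orbits, as you claim. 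With that patch your uniqueness argument is more conceptual, and arguably more convincing, than the paper's reference to a symbolic computation.
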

We call $j\in \ZZ$ and $\nu\in \CC$ \emph{spectral
parameters}.\il{spprm}{spectral parameters}
\begin{proof}We pick a non-zero element in a highest weight subspace of
$V$ of some $K$-type, and apply downward shift operators until we have
reached a minimal vector~$v$. Lemma~\ref{lem-mv} implies $v$ is also an
eigenvector of $\sh {-3}{-1} \sh 3 1$, say with eigenvalue $\th$.
Inserting $\ld_2(j,\nu)$ and $\ld_3(j,\nu)$ as eigenvalues of $C$ and
$\Dt_3$ we get two relations between $j$, $\nu$ and $\th$. Solving
these relations with Mathematica, \cite[\S11d]{Math}, leads to six
solutions for $(j,\nu)$. Among these solutions there are two solutions
of the form
\[ j\= \frac{h-3p}2\,,\quad \nu = \pm\sqrt{\text{a complicated
expression in $p$, $h$ and $\th$}}\,.\]
Since $h\equiv p\bmod 2$ this shows that $(j,\nu)$ with the desired
properties exist.

For the solutions of $\ld_n(j_1,\nu_1)=\ld_n(j,\nu)$ for $n=2,3$ we find
precisely the orbit $W(j,\nu)$, illustrated in Figure~\ref{fig-Wo}.
\end{proof}
\begin{figure}[tp]
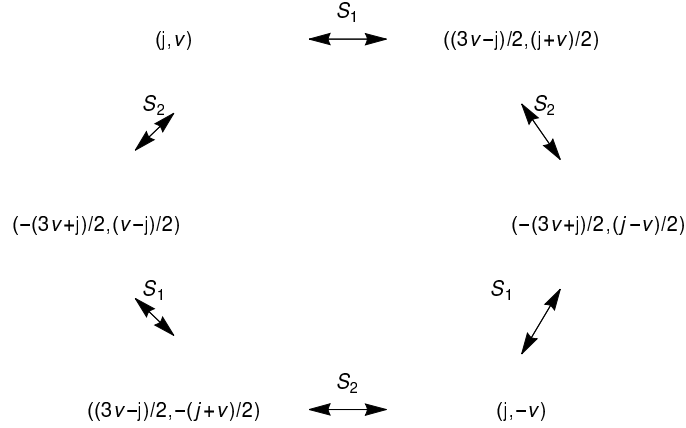

\begin{center}
\grf{9}{plWo}
\end{center}
\caption{Weyl group orbit of a point in $\CC^2.$} \label{fig-Wo}
\end{figure}

In this way we have concluded directly for $\SU(2,1)$ how the characters
of $ZU(\glie)$ that occur in $(\glie,K)$-modules are parametrized. The
result is in accordance with the general theory.

\rmrk{Several types of parametrization}By \il{WO}{$\WO)$}$\WO$ we denote
the collection of the $W$-orbits in $\CC^2$ that intersect
$\ZZ\times \CC$. We use the symbol $\ps$ in two ways. It may denote an
element of $\WO$, and it may denote the corresponding character of
$ZU(\glie)$. If $(j,\nu)\in \CC^2$ we denote the set $W(j,\nu) \in \WO$
by $\psi[j,\nu]$.\il{jnu}{$\psi[j,\nu]$}

We put for
$\ps\in \WO$\ir{wo}{\wo}\il{won}{$\wo(\ps)_\n,\; \wo^1(\ps_\n)$}
\badl{wo} \wo(\ps) &\= \bigl\{ (j,\nu)\in \ps\;:\; j\in \ZZ\bigr\}\,,
\qquad \wo^1(\ps) \= \bigl\{ j\;:\; (j,\nu)\in \wo(\ps)\bigr\}\,,\\
\wo(\ps)_\n&\=\wo(\ps)_{\ell,c,d} \= \bigl\{ (j,\nu) \in
\ps\;:\;\sign(\ell)(2j-d)+3\leq 0\bigr\}\,,\\
\wo^1(\ps)_\n &\= \bigl\{ j\;:\; (j,\nu)\in \wo(\ps)_\n\bigr\}\,. \eadl

In general $\wo\bigl(\psi[j,\nu]\bigr))$ has 2 elements, $(j,\nu)$ and
$(j,-\nu).$ If the number of elements in $\wo(\ps)$ is at most $2$ we
speak of \il{sp}{simple parametrization}simple parametrization. The set
$\wo\bigl(\psi[j,\nu]\bigr)$ has more than 2 elements if
$3\nu \equiv j\bmod 2,$ in which case we speak of \il{mp}{multiple
parametrization} multiple parametrization. We also make a distinction
between \il{ip}{integral parametrization}integral parametrization and
\il{gp}{generic parametrization}generic parametrization, summarized in
the scheme in Table~\ref{tab-parms}. The corresponding subsets of $\WO$
are indicated by the symbols $\WOS$, $\WOG$, $\cdots$ indicated in the
table.\il{WSMGI}{$\WOS$}
\begin{table}[htp]
\[
\renewcommand\arraystretch{1.6}
\begin{array}{|c|c|c|}\hline
3\nu \not\equiv j\bmod 2 & 3\nu \equiv j\bmod 2 & \nu \equiv j \bmod 2
\\
\text{or }j=\nu=0
& \text{and }\nu \not\equiv j\bmod 2& \text{and }(j,\nu) \neq
(0,0)
\\ \hline
\text{simple parametrization}& \multicolumn{2}{|c|} {\text{multiple
parametrization}\;\;\WOM}\\
\bigl| \wo(j,\nu)\bigr| \leq 2 &
\multicolumn{2}{|c|}{\bigl|\wo(j,\nu)\bigr| >2 }
\\ \hline
\multicolumn{2}{|c|}{\text{generic parametrization}\;\;\WOG}&
\text{integral parametrization} \\ \hline
\WOS & \WOGM&\WOI
\\ \hline
\end{array}
\]
\caption[]{Several types of parametrization of characters $\psi[j,\nu]$
of $ZU(\glie)$.\\
For each case we indicate the symbol corresponding to the set of
$\ps\in \WO$ with that type of parametrization. } \label{tab-parms}
\end{table}

The points $(j,\nu)$ giving integral parametrization form a lattice in
$\RR^2$ minus the origin. See Figure~\ref{fig-ip} for an illustration.
\begin{figure}[tp]
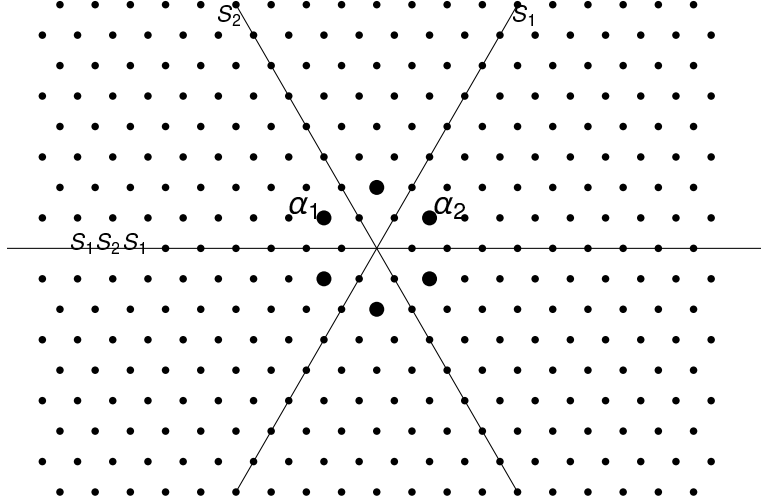

\begin{center}\grf{10}{ip0}\end{center}
\caption{Points for integral parametrization, depicted in the
$(j,\nu)$-plane ($j$ horizontal, $\nu$ vertical.) We have chosen the
scaling such that the lines fixed by $\Ws 1$ (given by $\nu=j$), fixed
by $\Ws 2$ (given by $\nu=-j$), and fixed by $\Ws 1\Ws 2\Ws 2$ (given
by $\nu=0$) intersect each other in angles of size $\pi/3.$ The thick
points indicate the position of the root system $\mathrm{A}_2.$
}\label{fig-ip}
\end{figure}

\begin{lem}\label{lem-gmp}Let $\ps\in \WOGM$. Then the elements of
$\wo^1(\ps)$ represent all three classes in $\ZZ/3\ZZ.$
\end{lem}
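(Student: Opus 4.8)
The plan is to reduce the statement to an explicit description of the three first coordinates occurring in the Weyl orbit $W(j,\nu)$, and then to a short congruence argument. First I would fix a representative $(j,\nu)\in\wo(\ps)$, so that $j\in\ZZ$, and apply the generators $\Ws1,\Ws2$ from \eqref{S12} to list the six points of the orbit (by Table~\ref{tab-parms} an element of $\WOGM$ has a full six-element orbit). A direct computation shows that the six points split into three pairs sharing a common first coordinate, the three values being
\[
 a_1 = j, \qquad a_2 = \tfrac12(3\nu-j), \qquad a_3 = -\tfrac12(3\nu+j),
\]
each pair being interchanged by the long reflection $\Ws1\Ws2\Ws1:(j,\nu)\mapsto(j,-\nu)$ (this is the picture in Figure~\ref{fig-Wo}). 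Thus $\wo^1(\ps)=\{a_1,a_2,a_3\}$, and one reads off immediately that $a_1+a_2+a_3=0$.

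Next I would translate the two congruences defining $\WOGM$ in Table~\ref{tab-parms}. The condition $3\nu\equiv j\bmod 2$ means $3\nu-j=2m$ for some $m\in\ZZ$; then $a_2=m$ and $a_3=-(j+m)$ are integers, so $\wo^1(\ps)$ does consist of integers, and $\nu=(j+2m)/3$. The second condition $\nu\not\equiv j\bmod 2$ says $\nu-j=\tfrac{2}{3}(m-j)\notin 2\ZZ$; since $\tfrac{2}{3}(m-j)$ lies in $2\ZZ$ exactly when $3\mid(m-j)$, this second condition is equivalent to $m\not\equiv j\bmod 3$, that is, $a_1\not\equiv a_2\bmod 3$.

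Finally I would invoke the elementary fact that three integers with $a_1+a_2+a_3\equiv 0\bmod 3$ are either all congruent or pairwise incongruent modulo $3$: if $a_i\equiv a_j$ for some $i\neq j$, then $a_k\equiv -2a_i\equiv a_i$, forcing all three to be congruent. Since we have just shown $a_1\not\equiv a_2$, the three values are pairwise incongruent mod $3$, hence represent all three classes of $\ZZ/3\ZZ$, which is the assertion (and incidentally shows $|\wo^1(\ps)|=3$).

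I do not expect a genuine obstacle here. The only point requiring care is the bookkeeping with the two congruences modulo $2$: one must check that $3\nu\equiv j\bmod 2$ forces integrality of $a_2,a_3$, and that $\nu\not\equiv j\bmod 2$ is exactly the condition $m\not\equiv j\bmod 3$ that breaks the ``all congruent'' case. The orbit computation is routine, so the real content is the two-line congruence argument in the last paragraph.
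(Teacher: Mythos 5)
Your proof is correct and follows essentially the same route as the paper: both identify $\wo^1(\ps)=\{j,\,a,\,-a-j\}$ with $\nu=\tfrac13(j+2a)$ and translate the condition $\nu\not\equiv j\bmod 2$ into $a\not\equiv j\bmod 3$. The only difference is that you spell out the final check (via $a_1+a_2+a_3=0$ and the "all congruent or pairwise incongruent" dichotomy) which the paper leaves as "one checks"; that filled-in step is valid.
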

\begin{proof}The element $\psi[j,\nu]$ corresponds to generic multiple
parametrization if $\nu = \frac13(j+2a)$ for some $a\in \ZZ$ and
$a\not\equiv j \bmod 3.$ Then $\wo^1(j,\nu)=\bigl\{ j,a, -a-j\}.$ One
checks that these three elements represent pairwise different elements
of $\ZZ/3\ZZ.$
\end{proof}

\subsection{Eigenfunction equations}\label{sect-eieq} The definition of
the Fourier term modules $\Ffu^\ps_\Nfu$ imposes relations of the form
$CF=\ps(C)F$ and $\Dt_3 F = \ps(\Dt_3)F$. To make these relations more
explicit we note that $\Ffu_\Nfu^\ps$ is the direct sum of the
subspaces $\Ffu_{\Nfu;h,p}^\ps$ of the $K$-types occurring in it, and
that the space $\Ffu^\ps_{h,p}$ is known if we know the highest weight
subspace $\Ffu^\ps_{\Nfu;h,p,p}.$ A highest weight element in
$\Ffu_{\Nfu;h,p,p}^\ps$ has the form
\be\label{Fcomp} F \bigl( n \am(t)k\bigr)
= \sum_r u_r(n) \, f_r(t)\, \Kph h {p}{r}{p} (k)\ee
with basis functions $u_r$ on $N,$ component functions $f_r$ in
$C^\infty(0,\infty),$ and the polynomial basis functions on~$K$
discussed in~\S\ref{sect-K}. The summation parameter runs over
$r\equiv p \bmod 2$, $|r|\leq p$, with the additional condition
\eqref{r0cond} in the non-abelian case. The prescribed action of $C$
and $\Dt_3$ imposes further relations for the components, which we call
the eigenfunction equations.\il{efeq}{eigenfunction equations}

To specify the eigenfunction equations it is convenient to distinguish
the following cases:
\begin{itemize}
\item $N$-trivial Fourier term modules:
$\Nfu=\Nfu_0$\,.\il{Ntefm}{$N$-trivial Fourier term module}
\item Generic abelian Fourier term modules: $\Nfu=\Nfu_\bt,$ $\bt\neq0.$
\il{gabef}{generic abelian Fourier term module} \il{Ftmgab}{Fourier
term module, generic abelian}
\item Non-abelian Fourier term modules: $\Nfu=\Nfu_{\ell,c,d}$ with
$\ell \in \frac \s 2\ZZ_{\neq 0},$ $c\bmod 2\ell,$ $d \in 1+2\ZZ.$
\il{nabef}{non-abelian Fourier term module}\il{Ftmnab}{Fourier term
module, non-abelian}
\end{itemize}
Except for special cases, the eigenfunction equations are complicated
and ask for computer help. We use the differentiation routines in
\S\ref{sect-exdf} and adapt them, in \cite[\S11a]{Math}, to the use
with elements of the enveloping algebra $U(\glie)$. For each of of the
three cases indicated above we develop routines to give the
eigenfunction equations. We choose to write them with
$(j,\nu) \in \ZZ\times \CC$ as the parameters. Hence the equations will
differ if we go to another element of $\wo(\psi[j,\nu])$.

In \cite[\S11efg]{Math} we derive the eigenfunction equations, and keep
in Section 11h routines for later use, to avoid recomputation of the
relation every time that we need them in later sections.
\begin{lem}\label{lem-efeq-t}{\rm $N$-trivial Fourier terms.\ } The
components $f_r$ of the element $F\in \Ffu_0^\ps$ in~\eqref{Fcomp} must
satisfy
\badl{eft} 0&= t^2 f_r'' - 3 t f' + \biggl( \frac{(h-3r)^2}{12}-\nu^2+4
- \frac{j^2}3 \biggr) f_r\,,\\
0&=
(h-3r-2j)(h-3r-3\nu+j)(h-3r+3\nu+j)\, f_r\,. \eadl
\end{lem}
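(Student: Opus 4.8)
The plan is to apply the two generators $C$ and $\Dt_3$ of $ZU(\glie)$ recorded in Table~\ref{tab-casdt} directly to a general highest-weight element of $\Ffu_0^\ps$. By~\eqref{Fcomp} with the trivial character $\ch_0\equiv1$ this element is
\[
F\bigl(n\am(t)k\bigr)=\sum_r f_r(t)\,\Kph h p r p(k),
\]
and the task is to transcribe the eigenvalue conditions $CF=\ld_2(j,\nu)F$ and $\Dt_3F=\ld_3(j,\nu)F$, with $\ld_2,\ld_3$ from~\eqref{ld23}, into relations on the components $f_r$. The instrument for making the action explicit is the differentiation scheme of \S\ref{sect-exdf}, in which each root vector $\Z_{13},\Z_{31},\Z_{23},\Z_{32}$ acts on a product $b(na)\,\Phi(k)$ through right differentiation of $b$ on $NA$ and left differentiation of $\Phi$ on $K$.

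What makes the $N$-trivial case tractable is that here $b(n\am(t))=f(t)$ carries only the constant character. By~\eqref{XNA} the derivatives $R_{NA}(\XX_0)b=R_{NA}(\XX_1)b=R_{NA}(\XX_2)b=0$, whereas $R_{NA}(\HH_r)b=tf'$; hence throughout the formulas of \S\ref{sect-exdf} (equivalently, in Table~\ref{tab-shab} specialized to $\bt=0$) the entire $\nlie$-part of the action disappears and the only surviving $NA$-derivative is the Euler operator $t\partial_t$ arising from $\HH_r$. The remaining ingredients are algebraic: the $K$-action via Table~\ref{tab-RLdK}, with the central data entering through the eigenvalue $-ih$ of $\CK_i$, the highest-weight value $-ip$ of $\WW_0$, and the left $\HH_i$-eigenvalue $i(h-3r)/2$ that produces the $r$-dependent coefficients.

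Since $C$ and $\Dt_3$ are central they commute with $L(\WW_0)$ and so preserve the left-weight spaces; thus $CF$ and $\Dt_3F$ are again of the form $\sum_r(\cdots)\Kph h p r p$ with $r$-th coefficient depending on $f_r$ alone, and the two conditions decouple into one relation per $r$. Running $C$ through the scheme, the $\nlie$-terms drop out and one is left with $\HH_r^2-4\HH_r=t^2\partial_t^2-3t\partial_t$ together with the contribution $\tfrac{(h-3r)^2}{12}$ of $-\tfrac13\HH_i^2$; equating with $\ld_2(j,\nu)f_r$ gives the first equation directly. For $\Dt_3$, of degree three, the same procedure yields a priori a third-order operator in $t\partial_t$ acting on $f_r$. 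The point is that imposing the Casimir relation already obtained, i.e. using that $f_r$ solves that second-order equation, strips away all the genuinely differential content of the $\Dt_3$-condition and leaves multiplication by a scalar cubic in $h-3r$, which after subtraction of $\ld_3(j,\nu)$ factors as
\[
(h-3r-2j)(h-3r-3\nu+j)(h-3r+3\nu+j),
\]
producing the second equation.

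I expect the main obstacle to be precisely this last reduction: carrying out the degree-three differentiation of $\Dt_3$ in full and verifying that, modulo the second-order Casimir equation, every derivative term cancels and the residual scalar is exactly the displayed factored cubic. This manipulation is practical only symbolically, so I would delegate it to the routines of \cite[\S11efg]{Math}. A reassuring consistency check is the minimal vector computed just before~\eqref{ld23}: there $(h,p,r)=(2j,0,0)$ gives $h-3r=2j$, the cubic vanishes, and the Euler equation forces the exponents $2\pm\nu$, matching the recorded values $\sh{-3}{-1}\sh31\,\ph=\tfrac18\bigl(\nu^2-(j+2)^2\bigr)\ph$ and $\Dt_3\ph=\ld_3(j,\nu)\ph$.
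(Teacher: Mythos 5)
Your proposal is correct and follows essentially the same route as the paper: apply $C-\ld_2(j,\nu)$ and $\Dt_3-\ld_3(j,\nu)$ via the explicit differentiation scheme of \S\ref{sect-exdf}, observe that for the trivial character all $\nlie$-derivatives vanish so the system decouples and only the Euler operator from $\HH_r$ survives, and then reduce the $\Dt_3$-relation modulo the Casimir equation (the paper subtracts $\frac12(h-2r+6)$ times the first equation) to obtain the factored cubic, with the heavy computation delegated to \cite[\S11e]{Math}. The only cosmetic difference is that you justify the decoupling partly by centrality, whereas the decisive reason — which you also give — is simply the disappearance of the $r$-shifting $\nlie$-terms in the explicit formulas.
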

\begin{proof}A Mathematica computation in \cite[\S11e]{Math} shows that
\[ \bigl(C-\ld_2(j,\nu) \bigr) \sum_r f_r \Kph h{p}{r}{q}\]
gives an expression in which the factor of $\Kph h{p}{r}{q}$ depends
only on $f_r$ and its derivatives. We get an uncoupled system of
differential equations for $f_r.$ Up to a factor these are the
differential equations in the first line of~\eqref{eft}.

The equation for $\Dt_3$ also gives an uncoupled system of differential
equations. Subtraction of $\frac12(h-2r+6)$ times the first equation
gives the relation in the second line.
\end{proof}

The three factors in the second equation are permuted by the action of
the Weyl group on $(j,\nu).$ This is like it should be, since the
eigenvalues of $C$ and $\Dt_3$ are invariant under the action of the
Weyl group.

The second equation shows that non-zero solutions are possible only for
certain values of~$r.$
\begin{lem}\label{lem-efrab}{\rm Abelian Fourier terms.\ } The
components $f_r$ of the element $F\in \Ffu_\bt^\ps$ in~\eqref{Fcomp}
must satisfy the following relations, where $r\equiv p\bmod2$,
$|r|\leq p$:
\begin{align*} 0&= t^2 f''_r - 3 t f'_r+\Bigl(
\frac{(h-3r)^2}{12}-\nu^2+4
-\frac{j^2}3-4\pi|\bt|^2 t^2\Bigr)
f_r\\
&\qquad\hbox{}
+ 2\pi i (p-r)\, \bar\bt \,t\, f_{r+2}-2\pi i
(p+r)\,\bt\, t \,f_{r-2}\,, \displaybreak[0]\\
0&= \Bigl(
(h-3r-2j)(h-3r-3\nu+j)(h-3r+3\nu+j)
\\
&\qquad\qquad\hbox{}+ 216 \pi^2 |\bt|^2 r\, t^2 \Bigr)\, f_r
\displaybreak[0]\\
&\qquad\hbox{}
-27\pi i (p-r)\,\bar\bt\,\Bigl( 2t^2 \, f'_{r+2} +(3r-h-2)\,t\,
f_{r+2}\Bigr)
\\
&\qquad\hbox{}
-27\pi i (p+r)\, \bt\, \Bigl( 2t^2 f'_{r-2}
+
(h-3r-2) \, t \, f_{r-2}\Bigr)\,. \end{align*}
\end{lem}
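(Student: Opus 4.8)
The plan is to follow the computation that proves Lemma~\ref{lem-efeq-t} for the $N$-trivial case, the essential new feature being that now $\bt\neq0$, so the unipotent directions act non-trivially on the left factor $\ch_\bt$ and the resulting system becomes \emph{coupled} in the index $r$. I start from a highest weight element $F\in\Ffu^\ps_{\bt;h,p,p}$ written as in \eqref{Fcomp} with $u_r=\ch_\bt$, and impose the two defining conditions $CF=\ps(C)\,F$ and $\Dt_3F=\ps(\Dt_3)\,F$; by \eqref{ld23} the relevant eigenvalues are $\ps(C)=\ld_2(j,\nu)$ and $\ps(\Dt_3)=\ld_3(j,\nu)$.

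To evaluate $CF$ I use the expression for $C$ in Table~\ref{tab-casdt} together with the differentiation scheme of \S\ref{sect-exdf}, specialized to $\Ffu_\bt$. The input data are the abelian action rules: from \eqref{chbt} one has $\ch_\bt=e^{2\pi i(\re\bt\,x+\im\bt\,y)}$, so $\XX_0\ch_\bt=0$ while $\XX_1,\XX_2$ act by multiplication by $2\pi i\re\bt,\ 2\pi i\im\bt$ (carrying the powers of $t$ recorded in \eqref{XNA}), and $\HH_r$ acts as $t\partial_t$. Collecting the coefficient of each basis function $\Kph h p r p$ then produces the first relation: its diagonal part $t^2f_r''-3tf_r'+(\cdots)f_r$ reproduces the $N$-trivial equation, the extra term $-4\pi|\bt|^2t^2f_r$ comes from $\XX_1^2+\XX_2^2$, and the couplings to $f_{r\pm2}$ arise from the off-diagonal cross terms, where the constant $\nlie$-action on $\ch_\bt$ supplies the factors $\bt,\bar\bt$ and the accompanying $\klie$-part of the \S\ref{sect-exdf} reduction shifts the index $r$ by $\pm2$.

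For the second relation I evaluate $\Dt_3F$ in the same way. As in the $N$-trivial proof the raw output still carries redundant second-order terms; subtracting a suitable multiple of the first ($C$-)equation removes them and leaves the displayed cubic relation. The diagonal cubic factor $(h-3r-2j)(h-3r-3\nu+j)(h-3r+3\nu+j)$ is essentially forced, since its three factors are permuted by the Weyl group exactly as in Lemma~\ref{lem-efeq-t}, reflecting the $W$-invariance of $\ld_3$; the genuinely new part is only the verification of the coefficients of the $|\bt|^2t^2f_r$ term and of the off-diagonal $\bar\bt,\bt$ couplings.

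I expect the main obstacle to be computational rather than conceptual. The element $\Dt_3$ is cubic with many non-commuting summands, and tracking the shifts $r\mapsto r\pm2$ through its various products while keeping the $\bt,\bar\bt$ weights correct is error-prone by hand; this is exactly why the derivation is delegated to the symbolic computation in \cite[\S11efg]{Math}. Once the multiple of the $C$-equation to be subtracted is identified, the remaining content is a mechanical check that the coefficients agree with the stated formulas.
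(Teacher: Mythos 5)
Your proposal matches the paper's proof: the paper likewise applies $C$ and $\Dt_3$ from Table~\ref{tab-casdt} to $F$ via the explicit differentiation scheme of \S\ref{sect-exdf} and the abelian action rules, rearranges the output so that all terms carry the same factor $\Kph h{p}{r}{p}$, and then subtracts a multiple of the first (Casimir) relation from the second to obtain the displayed cubic relation, with the actual symbolic work delegated to \cite[\S11f]{Math}. Your additional remark that the diagonal cubic factor is forced by the Weyl-invariance of $\ld_3$ is consistent with the paper's own comment following Lemma~\ref{lem-efeq-t}.
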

\begin{proof}The computation in \cite[\S11f]{Math} is more complicated
than in the $N$-trivial case, since there are more terms. Moreover,
neighboring components are coupled. We have to rearrange the sum of
individual terms in such a way that all contain the same factor
$\Kph h{p}{r}{p}.$ After that we carry out a simplification by
subtracting a multiple of the first relation from the second one.
\end{proof}

The relation should be valid for all $r$ between $-p$ and $p.$ The terms
with $f_{r\pm 2}$ contain the factor $p\mp r,$ which masks components
that do not exist.

Substitution of $\bt=0$ in Lemma~\ref{lem-efrab} gives \eqref{eft}.
\rmrk{Non-abelian Fourier terms}The computations in \cite[\S11g]{Math}
give eigenfunction equations in non-abelian Fourier term modules that
are complicated, and copying them here seems not to make sense. In
Table~\ref{tab-shnab}, p~\pageref{tab-shnab}, we managed to describe
the shift operators uniformly for $\ell>0$ and $\ell<0$. For the
eigenfunction equations it is simpler to consider separate formulas for
$\e=\sign(\ell)=1$ and $-1$. The final form is in \cite[\S11h]{Math}.

\subsection{One-dimensional K-types}\label{sect-1dKt}
In general, the coupling between the equations for the components is an
obstruction to get explicit solutions. The one-dimensional $K$-types
are an exception, since then there is only one component function.

The functions in $\Ffu_{\Nfu;h,0}^{\psi[j,\nu]}$ have the form
\[ F\bigl( n\am(t)k) = u(n) \, f(t)\, \Kph h000(k)\]
with the function $u\in C^\infty(N)$ determined by $\Nfu,$ and
$f\in C^\infty (0,\infty).$ In all cases the eigenfunction equations
give a second order differential equation for $f$ and a second equation
depending only on $f,$ imposing conditions on $h.$

The computations in \cite[\S12]{Math} show that in all cases this
condition is
\be (h-2j)(h-3\nu+j)(h+3\nu+j) = 0\,. \ee
So $h=2j$ is one possibility. The other factors are relevant only under
multiple parametrization.
(See Table~\ref{tab-parms} on page~\pageref{tab-parms}.) Each element
$j'\in \wo^1(\psi[j,\nu])$ gives $h=2j'$ as a possibility.

\subsubsection{N-trivial Fourier term modules} \label{sect-Ntr1d}From
Lemma~\ref{lem-efeq-t} we get the differential equation
\be t^2 f'' -3 t f' +(4-\nu^2) f=0\,.\ee
It has a two-dimensional solution space spanned by $t\mapsto t^{2+\nu}$
and $t\mapsto t^{2-\nu}$ if $\nu\neq 0,$ and by $t\mapsto t^2$ and
$t\mapsto t^2\log t$ if $\nu=0.$

\subsubsection{Generic abelian Fourier term
modules}\label{sect-Nab1d}For $\Ffu_\bt^{\psi[j,\nu]}$ with $h=2j,$ we
find for $\bt\neq0$ that
\be f(t) = t^2 j_\nu(2\pi|\bt|t)\ee
where $j_\nu$ is a solution of the modified Bessel differential
equation\ir{mBd0}{\text{modified Bessel differential equation}}
\be\label{mBd0} \tau^2 \, j_\nu''(\tau) + \tau \, j_\nu'(\tau)
-(\tau^2+\nu^2) \, j_\nu(\tau)\=0\,.\ee
We get a two-dimensional solution space spanned by modified Bessel
functions, discussed in~\S\ref{sect-mBf}.

\subsubsection{Non-abelian Fourier term modules} \label{sect-Nnab1d}In
\cite[\S12c]{Math} we obtain in the same way for
$\Ffu^{\psi[j,\nu]}_{\ell,c,d}$ a component function of the form
\be f(t) = t\, w_{\k,\nu/2}(2\pi|\ell|t^2)\,,\ee
where $w_{\k,s}$ is a solution of the Whittaker differential
equation\ir{Whd0}{\text{Whittaker differential equation}}
\be\label{Whd0} w_{\k,s}'' = \Bigl( \frac 14
- \frac\k \tau + \frac{s^2-1/4}{\tau^2} \Bigr)\, w_{\k,s}\ee
with parameters $\k=-m-\bigl( j \,\sign(\ell)+1\bigr)/2,$
$m\in \ZZ_{\geq 0},$ and $s=\nu/2.$ So here as well we have a
two-dimensional solution space spanned by known functions. In
\S\ref{sect-Whit} we discuss facts concerning Whittaker functions.

We note that the definition of $\Ffu_{\ell,c,d}$ implies that
\be m = \frac{\sign(\ell)}6(d-2j) - \frac12\,,\qquad \k =
-\frac16 \sign(\ell)
\,(d+j)\,. \ee
In \eqref{mucond} we arrived at a condition that is, for
$\tau^h_p=\tau^{2j}_0$, equivalent to
$\sign(\ell) d \geq 2\sign(\ell)j+3$. It is equivalent to $m \geq 0$.
So $\Ffu_{\n;2j',0}^{\psi[j,\nu]}$ is non-trivial with dimension~$2$ if
and only if $j'$ is in the set $\wo^1\bigl(\psi[j,\nu]\bigr)_\n$.
(See~\eqref{wo}.)

\rmrk{Summary}Let $\bt\in\CC$. The space $\Ffu^\ps_{\bt;2j,0}$ is
non-trivial if and only if $j\in \wo^1(\ps)$. Let $\n=(\ell,c,d)$. The
space $\Ffu^\ps_{\n;2j,0}$ is non-trivial if and only if
$j\in \wo^1(\ps)_\n$. These spaces of $K$-type $\tau^{2j}_0$ have
dimension~$2$.
\rmrk{Remark}For higher-dimensional $K$-types we arrive, for generic
abelian and non-abelian Fourier term modules, at coupled systems of
differential equations for which it is hard to get explicit solutions,
except in special cases.
\rmrk{Aim}We now proceed the investigation of Fourier term modules by
deriving conditions under which the shift operators may have a
non-trivial kernel.

\subsection{Kernels of downward shift operators} We consider Fourier term
modules $\Ffu_\Nfu^\ps$ with $\Nfu=\Nfu_\bt$ or $\Nfu=\Nfu_\n,$
$\n = (\ell,c,d).$ It suffices to consider the kernels of downward
shift operators on $K$-types $\tau^h_p$ with $h\equiv p\bmod 2$ and
$p\in \ZZ_{\geq 0}.$\il{dso1}{downward shift operator}

\begin{prop}\label{prop-kdso}Let $\ps\in \WO$. If the kernel of
$\sh{\pm 3}{-1}:\Ffu^{\ps}_{\Nfu;h,p,p}
\rightarrow \Ffu_{\Nfu;h\pm 3,p-1,p-1}^{\ps}$ is non-trivial then
\be\label{kdso} h\mp 3p = 2j\ee
for some $j\in \wo^1(\ps).$
\end{prop}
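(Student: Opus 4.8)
The plan is to reduce the statement to the spectral analysis of a single \emph{minimal vector}, reached by descending through $K$-types. Fix a non-zero $v\in\Ffu^\ps_{\Nfu;h,p,p}$ with $\sh{\pm3}{-1}v=0$. By Proposition~\ref{prop-shdef} the two downward shift operators $\sh3{-1}$ and $\sh{-3}{-1}$ commute, so the \emph{other} downward operator $\sh{\mp3}{-1}$ maps $\ker\sh{\pm3}{-1}$ into itself, and since $\Ffu^\ps_\Nfu$ is a $(\glie,K)$-module it also keeps us inside $\Ffu^\ps_\Nfu$. First I would iterate $\sh{\mp3}{-1}$ on $v$. Each application lowers $p$ by one, so the chain terminates in a non-zero vector $w\in\Ffu^\ps_{\Nfu;h_0,p_0,p_0}$ on which the next application of $\sh{\mp3}{-1}$ vanishes; together with $\sh{\pm3}{-1}w=0$, inherited from the kernel, this makes $w$ a minimal vector. (If the chain reaches $p_0=0$ it also stops, since both downward operators vanish on one-dimensional $K$-types.) The essential bookkeeping is that $\sh{\mp3}{-1}$ sends $\tau^{h'}_{p'}$ to $\tau^{h'\mp3}_{p'-1}$, so the combination $h\mp3p$ is preserved along the chain; hence $h_0\mp3p_0=h\mp3p$.

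Next I would extract the constraint from $w$. Because $w\in\Ffu^\ps_\Nfu$, the centre acts by $Cw=\ld_2(j,\nu)\,w$ and $\Dt_3 w=\ld_3(j,\nu)\,w$ for some $(j,\nu)\in\wo(\ps)$, with $\ld_2,\ld_3$ as in \eqref{ld23}. By Lemma~\ref{lem-mv}(iii)--(iv) the vector $w$ is an eigenvector of $\sh{-3}{-1}\sh31$, and eliminating that eigenvalue between the two formulas yields a single polynomial identity in $(h_0,p_0,j,\nu)$. This is exactly the elimination already carried out for the parametrization of central characters by Weyl orbits: its solutions for $(j,\nu)$ form the full orbit, and among them is the solution $j=\tfrac12(h_0-3p_0)$, an integer since $h_0\equiv p_0\bmod2$. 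For the upper sign this finishes the argument: $h-3p=h_0-3p_0=2j$ with $j\in\wo^1(\ps)$.

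For the lower sign the same identity gives $h_0-3p_0=2j$, the \emph{wrong} combination, and this is the main obstacle: the formulas of Lemma~\ref{lem-mv} privilege $h-3p$ over $h+3p$. I would resolve it by symmetry. There is a complex-linear automorphism $\rho$ of $\glie$ that fixes $\WW_0,\Z_{12},\Z_{21}$ and the compact Casimir $C_K$, sends $\CK_i\mapsto-\CK_i$, and interchanges $\Z_{13}\leftrightarrow\Z_{32}$ and $\Z_{31}\leftrightarrow\Z_{23}$ (on the roots of $\hK$ it negates the $\CK_i$-weight and preserves the $\WW_0$-weight, a symmetry of the $\mathrm A_2$ root system). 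Thus $\rho$ swaps $\sh3{-1}$ with $\sh{-3}{-1}$ and replaces each $K$-type $\tau^{h'}_{p'}$ by $\tau^{-h'}_{p'}$. Pulling $\Ffu^\ps_\Nfu$ back along $\rho$ carries a minimal vector in $\tau^{h_0}_{p_0}$ to a minimal vector in $\tau^{-h_0}_{p_0}$, with central character $\ps^\rho$ satisfying $\wo^1(\ps^\rho)=-\wo^1(\ps)$. Applying the upper-sign result in the twisted module gives $-h_0-3p_0=2j'$ with $j'\in\wo^1(\ps^\rho)$, i.e.\ $h_0+3p_0=2(-j')$ with $-j'\in\wo^1(\ps)$; combined with $h_0+3p_0=h+3p$ this is the desired conclusion. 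The step that needs care is verifying that $\rho$ genuinely lifts to a Lie-algebra automorphism with the stated effect on the shift operators and on $\wo^1$; once that is in place both signs are handled uniformly.
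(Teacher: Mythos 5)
Your argument is correct, and it takes a genuinely different, more structural route than the paper's. The paper proves this proposition by direct computation on component functions: in each of the three cases ($N$-trivial, generic abelian, non-abelian) it feeds the kernel relations of Tables~\ref{tab-krab} and~\ref{tab-krnab} into the eigenfunction equations and extracts the factor $(h\mp 3p-2j)(h\mp 3p-3\nu+j)(h\mp 3p+3\nu+j)$ by machine-assisted elimination, with separate sub-cases for the sign of $\ell$ and the position of $r_0(h)$. You instead descend inside $\ker\sh{\pm3}{-1}$ by iterating the other downward operator (legitimate because the two downward operators commute, as established in the proof of Proposition~\ref{prop-shdef}, and because $h\mp 3p$ is invariant under $\sh{\mp3}{-1}$), land on a non-zero minimal vector, and invoke the spectral constraint of Lemma~\ref{lem-mv}(iii)--(iv), which is exactly the elimination already carried out in the parametrization of central characters and which yields $\tfrac12(h_0-3p_0)\in\wo^1(\ps)$. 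This buys uniformity over all $\Nfu$ and in fact proves the stronger statement that the conclusion holds in any $(\glie,K)$-module with central character $\ps$ (consistent with Table~\ref{tab-vancond} for the principal series). The cost is the asymmetry you correctly identify: a minimal vector at $\tau^{h_0}_{p_0}$ with $p_0\geq 1$ does \emph{not} in general satisfy $h_0+3p_0\in 2\,\wo^1(\ps)$, since the remaining four points of the Weyl orbit of $\bigl(\tfrac12(h_0-3p_0),\nu\bigr)$ have first coordinate depending on $\nu$; so the lower sign genuinely needs extra input.

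Two points in your symmetry step must be nailed down. First, the literal assignment $\Z_{13}\leftrightarrow\Z_{32}$, $\Z_{31}\leftrightarrow\Z_{23}$ with $\Z_{12},\Z_{21}$ fixed is not a Lie algebra homomorphism: $[\Z_{13},\Z_{32}]$ is a nonzero multiple of $\Z_{12}$, and the swap reverses the order of the bracket, so a sign is forced. You can either insert signs on the root vectors or, better, realize $\rho$ concretely as complex conjugation $X\mapsto \bar X$ (an automorphism of this real form because $I_{2,1}$ is real; it sends $\Z_{13}\mapsto\Z_{31}$, $\Z_{23}\mapsto\Z_{32}$ and negates both $\CK_i$ and $\WW_0$) composed with $\mathrm{Ad}$ of the Weyl element of $K_0\cong\SU(2)$. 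Since only the lines $\CC\,\Z_{ij}$ and the $K$-isotypic projections enter the definition of the shift operators, the signs are harmless, and the composite does interchange $\sh3{-1}$ with $\sh{-3}{-1}$ on highest weight spaces and carry $\tau^h_p$ to $\tau^{-h}_p$. Second, $\wo^1(\ps\circ\rho)=-\wo^1(\ps)$ does not follow merely from $\ld_2$ being even in $j$; one must control $\rho(\Dt_3)$. The cleanest justification is that complex conjugation preserves $N$, $A$ and $M$ and sends the character $\xi_j$ of $M$ to $\xi_{-j}$, so the twist of $H^{\xi_j,\nu}_K$ is $H^{\xi_{-j},\nu}_K$ and the central characters transform as claimed. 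With these two verifications supplied, your proof is complete.
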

This proposition gives only a necessary condition for $\sh{\pm3}{-1}$ to
have a non-trivial kernel on $\Ffu_{\Nfu;h,p,p}^{\ps}.$

\begin{proof}For $p=0$ we established the statements  at the end of
the previous subsection. We proceed under the assumption that
$p\geq 1$. Supporting computations are in \cite[\S13]{Math}.

Elements of $\Ffu^{\ps}_{\Nfu;h,p,p}$ have the form
\[ F \bigl(n\am(t)k\bigr)= \sum_{r:(-p,p)} u_r(n) \, f_r(t)\, \Kph
h{p}{r}{p}(k)\,,\]
with $u_r=\chi_\bt$ if $\Nfu=\Nfu_\bt,$ and $u_r = 	\th_{m(h,r)}$ if
$\Nfu=\Nfu_\n.$

We first consider the $N$-trivial case. Lemma~\ref{lem-efeq-t} shows
that $h-3r=j$ for some $(j,\nu)\in \wo(\ps)$. The kernel relation for
$\sh 3{-1}$ in Table~\ref{tab-krab}, p~\pageref{tab-krab}, shows how
$f_r'$ depends on $f_r$.
(We have $\bt=0$.) Inserting this in the eigenfunction relations shows
that
\[ (h-3p-3\nu+j)(h-3p+3\nu+j)\=0\]
for non-zero $f_r$. So $h=3p + 2j'$ for $j'\in \wo^1(\ps)$. This is what
the lemma states for $\sh 3{-1}$. For $\sh{-3}{-1}$ we proceed
similarly.
\smallskip

In the generic abelian case we also use the kernel relations in
Table~\ref{tab-krab}, but now with $\bt\neq 0$. The components of an
element of the non-zero element of the kernel of $\sh3{-1}$ satisfy a
relation expressing $f_{r+2}$ in terms of $f_r$. So, if the element in
the kernel is non-zero then $f_{-p}$ has to be non-zero. We use the
eigenfunction relations in Lemma~\ref{lem-efrab} for $r=p$ for some
choice of $(j,\nu)$ such that $\ps=[(j,\nu)]$. The occurrences of
$f_{-p-2}$ in the eigenfunction relation are masked by the factor
$p+r=0$. For $f_{2-p}$ we use the relation obtained from the kernel
relations. We obtain two quantities that have to be zero. A suitable
linear combination is
\[ (h-3p-2j)(h-3p+j-3\nu) (h-3p+j+3\nu) f_{-p}\,.\]
So indeed $h=3p+2j'$ for some $j'\in \wo^1(\ps)$ as a necessary
condition. The case of $\sh{-3}{-1}$ goes similarly.\smallskip

In the non-abelian case the description of an element of
$\Ffu_{\n;h,p,p}$ in terms of its components is more complicated. We
use the description and notations in~\eqref{mhrdef}.

We find for $F$ in the kernel of $\sh 3{-1}$ the relation
\be\label{efr1d} f_r = \Bigl( (h-2p-r-2+4\pi\ell t^2)
f_{r-2}+ 2 t f_{r-2}' \Bigr)
\cdot
\begin{cases}
\frac {-i}{4\sqrt{2\pi|\ell|m(h,r)}}&\text{ if }\ell>0\\
\frac i{4\sqrt{2\pi|\ell|(m(h,r)+1)}}&\text{ if }\ell<0
\end{cases}
\ee
under the condition $m(h,r)\geq 1$ if $\ell>0$ and $m(h,r)\geq 0$ if
$\ell<0.$ So non-zero elements of the kernel are determined by the
component $f_{-p}$ if $m(h,-p)\geq 0,$ or by the component $f_{m(r_0)}$
with $r_0>p$ and $m(h,r_0)=0.$ There are many cases to consider, worked
out in \cite[\S13c]{Math}. There are two easy cases: $\e=1$, $r_0=p$,
and $\e=-1$, $r_0=-p$. Then there is only one non-zero component. The
second coordinate of the eigenfunction equations gives the condition
\be\label{hpr} (h-3p-2j)\, (h-3p-3\nu+j)\,(h-3p+3\nu+j)\=0\,.\ee
This implies the necessary condition in the proposition.

The remaining cases are $\e=1$, $r_0<p$, with $r=-p$; $\e=1$
$-p\leq r_0<p$, with $r=r_0$; and $\e=-1$, $r_0>-p$, with $r=-p$. In the
eigenfunction equation occur $f_r$ and $f_{r+2}$. We use \eqref{efr1d}
to replace $f_{r+2}$ and its derivative by expressions concerning
$f_r$. Then we take a suitable linear combination of the two
coordinates of the eigenfunction equations, and observe that it gives
\eqref{hpr} in all cases.

The case of the operator $\sh{-3}{-1}$ requires also the consideration
of many cases, all of which we work out in \cite[\S13c]{Math}.
\end{proof}

The proposition has the consequence that non-trivial kernels of downward
shift operators occur only in $K$-types corresponding to points on at
most three lines in the $(h/3,p)$-plane. Figure~\ref{fig-kerdso}
illustrates this in a case of generic multiple parametrization.
\begin{figure}[tp]
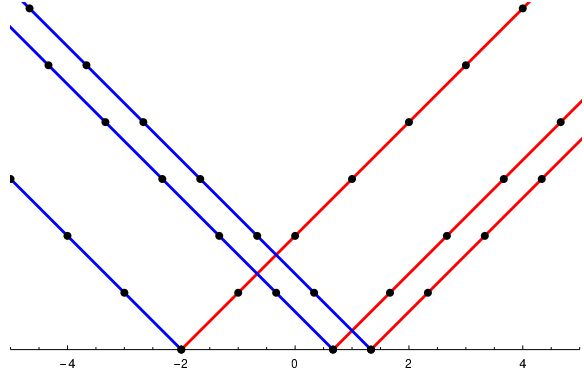

\begin{center}\grf{7.6}{kerdso}\end{center}
\caption{Points in the $(h/3,p)$-plane corresponding to $K$-types in
$\Ffu_\Nfu^{\psi[j,\nu]}$ with possibly non-trivial kernels of
$\sh3{-1}$
(lines with slope $1$)
and $\sh{-3}{-1}$ (lines with slope $-1$), for $(j,\nu)=(-3,1/3)$
(generic multiple parametrization).} \label{fig-kerdso}
\end{figure}

\begin{defn}\label{def-sect}
To each $j\in \ZZ$ we associate the set $\sect(j)$ of $K$-types
$\tau^h_p$ that satisfy $h\equiv p\bmod 2$ and
$|h-2j| \leq 3p$.\il{sect}{$\sect(j)$}
\end{defn}
The set $\sect(l)$ corresponds in the $(h/3,p)$-plane to the lattice
points satisfying $h\equiv p\bmod 2$ that are on or between the lines
$h=2j-3p$ and $h=2j+3p$ in~\eqref{kdso}.\il{slp}{sector of lattice
points in the $(h/3,p)$-plane} If $\ps\in \WOS$ there is one sector
$\sect(j)$, with $\{j\} = \wo^1(\ps)$.

If $\ps \in \WOGM$ there are three sectors, corresponding to the
elements of $\wo^1(\ps)$. These three sectors have no lattice point in
common, as illustrated in Figure~\ref{fig-sectors}. If $\ps\in \WOI$
the two or three sectors $\sect(j)$ with $j\in \wo^1(\ps)$ do have
lattice points in common.
\begin{figure}[tp]
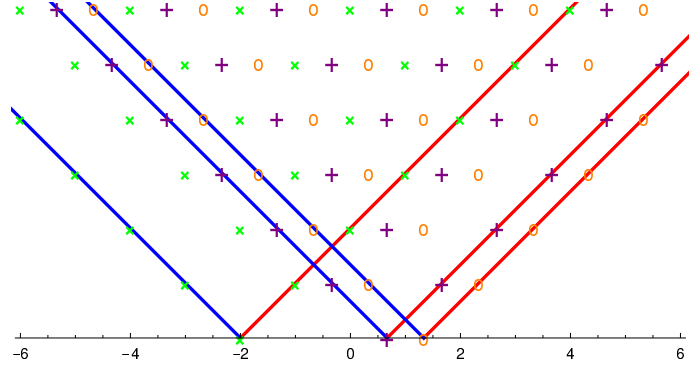

\begin{center}\grf{9}{3sectors}\end{center}
\caption{Points in the $(h/3,p)$-plane corresponding to $K$-types that
can occur in $\Ffu_\Nfu^\ps$ under generic multiple parametrization. We
use the same value $(j,\nu)=(-3,1/3)$ as in Figure~\ref{fig-kerdso}.}
\label{fig-sectors}
\end{figure}

\begin{prop}\label{prop-sectors}Let $\ps\in \WO$.
\begin{enumerate}
\item[i)] The $K$-types occurring in $\Ffu^\ps_\Nfu$ are contained in
\be \begin{cases}
\bigcup_{j\in \wo^1(\ps)} \sect(j)&\text{ if } \Nfu=\Nfu_\bt,\; \bt\in
\CC\,,\\
\bigcup_{j\in \wo^1(\ps)_\n} \sect(j) &\text{ if }\Nfu=\Nfu_\n\,.
\end{cases}
\ee
\item[ii)] Let $\ps\in \WOS$, $j\in \wo^1(\ps)$ if $\Nfu=\Nfu_\bt$ and
$j\in \wo^1(\ps)_\n$ if $\Nfu=\Nfu_\n$. Then for all
$p\in \ZZ_{\geq 0}$
\be\sh3{-1} \Ffu^{\ps}_{\bt;2j+3p,p,p}=\{0\}\,,\qquad \sh{-3}{-1}
\Ffu^{\ps}_{\bt;2j-3p,p,p}=\{0\}\,. \ee
\end{enumerate}
\end{prop}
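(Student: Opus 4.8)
The plan is to deduce both statements from Proposition~\ref{prop-kdso}, which pins down where a downward shift operator can have a non-trivial kernel, together with the one-dimensional $K$-type analysis of \S\ref{sect-1dKt}. The underlying geometry is that $\tau^h_p\in\sect(j)$ precisely when $h-3p\le 2j\le h+3p$, and that the downward shift operators move a highest weight vector in a tightly controlled way: $\sh3{-1}$ sends $(h,p)$ to $(h+3,p-1)$, preserving $h+3p$ while raising $h-3p$ by $6$, whereas $\sh{-3}{-1}$ sends $(h,p)$ to $(h-3,p-1)$, preserving $h-3p$ while lowering $h+3p$ by $6$.

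For i) I would start from a non-zero highest weight vector $v\in\Ffu^\ps_{\Nfu;h,p,p}$ and iterate one downward shift operator, say $\sh3{-1}$ in the abelian case. Since $p$ drops by one at each step, after finitely many steps the image vanishes, or else we reach a one-dimensional $K$-type where $\sh3{-1}$ is automatically zero; in either case the last non-zero iterate $v'\in\Ffu^\ps_{\Nfu;h',p',p'}$ lies in $\ker\sh3{-1}$. Proposition~\ref{prop-kdso} (or, when $p'=0$, the Summary in \S\ref{sect-1dKt}) then yields $h'-3p'=2j$ for some $j\in\wo^1(\ps)$. Because this descent preserves $h+3p$ and only raises $h-3p$, I get $h-3p\le h'-3p'=2j\le h'+3p'=h+3p$, that is $\tau^h_p\in\sect(j)$. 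This settles the case $\Nfu=\Nfu_\bt$.

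In the non-abelian case the same descent delivers $j\in\wo^1(\ps)$, but the claim demands $j\in\wo^1(\ps)_\n$, and closing this gap is the one genuinely delicate point. Here I would select the descent direction by the sign of $\ell$: use $\sh3{-1}$ when $\ell>0$ and $\sh{-3}{-1}$ when $\ell<0$. The endpoint $\tau^{h'}_{p'}$ occurs in $\Ffu_\n$, hence obeys the $K$-type restriction~\eqref{mucond}; for $\ell>0$ this reads $h'-3p'\le d-3$, so $2j=h'-3p'\le d-3$, which is exactly the inequality $\sign(\ell)(2j-d)+3\le0$ defining $\wo^1(\ps)_\n$. The case $\ell<0$ is symmetric, using $h'+3p'\ge d+3$. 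Thus the descent endpoint already lands in the smaller index set, and the sector containment follows as before.

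For ii), assume $\ps\in\WOS$, so $\wo^1(\ps)$ (resp.\ $\wo^1(\ps)_\n$) is the single class $\{j\}$ and, by part~i), every $K$-type of $\Ffu^\ps_\Nfu$ lies in the one sector $\sect(j)$. On $\Ffu^\ps_{\bt;2j+3p,p,p}$ the operator $\sh3{-1}$ has target $K$-type $\tau^{2j+3p+3}_{p-1}$, for which $|(2j+3p+3)-2j|=3p+3>3(p-1)$ (and for $p=0$ the operator vanishes on one-dimensional $K$-types anyway); hence the target lies outside $\sect(j)$, the space $\Ffu^\ps_{\bt;2j+3p+3,p-1}$ is zero by part~i), and $\sh3{-1}$ vanishes identically there. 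The assertion for $\sh{-3}{-1}$ on $\Ffu^\ps_{\bt;2j-3p,p,p}$ is the mirror image, and the non-abelian versions follow verbatim once i) is established. The only real obstacle is the bookkeeping in part~i) that upgrades $\wo^1(\ps)$ to $\wo^1(\ps)_\n$; everything else is forced by the geometry of the shift operators and by Proposition~\ref{prop-kdso}.
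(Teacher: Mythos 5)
Your proof is correct and rests on the same two pillars as the paper's --- Proposition~\ref{prop-kdso} together with the one-dimensional $K$-type analysis of \S\ref{sect-1dKt} --- but the descent is organized differently, and the difference is worth recording. The paper descends along whichever downward shift operator is non-zero, terminates at a minimal vector (possibly in a $K$-type with $p\geq 1$ sitting on the intersection of two sector boundaries), obtains the containment by contradiction (a point outside all sectors would descend to a forbidden point on the $h$-axis), and for the non-abelian refinement argues that at least one of the two indices $j_1,j_2$ at such an intersection lies in $\wo^1(\ps)_\n$ because otherwise the intersection point would fall in the region excluded by \eqref{mucond}. You instead fix a single descent direction --- $\sh3{-1}$ in the abelian case, and the direction dictated by $\sign(\ell)$ in the non-abelian case --- and exploit the monotone invariants $h\pm 3p$: the last non-zero iterate satisfies $h'\mp 3p'=2j$ by Proposition~\ref{prop-kdso} (or by \S\ref{sect-1dKt} when $p'=0$), and the inequalities $h-3p\leq 2j\leq h+3p$ follow at once, giving a direct rather than indirect containment. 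Your handling of the upgrade from $\wo^1(\ps)$ to $\wo^1(\ps)_\n$ is arguably cleaner than the paper's: since the endpoint is a $K$-type occurring in $\Ffu_\n$, applying \eqref{mucond} there yields $\sign(\ell)(2j-d)+3\leq 0$ immediately, with no case analysis at sector intersections. Part ii) is handled the same way in both arguments, by observing that the target $K$-type of the relevant downward shift operator leaves the unique sector and is therefore absent by part i).
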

\begin{proof}We take a point $v\neq 0$ in $\Ffu^\ps_{\Nfu;h,p,p}$ for a
$K$-type $\tau^h_p$ that occurs in $\Ffu^\ps_\Nfu$. If $p\geq 1$ the
point $(h/3,p)$ cannot be on both lines $h-3p=2j$ and $h+3p=2j$ in
Proposition~\ref{prop-kdso} for the same $j\in \wo^1(\ps)$. So there is
a non-zero vector $\sh 3{-1}v$ or $\sh{-3}{-1}v,$ in the $K$-type
$\tau^{h\pm 3}_{p-1}.$ If the point $(h/3,p)$ is in a sector
$\sect(j)$, then the next point is in $\sect(j)$ as well.

The process can be continued until we reach a minimal vector. That can
be a non-zero vector in a one-dimensional $K$-type, with $p=0$. In
\S\ref{sect-1dKt} we saw that this determines at most three $K$-types,
namely $\tau^{2j}_0$ with $j\in \wo^1(\ps)$, in the non-abelian case
$j\in \wo^1(\ps)_\n$.

A minimal vector can also occur in a $K$-type with $p\geq 1$. Then we
have by Proposition~\ref{prop-kdso}
\[ h=3p+2j_1 \= -3p+2j_2\]
with two different elements $j_1,j_2\in \wo^1(\ps)$. In the non-abelian
case at least one of $j_1$ and $j_2$ is in $\wo^1(\ps)_\n$, since
otherwise the intersection point would be in the region ruled out in
Figure~\ref{fig-mucond}.

A point $(h/3,p)$ outside the union of the sectors stays outside the
sectors, since the application of $\sh{\pm3}{-1}$ sends it to a point
$(h/3,p) + (\pm 1,-1)$. A non-zero vector in $\Ffu^\ps_{\Nfu;h,p,p}$
stays non-zero under this process. So we would end in a point on the
$(h/3)$-axis that is not of the form $\frac13 h=\frac23 j$ with
$j\in \wo^1(\ps)$, which yields a contradiction to the results
concerning one-dimensional $K$-types. This ends the proof of~i).

For ii) we consider a non-zero vector
$v\in \Ffu^\ps_{\Nfu;2j\pm 3p,p,p}$ with $\sh{\pm 3}{-1}v\neq 0$ for
$j\in \wo^1(\ps)$ (and $j\in \wo^1(\ps)_\n$ in the non-abelian case).
Then $(h/3\pm 1,p-1)$ is outside the sector $\sect(j)$. Under simple
parametrization, that is a single sector, and we get a contradiction
with~i).
\end{proof}

\subsection{Kernels of upward shift operators}We turn to the possibility
that upward shift operators in Fourier term modules may have a non-zero
kernel. \il{uwso1}{upward shift operator}
\begin{prop}
\label{prop-kuso}\hspace{1cm}
\begin{enumerate}
\item[i)]If $\ps\in \WOG$, then the upward shift operators in
$\Ffu_\Nfu^{\ps}$ are injective.
\item[ii)] Let $\ps\in \WOI$. The operators
\[ \sh{3}1 : \Ffu_{\Nfu;h,p,p}^{\ps} \rightarrow \Ffu^{\ps}_{\Nfu;h+
3,p+1,p+1}\,,
\quad \sh{-3}1 : \Ffu_{\Nfu;h,p,p}^{\ps} \rightarrow \Ffu^{\ps}_{\Nfu;h-
3,p+1,p+1}\]
are injective if $\Nfu=\Nfu_\bt$, $\bt\neq 0$. The operator $\sh{ 3}1$,
respectively $\sh{-3}1$, may have a non-zero kernel for $\Nfu=\Nfu_0$
or $\Nfu_\n$ if there are $j,j'\in \wo^1(\ps)$, $j\neq j'$, such that
\begin{enumerate}
\item[a)] $\tau^h_p\in \sect(j)$,
\item[b)] $h+ 3p+ 6 = 2j'$, respectively $h- 3p- 6 = 2j'$\,,
\item[c)] if $\Nfu=\Nfu_{\ell,c,d}$, then $\ell>0$ (respectively
$\ell<0$) and $j'\not \in \wo^1(\ps)_\n$.
\end{enumerate}
\end{enumerate}
\end{prop}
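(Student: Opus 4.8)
The plan is to study the kernel of an upward shift operator through the component functions of a highest weight vector, pairing the explicit kernel relations with the eigenfunction equations forced by the central character. Write a nonzero $F\in\Ffu^{\ps}_{\Nfu;h,p,p}$ as $F(n\am(t)k)=\sum_r u_r(n)\,f_r(t)\,\Kph h p r p(k)$, as in~\eqref{Fcomp}. For $\Nfu=\Nfu_\bt$ with $\bt\neq0$ there is nothing to do: $\Ffu^{\ps}_\bt\subset\Ffu_\bt$, and Proposition~\ref{prop-uso-ga} already gives injectivity of $\sh31$ and $\sh{-3}1$ on the whole of $\Ffu_\bt$, for every $\ps$. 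Thus the real content lies in the $N$-trivial case $\Nfu=\Nfu_0$ and the non-abelian case $\Nfu=\Nfu_\n$, and throughout it is enough to treat $\sh31$, since $\sh{-3}1$ is handled by the mirror argument $r\mapsto-r$ (interchanging $\Ws1$ and $\Ws2$).

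First I would run the uncoupled $N$-trivial case. Putting $\bt=0$ in Table~\ref{tab-krab}, the equation $\sh31F=0$ collapses to $2tf_r'+(h+2p-r)f_r=0$ for each $r$, so every surviving component is a single power $f_r=c_r\,t^{s_r}$ with $s_r=-(h+2p-r)/2$. Lemma~\ref{lem-efeq-t} then supplies two constraints: its second (algebraic) equation forces $h-3r=2j$ for some $j\in\wo^1(\ps)$, whence $|h-2j|=3|r|\leq3p$ and $\tau^h_p\in\sect(j)$; and its first equation is an Euler equation whose indicial roots, after inserting $h-3r=2j$ and using the Weyl invariance of $\ld_2$, are exactly $2\pm\nu$ for the matching $(j,\nu)\in\wo(\ps)$. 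A pure power solves this only if $s_r\in\{2+\nu,2-\nu\}$; as $s_r=-(j+r+p)$, this yields $\nu=\pm(j+r+p+2)\in\ZZ$. The genericity dichotomy is now an integrality check. For $\ps\in\WOS$ the only integer representatives are $(j,\pm\nu)$, and $\nu\in\ZZ$ with $r\equiv p\bmod2$ forces $\nu\equiv j\bmod2$, against $3\nu\not\equiv j\bmod2$ (the case $(j,\nu)=(0,0)$ is excluded since $s_r=2$ would need $r=-p-2$). For $\ps\in\WOGM$, Lemma~\ref{lem-gmp} makes the three elements of $\wo^1(\ps)$ pairwise distinct modulo $3$, which forces $\nu\notin\ZZ$ for every integer-$j$ representative. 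Either way $F=0$, proving injectivity in~i); the non-abelian subcase of~i) follows the same template, with the Whittaker equation of~\S\ref{sect-Nnab1d} replacing the Euler equation and the non-abelian kernel relations read off Table~\ref{tab-shnab} replacing Table~\ref{tab-krab}, the compatibility of the first-order kernel relation with the second-order Whittaker equation again forcing $\nu\in\ZZ$.

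For $\ps\in\WOI$ one has $\nu\equiv j\bmod2$ and $\nu\in\ZZ$, so the matching $\nu=\pm(j+r+p+2)$ can hold and a kernel may appear; the point is to read off when. With $\tau^h_p\in\sect(j)$ as in~a), substituting $h=2j+3r$ and $r+p=\pm\nu-j-2$ gives $h+3p+6=-j\pm3\nu$, and the value $-j-3\nu$ is exactly twice the $j$-coordinate $j'=-\tfrac12(3\nu+j)$ of $\Ws2(j,\nu)$. Because $\nu\equiv j\bmod2$, this $j'$ is an integer, so $j'\in\wo^1(\ps)$ and $h+3p+6=2j'$, which is~b); generically $j'\neq j$, equality forcing the $\Ws2$-fixed value $\nu=-j$. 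This gives a) and b). Condition~c) is the purely non-abelian refinement: the surviving component must be an allowed $K$-type component, i.e. satisfy~\eqref{mucond}, equivalently $m(h,r)\geq0$ in the notation~\eqref{mhrdef}. Following the image of $\sh31$ down the boundary line $h''+3p''=2j'$ (preserved by $\sh3{-1}$) toward $\tau^{2j'}_0$, one sees that a genuine kernel survives exactly when that one-dimensional $K$-type is absent from $\Ffu_\n$, which by the summary of~\S\ref{sect-Nnab1d} happens when $j'\notin\wo^1(\ps)_\n$; for $\sh31$ the relevant truncated boundary is the one entering the forbidden region of Figure~\ref{fig-mucond} for $\ell>0$, giving the sign condition in~c).

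The hard part will be this non-abelian case, and condition~c) in particular. The eigenfunction equations for $\Ffu_\n$ are coupled and are produced only symbolically in~\cite{Math}, so the transparent uncoupled reasoning used for $\Nfu_0$ must be replaced by a case analysis organized by the sign of $\ell$ and the position of $r_0(h)$ relative to $\pm p$, exactly the bookkeeping behind~\eqref{r0cond} and~\eqref{mhrdef}. The delicate step is to confirm that the kernel suggested by the integrality matching actually persists through this bookkeeping precisely when the target minimal $K$-type $\tau^{2j'}_0$ is genuinely missing from the module; this is what ties the representation-theoretic conditions a) and b) to the module-specific condition c).
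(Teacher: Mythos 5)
Your handling of the generic abelian case and of the $N$-trivial case is correct and is essentially the paper's own argument: Proposition~\ref{prop-uso-ga} disposes of $\Nfu_\bt$ with $\bt\neq 0$; for $\Nfu_0$ the kernel relation makes each component a pure power $t^{-(h+j)/3-p}$, and matching against the indicial roots $2\pm\nu$ of the Euler equation forces $\nu\in\ZZ$ with $\nu\equiv j\bmod 2$, which rules out generic parametrization and, under integral parametrization, yields $h+3p+6=2j'$ for some $j'\in\wo^1(\ps)$. (The paper excludes generic parametrization by a congruence mod $3$ using Lemma~\ref{lem-gmp}; your parity argument is an equivalent shortcut. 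Also, $j'\neq j$ is automatic, not merely generic: a) gives $h-2j\geq -3p$, so $h+3p+6\geq 2j+6>2j$.)

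The genuine gap is the non-abelian case, which you defer with a heuristic that does not substitute for the argument. First, your claim that it ``follows the same template'' overlooks that in $\Ffu_\n$ the kernel relation for $\sh31$ is not an uncoupled first-order equation per component: it is a recursion expressing $f_r$ in terms of $f_{r-2}$ and $f_{r-2}'$, so a kernel element is determined by its lowest component, and the only closed condition is the vanishing of the outgoing component of $\sh31F$ below the lowest index. By Table~\ref{tab-shnab} that condition reads $\th_{m(h,r_{\min})-\e}\cdot(\text{nonzero factor})\cdot f_{r_{\min}}=0$, which forces $F=0$ unless $\e=1$ and $m(h,r_{\min})=0$, i.e.\ unless $\ell>0$ and $-p\leq r_0(h)\leq p$. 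This computation is what produces the sign condition in c) and what proves injectivity of $\sh31$ for $\ell<0$ even under integral parametrization; the picture of a boundary line ``entering the forbidden region'' does not deliver it. Second, the condition $j'\notin\wo^1(\ps)_\n$ comes from a concrete inequality, not from your assertion that ``a genuine kernel survives exactly when $\tau^{2j'}_0$ is absent'': from $r_0(h)=\frac{h-d}3+1\geq -p$ one gets $2j'=h+6+3p\geq d+3$, so $\tau^{2j'}_0$ violates \eqref{mucond}. Your ``exactly when'' claim is both unproved and stronger than the necessary conditions the proposition asserts, and your outline also misses the boundary subcase $r_0(h)=p$, where the kernel element has a single component proportional to $t^{-(h+p)/2}e^{-2\pi\ell t^2}$ and the relation $h+6+3p=2j'$ must be checked directly against the eigenfunction equations.
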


\begin{proof}Proposition~\ref{prop-uso-ga} implies the statement in the
generic non-abelian case $\Nfu=\Nfu_\bt$, $\bt\neq 0$.

By Proposition~\ref{prop-sectors} we need consider only those $K$-types
$\tau^h_p$ for which there exists $(j,\nu) \in \wo(\ps)$ such that
$|h-2j|\leq 3p.$ We will show that the presence of a non-zero element
$v\in\Ffu^\ps_{\Nfu;h,p,p}$ with $\sh{\pm3}{-1} v\neq 0$ implies that
\be\label{relpm6} (h\pm 6\pm3p-3\nu +j)\,(h\pm 6 \pm 3p+3\nu +j
)\=0\,.\ee

This relation implies that $h\pm 6 \pm 3p$ is equal to $2j'$ for some
element $(j',\nu')$ of the Weyl group orbit of $(j,\nu)$, illustrated
in Figure~\ref{fig-Wo}. In fact, $j'= \frac{3\z\nu+j}2$,
$\z\in \{1,-1\}$. So $h+3p \equiv 2j'\bmod 6$. On the other hand, the
fact that $\tau^h_p$ is in $\sect(j)$ means that $h=2j+3(a-b)$ with
$a,b\in \ZZ_{\geq 0}$ and $a+b=p$. So $2j\equiv h+3p\bmod 6$ as well.
Hence $j\equiv j'\bmod 2$. Under generic parametrization this implies
$j=j'$
(with the use of Lemma~\ref{lem-gmp}). However $|h-2j|\leq 3p$ is not
compatible with $h\pm 3\pm 3p = 2j$. This rules out generic
parametrization, and we are left with $\ps\in \WOI$.
\rmrk{$N$-trivial case}The eigenfunction equations in
Lemma~\ref{lem-efeq-t} show that components of elements of $\Ffu_0^\ps$
satisfy an uncoupled system of differential equations. So we can
consider elements of the simple form
\[ F\bigl( n \am(t) k) = f_r(t)\, \Kph h{p}{r}{p}(k)\]
with $p \equiv r \bmod 2,$ $|r|\leq p,$ and $h=2j+3r.$

We carry out some computations in \cite[\S14a]{Math}. From
$\sh{\pm 3}{-1}F=0$ we see that $f_r$ is a multiple of
$t^{\mp(h+j)/3-p}$. Insertion of this function into the eigenfunction
equations with $(j,\nu)\in \wo(\ps)$ gives relation~\eqref{relpm6}. The
discussion following \eqref{relpm6} concludes the discussion of the
$N$-trivial case.
\rmrk{Non-abelian case}Now we consider
\[ F\bigl( n \am(t)k\bigr) = \sum_r \th_{m(h,r)}(n)\, f_r(t)\, \Kph
h{p}{r}{p}(k)\,,\]
with $r=-p, 2-p,\ldots,p$, with the restriction that only terms with
$m(h,r)\geq 0$ can contribute. We have $m(h,r\pm 2) = m(h,r)\pm \e$
by~\eqref{mudef}, and also $m\bigl(h, \e p\bigr)\geq 0$
(otherwise all components of $F$ vanish). We use the notation
$\e=\sign(\ell)$. We carry out several computations, in
\cite[\S14b]{Math}.

The kernel relations for the upward shift operator $\sh31$ give the
following relations between the components:
\bad f_r& \= \frac1{4(2+p-r) t}\Bigl( i(p+r) \bigl( (2+h+2p-r+4\pi\ell
t^2\bigr)f_{r-2}+ 2 t f'_{r-2} \bigr)\Bigr)
\\
&\qquad\hbox{} \cdot\begin{cases} \frac1{\sqrt{2\pi |\ell|m(r)}}&\text{
if }\e=1\text{ and } m(r)\geq 1\,,\\
\frac{-1}{\sqrt{2\pi|\ell|m(r+1)}}&\text{ if }\e=-1\text{ and } m(r)\geq
0\,,
\end{cases}
\ead
valid for $r \geq 2+\max(r_0,-p)$ if $\e=1$, and for $r\leq \min(r_0,p)$
if $\e=-1$. If $\e=1$ and $r_0< -p$, or if $\e=-1$, the lowest
component $f_{-p}$ determines all other components. A computation shows
that $\sh 3 1 \bigl(\th_m f_{-p} (t) \, \Kph hp{-p}p\bigr)=0$ implies
$f_{-p}=0$, unless $\e=1$ and $m=0$. This implies that the kernel of
$\sh 3 1$ on $\Ffu^\ps_{\n;h,p,p}$ is zero, unless $\e=1$ and
$-p\leq r_0 \leq p$. In this remaining case, the important component is
$f_{r_0}$. If $r_0<p$ we use a kernel relation to express $f_{r_0+2}$
in $f_{r_0}$ and its derivative, and insert this in the eigenfunction
equations for $r=r_0$. A Mathematica computation shows that this gives
for $f_{r_0}\neq 0$ the relation
\be \label{prd3} (h+6-3p-2j) (h+6-3p-3\nu+j)(h+6-3p+3\nu+j)\=0 \,.\ee
The factor $h+6-3p-j$ cannot be zero, since $\tau^h_p \in \sect(j)$. The
other two factors give~\eqref{relpm6}, and we have $h+6+3p=2j'$ for
$j'\in \wo^1(\ps)$, $j'\neq j$. Since $r_0(h)=\frac{h-d}3+1$, by
\eqref{mhrdef}, the requirement $r_0\geq -p$ implies that
$h \geq -3p+d-3$. Hence $2j'=h+6+3p\geq d+3$. The $K$-type
$\tau^{2j'}_0$ satisfies $2j'-3\cdot 0 \geq d+3$, and does not satisfy
the condition in~\eqref{mucond}. So $j'\not\in \wo^1(\ps)_\n$.

If $r_0=p$ the kernel relation leads to a differential equation with
explicit solutions spanned by
$f_{r_0}(t) = t^{-(h+p)/2}\, e^{-2\pi \ell t^2}$. Inserting this
solution in the eigenfunction equations shows that we have an element
of $\Ffu_\n^\ps$ with $\ps$ represented by $j=\frac{h-3p}2$ and
$\nu=\frac{h+p}2+2$. We note that $\tau^h_p \in \sect(j)$ for this
choice of~$(j,\nu)$. It turns out that $(j',\nu') = \Ws1(j,\nu)$
satisfies $j'=h+6+3p$, hence \eqref{prd3} holds in this case as well.
The same argument as for $-p\leq r_0 <p$ shows that
$j'\not\in \wo^1(\ps)_\n$.
\medskip

For the shift operator $\sh{-3}1$ we proceed similarly. The kernel
relations are
\bad f_r &\= \frac1{4(2+p+r) \, t} \Bigl( i(p-r)\bigl(
(2-h+2p+4-4\pi\ell t^2)f_{r+2}+2 t f_{r+2}'\bigr) \Bigr)\\
&\qquad\hbox{} \cdot\begin{cases}
\frac{-1}{\sqrt{2\pi|\ell|(1+m(r))}}&\text{ if }\e=1\text{ and }
m(r)\geq 0\,\\
\frac1{\sqrt{2\pi|\ell| m(r) }}&\text{ if }\e=-1\text{ and }m(r)\geq
1\,.
\end{cases}
\ead
Now the highest non-zero component determines the other components.

Like in the case of $\sh31$ there cannot be a non-trivial kernel if
$\e=1$, or if $\e=-1$ and $r_0> p$. We have to consider the case
$\e=-1$ and $-p\leq r_0 \leq p$.

  In the case of a kernel element with one component we should have
$f_{-p}$ equal to a multiple of
$t\mapsto t^{(h-p)/2} e^{\pi \ell t^2}$. This satisfies the
eigenfunction equations with $j=\frac12(h+3p)$ and
$\nu=1+\frac12(p-h)$, for which $\tau^h_p \in \sect(j)$. It turns out
that $h-6-3p=2j'$ with $(j',\nu')=\Ws2(j,\nu)$.

In the other case we substitute into the eigenfunction equations for
$r=r_0$ the expression for $f_{r_0-2}$ that follows from the kernel
relation. That leads to the relation
\[ (h-6-3p-2j)(h-6-3p+j-3\nu) (h-6-3p+f+3\nu)\=0\,,\]
and then to~\eqref{relpm6}. So in this case as well $h-6-3p=2j'$ for
$j'\in \wo^1(\ps)$. Further we have
\[ 2j' = h-3p-6 \= 3r_0 +d+3 - 3p - 6 \leq 3p+d-3p-3 \=d-3\,,\]
in contradiction to the requirement that $2j' \geq d+3$;
see~\eqref{mucond}.
\end{proof}

\begin{prop}\label{prop-dim}Let $\ps\in \WOG$. The subspaces
$\Ffu_{\Nfu;h,p,p}^{\ps}$ have dimension 2 if $\tau^h_p\in \sect(j)$
for some $j\in \wo^1(\ps)$, and dimension 0 otherwise.
\end{prop}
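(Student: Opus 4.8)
The plan is to read the vanishing directly off the support statement already established, and then to pin every remaining highest weight space to dimension exactly~$2$ by squeezing it between the injectivity of the upward shift operators and the injectivity of a suitably chosen downward shift operator. All shift operators are built from the action of $U(\glie)$, hence commute with $ZU(\glie)$ and preserve the eigenspace $\Ffu^\ps_\Nfu$, so throughout I may regard them as operators on the Fourier term module.

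\emph{Dimension $0$.} If $\tau^h_p$ lies in no sector $\sect(j)$ with $j\in\wo^1(\ps)$ (for $\Nfu=\Nfu_\bt$), respectively $j\in\wo^1(\ps)_\n$ (for $\Nfu=\Nfu_\n$), then Proposition~\ref{prop-sectors}~i) says that $\tau^h_p$ does not occur in $\Ffu^\ps_\Nfu$, so $\Ffu^\ps_{\Nfu;h,p,p}=\{0\}$. This disposes of the ``otherwise'' case; in the non-abelian case the relevant index set is $\wo^1(\ps)_\n$, which is exactly what the base case below forces.

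\emph{Dimension $\geq 2$.} Fix $j$ with $\tau^h_p\in\sect(j)$ and write $h=2j+3(a-b)$, $p=a+b$ with $a,b\in\ZZ_{\geq0}$, as in the proof of Proposition~\ref{prop-kuso}. Since $\ps\in\WOG$, the upward shift operators are injective (Proposition~\ref{prop-kuso}~i)), and $\sh31$, $\sh{-3}1$ commute (Proposition~\ref{prop-shdef}). Hence
\[ (\sh31)^a(\sh{-3}1)^b:\Ffu^\ps_{\Nfu;2j,0,0}\longrightarrow \Ffu^\ps_{\Nfu;h,p,p} \]
is injective, and its source is two-dimensional by the one-dimensional $K$-type computation summarized at the end of \S\ref{sect-1dKt} (this is where membership $j\in\wo^1(\ps)$, resp. $j\in\wo^1(\ps)_\n$, is used). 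Therefore $\dim\Ffu^\ps_{\Nfu;h,p,p}\geq2$ for every $\tau^h_p\in\sect(j)$.

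\emph{Dimension $\leq 2$.} Here I induct on $p$, the case $p=0$ being again the two-dimensional base. Let $p\geq1$. Under generic parametrization each residue class modulo~$3$ contains at most one element of $\wo^1(\ps)$: this is immediate when $|\wo^1(\ps)|=1$, and is Lemma~\ref{lem-gmp} when $\ps\in\WOGM$. Combined with Proposition~\ref{prop-kdso} this yields injectivity of a downward operator off the sector boundary. Indeed $\sh{-3}{-1}$ can have a nonzero kernel on $\Ffu^\ps_{\Nfu;h,p,p}$ only if $h+3p=2j'$ with $j'\in\wo^1(\ps)$, i.e. $j'=j+3a\equiv j\pmod 3$, forcing $j'=j$ and $a=0$; symmetrically $\sh3{-1}$ can have a kernel only if $b=0$. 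Hence if $a\geq1$ the operator $\sh{-3}{-1}$ is injective and carries $\Ffu^\ps_{\Nfu;h,p,p}$ into $\Ffu^\ps_{\Nfu;h-3,p-1,p-1}$, whose sector coordinates are $(a-1,b)$, again inside $\sect(j)$; and if $a=0$ (so $b=p\geq1$) the operator $\sh3{-1}$ is injective into $\Ffu^\ps_{\Nfu;h+3,p-1,p-1}$, with coordinates $(0,p-1)\in\sect(j)$. In either case the induction hypothesis bounds the target space by $2$, so $\dim\Ffu^\ps_{\Nfu;h,p,p}\leq2$, and with the previous step the dimension is exactly~$2$.

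The main obstacle is the downward-injectivity step, because Proposition~\ref{prop-kdso} supplies only a \emph{necessary} condition for a nonzero kernel: the real content is that under generic parametrization this condition is met only along the two boundary rays $a=0$ and $b=0$ of the sector, and that on each such ray the \emph{other} downward operator stays injective and keeps the image inside the same $\sect(j)$. The separation of the elements of $\wo^1(\ps)$ into distinct classes modulo~$3$ (Lemma~\ref{lem-gmp}, equivalently the disjointness of sectors recorded after Definition~\ref{def-sect}) is exactly what prevents an interior point from accidentally satisfying $h\mp 3p=2j'$ for a second index $j'$, and is the point at which the hypothesis $\ps\in\WOG$ is indispensable; under integral parametrization the sectors overlap and this squeezing breaks down, which is why the reducible modules studied later behave quite differently.
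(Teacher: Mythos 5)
Your proposal is correct and follows essentially the same route as the paper's proof: establish dimension $2$ at the one-dimensional $K$-type $\tau^{2j}_0$, propagate the lower bound upward via the injectivity of the upward shift operators from Proposition~\ref{prop-kuso}~i), and obtain the upper bound by descending to $\tau^{2j}_0$ along downward shift operators that are injective off the sector boundary (the paper packages this descent into the proof of Proposition~\ref{prop-sectors}, where the same mod-$3$ separation of $\wo^1(\ps)$ is used). Your explicit identification of which downward operator is injective at each $(a,b)$ is a slightly more detailed rendering of the same argument, not a different one.
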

\begin{proof}Under simple parametrization we have
$\dim\Ffu_{\Nfu;2j,0,0}^\ps=2,$ by the summary at the end of
Subsection~\ref{sect-1dKt}. Repeated application of the upward shift
operators brings us from the $1$-dimensional $K$-type to all $K$-types
corresponding to points in $\sect(j)$ for the sole $j\in \wo^1(\ps)$.
The injectivity in i) in Proposition~\ref{prop-kuso} gives multiplicity
at least $2$ for all $K$-types that occur. From any $K$-type we can go
down to the $K$-type $\tau^{2j}_0$ by application of downward shift
operators on highest weight spaces on which they are injective. So all
multiplicities are equal to~$2.$

Under generic multiple parametrization, the $K$-types correspond to
points in three disjoint sectors, to each of which we can apply the
same reasoning.\end{proof}


\def\flnm{rFtm-II-sFtm}


\section{Special Fourier term modules}\label{sect-spFtm}
\markright{10. SPECIAL FOURIER TERM MODULES}We turn to the structure of
the Fourier term modules $\Ffu_\Nfu^\ps$, under the assumption of
generic parametrization, in which these modules are the direct sum of a
finite number of $(\glie,K)$-modules. The study of Fourier term modules
under integral parametrization will be carried out in the next chapter.

In this section we discuss first the principal series modules
$H^{\xi,\nu}_K$, which are submodules of $\Ffu^\ps_0$, for the
character $\ps=\psi[j_\xi,\nu]$ of $ZU(\glie)$. In the other modules
$\Ffu_\Nfu$ we distinguish submodules by their behavior on~$A$. We
define in this way modules $\Wfu_\Nfu^{\xi,\nu}$ with exponential decay
as $t\uparrow\infty$, and modules $\Mfu_\Nfu^{\xi,\nu}$ with nice
behavior as $t\downarrow 0$. Under generic parametrization the modules
$H^{\xi,\nu}_K$, $\Wfu_\Nfu^{\xi,\nu}$ and $\Mfu^{\xi,\nu}_\Nfu$ are
isomorphic. We discuss a few intrinsically defined intertwining
operators.

\subsection{Principal series and logarithmic modules}\label{sect-prs}For
any choice of $(j,\nu)\in \ZZ\times \CC$ the Fourier term module
$\Ffu_0^{\psi[j,\nu]}$ contains the following
module\ir{HK-def}{H^{\xi,\nu}_K} in the \il{ps}{principal
series}principal series\il{kph}{$\kph h{p}{r}{q}$}
\badl{HK-def} H_K^{\xi,\nu} &= \bigoplus_{h,p,q} \CC\,\kph
h{p}{r}{q}(\nu)\,,\\
\kph h{p}{r}{q}\bigl( \nu;n \am(t) k \bigr)&= t^{2+\nu}\, \Kph
h{p}{r}{q}(k)\,, \eadl
with $\xi=\xi_j$ the character of $M$ corresponding to $j\in \ZZ$. The
sum is over integers satisfying $h\equiv p \equiv q\bmod 2,$
$|q|\leq p$, $|r|\leq p$, and $h=2j+3r$. The element $\kph {2j}000$ is
a solution of the differential equation in~\S\ref{sect-Ntr1d}.

The module $H^{\xi,\nu}_K$ depends on the choice of $(j,\nu)$ in
$\wo(\psi[j,\nu])$. The Fourier term module $\Ffu_0^{\psi[j,\nu]}$
depends only on the Weyl group orbit $\wo(\psi[j,\nu])$, and contains
(in general)
several principal series modules.\smallskip

Specialization of the results in Table~\ref{tab-shab},
p~\pageref{tab-shab}, gives the shift operators.
\badl{shps} \sh{\pm 3}1 \kph h{p}{r}{p}
(\nu)&= \frac{2+p\pm r}{8(1+p)}\, (4\pm h
+2\nu +2p\mp r) \, \kph {h\pm 3}{p+1}{r\pm 1}{p+1}(\nu)\,,\\
\sh{\pm3}{-1} \kph h{p}{r}{p}(\nu)
&= \frac p{4(p+1)} (2\nu \pm h
-2p \mp r) \, \kph{h\pm 3}{p-1}{r\pm 1}{p-1}(\nu)\,. \eadl

Under the assumption of generic parametrization all upward operators are
injective. See Proposition~\ref{prop-kuso}; or alternatively, check it
with \eqref{shps}. So $H_k^{\xi,\nu}$ is a special module
(Definition~\ref{def-scm}). Its type is
$\bigl[\ld_2(j,\nu);2j,0; \infty,\infty\bigr].$
Proposition~\ref{prop-kdso} shows that all downward shift operators
that stay in the sector $\sect(j)$ are also injective. With
Lemma~\ref{lem-alnzir} this implies the following result.
\begin{prop}\label{prop-psscm}Under the condition of generic
parametrization the representations in the principal series are
irreducible special modules.
\end{prop}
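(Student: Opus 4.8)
The plan is to verify that under generic parametrization every principal series module $H^{\xi,\nu}_K$ satisfies the hypotheses of Lemma~\ref{lem-alnzir}, from which irreducibility follows once we know $H^{\xi,\nu}_K$ is a special module. First I would confirm that $H^{\xi,\nu}_K$ is indeed a special module in the sense of Definition~\ref{def-scm}: its generator is the minimal vector $\kph{2j}000(\nu)$ of $K$-type $\tau^{2j}_0$ (computed in the Example preceding \eqref{ld23}), the Casimir $C$ acts by the scalar $\ld_2(j,\nu)$, and by Proposition~\ref{prop-sectors} the $K$-types are precisely those in $\sect(j)$, each with multiplicity one. Since the sector $\sect(j)$ is reached by repeated application of the two upward shift operators $\sh31,\sh{-3}1$ with $A=B=\infty$, this gives the parameter set $\bigl[\ld_2(j,\nu);2j,0;\infty,\infty\bigr]$.

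Next I would establish injectivity of all the relevant shift operators. The upward operators are injective by part~i) of Proposition~\ref{prop-kuso} (generic parametrization), or one can read it off directly from the explicit formula~\eqref{shps}: the scalar factor $4\pm h+2\nu+2p\mp r$ attached to $\sh{\pm3}1$ vanishes only when $2j'=h\pm3\pm3p$ lies on a neighboring Weyl-orbit line, which under $\ps\in\WOG$ would force $j=j'$ and contradict $\tau^h_p\in\sect(j)$. For the downward operators within the sector, Proposition~\ref{prop-kdso} tells us a nontrivial kernel of $\sh{\pm3}{-1}$ on $\Ffu^\ps_{0;h,p,p}$ requires $h\mp3p=2j'$ for some $j'\in\wo^1(\ps)$; but as long as we stay strictly inside $\sect(j)$ the point $(h/3,p)$ lies off both boundary lines, so these kernels are trivial. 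Equivalently, the downward scalar $2\nu\pm h-2p\mp r$ in~\eqref{shps} is nonzero for interior $K$-types.

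The translation into the language of Lemma~\ref{lem-alnzir} is then routine: the basis vectors $v(a,b)=(\sh31)^a(\sh{-3}1)^bv(0,0)$ from Subsection~\ref{sect-scm} have downward-shift coefficients $\al_\pm(a,b)$ that are exactly the nonzero scalars just identified, so $\al_\pm(a,b)\neq0$ for all $a,b\geq0$. Lemma~\ref{lem-alnzir} then yields irreducibility, and the statement that $H^{\xi,\nu}_K$ is a special module completes the claim.

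The main obstacle I anticipate is bookkeeping at the boundary of the sector rather than any deep point: one must be careful that the downward shift operators are only being required to be injective on the highest weight spaces $\Ffu^\ps_{0;h,p,p}$ with $\tau^h_p$ strictly interior to $\sect(j)$, since on the two edge lines $h=2j\pm3p$ the operators $\sh{\mp3}{-1}$ genuinely annihilate (these edges are how one descends to the minimal vector in the first place). Checking that this edge behavior is consistent with $\al_\pm(a,b)\neq0$ for the nonzero $v(a,b)$—as opposed to the always-zero boundary conventions $\al_+(0,b)=\al_-(a,0)=0$—is the one place demanding genuine care, but it is resolved by noting that the vanishing factors $2+p\pm r$ and the convention factors coincide precisely on the sector boundary, so no interior coefficient is lost.
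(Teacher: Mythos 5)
Your proposal is correct and follows essentially the same route as the paper: establish that $H^{\xi,\nu}_K$ is a special module of parameter set $\bigl[\ld_2(j,\nu);2j,0;\infty,\infty\bigr]$ using the injectivity of the upward shift operators (Proposition~\ref{prop-kuso} or \eqref{shps}), note via Proposition~\ref{prop-kdso} that the downward shift operators are injective on $K$-types interior to $\sect(j)$, and conclude irreducibility from Lemma~\ref{lem-alnzir}. The only quibble is cosmetic: the factors $2+p\pm r$ you invoke at the end belong to the upward shifts, while the edge vanishing of $\sh{\pm3}{-1}$ on $\kph hp{\pm p}p$ is automatic from the structure of the $\Kph hprq$ and matches the convention $\al_+(0,b)=\al_-(a,0)=0$ exactly as you intend.
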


We note that the basis vectors $\kph h{p}{r}{q}$ form holomorphic
families depending on $\nu \in \CC.$

\begin{prop}\label{prop-FFu0} Let $\ps \in \WOG$ such that $\nu\neq 0$
for all $(j,\nu)\in \wo(\ps)$. Then
\be \Ffu_0^{\ps} \=\bigoplus_{(j,\nu)\in \wo(\ps)} H_K^{\xi_j,\nu}
\,.\ee
\end{prop}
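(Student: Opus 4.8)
The plan is to prove the two inclusions by exhibiting the right-hand side as a direct sum of submodules of $\Ffu_0^\ps$ and then matching $K$-type multiplicities to force equality.

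First I would check that each $H_K^{\xi_j,\nu}$ with $(j,\nu)\in\wo(\ps)$ is a submodule of $\Ffu_0^\ps$. Its basis functions $\kph h p r q(\nu;n\am(t)k)=t^{2+\nu}\Kph h p r q(k)$ are manifestly left-$N$-invariant, and by the computation recorded before \eqref{ld23} (applied to the minimal vector $\kph{2j}000$) together with the centrality of $C$ and $\Dt_3$, the whole module is annihilated by $C-\ld_2(j,\nu)$ and $\Dt_3-\ld_3(j,\nu)$. Since $(j,\nu)$ lies in the Weyl orbit $\ps=\psi[j,\nu]$ and $\ld_2,\ld_3$ are $W$-invariant, this says precisely that $ZU(\glie)$ acts by $\ps$, so $H_K^{\xi_j,\nu}\subset\Ffu_0^\ps$. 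Under generic parametrization each $H_K^{\xi_j,\nu}$ is an irreducible special module whose $K$-types are exactly those of $\sect(j)$, each of multiplicity one (Proposition~\ref{prop-psscm}).

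Next I would establish that the sum is direct. A generic orbit $\wo(\ps)$ pairs up: for each $j\in\wo^1(\ps)$ exactly the two elements $(j,\nu),(j,-\nu)$ occur, and the sectors $\sect(j)$ for distinct $j\in\wo^1(\ps)$ are pairwise disjoint (a single sector under simple, three disjoint sectors under generic multiple parametrization; see Figure~\ref{fig-sectors}). Hence only the two modules attached to the same $j$ can overlap, and it suffices to show $H_K^{\xi_j,\nu}\cap H_K^{\xi_j,-\nu}=\{0\}$. Both are irreducible, so the intersection is $\{0\}$ unless the two modules coincide; comparing their one-dimensional minimal $K$-type $\tau^{2j}_0$, the generators are $t^{2+\nu}\Kph{2j}000$ and $t^{2-\nu}\Kph{2j}000$, which span different lines precisely because $\nu\neq0$. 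So the two irreducibles are distinct, the intersection is trivial, and the full sum over $\wo(\ps)$ is direct.

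Finally, equality follows from a $K$-type count. In the direct sum a $K$-type $\tau^h_p$ lying in $\sect(j)$ for the unique $j\in\wo^1(\ps)$ receives multiplicity exactly two, one from $H_K^{\xi_j,\nu}$ and one from $H_K^{\xi_j,-\nu}$, and none otherwise; meanwhile Proposition~\ref{prop-dim} (with the one-dimensional base case from the ODE in Subsection~\ref{sect-Ntr1d}, valid because $\nu\neq0$) gives $\dim\Ffu_{0;h,p,p}^\ps=2$ for $\tau^h_p$ in some sector and $0$ elsewhere, while Proposition~\ref{prop-sectors} confines all $K$-types of $\Ffu_0^\ps$ to $\bigcup_j\sect(j)$. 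Thus $\bigoplus H_K^{\xi_j,\nu}$ agrees with $\Ffu_0^\ps$ in every isotypic component, forcing equality. The main obstacle is the directness, and this is exactly where $\nu\neq0$ does the work: when some $(j,0)\in\wo(\ps)$ the minimal $K$-type equation degenerates (solutions $t^2$ and $t^2\log t$), the two principal series collapse, and $\Ffu_0^\ps$ acquires a logarithmic part captured by no $H_K^{\xi,\nu}$ — the reason that case is deferred to Propositions~\ref{prop-iLog} and~\ref{prop-logIP}. The only remaining care is bookkeeping: confirming the $(j,\pm\nu)$ pairing of a generic orbit and that the dimension-two base case of Proposition~\ref{prop-dim} indeed applies to the $N$-trivial module.
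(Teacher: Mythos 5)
Your proposal is correct and follows essentially the same route as the paper: the paper's proof simply cites the eigenfunction equations of Lemma~\ref{lem-efeq-t} (which yield the uncoupled ODEs with solution space spanned by $t^{2+\nu}$ and $t^{2-\nu}$ exactly when $\nu\neq0$) together with the dimension count of Proposition~\ref{prop-dim}, which is the same containment-plus-multiplicity argument you spell out in more detail. Your extra care about directness at the minimal $K$-type and the disjointness of sectors is a legitimate unpacking of what the paper leaves implicit, and your closing remark about the logarithmic degeneration at $\nu=0$ matches the paper's own caveat.
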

\begin{proof}This follows directly from the eigenfunction equations in
Lemma~\ref{lem-efeq-t} and from the dimension statements in
Proposition~\ref{prop-dim}. If $\nu=0$ we get solutions $t^2$ and
$t^2\log t$, and then the statement is not right.\end{proof}

\begin{prop}If $\psi[j,\nu]\in \WOG$, then $H^{\xi,\nu}_K$ and
$H^{\xi,-\nu}_K$ are isomorphic.
\end{prop}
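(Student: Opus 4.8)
The plan is to recognize both $H^{\xi,\nu}_K$ and $H^{\xi,-\nu}_K$ as irreducible special modules and then to apply the classification of such modules by their parameter set. By Proposition~\ref{prop-psscm}, under generic parametrization every principal series module is an irreducible special module in the sense of Definition~\ref{def-scm}; this applies to both $H^{\xi,\nu}_K$ and $H^{\xi,-\nu}_K$, which share the same character $\xi=\xi_j$. Hence, by Proposition~\ref{prop-scm}, it suffices to check that the two modules have the same parameter set $[\mu_2;h_0,p_0;A,B]$.

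First I would record the parameter set of $H^{\xi,\nu}_K$. The vector $\kph{2j}000(\nu)$ lies in the one-dimensional $K$-type $\tau^{2j}_0$ and generates $H^{\xi,\nu}_K$; since $p_0=0$ the downward shift operators vanish on it, so it is a minimal vector, giving $h_0=2j$ and $p_0=0$. Neither value involves $\nu$, so they agree for both signs. Because all upward shift operators are injective under generic parametrization (Proposition~\ref{prop-kuso}(i)) and $H^{\xi,\nu}_K$ exhausts the full sector $\sect(j)$ of $K$-types $\tau^h_p$ with $h=2j+3r$, $|r|\le p$, $|q|\le p$, the range parameters are $A=B=\infty$ as well. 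Thus the parameter set is $[\ld_2(j,\nu);2j,0;\infty,\infty]$, as recorded just before Proposition~\ref{prop-psscm}, and the same description holds verbatim for $H^{\xi,-\nu}_K$ except for the value of the Casimir eigenvalue.

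The only remaining datum is therefore $\mu_2$, and this is the crux. By \eqref{ld23} the Casimir acts on $H^{\xi,\nu}_K$ by $\ld_2(j,\nu)=\nu^2-4+\tfrac13 j^2$, which depends on $\nu$ only through $\nu^2$. Consequently $\ld_2(j,\nu)=\ld_2(j,-\nu)$, so the two modules carry identical parameter sets $[\ld_2(j,\nu);2j,0;\infty,\infty]$. Applying Proposition~\ref{prop-scm} then yields the isomorphism $H^{\xi,\nu}_K\cong H^{\xi,-\nu}_K$.

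I do not expect a genuine obstacle here: the argument reduces to the single observation that $\ld_2$ is even in $\nu$, while the $K$-type data $(h_0,p_0,A,B)$ and the generating character $\xi=\xi_j$ are insensitive to the sign of $\nu$. The one point requiring a moment of care is confirming that $A=B=\infty$ for both modules, i.e. that neither principal series module truncates along any shift direction; this is exactly the injectivity of the upward shift operators in Proposition~\ref{prop-kuso}(i) combined with the fact that the $K$-types of $H^{\xi,\pm\nu}_K$ fill $\sect(j)$.
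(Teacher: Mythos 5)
Your argument is correct and is essentially the paper's own proof: the paper likewise invokes the parameter set $[\ld_2(j,\nu);2j,0;\infty,\infty]$ of $H^{\xi,\nu}_K$, the evenness $\ld_2(j,-\nu)=\ld_2(j,\nu)$, and Proposition~\ref{prop-scm} classifying irreducible special modules by their parameter sets. Your additional verification that $h_0,p_0,A,B$ are independent of the sign of $\nu$ is exactly the content the paper records just before Proposition~\ref{prop-psscm}.
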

\begin{proof}Since $\ld_2(j,-\nu) = \ld_2(j,\nu)$ this follows from
Proposition~\ref{prop-scm} and the type
$\bigl[\ld_2(j,\nu);2j,0; \infty,\infty\bigr]$ of~$H^{\xi,\nu}_K$.
\end{proof}

The isomorphism can be chosen such that\ir{ii0}{\ii_0}
\badl{ii0} \ii_0\= \ii_0(j,\nu): \kph h{p}{r}{q}(\nu)
&\mapsto c(h,p,r,\nu) \, \kph h{p}{r}{q}(-\nu)\,,\\
c(h,p,r,\nu)&= \frac{\Gm( 1 + \frac{p-\nu}2+\frac{h-r}4)
\Gm(1+\frac{p-\nu}2+\frac{r-h}4)} {\Gm( 1 + \frac{p+\nu}2+\frac{h-r}4)
\Gm(1+\frac{p+\nu}2+\frac{r-h}4)} \,. \eadl
This is well defined for generic parametrization. Some checks are in
\cite[\S15b]{Math}. The map $\ii_0(j,\nu)$ is meromorphic in $\nu$; the
singularities occur only for some $(j,\nu)$ corresponding to integral
parametrization.

\rmrk{Remark} Under generic multiple parametrization there are more
principal series modules corresponding to the same Weyl group orbit. If
$j\neq j'$ in the same Weyl group orbit then $H^{\xi,\nu}_K$ and
$H^{\xi',\nu'}_K$ are not isomorphic, since the parameter $h_0$ is
given by $2j$ for principal series representations.

\rmrk{Logarithmic modules}In the case of $\nu=0$ the differential
relations in \eqref{eft} admit solutions with components of the form
$c_1 \,t^2\log t+c_2\, t^2.$\il{lm}{logarithmic module}

To describe these solutions we use the intertwining operator
\be \frac1{2\nu}\bigl(1 - \ii_0(j,\nu)\bigr) : H^{\xi,\nu}_K \rightarrow
\Ffu_0^{\psi[j,\nu]}\,.\ee
Since $c(h,p,r,0)=1$ for $(j,0)$ corresponding to generic
parametrization, this operator is well defined for $\nu=0$. In
particular, the value of $c(h,p,q,0)$ equals~$1.$ Hence\ir{ldphg}{\ldph
h p r q(\nu)}
\be \label{ldphg} \nu \mapsto \frac1{2\nu}\Bigl( 1-c(h,p,r,\nu) \Bigr)
\kph h{p}{r}{p}(\nu) \;=:\; \ldph h p q r(\nu) \ee
extends holomorphically to $\nu=0$ with a term with $t^2 \log t$ in its
components at $\nu=0.$ In this way we obtain an injective intertwining
operator $H^{\xi,\nu} \rightarrow \Ffu^{\psi[j,\nu]}_0$ for $\nu $ in a
neighborhood of~$0$ in~$\CC$. We call the
image\il{Lfu0}{$\Lfu_0^{\xi,\nu}$}$\Lfu_0^{\xi,\nu}\subset \Ffu_0^{\psi[j,\nu]}.$
\begin{prop}\label{prop-iLog}Let $j\in \{0\} \cup\bigl(1+2\ZZ\bigr).$
\begin{enumerate}
\item[i)] $\Lfu_0^{j,\nu}\cong H^{\xi,\nu}_K$ for $\nu\neq0$ in a
neighborhood of $0$.
\item[ii)] $\Ffu_0^{\ps[j,0]} = H^{\xi,0}_K \oplus \Lfu_0^{\xi,0}.$
\end{enumerate}
\end{prop}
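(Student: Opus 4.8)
The plan is to handle $\nu\neq0$ and $\nu=0$ uniformly by exploiting the irreducibility of $H^{\xi,\nu}_K$. For $j\in\{0\}\cup(1+2\ZZ)$ the character $\psi[j,0]$ has generic parametrization, and so does $\psi[j,\nu]$ for every $\nu$ in a sufficiently small neighbourhood of $0$ in $\CC$, the points of multiple and of integral parametrization being bounded away from $\nu=0$. Hence $H^{\xi,\nu}_K$ is an irreducible special module throughout such a neighbourhood, by Proposition~\ref{prop-psscm}. Write $L(\nu)\isdd\frac1{2\nu}\bigl(1-\ii_0(j,\nu)\bigr)$; by the discussion preceding the statement this is an intertwining operator $H^{\xi,\nu}_K\to\Ffu_0^{\psi[j,\nu]}$ depending holomorphically on $\nu$ near $0$, and $\Lfu_0^{\xi,\nu}$ is by definition its image.

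First I would prove that $L(\nu)$ is injective for every $\nu$ in this neighbourhood. Since $H^{\xi,\nu}_K$ is irreducible, $\ker L(\nu)$ is either $0$ or all of $H^{\xi,\nu}_K$, so it suffices to verify $L(\nu)\neq0$. For $\nu\neq0$ this is clear, because $\ii_0(j,\nu)$ carries $H^{\xi,\nu}_K$ isomorphically onto the distinct submodule $H^{\xi,-\nu}_K$, so $1-\ii_0(j,\nu)\neq0$; this already gives $\Lfu_0^{\xi,\nu}\cong H^{\xi,\nu}_K$, which is~i). For $\nu=0$ I would read off the leading radial behaviour: applied to a highest weight vector $\kph hprp(\nu)$ with $h=2j+3r$, the operator yields the component
\[
\frac1{2\nu}\bigl(t^{2+\nu}-c(h,p,r,\nu)\,t^{2-\nu}\bigr)\,\Kph hprp(k),
\]
and inserting $t^{2\pm\nu}=t^2\bigl(1\pm\nu\log t+O(\nu^2)\bigr)$ together with $c(h,p,r,\nu)=1+c'\nu+O(\nu^2)$ shows the limit as $\nu\to0$ equals $\bigl(t^2\log t-\tfrac{c'}2\,t^2\bigr)\Kph hprp(k)$. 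Thus $L(0)\neq0$ and its image carries a genuine $t^2\log t$ term, whence $L(0)$ is injective as well.

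For ii) I would show that $H^{\xi,0}_K$ and $\Lfu_0^{\xi,0}$ are complementary submodules of $\Ffu_0^{\psi[j,0]}$. Both are submodules: the first is a principal series module, the second the image of the intertwining operator $L(0)$. On each highest weight space $\Ffu_{0;h,p,p}^{\psi[j,0]}$ the vectors of $H^{\xi,0}_K$ fill the line spanned by $t^2\,\Kph hprp$, while those of $\Lfu_0^{\xi,0}$ fill the distinct line spanned by $\bigl(t^2\log t-\tfrac{c'}2\,t^2\bigr)\Kph hprp$; hence the submodule $H^{\xi,0}_K\cap\Lfu_0^{\xi,0}$ meets every highest weight space in $0$, and since a nonzero $(\glie,K)$-module has a nonzero highest weight space, the intersection is $0$. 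For surjectivity I would invoke Proposition~\ref{prop-dim}: as $\psi[j,0]\in\WOG$, the space $\Ffu_{0;h,p,p}^{\psi[j,0]}$ has dimension $2$ exactly when $\tau^h_p\in\sect(j)$ and $0$ otherwise. On such a $2$-dimensional space $H^{\xi,0}_K$ supplies the $t^2$-solution of the uncoupled equation of Lemma~\ref{lem-efeq-t} and $\Lfu_0^{\xi,0}$ the solution involving $t^2\log t$; being independent they span it. So $H^{\xi,0}_K+\Lfu_0^{\xi,0}$ meets every highest weight space in full dimension and therefore exhausts $\Ffu_0^{\psi[j,0]}$, giving the direct sum.

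The Taylor expansion at $\nu=0$ and the bookkeeping over $K$-types are routine, the latter eased by the observation that on $\sect(j)$, where $h-3r=2j$, the equation of Lemma~\ref{lem-efeq-t} collapses to $t^2f''-3tf'+(4-\nu^2)f=0$ regardless of the $K$-type. The one point that genuinely deserves care is the holomorphy of $L(\nu)$ at $\nu=0$, that is, that the pole of $\tfrac1{2\nu}$ is cancelled by the vanishing of $1-\ii_0(j,\nu)$ there; this rests on $c(h,p,r,0)=1$, which holds precisely because $\psi[j,0]$ is generic and is exactly why the hypothesis restricts $j$ to $\{0\}\cup(1+2\ZZ)$. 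Once that is granted, the irreducibility of $H^{\xi,\nu}_K$ does the remaining work, so I do not anticipate a serious obstacle.
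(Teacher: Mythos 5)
Your proposal is correct and follows essentially the route the paper itself takes: the paper leaves Proposition~\ref{prop-iLog} to the discussion immediately preceding it, namely that $c(h,p,r,0)=1$ under generic parametrization makes $\frac1{2\nu}\bigl(1-\ii_0(j,\nu)\bigr)$ holomorphic and injective near $\nu=0$ (injectivity coming from the irreducibility of $H^{\xi,\nu}_K$ in Proposition~\ref{prop-psscm}), with the $t^2\log t$ term surviving at $\nu=0$, and part~ii) then follows from the two-dimensionality of $\Ffu^{\ps[j,0]}_{0;h,p,p}$ (Proposition~\ref{prop-dim}) together with the explicit radial solutions $t^2$ and $t^2\log t$ of the uncoupled equation in Lemma~\ref{lem-efeq-t}. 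Your write-up merely makes these steps explicit, so there is nothing to correct.
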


\subsection{Submodules characterized by boundary behavior}\label{sect-smbh}
The Fourier term modules $\Ffu_\Nfu^{\ps}$ are generated by highest
weight functions in the $K$-types of the form given in \eqref{Fcomp}:
\[ F\bigl( n \am(t) k \bigr) = \sum_{r:(-p,p)} u_r(n)\, f_r(t) \, \Kph
h{p}{r}{p}(k)\,. \]
Since $\Ffu_\Nfu^\ps$ consists of $K$-finite vectors, each of its
elements is determined by finitely many component functions
$f _r = f_r(h,p).$
\begin{defn}\label{def-grbb}(Boundary behavior)
\begin{enumerate}
\item[i)] A function $f$ on $(0,\infty)$ has \il{regbeh}{$\nu$-regular
behavior at $0$}\emph{$\nu$-regular behavior at $0$} if
\be f(t) = t^{2+\nu} h(t)\,,\ee
where $h$ is the restriction to $(0,\infty)$ of an entire function.
\item[ii)] A function $f$ on $(0,\infty)$ has \il{ed}{exponential decay
at $\infty$}\emph{exponential decay at $\infty$} if there exists $a>0$
such that
\be f(t) \;\ll\; e^{-at}\quad\text{ as }t\rightarrow\infty\,.\ee
\item[iii)] An element $F\in \Ffu_{\Nfu;h,p,p}$, has the property in i),
respectively ii), if all its component functions have this property.
\end{enumerate}
\end{defn}

\rmrk{Examples}\begin{enumerate}
\item[i)] All $\kph h{p}{r}{q}(\nu)$ in the principal series module
$H^{j,\nu}_K$ have $\nu$-regular behavior at~$0.$

\item[ii)] The function
$\mu^{0,0}_\bt(j,\nu) \in \Ffu^{\psi[j,\nu]}_\bt$ with $\bt\neq0$
\ir{mu00a}{\mu^{a,b}_\bt}
\be\label{mu00a}\mu^{0,0}_\bt \bigl(j,\nu;n\am(t)k\bigr)
= \chi_\bt(n)
\, t^2\, I_\nu(2\pi|\bt|t)\, \Kph{2j}000(k)\ee
has $\nu$-regular behavior at $0.$ This follows from the
expansion~\eqref{Inu}. This function is an element of
$\Ffu_{\bt;2j,0,0}^{\psi[j,\nu]};$ see \S\ref{sect-Nab1d}.

\item[iii)] \label{itmu00}Similarly we find with $m_0=\frac{\e}6
(d-2j)-\frac12\in \ZZ_{\geq 0}$ and $\nu\not\in \ZZ_{\leq -1}$ the
function
\be\label{mu00nab}
\mu^{0,0}_{\ell,c,d} \bigl(j,\nu;n\am(t)k\bigr) = \Th_{\ell,c}\bigl(
h_{\ell,m_0};n\bigr)\, t \, M_{\k,\nu/2}(2\pi|\ell|t^2)\,
\Kph{2j}000(k)\,, \ee
where $\k= -m_0
-\frac12 \bigl( \e j+1\bigr).$ See the expansion~\eqref{Mkps} for the
$\nu$-regular behavior at~$0.$

\item[iv)] The function $\om^{0,0}_\bt(j,\nu)$ in
$\Ffu^{\psi[j,\nu]}_\bt$ with $\bt\neq 0,$ given
by\ir{om00ab}{\om^{a,b}_\bt}
\be\label{om00ab}
\om^{0,0}_\bt\bigl(j,\nu;n\am(t)k\bigr)
= \chi_\bt(n) \, t^2\, K_\nu(2\pi|\bt|t)\, \Kph{2j }000(k)\ee
has exponential decay at~$\infty.$ See~\eqref{Kae}.
\item[v)] The function $\om^{0,0}_\n(j,\nu)$ in
$\Ffu_{\ell,c,d}^{\psi[j,\nu]},$ with $m_0$ and $\k$ like in~iii),
given by
\be\label{om00nab}
\om^{0,0}_{\ell,c,d} \bigl(j,\nu;n\am(t)k\bigr)
= \Th_{\ell,c}\bigl( h_{\ell,m_0};n\bigr)\, t \,
W_{\k,\nu/2}(2\pi|\ell|t^2)\, \Kph{2j}000(k)
\ee
has exponential decay at~$\infty.$ See~\eqref{Wae}.
\end{enumerate}

\begin{prop}\label{prop-inv}The properties of $\nu$-regular behavior at
$0$ and exponential decay at $\infty$ are preserved under the action of
$\glie$ and $K.$
\end{prop}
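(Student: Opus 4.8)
The plan is to reduce the whole statement to the effect of the Lie algebra action on the single radial variable $t$, and then to check that the few elementary radial operations that can occur preserve each of the two boundary properties. First I would dispose of the compact directions. The group $K$ acts by right translation and $\klie_c$ by right differentiation; by Table~\ref{tab-RLdK} these operations only shift the index $q$ in the factors $\Kph hprq$ and recombine them with constant coefficients, leaving the $N$-factors $u_r$ and the radial component functions $f_r$ in \eqref{Fcomp} completely untouched. Hence $\nu$-regular behavior at $0$ and exponential decay at $\infty$ are trivially preserved under $K$ and $\klie$.

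For the remaining directions I would invoke the explicit differentiation scheme of \S\ref{sect-exdf}. By the Iwasawa splitting \eqref{IdL}, each of $\Z_{13},\Z_{31},\Z_{23},\Z_{32}$ is a sum of an element of $\nlie_c$, the element $\HH_r\in\alie$, and an element of $\klie_c$; the $\klie_c$-part is harmless by the previous paragraph. By \eqref{XNA} the $\nlie$- and $\alie$-parts act on a product $u(n)f(t)$ only through the three radial operations
\[ f\mapsto tf', \qquad f\mapsto tf, \qquad f\mapsto t^2 f, \]
arising from $\HH_r$, from $\XX_1,\XX_2$, and from $\XX_0$ respectively (the action on the $N$-factors merely producing further admissible functions $u$). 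Concretely, the formula \eqref{E31} and Table~\ref{tab-sh}, together with their specializations in Tables~\ref{tab-shab} and~\ref{tab-shnab}, exhibit every component function of $\XX F$ as a finite linear combination, with coefficients that are constants or powers of $t$, of $tf_r'$, $tf_r$, $t^2f_r$ and $f_r$. Thus it suffices to show that $tf'$, $tf$ and $t^2f$ inherit each property from $f$.

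For $\nu$-regular behavior this is immediate and requires no use of the eigenfunction equations: if $f(t)=t^{2+\nu}h(t)$ with $h$ entire, then $tf=t^{2+\nu}(th)$, $t^2f=t^{2+\nu}(t^2h)$ and $tf'=t^{2+\nu}\bigl((2+\nu)h+th'\bigr)$, and in each case the factor multiplying $t^{2+\nu}$ is again entire. For exponential decay the multiplications $f\mapsto tf$ and $f\mapsto t^2f$ clearly preserve the estimate, at the cost of an arbitrarily small loss in the decay rate, since $t^k e^{-at}\ll e^{-(a-\e)t}$.

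The one genuinely non-trivial point, which I expect to be the main obstacle, is the operation $f\mapsto tf'$ for exponential decay: decay of $f$ does not by itself force decay of $f'$ for an arbitrary function. Here I would use that the components $f_r$ of an element of $\Ffu_\Nfu^\ps$ are not arbitrary, but solve the eigenfunction equations of Lemma~\ref{lem-efeq-t}, Lemma~\ref{lem-efrab} and their non-abelian analogues. These are second-order linear ODEs whose coefficients are polynomial in $t$, with a singularity only at $t=0$, so near $t=\infty$ the first-order system governing $(f_r,f_r')_r$ is a perturbation of one whose recessive solution decays together with its derivative. Since generic parametrization is in force throughout \S\ref{sect-spFtm}, Proposition~\ref{prop-dim} tells us each highest weight space is two-dimensional, and the cleanest way to conclude is through the one-dimensional $K$-type analysis of \S\ref{sect-Nab1d} and \S\ref{sect-Nnab1d}: the relevant components are built from the exponentially decaying solutions $K_\nu$ of \eqref{mBd0} and $W_{\k,\nu/2}$ of \eqref{Whd0}, whose derivatives are again exponentially decaying combinations of functions of the same type. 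As the shift operators propagate these through all $K$-types via the operations $tf',tf,t^2f$, it follows that $f_r'$ decays whenever $f_r$ does, so $tf_r'$ and hence $\XX F$ has exponential decay, completing the proof.
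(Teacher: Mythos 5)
Your treatment of $\nu$-regular behavior at $0$ coincides with the paper's: both reduce the action of $\Z_{31},\Z_{13},\Z_{32},\Z_{23}$ on components to the operations $b\mapsto b$, $b\mapsto tb$, $b\mapsto t^2b$, $b\mapsto tb'$, and observe that each sends $t^{2+\nu}h(t)$ with $h$ entire to $t^{2+\nu}\tilde h(t)$ with $\tilde h$ entire. That half is correct and is essentially the paper's argument.

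For exponential decay you take a genuinely different route, and it is exactly at the point you flag as "the main obstacle" that the paper sidesteps the difficulty. The paper never differentiates the components at all: it invokes Harish-Chandra's convolution representation theorem to write $F(nak)=\int_G F(nakg^{-1})\al(g)\,dg$ with $\al\in C_c^\infty(G)$, so that $\XX F(n\am(t)k)=\int_G F(n\am(t)kg_1^{-1})\,\XX\al(g_1)\,dg_1$; since $g_1$ ranges over a compact set, the Iwasawa $A$-coordinate of $n\am(t)kg_1^{-1}$ stays in $[t/b,tb]$ for a fixed $b>1$, and the decay estimate survives. This works uniformly for every character $\ps$ and needs no information about the eigenfunction equations. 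Your alternative — controlling $tf'$ via the ODE structure — is workable in principle but, as sketched, has two gaps. First, to say that the decaying components "are built from $K_\nu$ and $W_{\k,\nu/2}$" you need to identify the exponentially decaying elements of $\Ffu^\ps_{\Nfu;h,p,p}$ with the span of the $\om^{a,b}_\Nfu$-families; that identification is established only later (Lemma~\ref{lem-bfs}, via the contiguous relations, and Lemma~\ref{lem-WNdisj}), it rests on the dimension count of Proposition~\ref{prop-dim} which is proved only under generic parametrization, whereas Proposition~\ref{prop-inv} is needed (e.g.\ for Definition~\ref{defW[]} and throughout Chapter III) under integral parametrization as well. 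Second, the assertion that for the coupled second-order system "the recessive solution decays together with its derivative" is not a proof; for a system with an irregular singular point at $\infty$ one must actually carry out the asymptotic analysis, or else fall back on the explicit special-function description — which returns you to the first gap. If you want to keep your route, the clean repair is to prove decay of $\om^{a,b}_\Nfu$ directly from the contiguous relations \eqref{cr1} and \eqref{crWd}--\eqref{crW1} and then handle the integral case separately; but the convolution argument makes all of this unnecessary.
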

\begin{proof}Since the actions of $K$ and of $\klie$ do not change the
component functions, it suffices to show that the actions of
$\Z_{31}, \Z_{23}, \Z_{32}, \Z_{13} \in \glie_c$ preserve these
properties. On each $K$-type these elements can be related to shift
operators. Tables \ref{tab-shab} and \ref{tab-krab}
pp~\pageref{tab-shab}, \pageref{tab-krab} describe the action of the
shift operators in the module $\Ffu_\Nfu.$ Inspection shows that the
operations on the components are linear combinations of $b\mapsto b,$
$b\mapsto t\, b,$ $b\mapsto t^2\, b,$ and $b\mapsto t\, b'.$ If
$b(t) = t^{2+\nu} h(t)$ with $h$ extending as a holomorphic function on
$\CC,$ these operations change $h$ by $h\mapsto t^c h,$ $c=1,2,3,$ or
by $h\mapsto t\, h'+(2+\nu) h.$ So the shift operators preserve the
property of $\nu$-regular behavior at~$0.$

For the property of exponential decay at $\infty$ we use the convolution
representation theorem of Harish Chandra; Theorem 1 on p.~18 of
\cite{HCh66}. One writes $F\in \Ffu_\Nfu^{\ps}$ in the form
\be F\bigl(nak\bigr) = \int_{ G} F(nak g^{-1})
\al(g)\, dg\ee
with $\al \in C_c^\infty(G).$ So $kg^{-1}$ in the integral runs over a
compact set, and we can write $kg^{-1}=n_1a_1k_1$ where $n_1,$ $a_1,$
and $k_1$ run over compact sets in $N,$ $A,$ and $K,$ respectively.
Then $nakg^{-1} =  (n\, an_1a^{-1}) \, aa_1 \, k_1,$ with
$\am(t) a_1= \am(t_1)$ where $t/b \leq t_1 \leq tb$ for some $b>1$
depending on $\al.$ Right differentiation by an element of $\glie$ can
be carried out on
\be \XX F\bigl( n \am(t) k \bigr ) =\int_G F\bigl(
n\am(t)kg_1^{-1}\bigr) \, \XX\al(g_1)\, dg_1\,.\ee
If $g_1$ varies through a compact set then the Iwasawa components $n_1$,
$a(t_1)$ and $k_1$ in $n\am(t) kg_1 = \allowbreak n_1\am(t_1) k_1$ vary
through compact sets. In particular there is $b>1$ such that
$t/b \leq t_1 \leq tb$. This preserves the estimate of exponential
decay.
\end{proof}
\rmrk{Polynomial growth}The proof shows that differentiation also
preserves the property of polynomial growth:\il{polgr0}{polynomial
growth} $f\bigl( n \am(t) k\bigr) = \oh(t^a)$ as $t\uparrow\infty$ for
some $a>0$.
\medskip

Proposition~\ref{prop-inv} suggests the following definitions:
\begin{defn}\label{defW[]}We put\ir{Wfudef}{\Wfu_\Nfu^{\ps}}
\be\label{Wfudef} \Wfu_\Nfu^{\ps} \= \Bigl\{ F \in \Ffu_\Nfu^{\ps}\;:\;
\text{ $F$ has exponential decay at $\infty$}\Bigr\}\,.\ee
\end{defn}

\rmrk{Notation} It is convenient to use the following subsets of Weyl
group orbits. For $\ps\in \WO$ we put: \ir{WO+}{\wo(\ps)^+\,, \;
\wo(\ps)_\n^+}
\badl{WO+} \wo(\ps)^+ &\= \bigl\{ (j,\nu)\in \wo(\ps) \;:\; \re\nu\geq
0\bigr\}\,,\\
\wo(\ps)_\n^+ &\= \wo(\ps)_\n \cap \wo(\ps)^+\,. \eadl

The restriction of the projection map $\wo(\ps)\rightarrow \wo^1(\ps)$
to $\wo(\ps)^+ \rightarrow  \wo^1(\ps)$ is a bijection.

\begin{defn}\label{defM[]}We define
\il{Mfudef}{$\Mfu_\Nfu^{\ps}$}$\Mfu_\Nfu^{\ps}$ as the
$(\glie,K)$-submodule of linear combinations of functions
$F \in \Ffu_\Nfu^{\ps}$ that have $\nu$-regular behavior at $0$ for
some $(j,\nu) \in \wo(\ps)^+$.
\end{defn}

\rmrk{Remark}Let $\ps\in \WO$. The principal series modules
$H^{\xi,\nu}_K$ with $(j_\xi,\nu) \in \wo(\ps)^+$ are submodules of
$\Mfu^\ps_0$. For other $\Nfu$, we will define, in \eqref{WMwnudef},
$\Mfu_\Nfu^{\xi,\nu}$ and $\Wfu_\Nfu^{\xi,\nu}$ as submodules of
$\Mfu_\Nfu^{\psi[j_\xi,\nu]}$, respectively
$\Wfu_\Nfu^{\psi[j_\xi,\nu]}$, with similar properties.

\begin{lem}\label{lem-WNdisj}Let $\Nfu$ be $\Nfu_\bt$ with $\bt\neq 0,$
or $\Nfu_\n$. If $\ps\in \WOG$ then
$\Mfu_\Nfu^\ps \cap \Wfu_\Nfu^\psi=\{0\}.$
\end{lem}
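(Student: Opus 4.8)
The plan is to show that the intersection $I \isdd \Mfu_\Nfu^\ps \cap \Wfu_\Nfu^\ps$, which is a $(\glie,K)$-submodule of $\Ffu_\Nfu^\ps$, must be the zero module, by descending to a one-dimensional $K$-type where the two boundary conditions become manifestly incompatible. First I would observe that $I$ is indeed a submodule: both $\nu$-regular behaviour at $0$ and exponential decay at $\infty$ are preserved under the actions of $\glie$ and $K$ by Proposition~\ref{prop-inv}, hence so is the conjunction of the two. Thus it suffices to assume $I\neq\{0\}$, produce a nonzero minimal vector of $I$, and pin down its $K$-type.

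Next I would argue that under generic parametrization any nonzero $(\glie,K)$-submodule $W\subset\Ffu_\Nfu^\ps$ contains a minimal vector lying in a one-dimensional $K$-type. Choosing a $K$-type $\tau^h_p$ occurring in $W$ and a nonzero highest weight vector $F\in W\cap\Ffu^\ps_{\Nfu;h,p,p}$ with $p\geq1$, this $F$ cannot be minimal: by Proposition~\ref{prop-kdso} the simultaneous vanishing of $\sh3{-1}F$ and $\sh{-3}{-1}F$ would force $h-3p=2j_1$ and $h+3p=2j_2$ for two distinct $j_1,j_2\in\wo^1(\ps)$, placing the lattice point $(h/3,p)$ in two of the sectors $\sect(j)$; but under generic parametrization these sectors are pairwise disjoint (a single sector if $\ps\in\WOS$, three disjoint sectors if $\ps\in\WOGM$). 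Hence some downward shift operator sends $F$ to a nonzero vector one step lower, still inside $W$ since the shift operators lie in $U(\glie)$. Iterating strictly decreases $p$, so the descent terminates at $p=0$, where the downward shift operators vanish automatically; this produces a minimal vector of $W$ in a one-dimensional $K$-type $\tau^{2j}_0$ with $j\in\wo^1(\ps)$ (respectively $j\in\wo^1(\ps)_\n$ in the non-abelian case).

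Applying this with $W=I$ yields a nonzero $v\in I$ inside $\Ffu^\ps_{\Nfu;2j,0}$, a space which is two-dimensional by \S\ref{sect-Nab1d} and \S\ref{sect-Nnab1d} (see also Proposition~\ref{prop-dim}). Its elements are $u(n)\,f(t)\,\Kph{2j}000(k)$ with $f$ running over a two-dimensional solution space of the relevant second order equation, whose indicial exponents at $0$ are $2\pm\nu$. Here I would use that $\wo(\ps)^+\to\wo^1(\ps)$ is a bijection, so that a single $(j,\nu)\in\wo(\ps)^+$ sits above $j$: membership of $v$ in $\Mfu^\ps_\Nfu$ then forces $f$ to be $\nu$-regular for this $\nu$, i.e. a scalar multiple of the component of $\mu^{0,0}_\bt(j,\nu)$ in \eqref{mu00a} (built from $I_\nu$) in the abelian case, or of $\mu^{0,0}_\n(j,\nu)$ in \eqref{mu00nab} (built from $M_{\k,\nu/2}$) in the non-abelian case. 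Membership of $v$ in $\Wfu^\ps_\Nfu$ forces $f$ to decay exponentially, i.e. to be a scalar multiple of the component of $\om^{0,0}_\bt(j,\nu)$ in \eqref{om00ab} (built from $K_\nu$), respectively of $\om^{0,0}_\n(j,\nu)$ in \eqref{om00nab} (built from $W_{\k,\nu/2}$).

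The crux, and the only genuine obstacle, is then to see that these two distinguished solutions are linearly independent for every admissible $(j,\nu)\in\wo(\ps)^+$, so that no nonzero $f$ can satisfy both constraints. This is exactly the classical independence of $I_\nu$ and $K_\nu$ as a basis for the modified Bessel equation \eqref{mBd0}, and of $M_{\k,\nu/2}$ and $W_{\k,\nu/2}$ for the Whittaker equation \eqref{Whd0}: the regular solution $I_\nu$ (resp. $M_{\k,\nu/2}$) grows at infinity rather than decaying, while the decaying solution $K_\nu$ (resp. $W_{\k,\nu/2}$) is not $\nu$-regular at $0$, its expansion carrying a nonzero $t^{2-\nu}$ (or, when $\nu=0$, logarithmic) term. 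I would draw the precise behaviour at $0$ and at $\infty$ from the asymptotics collected in \S\ref{sect-mBf} and \S\ref{sect-Whit}. Consequently $v=0$, contradicting $v\neq0$, and therefore $I=\{0\}$, as claimed.
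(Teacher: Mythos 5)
Your proposal is correct and follows essentially the same route as the paper's proof: note that the intersection is a $(\glie,K)$-submodule, use Proposition~\ref{prop-kdso} together with the disjointness of the sectors $\sect(j)$ under generic parametrization to see that a nonzero element can be pushed down by an injective downward shift operator until one reaches the one-dimensional $K$-type $\tau^{2j}_0$, and then invoke the explicit two-dimensional basis $\mu^{0,0}_\Nfu(j,\nu)$, $\om^{0,0}_\Nfu(j,\nu)$ (linear independence of $I_\nu$ versus $K_\nu$, resp.\ $M_{\k,\nu/2}$ versus $W_{\k,\nu/2}$) to conclude that no nonzero vector has both $\nu$-regular behavior at $0$ and exponential decay at $\infty$. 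The only cosmetic difference is that you justify the existence of a suitable injective downward shift directly from the sector geometry, where the paper additionally cites Proposition~\ref{prop-dim}; the substance is identical.
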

\begin{proof}The intersection $\Mfu_\Nfu^\ps \cap \Wfu_\Nfu^\ps$ is a
$(\glie,K)$-submodule of $\Ffu_\Nfu^\ps$. Consider a non-zero element
of $\Mfu_{\Nfu;h,p,p}^\ps \cap \Wfu_{\Nfu;h,p,p}^\ps$. Then
$\tau^h_p\in \sect(j)$ for some $j\in \wo^1(\ps)$, by
Proposition~\ref{prop-dim}. Proposition~\ref{prop-kdso} implies that if
$p\geq 1$ at least one of the downward shift operators is injective.
Thus, we get a non-zero element in the intersection of $K$-type
$\tau^{h\pm 3}_{p-1}$. Proceeding in this way we arrive at a non-zero
element in $\Mfu_{\Nfu;2j,0,0}^\ps \cap \Wfu_{\Nfu;2j,0,0}^\ps$. We
know an explicit basis $\om^{0,0}_\Nfu(j,\nu)$, $\mu^{0,0}_\Nfu(j,\nu)$
of the space $\Ffu^\ps_{\Nfu;2j,0,0}$; see
\eqref{mu00a}--\eqref{om00nab}. Of these, only $\om^{0,0}_\Nfu(j,\nu)$
has exponential decay at $\infty$, and it has no $\nu$-regular behavior
at $0$. A non-zero element with both properties does not
exist.\end{proof}

\rmrk{Basis families} Let $\Nfu=\Nfu_\bt$ with $\bt\neq0,$ or
$\Nfu=\Nfu_\n.$ We put for $a,b\in \ZZ_{\geq 0},$
$(a,b)\neq(0,0),$\il{muab}{$\mu^{a,b}_\Nfu$}
\il{omab}{$\om^{a,b}_\Nfu$}
\badl{xab} \mu^{a,b}_\Nfu(j,\nu) &= \bigl(\sh 31\bigr)^a
\bigl(\sh{-3}1\bigr)^b \mu ^{0,0}_\Nfu(j,\nu)\;\in\;
\Mfu_{\Nfu;2j+3(a-b),a+b,a+b}^{\psi[j,\nu]} \,,\\
\om^{a,b}_\Nfu(j,\nu)
&= \bigl(\sh 31\bigr)^a \bigl(\sh{-3}1\bigr)^b \om ^{0,0}_\Nfu(j,\nu)
\;\in\; \Wfu_{\Nfu;2j+3(a-b),a+b,a+b}^{\psi[j,\nu]}\,.
\eadl

\begin{lem}\label{lem-bfs}Let $\Nfu=\Nfu_\bt$, $\bt\neq 0$, or
$\Nfu=\Nfu_\n$. Take $(j,\nu)\in \ZZ\times\CC$, put $\ps=\ps[j,\nu]$,
and assume that $\frac\e6 d - \frac12 - \frac{\e}3j \in \ZZ_{\geq 0}$
if $\Nfu=\Nfu_\n$.
\begin{enumerate}
\item[i)]The functions in \eqref{xab} form meromorphic families
in~$\nu$. The families $\nu\mapsto \om^{a,b}_\Nfu(j,\nu)$ are even and
holomorphic in~$\CC$.

In the abelian case, the families $\nu \mapsto \mu^{a,b}_\bt(j,\nu)$ are
holomorphic on $\CC$ and satisfy
$\mu^{a,b}_\bt(j,-n)=\mu^{a,b}_\bt(j,n)$ for $n\in \ZZ$. In the
non-abelian case, the family $\nu \mapsto \mu^{a,b}_\n(j,\nu)$ may have
singularities at points of $\ZZ_{\leq -1}$. If a singularity occurs at
$-\nu_0\in \ZZ_{\leq -1}$, then it has first order, with a multiple of
$\mu^{a,b}_\n(j,\nu_0)$ as its residue.

\item[ii)] The components of the functions in~\eqref{xab} are linear
combinations of special functions of the following type
\begin{align*} \text{for }\om^{a,b}_\bt(j,\nu): \quad& t\mapsto t^c\,
K_{\nu+k}(2\pi|\bt|t)
&&c\in \ZZ_{\geq 2}\,,\; k\in \ZZ_{\geq 0}\,,
\displaybreak[0]\\
\text{for }\mu^{a,b}_\bt(j,\nu):\quad& t\mapsto t^c\,
I_{\nu+k}(2\pi|\bt|t)
&&c\in \ZZ_{\geq 2}\,,\; k\in \ZZ_{\geq 0}\,,
\displaybreak[0]\\
\text{for }\om^{a,b}_\n(j,\nu):\quad& t\mapsto t^c\,
W_{\k+k,\nu/2}(2\pi|\ell|t^2)&& c\in \ZZ_{\geq1},\; k\in \ZZ_{\geq
0}\,,
\displaybreak[0]\\
\text{for }\mu^{a,b}_\n(j,\nu):\quad& t\mapsto t^c\,
M_{\k+k,\nu/2}(2\pi|\ell|t^2)&& c\in \ZZ_{\geq1},\; k\in \ZZ_{\geq
0}\,,
\end{align*}
with $\k= -m_0(j)
-\frac12 \bigl( \e j+1\bigr)$.

\item[iii)] Let $a,b \in \ZZ_{\geq 0}$. Then
$\om^{a,b}_\Nfu(j,\nu)\in \Wfu^{\ps}_\Nfu$. If
$(j,\nu) \in \wo(\ps)^+$, then
$\mu^{a,b}_\Nfu(j,\nu) \in \Mfu^{\ps}_\Nfu$.

\item[iv)] If $\ps \in \WOG$, then the
spaces\il{WFUximudef}{$\Wfu_\Nfu^{\xi,\nu}$}
\il{MFUximudef}{$\Mfu_\Nfu^{\xi,\nu}$}
\badl{WMwnudef} \Wfu_\Nfu^{\xi,\nu} & \;:=\; \bigoplus_{a,b\geq 0} U
(\klie) \; \om^{a,b}_\Nfu(j,\nu)\\
\Mfu_\Nfu^{\xi,\nu} & \;:=\; \bigoplus_{a,b\geq 0} U(\klie) \;
\mu^{a,b}_\Nfu(j,\nu)&\quad&\text{ provided }\nu\not\in \ZZ_{\leq
-1}\,.
\eadl
are $(\glie,K)$-submodules of $\Ffu^\ps_\Nfu$. In particular,
$\Wfu_\Nfu^{\xi,\nu}\subset \Wfu^\ps_\Nfu$, and if
$(j_\xi,\nu)\in \wo(\ps)^+$, then
$\Mfu^{\xi,\nu}_\Nfu \subset \Mfu^{\psi}_\Nfu$.
\end{enumerate}
\end{lem}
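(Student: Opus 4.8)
The plan is to prove the four parts of Lemma~\ref{lem-bfs} in order, leaning heavily on the explicit descriptions of the shift operators already established, on the classification of characters via $\wo(\ps)$, and on the special-function facts about $I_\nu, K_\nu, M_{\k,\nu/2}, W_{\k,\nu/2}$ collected in the later sections on Bessel and Whittaker functions (referenced as \S\ref{sect-mBf} and \S\ref{sect-Whit}). The starting point is the fact, established at the end of \S\ref{sect-1dKt}, that the one-dimensional $K$-type space $\Ffu^\ps_{\Nfu;2j,0,0}$ is two-dimensional, spanned by the explicit functions $\mu^{0,0}_\Nfu(j,\nu)$ and $\om^{0,0}_\Nfu(j,\nu)$ given in~\eqref{mu00a}--\eqref{om00nab}, which involve exactly the special functions named in the statement. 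Everything in the lemma is built by applying the upward shift operators $(\sh31)^a(\sh{-3}1)^b$ to these two seed functions, so the whole proof is an induction on $a+b$ controlled by the action formulas in Tables~\ref{tab-shab} and~\ref{tab-shnab}.

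First I would prove ii), the structural statement about which special functions appear, since it underlies everything else. Reading off Tables~\ref{tab-shab} and~\ref{tab-shnab}, each application of an upward shift operator acts on a component $b(t)$ by a linear combination of $b\mapsto t\,b'$, $b\mapsto t\,b$, and (in the non-abelian case) $b\mapsto t^2\,b$ or multiplication by $h,p,r,\ell$-constants. Using the standard contiguity and differentiation relations for modified Bessel functions (which convert $t\,b'$ and multiplication by $t$ applied to $t^c K_{\nu+k}(2\pi|\bt|t)$ back into functions of the same form, shifting $c$ by an integer $\geq 0$ and the Bessel order $\nu+k$ by $\pm1$) and the analogous Whittaker relations that shift $\k$ by integers and raise the power of $t$, an induction on $a+b$ shows the components stay inside the stated families. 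The base case is the seed functions themselves. This is the computational heart, but it is routine once the contiguity relations are in hand.

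Next, part i) on meromorphy and holomorphy follows by tracking $\nu$-dependence through the same induction. The key inputs are: $K_\nu$ and $M_{\k,\nu/2}$, $W_{\k,\nu/2}$ are entire in $\nu$ (the seed $\om$-functions are even and holomorphic by the evenness $K_{-\nu}=K_\nu$ and the $W$-function's analytic behavior, referenced in \eqref{Kae},\eqref{Wae}); the $I_\nu$-seed is holomorphic with the symmetry $I_{-n}=I_n$ at integers; and the Whittaker $M_{\k,\nu/2}$ has its only singularities in $\nu$ at the poles of $1/\Gamma(1+\nu)$, i.e.\ at $\nu\in\ZZ_{\leq-1}$, each simple, which forces the claimed first-order pole of $\mu^{a,b}_\n$ with residue proportional to $\mu^{a,b}_\n(j,\nu_0)$ via the standard reflection $M_{\k,-\nu/2}$-relation. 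Since the shift operators have coefficients that are polynomial (hence holomorphic) in $\nu$, they preserve holomorphy and can only propagate the existing poles, giving i). Part iii) is then immediate from Proposition~\ref{prop-inv}: exponential decay and $\nu$-regular behavior are preserved by $\glie$ and $K$, the seeds have the respective properties (the $\om$-seed decays by \eqref{Kae},\eqref{Wae} and the $\mu$-seed is $\nu$-regular by the expansions \eqref{Inu},\eqref{Mkps}), and the shift operators are built from $\glie$-action, so $\om^{a,b}_\Nfu\in\Wfu^\ps_\Nfu$ and, when $(j,\nu)\in\wo(\ps)^+$, $\mu^{a,b}_\Nfu\in\Mfu^\ps_\Nfu$ by Definitions~\ref{defW[]} and~\ref{defM[]}.

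Finally, for iv) I would verify that the spans in \eqref{WMwnudef} are genuine $(\glie,K)$-submodules. Invariance under $K$ and $\klie$ is built into the $U(\klie)$-factor; invariance under the remaining $\glie$-directions amounts to checking closure under the four shift operators. Closure under the \emph{upward} shift operators is automatic, since $\sh31\,\mu^{a,b}=\mu^{a+1,b}$ and $\sh{-3}1\,\mu^{a,b}=\mu^{a,b+1}$ up to nonzero scalars by the injectivity in Proposition~\ref{prop-kuso}~i) (valid precisely because $\ps\in\WOG$), and similarly for $\om$. The main obstacle is closure under the \emph{downward} shift operators: I must show $\sh{\pm3}{-1}\,\mu^{a,b}$ lands back in the span, i.e.\ is a scalar multiple of $\mu^{a\mp1,b}$ or $\mu^{a,b\mp1}$ rather than producing a new independent function. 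This is where generic parametrization is essential --- it guarantees (via Proposition~\ref{prop-dim}) that each $K$-type space $\Ffu^\ps_{\Nfu;h,p,p}$ in the relevant sector is exactly two-dimensional, spanned by the $\mu$- and $\om$-vectors, with the downward image forced into the same one-dimensional $\mu$-line by the $\nu$-regularity (part iii)) and Lemma~\ref{lem-WNdisj} keeping the $\mu$- and $\om$-families transverse. The containments $\Wfu^{\xi,\nu}_\Nfu\subset\Wfu^\ps_\Nfu$ and (for $(j_\xi,\nu)\in\wo(\ps)^+$) $\Mfu^{\xi,\nu}_\Nfu\subset\Mfu^\ps_\Nfu$ then follow directly from iii) and Definitions~\ref{defW[]},~\ref{defM[]}.
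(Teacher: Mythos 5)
Your proposal is correct and follows essentially the same route as the paper: establish ii) by induction on $a+b$ from the explicit seed functions using the shift-operator formulas and the contiguous relations, deduce i) and iii) by tracking $\nu$-dependence and invoking Proposition~\ref{prop-inv}, and obtain closure of iv) under the downward shift operators from the injectivity of the upward shifts together with the two-dimensionality in Proposition~\ref{prop-dim}. The only (harmless) deviation is that you force the downward-shift image into the correct one-dimensional line via $\nu$-regularity/exponential decay and Lemma~\ref{lem-WNdisj}, whereas the paper does this by observing that the downward shifts preserve the two separate families of component functions listed in ii); both separations are valid under generic parametrization.
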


\rmrk{Remark}We postpone the definition of the modules
$\Wfu^{\xi,\nu}_\Nfu$ and $\Mfu^{\xi,\nu}_\Nfu$ under integral
parametrization till Lemma~\ref{lem-strab} and
Definition~\ref{def-VWMa}.

\begin{proof} The statements in i) are valid for $a=b=0$, as can be
checked in Appendix~\ref{app-spf}; see in particular \eqref{Knu},
\eqref{Wkps}, \eqref{Ievint} and~\eqref{Mres}. The properties are
preserved under application of the shift operators. In the proof of
Proposition~\ref{prop-inv} we gave the action of the shift operators on
the component functions. We apply this repeatedly to the special
function in the cases when $(a,b)=(0,0).$ Then we apply the contiguous
relations in \eqref{cr1} and~\eqref{crW1} to see that we stay in the
linear space spanned by the functions indicated in~ii). This gives
also~iii).

Under generic parametrization the upward shift operators are injective.
So the elements $\mu^{a,b}_\Nfu(j,\nu)$ and $\om^{a,b}_\Nfu(j,\nu)$ are
non-zero and linearly independent. Proposition~\ref{prop-dim} (on the
dimensions)
implies that together they span $\Ffu_\Nfu^{\psi[j,\nu]}.$ So the
downward shift operators send both $\om ^{a,b}_\Nfu(j,\nu)$ and
$\mu^{a,b}_\Nfu(j,\nu)$ to a linear combination of
$\om^{a',b'}_\Nfu(j,\nu)$ and $\mu^{a',b'}_\Nfu(j,\nu)$. The downward
shift operators also preserve linear combinations as indicated in~ii).
So they preserve $\Wfu^{\xi,\nu}_\Nfu$ and $\Mfu^{\xi,\nu}_\Nfu$. In
the definition of $\Mfu^{\xi,\nu}_\Nfu$ we imposed the condition
$\nu\not\in \ZZ_{\leq -1}$, thus avoiding the complications that may be
caused by singularities.
\end{proof}

\begin{prop}\label{prop-dsok1}For all $p\in \ZZ_{\geq 0}$
\bad \sh3{-1} \om^{p,0}_\Nfu(j,\nu)&=0&\quad \sh{-3}{-1}
\om^{0,p}_\Nfu(j,\nu)&=0
\quad\text{ for $\nu \in \CC$}\,,\\
\sh3{-1} \mu^{p,0}_\Nfu(j,\nu)&=0&\quad \sh{-3}{-1}
\mu^{0,p}_\Nfu(j,\nu)&=0
\quad\text{ for $\nu \in \CC\setminus \ZZ_{\leq -1}$\,.}
\ead
\end{prop}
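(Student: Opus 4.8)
The plan is to reduce all four identities to a single structural fact — that $\om^{0,0}_\Nfu(j,\nu)$ and $\mu^{0,0}_\Nfu(j,\nu)$ are minimal vectors sitting in a one-dimensional $K$-type — together with the commutation of shift operators supplied by Proposition~\ref{prop-shdef}.

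First I would record that, by \eqref{mu00a}--\eqref{om00nab}, both $\om^{0,0}_\Nfu(j,\nu)$ and $\mu^{0,0}_\Nfu(j,\nu)$ lie in $\Ffu^{\psi[j,\nu]}_{\Nfu;2j,0,0}$, the highest weight space of the $K$-type $\tau^{2j}_0$ with $p=0$. On a one-dimensional $K$-type both downward shift operators vanish, since their prospective target $V_{h\pm 3,-1,-1}$ is not a $K$-type — equivalently, the expressions for $\sh{\pm3}{-1}$ in Table~\ref{tab-sh} carry an overall factor $p$. Hence $\sh3{-1}\om^{0,0}_\Nfu=\sh{-3}{-1}\om^{0,0}_\Nfu=0$, and the same holds for $\mu^{0,0}_\Nfu$ at every $\nu$ where it is defined (all of $\CC$ in the abelian case, and $\CC\setminus\ZZ_{\leq -1}$ in the non-abelian case, by Lemma~\ref{lem-bfs}). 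This vanishing is independent of $\nu$.

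Next I would use the definitions in \eqref{xab}, namely $\om^{p,0}_\Nfu=(\sh31)^p\om^{0,0}_\Nfu$ and $\om^{0,p}_\Nfu=(\sh{-3}1)^p\om^{0,0}_\Nfu$, and likewise for $\mu$. Proposition~\ref{prop-shdef} asserts that $\sh3{-1}$ commutes with $\sh31$ and that $\sh{-3}{-1}$ commutes with $\sh{-3}1$. Invoking this commutation on each highest weight space in turn, a short induction on $p$ lets me slide the downward operator to the right past all $p$ upward factors:
\[\sh3{-1}(\sh31)^p\om^{0,0}_\Nfu=(\sh31)^p\sh3{-1}\om^{0,0}_\Nfu=0,\qquad \sh{-3}{-1}(\sh{-3}1)^p\om^{0,0}_\Nfu=(\sh{-3}1)^p\sh{-3}{-1}\om^{0,0}_\Nfu=0.\]
The identical computation with $\mu^{0,0}_\Nfu$ in place of $\om^{0,0}_\Nfu$ yields the two $\mu$-statements for $\nu\in\CC\setminus\ZZ_{\leq -1}$, and since the families $\nu\mapsto\om^{a,b}_\Nfu(j,\nu)$ are holomorphic on $\CC$ (Lemma~\ref{lem-bfs}), the two $\om$-statements hold for all $\nu\in\CC$.

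I do not anticipate a genuine obstacle; the content is pure bookkeeping of which $K$-type each operator acts on. The only points demanding care are that the commutation of Proposition~\ref{prop-shdef} must be applied on the correct highest weight space at each inductive step — the intermediate vectors $(\sh31)^{k}\om^{0,0}_\Nfu$ belong to $V_{2j+3k,k,k}$, so the hypotheses of the commutation statement are met there — and that for the $\mu$-family one must stay away from the poles at $\ZZ_{\leq -1}$ flagged in Lemma~\ref{lem-bfs}, which is exactly the stated restriction on $\nu$.
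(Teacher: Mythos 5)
Your reduction to the one-dimensional $K$-type $\tau^{2j}_0$ is fine, and the vanishing of both downward shift operators on $\om^{0,0}_\Nfu(j,\nu)$ and $\mu^{0,0}_\Nfu(j,\nu)$ is correct. The gap is in the step where you slide $\sh3{-1}$ past $(\sh31)^p$. The sentence of Proposition~\ref{prop-shdef} you invoke is contradicted by that proposition's own proof, which establishes only that the two \emph{upward} operators $\sh31$ and $\sh{-3}1$ commute with each other and that the two \emph{downward} operators $\sh3{-1}$ and $\sh{-3}{-1}$ commute with each other (this is also how the commutation is used everywhere else in the paper, e.g.\ ``the upward shift operators commute''). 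Moreover the explicit formulas \eqref{shps} show directly that $\sh31$ and $\sh3{-1}$ do \emph{not} commute: on the principal series one computes
\[ \sh3{-1}\,\sh31\,\kph h{p}{r}{p}(\nu) = \frac{p+1}{p+2}\; \sh31\,\sh3{-1}\,\kph h{p}{r}{p}(\nu)\,, \]
the asymmetry coming from the $p$-dependent correction term $(2(p+1))^{-1}\Z_{12}\Z_{31}$ in the definition of $\sh3{-1}$ in Table~\ref{tab-sho}. So the identity $\sh3{-1}(\sh31)^p\om^{0,0}_\Nfu=(\sh31)^p\sh3{-1}\om^{0,0}_\Nfu$ is unjustified as written. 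Since the discrepancy is a nonzero scalar, the kernel containment you need would still follow from a proportionality $\sh3{-1}\sh31 = c_p\,\sh31\sh3{-1}$ with $c_p\neq 0$ valid on highest weight vectors of an arbitrary $(\glie,K)$-module; but that is exactly what would have to be proved (from the commutation relations in $U(\glie)$, or by induction from the explicit shift-operator formulas in Tables \ref{tab-shab} and~\ref{tab-shnab}), and you do not supply it.

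For comparison, the paper takes a different and shorter route that avoids this algebra entirely: for all $\nu$ with $\ps[j,\nu]\in\WOS$ (simple parametrization, which excludes only a discrete set of $\nu$), part ii) of Proposition~\ref{prop-sectors} forces $\sh3{-1}$ to vanish on the boundary $K$-types $\tau^{2j+3p}_p$ of the single sector $\sect(j)$, since a nonzero image would live in a $K$-type outside every sector; the remaining $\nu$ are then handled by observing that $\nu\mapsto\sh3{-1}\om^{p,0}_\Nfu(j,\nu)$ is holomorphic (resp.\ $\nu\mapsto\sh3{-1}\mu^{p,0}_\Nfu(j,\nu)$ on $\CC\setminus\ZZ_{\leq-1}$) and vanishes on a set with accumulation points. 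To repair your argument, either prove the scalar-commutation identity above or switch to this sector-plus-analytic-continuation argument.
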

\begin{proof}Under simple parametrization this follows from ii) in
Proposition~\ref{prop-sectors}. The families $x^{p,0}_\Nfu(j,\cdot)$
and $x^{0,p}_\Nfu(j,\cdot)$ are holomorphic in their domain, and that
property is preserved under differentiation.
\end{proof}

\begin{proof}[Proofs of Theorems \ref{mnthm-ab-gp} and
\ref{mnthm-nab-gp}]\label{prfAB}
The role of $m_0(j)$ in Theorem~\ref{mnthm-nab-gp} is based on the
discussion in~\S\ref{sect-1dKt} of the functions in the $K$-type
$\tau^{2j}_0$, which have the form
$nak\mapsto \Th_{\ell,c}(h_{\ell,m};n) \, f_t) \Kph{2j}000(k)$, with a
normalized Hermite function $h_{\ell,m}$ with $m\in \ZZ_{\geq 0}$. The
eigenfunction equations show that $K$-type $m=m_0(j)$ in the case of a
one-dimensional $K$-type.

Lemma~\ref{lem-bfs} gives elements of $\Wfu^\ps_\Nfu$ and
$\Mfu^\ps_\Nfu$ in terms of basis functions, both in the generic
abelian case and in the non-abelian case. Since the modified Bessel
functions or Whittaker functions are linearly independent, this shows
that the $K$-types with $|h-2j|\leq 3p$ occur in both modules with
multiplicity at least one. Proposition~\ref{prop-dim} implies that the
multiplicities are exactly one.

Proposition~\ref{prop-kdso} tells that the downward shift operators
vanish on boundaries of the sectors $\sect(j)$. Hence there are no
$K$-types in $\Wfu^{\xi,\nu}_\Nfu$ or $\Mfu^{\xi,\nu}_\Nfu$ that do not
satisfy $|h-2j_\xi|\leq 3p$. Proposition~\ref{prop-kuso} gives the
injectivity of the upward shift operators. So $\Mfu^{\xi,\nu}_\Nfu$ and
$\Wfu^{\xi,\nu}_\Nfu$ are special modules as in
Definition~\ref{def-scm}, with parameter set
$\bigl[ \ld_2(j_\xi,\nu); 2j_\xi,0;\infty,\infty]$. Hence they are
isomorphic to $H^{\xi,\nu}_K$ by Propositions \ref{prop-scm}
and~\ref{prop-psscm}.
\end{proof}

\begin{remark}\label{rmk-WM-gp}Among these modules, $\Ffu^\ps_\Nfu$ and
$\Wfu^\ps_\Nfu$ are defined in an intrinsic way; $\Ffu^\ps_\Nfu$ as the
codomain of the Fourier term operator $\Four_\Nfu$, with the submodule
$\Wfu^\ps_\Nfu$ determined by the condition of exponential decay.

Under generic parametrization, and the additional condition
$\nu \not\in \ZZ_{\leq-1}$ for $\Mfu^{\xi,\nu}_\n$, we define
inside $\Wfu^\ps_\Nfu$ the special modules $\Wfu^{\xi,\nu}_\Nfu$
generated by $\om^{0,0}_\Nfu(j_\xi,\nu)$ with $\nu \in \wo(\ps)^+$. The
modules $\Mfu_\Nfu^{\xi,\nu}$ are intrinsically defined by the
condition of $\nu$-regular behavior at~$0$. The definition of
$\Mfu^\ps_\Nfu$ in~\eqref{defM[]} is much less intrinsic. The
restriction to $\re\nu\geq 0$ is motivated by i) in
Proposition~\ref{prop-dsok1}.
\end{remark}

\subsection{Intertwining operators}\label{sect-inttw}
Under generic parametrization, we have obtained various irreducible
simple modules that are isomorphic by Proposition~\ref{prop-scm},
namely
\badl{II-ex} &H^{\xi,\nu}_K \,,\quad H^{\xi,-\nu}_K\,, \quad
\Mfu_\bt^{\xi,\nu}\,,
\quad \Wfu_\bt^{\xi,\nu}\,,\quad \Mfu_\n^{\xi,\nu}\,,\quad
\Wfu_\n^{\xi,\nu}\,,
\eadl
where we take $\bt\neq 0,$ and $\n=(\ell,c,d)$ such that
$m_0= \frac\e6 \bigl( d-2j\bigr)-\frac12\in  \ZZ_{\geq 0}.$ The
isomorphisms are given by intertwining operators determined up to a
non-zero factor. We may fix them by prescribing them in the $K$-type
$\tau^{2j}_0,$ for instance by letting the basis vectors $x^{0,0}_\Nfu$
correspond to each other. 
In this way we can also build, under generic parametrization, an
injective morphism
\be \Wfu_\Nfu^{\xi,\nu} \rightarrow \Mfu_\Nfu^{\xi,\nu} \oplus
\Mfu_\Nfu^{\xi,-\nu}\ee
based on \eqref{Knu} and~\eqref{Wkps}.

We discuss three types of intertwining operators that are defined in a
more intrinsic way.

\rmrk{Left translations by elements normalizing the lattice}In
\S\ref{sect-normLd} we discussed that left translation by elements
normalizing the lattice $\Ld_\s$ gives intertwining operators of
$(\glie,K)$-modules between large Fourier term modules $\Ffu_\Nfu$.
Hence they preserve the Fourier term modules $\Ffu^\ps_\Nfu$. The
description in terms of basis elements in Table~\ref{tab-ltrFfu},
p~\pageref{tab-ltrFfu}, preserves the functions $f$ on~$A$. Hence they
preserve the families $\om^{0,0}_\Nfu$ and $\mu^{0,0}_\Nfu$, and by the
intertwining property also the derivatives $\om^{a,b}_{\Nfu}$ and
$\mu^{a,b}_\Nfu$. So they preserve the special Fourier term modules as
well.

\rmrk{Evaluation at zero}A basis element of $K$-type $\tau^h_p$ in
$\Mfu^{\xi,\nu}_\Nfu$ has the form
\[ F= \sum_r u_r(n) \, t^{2+\nu} \, h_r(t) \, \Kph h{p}{r}{q}\,.\]
For each $r$, the function $h_r$ is entire, and $u_r$ is a basis element
on~$N$, either a character or a theta function. The summation variable
satisfies $|r|\leq p$, $r\equiv p \bmod 2$, and some further condition
in the non-abelian case. \il{eval0}{evaluation at zero}\emph{Evaluation
at zero} is the operator
$E_0: \Mfu^{\xi,\nu}_\Nfu \rightarrow H^{\xi,\nu}_K$ induced
by\ir{ev0}{E_0}
\be\label{ev0}
E_0 F = \sum_r t^{2+\nu} \, h_r(0) \,\Kph h{p}{r}{q}\,.\ee
So we replace all $u_r$ by $1$, and $h_r(t)$ by its value at~$t=0$.

\begin{prop}Evaluation at zero is an intertwining operator of
$(\glie,K)$-mod\-ules.\end{prop}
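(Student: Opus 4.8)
The plan is to exploit the fact that $E_0$ records only the coefficient of $t^{2+\nu}$ in each component function, so that it ``forgets'' every term carrying an extra factor of $t$; what survives is then governed exactly by the principal-series shift operators of \eqref{shps}. First I would dispose of the easy equivariances and of well-definedness. Right translation by $K$ permutes the $\Kph h p r q$ for fixed $(h,p)$ and does not touch the component functions, while each basis element of $\klie_c$ acts, by Table~\ref{tab-RLdK}, either by a scalar ($\CK_i,\WW_0$) or by the index shifts $R(\Z_{12}),R(\Z_{21})$ on the $\Kph h p r q$, again leaving the $f_r=t^{2+\nu}h_r$ alone; since $E_0$ replaces $u_r$ by $1$ and $h_r$ by $h_r(0)$, it commutes with the entire $(\klie,K)$-action. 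That the image lands in $H^{\xi,\nu}_K$ I would obtain by passing to leading order in the eigenfunction equations of Lemmas~\ref{lem-efeq-t} and~\ref{lem-efrab}: in Lemma~\ref{lem-efrab} the coupling terms $t\,f_{r\pm2}$ and $t^2 f_r$ are of order $t^{3+\nu}$ and $t^{4+\nu}$, so the coefficient of $t^{2+\nu}$ obeys the uncoupled $N$-trivial relation, which forces $h_r(0)=0$ unless $h-3r=2j$ — precisely the constraint defining the basis of $H^{\xi,\nu}_K$ in~\eqref{HK-def} (the non-abelian case is identical, with the Whittaker equation and $c\ge1$).

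Next I would reduce $\glie$-equivariance to the two upward shift operators $\Z_{31}=\sh31$ and $\Z_{23}=\sh{-3}1$. The set $S=\{\,X\in\glie_c:\ E_0(Xv)=X\,E_0(v)\text{ for all }v\,\}$ is a linear subspace closed under brackets, since for $X,Y\in S$ one has $E_0([X,Y]v)=X\,E_0(Yv)-Y\,E_0(Xv)=[X,Y]\,E_0(v)$. Thus $S$ is a Lie subalgebra, and since $\klie_c\subset S$ together with $\Z_{31},\Z_{23}\in S$ generate $\glie_c$ — for instance $[\Z_{12},\Z_{23}]$ is a nonzero multiple of $\Z_{13}$ and $[\Z_{12},\Z_{31}]$ a nonzero multiple of $\Z_{32}$, by the weights recorded in Table~\ref{tab-rootv} — it suffices to prove $E_0\,\sh{\pm3}1=\sh{\pm3}1\,E_0$, where on the right the shift acts in $H^{\xi,\nu}_K$. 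Should one prefer to avoid the generation claim, the downward shifts can instead be handled directly through the identity $\sh3{-1}=\Z_{32}+\tfrac1{2(p+1)}\Z_{12}\Z_{31}$ of Table~\ref{tab-sho} together with $\klie_c$-equivariance.

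The heart is then a single $t\downarrow0$ computation. In Tables~\ref{tab-shab} and~\ref{tab-shnab} the upward shifts send a component $b=f_r=t^{2+\nu}h_r$ to a linear combination of the index-preserving term $2tb'+(\mathrm{const})\,b$ and of index-shifting terms $b\mapsto tb$ (carrying $\bt,\bar\bt$, or $\sqrt{2\pi|\ell|(m+\dt_{\pm\e})}\,\th_{m\pm\e}$) and $b\mapsto t^2 b$ (from $4\pi\ell t^2$). The last two raise the power of $t$ and so vanish under $E_0$, whereas $2tb'+c\,b$ contributes $\bigl(2(2+\nu)+c\bigr)h_r(0)$ at $t=0$. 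One then checks, in each of the four sub-cases, that this surviving coefficient is exactly the factor appearing in \eqref{shps}; for $\sh31$, say, it equals $2(2+\nu)+(h+2p-r)=4+h+2\nu+2p-r$, matching \eqref{shps} after division by $8(p+1)$. Hence $E_0(\sh{\pm3}1F)=\sh{\pm3}1(E_0F)$, and by the previous paragraph $E_0$ commutes with all of $\glie_c$, proving the proposition.

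I expect the only genuine difficulty to be organizational: confirming uniformly, across the abelian and non-abelian tables and both upward directions, that every index-shifting contribution carries a surplus factor of $t$ and that the index-preserving factor $2(2+\nu)+c$ reproduces the principal-series coefficient of \eqref{shps}. The Lie-subalgebra reduction is what keeps this in check, since it removes the downward shift operators, with their $K$-type-dependent correction terms, from the verification, leaving only the two clean upward formulas to compare against \eqref{shps}.
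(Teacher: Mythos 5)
Your overall strategy—commute with $\klie_c$ and $K$, reduce to shift operators on highest weight vectors, and compare the coefficient of $t^{2+\nu}$ against \eqref{shps}—is essentially the paper's, and your leading-order computation for the upward shifts is correct: in Tables~\ref{tab-shab} and~\ref{tab-shnab} every index-shifting term carries a surplus factor of $t$, and the surviving factor $2(2+\nu)+c$ does reproduce the coefficients in \eqref{shps}. Your well-definedness argument (that the eigenfunction equations force $h_r(0)=0$ unless $h-3r=2j$, using disjointness of sectors under generic parametrization) is a genuine addition that the paper leaves implicit.

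The gap is in the reduction to the two upward shift operators alone. Your set $S=\{X\in\glie_c: E_0(Xv)=X\,E_0(v)\text{ for all }v\}$ is indeed a Lie subalgebra, and $\klie_c$ together with $\Z_{31},\Z_{23}$ does generate $\glie_c$. But membership $\Z_{31}\in S$ requires commutation of $E_0$ with $\Z_{31}$ on \emph{all} vectors, whereas $\sh31$ is only the restriction of $\Z_{31}$ to highest weight spaces (compare \eqref{E31}: on a general weight vector $\Z_{31}$ has components in $\tau^{h+3}_{p-1}$ as well). Passing from highest weight vectors to all of $V_{h,p}=U(\klie)V_{h,p,p}$ forces you to commute $\Z_{31}$ past powers of $\Z_{12}$, and $[\Z_{31},\Z_{12}]$ is a nonzero multiple of $\Z_{32}$ (similarly $[\Z_{23},\Z_{12}]\propto\Z_{13}$); on highest weight vectors $\Z_{32}$ is $\sh3{-1}-\frac1{2(p+1)}\Z_{12}\sh31$, so the downward shift operators reappear. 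Your proposed fallback for the downward shifts has the same circularity: $\sh3{-1}=\Z_{32}+\frac1{2(p+1)}\Z_{12}\Z_{31}$ only helps if you already know $E_0$ commutes with $\Z_{32}$. The clean fixes are either (a) the paper's reduction, which checks all four shift operators on highest weight vectors (this does suffice, because the action of $\{\Z_{31},\Z_{13},\Z_{32},\Z_{23}\}$ on arbitrary vectors is determined by the shift operators on highest weight vectors together with the $\klie$-action, as in the proof of Proposition~\ref{prop-scm}); or (b) running your $t\downarrow0$ analysis on the full formulas of type \eqref{E31} for $\Z_{31}$ and $\Z_{23}$ on arbitrary weight vectors, after which the Lie-subalgebra argument legitimately delivers $\Z_{32}$ and $\Z_{13}$ for free. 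Either way the extra verification is of the same kind as the one you already carried out—the downward-shift entries of Tables~\ref{tab-shab} and~\ref{tab-shnab} are again of the form $2tf'+cf$ plus terms with surplus powers of $t$—so the proof is salvageable, but as written the reduction step does not hold.
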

\begin{proof}Clearly $E_0$ commutes with the action of $\klie$ and $K.$
So it suffices to check the operation on a basis of a complementary
space of $\klie$ in the Cartan decomposition, or for the shift
operators on highest weight vectors. It is not too hard to do this by
hand, on the basis of Tables \ref{tab-shab}, \ref{tab-shnab},
pp~\pageref{tab-shab}, \pageref{tab-shnab}, and the relations
in~\eqref{shps}. A check is in \cite[\S16]{Math}.
\end{proof}

\rmrk{An inverse operator} In~\cite{GW80} Goodman and Wallach define, in
a much more general context than $\SU(2,1)$, a linear form on the
analytic vectors in principal series representations given by an
infinite sum of differential operators. This induces a family of
intertwining operators $H^{\xi,\nu}_K \rightarrow \Mfu^{\xi,\nu}_\bt$,
which is inverse to evaluation at zero up to a factor that depends
meromorphically on~$\nu$.\medskip

\rmrk{Kunze-Stein operators}An interesting family of intertwining
operators is given by the \il{KSo}{Kunze-Stein operator}Kunze-Stein
operators. See Kunze, Stein \cite{KuSt}, or Schiffmann \cite{Schi}.
These operators turn up in the computation of Fourier coefficients of
Poin\-car\'e series. Here we mention their definition, but do not go
into computations.

The Kunze-Stein operators act on functions $F\in C^\infty(G)_K$ that
satisfy an estimate
\be\label{ccfd} F\bigl( n \am(t) k \bigr)\;\ll\; t^{2+\e}\ee
for some $\e>0,$ uniformly in $n$
(and $k$). For $\bt\in \CC$ and $\eta\in N \wm AMN$ (the big cell in the
Bruhat decomposition) the abelian Kunze-Stein operator is given by
\be\label{aSo}(S_{\!\bt}(\eta)F)(g) = \int_{n'\in N}
\overline{\chi_\bt(n')}\, F(\eta n'g)\, dn'\,,\ee
\il{Scho}{$S_{\!\Nfu}(\eta)$}and for $\n=(\ell,c,d)$ the non-abelian
Kunze-Stein operator is
\badl{nSo} (S_{\!\n}(\eta)F)\bigl( n a k \bigr)
&= \sum_{m,h,p,r,q} \Th_{\ell,c}(h_{\ell,m};n) \, \int_{n'\in N}
\int_{k'\in K} \overline{\Th_{\ell,c}(h_{\ell,m};n')}\\
&\qquad\hbox{} \cdot
F( \eta n' a k')\, \overline{\Kph h{p}{r}{q}(k')}\, dk'\, dn'\,
\frac{\Kph h{p}{r}{q}(k)}{\bigl\|\Kph h{p}{r}{q}\bigr\|^2}\,.
\eadl
The sum is over $m,h,p,r,q \in \ZZ$ satisfying $m\geq 0,$
$h\equiv p\equiv r\equiv q\bmod 2,$ $|r|\leq p,$ $|q|\leq p,$ and
$\sign(\ell)\left( 6m+3\right) + h - 3r=d.$
(These operators are similar to the Fourier term operators in
Proposition~\ref{prop-Ftit}.)
Under the condition \eqref{ccfd} the integrals converge absolutely.
Applied to $F\in \Mfu_\Nfu^{\xi,\nu}$ with $\re \nu>0$ we get
holomorphic families of intertwining operators
\badl{Sch-im} S_{\!\bt}(\eta) &: \Mfu_\Nfu^{\xi,\nu} \rightarrow
\begin{cases}H^{\xi,-\nu}_K\,,& \text{ if }\bt=0\,,\\
\Wfu_\bt^{\xi,\nu}&\text{ if }\bt\neq0\,,\end{cases}\\
S_{\!\n}(\eta)&: \Mfu_\Nfu^{\xi,\nu} \rightarrow \Wfu_\n^{\xi,\nu}\,.
\eadl


\def\flnm{rFtm-chap-III}

\setcounter{tabold}{\arabic{table}}
\setcounter{figold}{\arabic{figure}}


\chapter{Submodule structure}\label{chap-3}\setcounter{section}{10}
\setcounter{table}{\arabic{tabold}}
\setcounter{figure}{\arabic{figold}} \markboth{III. SUBMODULE STRUCTURE}
{III. SUBMODULE STRUCTURE} In the previous chapter we saw that under generic
parametrization (see Table~\ref{tab-parms}, p~\pageref{tab-parms})
the special Fourier term modules, like $\Mfu_\Nfu^{\xi,\nu}$ and
$\Wfu_\Nfu^{\xi,\nu},$ are isomorphic if they determine the same
element $[j_\xi,\nu]\in \WO$. Under integral parametrization this is no
longer the case. Then the special Fourier term modules have non-trivial
submodules, and the submodule lattice is not determined  only
 by an element of $\WO$. The main purpose of this chapter is
to determine the submodule structure of all Fourier term
modules~$\Ffu^\ps_\Nfu$.

By a general theorem all irreducible $(\glie,K)$-modules occur as a
submodule of a principal series module. So the irreducible modules that
we will find in Section~\ref{sect-ps-ism}, discussing the principal
series, represent all isomorphism classes of irreducible
$(\glie,K)$-modules. In comparison the generic abelian Fourier term
modules in Section~\ref{sect-abip} have a simpler submodule structure.
In Section~\ref{sect-nab} we will see that the submodule structure of
the non-abelian Fourier term modules $\Ffu_{\ell,c,d}^\ps$ is
complicated, and depends not only on the spectral parameters, but also
on the parameters $\ell$ and~$d$.


\def\flnm{rFtm-III-im}


\section{Preliminaries}\markright{11.
PRELIMINARIES}\label{sect-iclirr}The investigation of the submodule
structure will be done in separate sections for the $N$-trivial case,
the generic abelian case, and the non-abelian case. Here we carry out
preparations that will be used in all these cases.

The \il{sqth}{subquotient theorem}subquotient theorem of Harish Chandra
states that all isomorphism classes of irreducible $(\glie,K)$-modules
can be realized as subquotients of some representation $H^{\xi,\nu}_K$
in the principal series; see eg \cite[Theorem 3.5.6]{Wal88}. A result
by Casselman-Mili\v ci\'c \cite{CM82} implies that we even get all
irreducible $(\glie,K)$-modules as submodules of some $H^{\xi,\nu}_K$.
In Section \ref{sect-ps-ism} we will give the irreducible submodules of
representations in the principal series. Thus we will have the complete
list of isomorphism classes of irreducible $(\glie,K)$-modules. In
\S\ref{sect-list-irr} below we will give this list.

\subsection{Lattice points}\label{sect-lp}
The modules in \eqref{II-ex} are in the class of irreducible principal
series modules. All other isomorphism classes are represented in some
$H^{\xi,\nu}_K$ with parameters $(j_\xi,\nu)$ corresponding to integral
parametrization. Under integral parametrization we deal with Weyl group
orbits in the lattice\ir{Ldef}{L,\; L^+}
\be\label{Ldef} L \= \bigl\{ (j,\nu)\in \ZZ^2\;:\; j\equiv\nu\bmod
2\bigr\}\,.\ee
See Figure~\ref{fig-ip}, p~\pageref{fig-ip}. The origin $(0,0)$ in this
lattice does not correspond to integral parametrization.

A fundamental set for the action of the Weyl group~$W$ on $L$ is the
closure of the \il{pWch}{positive Weyl chamber} positive Weyl chamber
$ L^+ = \{(j,\nu)\in L\;:\; \nu \geq |j|\bigr\}$. (It is positive for
the choice of $\al_1$ and $\al_2$ in \eqref{al12} as simple positive
roots.) The walls of $L^+$ are the lines $j=\nu\in \ZZ_{\geq 1}$ and
$-j=\nu \in \ZZ_{\geq 1}$.\il{walls}{walls of positive chamber}

We use the convention to denote elements of $L^+$ by $(j_+,\nu_+)$;
hence $j_+\equiv \nu_+\bmod 2$, and $\nu_+\geq |j_+|$. Furthermore, we
denote elements of the adjacent Weyl chambers by
$(j_r,\nu_r) \in \Ws 1L^+$ and $(j_l,\nu_l)\in \Ws2 L^+$.
\il{jprl}{$j_+,\; j_r, \; j_l$}\il{nuprl}{$\nu_+,\; \nu_r, \; \nu_l$}
Hence $j_r\geq 1$, $0\leq \nu_r\leq j_r$, and $j_l \leq -1$,
$0\leq \nu_l \leq -j_l$. On the walls we have
$(j_+,\nu_+) = (j_r,\nu_r) = (j,j)$, $j\in \ZZ_{\geq 1}$, and
$(j_+,\nu_+) =(j_l,\nu_l) = (-j,j)$, $j\in \ZZ_{\geq 1}$.

If $(j_+,\nu_+)$, $(j_r,\nu_r)$ and $(j_l,\nu_l)$ are in the same Weyl
group orbit we have the relation
\badl{jnurels} (j_r,\nu_r)&\= \Ws1(j_+,\nu_+) \= \bigl(
\tfrac{3\nu_+-j_+}2,\tfrac{\nu_++j_+}2\bigr) \,,\\
(j_l,\nu_l)&\= \Ws2(j_+,\nu_+) \= \bigl(
\tfrac{-3\nu_+-j_+}2,\tfrac{\nu_+-j_+}2\bigr) \,.
\eadl
Under these relations we have the following identities:
\be j_r+j_l + j_+ \= 0\,,\qquad \nu_r-\nu_+ +\nu_l\=0\,.\ee

\subsection{Isomorphism classes of irreducible
representations}\label{sect-list-irr}We give in
Subsection~\ref{sect-smps} a list of isomorphism classes of irreducible
$(\glie,K)$-modules that are embedded in a principal series
representation. As discussed in the introduction to this section, this
is the complete list.

All the irreducible $(\glie,K)$-modules turn out to be special modules;
see Definition~\ref{def-scm}.

For the purpose of these notes we classify the isomorphism classes into
four \il{tpirr}{type of irreducible representation}types, $\II$, $\IF$,
$\FI$, and $\FF$, according to the action of the upward shift
operators, which in Table~\ref{tab-sho}, p~\pageref{tab-sho}, are given
by the action of elements of the complexified Lie algebra~$\glie_c$.
The first letter refers to $\sh 3 1$, and the second letter to
$\sh{-3}1$. This letter is $I$ if the shift operator acts injectively
in the module; otherwise it is $F$. Most classes have a $K$-type of
dimension $1$. If the dimension of the minimal $K$-type is larger
than~$1$, we add a subscript $+$. To completely determine the
isomorphism class we add the choice of spectral parameters $(j,\nu)$
such that the irreducible module occurs in the module $H^{\xi_j,\nu}_K$
in the principal series. In many cases this choice is
unique.\il{IIIF}{$\II$, $\IF$}\il{FFFI}{$\FF$, $\FI$}

The following list gives all isomorphism classes of irreducible
$(\glie,K)$-modules. We indicate also which of the classes admit a
unitary structure, to be discussed in \S\ref{sect-un}.
\begin{itemize}
\item \emph{Irreducible principal series}\il{pdsi}{principal series,
irreducible}
\begin{enumerate}
\item[$\II(j,\nu)$] with $j\in \ZZ$, $\re \nu \geq 0$,
$\nu \not\equiv j\bmod 2$, or $(j,\nu)=(0,0)$. The principal series
modules $H^{\xi_j,\nu}_K$ and $H^{\xi_j,-\nu}_K$ are in this
isomorphism class.

A unitary structure occurs in the following cases:
\begin{itemize}\item $\re\nu=0$, \emph{unitary irreducible principal
series}.\il{ps-ui}{principal series, unitary irreducible}
\item $\nu \in \RR$, $0<\nu<2$ if $j=0$, or $0<\nu<1$ if $j$ is odd,
\emph{complementary series}.\il{cs}{complementary series}
\end{itemize}

\end{enumerate}
\item \emph{Discrete series types}
\begin{enumerate}
\item[$II_+(j_+,\nu_+)$] with $j_+\in \ZZ$, $\nu_+\in \ZZ_{\geq 0}$,
$j_+\equiv\nu_+\bmod2$, $\nu_+\geq |j_+|$. \emph{Large discrete series
type.}\il{ldst}{large discrete series type}

\item[$\IF(j_r,\nu_r)$] with $j_r \in \ZZ_{\geq 2}$,
$\nu_r \equiv j_r\bmod 2$, $0\leq \nu_r\leq j_2-2$. \emph{Holomorphic
discrete series type.}\il{hdst}{holomorphic discrete series type}

\item[$\FI(j_l,\nu_l)$] with $j_l\in \ZZ_{\leq -2}$,
$\nu_l\bmod j_l\bmod 2$, $0\leq \nu_l \leq |j_l|-2$.
\emph{Antiholomorphic discrete series type.} \il{adst}{antiholomorphic
discrete series type}
\end{enumerate}
All modules of discrete series type admit a unitary structure.

\item \emph{Langlands representations}\il{Llr}{Langlands representation}
\begin{enumerate}
\item[$\IF_+(j_r,-\nu_r)$] and $\IF(j_r,-j_r)$ with
$j_r\in \ZZ_{\geq 1}$, $\nu_r\equiv j_r\bmod 2$, and
$1\leq \nu_r \leq j_r-2$ for $\IF_+(j_r,\nu_r)$; $\nu_r=j_r$ for
$\IF(-j_r)$.

\item[$\FI_+(j_l,-\nu_l)$] and $\FI(j_l,j_l)$ with
$j_l \in \ZZ_{\leq -1}$, $\nu_l\equiv j_l\bmod 2$, and
$1\leq \nu_l \leq |j_l|-2$ for $\FI_+(j_l,-\nu_l)$; $\nu_l=j_l$ for
$\FI(j_l,j_l)$ .

\item[$\FF(j_+,\nu_+)$] with $j_+ \in \ZZ$, $\nu_+\equiv j_+\bmod 2$,
$\nu_+\geq |j_+|+2$. \emph{Finite-dimensional irreducible modules.}
\end{enumerate}
The Langlands representations that admit a unitary structure are
$\FF(2,0)$, and all classes $\IF(j_r,-1)$, $\FI(j_l,-1)$. We call the
representations with $\nu=-1$ \emph{thin
representations}.\il{cohrepr}{thin representation}
\end{itemize}

\rmrk{Occurrence in principal series modules} These isomorphism classes
are represented by one or more genuine submodules of a principal series
representation and by one or more quotients of principal series
representations. We will see this explicitly in \S\ref{sect-smps}. The
classes $\II(j,\nu)$ are represented by $H^{\xi_j,\nu}_K$, which is a
(trivial)
submodule and a (trivial) quotient of itself.
\rmrk{Discrete series type} \il{ds}{discrete series} Discrete series
representations have spectral parameters $(j,\nu)$ in the interior of a
Weyl chamber in Figure~\ref{fig-ip}, p~\pageref{fig-ip}. They are
characterized by being represented in $L^2(G)$. \il{lds}{limits of
discrete series}Limits of discrete series correspond to $(j,\nu)$ on a
wall between Weyl chambers. These classes are not represented in
$L^2(G)$. We put these classes together into a discrete series type.

We distinguish holomorphic and antiholomorphic discrete series type.
These concepts are interchanged if we change the complex structure of
the symmetric space $G/K$ (or of the space into which $G/K$ is
embedded). We prefer to keep both names, in accordance with
\cite[\S7]{Wal76}.

\rmrk{Langlands representations}All Langlands representations are
non-tempered. They occur as quotient of a unique principal series
module $H^{\xi,\nu}_K$ with $\nu \geq 1$, and are often called
\emph{Langlands quotients}.

\subsection{Submodules determined by shift operators} From Propositions
\ref{prop-kdso} and~\ref{prop-kuso} we obtain the information that
kernels of shift operators can occur only in $K$-types $\tau^h_p$ that
correspond to points in the $(\frac h3,p)$-plane on specific lines.
This allows us to draw conclusions concerning submodules.

\begin{prop}\label{prop-smks}Let $j\in \ZZ$ and let $V$ be a $(\glie,K)$
module such that
\be \label{Vsmks} V \= \bigoplus_{a,b\geq 0} V_{2j +3(a-b),a+b}\,.\ee
\begin{itemize}
\item[i)] For $c\in \ZZ$ and $u$ a linear form on $\RR^2$ let $X_{u,c}$
be the subspace of $V$ given by
\be X_{u,c} = \bigoplus_{(h/3,p)\in \sect(j)\;:\; u(h,p)\geq c} V_{h,p}
\,.\ee
The linear space $X_{u,c}$ is a proper $(\glie,K)$-submodule of~$V$ in
the following cases:
\begin{enumerate}
\item[a)] $u(h,p) = -\frac h3-p$, $c\leq -\frac{2j}3$, and
$\sh 31 V_{h,p,p}=\{0\}$ for all $(h/3,p)\in \sect(j)$ such that
$u(h,p)=c$.
\item[b)] $u(h,p) =\frac h3-p$, $c\leq - \frac{2j}3$, and
$\sh {-3}1 V_{h,p,p}=\{0\}$ for all $(h/3,p)\in \sect(j)$ such that
$u(h,p)=c$.
\item[c)] $u(h,p) = -\frac h3+p$, $c\geq \frac{2j}3$, and
$\sh 3{-1} V_{h,p,p}=\{0\}$ for all $(h/3,p)\in \sect(j)$ such that
$u(h,p)=c$.
\item[d)] $u(h,p)=\frac h3 +p$, $c\geq \frac{2j}3$, and
$\sh {-3}{-1} V_{h,p,p}=\{0\}$ for all $(h/3,p)\in \sect(j)$ such that
$u(h,p)=c$.
\end{enumerate}
\item[ii)]Let $Y\subset V$ be a submodule such that the subspace
$Y_{h,p}$ of $K$-type $\tau^h_p$ is non-zero. If
$\sh{3\al}\bt Y_{h,p,p}\neq 0$ for $\al,\bt\in \{1,-1\}$, then
$Y_{h+3\al,p+\bt}\neq 0$.
\end{itemize}
\end{prop}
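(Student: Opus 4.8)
The plan for~i) is to reduce the submodule property of $X_{u,c}$ to the behavior of the four shift operators. The actions of $K$ and of $\klie$ fix each isotypic component $V_{h,p}$, and inside a single $K$-type the weight operators $\Z_{12},\Z_{21}$ stay within $V_{h,p}$; hence all of these preserve $X_{u,c}$. By the Corollary following Proposition~\ref{prop-shdef}, the submodule $U(\glie)v$ generated by a highest-weight vector $v\in V_{h,p,p}$ equals $U(\klie)Pv$ with $P$ ranging over compositions of shift operators. Since $U(\klie)$ already preserves $X_{u,c}$, it therefore suffices to check that each shift operator carries the highest-weight space of an \emph{allowed} $K$-type (one with $u(h,p)\geq c$) back into $X_{u,c}$.

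The key computation I would carry out is that $\sh{3\al}\bt$ sends $(h,p)$ to $(h+3\al,p+\bt)$, so it changes any linear form $u$ by a fixed amount. Substituting the four forms of~i) one finds in each case that exactly one shift operator lowers $u$ by $2$ while the remaining three leave $u$ unchanged or raise it by~$2$, and that the lowering operator is precisely the one named in the matching hypothesis a)--d). Because the values of $u$ on the $K$-types of $V$ are spaced by~$2$, a highest-weight vector with $u\geq c$ can leave the region $u\geq c$ only through this distinguished lowering operator, and only when it is applied on the boundary layer $u=c$. The hypothesis that the operator annihilates every $V_{h,p,p}$ with $u(h,p)=c$ then guarantees $Pv\in X_{u,c}$ for all $P$, which proves invariance. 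Properness holds because the normalizations $c\leq-\frac{2j}{3}$ in a), b) and $c\geq\frac{2j}{3}$ in c), d) place the boundary layer strictly inside the sector $\sect(j)$, so that $X_{u,c}$ omits those $K$-types of $V$ lying beyond it (which are present in the full-sector modules of interest).

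Part~ii) I would handle by the same principle, and it is essentially immediate. A $K$-type occurs in a $(\glie,K)$-module exactly when its highest-weight space is non-zero, so $Y_{h,p}\neq0$ yields a non-zero $v\in Y_{h,p,p}$. The shift operator $\sh{3\al}\bt$ is realized by an element of $U(\glie)$ (Table~\ref{tab-sho}), hence maps the submodule $Y$ into itself; the assumption $\sh{3\al}\bt Y_{h,p,p}\neq0$ then produces a non-zero vector of $Y$ inside $V_{h+3\al,p+\bt,p+\bt}$, so $Y_{h+3\al,p+\bt}\neq0$. The only place demanding genuine care is the sign bookkeeping in~i): one must confirm that no shift operator other than the designated one can lower $u$, and that the level where vanishing is assumed is exactly the boundary $u=c$. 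Once the four substitutions are tabulated this is routine, so the real hard part is the disciplined case analysis rather than any conceptual difficulty.
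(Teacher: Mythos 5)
Your argument is correct and follows essentially the same route as the paper: both reduce invariance of $X_{u,c}$ to the action of the four shift operators on highest-weight vectors, track a linear level function that only the designated operator can decrease (by $2$, the spacing of $u$-values on the lattice of $K$-types), and use the vanishing hypothesis to stop the descent at the boundary layer $u=c$; part ii) is the same one-line observation in both. The only difference is presentational: you invoke the Corollary following Proposition~\ref{prop-shdef} to pass directly to compositions of shift operators, whereas the paper redoes that reduction inline by writing elements of $U(\glie)$ as combinations of monomials $U(\klie)\,\Z_{31}^{\al}\Z_{23}^{\bt}\Z_{13}^{\gm}\Z_{32}^{\dt}$ and eliminating the offending factors case by case.
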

\rmrk{Remarks}
(1) In the proposition we do not assume that the module is generated by
a minimal element, or that the Casimir element acts by multiplication
by a scalar. We will apply the proposition to the Fourier term modules
$\Ffu^\ps_\Nfu$, in which the Casimir operator acts as a scalar.

(2) The proposition enables us to reduce the study of submodules for the
action of~$\glie$ to the consideration of shift operators. We will use
this result repeatedly in this chapter. Part i) tells us that lines on
which a shift operator vanishes determine submodules. Part ii) tells
that all submodules are visible in the vanishing of shift operators.
\begin{proof}Part~ii) follows from the fact that
$Y_{h,p} = U(\klie) Y_{h,p,p}$.

In i)c) we have the following situation:
\begin{center}\grf{5}{cpropc}\end{center}
We use here, and later on, the convention that a downward arrow to a
point $(h/3,p)$ indicates that application of the corresponding
downward shift operator ``stops at that point'': the space $V_{h,p,p}$
is in the kernel of that shift operator. In the picture are two lines
of points where the shift operator $\sh 3{-1}$ is zero, and one line of
points where $\sh{-3}{-1}$ is zero.

The action of the Lie algebra sends an element of $V_{h,p}$ to an
element in the sum of spaces $V_{h',p'}$ with
$\frac{|h-h'|}3, |p-p'| \in \{-1,0,1\}$. The critical $K$-types
correspond to points on the line in the interior of the sector. Let
$(h/3,p)$ be such a point, and suppose that for $v\in V_{h,p,q}$ there
exists $u\in U(\glie)$ such that $u v \in V_{h+3,p-1}$. The elements
$\Z_{12}$ and $\Z_{21}$ in $\klie_c$ change the weight in a $K$-type by
one, and are injective except on the lowest or highest weight spaces,
respectively. Using this we can reduce the situation to
$v\in V_{h,p,p}$ and $uv \in V_{h+3,p-1,p-1}$.

We write $u$ as a linear combination of elements in
$U(\klie) \Z_{31}^\al \Z_{23}^\bt \Z_{13}^\gm \Z_{32}^\dt$. A
contribution with $\dt\geq 0$ can be reduced by the assumption that
$\sh 3 {-1} v=0$ to terms with lower values of $\dt$. (We use the
description of the shift operators in Table~\ref{tab-sho},
p~\pageref{tab-sho}.) Repeating this we arrange $\dt=0$. Further, we
note that $\Z_{31}$, $\Z_{23}$ and $\Z_{13}$ preserve the $K$-types
with $u(h,p)\geq c$. (For $\Z_{13}$ we use again Table~\ref{tab-sho}.)
This shows that $u v$ is contained in~$W$. So under condition c) we get
invariance of~$X_{u,c}$.

Under condition~d) we proceed in the same way, with a reversed role of
$\Z_{32}$ and $\Z_{13}$.
\begin{center}\grf{5}{cpropd}\end{center}

For b) the module $X_{u,c}$ has $K$-types in the region between the
lines with slope 1. For $c=\frac{2j}3$ both lines coincide, and
$X_{u,c}$ is a non-trivial subspace of $V$.
\begin{center}\grf{5}{cpropb}\qquad \grf5{cpropb1}\end{center}
An upward arrow to a point $(h/3,p)$ indicates that the space
$V_{h,p,p}$ is in the kernel of the corresponding upward shift
operator. In the picture on the right the points
$(h/3,p)= (2j/3,0)+a(1,1)$ correspond to spaces $V_{h,p,p}$ on which
both shift operators $\sh3{-1}$ and $\sh{-3}1$ vanish.

Like above we reduce consideration to $v\in V_{h,p,p}$ with $u(h,p)=c$,
for which we know that $\sh{-3}1 v=0$. Decomposing $u$ as above, we
reduce to $\gm=\dt=0$ by the observation that application of
$\sh 3{-1}$, and of $\sh{-3}{-1}$, does not decrease the value of
$u(h,p)$. Since $\sh {-3}1 v=0$ we can remove all terms with
$\bt\geq1$. The proof for assumption b) is completed by the observation
that the action of $\Z_{31}$ does not decrease the value of $u(h,p)$.

\begin{center}\grf{5}{cpropa}\end{center}
For assumption a) we proceed similarly with a reversed role of $\Z_{31}$
and $\Z_{23}$.
\end{proof}

\rmrk{Conventions in the figures} The conventions explained in the proof
will be used for many figures in the $(h/3,p)$-plane in the sequel.


\def\flnm{rFtm-III-ps}

\section{Principal series and related
modules}\label{sect-ps-ism}\markright{12. PRINCIPAL SERIES AND RELATED
MODULES} Under the assumption of integral parametrization, the variety
of submodule structures of principal series modules is large: in
\S\ref{sect-smps} we will meet twelve different submodule structures.
Most isomorphism classes in the list in~\S\ref{sect-list-irr} occur in
only one principal series module. The type $\II_+(j_+,\nu_+)$ forms the
sole exception.

In most cases of integral parametrization, the Fourier term module
$\Ffu_0^\ps$ is the direct sum of a number of non-isomorphic principal
series modules. If the spectral parameters are on a wall of a Weyl
chamber we need also the logarithmic submodules, discussed
in~\S\ref{sect-log}.

The last subsection, \S\ref{sect-iskdso}, is of a more technical nature.
It is relevant not only for the modules $\Ffu_0^\ps$, but also for
other Fourier term modules.
\subsection{Kernels of shift operators and submodules}
\rmrk{$K$-types}The $K$-types $\tau^h_p$ occurring in $H^{\xi,\nu}_K$
have multiplicity one. These $K$-types correspond to points in the
sector $\sect(j)$ in the $(h/3,p)$-plane indicated in
Figure~\ref{fig-sectps}.
\begin{figure}[htp]
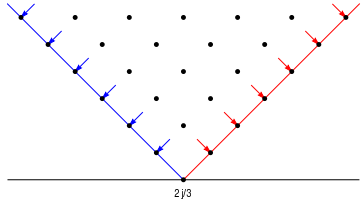

\begin{center}\grf{4.7}{sectps}\end{center}
\caption{Points corresponding to the $K$-types occurring in
$H^{\xi,\nu}_K.$ The downward shift operator $\sh3{-1}$, respectively
$\sh{-3}{-1}$, is zero on the highest weight in $K$-types indicated by
line with slope~$1$, respectively~$-1$. }\label{fig-sectps}
\end{figure}

\begin{prop}\label{prop-ksops}Let $(j,\nu)\in L$ and
$(j_j,\nu_j) = \Ws j (j,\nu)$, and let $\tau^h_p\in\sect(j)$. Then
$\kph h {p}{r}{p}(\nu)$, with $h=3r+2j$, is in the kernel of a shift
operator precisely in the cases in Table~\ref{tab-vancond}.
\end{prop}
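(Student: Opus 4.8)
The plan is to compute the shift operators explicitly on the principal series vectors $\kph h{p}{r}{p}(\nu)$ and read off exactly when they vanish. The key formulas are already available in~\eqref{shps}, which specialize the general abelian shift operators of Table~\ref{tab-shab} to the $N$-trivial case $\bt=0$. Writing $h=3r+2j$, the upward shift operators act by
\[ \sh{\pm3}1 \kph h{p}{r}{p}(\nu) \= \frac{2+p\pm r}{8(1+p)}\,\bigl( 4\pm h + 2\nu + 2p \mp r\bigr)\,\kph{h\pm3}{p+1}{r\pm1}{p+1}(\nu)\,, \]
and the downward operators by
\[ \sh{\pm3}{-1}\kph h{p}{r}{p}(\nu) \= \frac p{4(p+1)}\,\bigl( 2\nu \pm h - 2p \mp r\bigr)\,\kph{h\pm3}{p-1}{r\pm1}{p-1}(\nu)\,. \]
Since each right-hand side is a single nonzero basis vector times a scalar, the kernel condition is simply the vanishing of the scalar coefficient. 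So the whole proposition reduces to determining when these explicit linear factors vanish.

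First I would substitute $h=3r+2j$ into each coefficient and simplify. For instance, the downward factor $2\nu+h-2p-r$ becomes $2\nu+2r+2j-2p$, i.e. $\sh3{-1}$ vanishes exactly when $\nu=p-r-j$; similarly $\sh{-3}{-1}$ vanishes when $2\nu-h-2p+r = 2\nu-2r-2j-2p=0$, i.e. $\nu=p+r+j$. For the upward operators, the coefficient splits as a product of a ``boundary'' factor $2+p\pm r$ (which vanishes on the edge of the sector $\sect(j)$, where the $K$-type leaves the module) and a ``spectral'' factor $4\pm h+2\nu+2p\mp r$. Substituting $h=3r+2j$, the spectral factor for $\sh31$ is $4+3r+2j+2\nu+2p-r=4+2r+2j+2\nu+2p$, vanishing when $\nu=-(p+r+j+2)$, and for $\sh{-3}1$ it is $4-3r-2j+2\nu+2p+r=4-2r-2j+2\nu+2p$, vanishing when $\nu=-(p-r-j+2)$. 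The main task is then to re-express these conditions in terms of the reflected spectral parameters $(j_1,\nu_1)=\Ws1(j,\nu)$ and $(j_2,\nu_2)=\Ws2(j,\nu)$ using the explicit formulas~\eqref{S12}, so that the entries of Table~\ref{tab-vancond} come out in the intended invariant form; this bookkeeping, matching each vanishing locus to a reflection and checking the parity/range constraints defining $L$, is the only delicate part.

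The step I expect to be the genuine obstacle is not the algebra of a single coefficient but the global consistency check: I must verify that the four families of vanishing lines, together with the sector-boundary factors $2+p\pm r=0$, account for \emph{precisely} the cases listed in the table and no others, and that the ``$h=3r+2j$'' parametrization is compatible with $h\equiv p\bmod 2$ and $|r|\le p$ throughout. In particular the boundary factors and the spectral factors can coincide for special $(j,\nu)$ (this is exactly where the $I$ versus $F$ distinction of~\S\ref{sect-list-irr} originates), so I would cross-check the tabulated loci against the Weyl-orbit picture of Figure~\ref{fig-Wo} and against Propositions~\ref{prop-kdso} and~\ref{prop-kuso}, which already pin down on \emph{which} lines in the $(h/3,p)$-plane nontrivial kernels may occur. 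Since those propositions give necessary conditions of exactly the form $h\mp3p=2j'$ with $j'\in\wo^1(\ps)$, and the computation above gives the sufficient conditions as explicit linear equations, the proof amounts to confirming that the explicit equations instantiate those necessary conditions with the correct reflection attached to each. I would delegate the raw coefficient simplifications to the Mathematica notebook \cite{Math} (as is done for the analogous tables elsewhere) and present only the resulting case distinctions.
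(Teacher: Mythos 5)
Your approach is the same as the paper's: specialize the shift operators to the principal series via \eqref{shps}, read off the vanishing of the scalar coefficients, and rewrite the resulting linear conditions in $\nu$ as the lines $\tfrac h3\mp p=\tfrac23 j_{1,2}(\pm 2)$ of Table~\ref{tab-vancond} using the reflections \eqref{S12}; your spectral-factor computations ($\nu=p-r-j$, $\nu=p+r+j$, $\nu=-(p+r+j+2)$, $\nu=-(p-r-j+2)$) all check out and match the paper's.

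One bookkeeping point is off, though it does not derail the argument. The factor $2+p\pm r$ in the upward operators never vanishes for $|r|\le p$ (it is always $\ge 2$), so it is not a ``boundary factor'' and the upward shift operators have no boundary vanishing — consistent with their injectivity under generic parametrization. The unconditional entries ``or $\tfrac h3\mp p=\tfrac23 j$'' in the $\sh{\pm3}{-1}$ rows of the table come instead from the fact that when $r=\pm p$ the target realization $\kph{h\pm3}{p-1}{r\pm1}{p-1}$ does not exist ($|r\pm1|>p-1$), so $\sh{\pm3}{-1}\kph h p{\pm p}p(\nu)=0$ for \emph{all} $\nu$; the paper records this separately before analyzing the scalar factors. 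Your global consistency check would surface this, but as written your account of where those two rows of the table come from is misplaced. With that correction the proposal reproduces the paper's proof.
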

\begin{table}[htp]
\[
\renewcommand\arraystretch{1.4}
\begin{array}{|c|c|c|}
\hline
&\text{if}&\text{for}
\\ \hline
\sh 31& j_2\geq j+3 \quad (\nu\leq
-j-2) & \frac h3+p= \frac 23 j_2-2\\
\hline
\sh{-3}1& j_1\leq j-3\quad (\nu \leq j-2) & \frac h3-p= \frac23 j_1+2\\
\hline
\sh3{-1}& j_2< j \quad(\nu \geq
-j+2) & \frac h3-p = \frac23 j_2\\
& \text{all }(j,\nu)&\text{or } \frac h3-p = \frac23j
\\ \hline
\sh{-3}{-1}&j_1>j\quad (\nu \geq j+2)
& \frac h3+p=\frac23 j_1\\
& \text{all }(j,\nu)&\text{or } \frac h3+p= \frac23j
\\ \hline
\end{array}
\]
\caption[]{Conditions for vanishing of shift operators on the function
$\kph h{p}{(h-2j)/3}{p}(\nu)$.\\
We use $j_1=\frac12(3\nu-j)$ and $j_2=-\frac12(3\nu+j)$. }
\label{tab-vancond}
\end{table}
\begin{proof}We use that $h=2j+3r$ for
$\kph h{p}{r}{p}(\nu)\in H^{\xi,\nu}_K$.

For all values of $(j,\nu)$ the element $\kph h {p}{\pm p}{p}(\nu)$ is
in the kernel of the downward shift operator $\sh{\pm 3}{-1}$. The
corresponding points in Figure~\ref{fig-sectps} are on the right,
respectively left, boundary of the sector $\sect(j)$. This gives the
second lines of the possibilities for $\frac h3\mp p$ in the last
column.

In \eqref{shps} we see that $\sh{\pm 3}{-1}\kph h {p}{\pm p}{p}(\nu)$
vanishes also if
\[ 0 \= \pm h+2\nu-2p\mp r\= 2\left(\nu \pm 13 j\pm \frac13 h - p
\right)
\,.\]
(We used that $r=\frac13(h-2j)$.)
For $\sh3{-1}$ this gives
\[ \frac13 h-p \= - \nu-j = \frac23\,j_2 \,.\]
This represents a line with slope $1$ in Figure \ref{fig-sectps}
intersecting the horizontal axis in the point
$\bigl( \frac23 j_2,0\bigr)$. If $j_2<j$ this line has points in the
sector minus the right boundary line. So, if $j_2<j$ it gives new
spaces in $H^{\xi,\nu}_{K;h,p,p}$ on which $\sh 3{-1}$ vanishes. This
gives the third box in Table~\ref{tab-vancond}. For $\sh{-3}{-1}$ we
obtain the line $\frac13 h +p= \frac 23 j_1$, which leads to new kernel
elements of $\sh{-3}{-1}$ if $j_1>j$. This gives the bottom box in the
table.

For the vanishing of $\sh{\pm 3}1\kph h{p}{r}{p}(\nu)$ we find in
\eqref{shps} the condition
\[ 4 \pm h + 2\nu + 2p \mp r\=0\,.\]
This yields the vanishing of $\sh31$ for points in the sector on the
line $\frac h3+p=\frac23 j_2-2$ if $j_2\geq j+3$. This gives the first
box in the table. We obtain the second box in a similar way. For
general $(j,\nu)$ the upward shift operators are injective. So if the
line that we find coincides with a boundary line of the sector
$\sect(j)$ we get new information. Hence we need no strict inequalities
in the first two boxes.
\end{proof}
\rmrk{Remark} For two different points in a Weyl orbit in $ \WOI$ the
space $H^{\xi,\nu}_K$ and $H^{\xi',\nu'}_K$ have intersection zero.
Nevertheless the boundary lines of $\sect(j')$ have a significance for
$H^{j,\nu}_K$. On points in $\sect(j) \cap \partial \sect(j')$ at least
one shift operator vanishes. On points in
$\partial\sect(j) \cap \partial \sect(j')$ at least two shift operators
vanish.
\subsection{Submodules of principal series modules}\label{sect-smps}
Table~\ref{tab-vancond} determines half-planes in $\RR^2$ that induce
subsets of $L$ for which there is a line intersecting $\sect(j)$ on
which a shift operator vanishes in $H^{\xi,\nu}_K$. There are non-empty
intersections of two of such half-planes, but no non-empty intersection
of three half-planes.
\begin{figure}[htp]
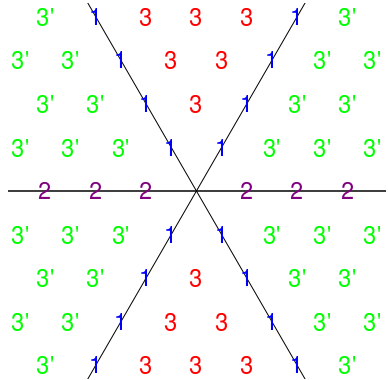

\begin{center}\grf{5}{gsm}\end{center}
\caption{Points $(j,\nu)$ in the lattice $L$ and the number of genuine
submodules of $H^{\xi,\nu}_K$. }\label{fig-gsm}
\end{figure}
There are several possibilities, indicated in Figure~\ref{fig-gsm}. If a
point is in only one of the half-planes, then $H^{\xi,\nu}_K$ has one
genuine submodule. If $(j,\nu)$ is in the intersection of two
half-planes, there are several possibilities. If cases i)a) and i)d) in
Proposition~\ref{prop-smks} are combined, or cases i)b) and i)d), then
the half-planes are bounded by two parallel lines in the
$(h/3,p)$-plane, leading to two or three genuine submodules, depending
on the relative position, indicated by 2 or $3'$ in
Figure~\ref{fig-gsm}. In the other combinations the half-planes are
bounded by intersecting lines. This results in three genuine
submodules, indicated by 3 in the figure. In this way we obtain twelve
regions in the lattice to investigate.\medskip

We consider these regions separately. For each we give two
illustrations. On the left we sketch the corresponding subset of the
lattice $L$. On the right we give a sketch of the $(h/3,p)$-plane for
one point $(j,\nu)$, using the conventions indicated at the end of
Section~\ref{sect-iclirr}. If we move the point on the left, then the
lines where the shift operators vanish move as well. As long as we stay
in the interior of a Weyl chamber the submodule structure keeps its
general structure. Different structures arise when we cross a wall.

We give parameters for the irreducible submodules and the irreducible
quotients. Furthermore, we give a composition diagram.

We use the notation for the spectral parameters indicated
in~\S\ref{sect-lp}. In particular we assume in each case that
$(j_+,\nu_+)$, $(j_r,\nu_r)$ and $(j_l,\nu_l)$ are related
by~\eqref{jnurels}.

Afterwards, we summarize the occurrence of irreducible modules in
principal series modules in Table~\ref{tab-isot-ps} and
Figure~\ref{fig-isot}.
\medskip

\subsubsection{}\label{sect-lds}Spectral parameters:
$\nu_+\geq |j_+|+2$. See Figure~\ref{fig-ps1}.
\begin{figure}[ht]
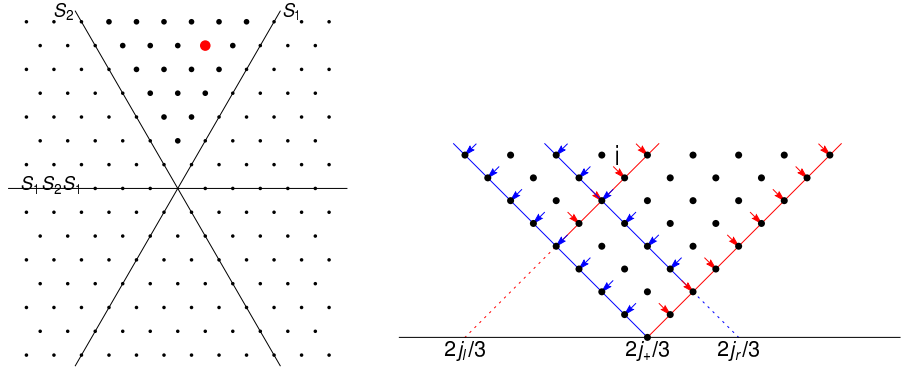

\begin{center}
\grf{4.7}{r1} \quad \grf{6.7}{s1}
\end{center}
\caption{The region $\nu_+\geq |j_+|+2$ in \S\ref{sect-lds}. Illustrated
for $(j_+,\nu_+)=(2,6)$. }\label{fig-ps1}
\end{figure}

There are four genuine submodules.
\be \renewcommand\arraystretch{1.4}
\begin{array}{|clll|}\hline
(a)&h_0= j_l+j_r \=-j_+
&p_0 =\frac{j_r-j_l}3 \=\nu_+
&A= B=\infty \\ \hline
(ab)&h_0=j_++j_l\=-j_r
&p_0= \frac{j_+-j_l}3 \=\nu_r
&A=B=\infty\\\hline
(ac)&h_0= j_r+j_+\= -j_l& p_0 = \frac{j_r-j_p}3\=\nu_l & A=B=\infty\\
\hline
(abc) &\multicolumn{3}{l|}{\text{ not a special $(\glie,K)$-module}}
\\ \hline
\end{array}\ee
The letters indicate the sets of $K$-types in the submodule. See
Figure~\ref{fig-ps1a}.
\begin{figure}[ht]
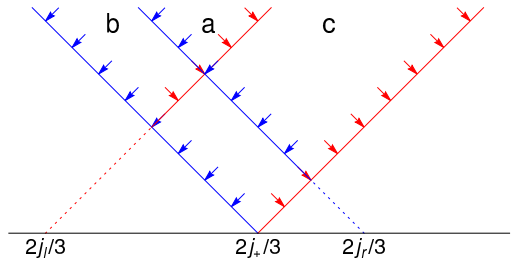

\begin{center}
\grf{6.7}{s1a}
\end{center}
\caption{Indication of regions corresponding to sets of $K$-types
corresponding to genuine subspaces in
\S\ref{sect-lds}.}\label{fig-ps1a}
\end{figure}

Submodule (a) is irreducible of type $\II_+(j_+,\nu_+)$. The quotient
$H^{\xi_+.\nu_+}_K \bmod (abc)$ is irreducible of type
$\FF(j_+,-\nu_+)$.

We find in addition the following irreducible subquotients
\begin{align*}
&(ab)/\bigm/(a) \;\cong (abc)\bigm/(ac) &\text{type
}&\FI_+(j_l,-\nu_l)\\
& (ac)\bigm/ (a) \;\cong\; (abc)\bigm/(ab) & \text{type
}&\IF_+(j_r,-\nu_r)
\end{align*}
This leads to the following composition diagram.
\badl{cd1} \xymatrix@R.5cm@C1.2cm{
&& (ab) \ar[rd] ^{\IF_+(j_r,-\nu_r)}
\\
\{0\} \ar[r] ^{\II_+(j_+,\nu_+)} & (a) \ar[ru]^{\FI_+(j_l,-\nu_l) }
\ar[rd]_{\IF_+(j_r,-\nu_r)}
&&
(abc) \ar[r] ^{\FF(j_+,-\nu_+)} & H^{\xi_+, \nu_+} _K \\
&& (ac) \ar[ru] _{\FI_+(j_l,-\nu_l) } }
\eadl
\rmrk{Conventions in the decomposition diagrams}\il{cd}{composition
diagram} Each arrow in the diagram denotes an inclusion of submodules
with irreducible quotient. We indicate the isomorphism class of the
quotient above or below the arrow. The left-most arrows correspond to
irreducible submodules, the right-most arrows correspond to irreducible
quotients of the principal series module. The other arrows correspond
to other subquotients.

\subsubsection{}\label{sect-ps2}Spectral parameters:
$j_+=\nu_+\in \ZZ_{\geq 1}$ in \S\ref{sect-ps2}. See
Figure~\ref{fig-ps2}.
\begin{figure}[ht]
\begin{center}
\grf{4.7}{r2} \quad \grf{6.7}{s2}
\end{center}
\caption{The region $\nu_+= j_+ \in \ZZ_{\geq 1}$ in \S\ref{sect-ps2}.
Illustrated for $(j_+,\nu_+)=(1,1)$. }\label{fig-ps2}
\end{figure}

In this case $(j_l,\nu_l)=(j_+,\nu_+)$. The irreducible submodule has
type $\II_+(j_+,j_+)$, with the following parameters.
\be \renewcommand\arraystretch{1.4}
\begin{array}{|lll|}\hline
h_0=j_l+j_+ \=- j_+& p_0= \frac{j_+-j_l}2 = \nu_+ & A=B=\infty
\\ \hline\end{array}
\ee
The unique quotient has type $\IF(j_r,-j_r)$. Composition diagram:
\badl{cd2}\xymatrix@C1.2cm{ \{0\} \ar[r]^{\II_+(j_+,j_+)} & \square
\ar[r]^{\IF(j_r,-j_r)} & H^{\xi_+,j_+}_K }
\eadl

\subsubsection{}\label{sect-ps3} Spectral parameters:
$3\leq \nu_r\leq j_r-2$. See Figure~\ref{fig-ps3}.
\begin{figure}[ht]
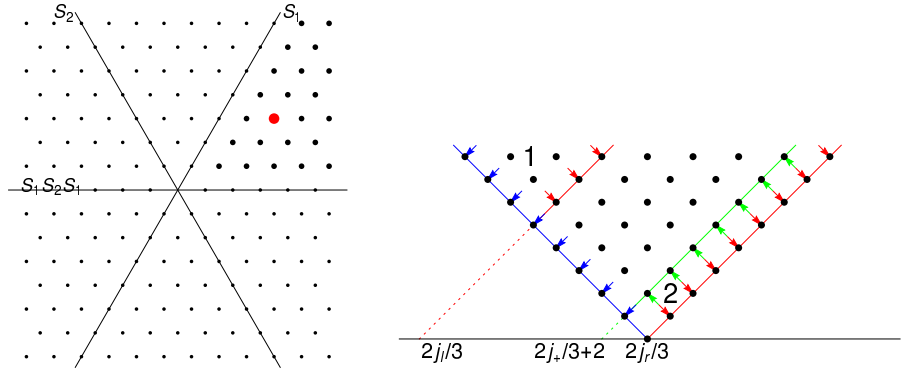

\begin{center}
\grf{4.7}{r3} \quad \grf{6.7}{s3}
\end{center}
\caption{The region $1\leq \nu_r \leq j_r-2$ in \S\ref{sect-ps3}.
Illustrated for $(j_r,\nu_r)=(7,3)$. }\label{fig-ps3}
\end{figure}

There are three genuine submodules; two of them are irreducible, the
third one is the direct sum of the irreducible submodules.
\be \renewcommand\arraystretch{1.4}
\begin{array}{|cll|}\hline
1& h_0= j_r+j_l=- j_+& p_0 = \frac{j_r-j_l}3=\nu_+\\
& A=\infty&B=\infty\\ \hline
2& h_0=2j_r & p_0=0\\
&A=\infty& B= \frac{j_r-j_+}3-1=\nu_l-1
\\ \hline\end{array}
\ee
Submodule $1$ has type $\II_+(j_+,\nu_+)$, and submodule $2$ is of type
$\IF(j_r,\nu_r)$. The unique quotient has type $\IF_+(j_r,-\nu_r)$.
Composition diagram:
\badl{cd3}\xymatrix@R.5cm@C1.1cm{
& \square \ar[r]^{\IF_+(j_r,-\nu_r)} & \square \ar[rd]^{\IF(j_r,\nu_r)}
\\
\{0\} \ar[ru] ^{\II_+(j_+,\nu_+)} \ar[rd]_{\IF(j_r,\nu_r)}
&& &H^{\xi_r,\nu_r}_K \\
& \square \ar[r]_{\IF_+(j_r,-\nu_r)} & \square
\ar[ru]_{\II_+(j_+,\nu_+)} }\eadl

A special case occurs if $\nu_r=j_j-2$. Then the $K$-types in the
submodule in the holomorphic discrete series form a boundary line of
the sector $\sect(j_r)$. See Figure~\ref{fig-ps3a}.
\begin{figure}[ht]
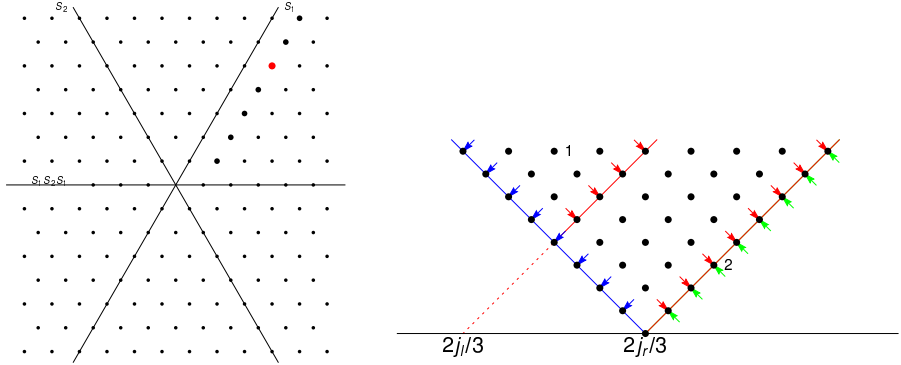

\begin{center}
\grf{4.7}{r3a} \quad \grf{6.7}{s3a}
\end{center}
\caption{The region $3\leq \nu_r= j_r-2$. Illustrated for
$(j_r,\nu_r)=(7,5)$. }\label{fig-ps3a}
\end{figure}

\subsubsection{}\label{sect-ps4}Spectral parameters
$j_r \in 2\ZZ_{\geq 1}$, $\nu_r=0$. See Figure~\ref{fig-ps4}.
\begin{figure}[ht]
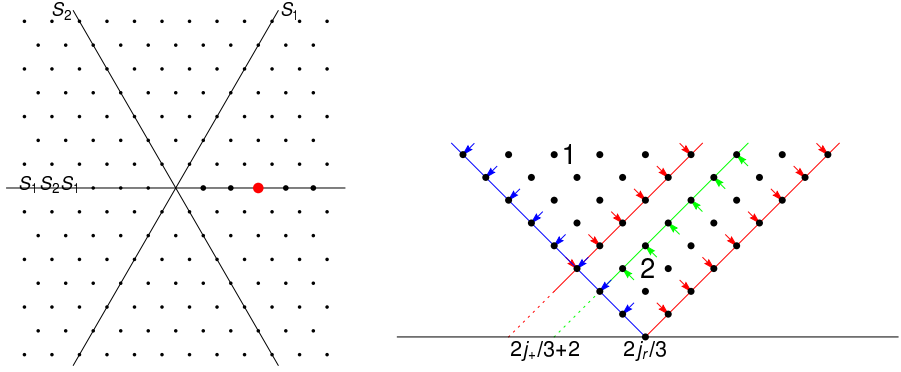

\begin{center}
\grf{4.7}{r4} \quad \grf{6.7}{s4}
\end{center}
\caption{The region $j_r\in 2\ZZ_{\geq 2}$, $\nu_r=0$, in
\S\ref{sect-rs5}. Illustrated for $(j_r,\nu_r)=(6,0)$. }\label{fig-ps4}
\end{figure}

In this case $(j_l,\nu_l)=(j_+,\nu_+)$, and in the figure on the right
there are no lattice points in the region between the lines through
$\frac23 j_l$ and through $\frac 23 j_++2$. The principal series module
$H^{\xi_r,0}_K$ is a direct sum of two irreducible submodules, of types
$\II_+(-j_+, j_+)=\II_+(-j_r/2,j_r/2)$ and $\IF(j_r,0)$, respectively.
\be \renewcommand\arraystretch{1.4}
\begin{array}{|cll|}\hline
1& h_0= j_r+j_l=-\ j_+& p_0 = \frac{j_r-j_l}3=\nu_+\\
& A=\infty&B=\infty\\ \hline
2& h_0=2j_r & p_0=0\\
&A=\infty& B= \frac{j_r-j_+}3-1=\frac{j_r}2-1
\\ \hline\end{array}
\ee
Composition diagram:
\badl{cd4} \xymatrix@R.5cm{
&\square \ar[rd]^{\IF(j_r,0)}
\\
\{0\}\ar[ru]^{\II_+(-j_+,j_+)} \ar[rd]^{\IF(j_r,0)}
&& H^{\xi_r,0}_K \\
&\square \ar[ru]_{\II_+(-j_+,j_+)} }\eadl

\subsubsection{}\label{sect-rs5}Spectral parameters $(j_r,-\nu_r)$ with
$1\leq \nu_r\leq j_r-2$. See Figure~\ref{fig-ps5}.
\begin{figure}[ht]
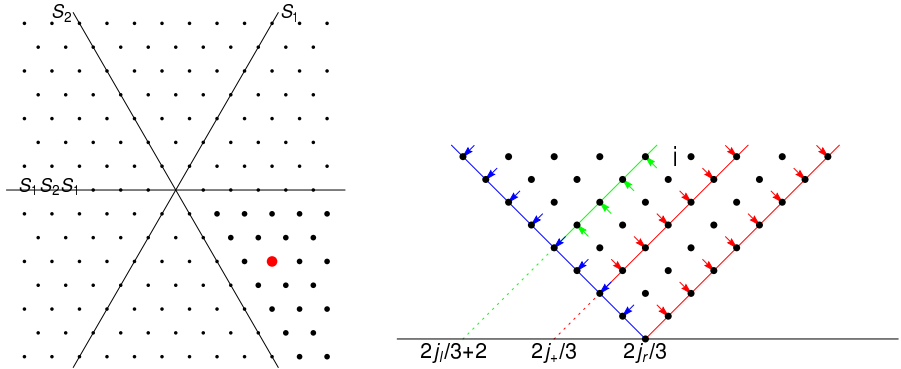

\begin{center}
\grf{4.7}{r5} \quad \grf{6.7}{s5}
\end{center}
\caption{The region of $(j_r,-\nu_r)$ for $1\leq \nu_r \leq j_r-2$.
Illustrated for $(j_r,-\nu_r)=(7,-3)$. }\label{fig-ps5}
\end{figure}

There is one irreducible submodule, of type $\IF_+(j_r,-\nu_r)$.
\be \renewcommand\arraystretch{1.4}
\begin{array}{|ll|}\hline
h_0 = j_++j_r =-j_l& p_0 = \frac{j_r-j_+}3 = \nu_l \\
A=\infty& B=\frac{j_+-j_l}3-1 =\nu_r-1
\\ \hline\end{array}
\ee
The two irreducible quotients have types $\II_+(j_+,\nu_+)$ and
$\IF(j_r,\nu_r)$. Composition diagram:\badl{cd5}
\xymatrix@R.5cm@C1.1cm{
&& \square \ar[rd] ^{\IF(j_r,\nu_r)} \\
\{0\} \ar[r]^{\IF_+(j_r,-\nu_r)}
& \square \ar[ru] ^{\II_+(j_+,\nu_+)} \ar[rd]_{\IF(j_r,\nu_r)} &&
H^{\xi_r,-\nu_r}_K\\
&& \square \ar[ru]_{\II_+(j_+,\nu_+)} }\eadl
A special case occurs if $\nu_r=1$. Then $j_l=j_+-3$. See
Figure~\ref{fig-ps5a}.
\begin{figure}[ht]
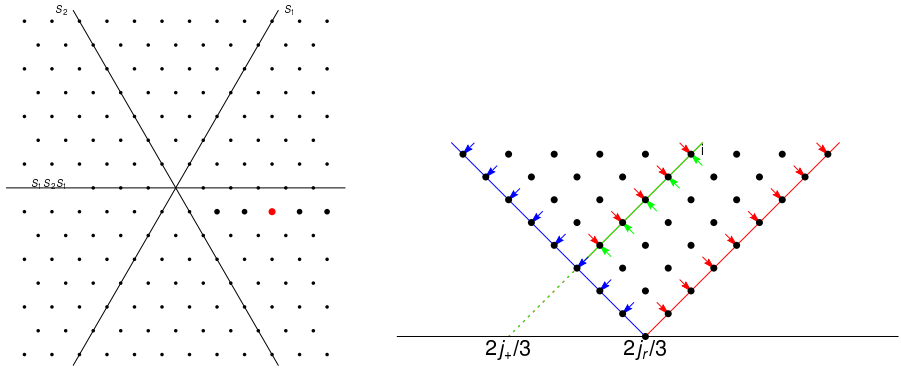

\begin{center}
\grf{4.7}{r5a} \quad \grf{6.7}{s5a}
\end{center}
\caption{The region of $(j_r,-1)$ for $j_r\geq 3$ odd, in
\S\ref{sect-rs6}. Illustrated for $j_r=7$. }\label{fig-ps5a}
\end{figure}

\subsubsection{}\label{sect-rs6}Spectral parameters $(j_r,-j_r)$ with
$j_r\geq 1$. See Figure~\ref{fig-ps6}.
\begin{figure}[ht]
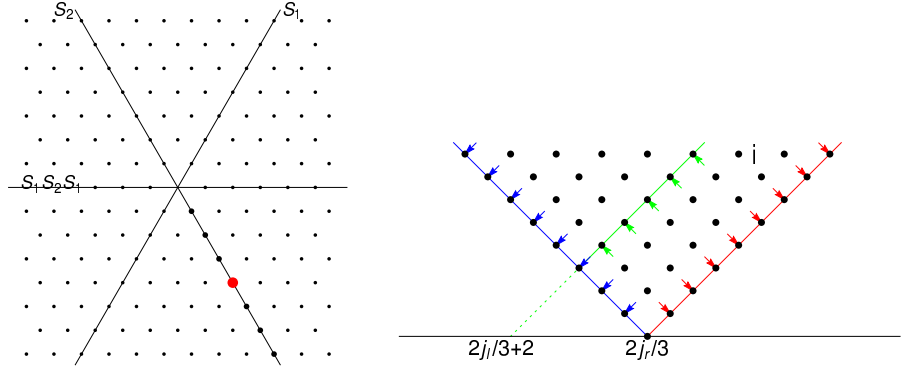

\begin{center}
\grf{4.7}{r6} \quad \grf{6.7}{s6}
\end{center}
\caption{The region of $(j_r,-j_r)$ for $j_r\geq 1$, in
\S\ref{sect-rs6}. Illustrated for $j_r=4$. }\label{fig-ps6}
\end{figure}

We have $(j_+,\nu_+) = (j_r,\nu_r)$. There is one irreducible submodule,
which has type $\IF(j_r,-j_r)$.
\be \renewcommand\arraystretch{1.4}
\begin{array}{|ll|}\hline
h_0 =2j_r& p_0 = 0\\
A=\infty& B=\frac{j_r-j_l}3-1 =j_r-1\\
\hline\end{array}
\ee
The irreducible quotient has type $\II_+(j_+,\nu_+)$. Composition
diagram:
\badl{cd6}\xymatrix@C1.2cm{ \{0\}\ar[r]^{\IF(j_r,-j_r)} & \square
\ar[r]^{\II_+(j_+,j_+)} & H^{\xi_r,-j_r}_K }\eadl

We note that in the pictures the modules in the isomorphism class
$\IF(j_r,-j_r)$ look the same as the modules in the class
$\IF(2j_r,0)$. To see that we have different isomorphism classes we
compare the full parameter sets, with $j\in \ZZ_{\geq 0}$.
\be\begin{array}{c|c|ccccc}
j_r &\text{type}&\ld_2 &h_0&p_0&A&B\\
\hline
2j& \IF(2j,0)&\frac 13 (2j)^2-4& 4j&0&\infty&j-1\\
j & \IF(j,-j)& \frac43 j^2-4& 2j &0&\infty& j-1
\end{array}
\ee

\subsubsection{}\label{sect-rs7}Spectral parameters $(j_+,-\nu_+)$ with
$\nu_+\geq |j_+|-2$. See Figure~\ref{fig-ps7}.
\begin{figure}[ht]
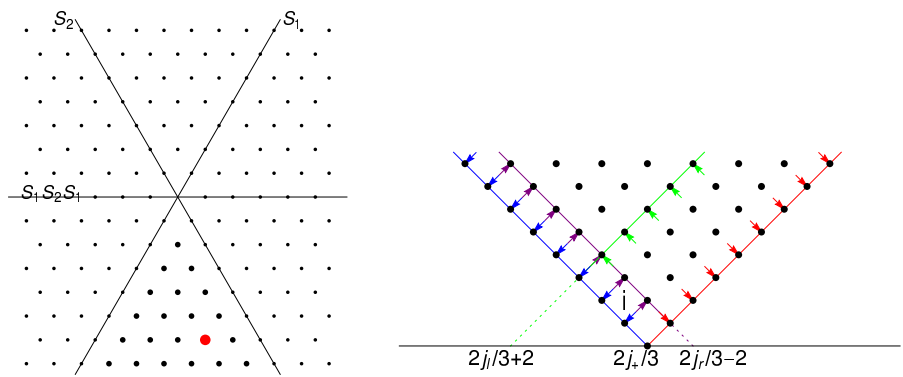

\begin{center}
\grf{4.7}{r7} \quad \grf{6.7}{s7}
\end{center}
\caption{The region of $(j_+,-\nu_+)$ for $\nu_+\geq |j_+|+2$, in
\S\ref{sect-rs7}. Illustrated for $j_+=2$, $-\nu_+=-6$.
}\label{fig-ps7}
\end{figure}

There is one irreducible submodule with finite dimension equal
to~$(A+1)(B+1)$.
\be \renewcommand\arraystretch{1.4}
\begin{array}{|ll|}\hline
h_0 = 2j_+ & p_0 = 0 \\
A= \frac{j_r-3-j_+}3 = \nu_l-1& B=\frac{j_+-(j_l+3)}3 \= \nu_r-1 \\
\hline\end{array}
\ee
The irreducible quotient has type $\II_+(j_+,\nu_+)$. Composition
diagram:
\badl{cd7} \xymatrix@R.5cm@C1.2cm{
&&\square \ar[rd]^{\IF_+(j_r,-\nu_r) }
\\
\{0\}\ar[r]^{\FF(j_r,-\nu_r)} & \square \ar[ru]^{\FI_+(j_l,-\nu_l)}
\ar[rd]_{\IF_+(j_r,-\nu_r) }
&& \square \ar[r]^{\II_+(j_+,\nu_+)}
& H^{\xi_+,-\nu_+}_K
\\
&&\square \ar[ru]_{\FI_+(j_l,-\nu_l)} }\eadl

\subsubsection{}\label{sect-rs8}Spectral parameters $(j_l,j_l)$ with
$j_l\leq -1$. See Figure~\ref{fig-ps8}. Compare
Subsection~\ref{sect-rs6}.
\begin{figure}[ht]
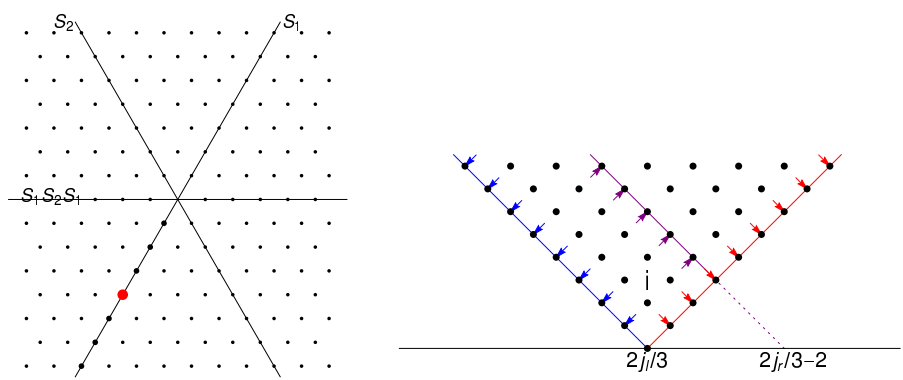

\begin{center}
\grf{4.7}{r8} \quad \grf{6.7}{s8}
\end{center}
\caption{The region of $(j_l,j_l)$ for $j_l\leq -1$, in
\S\ref{sect-rs8}. Illustrated for $j_l=-4$. }\label{fig-ps8}
\end{figure}

There is one irreducible submodule of type $\FI(j_l,j_l)$.
\be \renewcommand\arraystretch{1.4}
\begin{array}{|llll|}\hline
h_0= 2j_l & p_0=0 & A= \frac{j_r-3-j_l}3 = |j_l|-1& B=\infty
\\\hline
\end{array}
\ee
The irreducible quotient has type $\II_+(j_+,\nu_+)$. Composition
diagram:
\badl{cd8}\xymatrix@C1.2cm{ \{0\} \ar[r]^{\FI(j_l,j_l)} & \square
\ar[r]^{\II_+(j_+,-j_+)} & H^{\xi_l,j_l}_K }\eadl

\subsubsection{}\label{sect-rs9}Spectral parameters $(j_l,-\nu_l) $ with
$1\leq \nu_r \leq -j_l-2$. See Figure~\ref{fig-ps9}. Compare
\S\ref{sect-rs5}.
\begin{figure}[ht]
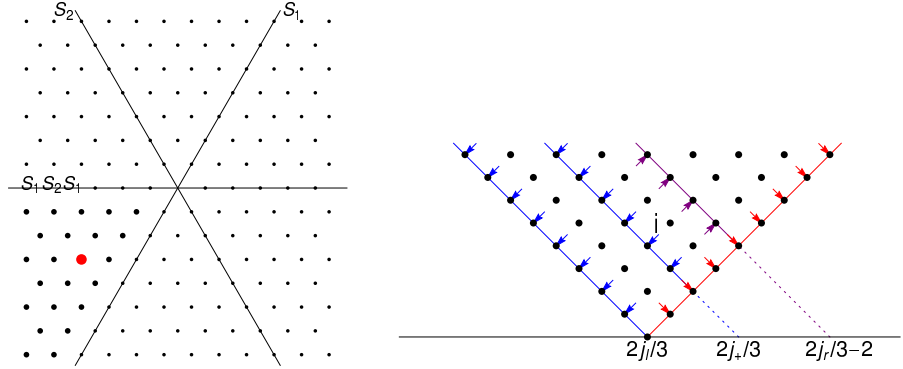

\begin{center}
\grf{4.7}{r9} \quad \grf{6.7}{s9}
\end{center}
\caption{The region $j_l+2 \leq -\nu_l \leq -1$, in \S\ref{sect-rs9}.
Illustrated for $j_l=-7$, $\nu_l=3$. }\label{fig-ps9}
\end{figure}

There is one irreducible submodule, of type $\FI_+(j_l,-\nu_l)$.
\be \renewcommand\arraystretch{1.4}
\begin{array}{|ll|}\hline
h_0= j_l+j_+ = -j_r & p_0 = \frac{j_+-j_l}3 = \nu_r \\
A= \frac{j_r-j_+}3-1 = \nu_l-1
&B=\infty\\ \hline
\end{array}
\ee
The irreducible quotients have types $\II_+(j_+,\nu_+)$ and
$\FI(j_l,\nu_l)$. Composition diagram:\badl{cd9}
\xymatrix@R.5cm@C1.1cm{
&&\square \ar[rd]^{\FI(j_l,\nu_l)}
\\
\{0\}\ar[r]^{\FI_+(j_l,-\nu_l)} & \square \ar[ru]^{\II_+(j_+,\nu_+)}
\ar[rd]_{\FI(j_l,\nu_l) }
&& H^{j_l,-\nu_l}_K
\\
&&\square \ar[ru]_{\II_+(j_+,\nu_+)} }\eadl

\subsubsection{}\label{sect-rs10} Spectral parameters
$j_l\in 2\ZZ_{\leq -1}$, $\nu_l=0$. See Figure~\ref{fig-ps10}. Compare
\S\ref{sect-ps4}.
\begin{figure}[ht]
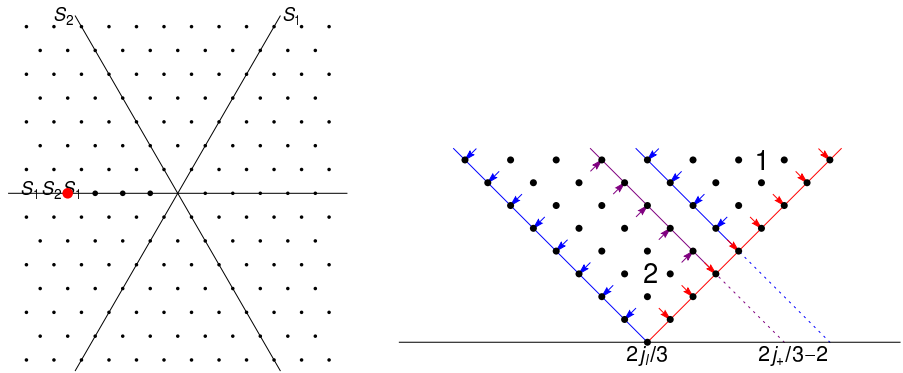

\begin{center}
\grf{4.7}{r10} \quad \grf{6.7}{s10}
\end{center}
\caption{The region $j_l\leq -2$ even, $\nu_l=0$, in \S\ref{sect-rs10}.
Illustrated for $j_l=-8$, $\nu_l=0$. }\label{fig-ps10}
\end{figure}

In this case $(j_r,\nu_r) = (j_+,\nu_+)$. The module $H^{\xi_l,0}_K$ is
a direct sum of two irreducible submodules, of types $\II_+(j_+,\nu_+)$
and $\FI(j_l,0)$.
\be \renewcommand\arraystretch{1.4}
\begin{array}{|cll|}\hline
1&h_0=2j_l & p_0=0 \\
&A=\frac{j_+-3-j_l}3=\nu_r-1& B=\infty\\ \hline
2&h=j_r+j_l=- j_+ & p_0=\frac{j_r-j_l}3=\nu_+\\
&A=\infty & B=\infty
\\\hline
\end{array}
\ee
Composition diagram:
\badl{cd10} \xymatrix@R.5cm{
&\square\ar[rd]^{\FI(j_l,0)}\\
\{0\} \ar[ru]^{\II_+(j_+,j_+)} \ar[rd]_{\FI(j_l,0)} && H^{\xi_l,0}_K\\
&\square \ar[ru]_{\II_+(j_+,j_+)}
\\
}\eadl

\subsubsection{}\label{sect-rs11}Spectral parameters
$1\leq \nu_l \leq -j_l-2$. See Figure~\ref{fig-ps11}. Compare
\S\ref{sect-ps3}.
\begin{figure}[ht]
\begin{center}
\grf{4.7}{r11} \quad \grf{6.7}{s11}
\end{center}
\caption{The region $1\leq \nu_l \leq -j_l-2$, in \S\ref{sect-rs11}.
Illustrated for $(j_l,\nu_l)=(-5,3)$. }\label{fig-ps11}
\end{figure}

There are two irreducible submodules, of types $\II_+(j_+,\nu_+)$ and
$\FI(j_l,\nu_l)$.
\be \renewcommand\arraystretch{1.4}
\begin{array}{|cll|}\hline
1&h_0=j_r+j_l = - j_+& p_0 \= \frac{j_r-j_l}3=\nu_+\\
&A=\infty&B=\infty\\ \hline
2&h_0=2j_l&p_0=0\\
&A=\frac{j_p-3-j_l}3 = \nu_r-1& B=\infty \\ \hline
\end{array}
\ee
The type of the irreducible quotient is $\FI_+(j_l,-\nu_l)$. Composition
diagram:
\badl{cd11}\xymatrix@R.5cm@C1.1cm{
& \square \ar[r]^{\FI_+(j_l,-\nu_l)}
&\square \ar[rd]^{\FI(j_l,\nu_l)}
\\
\{0\} \ar[ru]^{\II_+(j_+,\nu_+)} \ar[rd]_{\FI(j_l,\nu_l)}
&&& H^{\xi_l,\nu_l}_K
\\
&\square \ar[r]^{\FI_+(j_l,-\nu_l)}
&\square \ar[ru]_{\II_+(j_+,\nu_+)} }\eadl

\subsubsection{}\label{sect-rs12}Spectral parameters $(j_l,\nu_l)$ with
$-j_l = \nu_l\geq 1$. See Figure~\ref{fig-ps12}. Compare
\S\ref{sect-ps2}.
\begin{figure}[ht]
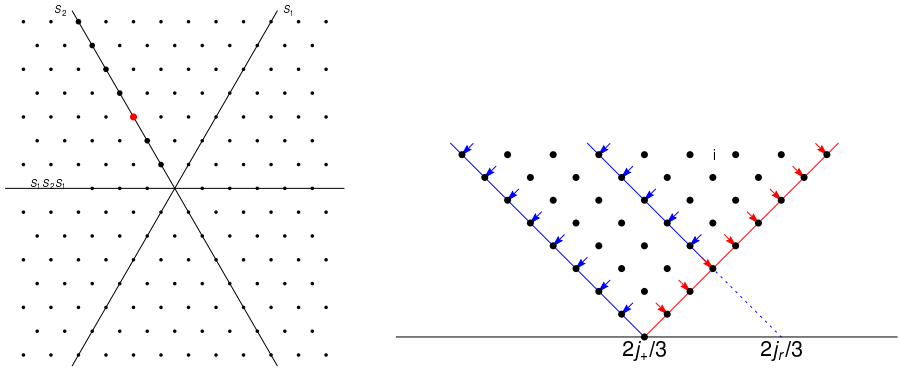

\begin{center}
\grf{4.7}{r12} \quad \grf{6.7}{s12}
\end{center}
\caption{The region $-j_+=\nu_+\geq 1$, in \S\ref{sect-rs12}.
Illustrated for $(j_+,\nu_+)=(-3,3)$. }\label{fig-ps12}
\end{figure}

In this case $(j_l,\nu_l) = (j_+,\nu_+)$. There is one irreducible
submodule, of type $\II_+(-\nu_+,\nu_+)  =\II_+(j_+,-j_+)$.
\be \renewcommand\arraystretch{1.4}
\begin{array}{|llll|}\hline
h_0=j_++j_r = - j_+& p_0 = \frac{j_r-j_+}3=\nu_+& A=\infty & B=\infty
\\ \hline
\end{array}
\ee
The irreducible quotient has type $\FI(j_l,-\nu_l) =\FI(j_l,j_l)$.
  Composition diagram:
\badl{cd12}\xymatrix@C1.2cm{ \{0\} \ar[r]^{\II_+(j_+,|j_+|)} & \square
\ar[r]^{\FI(j_l,j_l)}
& H^{\xi_+,|j_+|}_K }\eadl

\rmrk{Observation}We note that in in the cases that if $(j,\nu)$ is in
the interior of a Weyl chamber, which means that $(j,\nu)$ determines a
regular character of $ZU(\glie)$), then $H^{\xi_j,\nu}_K$ has for
$\nu\geq 1$ one or two irreducible submodules and one irreducible
quotient, and for $\nu\leq -1$ one irreducible submodule and one or two
irreducible quotients. If $(j,\nu)$ is on the wall between Weyl
chambers, then $H^{\xi_j,\nu}_K$ is the direct sum of two irreducible
submodules if $\nu=0$, and otherwise has an an irreducible submodule
$V$ such that $H^{\xi_j,\nu}_K/ V$ is the irreducible quotient. See
Collingwood \cite[p~46]{Coll85}, where these observations for a regular
character of $ZU(\glie)$ are placed in a wider context.

\subsubsection{Unique embedding} In Table~\ref{tab-isot-ps} we summarize
the irreducible iso\-mor\-phism types occurring in reducible principal
series modules.
\begin{table}[htp]\renewcommand\arraystretch{1.2}{\small
\[\begin{array}{|c|ccccc|}
\hline
\text{type}&&h_0&p_0&A&B \\ \hline
\II_+(j_+,\nu_+)&\nu_+\geq |j_+|+2&-j_+ & \nu_+ & \infty &\infty \\
&\multicolumn{5}{|l|}{\text{ it occurs in }H^{\xi_+,\nu_+}_K, \text{
Fig.~\ref{fig-ps1}}} \\
&\multicolumn{5}{|l|}{\text{ it occurs in }H^{\xi_r,\nu_r}_K, \text{
Fig.~\ref{fig-ps3}}} \\
&\multicolumn{5}{|l|}{\text{ it occurs in }H^{\xi_l,\nu_l}_K, \text{
Fig.~\ref{fig-ps11}}}\\\hline
\II_+(j_+,j_+) & j_+= \nu_+\in \ZZ_{\geq 1}& -j_+ & \nu_+ &
\infty&\infty \\
&\multicolumn{5}{|l|}{\text{ it occurs in }H^{\xi_+,j_+}_K, \text{
Fig.~\ref{fig-ps2}}} \\
&\multicolumn{5}{|l|}{\text{ it occurs in }H^{\xi_l,-0}_K, \text{
Fig.~\ref{fig-ps10}}}\\ \hline
\II_+(j_+,-j_+) & j_+=-\nu_+\in \ZZ_{\geq 1} & h_0= -j_+ & \nu_+&
\infty&\infty\\
&\multicolumn{5}{|l|}{\text{ it occurs in }H^{\xi_r,0}_K, \; j_r = -2
j_+ \text{ Fig.~\ref{fig-ps4}}} \\
&\multicolumn{5}{|l|}{\text{ it occurs in }H^{\xi_l,-j_l}_K, \; j_r =
-2 j_+ \text{ Fig.~\ref{fig-ps12}}}
\\\hline
\IF(j_r,\nu_r) & 1\leq \nu_r \leq j_r-2& 2j_r & 0 & \infty & \nu_l-1\\
&\multicolumn{5}{|l|}{\text{ it occurs in }H^{\xi_r,\nu_r}_K, \text{
Fig.~\ref{fig-ps3}}} \\ \hline
\IF(j_r,0) & j_r\in 2\ZZ_{\geq 1}& 2j_r & 0 &\infty & \nu_l-1 \\
\mu_2=\frac13j_r^2-4
&\multicolumn{5}{|l|}{\text{ it occurs in }H^{\xi_r,0}_K, \text{
Fig.~\ref{fig-ps4}}} \\ \hline
\IF_+( j_r,-\nu_r) & 1\leq \nu_r\leq j_r-2 & -j_l & \nu_l & \infty &
\nu_r-1\\
&\multicolumn{5}{|l|}{\text{ it occurs in }H^{\xi_r,-\nu_r}_K, \text{
Fig.~\ref{fig-ps5}}} \\ \hline
\IF(j_r,-j_r)& j_r\in \ZZ_{\geq 1} & 2j_r & 0 & \infty & j_r-1\\
\mu_2=\frac 43 j_r^2-4
&\multicolumn{5}{|l|}{\text{ it occurs in }H^{\xi_r,-\nu_r}_K, \text{
Fig.~\ref{fig-ps6}}} \\ \hline
\FI(j_l,\nu_l) & 1\leq \nu_l \leq -j_l-2 & 2j_l & 0 &\nu_r-1& \infty
\\
&\multicolumn{5}{|l|}{\text{ it occurs in }H^{\xi_l,\nu_l}_K, \text{
Fig.~\ref{fig-ps11}}}
\\ \hline
\FI(j_l,0) &j_l = -2 j_r=-j_+\in 2\Z_{\leq -1}& 2j_l& 0& \nu_r-1 &
\infty\\
\mu_2=\frac13j_l^2-4
&\multicolumn{5}{|l|}{\text{ it occurs in }H^{\xi_l,-0}_K, \text{
Fig.~\ref{fig-ps10}}}
\\ \hline
\FI_+(j_l,-\nu_l)&j_l+2 \leq -\nu_l -1& -j_r & \nu_r & \nu_l-1&\infty\\
&\multicolumn{5}{|l|}{\text{ it occurs in }H^{\xi_l,-\nu_l}_K, \text{
Fig.~\ref{fig-ps9}}}
\\ \hline
\FI(j_l,j_l)&j_l \in \ZZ_{\leq -1}& 2j_l & 0 & |j_l|-1&\infty\\
\mu_2=\frac43 j_l^2-4
&\multicolumn{5}{|l|}{\text{ it occurs in }H^{\xi_l,-j_l}_K, \text{
Fig.~\ref{fig-ps8}}}
\\ \hline
\FF(j_+,-\nu_+)& \nu_+ \geq |j_+|+2& 2j_+& 0 & \frac{\nu_+-j_+}2-1 &
\frac{j_++\nu_+}2-1 \\
&\multicolumn{5}{|l|}{\text{ it occurs in }H^{\xi_+,-\nu_+}_K, \text{
Fig.~\ref{fig-ps7}}}
\\ \hline
\end{array}
\]}
\caption{Isomorphism types under integral parametrization, and their
embeddings in principal series representations.} \label{tab-isot-ps}
\end{table}

We see that almost all types of irreducible $(\glie,K)$-modules under
integral para\-metrization occur as a submodule in only one principal
series module $H^{\xi,\nu}_K$. \il{ueb}{unique embedding in principal
series} Only for the large discrete series type there are several
embeddings: three for large discrete series, and two for limits of
large discrete series. See Collingwood, \cite{Coll84}, p~115-119 in
\cite[\S5.3]{Coll85}, for the unique embedding property in a wider
context. See also Figure~\ref{fig-isot}.

For the spectral parameters in the notation $\II_+(j_+,\nu_+)$ we have
chosen to use the dominant Weyl chamber. For all other isomorphism
classes in the list in \S\ref{sect-list-irr} the spectral parameters
$(j,\nu)$ is uniquely determined by the unique principal series module
in which it occurs as a submodule. We note that the Weyl orbit
$(j_l,\pm\nu_l)$, $(j_+,\pm \nu_+)$, $(j_r,\pm \nu_r)$ corresponds to a
unique character of the ring $ZU(\glie)$. The choice of an element of
this orbit determines a character of $AM$, and hence of $NAM$. Induced
up to $G$, this provides a specific principal series module.

\begin{figure}[ht]
\begin{center}\grf{10}{isot}\end{center}
\caption{ Lattice point $(j,\nu) \in L$ and embeddings of isomorphism
types in $H^{\xi_j,\nu}_K$.
\\
The limits of large discrete series are on the walls between the large
discrete series and the holomorphic and antiholomorphic discrete
series. The horizontal walls carry the limits of antiholomorphic and
holomorphic discrete series.\\
The thin representations are the Langlands representations that are
unitarizable (with $\nu=-1$). The point $(j,\nu)=(0,-2)$ corresponds to
the trivial representation. The unitarizable modules occur for
$\nu \geq -1$.\\
The circled dots form the root system. This Weyl orbit corresponds to
the character of $ZU(\glie)$ represented by $\rho=(0,2)$, which is half
the sum of the positive roots.} \label{fig-isot}
\end{figure}
\subsection{Characterization by sets of K-types} Let us take
$j_l<j_+<j_r$. The corresponding spectral parameters $(j_l,\pm \nu_l)$,
$(j_+,\pm \nu_+)$, and $(j_r,\pm\nu_r)$ form one Weyl orbit,
determining a character $\ps$ of $ZU(\glie)$. Each of the six
corresponding principal series modules contains one or two irreducible
$(\glie,K)$-modules. We consider the sets of $K$-types occurring in
each of these irreducible modules. These $K$-types correspond to points
in the union
\[ \sect(j_l)\cup \sect(j_+)\cup\sect(j_r)\,.\]
In Figure~\ref{fig-ps7} we see that the $K$-types $\tau^h_p$ in a
representation in the class $\FF(j_+,-\nu_+)$ satisfy
\[ \frac{2j_+}3 \leq p+\frac h3 < \frac{2j_r}3\,,\qquad \frac{2j_l}3 <
p-\frac h3 \leq \frac{2j_+}3\,.\]
This corresponds to the rectangular region near the base point
$\bigl( \frac{2j_+}3, 0\bigr)$, in Figure~\ref{fig-Ktp}.
\begin{figure}[ht]
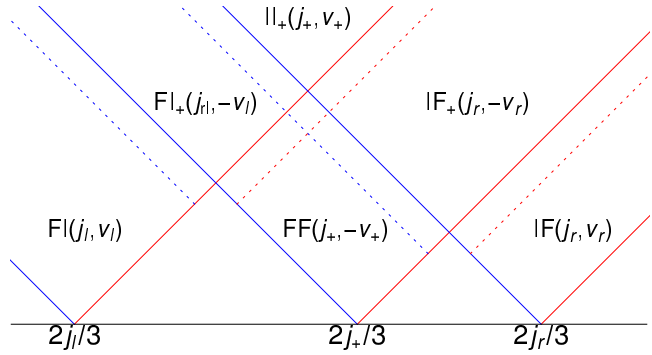

\begin{center}\grf{8.5}{Ktp}\end{center}
\caption{$K$-types of irreducible representations with character of
$ZU(\glie)$ determined by $(j_+,\nu_+)$ in the interior of the positive
Weyl chamber. }\label{fig-Ktp}
\end{figure}

Going through the other irreducible modules in Figures \ref{fig-ps1},
\ref{fig-ps3}, \ref{fig-ps5}, \ref{fig-ps9}, and~\ref{fig-ps11}, we
find that the $K$-types of the representations are in disjoint regions
in Figure~\ref{fig-Ktp}. The type $\II_+(j_+,\nu_+)$ (large discrete
series) occurs three times, but, of course, always with the same set of
$K$-types.

Figure~\ref{fig-Ktp} illustrates the following result.
\begin{prop}\label{prop-Ktp} Let $V_1$ and $V_2$ be irreducible
$(\glie,K)$-modules in which $ZU(\glie)$ acts by the same character but
which are not isomorphic. Then the set of $K$-types occurring in $V_1$
is disjoint from the set of $K$-types occurring in~ $V_2$.
\end{prop}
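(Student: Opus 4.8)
The plan is to reduce the whole statement to the combinatorics of $K$-types of special modules. Since every irreducible $(\glie,K)$-module is a special module (Definition~\ref{def-scm}) and, by the subquotient theorem recalled in \S\ref{sect-iclirr}, embeds in a principal series module sitting inside $\Ffu_0^\ps$, I may take $V_1$ and $V_2$ to be irreducible special modules realized in $\Ffu_0^\ps$; then $C$ acts on both by the common scalar $\mu_2=\ps(C)$, and Propositions~\ref{prop-kdso} and~\ref{prop-kuso} apply. For a $K$-type $\tau^h_p$ I record the two coordinates $\frac h3+p$ and $\frac h3-p$. By the action of the shift operators in the $(h/3,p)$-plane (see the ``$K$-types'' remark in \S\ref{sect-lnab}), $\sh31$ and $\sh{-3}{-1}$ change $\frac h3+p$ by $\pm2$ and fix $\frac h3-p$, while $\sh{-3}1$ and $\sh3{-1}$ do the opposite. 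Hence \eqref{absum} shows that the set of $K$-types of a special module is a coordinate rectangle in these two coordinates: a product of an interval in $\frac h3+p$ and an interval in $\frac h3-p$.

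First I would pin down the four edges of this rectangle. The minimal $K$-type $\tau^{h_0}_{p_0}$ is the corner at which both downward shift operators vanish, so Proposition~\ref{prop-kdso} forces $\frac{h_0}3-p_0=\frac23 j_a$ and $\frac{h_0}3+p_0=\frac23 j_b$ with $j_a,j_b\in\wo^1(\ps)$; thus the lower end of the $\frac h3+p$–interval and the upper end of the $\frac h3-p$–interval both lie on the grid $\frac23\wo^1(\ps)$. When an upward shift operator vanishes at a far corner, Proposition~\ref{prop-kuso}(ii) places that edge at $\frac h3+p=\frac23 j'-2$, respectively $\frac h3-p=\frac23 j'+2$, for some $j'\in\wo^1(\ps)$ (and the edge is at $+\infty$, resp.\ $-\infty$, when the operator is injective). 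So every finite edge sits exactly one lattice step inside one of the grid lines $\frac h3\pm p\in\frac23\wo^1(\ps)$.

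Next I would use that an irreducible special module is determined by its minimal $K$-type together with $\mu_2$. Indeed, starting from a minimal vector $v(0,0)$, Lemma~\ref{lem-mv}(iii) and~(ii) compute $\sh{-3}{-1}\sh31 v(0,0)$ and $\sh3{-1}\sh{-3}1 v(0,0)$, hence $\al_-(1,0)$ and $\al_+(0,1)$, from $(\mu_2,h_0,p_0)$ alone; this starts the induction of Lemma~\ref{lem-iae}, so all coefficients $\al_\pm(a,b)$ and therefore the parameters $A,B$ at which the shift operators first vanish are determined by $(\mu_2,h_0,p_0)$. By Proposition~\ref{prop-scm} the isomorphism class is then determined by $(\mu_2,h_0,p_0)$. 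Thus proving disjointness reduces to the claim that two irreducible modules with the common character $\ps$ that share a $K$-type must have the same minimal $K$-type.

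For generic parametrization this claim is immediate: by Proposition~\ref{prop-sectors} the $K$-types of $\Ffu_\Nfu^\ps$ lie in $\bigcup_{j\in\wo^1(\ps)}\sect(j)$, these sectors are pairwise disjoint (the remark following Definition~\ref{def-sect}), and each irreducible module is some $H^{\xi_j,\nu}_K$ with $K$-type set exactly $\sect(j)$ by Proposition~\ref{prop-dim}, so non-isomorphic ones (distinguished by $h_0=2j$) cannot meet. The substantial case is integral parametrization, $\ps\in\WOI$, where $\sect(j_l),\sect(j_+),\sect(j_r)$ overlap. Here the finitely many admissible minimal $K$-types correspond to the isomorphism classes tabulated in \S\ref{sect-list-irr}, and I would finish by verifying that the corresponding rectangles tile $\bigcup_j\sect(j)$ without overlap: since each finite edge lies at a grid line shifted inward by $2$, a rectangle ending at $\frac h3+p=\frac23 j'-2$ is separated by the empty lattice strip $\frac23 j'-2<\frac h3+p<\frac23 j'$ from any rectangle whose $\frac h3+p$–interval begins at $\frac23 j'$, and symmetrically in the other coordinate. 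I expect this last verification to be the main obstacle: confirming from the explicit parameter sets in Table~\ref{tab-isot-ps} that the finite edges always land at $\frac23\wo^1(\ps)\mp2$ in exactly the way that separates adjacent rectangles — including the configurations (for instance large discrete series against the finite-dimensional module) where a bounded rectangle must be shown not to protrude into an unbounded one — is precisely the content illustrated by Figure~\ref{fig-Ktp}, and it has to be read off case by case rather than from a single uniform inequality.
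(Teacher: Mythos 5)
Your proposal is correct and follows essentially the same route as the paper: reduce via the Casselman--Mili\v ci\'c embedding to the explicit list of irreducible submodules of principal series modules, treat simple and generic multiple parametrization by the disjointness of the sectors $\sect(j)$, and settle integral parametrization by a finite case-by-case comparison of the $K$-type sets from Table~\ref{tab-isot-ps} (the content of Figure~\ref{fig-Ktp}), which the paper likewise defers to that figure rather than to a uniform inequality. Your rectangle-in-$(\frac h3+p,\frac h3-p)$ coordinates framing, with finite edges pinned one lattice step inside the grid $\frac23\wo^1(\ps)$ by Propositions~\ref{prop-kdso} and~\ref{prop-kuso}, is a clean way to organize that same check but does not change the method.
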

\begin{proof}Let us consider the character $\ps$ of $ZU(\glie)$ that is
common to $V_1$ and $V_2$. It is represented by a set of spectral
parameters $(j_0,\nu_0)\in \ZZ \times \left(i\RR \cup \RR\right)$ that
forms one orbit of the Weyl group. Table~\ref{tab-parms},
p~\pageref{tab-parms}, distinguishes several types of Weyl orbits.
Under simple parametrization, the Weyl orbit consists of $(j_0,\nu_0)$
and $(j_0,-\nu_0)$ with $3\nu_0\not\equiv j_0 \bmod 2$, and of the
element $(0,0)$. The $K$-types in the isomorphic $(\glie,K)$-modules
$H^{\xi_{j_0},\nu_0}_K$ and $H^{\xi_{j_0},\nu_0}_K$ correspond to the
points in the sector $\sect(j_0)$, in Definition~\ref{def-sect}.

Under generic multiple parametrization, the Weyl orbit consists of six
elements, occurring in pairs $(j,\nu)$, $(j,-\nu)$. The different pairs
correspond to non-isomorphic modules, and have disjoint sets of
$K$-types. See Figure~\ref{fig-sectors}.

There remains the integral parametrization, with spectral parameters
$(j,\nu_0)\in \ZZ^2$, $j_0\equiv\nu_0\bmod 2$ and $(j_0,\nu_0)\neq
(0,0)$. The Weyl orbits have six or three elements. In constructing
Figure~\ref{fig-Ktp} we have checked that non-isomorphic modules have
disjoint sets of $K$-types for orbits with six elements. We leave to
the reader the analogous checks for the orbits with three elements,
corresponding to $(j_+,\nu_+)$ on one of the boundary lines of the
positive Weyl chamber $L^+$.

By the theorem of Casselman and Mili\v ci\'c quoted in
\S\ref{sect-iclirr}, we conclude that we have taken care of all
isomorphism classes of irreducible $(\glie,K)$-modules.
\end{proof}

\subsection{Logarithmic submodules}\label{sect-log}The module
$\Ffu^{\ps[j,0]}_0$ with an odd value of $j$ (or with $j=0$) contains
the submodule $\Lfu_0^{j,0}$, discussed in Proposition~\ref{prop-iLog}.
We have to look also at $j\in 2\ZZ_{\neq 0}$, which comes under
integral parametrization.\il{ls1}{logarithmic submodule}
\begin{prop}\label{prop-logIP}
For $j\in 2\ZZ_{\neq 0}$ we
put\ir{tLfuip}{\tilde\Lfu_0^{j,0}}\il{ldph}{$\ldph h{p}{r}{q}$}
\badl{tLfuip} \tilde \Lfu_0^{j,0} &\= \bigoplus_{(h/3,p) \in \sect(j),\,
q\equiv p(2),\, |q|\leq p} \Bigl( \CC\, \ldph h{p}{r}{q}(0) + \CC\,\kph
h{p}{r}{q}(0)
\Bigr)\,,\\
\ldph h{p}{r}{q}&(0;n\am(t)k)
\= t^2\log t\; \Kph h{p}{r}{q}(k)\,, \eadl
with $r=\frac13 (h-2j)$. Then $\tilde \Lfu_0^{j,0}$ is a
$(\glie,K)$-submodule of $\Ffu_0^{\ps[j,0]}$ and
\be\label{0q} \tilde \Lfu_0^{j,0} \bigm/ H^{\xi,0}_K \cong
H^{\xi,0}_K\,.\ee
\end{prop}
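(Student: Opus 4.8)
The plan is to realize the logarithmic functions as $\nu$-derivatives of the principal-series family and to exploit the fact that the $\glie$-action depends holomorphically (indeed polynomially) on $\nu$. The starting observation is that
\[
\ldph h{p}{r}{q}(0) \= \partial_\nu \kph h{p}{r}{q}(\nu)\bigm|_{\nu=0}\,,
\]
because $\kph h{p}{r}{q}(\nu;n\am(t)k) = t^{2+\nu}\Kph h{p}{r}{q}(k)$ and $\partial_\nu t^{2+\nu}|_{\nu=0} = t^2\log t$. To see that $\tilde\Lfu_0^{j,0}\subset \Ffu_0^{\ps[j,0]}$ I would check the eigenfunction equations of Lemma~\ref{lem-efeq-t}. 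For the indices occurring in $\tilde\Lfu_0^{j,0}$ one has $h-3r=2j$, so the cubic equation in the second line is satisfied identically (its first factor vanishes), while with $\nu=0$ the first equation reduces to the Euler equation $t^2 f''-3tf'+4f=0$, whose indicial polynomial is $(s-2)^2$ with the double root $s=2$. Hence both $t^2$ and $t^2\log t$, that is both $\kph h{p}{r}{q}(0)$ and $\ldph h{p}{r}{q}(0)$, solve the equations and lie in $\Ffu_0^{\ps[j,0]}$.

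Next I would prove invariance. The vectors $\kph h{p}{r}{q}(\nu)$ form a holomorphic family in $\nu$, and the action of $K$ and of each generator of $\glie_c$ is given by the $\nu$-independent $\klie$-action of Table~\ref{tab-RLdK} together with the explicit shift-operator formulas \eqref{shps}, all of whose coefficients are polynomials in $\nu$. Writing such a relation schematically as $D\,\kph(\nu)=\sum P(\nu)\,\kph(\nu)$, with $D$ a fixed (i.e.\ $\nu$-independent) differential operator and $P\in\CC[\nu]$, I differentiate in $\nu$ and set $\nu=0$, obtaining
\[
D\,\ldph(0)\=\sum P'(0)\,\kph(0)+\sum P(0)\,\ldph(0)\,.
\]
Since the shift operators preserve $h-3r=2j$ (one checks $(h\pm3)-3(r\pm1)=h-3r$ from \eqref{shps}) and the $\klie$-action only touches the factor $\Kph h{p}{r}{q}$, both sums run over indices in $\sect(j)$ with the prescribed $r$. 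This shows $\tilde\Lfu_0^{j,0}$ is closed under $\glie$ and $K$, hence is a $(\glie,K)$-submodule of $\Ffu_0^{\ps[j,0]}$ containing $H^{\xi,0}_K=\bigoplus\CC\,\kph h{p}{r}{q}(0)$.

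Finally, for the isomorphism of the quotient, the displayed formula exhibits the action in the ordered basis $\bigl(\kph(0),\ldph(0)\bigr)$ as block upper triangular: the $\kph(0)$ alone span $H^{\xi,0}_K$ with action coefficients $P(0)$, and in the quotient $\tilde\Lfu_0^{j,0}/H^{\xi,0}_K$ the classes $[\ldph(0)]$ transform by $D\,[\ldph(0)]=\sum P(0)\,[\ldph(0)]$, since the $[\kph(0)]$ vanish. As this is the same coefficient $P(0)$ governing $D\,\kph(0)=\sum P(0)\,\kph(0)$ in $H^{\xi,0}_K$, the linear bijection $[\ldph h{p}{r}{q}(0)]\mapsto\kph h{p}{r}{q}(0)$ intertwines the $(\glie,K)$-actions, giving the desired isomorphism. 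The main point requiring care is precisely the bookkeeping that the differentiated coefficients $P'(0)$ feed only into the $H^{\xi,0}_K$-components and therefore drop out of the quotient action; this is exactly what the upper-triangular form guarantees, and it is where the holomorphy of the family does the work in place of the singular intertwining operator $\ii_0$ (which is unavailable here because $(j,0)$ with $j\in 2\ZZ_{\neq0}$ gives integral parametrization).
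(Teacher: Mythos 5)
Your proposal is correct and follows essentially the same route as the paper: there, $\ldph hprq(0)$ is obtained as the value at $\nu=0$ of $\frac1{2\nu}\bigl(\kph hprq(\nu)-\kph hprq(-\nu)\bigr)$, which coincides with your $\partial_\nu \kph hprq(\nu)\bigm|_{\nu=0}$, and the paper likewise differentiates the holomorphic action relations to get the upper-triangular structure $\XX\,\ldph hprq(0)=\sum f_j(0)\,\ldph{h_j}{p_j}{r_j}{q_j}(0)+\sum f_j'(0)\,\kph{h_j}{p_j}{r_j}{q_j}(0)$. The only cosmetic difference is that the paper establishes \eqref{0q} by explicitly computing where the shift operators vanish modulo $H^{\xi,0}_K$, whereas you read the isomorphism off directly from the matching diagonal coefficients $f_j(0)$ (together with the tacit, but correct, fact that terms leaving $\sect(j)$ vanish because the corresponding $\Kph{h'}{p'}{r'}{q'}$ with $|r'|>p'$ do not exist).
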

\begin{proof}We put for $\nu \in \CC$
\be \label{ldph!=0} \ldph h{p}{r}{q}(\nu) = \frac1{2\nu} \bigl( \kph
h{p}{r}{q}(\nu) - \kph h{p}{r}{q}(-\nu) \bigr)\,.\ee
This extends holomorphically to $\nu=0$ with value
$\ldph h{p}{r}{q}(0)$.

For any $\XX\in \glie_c$ the derivative $\XX \kph h{p}{r}{q}(\nu)$ is a
linear combination of elements
$f_j(\nu) \, \kph{h_j}{p_j}{r_j}{q_j}(\nu)$ with holomorphic functions
$f_j$. This implies that $\XX \ldph h{p}{r}{q}(0)$ is a linear
combination of
$f_j(0)\, \ldph h{p}{r}{q}(0) + f_j'(0)\, \kph h{p}{r}{q}(0)$. Applying
this approach to the shift operators, we get with \eqref{shps}:
\begin{align*}
\sh{\pm 3}1 \ldph h{p}{r}{p}(0) &\= \frac{2+p\pm r}{8(p+1)} \Bigl(
(4\pm h + 2p\mp r) \, \ldph{h\pm 3}{(p+1)}{(r\pm 1)}{(p+1)}(0)\\
&\qquad\qquad \hbox{}
+ 2 \kph{h\pm 3}{(p+1)}{(r\pm 1)}{(p+1)}(0)\Bigr)\,,\\
\sh{\pm3}{-1}\ldph h{p}{r}{p}(0) &\= \frac p{4(p+1)} \Bigl( (\pm h-2p
\mp r)
\ldph{h\pm 3}{(p-1)}{r\pm 1}{p-1}(0)\\
&\qquad\qquad\hbox{}
+ 2 \kph {h\pm 3}{(p-1)}{r\pm 1}{p-1}(0) \Bigr)\,. \end{align*}

To determine the kernels of the shift operators, we note first that on
the boundary lines $h=2j\pm 3p$ of the sector $\sect(j)$ we have the
vanishing of $\sh{\pm 3}{-1} \ldph h{p}{\pm p}{p}$ by the properties of
$\Kph h{p}{r}{p}$. We check what happens on the lines indicated in
Propositions \ref{prop-kdso} and~\ref{prop-kuso}. The Weyl orbit of
$(j,0)$ has the two other elements $(-j,j)$ and $(-j,-j)$. On the line
$h= 2j'\pm 3p = -j\pm 3p$ we find
\be \sh{\pm 3}{-1}\ldph h{p}{r}{p} \= \frac p{2(p+1)} \, \kph {h\pm
3}{p-1}{r\pm 1}{p-1}(0)\,.\ee
On the line $h=-j\mp3p\mp6$:
\be \sh{\pm 3}1 \ldph h{p}{r}{p} \= \frac{2+p\pm r}{4(p+1) } \kph{h\pm
3}{p+1}{r\pm 1}{p+1}(0)\,.\ee
This shows that $\tilde \Lfu_0^{j,0}$ is a $(\glie,K)$-submodule of
$\Ffu^{\ps[j,0]}_0$, and that, modulo $H^{\xi,0}_K$, the shift
operators are zero on $\ldph h{p}{r}{p}(0)$ under the same conditions
as for $\kph h{p}{r}{p}(0)$. This gives~\eqref{0q}.
\end{proof}

\subsection{Intersection of kernels for the principal
series}\label{sect-iskdso}
Here we combine the necessary conditions in Proposition \ref{prop-kdso}
to get a condition for both downward shift operators to have a
non-trivial kernel.

First we give a lemma that will be useful in the next sections as well.
\begin{lem}\label{lem-kdso}Let $h\equiv p \bmod 2$, $p\in \ZZ_{\geq 1}$.
Define the integers $j_1<j_2$ by
\be\label{hjp} h-3p\= 2j_1\,, \quad h+3p \= 2 j_2\,.\ee
\begin{enumerate}
\item[i)] The set $ \bigl\{ j_1, j_2,-h\bigr\}$ is equal to the set
$\wo^1(\ps)$ for the unique $\ps\in \WOI$ represented by $(-h,p)$.
\item[ii)] In terms of the notations and relations in~\eqref{jnurels} we
have $\{j_1,j_2,-h\} = \{ j_l,j_+,j_r\}$, and one of the following
situations occurs:
\be\label{jjhp}
\begin{array}{c|cc|rlrl}
\text{\rm case}
& j_1 & j_2 & \multicolumn{2}{c}h& \multicolumn{2}{c}p \\ \hline
1& j_l & j_+& j_l+j_+ &= -j_r& \frac13(j_+-j_l)&= \nu_r
\\
2 & j_l & j_r & j_l + j_ r&= -j_+ & \frac13(j_r-j_l)&=\nu_+
\\
3 & j_+ & j_r & j_+ + j_r&=-j_l & \frac13(j_r-j_+)&= \nu_l
\end{array}\ee
If $j_l=j_+$ case 1 does not occur, and cases 2 and 3 coincide; if
$j_+=j_r$ case 3 does not occur, and cases 1 and 2 coincide.
\item[iii)] Let $\nu_1,\nu_2$ be the unique non-negative integers such
that $(j_n,\nu_n)\in \wo(\ps)^+$ for $n=1,2$. Then
$\nu_1=\frac12|h+p|$, and $\nu_2= \frac12|h-p|$.
\item[iv)]The quantities $\nu_1\pm j_1$ and $\nu_2\pm j_2$ have the
following values:
\be
\begin{array}{|c|cc|cccc|}\hline
&j_1&j_2&\nu_1+j_1&\nu_1-j_1&\nu_2+j_2&\nu_2-j_2 \\ \hline
1&j_l&j_+&-2p&2(\nu_l+p)&2p&2(\nu_+-p)
\\
2&j_l&j_r&2(\nu_l-p)&2p&2p&2(\nu_r-p)
\\
3&j_+&j_r&2(\nu_+-p)&2p&2(\nu_r+p) & -2p
\\ \hline
\end{array}\ee
\end{enumerate}
\end{lem}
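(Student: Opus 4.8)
The statement is purely a matter of the explicit Weyl-group action \eqref{S12} together with the chamber conventions and relations of \S\ref{sect-lp}, so the plan is to compute the $W$-orbit of $(-h,p)$ once and then read off all four parts by substitution. First I would record the elementary consequences of \eqref{hjp}: since $h\equiv p\bmod2$ both $j_1$ and $j_2$ are integers, and $j_1+j_2=h$, $j_2-j_1=3p>0$, so indeed $j_1<j_2$.

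For i) I would start from $(-h,p)$, which lies in the lattice $L$ because $-h\equiv p\bmod2$ and is not $(0,0)$ since $p\geq1$; hence $\ps=\psi[-h,p]\in\WOI$. Applying the generators in \eqref{S12} gives
\[
\Ws1(-h,p)=\Bigl(\tfrac{3p+h}2,\tfrac{p-h}2\Bigr)=\Bigl(j_2,\tfrac{p-h}2\Bigr),\qquad
\Ws2(-h,p)=\Bigl(\tfrac{h-3p}2,\tfrac{p+h}2\Bigr)=\Bigl(j_1,\tfrac{p+h}2\Bigr),
\]
so $-h,\,j_1,\,j_2\in\wo^1(\ps)$. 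A short computation shows that the reflection $\Ws1\Ws2\Ws1\in W$ acts by $(j,\nu)\mapsto(j,-\nu)$; composing it with $1,\Ws1,\Ws2$ accounts for the remaining three elements of the six-point orbit without changing the $j$-coordinate, so every orbit point has first coordinate in $\{-h,j_1,j_2\}$. Therefore $\wo^1(\ps)=\{j_1,j_2,-h\}$ (as a set, with a repetition exactly when $h=\pm p$), which is i).

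For ii) I would use that, by the conventions of \S\ref{sect-lp}, $\wo^1(\ps)=\{j_l,j_+,j_r\}$ with $j_l\le j_+\le j_r$ (the inequalities follow from $\nu_+\geq|j_+|$ via \eqref{jnurels}) and $j_l+j_++j_r=0$. Comparing the ordering $j_1<j_2$ with the position of the third value $-h$ produces the three cases of \eqref{jjhp}: $-h=j_r$ (Case~1), $-h=j_+$ (Case~2), $-h=j_l$ (Case~3). The formula $h=j_1+j_2$ combined with $j_l+j_++j_r=0$ gives the tabulated values of $h$, while $p=\tfrac13(j_2-j_1)$ together with the identities $j_+-j_l=\tfrac32(j_++\nu_+)$, $j_r-j_l=3\nu_+$, $j_r-j_+=\tfrac32(\nu_+-j_+)$ — all immediate from \eqref{jnurels} — turn $p$ into $\nu_r$, $\nu_+$, $\nu_l$ respectively. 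When $j_l=j_+$ Case~1 is impossible and Cases~2,3 merge; when $j_+=j_r$ Case~3 is impossible and Cases~1,2 merge, which are precisely the degenerations the lemma records.

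For iii), from the two orbit elements above and the $\nu$-reflection $\Ws1\Ws2\Ws1$, the orbit contains $(j_1,\pm\tfrac{h+p}2)$ and $(j_2,\pm\tfrac{h-p}2)$; since the element of $\wo(\ps)^+$ with a prescribed first coordinate is the one with nonnegative (real, integral) second coordinate, $\nu_1=\tfrac12|h+p|$ and $\nu_2=\tfrac12|h-p|$. For iv) I would observe that in each case $(j_1,\nu_1)$ and $(j_2,\nu_2)$ equal two of the three standard representatives $(j_l,\nu_l),(j_+,\nu_+),(j_r,\nu_r)$, all with nonnegative $\nu$, so the four quantities $\nu_1\pm j_1$, $\nu_2\pm j_2$ become expressions like $\nu_l\pm j_l$ which I evaluate directly through \eqref{jnurels} and $p=\tfrac13(j_2-j_1)$; for instance in Case~1 one gets $\nu_1+j_1=\nu_l+j_l=-(\nu_++j_+)=-2\nu_r=-2p$, matching the table, and the other entries follow the same one-line substitutions. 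The work in iii)–iv) is pure bookkeeping; the only real care needed is tracking the sign of $h\pm p$ when passing to absolute values and correctly handling the wall cases $h=\pm p$, where two sectors coincide and one line of \eqref{jjhp} drops out — this sign/degeneration bookkeeping is the main (mild) obstacle, as the underlying computation is a single orbit evaluation.
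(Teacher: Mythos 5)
Your proposal is correct and follows essentially the same route as the paper: a direct computation with the generators $\Ws1,\Ws2$ of \eqref{S12}, the observation that $(-h,p)\in L$ gives $\ps\in\WOI$, and substitution of the relations \eqref{jnurels} for parts ii)--iv). The only difference is that you carry out by hand (via the explicit orbit and the reflection $\Ws1\Ws2\Ws1:(j,\nu)\mapsto(j,-\nu)$) the sign computation for iii) that the paper delegates to its Mathematica notebook; your verification checks out.
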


\begin{proof}The relations \eqref{hjp} mean that the point
$\bigl( \frac h3,p \bigr)$ is on the right boundary of $\sect(j_1)$ and
on the left boundary of $\sect(j_2)$. That rules out generic multiple
parametrization. See Figure~\ref{fig-sectors}, p~\pageref{fig-sectors}.

Since $h \equiv p\bmod 2$ we have $j_1,j_2\in \ZZ$. Since $p\geq 0$ we
have $(h,p) \neq 0$, and $(-h,p)\in L$. So $\ps=[-h,p]\in \WOI$. This
gives~i). Part ii)
follows from \eqref{hjp} and the order relation $j_l \leq j_+\leq j_r$.

For iii) we carry out a computation (in \cite[\S17b]{Math}) based on
\eqref{jnurels}, which shows that $\nu_1=\pm\frac12(h+p)$ and
$\nu_2= \pm \frac12(h-p)$ with the signs according to the following
scheme:
\be
\begin{array}{|c|cc|cc|}\hline
&j_1&j_2& \nu_1 & \nu_2 \\ \hline
1& j_l&j_+&-1 & -1
\\
2&j_l&j_r&1&-1
\\
3&j_+&j_r&1&1
\\ \hline
\end{array}
\ee

With these signs we can express $j_n$ in $\nu_n$ and $p$. That leads to
the table in~iv).
\end{proof}

The $K$-types $\tau^h_p$ given in the lemma are the intersection points
of the boundaries of sectors $\sect(j)$, as sketched in
Figure~\ref{fig-sectis}.
\begin{figure}[ht]
\begin{center}\grf{8}{sectis}\end{center}
\caption[]{Intersection of sectors $\sect(j)$ corresponding to one Weyl
orbit.\\
If $(j_+,\nu_+)$ is on a wall of the positive chamber $L^+$ two of the
sectors coincide.} \label{fig-sectis}
\end{figure}

\begin{prop}\label{prop-F0do}Let $\tau^h_p$ be a $K$-type with
$p\geq 1$. If the intersection \il{K0hp}{$K_{0;h,p}$}$K_{0;h,p}$ of the
kernels of $\sh3{-1}:\Ffu_{0;h,p,p}\rightarrow \Ffu_{0;h+3,p-1,p-1}$
and $\sh{-3}{-1}:\Ffu_{0;h,p,p}\rightarrow \Ffu_{0;h-3,p-1,p-1}$ is
non-zero, then $K_{0;h,p} \subset \Ffu_{0;h,p,p}^\ps$ for
$\ps= [-h,p]\in \WOI$, by Lemma~\ref{lem-kdso}.

The space $K_{0;h,p}$ is non-zero in the following cases:
\be
\begin{array}{|c|cc|c|}\hline
&h& p\geq 1 & \text{basis of $K_{0;h,p}$} \\ \hline
1&j_l+j_+ & \frac{j_+-j_l}3 &\kph h p {r_l} p ( \nu_l)\,,\; \kph h p
{r_+}p(\nu_+)
\\
2&j_l+j_r & \frac{j_r-j_l}3 &\kph h p {r_l} p ( \nu_l)\,,\; \kph h p
{r_r} p( \nu_r)\,,\; \kph h p {r_+}p(\nu_+)
\\
3&j_++j_r& \frac{j_r-j_+}3&\kph h p {r_r} p( \nu_r)\,,\; \kph h p
{r_+}p(\nu_+)
\\ \hline
\end{array}
\ee
We use $r_l = \frac{h-2j_l}3$, $j_+ = \frac{h-2 j_+}3$, and
$r_r= \frac{h-2j_r}3$. If $j_l=j_+$ then $\kph hp{r_r}p(\nu_r)$ and
$\kph h p {r_+}p(\nu_+)$ form a basis of $K_{0;h,p}$, and if $j_+=j_r$,
a basis is $\kph h p {r_l}p(\nu_l)$, $\kph hp{r_+}p(\nu_+)$.
\end{prop}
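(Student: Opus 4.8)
The plan is to compute $K_{0;h,p}$ directly inside the large $N$-trivial module, where no central character has yet been imposed, and only afterwards to translate the result into principal-series language and use Lemma~\ref{lem-kdso} for all the bookkeeping. First I would write a highest weight vector of $K$-type $\tau^h_p$ as $F(n\am(t)k)=\sum_r f_r(t)\,\Kph h p r p(k)$ with the component functions $f_r\in C^\infty(0,\infty)$ completely arbitrary. Specializing the shift-operator formulas of Table~\ref{tab-shab} to $\bt=0$, the two downward operators act on each component separately: $\sh3{-1}F=0$ is the uncoupled family $2tf_r'+(h-2p-r-4)f_r=0$ for $-p\le r\le p-2$ (with $f_p$ unconstrained, since its image would lie in a non-existent $K$-component), while $\sh{-3}{-1}F=0$ is $2tf_r'-(h+2p-r+4)f_r=0$ for $-p+2\le r\le p$ (with $f_{-p}$ unconstrained).

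Each of these first-order equations has a one-dimensional solution space spanned by a single power of $t$. Hence $f_{-p}$ is a multiple of $t^{2+(p-h)/2}$ and $f_p$ a multiple of $t^{2+(h+p)/2}$, while for every interior index $-p<r<p$ both equations are imposed at once; their exponents coincide only when $r=h$, so $f_r=0$ there except possibly $f_h$, a multiple of $t^{2+p}$, which occurs exactly when $|h|\le p-2$. Thus $K_{0;h,p}$ is spanned by the two boundary monomials together with this interior monomial when it is present, so $\dim K_{0;h,p}$ is $2$ or $3$.

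The next step is to recognize each surviving monomial $t^{2+\nu'}\Kph h p r p$ as the principal-series generator $\kph h p r p(\nu')$ with $j'=(h-3r)/2$. The three indices $r=p,-p,h$ give $j'=(h-3p)/2,\ (h+3p)/2,\ -h$ respectively, and since $p\ge1$ forces $(h,p)\ne(0,0)$ and $h\equiv p\bmod 2$, Lemma~\ref{lem-kdso}(i) identifies these three values with $\wo^1(\ps)$ for $\ps=\psi[-h,p]\in\WOI$; part~(iii) of the same lemma shows that the exponents $\nu'$ I found agree, up to sign, with the non-negative spectral parameters of that orbit. Consequently every element of $K_{0;h,p}$ is a combination of $\kph$'s all having central character $\ps$, which gives the first assertion $K_{0;h,p}\subset\Ffu_{0;h,p,p}^{\ps}$.

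Finally I would match the monomials to the table using the relations~\eqref{jnurels} and the case split~\eqref{jjhp}. The two boundary components are always present and correspond to the outer two $j$-values of the case; evaluating their exponents through \eqref{jnurels} identifies them with the listed functions (the spectral parameter appearing as $+\nu_\bullet$ or $-\nu_\bullet$ according to which solution of the indicial equation lies in the kernel). The interior component survives precisely in case~2, where $-h=j_+$ and the interior inequality $\nu_+\ge|j_+|+2$ yields $|h|\le p-2$, contributing the third vector $\kph h p {r_+}p(\nu_+)$; in cases~1 and~3 the chamber inequalities $\nu_r\le j_r$ and $\nu_l\le|j_l|$ force $|h|>p-2$, so no interior component exists and the kernel is two-dimensional. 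The degenerate configurations $j_l=j_+$ and $j_+=j_r$ are treated separately: there the case-1 (respectively case-3) $K$-type collapses to $p=0$ and drops out, while in the surviving case-2 $K$-type the index $r=h$ merges with a boundary, yielding the two-element bases recorded at the end of the statement. The conceptual content is slight; the real work — and the only place errors can creep in — is this final matching, namely tracking each exponent and the sign of its spectral parameter through \eqref{jnurels} and deciding, via the Weyl-chamber inequalities, exactly when the interior component $f_h$ is present.
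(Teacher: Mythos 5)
Your proof is correct, and it takes a genuinely different route from the paper's. You work bottom-up: setting $\bt=0$ in the kernel relations of Table~\ref{tab-krab} decouples them into first-order equations $2tf_r'+(h-2p-r-4)f_r=0$ (for $r\le p-2$) and $2tf_r'-(4+h+2p-r)f_r=0$ (for $r\ge 2-p$), each with a one-dimensional monomial solution space; intersecting them kills every interior component except $r=h$ (present exactly when $|h|\le p-2$) and leaves $f_{\mp p}$ proportional to $t^{2+(p\mp h)/2}$. This computes $K_{0;h,p}$ inside the large module in one stroke: the containment in $\Ffu^{\ps[-h,p]}_{0;h,p,p}$, the exact dimension ($2$ or $3$), and — because first-order equations admit no logarithmic solutions — the exclusion of the logarithmic modules all come for free. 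The paper instead argues top-down: it starts from the decomposition of $\Ffu_0^\ps$ into principal series (and logarithmic) pieces, tests each candidate $\kph hprp(\pm\nu)$ against the vanishing conditions of Table~\ref{tab-vancond} via Weyl-group bookkeeping, and must separately verify in the accompanying notebook that logarithmic solutions contribute nothing. Your route is more self-contained and, in my view, cleaner.

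One remark on the final matching, which you rightly identify as the only delicate step. The exponents you obtain are $2+(h+p)/2$ and $2+(p-h)/2$, and by Lemma~\ref{lem-kdso} these equal $2\mp\nu_1$ and $2\mp\nu_2$ with case-dependent signs; for instance, in case 1 one finds $(h+p)/2=-\nu_l$, so the boundary vector at $r=r_l$ is $\kph h p {r_l}p(-\nu_l)$ rather than $\kph h p {r_l}p(\nu_l)$ as the table literally states (a quick check with $(j_+,\nu_+)=(0,2)$, $h=-3$, $p=1$ confirms the kernel component is $t\,\Kph{-3}111$, not $t^3\,\Kph{-3}111$). Your hedge about ``$+\nu_\bullet$ or $-\nu_\bullet$ according to which solution of the indicial equation lies in the kernel'' is exactly the right level of care; carrying those signs through \eqref{jnurels} explicitly is the only work left to finish the table, and doing so would in fact correct it.
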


\begin{proof} Proposition~\ref{prop-kdso} shows that the lemma can be
applied, thus reducing the proof to a computation of the downward shift
operators on $\kph h p r p (\nu)$ where $r$ is determined by $h=2j+3r$
for all $(j,\nu) $ in the Weyl orbit of $(j_+,\nu_+)$.

We use Table~\ref{tab-vancond}, p~\pageref{tab-vancond}, which uses the
quantities $\frac h3\pm p$, which we compute for the three cases:
\be\renewcommand\arraystretch{1.2}
\begin{array}{|c|cc|}\hline
& \frac h3+p & \frac h3-p \\ \hline
1&\frac23 j_+ & \frac23 j_l \\
2& \frac23 j_r & \frac23 j_l \\
3& \frac 23 j_r & \frac23 j_+ \\ \hline
\end{array}\ee

We first consider elements of $H^{\xi_l,\pm \nu_l}_K$. We have
$\sh 3{-1}\kph h p {r_l} p (\nu)=0$ for all $\nu \in \CC$. Furthermore
we have $\Ws 1 (j_l,\nu_l) = (j_+,\nu_+)$ and
$\Ws 1 (j_l,-\nu_l)= (j_r,\nu_r)$. So we have $j_1 > j_l$ in both
cases. This gives $\kph h p {r_l} p ( \nu_l) \in K_{0;h,p}$ in case 1,
and $\kph h p {r_l} p ( -\nu_l) \in K_{0;h,p}$ in case~2.
(Computations of the Weyl group action in~\cite[\S17a]{Math}.)

We handle elements of $H^{\xi_r,\nu_r}_K$ in an analogous way. We need
to consider $\sh 3 {-1} \kph h p {r_r} p(\pm \nu_r)$, and have
$\Ws 1 (j_r,\nu_r) = (j_+,\nu_+)$ and
$\Ws 1 (j_r,-\nu_r) = (j_l,\nu_l)$. This gives elements in $K_{0;h,p}$
in cases 2 and~3.

Finally we consider elements of $H^{\xi_+,\nu_+}_K$. We have
\begin{align*}
\Ws2 (j_+,\nu_+) &\= (j_l,\nu_l)\,,&
\quad \Ws1(j_+,\nu_+) &\= (j_r,\nu_r)\,,\\
\Ws2(j_+,-\nu_+)
&\=(j_r,-\nu_r)\,,& \Ws1(j_+,-\nu_+) &\=
(j_l,-\nu_l)\,.\end{align*}

With Table~\ref{tab-vancond}, p~\pageref{tab-vancond}, this gives the
possibilities in Table~\ref{tab-j123}.
\begin{table}[ht]
\[\renewcommand\arraystretch{1.4}
\begin{array}{|cc|cc|cc|}\hline
& \pm& 3h-p & &3h+p & \\ \hline
1& + & j_l = j_2 < j & \sh 3 {-1}\ph_+ =0 & j_+ = j & \sh{-3}{-1}\ph_+=0
\\
& - & j_l = j_1 < j &j_2=j_r>j&j_+ = j & \sh {-3}{-1} \ph_-=0 \\ \hline
2&+& j_l= j_2<j & \sh 3{-1} \ph_+=0 &j_r=j_1 > j & \sh{-3}{-1}\ph_+=0
\\
&-& j_l= j_1 < j &j_2=j_r>j & j_r=j_2>j & j_1=j_l<j\\\hline
3&+&j_+ = j & \sh 3{-1} \ph_+=0 & j_r=j_1>j &\sh{-3}{-1}\ph_+=0
\\
&-&j_+ = j & \sh3{-1}\ph_-=0 &j_r=j_2>j&j_1=j_l<j
\\ \hline
\end{array}
\]
\caption{Combinations for $j=j_+$ and
$\ph_\pm = \kph h p {r_+}p(\pm \nu_+)$. }\label{tab-j123}
\end{table}
If $j_+$ is equal to $j_l$ or $j_r$, we have to deal with only one
intersection point of boundaries of sectors; one case does not exist,
and the two other cases coincide. The same reasoning leads to two
elements in the intersection.

Non-zero elements of different principal series modules are linearly
independent. We have to see whether logarithmic modules
(Proposition~\ref{prop-logIP}) may be in the intersection of kernels. A
computation in \cite[\S17c]{Math} shows that logarithmic solutions do
not give elements of $K_{0;h,p}$.
\end{proof}


\def\flnm{rFtm-III-ab}


\section{Generic abelian Fourier term
modules}\label{sect-abip}\markright{13. GENERIC ABELIAN FOURIER TERM
MODULES} We turn to the submodule structure of \il{gabFtm}{generic
abelian Fourier term modules}generic abelian Fourier term modules
$\Ffu^{\ps}_\bt$ with $\bt\neq 0$. Under integral para\-metrization, we
still have to define the submodules $\Mfu^{\xi,\nu}_\bt$ and
$\Wfu_\bt^{\xi,\nu}$.

These modules differ from those in the $N$-trivial case in two aspects:
(1) In these modules there are only irreducible submodules of type
$\II_+$. (2) The modules and $H^{\xi',\nu'}_K$ and $H^{\xi,\nu}_K$ in
the principal series have zero intersection if $(j,\nu)$ and
$(j',\nu')$ are different elements in the same Weyl group orbit. We
will see that $\Wfu^{\xi,\nu}_\bt \cap \Wfu^{\xi',\nu'}_\bt$ and
$\Mfu^{\xi,\nu}_\bt \cap \Mfu^{\xi',\nu'}_\bt$ are non-zero modules.

\rmrk{Preliminaries} There are some facts that hold for both the abelian
and the non-abelian case.

In Lemma~\ref{lem-bfs} we obtained the families
$\nu\mapsto \om_\Nfu^{a,b}(j,\nu)$ and
$\nu\mapsto \mu^{a,b}_\Nfu(j,\nu)$ that are holomorphic in $\nu\in \CC$
(for $\om$), or in $\CC\setminus \ZZ_{\leq -1}$ (for $\mu^{a,b}_\n$). In
the cases $a=0$ or $b=0$, Proposition~\ref{prop-dsok1} states a
vanishing result for the downward shift operators, which stays valid in
the case of integral parametrization.

These families may be compared to the basis families
$\nu\mapsto\kph h{p}{r}{p}(\nu)$ for the principal series. An important
difference is that $\kph h{p}{r}{p}(\nu)$ is explicitly known, whereas
the families $\om^{a,b}_\Nfu$ and $\mu^{a,b}_\Nfu$ have a much more
complicated description. We have to look for situations in which we can
obtain a relatively simple description. One of such situations occurs
when the intersection of the kernels of the downward shift operators is
non-zero. Proposition~\ref{prop-kdso} and Lemma~\ref{lem-kdso} show
that this happens for a given $K$-type only for one character of
$ZU(\glie)$.

\rmrk{Families with a fixed K-type}In \eqref{xab} we defined the
families $\om^{a,b}_\bt$ and $\mu^{a,b}_\bt$ by repeated application of
the upward transfer operators to $\om^{0,0}_\bt$ and $\mu^{0,0}_\bt$.
Proposition~\ref{prop-uso-ga} implies that these families are non-zero
for all $\nu$.

\begin{lem}\label{lem-ext-ab} In the decomposition
$F=\ch_\bt \sum_r f_r\, \Kph h{p}{r}{p}$ the following holds.
\begin{align*} \om_\bt^{p,0}(j,\nu) &\text{ has lowest component }
f_{-p}(t)
\ddis t^{p+2}\, K_\nu(2\pi|\bt|t)\,,\displaybreak[0]
\\
\mu_\bt^{p,0}(j,\nu) &\text{ has lowest component } f_{-p}(t) \ddis
t^{p+2}\, I_\nu(2\pi|\bt|t)\,,\displaybreak[0]\\
\om_\bt^{0,p}(j,\nu)
&\text{ has highest component } f_{p}(t) \ddis t^{p+2}\,
K_\nu(2\pi|\bt|t)\,,\displaybreak[0]\\
\mu_\bt^{0,p}(j,\nu) &\text{ has highest component } f_{p}(t) \ddis
t^{p+2}\, I_\nu(2\pi|\bt|t)\,.
\end{align*}
\end{lem}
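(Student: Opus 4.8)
The plan is to track a single extreme component through the definition \eqref{xab} of the families as iterated upward shift operators. Recall from \eqref{mu00a} and~\eqref{om00ab} that $\mu^{0,0}_\bt(j,\nu)$ and $\om^{0,0}_\bt(j,\nu)$ live in the one-dimensional $K$-type $\tau^{2j}_0$, with single ($r=0$) component $t^2 I_\nu(2\pi|\bt|t)$, respectively $t^2 K_\nu(2\pi|\bt|t)$. Since $\om^{p,0}_\bt = (\sh31)^p\om^{0,0}_\bt$ and $\mu^{p,0}_\bt=(\sh31)^p\mu^{0,0}_\bt$, and applying $\sh31$ $p$ times sends $\tau^{2j}_0$ to $\tau^{2j+3p}_p$, I would prove by induction on $p$ that the lowest component (at $r=-p$) is a nonzero scalar times $t^{p+2}K_\nu(2\pi|\bt|t)$, respectively $t^{p+2}I_\nu(2\pi|\bt|t)$. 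The statements for $\om^{0,p}_\bt$, $\mu^{0,p}_\bt$ and the highest component are symmetric, using $(\sh{-3}1)^p$.

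For the inductive step I would read the action of $\sh31$ off Table~\ref{tab-shab}. Writing $F = \chi_\bt \sum_r f_r \Kph h p r p$ in $K$-type $\tau^h_p$, and denoting by $[\,G\,]_{r'}$ the coefficient of $\Kph{h+3}{p+1}{r'}{p+1}$ in an element $G$ of $\tau^{h+3}_{p+1}$, this coefficient is fed by $f_{r'-1}$ through the derivative term (with factor $2+p+(r'-1)$) and by $f_{r'+1}$ through the term carrying $\bar\bt$ (with factor $2+p-(r'+1)$). At the extreme value $r'=-(p+1)$ the first factor $2+p+(r'-1)=0$ vanishes — this is exactly the masking of the absent component $f_{-(p+2)}$ noted after Table~\ref{tab-krab} — so only the $\bar\bt$-term survives, and
\[
8(p+1)\,\bigl[\sh31 F\bigr]_{-(p+1)} = -4\pi i\,(2+2p)\,\bar\bt\, t\, f_{-p},
\qquad\text{i.e.}\qquad
\bigl[\sh31 F\bigr]_{-(p+1)} = -\pi i\,\bar\bt\, t\, f_{-p}.
\]
Because $\bt\neq 0$, each application multiplies the lowest component by a nonzero constant times $t$; starting from $t^2K_\nu$ (resp.\ $t^2 I_\nu$) and iterating $p$ times yields $(-\pi i\bar\bt)^p\,t^{p+2}K_\nu(2\pi|\bt|t)$ (resp.\ with $I_\nu$), which is the assertion in the sense of $\ddis$. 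The identical computation with the $\sh{-3}1$ row of Table~\ref{tab-shab}, applied to the highest index $r'=p+1$, shows the masking factor $2+p-(r'+1)=0$ kills the absent component $f_{p+2}$, leaving $\bigl[\sh{-3}1 F\bigr]_{p+1}=\pi i\,\bt\, t\, f_{p}$ and the statements for $\om^{0,p}_\bt$, $\mu^{0,p}_\bt$.

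The only point needing care is the masking of the nonexistent neighboring component, which here happens cleanly through the vanishing of the coefficient $2+p+r$ (resp.\ $2+p-r$) at the boundary, precisely the mechanism already flagged in Lemma~\ref{lem-efrab}. I would also record that the surviving coefficient $2+2p$ is nonzero and that $\bt\neq 0$ makes the accumulated constant $(-\pi i\bar\bt)^p$ (resp.\ $(\pi i\bt)^p$) nonzero, so equality holds up to a nonzero scalar as required. No facts about the Bessel functions beyond their role as the $p=0$ components enter, since on the extreme component the shift operators act only by multiplication by a power of $t$ — the derivative term $2tf'$ is exactly the piece that drops out under the masking.
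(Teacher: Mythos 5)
Your proof is correct and follows essentially the same route as the paper: the paper's proof likewise notes that the families are defined inductively from the explicit $p=0$ component $t^2 j_\nu(2\pi|\bt|t)$ and that Table~\ref{tab-shab} shows the relevant upward shift operator acts on the lowest (resp.\ highest) component as multiplication by a non-zero multiple of~$t$. You have merely made the masking of the boundary coefficient and the accumulated constant $(-\pi i\bar\bt)^p$ explicit, which the paper leaves implicit.
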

We use \il{.=}{$\dis$}$\dis$ to indicate equality up to a non-zero
factor.

\rmrk{Determining components} Proposition~\ref{prop-dsok1} implies that
$\sh 3{-1}x^{p,0}_\bt$ and $\sh{-3}{-1}x^{0,p}_\bt$ are identically
zero for $x=\om$ and $x=\mu$. The kernel relations in
Table~\ref{tab-krab}, p~\pageref{tab-krab}, imply that the components
in the lemma \il{dc}{determining component}determine all other
components.

\begin{proof}[Proof of Lemma~\ref{lem-ext-ab}]The families
$x^{a,b}_\Nfu$ are defined inductively. For $a=b=0$ relations
\eqref{mu00a} and \eqref{om00ab} give the component
$f_0(t) = t^2 \, j_\nu(2\pi |\bt|t)$, with $j_\nu=I_\nu$ or $K_\nu$.
The description of the shift operators in Table~\ref{tab-shab},
p~\pageref{tab-shab}, implies that the relevant upward shift operator
acts on the lowest, respectively highest, component as multiplication
by a non-zero multiple of~$t$.
\end{proof}

\rmrk{Intersection of kernels}
\begin{prop}\label{prop-nsefa}Let $\bt\neq 0$, and let $\tau^h_p$ with
$p\geq 1$ be a $K$-type that occurs in $\Ffu_\bt$. We denote by
\il{Khp}{$K_{0;h,p},\; K_{\bt;h,p},\; K_{\n;h,p}$}$K_{\bt;h,p}$ the
intersection of the kernels of
$\sh 3{-1} : \Ffu_{\bt;h,p,p} \rightarrow \Ffu_{\bt;h+3,p-1,p-1}$ and
$\sh {-3}{-1} : \Ffu_{\bt;h,p,p} \rightarrow \Ffu_{\bt;h-3,p-1,p-1}$.
\begin{enumerate}
\item[i)] The dimension of $K_{\bt;h,p}$ is equal to $2$. A basis is
\ir{expl-kdso}{\kk^I_{\bt;h,p},\; \kk^K_{\bt;h,p}}
\badl{expl-kdso} \kk^I_{\bt;h,p}&\=\sum_{ r\equiv p(2),\, |r|\leq p}
\chi_\bt\;
(i\bt/|\bt|)^{(r+p)/2} \, t^{2+p} \, I_{|h-r|/2}(2\pi|\bt|t)
\, \Kph h{p}{r}{p}\,,\\
\kk^K_{\bt;h,p}&\=\sum_{r\equiv p(2),\, |r|\leq p} \chi_\bt\;
(-i\bt/|\bt|)^{(r+p/2} \, t^{2+p}\, K_{|h-r|/2}(2\pi|\bt|t)
\, \Kph h{p}{r}{p}\,. \eadl

\item[ii)]
\begin{enumerate}
\item[a)] $K_{\bt;h,p}$ is a subspace of the module $\Ffu^\ps_\bt$ where
$\ps=\ps[-h,p]\in \WOI$.
\item[b)] Put $j_1=\frac12(h-3p)$, $j_2=\frac12(h+3p)$. There are unique
$\nu_1,\nu_2\in \ZZ_{\geq 0}$ such that $(j_n,\nu_n)\in \wo(\ps)^+$ for
$n=1,2$. With these values we have:
\badl{kkbtmuom} \kk^I_{\bt;h,p} &\ddis \mu^{p,0}_\bt(j_1,\nu_1)
\ddis \mu^{0,p}_\bt(j_2,\nu_2)\,,\\
\kk^K_{\bt;h,p} &\ddis \om^{p,0}_\bt(j_1,\nu_1) \ddis
\om^{0,p}_\bt(j_2,\nu_2)\,.
\eadl
\end{enumerate}
\end{enumerate}
\end{prop}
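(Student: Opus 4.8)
The plan is to realise $K_{\bt;h,p}$ as the space of minimal vectors of $K$-type $\tau^h_p$ inside the large module $\Ffu_\bt$, and to extract it directly from the abelian kernel relations in Table~\ref{tab-krab}. Writing a highest weight vector as $F=\chi_\bt\sum_r f_r\,\Kph h{p}{r}{p}$, the condition $\sh3{-1}F=0$ is the recursion $2tf_r'+(h-2p-r-4)f_r=-8\pi i\bar\bt\,t f_{r+2}$ for $-p\le r\le p-2$, which expresses each $f_{r+2}$ through $f_r$ and $f_r'$; hence an element of the kernel of $\sh3{-1}$ is determined by its lowest component $f_{-p}$, exactly as in the proof of Proposition~\ref{prop-uso-ga}. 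Imposing in addition $\sh{-3}{-1}F=0$, I would use the companion relation $2tf_{r+2}'-(2+h+2p-r)f_{r+2}=2\pi i\bt\,t f_r$ for $r=-p$: substituting the expression for $f_{-p+2}$ coming from the first recursion (and its derivative) turns it into a second-order linear ODE for $f_{-p}$ alone.

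First I would identify this ODE. After removing the factor $t^{2+p}$, writing $f_{-p}(t)=t^{2+p}g(t)$, it becomes the modified Bessel equation \eqref{mBd0} of order $\nu_1=\tfrac12|h+p|$ in the variable $2\pi|\bt|t$, so that $f_{-p}$ ranges over the two-dimensional space spanned by $t^{2+p}I_{\nu_1}(2\pi|\bt|t)$ and $t^{2+p}K_{\nu_1}(2\pi|\bt|t)$. Since $f_{-p}$ determines all of $F$, this already yields $\dim K_{\bt;h,p}\le 2$; the remaining equations $\sh{-3}{-1}F=0$ for $r>-p$ can only cut the space down further, so they need not be analysed separately for the upper bound.

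For the lower bound and the explicit basis I would solve the recursion starting from $f_{-p}\ddis t^{2+p}I_{\nu_1}$, respectively $t^{2+p}K_{\nu_1}$, and show that the closed forms are the stated sums with components $t^{2+p}I_{|h-r|/2}$ and $t^{2+p}K_{|h-r|/2}$ and phase factors $(\pm i\bt/|\bt|)^{(r+p)/2}$. This is the computational heart of the argument: it amounts to checking that the contiguous relations for the modified Bessel functions (the relations \eqref{cr1}) propagate a single Bessel order $|h-r|/2$ through the recursion while simultaneously fulfilling the $\sh{-3}{-1}$ relations. Linear independence of $\kk^I_{\bt;h,p}$ and $\kk^K_{\bt;h,p}$ then follows from the distinct behaviour of $I_{\nu_1}$ and $K_{\nu_1}$ as $t\downarrow0$ and $t\uparrow\infty$, giving $\dim K_{\bt;h,p}=2$ and part~i). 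I expect this Bessel bookkeeping to be the only real obstacle; everything else is structural.

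For part~ii) I would argue by matching distinguished components rather than recomputing. By Proposition~\ref{prop-dsok1}, $\mu^{p,0}_\bt(j_1,\nu_1)$ and $\om^{p,0}_\bt(j_1,\nu_1)$ lie in the kernel of $\sh3{-1}$, and by Lemma~\ref{lem-ext-ab} their lowest components are $\ddis t^{2+p}I_{\nu_1}$ and $\ddis t^{2+p}K_{\nu_1}$, which agree up to a scalar with the lowest components of $\kk^I_{\bt;h,p}$ and $\kk^K_{\bt;h,p}$; since an element of the kernel of $\sh3{-1}$ is determined by its lowest component, this forces $\kk^I_{\bt;h,p}\ddis\mu^{p,0}_\bt(j_1,\nu_1)$ and $\kk^K_{\bt;h,p}\ddis\om^{p,0}_\bt(j_1,\nu_1)$. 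The same argument applied to the kernel of $\sh{-3}{-1}$ and the highest components, of Bessel order $\nu_2=\tfrac12|h-p|$, gives the identifications with $\mu^{0,p}_\bt(j_2,\nu_2)$ and $\om^{0,p}_\bt(j_2,\nu_2)$, where $j_1,j_2,\nu_1,\nu_2$ are as in Lemma~\ref{lem-kdso}; note $\nu_1,\nu_2\in\ZZ_{\ge0}$ because $h\equiv p\bmod2$, so Proposition~\ref{prop-dsok1} indeed applies. Finally ii)a) follows from ii)b): $\mu^{p,0}_\bt(j_1,\nu_1)=(\sh31)^p\mu^{0,0}_\bt(j_1,\nu_1)$ lies in $\Ffu^{\ps[j_1,\nu_1]}_\bt$, and $\ps[j_1,\nu_1]=\ps[-h,p]=\ps$ by Lemma~\ref{lem-kdso}(i), so both basis vectors lie in $\Ffu^\ps_\bt$.
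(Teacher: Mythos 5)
Your proposal is correct and follows essentially the same route as the paper: both rest on the kernel relations of Table~\ref{tab-krab} reducing the components to modified Bessel equations, the contiguous relations \eqref{cr1} producing the explicit two-dimensional basis, and the matching of determining (lowest/highest) components via Lemma~\ref{lem-ext-ab} and Lemma~\ref{lem-kdso} to obtain \eqref{kkbtmuom}. The only notable organizational differences are that you obtain the upper bound $\dim K_{\bt;h,p}\leq 2$ from the single ODE for $f_{-p}$ together with the determining-component principle (the paper derives an ODE for every $f_r$ and separately proves the stronger dimension statement \eqref{dbts} by induction), and that you deduce ii)a) from ii)b) and Lemma~\ref{lem-kdso}~i) instead of verifying the action of $ZU(\glie)$ directly; both shortcuts are sound.
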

\rmrk{Remarks} The $K$-types $\tau^h_p$ discussed here are the sole
higher-dimensional $K$-types for which we obtained reasonably simple
descriptions of elements of $\Ffu_{\bt;h,p,p}$.
Corollary~\ref{cor-jl+r-ab} describes one more instance with explicit
expressions similar to those in~\eqref{kkbtmuom}.

Such $K$-types correspond to points $(h/3,p)$ on the intersection of two
sectors, here denoted $\sect(j_1)$ and $\sect(j_2)$.

\begin{proof}We consider an element
$F = \ch_\bt \sum_r f_r\, \Kph h{p}{r}{p}$ in
$K_{\bt;h,p} \subset \Ffu_{\bt;h,p,p}$. Its components $f_r$ satisfy
the kernel relations for $\sh3{-1}$ and $\sh{-3}{-1}$ in
Table~\ref{tab-krab}, p~\pageref{tab-krab}. These two relations lead to
a second order differential equation for~$f_r$. A computation in
\cite[\S18a]{Math} shows that this differential equation is related to
the modified Bessel differential equation \eqref{mBd} with
$\nu=(h-r)/2$, and $f_r(t) = t^{2+p} \, j(2\pi|\bt|t)$, where we can
take $j$ equal to $I_{(h-r)/2}$ or $K_{(h-r)/2}$. The function $K_\nu$
is even in the parameter $\nu$. The same holds for $I_\nu$ for integral
values of~$\nu$.

To determine the relation between the coefficients for various values of
$r$ in these linear combinations, we use again the kernel relations.
With use of the contiguous relations in \eqref{cr1} we get in
\cite[\S18b]{Math} a two-dimensional solution space for the
coefficients, with basis as indicated in~\eqref{expl-kdso}.

In this way we have two explicit linearly independent elements of
$K_{\bt;h,p}$. In \cite[\S18c]{Math} we see that both functions are
eigenfunction of $ZU(\glie)$ with character $\ps=\ps[-j,p]$.

To finish the proof of i) and ii)a), we still have to show that the
dimension of $K_{\bt;h,p}$ is two.
\smallskip

The lowest component of $\mu_\bt^{p,0}(j_1,\nu_1)$ is equal to
$t^{p+2}\, I_{\nu_1}(2\pi|\bt|t)$ by Lemma~\ref{lem-ext-ab}. The lowest
component of $\kk^I_{\bt;h,p}$ is $t^{2+p} \, I_{|h+p|/2}(2\pi|\bt|t)$.
By iii) in Lemma~\ref{lem-kdso}, these two functions are proportional.
The other relations in~\eqref{kkbtmuom} follow analogously.

For the dimension assertions we use the following induction result.
\begin{lem}\label{lem-dimdso}Let $h$, $p$, $\ps$, $(j_1,\nu_1)$ and
$(j_2,\nu_2)$ as in the proposition.

If $\dim \Ffu^\ps_{\bt;h,p,p}>2$ then at least one of the following
statements holds:
\[ \dim \Ffu^\ps_{\bt;h+3,p-1,p-1}>2\,,\qquad \dim
\Ffu^\ps_{\bt;h-3,p-1,p-1}>2\,.\]
\end{lem}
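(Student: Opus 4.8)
The plan is to prove the contrapositive. Write $V=\Ffu^\ps_{\bt;h,p,p}$ and $V_\pm=\Ffu^\ps_{\bt;h\pm3,p-1,p-1}$, and suppose $\dim V_+\le 2$ and $\dim V_-\le 2$; I will deduce $\dim V\le 2$. The device is the downward map
\[
\Phi=(\sh3{-1},\sh{-3}{-1}):V\longrightarrow V_+\oplus V_-,
\]
whose kernel is exactly $K_{\bt;h,p}$. By Lemma~\ref{lem-kdso} the hypothesis $\ps=\ps[-h,p]\in\WOI$ places $(h/3,p)$ on the common boundary of the two sectors $\sect(j_1)$ and $\sect(j_2)$, so by Proposition~\ref{prop-kdso} both downward shift operators may have a non-trivial kernel here; this degeneracy of $\Phi$ is the source of the two-dimensionality.

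First I would control $K_{\bt;h,p}=\ker\Phi$ independently. An element $F=\chi_\bt\sum_r f_r\,\Kph h{p}{r}{p}$ of $K_{\bt;h,p}$ satisfies simultaneously the two kernel relations of Table~\ref{tab-krab}, for $\sh3{-1}$ and for $\sh{-3}{-1}$. Eliminating the neighbouring component between these two first-order relations turns each of them into the modified Bessel equation \eqref{mBd0} of order $(h-r)/2$; hence every component lies in the two-dimensional space spanned by $t^{2+p}I_{(h-r)/2}(2\pi|\bt|t)$ and $t^{2+p}K_{(h-r)/2}(2\pi|\bt|t)$, and the kernel relations propagate the choice made for the lowest component $f_{-p}$ to all the others. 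The contiguous relations \eqref{cr1} show this propagation is consistent with exactly a two-parameter family, so $\dim K_{\bt;h,p}=2$, a basis being the functions $\kk^I_{\bt;h,p}$, $\kk^K_{\bt;h,p}$ of \eqref{expl-kdso}.

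The heart of the argument is then to show that a single downward operator already detects $K_{\bt;h,p}$: if $F\in V$ satisfies $\sh3{-1}F=0$ then $\sh{-3}{-1}F=0$, and symmetrically. Here the eigenfunction equations of Lemma~\ref{lem-efrab} enter: substituting the $\sh3{-1}$-kernel relation into the second (degree-three) eigenfunction equation, specialised to $\ps=\ps[-h,p]$, should force the $\sh{-3}{-1}$-kernel relation as well, a computation of the same type as those recorded in \cite[\S18]{Math}. Granting this, $\sh3{-1}$ and $\sh{-3}{-1}$ each induce injections $V/K_{\bt;h,p}\hookrightarrow V_+$ and $V/K_{\bt;h,p}\hookrightarrow V_-$ (the upward injectivity being Proposition~\ref{prop-uso-ga}). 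Feeding in $\dim V_\pm\le2$ together with Proposition~\ref{prop-dsok1}, which identifies the images with the Bessel families $\mu^{a,b}_\bt$, $\om^{a,b}_\bt$ at the neighbours, is what pins the excess $\dim V-2$ to zero and yields $\dim V\le2$. Combined with the base case of \S\ref{sect-Nab1d} (where $\dim\Ffu^\ps_{\bt;2j,0,0}\le2$), this inductive step then forces $\dim\Ffu^\ps_{\bt;h,p,p}=2$, and hence $\dim K_{\bt;h,p}=2$, completing the proposition.

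The step I expect to be the main obstacle is precisely the coupling between the components $f_r$ and $f_{r\pm2}$ in the eigenfunction equations: it is what blocks a naive count of solutions, and it must be untangled both to see that the single-operator kernel coincides with $K_{\bt;h,p}$ and to verify that the induced maps to $V_\pm$ actually carry the full excess of $V$ (so that the sharp constant $2$, rather than a weaker bound, emerges). My plan is to defuse this coupling exactly as in the kernel computation above, reducing each component to the uncoupled modified Bessel equation and transporting the linking coefficients through the contiguous relations, leaving the bookkeeping of the masking factors $p\mp r$ to the symbolic computation in \cite[\S18]{Math}.
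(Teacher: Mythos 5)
Your setup is faithful to the paper's: you argue by contraposition, you identify $K_{\bt;h,p}=\ker(\sh3{-1})\cap\ker(\sh{-3}{-1})$ as the obstruction, you compute its dimension to be $2$ via the kernel relations and the modified Bessel equation, and you invoke the known two-dimensional families $\om^{a,b}_\bt$, $\mu^{a,b}_\bt$ at the neighbouring $K$-types. All of that agrees with Proposition~\ref{prop-nsefa} and the surrounding text.

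The gap is in the final dimension count. Even granting your intermediate claim that a single downward operator already detects $K_{\bt;h,p}$ (which is true, and provable by the ODE argument you sketch), the induced injections $V/K_{\bt;h,p}\hookrightarrow \Ffu^\ps_{\bt;h\pm3,p-1,p-1}$ only yield $\dim V\le \dim K_{\bt;h,p}+\dim \Ffu^\ps_{\bt;h\pm3,p-1,p-1}\le 4$, not $\le 2$. To close the argument you must show that for $F\notin K_{\bt;h,p}$ at least one of $\sh3{-1}F$, $\sh{-3}{-1}F$ falls \emph{outside} the two-dimensional spans of $\om^{0,p-1}_\bt(j_2,\nu_2),\mu^{0,p-1}_\bt(j_2,\nu_2)$ and $\om^{p-1,0}_\bt(j_1,\nu_1),\mu^{p-1,0}_\bt(j_1,\nu_1)$; equivalently, that if $\sh3{-1}F$ lies in that span then it is zero (and likewise for $\sh{-3}{-1}F$). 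Proposition~\ref{prop-dsok1}, which you cite at this point, only records that the downward operators annihilate those families; it says nothing about preimages under $\sh{\pm3}{-1}$ of nonzero elements of them. The paper supplies exactly this missing step: since the families in \eqref{b-} are killed by $\sh{-3}{-1}$ and hence determined by their highest component $b$, one expresses $b$ (the top component of $\sh3{-1}F$) in terms of $f_p$, $f_{p-2}$, $f_{p-2}'$, solves for $f_{p-2}'$, substitutes into the eigenfunction equations of Lemma~\ref{lem-efrab} with $(j,\nu)=(j_2,\nu_2)$, and finds that the second relation collapses to $-216\pi i\bt(p+1)\,t\,b=0$, forcing $b=0$ and hence $\sh3{-1}F=0$. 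Without this computation (or an equivalent), your argument does not reach the sharp bound, so the proposal as written does not prove the lemma.
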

\begin{proof}
We know two linearly independent elements
\be\label{b-}\om_\bt^{0,p-1}(j_2,\nu_2)\,,\; \mu_\bt^{0,p-1}(j_2,\nu_2)
\in \Ffu^\ps_{\bt;h+3,p-1,p-1}\,,\ee
and two linearly independent elements
\be\label{b+}
\om_\bt^{p-1,0}(j_1,\nu_1)\,,\; \mu_\bt^{p-1,0}(j_1,\nu_1)
\in \Ffu^\ps_{\bt;h-3,p-1,p-1}\,.\ee
To prove the lemma we assume that $F\in \Ffu^\ps_{\bt;h,p,p}$ is not in
$K_{\bt;h,p}$, and want to show that at least one of
\be F_- \= \sh3{-1}F\in \Ffu^\ps_{\bt;h+3,p-1,p-1}
\quad\text{ and }\quad F_+ \= \sh{-3}{-1}F \in
\Ffu^\ps_{\bt;h-3,p-1,p-1}\ee
is not a linear combination of the two elements in \eqref{b+} or
\eqref{b-}.

Suppose that $F_-$ is a linear combination of basis functions
in~\eqref{b+}. These functions are in the kernel of $\sh{-3}{-1}$, and
hence determined by their highest component. It suffices to show that
the component $b$ of $F_-$ of order $p-1$ vanishes.

On the other hand, since $b$ is the highest component of $\sh{3}{-1}F$,
it can be expressed in the highest two components of~$F$.
\be b\= \frac p{4(p+1)} \Bigl( 4\pi i \bar\bt t f_p + 2 t f_{p-2}'
+(h-3p-2) f_{p-2} \Bigr)\,.\ee
This enables us to express $f_{p-2}'$ in $f_{p-2}$, $f_p$ and the
function $b$. We substitute this in the eigenfunction equations for the
highest component of $F$. We have to take $(j,\nu)$ in the
eigenfunctions equations equal to $(j_2,\nu_2)$, and use iii) in
Lemma~\ref{lem-kdso}. The second eigenfunction relation takes the form
\[ -216\pi i \bt (p+1) t b\=0\,.\]
Hence $b=0$.

In a similar way we obtain that $F_+=0$ from the assumption that it is a
linear combination of the functions in~\eqref{b-}. Both computations
are in \cite[\S18d]{Math}.
\end{proof}

To finish the proof of proposition~\ref{prop-nsefa} we show more than is
needed for the present proposition, namely that for $\ps=\ps[-h,p]$
\be \label{dbts}\dim \Ffu_{\bt;h',p',p'}^\ps \= 2 \text{ or } 0\,,\qquad
\text{ for all $K$-types $\tau^{h'}_{p'}$}\,. \ee

For $p'=0$ we know \eqref{dbts} from \S\ref{sect-1dKt}: the dimension is
$2$ for $h=\frac 23 j$ with $j\in \wo^1(\ps)$ and 0 otherwise. The
injectivity of the upward shift operators
(Proposition~\ref{prop-uso-ga}) shows that the dimension is at least $2$
whenever $\Ffu_{\bt;h',p',p'}\neq \{0\}$. When we apply a downward
shift operator, the dimension cannot decrease if that shift operator is
injective. So from a given $K$-type $\tau^{h'}_{p'}$ we can go down to
a $K$-type $\tau^{h'\pm 3}_{p'-1}$ without decreasing the dimension as
long as one of the downward shift operators is injective on
$\Ffu_{\bt;h',p',p'}^\ps$. The only possibility of a change in
dimension occurs if both downward shift operators have a non-trivial
kernel. Lemma~\ref{lem-dimdso} shows that a dimension larger than 2
cannot drop to 2. Thus we get~\eqref{dbts}.
\end{proof}

\subsection{Structure results}
We still have to define the modules
$\Wfu^{\xi,\nu}_\bt$ and $\Mfu^{\xi,\nu}_\bt$ under integral
parametrization, which implies that $\nu \in \ZZ$. We restrict
ourselves to the case $\nu \in \ZZ_{\geq 0}$.

\begin{lem}\label{lem-strab} Let $\ps\in \WOI$. 
For each $(j,\nu) \in \wo(\ps)^+$ we
put\il{Mfudefia}{$\Mfu_\bt^{\xi,\nu}$}\il{Wfudefia}{$\Wfu_\bt^{\xi,\nu}$}
\badl{MWbd} \Mfu_\bt^{\xi,\nu} &\= \sum_{a,b\geq 0} U(\klie)\;
\mu^{a,b}_\bt(j,\nu)
\subset \Mfu_\bt^\ps\,,\\
\Wfu_\bt^{\xi,\nu} &\= \sum_{a,b\geq 0}U(\klie)\; \om^{a,b}_\bt(j,\nu)
\subset \Wfu_\bt^\ps\,.
\eadl
\begin{enumerate}
\item[i)] These spaces are $(\glie,K)$-submodules of $\Ffu^\ps_\bt$, and
$\Mfu^{\xi,\nu}_\bt\cap\Wfu^{\xi,\nu}_\bt= \{0\}$.

The $K$-types $\tau^h_p$ in $\Wfu^{\xi,\nu}_\bt$ and
$\Mfu^{\xi,\nu}_\bt$ have multiplicity one if
$(h/3,p) \in \sect(j_\xi)$, and do not occur in these modules
otherwise.

\item[ii)] Suppose that $(h/3,p) \in \sect(j) \cap \sect(j')$ for
$(j,\nu), \; (j',\nu')  \in \wo (\ps)^+$. Then
$\Wfu^{\xi_j,\nu}_{\bt;h,p,p} = \Wfu^{\xi_{j'},\nu'}_{\bt;h,p,p}$ and
$\Mfu^{\xi_j,\nu}_{\bt;h,p,p} = \Mfu^{\xi_{j'},\nu'}_{\bt;h,p,p}$
\item[iii)] We have
\be\label{dsd-ab} \Ffu^\ps_\bt = \Bigl( \sum_{(j,\nu)\in \wo(\ps)^+}
\Wfu^{\xi,\nu}_\bt\Bigr) \oplus \Bigl( \sum_{(j,\nu)\in \wo(\ps)^+}
\Mfu^{\xi,\nu}_\bt\Bigr)\,.\ee
\item[iv)]
\be \Mfu^\ps_\bt \= \sum_{(j,\nu)\in \wo(\ps)^+} \Mfu^{\xi,\nu}_\bt\,,
\quad \Wfu^\ps_\bt \= \sum_{(j,\nu)\in \wo(\ps)^+} \Wfu^{\xi,\nu}_\bt
\,.\ee
\end{enumerate}
\end{lem}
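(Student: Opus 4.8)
The plan is to reduce all four assertions to the structure of the two-dimensional highest weight spaces $\Ffu^\ps_{\bt;h,p,p}$. The essential input is the dimension count \eqref{dbts}, obtained inside the proof of Proposition~\ref{prop-nsefa}: for $\ps\in\WOI$ and $\bt\neq0$ every such space has dimension $0$ or $2$, and it is nonzero exactly when $(h/3,p)\in\sect(j)$ for some $j\in\wo^1(\ps)$ (Proposition~\ref{prop-sectors}). First I would record that for such a $K$-type, choosing $(j,\nu)\in\wo(\ps)^+$ with $(h/3,p)\in\sect(j)$ and the unique $(a,b)$ with $a+b=p$, $2j+3(a-b)=h$, $a,b\geq0$, the vectors $\mu^{a,b}_\bt(j,\nu)$ and $\om^{a,b}_\bt(j,\nu)$ span $\Ffu^\ps_{\bt;h,p,p}$. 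They are nonzero because the upward shift operators are injective (Proposition~\ref{prop-kuso}(ii), or Proposition~\ref{prop-uso-ga}), and linearly independent because, by Lemma~\ref{lem-ext-ab}, their extreme components are pure $I_{\nu+k}$ and pure $K_{\nu+k}$ Bessel functions; hence $\mu^{a,b}_\bt$ grows exponentially at $\infty$ while $\om^{a,b}_\bt$ decays, and $\mu^{a,b}_\bt$ is $\nu$-regular at $0$ while $\om^{a,b}_\bt$ is not. The same behaviour shows that, inside this space, the subspace of $\nu$-regular functions is exactly $\CC\,\mu^{a,b}_\bt(j,\nu)$ and the subspace of exponentially decaying functions is exactly $\CC\,\om^{a,b}_\bt(j,\nu)$. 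I expect this dichotomy to be the main obstacle: securing it requires the explicit modified Bessel descriptions of Lemma~\ref{lem-ext-ab} and Proposition~\ref{prop-nsefa} together with the distinct behaviour of $I_\nu$ and $K_\nu$ for integral $\nu$ at $0$ and $\infty$. Once it is in hand the rest is module-theoretic bookkeeping.

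For part i), invariance under $\klie$ and $K$ is immediate from \eqref{MWbd}, and for invariance under $\glie$ I would use the corollary to Proposition~\ref{prop-shdef} to reduce to stability under the four shift operators. The upward operators send $\mu^{a,b}_\bt$ to $\mu^{a+1,b}_\bt$ or $\mu^{a,b+1}_\bt$, hence stay in the module. A downward operator sends $\mu^{a,b}_\bt(j,\nu)$ to a $\nu$-regular element (Proposition~\ref{prop-inv}) of $\Ffu^\ps_{\bt;h\pm3,p-1,p-1}$; by the preliminary dichotomy this element lies in $\CC\,\mu^{a',b'}_\bt(j,\nu)$ when the target $K$-type is still in $\sect(j)$, while on the boundary of the sector the operator vanishes (Proposition~\ref{prop-dsok1}). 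Either way it remains in $\Mfu^{\xi,\nu}_\bt$, and symmetrically for $\Wfu^{\xi,\nu}_\bt$. Multiplicity one and the description of the occurring $K$-types follow because each highest weight space meeting the module is the one-dimensional $\CC\,\mu^{a,b}_\bt$ (resp.\ $\CC\,\om^{a,b}_\bt$), vanishing off $\sect(j_\xi)$; the disjointness $\Mfu^{\xi,\nu}_\bt\cap\Wfu^{\xi,\nu}_\bt=\{0\}$ is $\CC\,\mu^{a,b}_\bt\cap\CC\,\om^{a,b}_\bt=\{0\}$ in each $K$-type.

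Part ii) is then almost formal: if $(h/3,p)\in\sect(j)\cap\sect(j')$ with $(j,\nu),(j',\nu')\in\wo(\ps)^+$, then both $\mu^{a,b}_\bt(j,\nu)$ and the corresponding $\mu^{a'',b''}_\bt(j',\nu')$ are $\nu$-regular vectors of the same two-dimensional space, hence both lie in its one-dimensional regular part and are proportional, giving $\Mfu^{\xi_j,\nu}_{\bt;h,p,p}=\Mfu^{\xi_{j'},\nu'}_{\bt;h,p,p}$; the exponential-decay argument gives the $\Wfu$ statement. For part iii), spanning holds because every highest weight space $\Ffu^\ps_{\bt;h,p,p}=\CC\,\mu^{a,b}_\bt\oplus\CC\,\om^{a,b}_\bt$ lies in $\sum\Mfu^{\xi,\nu}_\bt+\sum\Wfu^{\xi,\nu}_\bt$, and these spaces are $U(\klie)$-generated over their highest weight spaces. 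For directness I would note that $(\sum\Wfu^{\xi,\nu}_\bt)\cap(\sum\Mfu^{\xi,\nu}_\bt)$ is a submodule; a nonzero one would contain a nonzero highest weight vector, which by part ii) would lie in $\CC\,\om^{a,b}_\bt\cap\CC\,\mu^{a,b}_\bt=\{0\}$, a contradiction.

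Finally, part iv) follows by a lattice argument. Each $\Wfu^{\xi,\nu}_\bt$ consists of exponentially decaying functions and each $\Mfu^{\xi,\nu}_\bt$ of $\nu$-regular ones, so $\sum\Wfu^{\xi,\nu}_\bt\subseteq\Wfu^\ps_\bt$ and $\sum\Mfu^{\xi,\nu}_\bt\subseteq\Mfu^\ps_\bt$ (Definitions~\ref{defW[]}, \ref{defM[]}), while the per-$K$-type dichotomy gives $\Wfu^\ps_\bt\cap\Mfu^\ps_\bt=\{0\}$. Combining with iii),
\[ \Ffu^\ps_\bt=\Bigl(\sum\Wfu^{\xi,\nu}_\bt\Bigr)\oplus\Bigl(\sum\Mfu^{\xi,\nu}_\bt\Bigr)\subseteq\Wfu^\ps_\bt+\Mfu^\ps_\bt\subseteq\Ffu^\ps_\bt, \]
so $\Ffu^\ps_\bt=\Wfu^\ps_\bt\oplus\Mfu^\ps_\bt$. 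Since the two inner sums sit factorwise inside the two outer summands and the outer summands meet trivially, for any $w\in\Wfu^\ps_\bt$ the decomposition $w=w'+m'$ forces $m'\in\Wfu^\ps_\bt\cap\Mfu^\ps_\bt=\{0\}$, whence $w\in\sum\Wfu^{\xi,\nu}_\bt$; both inclusions are therefore equalities, which is iv).
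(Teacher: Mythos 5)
Your proposal follows essentially the same route as the paper: the two‑dimensionality result \eqref{dbts}, injectivity of the upward shift operators, vanishing of the downward shift operators on sector boundaries (Proposition~\ref{prop-dsok1}), and the explicit minimal vectors ($t^2I_\nu$, $t^2K_\nu$ at $p=0$ and $\kk^I_{\bt;h,p}$, $\kk^K_{\bt;h,p}$ from Proposition~\ref{prop-nsefa} at $p\geq1$) are exactly the inputs the paper uses, and your parts~ii)--iv) are the same multiplicity bookkeeping. The only organizational difference is that you front‑load a per‑$K$‑type ``dichotomy'' and derive everything from it, while the paper runs the descent argument separately inside each part.

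One step is stated more strongly than your citations support. You justify the linear independence of $\mu^{a,b}_\bt(j,\nu)$ and $\om^{a,b}_\bt(j,\nu)$, and the dichotomy itself, by saying that ``by Lemma~\ref{lem-ext-ab} their extreme components are pure $I_{\nu+k}$ and pure $K_{\nu+k}$ Bessel functions.'' Lemma~\ref{lem-ext-ab} only describes the lowest component of $x^{p,0}_\bt$ and the highest component of $x^{0,p}_\bt$; for a $K$-type in the interior of $\sect(j)$, with $a,b>0$, the components of $\mu^{a,b}_\bt$ and $\om^{a,b}_\bt$ are genuine linear combinations of functions $t^cI_{\nu+k}$, respectively $t^cK_{\nu+k}$ (Lemma~\ref{lem-bfs}~ii)), and it is not immediate from the asymptotics alone that such a combination cannot decay exponentially or be $\nu$-regular. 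The gap is filled exactly as the paper does for $\Mfu^{\xi,\nu}_\bt\cap\Wfu^{\xi,\nu}_\bt=\{0\}$: from any $K$-type one descends along injective downward shift operators (at least one is injective off the sector boundaries, by Proposition~\ref{prop-kdso}) to a minimal vector, which lives either in a one‑dimensional $K$-type or in $K_{\bt;h,p}$, where the components \emph{are} single Bessel functions; since exponential decay and $\nu$-regularity are preserved under the descent (Proposition~\ref{prop-inv}) and $I_\nu$, $K_\nu$ are independent there, the dichotomy follows. With that repair your argument is complete and coincides with the paper's.
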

\begin{proof}The space $\Wfu^{\xi,\nu}_\bt$ is invariant under $\klie$.
To see that it is invariant under $\glie$, it suffices to consider the
shift operators on a highest weight vector in a $K$-type. We use the
definition in Table~\ref{tab-sho}, p~\pageref{tab-sho} and apply ii) in
Lemma~\ref{lem-mv} to see that $\om^{a,b}_\bt(j_\xi,\nu)$ is sent to a
linear combination of elements $u \,\om^{a',b'}_\bt(j_\xi,\nu)$ with
$u\in U(\klie)$. By Proposition~\ref{prop-dsok1} the point
corresponding to the $K$-type cannot leave the sector $\sect(j_\xi)$.
Hence $\Wfu^{\xi,\nu}_\bt$ is a $(\glie,K)$-module. The same reasoning
works for $\Mfu^{\xi,\nu}_\bt$.

Now consider an element $f\in \Wfu^\bt_\bt\cap \Mfu^{\xi,\nu}_\bt $ of a
given $K$-type $\tau^h_p$. Using a downward path in the $(h/3,p)$-plane
given by injective downward shift operators we ultimately arrive at a
minimal vector $v\in \Wfu^\ps_\bt$, on which both downward shift
operators vanish. In that situation we know that minimal vector
explicitly, from \S\ref{sect-1dKt}
($p=0$) or \eqref{expl-kdso} ($p\geq 1$). Since the I-Bessel functions
and the $K$-Bessel functions with the same parameter are linearly
independent, this minimal vector vanishes. We used a path of injective
downward shift operators, and conclude that $f=0$.

The upward shift operators are injective by Proposition~\ref{prop-kuso}.
So all $K$-types corresponding to points of $\sect(j_\xi)$ occur in
$\Wfu^{\xi,\nu}_\bt$ and in $\Mfu^{\xi,\nu}_\bt$ with multiplicity at
least~$1$. By \eqref{dbts} the multiplicities are exactly one. This
gives~i).\smallskip

Consider $j_l, j_r\in \wo^1(\ps)$ (in the conventions of
\eqref{jnurels}). Proposition~\eqref{prop-nsefa} ii)b) implies that
$\Wfu^{\xi_l,\nu_l}_{\bt;h,p,p} = \Wfu^{\xi_r,\nu_r}_{\bt;h,p,p}$ for
$(h/3,p)$ equal to the lowest point of the intersection
$\sect(j_l) \cap \sect(j_r)$. From any other $K$-type corresponding to
a point of $\sect(j_l) \cap \sect(j_r)$ we can go down to the $K$-type
by a path of injective downward shift operators. This gives the
analogous equality for the $K$-types in the intersection.
\begin{center}\grf{5}{ints123}\end{center}
This corresponds to the triangular region above 2 in the picture.

For $j_l < j_+< j_r$ we are in the rectangular regions above 1 and 3. In
the same way the equalities
$\Wfu^{\xi_l,\nu_l}_{\bt;h,p,p} = \Wfu^{\xi_+,\nu_+}_{\bt;h,p,p}$ for
$(h/3,p) \in \bigl( \sect(j_l) \cap \sect(j_+) \bigr) \setminus \sect(j_r)$,
and $\Wfu^{\xi_+,\nu_+}_{\bt;h,p,p} = \Wfu^{\xi_r,\nu_r}_{\bt;h,p,p}$
for
$(h/3,p) \in \bigl( \sect(j_+) \cap \sect(j_r) \bigr) \setminus \sect(j_l)$.
The upward shift operators are injective by
Proposition~\ref{prop-kuso}. So relations
$\om^{a,b}_\bt(j_1,\nu_1) \dis \om^{a,b}_\bt(j_2,\nu_2)$ are preserved
if we increase $a$ \ and/or $b$. This shows that in all $K$-types the
spaces $\Wfu_{\bt;h,p,p}^{\xi,\nu}$ are the same for all $(j_\xi,\nu)$
such that $(h/3,p) \in \sect(j_\xi)$. The same reasoning goes through
for~$\Mfu$. This gives ii), and implies~iii).

We turn to the submodules $\Wfu^\ps_\bt$ and $\Mfu^\ps_\bt$ of
$\Ffu^\ps_\bt$ in Definitions \ref{defW[]} and~\ref{defM[]}. From the
inclusions
\[\sum_{(j,\nu)\in \wo(\ps)^+} \Wfu^{\xi,\nu}_\bt\subset \Wfu^\ps_\bt\,,
\qquad \sum_{(j,\nu)\in \wo(\ps)^+} \Mfu^{\xi,\nu}_\bt\subset
\Mfu^\ps_\bt\,, \]
we obtain iv) by comparing multiplicities of $K$-types
in~\eqref{dsd-ab}.
\end{proof}

\begin{proof}[Proof of Theorem~\ref{mnthm-ab-ip}]\label{prfC}
Lemma~\ref{lem-strab} gives most of the statements of
Theorem~\ref{mnthm-ab-ip}. We still have to prove part~iv) of the
theorem. That implies the reducibility in~i).

The intersection $\cap_{(j,\nu) \in \wo(\ps)^+} \Wfu^{\xi_j,\nu}_\bt$ is
of course an invariant submodule. From the maximal weight in the
minimal $K$-type in the intersection we can reach all $K$-types in the
intersection by injective upward shift operators, and we can go back by
injective downward shift operators. Since the highest weight vectors in
a subspace of a given $K$-type generate the whole subspace, this
suffices for irreducibility.
\end{proof}

\rmrk{Illustration}Figure~\ref{fig-strab} gives an illustration of the
submodule structure.
\begin{figure}[t]
\begin{center}\grf{7}{strab3}\\
\grf{5}{strabl}\qquad \grf{5}{strabr}
\end{center}
\caption[]{Sketches of the submodule structure in the generic abelian
cases, applying to $\Wfu^\ps_\bt$ and to $\Mfu^\ps_\bt$. On top the
case $(j_l,j_+,j_r) = (-15,3,12)$, at the bottom the cases $(-6,3,3)$
(left) and $(-4,-4,8)$ (right).\\
We use the conventions in \eqref{jnurels}. The dots indicate the minimal
$K$-type of the irreducible submodule. } \label{fig-strab}
\end{figure}

\begin{remark}\label{rmk-WM-ab}The status of the submodules of the
Fourier term module $\Ffu_\bt^\ps$ under integral parametrization is
the same as in Remark~\ref{rmk-WM-gp}. The difference concerns the
intersections of the special modules in~\eqref{MWbd}. The modules
$\Wfu_\bt^{\xi,\nu}$ coincide in all $K$-types that they have in
common; the same holds for the modules $\Mfu_\bt^{\xi,\nu}$. On the
order hand, all principal series modules have zero intersection.
\end{remark}

With \eqref{MWbd} we get the following addition to ii)b) in
Proposition~\ref{prop-nsefa}.
\begin{cor}\label{cor-jl+r-ab}
Consider $(j_1,\nu_1)=(j_l,\nu_l)$, $(j_+,\nu_+)$,
$(j_r,\nu_r)=(j_2,\nu_2)$ in one Weyl group orbit, with $j_l<j_+<j_r$,
in the notation of~\eqref{jnurels}. The minimal $K$-type $\tau^h_p$ in
the intersection $\sect(j_1) \cap \sect(j_+)\cap \sect(j_2)$ has
descriptions $h=2j_1+3p=2j_2-3p=2j_++3(a-b)$, $p=a+b$ with
$a,b\in \ZZ_{\geq 0}$.

In this situation we have the following addition to~\eqref{kkbtmuom}:
\be \mu^{a,b}_\bt(j_+,\nu_+) \ddis \kk^I_{\bt;h,p}\,,\qquad
\om^{a,b}_\bt(j_+,\nu_+) \ddis \kk^K_{\bt;h,p}\,.\ee
\end{cor}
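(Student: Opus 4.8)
The plan is to reduce the whole statement to the dimension count \eqref{dbts} already extracted in the proof of Proposition~\ref{prop-nsefa}, so that almost no new computation is needed. First I would record the geometry of the minimal $K$-type $\tau^h_p$. Sitting at the bottom of $\sect(j_l)\cap\sect(j_r)$, it lies on the right boundary of $\sect(j_l)$ and on the left boundary of $\sect(j_r)$, so that $j_l=\tfrac12(h-3p)=j_1$ and $j_r=\tfrac12(h+3p)=j_2$ in the notation of Lemma~\ref{lem-kdso}. Using the orbit relation $j_l+j_++j_r=0$ from \eqref{jnurels} one finds $h=j_l+j_r=-j_+$, hence $-h=j_+$, and Lemma~\ref{lem-kdso}(i) then identifies $\wo^1(\ps)=\{j_l,j_+,j_r\}$ for $\ps=\ps[-h,p]\in\WOI$. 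In particular $(j_+,\nu_+)\in\wo(\ps)^+$, so the functions $\mu^{a,b}_\bt(j_+,\nu_+)$ and $\om^{a,b}_\bt(j_+,\nu_+)$ really do lie in $\Ffu^\ps_\bt$, with $a,b\geq 0$, $a+b=p$ and $h=2j_++3(a-b)$ as in the statement.

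Next I would invoke the dimension statement \eqref{dbts}, which gives $\dim\Ffu^\ps_{\bt;h,p,p}=2$ because this $K$-type occurs. By Proposition~\ref{prop-nsefa} the two explicit functions $\kk^I_{\bt;h,p}$ and $\kk^K_{\bt;h,p}$ of \eqref{expl-kdso} both lie in $\Ffu^\ps_{\bt;h,p,p}$, and they are linearly independent since $I_\nu$ and $K_\nu$ are linearly independent solutions of the modified Bessel equation \eqref{mBd0}; hence they form a basis of this two-dimensional highest weight space. The element $\mu^{a,b}_\bt(j_+,\nu_+)=(\sh31)^a(\sh{-3}1)^b\mu^{0,0}_\bt(j_+,\nu_+)$ is again a highest weight vector of $K$-type $\tau^h_p$, and it is non-zero by the injectivity of the upward shift operators (Proposition~\ref{prop-kuso}(ii), or directly Proposition~\ref{prop-uso-ga}); the same holds for $\om^{a,b}_\bt(j_+,\nu_+)$.

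It then remains only to locate these two vectors inside the basis $\{\kk^I_{\bt;h,p},\kk^K_{\bt;h,p}\}$, which I would do purely by boundary behaviour. Writing $\mu^{a,b}_\bt(j_+,\nu_+)=\alpha\,\kk^I_{\bt;h,p}+\beta\,\kk^K_{\bt;h,p}$, the left-hand side has $\nu_+$-regular behaviour at $0$ (its components are combinations of $I_{\nu_++k}$ by Lemma~\ref{lem-bfs}(ii)), whereas the components of $\kk^K_{\bt;h,p}$ are $K$-Bessel functions, which are not $\nu_+$-regular at $0$; comparing leading terms as $t\downarrow0$ forces $\beta=0$, so $\mu^{a,b}_\bt(j_+,\nu_+)\ddis\kk^I_{\bt;h,p}$. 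Symmetrically, $\om^{a,b}_\bt(j_+,\nu_+)$ has exponential decay at $\infty$ while $\kk^I_{\bt;h,p}$ (an $I$-Bessel combination) grows, so its $\kk^I$-coefficient must vanish and $\om^{a,b}_\bt(j_+,\nu_+)\ddis\kk^K_{\bt;h,p}$. The only genuinely load-bearing input is the dimension equality $\dim\Ffu^\ps_{\bt;h,p,p}=2$, but that is exactly \eqref{dbts}; once it is in hand the argument is just the separation of the regular-at-$0$ solution from the decaying-at-$\infty$ solution, and there is no real obstacle beyond bookkeeping.
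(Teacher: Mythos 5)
Your proof is correct. The paper derives the corollary in one line from Lemma~\ref{lem-strab}: by \eqref{MWbd} the vector $\mu^{a,b}_\bt(j_+,\nu_+)$ spans the multiplicity-one space $\Mfu^{\xi_+,\nu_+}_{\bt;h,p,p}$, which by part~ii) of that lemma coincides with $\Mfu^{\xi_l,\nu_l}_{\bt;h,p,p}$, already identified with $\CC\,\kk^I_{\bt;h,p}$ in Proposition~\ref{prop-nsefa}~ii)b); similarly for the $\Wfu$-spaces and $\kk^K_{\bt;h,p}$. You instead bypass Lemma~\ref{lem-strab} and argue directly inside the two-dimensional space $\Ffu^\ps_{\bt;h,p,p}$: you use \eqref{dbts} to see that $\kk^I_{\bt;h,p}$ and $\kk^K_{\bt;h,p}$ form a basis, and then eliminate the wrong basis vector by the behaviour at $t\downarrow 0$ (the logarithmic or negative-power singularity of the $K$-Bessel components, absent from the $I$-Bessel ones) and at $t\uparrow\infty$ (the exponential growth of the $I$-Bessel components against the decay of $\om^{a,b}_\bt$). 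These are exactly the inputs the paper uses to prove Lemma~\ref{lem-strab}~i)--ii) in the first place, so your argument is essentially an inlined, self-contained version of the paper's; it buys independence from the structure lemma at the cost of redoing a small piece of its proof. Both routes ultimately rest on the same two nontrivial facts: the dimension statement \eqref{dbts} and the explicit basis of $K_{\bt;h,p}$ from Proposition~\ref{prop-nsefa}, together with the injectivity of the upward shift operators guaranteeing that $\mu^{a,b}_\bt(j_+,\nu_+)$ and $\om^{a,b}_\bt(j_+,\nu_+)$ are non-zero.
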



\def\flnm{rFtm-III-nab}


\section{Non-abelian Fourier term
modules}\label{sect-nab}\markright{14. NON-ABELIAN FOURIER TERM
MODULES} Under integral parametrization, the non-abelian case is more
complicated than the generic abelian case. The modules $\Mfu_\n^\ps$
and $\Wfu_\n^\ps$ have in some cases a non-zero intersection. For this
reason we also use modules based on the unusual Whittaker functions
$V_{\k,s}$ in~\eqref{Vkps}. Furthermore, the families $\om^{a,b}_\n$
and $\mu^{a,b}_\n$ may have zeros. As a consequence, we need more
complicated families to describe submodules.

The aim of this section is to determine the structure of non-abelian
Fourier term modules, and to prove Theorem~\ref{mnthm-nab-ip}. We also
give a detailed description of the structure of the special Fourier
term modules.

Proposition \ref{prop-ik} is of independent interest. It discusses a
situation for which we have a reasonably simple description of elements
of $\Ffu_{\n;h,p,p}$ with $p>1$.

\subsection{Notations and conventions}For the non-abelian Fourier term
modules we need several notations. We used some of them in an earlier
section, to be recalled here.\medskip

The type of the realization of the Stone-von Neumann representation is
indicated by $\n=(\ell,c,d)$ with $\ell \in \frac12\ZZ_{\neq 0}$,
$c\bmod 2\ell$, $d\equiv 1\bmod 2$. We abbreviate
$\sign(\ell)=\varepsilon$.\il{eps}{$\varepsilon$} The $K$-types that
occur in $\Ffu^\psi_\n$ have to satisfy the condition \eqref{mucond},
which can be written as
\be\label{mucond1} 3p -3\leq \varepsilon\bigl( h-d\bigr)\,.\ee

The decomposition of $F\in \Ffu_{\n;h,p,p}$ into component functions has
the form
\be \label{decomp-n}F\bigl(n\am(t)k\bigr) = \sum_r
\vartheta_{m(h,r)}(n)\, f_r(t)\, \Kph h{p}{r}{p}(k)\,,\ee
where \il{th-m}{$\vartheta_m$ abbreviates
$\Theta_{\ell,c}(h_{\ell,m})$}$\vartheta_m$ is an abbreviation of
$\Theta_{\ell,c}(h_{\ell,m})$ with the convention that $	\vartheta_m=0$
if $m\in \ZZ_{<0}$. This puts a further restriction on the summation
variable $r$, $|r|\leq p$, $r\equiv p\bmod 2$. The quantity
\il{mhrn}{$m(h,r)$}$m(h,r)$ is determined by the relation
\be \varepsilon \bigl( 6m(h,r)+3\bigr) + h-3r = 0\,.\ee

The quantity \il{r0n}{$r_0(h)$}$r_0(h)$ is the solution of
$m\bigl( h,r_0(h)\bigr)=0$. Since $m(h,r)$ is increasing in $r$ if
$\varepsilon=\sign(\ell)=1$, and decreasing in $r$ if $\varepsilon<0$,
the sum in~\eqref{decomp-n} effectively runs over $r$ satisfying
$ \max(r_0(h),-p)\leq r\leq  p$ if $\varepsilon=1$, and
$-p\leq r \leq \min(r_0(h),p )$ if $\varepsilon=-1$. We use the
standing assumption that the components $f_r$ are $0$ if $r$ does not
satisfy these relations. All this is subsumed in the notation
$\sum_r$.\il{sumrnot}{$\sum_r$}

\il{minc}{minimal component}\il{maxc}{maximal component}We use the
following terminology
\be\label{maxc} \begin{array}{|c|cc|}\hline
& \varepsilon=1 & \varepsilon=-1 \\
\hline
\text{ minimal component of }F:& f_{\max(r_0(h),-p)} & f_{-p}
\\
\text{maximal component of }F:& f_p & f_{\min(r_0(h),p)}
\\ \hline
\end{array}
\ee

A basis of $\Ffu^\psi_{\n;2j,0,0}$ has been determined in
\S\ref{sect-1dKt}, in terms of Whittaker functions with parameters
\il{kap0}{$\k_0(j)$}$\k=\k_0(j)$ and $s$. We use also the conventions
and relations in~\eqref{jnurels}. Table~\ref{tab-parms-na} lists the
notations that we use for the non-abelian Fourier term modules.
\begin{table}[htp]
\[\renewcommand\arraystretch{1.4}
\begin{array}{|rll|}\hline
\n&\;=\; (\ell,c,d) &d\in 1+2\ZZ \\
\varepsilon&\;=\; \sign(\ell)
&\\\hline
\vartheta_m &\;=\; \Theta_{\ell,c}(h_{\ell,m})&\vartheta_m=0\text{ if
}m< 0\\ \hline
m(h,r) &\;=\; \frac \varepsilon2\bigl( r - r_0(h)
\bigr)\in \ZZ &\\
&\;= \;m_0(j)+\frac \varepsilon6\bigl(3r+2j-h\bigr) & \\
\hline
m_0&\;=\;m_0(j)= \frac \varepsilon 6(d-2j)-\frac12 \in \ZZ_{\geq 0}& \\
\hline
r_0(h)&\;=\;\frac{h-d}3+\varepsilon&
\\
&\;=\;\frac13(h-2j)-2\varepsilon m_0(j)
&\\
\hline
\k_0= \k_0(j) &\;=\; -\frac\varepsilon 6
(d+j)
\in \frac12\ZZ &\\
&\;=\; -m_0(j)
- \frac12(\varepsilon j+1)&
\\ \hline
(j_+,\nu_+) &\in L^+\,,&\\
(j_l,\nu_l) &\;=\; \Ws2 (j_+,\nu_+)\,,&
(j_r,\nu_r) \;=\; \Ws 1
(j_+,\nu_+)\\\hline
\wo(\psi)_\n&\;=\; \bigl\{ (j,\nu)\in \wo(\psi)\;:\; m_0(j)\geq 0\bigr\}
& \\
\wo(\psi)_\n^+&\;=\; \bigl\{ (j,\nu)\in \wo(\psi)\;:\; m_0(j)\geq 0\,,&
\re \nu\geq 0 \bigr\}
\\ \hline
\end{array}\]
\caption{Overview of notations for the non-abelian
cases}\label{tab-parms-na}
\il{m0}{$m_0(j)$}
\end{table}

\subsection{Families with fixed K-types}The families
$\omega^{a,b}_\n(j,\nu)$ and $\mu^{a,b}_\n(j,\nu)$ have been defined in
\eqref{mu00nab} and \eqref{om00nab} for $a=b=0$, and then recursively
in~\eqref{xab} for all $a,b\in \ZZ_{\geq 0}$. Since the $W$-Whittaker
function $W_{\k,\nu/2}$ is holomorphic and even in $\nu$ the families
$\omega^{a,b}_\n$ are holomorphic and even in $\nu\in \CC$. First order
singularities of $\mu_\n^{a,b}$ may occur at $\nu \in \ZZ_{\leq 1}$; so
$\mu^{a,b}_\n$ is a meromorphic family on $\CC$. See \eqref{Mkps} and
\eqref{Wkps}.

The relation \eqref{specWV} implies that $\omega^{a,b}_\n$ and
$\mu^{a,b}_\n$ may not provide us with linearly independent elements in
highest weight spaces for all combinations of $w$ and $\nu$. We employ
the
(unusual)
Whittaker function $V_{\k,s} $ in \eqref{Vkps} to define\ir{upsab}{
\upsilon_{\ell,c,\mu}^{a,b}}
\badl{upsab} \upsilon^{0,0}_{\ell,c,\mu}\bigl(j,\nu ;n\am(t)k\bigr) &=
\Theta_{\ell,c}\bigl( h_{\ell,m_0(j)};n)
\, t\, V_{\k_0(j)
,\nu/2}(2\pi|\ell|t^2)\, \Kph{2j}000(k)\,, \\
\upsilon^{a,b}_\n
(j,\nu) &= \bigl( \sh{-3}1\bigr)^b \bigl( \sh31)^a
\upsilon^{0,0}_\n(j,\nu)\,. \eadl
These families are holomorphic and even in~$\nu$.

The definition in \eqref{Vkps} implies that $\upsilon^{0,0}_\n(j,\nu)$
is a meromorphic linear combination of $\omega^{0,0}_\n(j,\nu)$ and
$\mu^{0,0}_\n(j,\nu)$ that is holomorphic for $\nu\in \CC\setminus\ZZ$.
The families $\upsilon^{a,b}_\n$ inherit this property.

Applying Proposition~\ref{prop-dsok1} for $\nu \in \CC \setminus \ZZ$,
and extending the result by holomorphy, we obtain that
\be \label{ect-ups}
\sh3{-1}\upsilon^{p,0}_\n(j,\nu)=0\,,\quad
\sh{-3}{-1}\upsilon^{0,p}_\n(j,\nu)=0\quad\text{ for all }\nu\in
\CC\,.\ee

\begin{lem}\label{lem-dcpt}If $F\in \Ffu_{\n;h,p,p}$ satisfies
$\sh 3{-1} F=0$, then $F$ is determined by its minimal component. If
$\sh{-3}{-1}F=0$, then $F$ is determined by its maximal component.
\end{lem}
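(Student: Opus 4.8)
The statement is really about the structure of the kernel of a downward shift operator on a single highest-weight space $\Ffu_{\n;h,p,p}$, so the natural approach is to use the explicit kernel relations that the shift operators impose on the component functions in the decomposition \eqref{decomp-n}. First I would recall that an element $F\in\Ffu_{\n;h,p,p}$ is written as $F=\sum_r \vartheta_{m(h,r)}\,f_r\,\Kph h{p}{r}{p}$, where by convention $\vartheta_m=0$ for $m<0$, so that the sum runs effectively over $\max(r_0(h),-p)\le r\le p$ when $\varepsilon=1$, and over $-p\le r\le \min(r_0(h),p)$ when $\varepsilon=-1$. The assertion that $F$ is determined by its \emph{minimal} component (resp.\ maximal) should follow by showing that the relation $\sh3{-1}F=0$ forces a recursion expressing each higher component in terms of the one just below it (resp.\ $\sh{-3}{-1}F=0$ expresses each lower component in terms of the one just above).

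The key step is to read off this recursion from the description of the downward shift operators in Table~\ref{tab-shnab}. Writing out $\sh3{-1}F=0$ and collecting the coefficient of each basis function $\vartheta_{m(h,r)}\,\Kph{h+3}{p-1}{r+1}{p-1}$ gives, for each admissible $r$, a first-order linear relation coupling $f_r$ and $f_{r+2}$ of the schematic form already seen in \eqref{efr1d}, namely
\be
f_r \;=\; \Bigl( (h-2p-r-2+4\pi\ell t^2)\,f_{r-2} + 2t\,f_{r-2}' \Bigr)\cdot
\begin{cases}
\tfrac{-i}{4\sqrt{2\pi|\ell|\,m(h,r)}} &\text{if }\ell>0,\\
\tfrac{i}{4\sqrt{2\pi|\ell|\,(m(h,r)+1)}} &\text{if }\ell<0.
\end{cases}
\ee
The essential observation is that the prefactor is nonzero precisely under the admissibility conditions on $m(h,r)$ recorded in the discussion of \eqref{efr1d}. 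Thus each component is an explicit first-order differential expression in the next-lower one, and by induction on $r$ every $f_r$ is determined by the minimal component $f_{\max(r_0(h),-p)}$. I would then repeat the symmetric computation for $\sh{-3}{-1}F=0$, where the analogous kernel relations (with reversed orientation) express $f_r$ in terms of $f_{r+2}$, so that the \emph{maximal} component determines all of $F$.

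The main obstacle is bookkeeping at the boundaries of the range of $r$, where the masking convention $\vartheta_m=0$ for $m<0$ interacts with the vanishing of the coefficients $p\mp r$ and with the two regimes $\varepsilon=\pm1$. I would need to check carefully that the induction starts correctly from the minimal (resp.\ maximal) component and does not require any input beyond it: concretely, that when $\varepsilon=1$ the recursion propagates upward from $r=\max(r_0(h),-p)$ without ever demanding a component with $m(h,r)<0$, and dually when $\varepsilon=-1$. This is exactly the kind of case split that the paper relegates to the Mathematica notebook, so I would invoke the explicit forms of the kernel relations and verify the edge cases rather than re-deriving them, appealing to the computations in \cite[\S13c]{Math}. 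Once the recursion and its starting point are pinned down, the determination claim is immediate: two kernel elements with the same minimal (resp.\ maximal) component satisfy the same recursion and hence coincide.
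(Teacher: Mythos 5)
Your proposal is correct and follows essentially the same route as the paper: the paper's proof simply points to the explicit description of $\sh{\pm3}{-1}$ in Table~\ref{tab-shnab}, derives from it the kernel relations (recorded in Table~\ref{tab-krnab}), and observes that these express each component as a first-order differential expression in the adjacent one, so that the minimal (resp.\ maximal) component determines all the others. Your extra care with the boundary cases and the masking convention $\vartheta_m=0$ for $m<0$ is exactly the bookkeeping the paper delegates to the notebook.
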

\begin{table}[htp]
\[\renewcommand\arraystretch{1.4}
\begin{array}{|rl|}\hline
&\sh 3{-1}:\\
f_{r+2}&= \Bigl(2t f'_r +
(h-2p-r-4+4\pi\ell t^2) f_r\Bigr)/ 4i t \sqrt{ 2\pi|\ell|(1+m(h,r))} \\
&\text{for }\max\bigl(-p,r_0(h)\bigr)
\leq r \leq p-2\text{ and }\varepsilon=1\,,\\
\hline
f_{r+2}&=
-\Bigl(2t f'_r + (h-2p-r-4+4\pi\ell t^2)
f_r\Bigr)/ 4i t \sqrt{ 2\pi|\ell|m(h,r)} \\
&\text{for }-p \leq r \leq \min\bigl(r_0(h),p\bigr)-2\text{ and }
\varepsilon=-1\\ \hline
0&=2t f'_{r_0(h)}+
(h-2p-r_0(h)-4+4\pi\ell t^2) f_{r_0(h)}\\
&\text{for } -p \leq r_0(h) \leq p\text{ and }\varepsilon=-1\\
\hline\hline
&\sh{-3}{-1}:\\
f_{r-2}&= \Bigl( (h+2p-r+4+4\pi\ell t^2)
f_r - 2 t f_r'\Bigr)/4it \sqrt{2\pi|\ell| m(h,r) }\\
&\text{for } \max(r_0(h),-p) +2\leq r \leq p\text{ and }\varepsilon=1
\\
\hline
f_{r-2} &= -\Bigl( (h+2p-r+4+4\pi\ell t^2) f_r - 2 t f_r'\Bigr)/4it
\sqrt{2\pi|\ell|(1+ m(h,r) )}\\
&\text{for }2-p\leq r \leq\min(r_0(h),p)\text{ and }\varepsilon=-1
\\ \hline
0&=(h+2p-r_0(h)+4+4\pi\ell t^2)
f_{r_0(h)} - 2 t f_{r_0(h)}'\\
&\text{for }-p \leq r_0(h) \leq p\text{ and }\varepsilon=1 \\ \hline
\end{array}\]
\caption{Kernel relations for downward shift operators in $\Ffu_\n$
applied to $F= \sum_r \vartheta_{m(h,r)} \, f_r \, \Kph h p r p$.
Computations in \cite[\S19]{Math}.} \label{tab-krnab}
\end{table}
\begin{proof}Table~\ref{tab-shnab}, p~\pageref{tab-shnab}, gives an
explicit description of $\sh {\pm3}{-1} $ in terms of the component
function of an element of $\Ffu_{\n;h,p,p}$. This leads to the
\il{krn}{kernel relations}kernel relations in Table~\ref{tab-krnab}.
These relations imply the statement in the lemma, and, moreover, impose
in some cases a differential equation for this determining component.
\end{proof}
~

The families $x^{a,b}_\n(j,\nu)$ with $x\in \{\omega,\upsilon,\mu\}$ may
have zeros as a function of~$\nu$. At a zero, the derivative with
respect to $\nu$ is an element of $\Ffu^\psi_\n$. Lemma~\ref{lem-dcpt}
shows that we can investigate these zeros by consideration of the
maximal or minimal component.

\begin{prop}\label{prop-extr.na}Let $x\in \{\omega,\upsilon,\mu\} $ and
let $j\in \ZZ$ such that the $K$-type $\tau^{2j}_0$ occurs in
$\Ffu_\n$.
\begin{enumerate}
\item[i)] For each $p\in \ZZ_{\geq 0}$ there are uniquely
determined\il{tom}{$\tilde\omega_\n^{p,0},\;\tilde\omega_\n^{0,p}$}
\il{tmu}{$\tilde\mu_\n^{p,0},\;\tilde\mu_\n^{0,p}$}
\il{tups}{$\tilde\upsilon_\n^{p,0},\;\tilde\upsilon_\n^{0,p}$} families
$\tilde x^{p,0}_\n(j,\nu)$ and $\tilde x^{0,p}_\n(j,\nu)$ such that
\begin{enumerate}
\item[a)] these families are holomorphic in $\nu$ on
$\CC\setminus \ZZ_{\leq -1}$, and are non-zero for each
$\nu \in \CC\setminus\ZZ_{\leq -1}$;
\item[b)]
$x^{p,0}_\n(j,\nu) = \varphi_+^p(j,\nu)\, \tilde x^{p,0}_\n(j,\nu)$ and
$x^{0,p}_\n(j,\nu) = \varphi_-^p(j,\nu)\, \tilde x^{0,p}_\n(j,\nu)$
with holomorphic functions $\nu\mapsto \varphi_\pm^p(j,\nu)$
on~$\CC\setminus \ZZ_{\leq -1}$;
\item[c)] $\tilde x^{p,0}_\n(j,\nu)$ is determined by its minimal
component, and $\tilde x^{0,p}_\n(j,\mu)$ is determined by its maximal
component, given explicitly in Table~\ref{tab-mmc}.
\begin{table}[tp]
\[
\renewcommand\arraystretch{1.4}
\begin{array}{|c|c|c|}\hline
& \bigl(p\leq m_0(j), \;\varepsilon=1\bigr)\text{ or }\varepsilon=-1
& p\geq m_0(j),\; \varepsilon=1 \\
\tilde x_\n^{p,0} & \text{min.\ comp.\ }f_{-p} & \text{min.\ comp.\
}f_{r_0(2j+3p)}
\\\hline
\tilde\omega_\n^{p,0}& t^{p+1}\, W_{\k_0(j),\nu/2}(2\pi|\ell|t^2) &
t^{m_0(j)+1} \, W_{\k,\nu/2}(2\pi|\ell|t^2)
\\
\tilde\upsilon_\n^{p,0}& t^{p+1}\, V_{\k_0(j),\nu/2}(2\pi|\ell|t^2)
& t^{m_0(j)+1} \, V_{\k,\nu/2}(2\pi|\ell|t^2) \\
\tilde\mu_\n^{p,0}& t^{p+1}\, M_{\k_0(j),\nu/2}(2\pi|\ell|t^2) &
t^{m_0(j)+1} \, M_{\k,\nu/2}(2\pi|\ell|t^2)
\\
\
& \k_0(j)= -m_0(j) -\frac{\varepsilon j+1}2
& \k=-p -\frac{j+1}2 \\ \hline
&\varepsilon=1 \text{ or } \bigl( p\leq m_0(j),\; \varepsilon=-1\bigr)
& p\geq m_0(j),\, \varepsilon=-1
\\
\tilde x^{0,p}_\n&\text{max.\ comp.\ } f_p & \text{max.\ comp.\
}f_{r_0(2j-3p)}
\\ \hline
\tilde\omega_\n^{0,p}& t^{p+1}\, W_{\k_0(j),\nu/2}(2\pi|\ell|t^2) &
t^{m_0(j)+1} \, W_{\k,\nu/2}(2\pi|\ell|t^2)
\\
\tilde\upsilon_\n^{0,p}& t^{p+1}\, V_{\k_0(j),\nu/2}(2\pi|\ell|t^2)
& t^{m_0(j)+1} \, V_{\k,\nu/2}(2\pi|\ell|t^2) \\
\tilde\mu_\n^{0,p}& t^{p+1}\, M_{\k_0(j),\nu/2}(2\pi|\ell|t^2) &
t^{m_0(j)+1} \, M_{\k,\nu/2}(2\pi|\ell|t^2)
\\
& \k_0(j)= -m_0(j) -\frac{\varepsilon j+1}2
& \k=-p +\frac{j-1}2 \\ \hline
\end{array}
\]
\caption[]{Determining components of the families $\tilde x^{p,0}_\n$
and $\tilde x^{0,p}_\n$ for
$x\in \{\omega,\upsilon,\mu\}$.}\label{tab-mmc}
\end{table}
\end{enumerate}

\item[ii)] \begin{enumerate}
\item[a)]$\sh 3{-1}\tilde x_\n^{p,0}(j,\nu)=0$ and
$\sh{-3}{-1}\tilde x^{0,p}_\n(j,\nu)=0$ for all $\nu \in \CC$, or for
all $\nu \in \CC\setminus \ZZ_{\leq -1}$ if $x=\mu$.

\item[b)]For some of the other shift operators we know the behavior on
these families explicitly: $\sh31\tilde x^{p,0}_\n$ and
$\sh{-3}1 \tilde x^{0,p}_\n$ in Table~\ref{tab-x-up}, and
$\sh{-3}{-1}\tilde x^{p,0}_\n$ and $\sh3{-1}\tilde x^{0,p}$ in
Table~\ref{tab-x-dn}.
\end{enumerate}

\item[iii)]The functions $\tilde \omega^{p,0}\bigl(j,\nu;n\am(t)k\bigr)$
and $\tilde \omega^{0,p}\bigl(j,\nu;n\am(t)k\bigr)$ have exponential
decay as $t$ tends to~$\infty$.
\end{enumerate}
\end{prop}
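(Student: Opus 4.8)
The plan is to prove the three parts of Proposition~\ref{prop-extr.na} by building up from the one-dimensional $K$-type $\tau^{2j}_0$ and carefully tracking what happens to the special functions $W_{\k,s}$, $V_{\k,s}$, $M_{\k,s}$ under repeated application of the upward shift operators. First I would recall from \S\ref{sect-Nnab1d} and the definitions in \eqref{om00nab}, \eqref{upsab}, \eqref{mu00nab} that $x^{0,0}_\n(j,\nu)$ has a single determining component built from the relevant Whittaker function with parameters $\k=\k_0(j)$ and $s=\nu/2$. The families $x^{p,0}_\n=(\sh31)^p x^{0,0}_\n$ and $x^{0,p}_\n=(\sh{-3}1)^p x^{0,0}_\n$ are then obtained by iterating $\sh{\pm3}1$, and by \eqref{ect-ups} (together with Proposition~\ref{prop-dsok1}) we already have that $\sh3{-1}$ kills $x^{p,0}_\n$ and $\sh{-3}{-1}$ kills $x^{0,p}_\n$ for $\omega$, $\upsilon$ and (away from $\ZZ_{\leq-1}$) for $\mu$. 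This gives part~ii)a) almost immediately, and by Lemma~\ref{lem-dcpt} the vanishing of these downward operators means each family is determined by its minimal, respectively maximal, component, which is the content needed for~i)c).

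The core work is part~i), the extraction of a scalar factor $\ph^p_\pm(j,\nu)$ so that the renormalized family $\tilde x^{p,0}_\n$ is holomorphic and nowhere vanishing on $\CC\setminus\ZZ_{\leq-1}$. Here I would compute the determining component of $x^{p,0}_\n$ explicitly. Using the action of $\sh31$ on the minimal component (read off from Table~\ref{tab-shnab}, p~\pageref{tab-shnab}) together with the contiguous relations for Whittaker functions in~\eqref{crW1}, I expect that each application of $\sh31$ multiplies the relevant Whittaker function's $\k$-parameter by one unit and pulls out an explicit scalar prefactor depending on $\nu$. The two columns of Table~\ref{tab-mmc} correspond to the two regimes: when $p\leq m_0(j)$ the minimal component stays at $f_{-p}$ and the parameter $\k_0(j)$ is unchanged (the shift lands inside the Hermite range $m\geq0$), whereas for $p\geq m_0(j)$ with $\e=1$ the index $r_0$ becomes the active lower bound and the parameter moves to $\k=-p-\tfrac{j+1}2$. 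I would isolate all the $\nu$-dependent scalar factors accumulated along the way into $\ph^p_\pm(j,\nu)$ and check that what remains is a single Whittaker function of the stated form, which is holomorphic and (for $W$, $V$) even and nonvanishing, while $M_{\k,\nu/2}$ is nonvanishing away from $\nu\in\ZZ_{\leq-1}$ by its expansion~\eqref{Mkps}. The nonvanishing of $\tilde x$ is really the nonvanishing of a single special function, so it reduces to the known analytic facts about Whittaker functions collected in the appendix.

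For part~ii)b) I would simply apply the remaining shift operators $\sh31$ and $\sh{-3}{-1}$ (resp.\ $\sh{-3}1$ and $\sh3{-1}$) to the determining components in Table~\ref{tab-mmc}, again using the explicit formulas in Table~\ref{tab-shnab} and the contiguous relations for $W_{\k,s}$, $M_{\k,s}$, $V_{\k,s}$; the resulting expressions are what populate Tables~\ref{tab-x-up} and~\ref{tab-x-dn}, and the verification is a bookkeeping exercise best delegated to the Mathematica notebook \cite{Math}. Finally part~iii) follows from the asymptotics of the $W$-Whittaker function: $W_{\k,\nu/2}(2\pi|\ell|t^2)$ decays like $e^{-\pi|\ell|t^2}$ as $t\uparrow\infty$ (see~\eqref{Wae}), and since the higher components are obtained from the determining one by the operations listed in the proof of Proposition~\ref{prop-inv} (multiplication by powers of $t$ and by $t\partial_t$), this exponential decay is preserved, so $\tilde\omega^{p,0}_\n$ and $\tilde\omega^{0,p}_\n$ lie in $\Wfu^\psi_\n$. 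The main obstacle I anticipate is the careful case analysis in~i): correctly identifying, for each sign $\e$ and each relative size of $p$ and $m_0(j)$, which component is the determining one and exactly which scalar prefactors to absorb into $\ph^p_\pm$ so that holomorphy and nonvanishing hold uniformly on $\CC\setminus\ZZ_{\leq-1}$. The holomorphy of $\tilde x$ across the integers where the unnormalized $x^{p,0}_\n$ might vanish is the delicate point, and it is exactly this renormalization that later lets us take $\nu$-derivatives at zeros to produce new module elements.
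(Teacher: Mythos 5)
Your proposal follows essentially the same route as the paper: induction on $p$ starting from the explicitly known $p=0$ case, tracking the determining (minimal/maximal) component through the two regimes $p\lessgtr m_0(j)$, absorbing the accumulated $\nu$-dependent scalars into $\varphi^p_\pm$ so that a single nonvanishing Whittaker function remains, deducing ii)a) from Proposition~\ref{prop-dsok1} and \eqref{ect-ups} together with Lemma~\ref{lem-dcpt}, and getting iii) from the $W$-asymptotics \eqref{Wae} via the component recursions and contiguous relations. The only detail worth flagging is that in the regime $p\geq m_0(j)$ the order of the lowest component itself increases under $\sh31$ (from $r_0(h)$ to $r_0(h)+1$), so one must also compute the component of order $r_0(h)+2$ via the kernel relation for $\sh3{-1}$ before the new determining component can be identified; your plan implicitly covers this but the paper makes the step explicit.
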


\begin{table}[tp]
\[ \renewcommand\arraystretch{1.4}
\begin{array}{|c|cc|}\hline
& \sh{3}{1}\tilde x^{p,0}_\n/\tilde x^{p+1,0}_\n&\\ \hline
\varepsilon=1,\;0\leq p< m_0(j) & i
\sqrt{2\pi|\ell|}\,\sqrt{m_0(j)-p}&\\
\hline
\varepsilon=1, \; p\geq m_0(j) & \bigl( p+1+\frac{j+\nu}2\bigr)\,
\bigl(p+1+\frac{j-\nu}2\bigr)
& \tilde \omega^{p,0}_\n\\
&-1 & \tilde \upsilon^{p,0}_\n \\
& p+1+\frac{j+\nu}2
& \tilde \mu^{p,0}_\n \\ \hline
\varepsilon=-1,\; p\geq 0 & -i \sqrt{2\pi|\ell|}\,
\sqrt{m_0(j)+1+p}&\\\hline
\hline& \sh{-3}1 \tilde x^{0,p}_\n/\tilde x^{0,p+1}_\n& \\
\hline
\varepsilon=1& -i \sqrt{2\pi|\ell|}\,\sqrt{1+m_0(j)+p}
&\\\hline
\varepsilon=-1,\;0\leq p< m_0(j) & i\sqrt{2\pi|\ell|}\,
\sqrt{m_0(j)-p}&\\
\hline
\varepsilon=-1,\; p\geq m_0(j)&\bigl( p+1-\frac{j+\nu}2\bigr)\, \bigl(
p+1
+\frac{\nu-j}2\bigr)
& \tilde \omega^{0,p}_\n\\
& -1 & \tilde \upsilon^{0,p}_\n\\
& p+1+\frac{\nu-j}2
& \tilde \mu^{0,p}_\n
\\ \hline
\end{array}
\]
\caption[]{Upward shift operator on families $\tilde x^{p,0}_\n$ and
$\tilde x^{0,p}_\n$.\\
The factors in the table are equal to the quotients
$\varphi_+^p(j,\nu)/\varphi_-^p(j,\nu)$ of the holomorphic functions in
i)b) in Proposition~\ref{prop-extr.na}. }\label{tab-x-up}
\end{table}

\begin{table}[tp]
\[ \renewcommand\arraystretch{1.8}
\begin{array}{|c|cc|}\hline
& \sh{-3}{-1}\tilde x^{p,0}_\n/\tilde x^{p-1,0}_\n & \\ \hline
\varepsilon=1,\; 1\leq p \leq m_0(j) & \frac{ ip \, \bigl(
p+\frac{j-\nu}2\bigr)\, \bigl( p+\frac{j+\nu}2\bigr) } {(p+1)\,
\sqrt{2\pi|\ell|(m_0(j)-p+1)}}& \\
\hline
\varepsilon=1,\; p>m_0(j)& \frac{-p}{p+1}&\tilde\omega^{p,0}_\n \\
& \frac p{p+1}\bigl( p + \frac{j+\nu}2 \bigr)\, \bigl( p+\frac{j-\nu}2
\bigr)
& \tilde\upsilon^{p,0}_\n\\
& -\frac p{p+1} \bigl( p+\frac{j-\nu}2 \bigr) &\tilde \mu^{p,0}_\n
\\ \hline
\varepsilon=-1& \frac{ -ip \bigl( p+ \frac{j-\nu}2\bigr)\, \bigl( p
+ \frac{j+\nu}2\bigr)} {(p+1)\, \sqrt{2\pi |\ell|(m_0(j)+p)}}& \\
\hline\hline
& \sh{3}{-1}\tilde x^{0,p}_\n/\tilde x^{0,p-1}_\n & \\ \hline
\varepsilon=1& \frac{-i p \,\bigl(p-\frac{j+\nu}2\bigr)\, \bigl(
p-\frac{j-\nu}2\bigr)} { (p+1)\, \sqrt{2\pi|\ell|\,(m_0(j)+p)}}
&\\
\hline
\varepsilon=-1,\; 1\leq p \leq m_0 & \frac{ ip \,\bigl( p -
\frac{j+\nu}2\bigr)\, \bigl( p - \frac{j-\nu}2\bigr)}
{(p+1)\,\sqrt{2\pi|\ell|\,(m_0(j)-p+1)}}&
\\ \hline
\varepsilon=-1,\; p>m_0(j) & -\frac p{p+1}&\tilde \omega^{0,p}_\n\\
& \frac p{p+1}\, \bigl( p - \frac{j+\nu}2\bigr)\, \bigl( p+
\frac{\nu-j}2\bigr)
& \tilde\upsilon^{0,p}_\n\\
& -\frac p{p+1}\,\bigl(p-\frac{j+\nu}2\bigr)& \tilde \mu^{0,p}_\n \\
\hline
\end{array}
\]
\caption[]{ Downward shift operators on families $\tilde x^{p,0}_\n$ and
$\tilde x^{0,p}_\n$ for $p\in \ZZ_{\geq 1}$.\\
We note that $\sh3{-1}\tilde x^{p,0}_\n$ and
$\sh{-3}{-1}\tilde x^{0,p}_\n$ are identically zero. }\label{tab-x-dn}
\end{table}

\rmrk{Remarks} 1. We recall the use of $m_0(j)$ and $r_0(h)$ in this
proposition and the accompanying tables. In the general situation of
Table~\ref{tab-parms-na} the quantities $m_0=m_0(j)$ and $r_0=r_0(h)$
have the following significance:
\begin{itemize}
\item $m_0(j)\geq 0$ is equivalent to the occurrence of the $K$-type
$\tau^{2j}_0$ in $\Ffu_\n$.
\item $r_0(h)$ determines which components $f_r$ of an element of
$\Ffu_{\n;h,p,p}$ can be non-zero, namely
$\max\bigl(r_0(h),-p\bigr)\leq r\leq p$ if $\varepsilon=\sign(\ell)=1$,
and $-p\leq r \leq \min\bigl(r_0(h),p\big)$ if $\varepsilon=-1$.
\end{itemize}

In Proposition~\ref{prop-extr.na} we have that $h=2j+3p$ for
$\tilde x^{p,0}_\n(j,\nu)$ and $h=2j-3p$ for
$\tilde x^{0,p}_\n(j,\nu)$. This interpretation leads to the scheme in
Table~\ref{tab-r0m0}.
\begin{table}
\[
\begin{array}{|c|cc|}\hline
\varepsilon& \tilde x^{p,0}_\n & \tilde x^{0,p}_\n \\ \hline
1 & \begin{array}{rl}m_0\geq 0
&\Leftrightarrow r_0 \leq p \\
m_0 \geq p & \Leftrightarrow r_0\leq -p \text{ (ac)}
\end{array}
& m_0\geq 0 \Rightarrow r_0\geq p \text{
(ac)} \\ \hline
-1 & m_0 \geq 0 \Rightarrow r_0 \geq p \text{ (ac)} &
\begin{array}{rl}
m_0\geq 0 & \Leftrightarrow r_0\geq -p\\
m_0\geq p& \Leftrightarrow r_0\geq p \text{ (ac)}
\end{array}
\\ \hline
\end{array}
\]
\caption{Relation between $m_0=m_0(j)$ and $r_0=r_0(h)$. By (ac) we
indicate that the value of $r_0$ imposes no restriction on the
components with $|r|\leq p$.} \label{tab-r0m0}
\end{table}\smallskip

\noindent
2. \ Proposition \ref{prop-extr.na} may be compared to
Lemma~\ref{lem-ext-ab} in the generic abelian cases. Since in the
abelian case the upward shift operators are injective, there was in
Section~\ref{sect-abip} no need to introduce families $\tilde x^{p,0}$
and $\tilde x^{0,p}$ by dividing out zeros.

\begin{proof}[Proof of Proposition~\ref{prop-extr.na}]In the case when
$p=0$ we take $\tilde x^{0,0}_\n(j,\nu)$ equal to $x^{0,0}_\n(j,\nu)$,
and the assertions hold by \S\ref{sect-Nnab1d}. We proceed by
induction. Most steps can be carried out by hand with the description
of the upward shift operators in Table \ref{tab-shnab},
p~\pageref{tab-shnab}. We prefer to carry out all steps with
Mathematica. See \cite[\S20]{Math}.

In many steps the determining component of $x^{p,0}_\n$, respectively
$x^{0,p}_\n$, is multiplied by a simple non-zero factor:
\be\label{ind1}
\renewcommand\arraystretch{1.4}
\begin{array}{|cc|cc|}\hline
& x^{p,0}_\n\mapsto x^{p+1,0}_\n \\
\hline
\varepsilon=1,\;p< m_0 & i \sqrt{2\pi|\ell|}\,\sqrt{m_0-p}\\
\varepsilon=-1& -i \sqrt{2\pi|\ell}\, \sqrt{m_0+1+p}\\ \hline
& x^{0,p}_\n\mapsto x^{0,p+1}_\n\\ \hline
\varepsilon=1& - i\sqrt{2\pi|\ell|}\,\sqrt{1+m_0+p} \\
\varepsilon=-1,\; p< m_0 & i\sqrt{2\pi|\ell|}\, \sqrt{m_0-p}\\
\hline
\end{array}
\ee
This gives in many cases part i) of the proposition for the action of
$\sh 31 $ on $\tilde x^{p,0}_\n$, and for the action of $\sh{-3}1$ on
$\tilde x^{0,p}_\n$. Since
\[ \sh 3 1 \tilde x^{p,0}_\n = \bigr(\varphi_+^p\bigr)^{-1} \sh 31
x^{p,0}_\n = \bigr(\varphi_+^p\bigr)^{-1} x^{p+1,0} _\n=
\varphi_+^{p+1} \bigl(\varphi_+^p\bigr)^{-1} \tilde x^{p+1}_\n \,,\]
and similarly for $\sh{-3}1 \tilde x^{0,p}_\n$, the entries in
Table~\ref{tab-x-up} give the quotients of successive values of
$\varphi_\pm^p$. Hence the factors $\varphi_\pm^p$ are essentially
known. See \cite[\S20a]{Math}.

The remaining cases, with $p\geq m_0(w)$, are more complicated. The
lowest component $x^{p,0}_\n$ has order $r_0(h)$ and for $x^{p+1,0}_\n$
the order increases to $r_0(h+3)=r_0(h)+1$. This has the consequence
that we need also the component of $x^{p,0}_\n$ of order $r_0(h)+2$.
This can be determined by the kernel relation for $\sh3{-1}$ in
Table~\ref{tab-krnab}. The lowest component of $\sh 3 1 x^{p,0}_\n$ can
now be expressed in the lowest component of $F$. It is necessary to
write it in the form $t^{ m_0+1} w_{\k,\nu/2}(2\pi |\ell|t^2)$ for
$w_{\k,\nu}$ equal to one of the three Whittaker functions
$W_{\k,\nu/2}$, $V_{\k,\nu/2}$ and $M_{\k,\nu/2}$. The computations in
this case and in the case of $x^{0,p}_\n$ are in
\cite[\S20b]{Math}.\smallskip

Proposition \ref{prop-dsok1} and equation~\eqref{ect-ups} give the
vanishing of the holomorphic families $\sh 3{-1}x^{p,0}_\n$ and
$\sh{-3}{-1} x^{0,p}_\n$ on $\CC\setminus \ZZ_{\leq -1}$. Since
$\tilde x^{p,0}_\n = x^{p,0}_\n \bigm/ \varphi_+^p$ is holomorphic on
$\CC\setminus \ZZ_{\leq 1}$, the vanishing of
$\sh 3{-1}\tilde x^{p,0}_\n$ follows. We proceed similarly for
$\tilde x^{0,p}_\n$.

The action of the downward shift operators in Table~\ref{tab-x-dn} is
obtained by computations similar to those for the upward shift
operator. We have to use that there is one downward shift operator for
which the image is zero, by Proposition \ref{prop-dsok1}, and
equation~\eqref{ect-ups} for $x=\upsilon$, to get a relation between
the components. See \cite[\S20cd]{Math}.

The minimal component of $\tilde \omega^{p,0}_\n(j,\nu)$ and the maximal
component of $\tilde \omega^{0,p}_\n(j,\nu)$ are of the form
$t^c \, W_{\k,s}$ with $c\in \ZZ$, $\k\in \frac12\ZZ$ and $\nu\in \CC$.
The recursive relations between the components used in the proof of
Lemma~\ref{lem-dcpt}, together with \eqref{crWd}, imply that the other
components are linear combinations of functions of the same form, and
hence have exponential decay at~$\infty$ according to~\eqref{Wae}.
\end{proof}

Relation~\eqref{MinWV} implies that each $\tilde \mu_\n^{p,0}(j,\nu)$ is
a linear combination of $\tilde \omega_\n^{p,0}(j,\nu)$ and
$\tilde \upsilon^{0,p}_\n(j,\nu)$, and analogously for
$\tilde \mu^{0,p}_\n(j,\nu)$. In general both coefficients in the
linear combination are non-zero. If $\nu\equiv j\bmod 2$ and
$\nu\geq 0$, then the following special relations occur.
\begin{lem}\label{lem-mu-omups}Let $\n=(\ell,c,d)$, put
$\varepsilon=\sign(\ell)$, and let $j\equiv p \equiv \nu\pmod 2$,
$p,\nu\in \ZZ_{\geq 0}$. Suppose that $m_0(j) \geq 0$.
\begin{enumerate}
\item[i)]
$\tilde \mu_\n^{p,0}(j,\nu)\dis \tilde \upsilon_\n^{p,0}(j,\nu)$ in the
following cases:
\begin{itemize}
\item $\varepsilon=1$ and $-j+\nu \leq 2\max\bigl( p,m_0(j)\bigr)$,
\item $\varepsilon=-1$ and $j+\nu \leq 2m_0(j)$.
\end{itemize}
\item[ii)]
$\tilde \mu_\n^{0,p}(j,\nu) \dis \tilde \upsilon_\n^{0,p}(j,\nu)$ in
the following cases:
\begin{itemize}
\item $\varepsilon=1$ and $-j+\nu \leq 2m_0(j)$,
\item $\varepsilon=-1$ and $j+\nu \leq 2\max\bigl( p, m_0(j) \bigr)$.
\end{itemize}
\item[iii)]
$\tilde \mu^{p,0}_\n(j,\nu) \dis \tilde \omega^{p,0}_\n(j,\nu)$ in the
following cases:
\begin{itemize}
\item $\varepsilon=1$ and $j+\nu \leq -2-2\max\bigl( p,m_0(j)\bigr)$,
\item $\varepsilon=-1$ and $-j+\nu \leq -2-2m_0(j)$.
\end{itemize}
\item[iv)]
$\tilde \mu^{0,p}_\n(j,\nu) \dis \tilde \omega^{0,p}_\n(j,\nu)$ in the
following cases:
\begin{itemize}
\item $\varepsilon=1$ and $j+\nu \leq -2-2m_0(j)$,
\item $\varepsilon=-1$ and $-j+\nu \leq -2-2\max\bigl( p,m_0(j)\bigr)$.
\end{itemize}
\end{enumerate}
\end{lem}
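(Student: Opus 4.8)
The plan is to prove Lemma~\ref{lem-mu-omups} by reducing each claimed proportionality $\tilde\mu \dis \tilde x$ to an explicit identity between Whittaker functions, applied to the determining components of the families. The key observation, supplied by Proposition~\ref{prop-extr.na}~i)c) together with Lemma~\ref{lem-dcpt}, is that each of the families $\tilde\mu^{p,0}_\n$, $\tilde\upsilon^{p,0}_\n$, $\tilde\omega^{p,0}_\n$ is determined by a single component (the minimal one for the superscript $(p,0)$, the maximal one for $(0,p)$), and that component is $t^c$ times one of the three Whittaker functions $M_{\k,\nu/2}$, $V_{\k,\nu/2}$, $W_{\k,\nu/2}$ evaluated at $2\pi|\ell|t^2$, with the \emph{same} $\k$ and the \emph{same} power $c$ across all three families in a given case. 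Thus each asserted proportionality $\tilde\mu \dis \tilde\upsilon$ (resp. $\tilde\mu\dis\tilde\omega$) is equivalent to the statement that, for the relevant parameter $\k$ and index $s=\nu/2$, the function $M_{\k,s}$ is a scalar multiple of $V_{\k,s}$ (resp. of $W_{\k,s}$). So the whole lemma collapses to: for which integral/half-integral parameter configurations is $M_{\k,\nu/2}$ proportional to one of the other two Whittaker functions?

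First I would collect from the Whittaker-function appendix the linear relation $M_{\k,\nu/2} = \alpha\,\omega\text{-type} + \beta\,\upsilon\text{-type}$ promised by the cited relation~\eqref{MinWV}, expressing $M$ as a linear combination of $W$ (the $\omega$-family) and $V$ (the $\upsilon$-family), with coefficients $\alpha,\beta$ that are explicit ratios of Gamma functions in $\k$ and $s=\nu/2$. The proportionality $\tilde\mu\dis\tilde\upsilon$ holds exactly when the $W$-coefficient $\alpha$ vanishes, and $\tilde\mu\dis\tilde\omega$ holds exactly when the $V$-coefficient $\beta$ vanishes. The vanishing of such a Gamma-quotient coefficient occurs precisely when its argument hits a pole of the Gamma function in the denominator, i.e.\ at a non-positive integer; this is what converts the analytic identity into the arithmetic inequalities ($-j+\nu\leq 2\max(p,m_0(j))$ and so on) listed in the statement.

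The bookkeeping, which is where I would be most careful, is translating the abstract Gamma-argument conditions into the four sign-and-parity cases of the lemma. I would split according to $\varepsilon=\pm1$ and according to whether $p\leq m_0(j)$ or $p\geq m_0(j)$, since by Table~\ref{tab-mmc} these two regimes give different values of the parameter $\k$ (namely $\k_0(j)=-m_0(j)-\tfrac{\varepsilon j+1}2$ in the first regime and $\k=-p\mp\tfrac{j\mp1}2$ in the second). In each case I substitute the appropriate $\k$ and $s=\nu/2$ into the pole condition for $\alpha$ or $\beta$, use $j\equiv p\equiv\nu\pmod2$ and $p,\nu\in\ZZ_{\geq0}$ to guarantee that the relevant argument is genuinely a half-integer of the right parity, and read off the inequality. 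The cases~i)--iv) are related in pairs by the symmetry $(p,0)\leftrightarrow(0,p)$, which under the shift-operator formalism corresponds to interchanging $\sh31\leftrightarrow\sh{-3}1$ and replacing $j$ by $-j$ (visible in the two halves of Tables~\ref{tab-x-up} and~\ref{tab-x-dn}); I would prove i) and iii) in full and then obtain ii) and iv) by this symmetry rather than repeating the computation.

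The main obstacle I expect is not conceptual but the matching of normalizations: the families $\tilde x^{p,0}_\n$ were defined by dividing out the holomorphic factors $\varphi^p_\pm(j,\nu)$, and the proportionality constants hidden in $\dis$ depend on these factors via the entries of Tables~\ref{tab-x-up} and~\ref{tab-x-dn}. I must make sure that the parameter regime in which a Gamma-quotient coefficient vanishes is the \emph{same} regime in which $\tilde\mu^{p,0}_\n$ is actually holomorphic and non-zero (recall $\mu^{a,b}_\n$ may have first-order poles at $\nu\in\ZZ_{\leq-1}$), so that the identity $M=\alpha W+\beta V$ can be applied componentwise without running into a singularity. Since the hypotheses restrict to $\nu\in\ZZ_{\geq0}$ with $m_0(j)\geq0$, I expect these holomorphy constraints to be automatically satisfied, but verifying this explicitly—together with confirming that the vanishing happens at $\alpha$ or $\beta$ and not merely at an accidental common zero—is the step requiring the supporting Mathematica checks along the lines of the computations cited for the surrounding propositions.
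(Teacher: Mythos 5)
Your proposal is correct and follows essentially the same route as the paper: reduce each proportionality to the determining components of Table~\ref{tab-mmc}, invoke the criterion from \eqref{MWV} (i.e.\ the vanishing of a Gamma-quotient coefficient in \eqref{MinWV}) for $M_{\k,s}\dis V_{\k,s}$ or $M_{\k,s}\dis W_{\k,s}$, and translate the resulting conditions on $\frac12\pm\k+s$ into the stated inequalities by splitting on $\varepsilon$ and on $p\lessgtr m_0(j)$. The holomorphy worry you flag is indeed vacuous here since $\nu\in\ZZ_{\geq 0}$, just as the paper implicitly assumes.
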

By $\dis$ we denote equality up to a non-zero factor.

We note that the conditions in i) and ii)
and the corresponding conditions in iii)
and~iv) exclude each other.
\begin{proof}With \eqref{MWV} we have $M_{\k,s} \dis V_{\k,s}$ if and
only if $\frac12+\k+s\in \ZZ_{\leq 0}$. We apply this to the
determining components of $\tilde \mu_\n^{p,0}(j,\nu)$ and
$\tilde \upsilon_\n^{p,0}(j,\nu)$ as given in Table~\ref{tab-mmc},
p~\pageref{tab-mmc}, to get $j\equiv \nu \bmod 2$ and
\begin{align*} j + \nu &\leq 2m_0(j)& \text{ if }&\varepsilon=-1\,,\\
-j+\nu &\leq 2m_0(j)& \text{ if }
&\varepsilon=1 \text{ and } p\leq m_0(j)\,,\\
-j+\nu &\leq 2p&\text{ if }
&\varepsilon=1\text{ and }p\geq m_0(j)\,.
\end{align*}
This can be reformulated as the conditions in~i). For ii)
we proceed analogously.

Equation \eqref{MWV} gives also $M_{\k,s} \dis W_{\k,s}$ if and only if
$\frac12-\k+s\in \ZZ_{\leq 0}$. We use again Table~\ref{tab-mmc} to get
for $\tilde \mu^{p,0}_\n(j,\nu) \dis \tilde \omega^{p,0}_\n(j,\nu)$ the
conditions
\begin{align*}
-j+\nu &\leq -2\bigl( m_0(j)+1\bigr)
&\text{ if }&\varepsilon=-1\,,\\
j+\nu &\leq -2\bigl(m_0(j)+1\bigr)& \text{ if }& \varepsilon=1\text{ and
} p\leq m_0(j)\,,\\
j+\nu &\leq
-2(p+1) &\text{ if }& \varepsilon=1\text{ and } p\geq m_0(j)\,.
\end{align*}
This gives iii), and, analogously,~iv).
\end{proof}

\subsection{Intersection of kernels}The intersection of kernels of
downward shift operators was considered for the abelian cases in
Propositions \ref{prop-F0do} and~\ref{prop-nsefa}. Here again, we use
the notations and conventions of Lemma~\ref{lem-kdso}.

\begin{prop}\label{prop-ik}Let $\tau^h_p$ be a $K$-type occurring in
$\Ffu_\n$. Denote by \il{Khpn}{$K_{\n;h,p}$}$K_{\n;h,p}$ the
intersection of the kernels of
$\sh 3{-1}:\Ffu_{\n;h,p,p} \rightarrow \Ffu_{\n;h+3,p-1,p-1}$ and of
$\sh{-3}{-1}:\Ffu_{\n;h,p,p} \rightarrow \Ffu_{\n;h-3,p-1,p-1}$.

We define\ir{kkVW}{\kk^W_{\n;h,p},\; \kk^V_{\n;h,p}}
\badl{kkVW} \kk^W_{\n;h,p} &=\sum_r \vartheta_{m(h,r)}\,
t^{p+1}\,c^W(r)\, W_{\k(r),s(r)}(2\pi|\ell|t^2)\, \Kph h{p}{r}{p}\,,\\
\kk^V_{\n;h,p}
&=\sum_r \vartheta_{m(h,r)}\, t^{p+1}\, c^V(r)\,
V_{\k(r),s(r)}(2\pi|\ell|t^2)\, \Kph h{p}{r}{p}\,,
\eadl
where $r\equiv p\bmod 2$, $|r|\leq p$, $m(h,r) \geq 0$, and where
$\th_m$ is an abbreviation for $\Th_{\ell,c}(h_{\ell,m})$. We use
$m(h,r)$ as indicated in Table~\ref{tab-parms-na},
p~\pageref{tab-parms-na}, and
\be\label{kapr} \k(r) = -m(h,r)-\varepsilon \, s(h,r)
-\frac12, \qquad s(r) = s(h,r)= \frac{h-r}4\,.\ee
The coefficients are\ir{cWVexpl}{c^W(r)\,, c^V(r)}
\be \label{cWVexpl}
c^W(r) \= i^{m(h,r)}\sqrt{m(h,r)!}\,,\qquad c^V(r)
\= \frac{(-1)^{m(h,r)}}{\sqrt{m(h,r)!}}\,,\ee
for $r\equiv p\bmod 2$ such that $m(h,r)\geq 0$.

\begin{enumerate}
\item[i)] \begin{enumerate}
\item[a)] If $|r_0(h)|>p$, then $K_{\n;h,p}$ has dimension $2$, and is
spanned by $\kk^W_{\n;h,p}$ and $\kk^V_{\n;h,p}$.
\item[b)] If $|r_0(h)|\leq p$, then $\kk^V_{\n;h,p}$ spans $K_{\n;h,p}$.
\end{enumerate}

\item[ii)] The subspace $K_{\n;h,p}$ of the large Fourier term module
$\Ffu_\n$ is contained in the Fourier term module $\Ffu_\n^\psi$, where
$\psi=[-h,p]$. With the notation of Lemma~\ref{lem-kdso}, we have the
following equalities up to a non-zero factor:
\begin{enumerate}
\item[a)] If $m_0(j_1)\geq 0$ and $m_0(j_2) \geq 0$, then
\bad \kk^W_{\n;h,p} & \ddis \tilde \omega^{p,0}_\n(j_1,\nu_1) \ddis
\tilde \omega^{0,p}_\n(j_2,\nu_2)\,,\\
\kk^V_{\n;h,p} &\ddis \tilde \upsilon^{p,0}_\n(j_1,\nu_1)
\ddis \tilde \upsilon^{0,p}_\n(j_2,\nu_2)\,,\\
\ead
(If $r_0(h)=-\e p$, then $\kk^W_{\n;h,p}\not\in K_{\n;h,p}$.)
\item[b)] If $\varepsilon=1$ and $0\leq m_0(j_1)<p$ (and hence
$m_0(j_2)<0$), then
\be \kk^V_{\n;h,p}\ddis \tilde\upsilon^{p,0}_\n(j_1,\nu_1) \,.\ee
\item[c)] If $\varepsilon=-1$ and $0\leq m_0(j_2)<p$ (and hence
$m_0(j_1))<0$), then
\be \kk^V_{\n;h,p}= \tilde\upsilon^{0,p}_\n(j_2,\nu_2)\,.\ee
\end{enumerate}
\end{enumerate}
\end{prop}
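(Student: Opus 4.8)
The plan is to reduce everything to the one-variable behaviour of the component functions $f_r$, exactly as in the generic abelian case treated in Proposition~\ref{prop-nsefa}, but with modified Bessel functions replaced by Whittaker functions. First I would write a general $F\in K_{\n;h,p}$ as $F=\sum_r \vartheta_{m(h,r)}\,f_r\,\Kph h p r p$ and impose the kernel relations for both downward shift operators from Table~\ref{tab-krnab}. The relation coming from $\sh3{-1}$ expresses $f_{r+2}$ through $f_r$ and $f_r'$, and the relation coming from $\sh{-3}{-1}$ expresses $f_{r-2}$ through $f_r$ and $f_r'$; feeding one into the other at neighbouring indices produces, after the substitution $f_r(t)=t^{p+1}w(2\pi|\ell|t^2)$, the Whittaker equation~\eqref{Whd0} with the parameters $\k(r)$ and $s(r)$ of~\eqref{kapr} (a computation I would delegate to \cite{Math}). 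This has two consequences at once: each $f_r$ lies in the two-dimensional solution space spanned by $W_{\k(r),s(r)}$ and $M_{\k(r),s(r)}$ (equivalently, by $W_{\k(r),s(r)}$ and $V_{\k(r),s(r)}$), and, since any element of $\ker\sh3{-1}$ is determined by its minimal component by Lemma~\ref{lem-dcpt}, we get the a priori bound $\dim K_{\n;h,p}\le 2$ for $p\ge1$ (the case $p=0$ being the one-dimensional $K$-type of \S\ref{sect-Nnab1d}, where $K_{\n;h,0}=\Ffu_{\n;h,0,0}$ already has dimension two by \S\ref{sect-1dKt}).

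Next I would pin down how the Whittaker coefficients in $f_r$ and $f_{r+2}$ are linked. The first-order recurrences, combined with the contiguous relations \eqref{crW1} and \eqref{crWd} and the definition~\eqref{Vkps} of the unusual Whittaker function $V_{\k,s}$, force the $W$-parts of the successive components to assemble with the coefficients $c^W(r)$ of~\eqref{cWVexpl} into $\kk^W_{\n;h,p}$, and the $V$-parts with $c^V(r)$ into $\kk^V_{\n;h,p}$; I would work with the basis $\{W,V\}$ rather than $\{W,M\}$ precisely because $V$ is the solution adapted to the boundary behaviour below. Both $\kk^W_{\n;h,p}$ and $\kk^V_{\n;h,p}$ are then explicit, linearly independent elements of $K_{\n;h,p}$ (distinct Whittaker functions are independent), which together with the bound gives dimension exactly two whenever no further constraint intervenes.

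The heart of the argument, and the step I expect to be the main obstacle, is the boundary analysis distinguishing $|r_0(h)|>p$ from $|r_0(h)|\le p$. When $|r_0(h)|>p$ the effective summation range is the full $-p\le r\le p$ and Table~\ref{tab-krnab} contributes no extra relation, so both basis vectors survive and part~i)a) holds. When $-p\le r_0(h)\le p$ the lowest (respectively highest) nonzero component sits at $r=r_0(h)$, where $m(h,r_0(h))=0$ and $\vartheta_0$ appears; here Table~\ref{tab-krnab} contributes an additional first-order differential equation. The delicate point is to show that, for the given parameter, it is exactly $V_{\k_0,\nu/2}$ that satisfies this first-order equation while $W_{\k_0,\nu/2}$ does not, so that only $\kk^V_{\n;h,p}$ remains and the dimension drops to one (part~i)b)); the threshold $r_0(h)=-\varepsilon p$ is the boundary case in which $\kk^W$ is already expelled from $K_{\n;h,p}$. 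Verifying this selection, and tracking the normalisations through the contiguous relations, is where the real work lies.

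Finally, for part~ii) I would invoke Lemma~\ref{lem-kdso} to identify $\psi=[-h,p]\in\WOI$, together with $\{j_1,j_2,-h\}=\wo^1(\psi)$ and the non-negative integers $\nu_1,\nu_2$ with $(j_n,\nu_n)\in\wo(\psi)^+$. That $K_{\n;h,p}\subset\Ffu_\n^\psi$ follows by checking that the Whittaker parameters make $C$ and $\Dt_3$ act by $\ps$, as in Proposition~\ref{prop-nsefa}\,ii)a). The explicit proportionalities then drop out of Lemma~\ref{lem-dcpt} and Proposition~\ref{prop-extr.na}: $\kk^W_{\n;h,p}$ lies in $\ker\sh3{-1}$ with minimal component a $W$-Whittaker whose parameters, by Lemma~\ref{lem-kdso}\,iii)--iv) and Table~\ref{tab-mmc}, coincide with those of the minimal component of $\tilde\omega^{p,0}_\n(j_1,\nu_1)$, whence the two are proportional; reading off the maximal component gives the companion relation to $\tilde\omega^{0,p}_\n(j_2,\nu_2)$, and the same argument with $V$ in place of $W$ handles $\kk^V_{\n;h,p}$ and the $\tilde\upsilon$ families. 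The split into sub-cases a), b), c) then simply records which of $m_0(j_1),m_0(j_2)$ are non-negative, i.e.\ which of the two families actually defines a nonzero element of $\Ffu_\n$ under the constraint~\eqref{mucond}.
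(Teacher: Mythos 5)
Your proposal follows essentially the same route as the paper's proof: decompose into components, combine the two kernel relations of Table~\ref{tab-krnab} into the Whittaker equation with parameters $\k(r)$, $s(r)$, use the contiguous relations (including the half-integer shifts) to assemble the coefficients $c^W(r)$, $c^V(r)$, observe that the extra first-order relation at $r=r_0(h)$ when $|r_0(h)|\le p$ is satisfied only by the $V$-solution, and identify with the $\tilde\omega$, $\tilde\upsilon$ families by comparing determining components via Lemma~\ref{lem-kdso} and Table~\ref{tab-mmc}. The only cosmetic slip is the aside that $\{W_{\k,s},M_{\k,s}\}$ is ``equivalently'' a basis --- it fails to be one for the degenerate parameters in \eqref{MWV} --- but since you immediately switch to the basis $\{W_{\k,s},V_{\k,s}\}$, as the paper does, this does not affect the argument.
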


\noindent\rmrk{Remarks}(1) The Whittaker functions $W_{\k,s}$ and
$V_{\k,s}$ are well defined and linearly independent for all values of
the parameters. In Proposition~\ref{prop-kkM} we will define
$\kk^M_{\n;h,p}$ based on $M_{\k,s}$ in a similar way.

(2) The condition $|r_0(h)|>p$ for dimension $2$ in~i) is stricter than
the conditions on $m_0(j_1)$ and $m_0(j_2)$ in~ii)a). This part is
valid if one of the $m_0$'s is equal to $p$ and the other equal to~$0$.
In that case $\kk^W_{\n;h,p}$ is not an element of~$K_{\n;h,p}$.

(3) This proposition is analogous to Proposition~\ref{prop-nsefa} in the
abelian case. In Proposition~\ref{prop-idkk} we will discuss a result
analogous to Corollary~\ref{cor-jl+r-ab}.

\begin{proof}Suppose that
\[ F= \sum_ r \vartheta_{m(h,r)} \,f_r \, \Kph hprp\]
is an element of $K_{\n;h,p}$. Then it has to satisfy the kernel
relations in Table~\ref{tab-krnab}, p~\pageref{tab-krnab}.

The computations are mostly done with Mathematica. In \cite[\S21a]{Math}
we check that the components are indeed given by Whittaker functions.

We first consider the case that only one component can be non-zero. That
happens if $m(h,\varepsilon p)=0$, with component $f_{\varepsilon p}$.
Then the kernel relations impose a linear differential equation for
$f_{\varepsilon p}$ which implies that it is of the form
\be\label{1compn} f_{\varepsilon p}(t) \ddis t^{p+1}\, V_{\k(\varepsilon
p), s(h,\varepsilon p)}(2\pi |\ell|t^2)
\,.\ee
Hence $F \ddis \kk^V_{\n;h,p}$ spans $K_{\n;h,p}$ in the case of one
component. Thus we have i)b) if $m(h,\e p)=0$.

In all other cases there are more components to consider. We take $r$
such that $f_r$ and $f_{r+2}$ can occur in the sum. We combine the
kernel relations to get a second order differential equation that
implies that $f_r (t) = t^{p+1}\, g_r(2\pi|\ell|t^2)$ where $g$ is a
solution of the Whittaker differential equation \eqref{Wh-deq} with
parameters $\k(r)$ and $s(h,r)$. So each component is in a well-defined
two-dimensional subspace of $C^\infty(0,\infty)$.

The kernel relations involve differentiations and multiplications by
powers of $t$. The first six contiguous relations in \eqref{crWd} and
\eqref{crW1} imply that if one component has the form
$t^{p+1}\, W_{\k,s}(2\pi|\ell|t^2)$, then all other components can be
expressed in $W$-Whittaker functions. Analogously for $V$-Whittaker
functions.

So we look for expressions of the form given in \eqref{kkVW}, and try to
determine how the coefficients are related. For that purpose we need
also contiguous relations in which the parameters are shifted by
$\frac12$. The complicated computations are in \cite[21b]{Math} and
lead to recursive relations for the coefficients, for which
\eqref{cWVexpl} gives solutions.

If $|r_0(h)|\leq p$ there is one more kernel relation. The case
$r_0(h)=\varepsilon p$ has already been discussed. If $\varepsilon=1$
the find that the $W$-Whittaker function does not satisfy the relation.
So in this case we are left with $\kk^V_{\n;h,p}$.

Let $\varepsilon=-1$ and $-p< r_0(h) \leq p$. In this case as well, only
$\kk^V_{\n;h,p}$ satisfies the kernel relations. This completes the
proof of~i).
\smallskip

For~ii) we consider the identifications up to a non-zero factor. The
results then imply that $K_{\n;h,p} \subset \Ffu_\n^\psi$.

It suffices to compare the determining components in Table~\ref{tab-mmc}
with the corresponding component of the functions in~\eqref{kkVW}. We
have to check the resulting relations for Whittaker functions, which we
consider in Lemma~\ref{lem-VWrels} below. Part i) of the lemma deals
with the determining components for ii)a). We work with $W_{\k,s}$ and
$V_{\k,s}$, which are even in~$s$.

Part~ii) of the lemma gives the proof of ii)b) and~ii)c). In the proof
of ii)b) and ii)c) we use that $\nu_1=\pm (2p+j_1)$ and
$\nu_2=\pm(2p+j_2)$. This is in agreement with the kernel conditions in
Table~\ref{tab-x-dn}, and seem to make the proof circular. However, it
is also a consequence of $h=2j_1+2p=2j_2-3p$ together with the
relations in Lemma~\ref{lem-kdso}.
\end{proof}

\begin{lem}\label{lem-VWrels}We use the notations of
Proposition~\ref{prop-ik}.
\begin{enumerate}
\item[i)] The condition $m_0(j_1)\geq 0$, $m_0(j_2)\geq 0$ is equivalent
to $\e r_0(h) \geq p$.

These equivalent conditions imply equality of the parameters in the
Whittaker differential equation.
\bad \k(-p) &\= -m_0(j_1) - \frac{\e j_1+1}2\,,& \qquad s(-p) &\= \pm
\nu_1/2\,,\\
\k(p) &\= - m_0(j_2) - \frac{\e j_2+1}2\,,& \qquad s(p)
   &\= \pm \nu_2/2\,.
\ead
\item[ii)] The condition $m_0(j_2)<0 \leq m_0(j_1)$ is equivalent to
$\e=1$ and $-p < r_0(h) \leq p$. Under these equivalent conditions
\begin{align*} t^{m_0(j_1)+1 }& V_{-p-(j_1+1)/2,(h+p)/4}(2\pi|\ell|t^2)
\ddis t^{p+1}V_{\k(r_0(h)),s(r_0(h))}(2\pi|\ell|t^2)
\\
&\ddis t^{2+j_1+2p+m_0(j_1) } e^{\pi\ell t^2}\,.
\end{align*}
\item[iii)]The condition $m_0(j_1)<0 \leq m_0(j_2)$ is equivalent to
$\e=-1$ and $-p \leq r_0(h)<p$. Under these equivalent conditions
\begin{align*}
t^{m_0(j_2)+1} &V_{-p+(j_2-1)/2,(h-p)/4}(2\pi|\ell|t^2)
\ddis t^{p+1} V_{\k(r_0(h)),s(r_0(h))}(2\pi|\ell|t^2)\\
&\ddis t^{2-j_2+2p+m_0(j_2) } e^{\pi|\ell|t^2}\,.
\end{align*}
\end{enumerate}
\end{lem}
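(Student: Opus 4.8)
The plan is to reduce everything to explicit relations between the three standard Whittaker functions $W_{\k,s}$, $V_{\k,s}$ and $M_{\k,s}$. The three parts of the lemma are really one computation carried out in three parameter ranges, so the governing facts are the relation~\eqref{MWV} between $M$, $W$ and $V$ (and the special coincidences $M_{\k,s}\dis V_{\k,s}$ when $\tfrac12+\k+s\in\ZZ_{\leq 0}$, $M_{\k,s}\dis W_{\k,s}$ when $\tfrac12-\k+s\in\ZZ_{\leq 0}$), together with the explicit values of the parameters $\k(r)$, $s(r)$ from~\eqref{kapr} and the determining components listed in Table~\ref{tab-mmc}. First I would set up the dictionary: for the minimal component $f_{-p}$ of $\tilde x^{p,0}_\n$ and the maximal component $f_p$ of $\tilde x^{0,p}_\n$ I read off from Table~\ref{tab-mmc} that $\k=\k_0(j)=-m_0(j)-\tfrac{\e j+1}2$ and the half-parameter is $s=\nu/2$ when $p\leq m_0(j)$ (or $\e=-1$ for $\tilde x^{p,0}$, resp.\ $\e=1$ for $\tilde x^{0,p}$), while in the opposite range the parameter changes to $\k=-p\mp\tfrac{j+1}2$ with $s=(h\pm p)/4$. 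The quantities $r_0(h)$, $m_0(j)$ are tied to each other by Table~\ref{tab-r0m0}, which I would use to convert each inequality on $m_0(j_1),m_0(j_2)$ into the equivalent statement about $\e r_0(h)$ versus $\pm p$.

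For part~i) the claim is the clean equivalence $m_0(j_1)\geq0,\ m_0(j_2)\geq0\iff \e r_0(h)\geq p$, plus identification of the Whittaker parameters. The equivalence is immediate from Table~\ref{tab-r0m0}: with $h=2j_1+3p=2j_2-3p$ (the defining relations~\eqref{hjp} of Lemma~\ref{lem-kdso}), the two endpoint components $f_{-p}$ and $f_{p}$ both survive exactly when $r_0(h)$ lies outside $[-p,p]$ on the correct side dictated by $\e$. For the parameter identities I would plug $r=-p$ and $r=p$ into~\eqref{kapr}; at $r=-p$, $s(-p)=(h+p)/4=\tfrac12(j_1+\tfrac32 p)\cdot\tfrac12$, and using $\nu_1=\tfrac12|h+p|$ from iii) of Lemma~\ref{lem-kdso} this is $\pm\nu_1/2$. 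The same substitution gives $\k(-p)=-m_0(j_1)-\tfrac{\e j_1+1}2$ after rewriting $m(h,-p)$ and $s(h,-p)$ via the entries of Table~\ref{tab-parms-na}; symmetrically for $r=p$.

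Parts~ii) and~iii) are the genuinely computational heart, and I expect the main obstacle to be the bookkeeping of the powers of $t$ and the exponential $e^{\pi\ell t^2}$ in the chain of proportionalities, not anything conceptual. In ii), the hypothesis $m_0(j_2)<0\leq m_0(j_1)$ forces $\e=1$ and $-p<r_0(h)\leq p$ by Table~\ref{tab-r0m0}, so the determining component is the one at $r=r_0(h)$ where $m(h,r_0(h))=0$: at that index the $V$-Whittaker function collapses (because $\tfrac12+\k+s\in\ZZ_{\leq0}$ fails to give a nontrivial second solution and $V$ degenerates to an elementary form), yielding $t^{p+1}V_{\k(r_0(h)),s(r_0(h))}\dis t^{2+j_1+2p+m_0(j_1)}e^{\pi\ell t^2}$. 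I would verify the middle proportionality $t^{m_0(j_1)+1}V_{-p-(j_1+1)/2,(h+p)/4}\dis t^{p+1}V_{\k(r_0(h)),s(r_0(h))}$ by checking that both Whittaker functions have the same pair $(\k,s)$ after the shift $h\mapsto 2j_1+3p$ and the substitution $m_0(j_1)=\tfrac16(d-2j_1)-\tfrac12$; this is a matching of parameters plus the prefactor $t^{p+1}$ versus $t^{m_0(j_1)+1}$, whose exponents differ by $(p-m_0(j_1))$, absorbed into the arguments of the Whittaker function under the contiguous relations already catalogued in~\eqref{crWd} and~\eqref{crW1}. The final elementary-function identity for $V$ at these parameters comes from the degenerate case of the Whittaker function and can be checked directly from the defining expansion~\eqref{Vkps}. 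Part~iii) is identical under the sign change $\e=1\mapsto\e=-1$, $j_1\mapsto j_2$, $h+p\mapsto h-p$, reflecting the involution $\Ws1\leftrightarrow\Ws2$, so I would simply invoke symmetry rather than repeat the computation; all the underlying Mathematica checks are the ones cited for Proposition~\ref{prop-ik} in \cite[\S21]{Math}.
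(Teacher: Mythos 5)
Part~i) of your proposal is fine and matches the paper: the equivalence comes from $m_0(j_2)-m_0(j_1)=-\e p$ and $r_0(h)=p-2\e m_0(j_1)=-p-2\e m_0(j_2)$, and the parameter identities are a direct substitution into \eqref{kapr} using Lemma~\ref{lem-kdso} and Table~\ref{tab-parms-na}.

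The gap is in your treatment of the first proportionality in ii) (and iii)). You propose to prove
$t^{m_0(j_1)+1}V_{-p-(j_1+1)/2,(h+p)/4}\ddis t^{p+1}V_{\k(r_0(h)),s(r_0(h))}$
``by checking that both Whittaker functions have the same pair $(\k,s)$,'' with the discrepancy in the external powers of $t$ ``absorbed \ldots under the contiguous relations.'' This cannot work, and the paper explicitly warns against it: \emph{the parameters of the two Whittaker functions in ii) are not equal}. Indeed, with $h=2j_1+3p$ and $r_0=p-2m_0(j_1)$ one finds $\k=-p-(j_1+1)/2$, $s=j_1/2+p$ for the left-hand function, but $\k(r_0)=-\tfrac12\bigl(j_1+p+m_0(j_1)\bigr)-\tfrac12$, $s(r_0)=\tfrac12\bigl(j_1+p+m_0(j_1)\bigr)$ for the right-hand one; these coincide only when $m_0(j_1)=p$, which is excluded in case ii) since $m_0(j_2)=m_0(j_1)-p<0$. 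Moreover, if the parameters \emph{did} agree, two multiples $t^a V_{\k,s}(2\pi|\ell|t^2)$ and $t^b V_{\k,s}(2\pi|\ell|t^2)$ with $a\neq b$ could only be proportional if $V_{\k,s}$ were itself a pure power times an exponential — so the degenerate case is not an optional shortcut but the whole point. The correct mechanism (the paper's) is that \emph{both} functions satisfy $s=-(\k+\tfrac12)$, so both collapse via the specialization \eqref{specWV}, $V_{\k,\pm(\k+1/2)}(\tau)\dis \tau^{-\k}e^{\tau/2}$, to elementary functions; one then checks that the two total exponents of $t$ agree, each equalling $2+j_1+2p+m_0(j_1)$. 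Your proposal does apply this degeneration to the right-hand function (though your stated reason, about $\tfrac12+\k+s\in\ZZ_{\leq 0}$ and ``second solutions,'' is not the relevant criterion — that condition governs $M_{\k,s}\dis V_{\k,s}$ via \eqref{MWV}, not the collapse of $V$ itself), but not to the left-hand one, which is where the argument breaks. The fix is short: apply \eqref{specWV} to both sides and compare exponents; part iii) then follows by the sign symmetry you describe.
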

\begin{proof}The equivalences follow from $ m_0(j_2)- m_0(j_1)= - \e p$,
and $r_0(h) = p-2\e m_0(j_1)= -p-2\e m_0(j_2)$. The equality of the two
sets of parameters $\k$ and $s$ in i) can be checked by a computation,
using Tables \ref{tab-parms-na} and~\ref{tab-x-dn}, and
Lemma~\ref{lem-kdso}.

The parameters of the Whittaker functions in~ii) are not equal. Working
them out, we arrive at values of the parameters to which we can apply
the specialization in~\eqref{specWV}. This leads for both functions to
an explicit expression in $t$, with different non-zero factors. The
computations are carried out in \cite[21c]{Math}.
\end{proof}

\subsection{Dimension}
\begin{lem}\label{lem-dimhp}In the notations of
Proposition~\ref{prop-ik}: If $\dim \Ffu^\psi_{\n;h,p,p}>2$ then at
least one of the following statements holds:
\[ \dim \Ffu^\psi_{\n;h+3,p-1,p-1}>2\,,\qquad \dim
\Ffu^\psi_{\n;h-3,p-1,p-1}>2\,.\]
\end{lem}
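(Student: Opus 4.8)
The plan is to follow the template of Lemma~\ref{lem-dimdso}, the corresponding propagation statement in the generic abelian case, replacing the two-dimensional space of modified Bessel solutions by the appropriate space of Whittaker functions. Write $V=\Ffu^\ps_{\n;h,p,p}$, $V_-=\Ffu^\ps_{\n;h+3,p-1,p-1}$ and $V_+=\Ffu^\ps_{\n;h-3,p-1,p-1}$, and consider the downward shift operators $S=\sh3{-1}:V\to V_-$ and $T=\sh{-3}{-1}:V\to V_+$, whose common kernel is $K_{\n;h,p}$. By Proposition~\ref{prop-ik} we have $\dim K_{\n;h,p}\le 2$. With the notation of Lemma~\ref{lem-kdso}, put $j_1=\tfrac12(h-3p)$ and $j_2=\tfrac12(h+3p)$, so that $\tau^h_p$ sits on the boundaries of $\sect(j_1)$ and $\sect(j_2)$, and let $\nu_1,\nu_2\in\ZZ_{\geq 0}$ be as in Lemma~\ref{lem-kdso}~iii).

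First I would use $\dim V>2\ge\dim K_{\n;h,p}$ to pick $F\in V\setminus K_{\n;h,p}$; then $SF\neq0$ or $TF\neq0$. The claim is that $SF\neq0$ already forces $\dim V_->2$, and symmetrically $TF\neq 0$ forces $\dim V_+>2$, which proves the lemma. Consider the case $SF\neq0$. Then $V_-\neq\{0\}$, so the $K$-type $\tau^{h+3}_{p-1}$ occurs in $\Ffu_\n$; since it lies on the boundary of $\sect(j_2)$, its occurrence is equivalent to $m_0(j_2)\geq0$. Hence the families $\tilde\omega^{0,p-1}_\n(j_2,\nu_2)$ and $\tilde\upsilon^{0,p-1}_\n(j_2,\nu_2)$ are defined, non-zero and lie in $V_-$ (Proposition~\ref{prop-extr.na}~i)). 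By Proposition~\ref{prop-extr.na}~ii)a) and by~\eqref{ect-ups} both are annihilated by $\sh{-3}{-1}$, and they are linearly independent because the Whittaker functions $W_{\k,s}$ and $V_{\k,s}$ are (Remark~(1) following Proposition~\ref{prop-ik}). By Lemma~\ref{lem-dcpt} every element of $V_-$ in the kernel of $\sh{-3}{-1}$ is determined by its maximal component, which solves a second order (Whittaker) equation; so that kernel has dimension at most two and is therefore exactly the span of these two families.

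The key step is then to show that $SF\neq0$ cannot lie in this two-dimensional span. Assuming it does, $SF$ is annihilated by $\sh{-3}{-1}$ and so is determined by its maximal component $b$, of order $p-1$. Using the explicit form of $\sh3{-1}$ in Table~\ref{tab-shnab}, p~\pageref{tab-shnab}, I would express $b$ through the top two component functions of $F$, solve for the derivative of the next-to-maximal component, and substitute this into the eigenfunction equation for the maximal component of $F$ taken with spectral parameters $(j_2,\nu_2)$, invoking Lemma~\ref{lem-kdso}~iii). As in the abelian computation that produced $-216\pi i\,\bt(p+1)\,t\,b=0$ in the proof of Lemma~\ref{lem-dimdso}, this should collapse to a relation forcing $b=0$, whence $SF=0$, contradicting $SF\neq0$. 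Consequently $\{\tilde\omega^{0,p-1}_\n(j_2,\nu_2),\tilde\upsilon^{0,p-1}_\n(j_2,\nu_2),SF\}$ is linearly independent and $\dim V_->2$. The case $TF\neq0$ is treated in mirror image, now with the families $\tilde\omega^{p-1,0}_\n(j_1,\nu_1)$ and $\tilde\upsilon^{p-1,0}_\n(j_1,\nu_1)$, their minimal components, and the operator $\sh3{-1}$.

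I expect the main obstacle to be this final eigenfunction-equation computation. Unlike the abelian case, the component functions now carry theta factors $\vartheta_{m(h,r)}$ together with Whittaker functions, the relevant maximal or minimal component depends on $\e=\sign(\ell)$ and on the position of $r_0(h)$ relative to $\pm p$ (see~\eqref{maxc}), and one must use the non-abelian eigenfunction equations of \cite[\S11h,\,\S21]{Math} rather than the cleaner abelian relations of Lemma~\ref{lem-efrab}. I would therefore carry out the substitution and the vanishing of $b$ with Mathematica, treating $\e=1$ and $\e=-1$ separately, exactly as in the supporting computations already used for Propositions~\ref{prop-ik} and~\ref{prop-extr.na}. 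The conceptual content, however, is entirely parallel to Lemma~\ref{lem-dimdso}: the bound $\dim K_{\n;h,p}\le2$, combined with the fact that the $\sh{\pm3}{-1}$-kernels in the neighbouring $K$-types are at most two-dimensional, forces any excess dimension at $\tau^h_p$ to propagate to one of the lower $K$-types.
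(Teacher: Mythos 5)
Your global strategy — bound the common kernel $K_{\n;h,p}$ by Proposition~\ref{prop-ik}, identify the $\sh{\pm3}{-1}$-kernels in the two neighbouring $K$-types as two-dimensional spaces of explicitly known Whittaker elements, and derive a contradiction by pushing an $F\notin K_{\n;h,p}$ downward — is the same as the paper's. But there is a genuine gap in the decoupling you build the argument on. You claim that $SF\neq0$ \emph{by itself} forces $\dim\Ffu^\psi_{\n;h+3,p-1,p-1}>2$, to be proved by showing that the single eigenfunction equation for the maximal component of $F$ collapses to $b=0$, exactly as the relation $-216\pi i\,\bt(p+1)\,t\,b=0$ did in Lemma~\ref{lem-dimdso}. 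In the non-abelian setting no such collapse occurs. In the central case $m_0(j_1)\geq0$ and $m_0(j_2)\geq0$ (equivalently $|r_0(h)|\geq p$), the paper cannot treat the two directions separately: it writes \emph{both} $F^+=\sh{-3}{-1}F$ and $F^-=\sh3{-1}F$ as combinations $c^\pm_\omega b^\pm_\omega+c^\pm_\upsilon b^\pm_\upsilon$, expresses the extreme components of $F^\pm$ through $f_{\pm p}$, $f_{\pm(p-2)}$, $f_{\pm(p-2)}'$, and substitutes into the eigenfunction equations at \emph{both} $r=p$ and $r=-p$; only the resulting relation among explicit Whittaker functions, resolved by their linear independence, kills all four coefficients simultaneously. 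What is actually proved is therefore the disjunction ``not both $F^+$ and $F^-$ lie in their respective spans'', not your per-direction implication. Your stronger claim is unsupported, and since a single equation at $r=p$ only relates $b$ to the unknown functions $f_p$ and $f_{p-2}$ without eliminating them, there is no reason to expect it to hold.

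Even in the cases where one direction is vacuous (e.g.\ $\e=1$ and $0\leq m_0(j_1)<p$, so that $m_0(j_2)<0$ and $F^-=0$ automatically), the paper's argument is not ``the relation forces $b=0$'': it first normalizes $F$ by subtracting a multiple of $b_\omega$ so that $F^+\dis b^+_\upsilon$, then uses the explicit determining components from Table~\ref{tab-mmc} together with the asymptotics \eqref{Vae} to force the implicit factor to vanish. So the Whittaker-function input (Proposition~\ref{prop-extr.na}, Table~\ref{tab-mmc}, \eqref{Wae}--\eqref{Vae}) is not merely a computational nuisance to be delegated to Mathematica, as your last paragraph suggests, but the mechanism of the proof. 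A further small point: inferring $m_0(j_2)\geq0$ from $V_-\neq\{0\}$ is not immediate, since $\tau^{h+3}_{p-1}$ could a priori lie in the sector of the third Weyl-orbit element $-h$; the paper sidesteps this by organizing the case analysis directly around the signs of $m_0(j_1)$ and $m_0(j_2)$. The repair is to replace your per-direction claim by the contrapositive of the lemma itself — assume both neighbouring spaces have dimension at most $2$, express both $F^\pm$ in the corresponding bases, and run the coupled substitution — which is precisely the paper's proof.
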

\begin{proof}
We know that that $\dim\Ffu_{\n;h,p,p}^\psi$ is at least two, by the
presence of $b_\omega=\tilde\upsilon_\n^{p,0}(j_1,\nu_1)$ and
$b_\upsilon=\tilde \omega^{p,0}_\n(j_1,\nu_1)$ if $m_0(j_1)\geq 0$, and
the presence of $b_\omega =\tilde\upsilon_\n^{0,p}(j_2,\nu_2)$ and
$b_\upsilon =\tilde \omega^{0,p}_\n(j_2,\nu_2)$ if $m_0(j_2)\geq 0$. If
both $m_0(j_1)\geq 0$ and $m_0(j_2)\geq 0$, the two choices of
$b_\omega$ and $b_\upsilon$ are proportional.

Similarly, $\dim\Ffu_{\n;h+3,p-1,p-1}^\psi\geq 2$ if $m_0(j_2)\geq 0$,
and $\dim \Ffu^\psi_{\n;h-3,p-1,p-1}\geq 2$ if $m_0(j_1)\geq 0$. Since
the $K$-type $\tau^h_p$ occurs, we cannot have both $m_0(j_1)<0$ and
$m_0(j_2)<0$. We choose
\bad b_\omega^+ &= \tilde\omega^{p-1,0}_\n(j_1,\nu_1)\,,& b_\upsilon^+
&= \tilde\upsilon_\n^{p-1,0}(j_1,\nu_1)
&&\bigl(m_0(j_1)\geq 0\bigr)\,,\\
b_\omega^- &= \tilde\omega^{0,p-1}_\n(j_2,\nu_2)\,,& b_\upsilon^- &=
\tilde\upsilon_\n^{0,p-1}(j_2,\nu_2)
&&\bigl(m_0(j_2)\geq 0\big)\,. \ead
If $m_0(j_1)<0$, then $\Ffu^\psi_{\n;h-3,p-1,p-1}=\{0\}$, and we take
$b_\omega^+=b_\upsilon^+=0$. Similarly, we take
$b_\omega^-=b_\upsilon^-=0$ if $m_0(j_2)=0$.

To prove the lemma we suppose that $\dim \Ffu^\psi_{\n;h,p,p}>2$, and
that $\Ffu^\psi_{\n;h-3,p-1,p-1}$ and $\Ffu^\psi_{\n;h+3,p-1,p-1}$ are
either zero or spanned by $b_\omega^\pm$ and $b_\upsilon^\pm$. We take
$F\in \Ffu^\psi_{\n;h,p,p}$ that is not a linear combination of
$b_\omega$ and $b_\upsilon$, and put
$F^\pm =\sh{\mp 3}{-1} F \in \Ffu^\psi_{\n;h\mp3,p-1,p-1}$. See
Figure~\ref{fig-Fpm}.
\begin{figure}[ht]
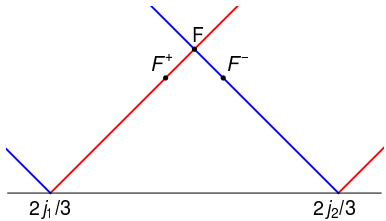

\begin{center} \grf{5}{Fpm}\end{center}
\caption{$F^\pm =\sh{\mp 3}{-1} F$.} \label{fig-Fpm}
\end{figure}
We can write these derivatives as
\be F^\pm = c_\omega^\pm \, b^\pm_\omega + c_\upsilon^\pm\,
b^\pm_\upsilon\,. \ee
That should lead to a contradiction.\smallskip

If both $F^+$ and $F^-$ are zero, then $F\in K_{h,p}$, hence $F$ is a
linear combination of $b_\omega$ and $b_\upsilon$. So we assume that at
least one of the derivatives is non-zero. We have many cases, requiring
essentially two different approaches. \medskip

\rmrk{Case $0< m_0(j_1)<p$ and $\varepsilon=1$}Then $m_0(j_2)<0$, and
$F^-=0$. Computations in \cite[\S22a]{Math}.

The function $F$ has components of order $r$ between $r_0(h)$ and $p$.
Since $\sh3{-1}F=0$ the function $F$ is determined by its lowest
component $f_{r_0}$. The lowest component of $F^+$ has order $r_0+1$
and a computation shows that it is equal to a non-zero multiple of
\[ 2 t f_{r_0}'-(4+h+2p-r_0+4\pi \ell t^2)
f_{r_0}\,.\]

In this case $b_\omega$ is not an element of
$K_{h,p} = \CC \, b_\upsilon$, and
$\sh{-3}{-1} b_\omega \dis b^+_\omega$. See Proposition \ref{prop-ik}.
So we can subtract a multiple of $b_\omega$ from $F$ to arrange that
$F\dis b^+_\upsilon$. With Table~\ref{tab-mmc} we get
\be\label{cpr} 2 t f_{r_0}'-(4+h+2p-r_0+4\pi \ell t^2) f_{r_0} \ddis
t^{0+1}\, V_{-p+1-(j_1+1)/2,\nu'/2}(2\pi|\ell|t^2)\,. \ee
This gives expressions for the derivatives of $f_{r_0}$ in terms of
$f_{r_0}$ and $V$-Whittaker functions.

In the eigenfunction equations for $r=r_0$ also terms with $f_{r_0+2}$
are present. Since $\sh 3 {-1}F=0$ we can express $f_{r_0+2}$ in terms
of $f_{r_0}$ by the kernel relation for $\sh 3{-1}$. Substitution of
all these expressions into the eigenfunction equations gives two linear
combination of $V$-Whittaker functions. The asymptotic behavior in
\eqref{Vae} shows that the implicit factor in \eqref{cpr} has to be
zero. Hence $f_{r_0}$ and $F$ have to vanish.

\rmrk{Case $m_0(j_1)=0$ and $\varepsilon=1$}So $m_0(j_2)<0$ and $F^-=0$.
Computations in \cite[\S22b]{Math}.

The function $F$ has in this case only one component, $f_{r_0}=f_p$. So
there is no need to determine $f_{r_0+2}$. The further computation is
similar to the previous one.

\rmrk{Cases $0 \leq m_0(j_2)<p$ and $\varepsilon=-1$}Then $m_0(j_1)<0$
and $F^+=0$. We proceed as in the previous cases. See
\cite[\S22cd]{Math}.

\rmrk{Cases $m_0(j_1)\geq 0$, $m_0(j_2)\geq 0$}Now both $F^+$ and $F^-$
may be nonzero. We can write
\be\label{Fomups} F^\pm = c^\pm_\omega \, b^\pm_\omega + c^\pm_\upsilon
\, b^\pm_\upsilon\,.\ee
The components of $F$ can have order $-p\leq r\leq p$, and the
components of $F^{\pm}$ can have order between $1-p$ and $p-1$. The
determining components of $b^\pm_\omega $ and $b^\pm_\upsilon$ have
order $\pm (1-p)$.

We compute the highest component $f^+_{p-1}$ of $F^+$, and the lowest
component $f^-_{1-p}$ of $F^-$. The function $f^\pm_{\mp(p-1)}$ is a
linear combination of $f_{\mp p}$, $f_{\mp(p-2)}$ and $f_{\mp(p-2)}'$.
Since we know the determining components of the functions in the right
hand side of \eqref{Fomups}, we can solve for $f_{\mp(p-2)}'$.
Substitution into the eigenfunction equations for $r=-p$ and for $r=p$
leads to a relation involving Whittaker functions only, which shows
that all four coefficients in \eqref{Fomups} have to vanish.

We carry out the actual computations for $\varepsilon=1$ and for
$\varepsilon=-1$ separately, in \cite[\S22ef]{Math}.

\rmrk{Conclusion} In all cases we conclude that the presence of
$F\in \Ffu^\psi_{\n;h,p,p}$ linearly independent of $b_\omega$ and
$b_\upsilon$ would lead to derivatives $F^+$ and $F^-$, of which at
least one is linearly independent of $b^\pm_\omega$ and
$b^\pm_\upsilon$.
\end{proof}

\begin{prop}\label{prop-dim2-nab}For $\psi\in \WOI$ the dimension of
$ \Ffu_{\n;h,p,p}^\psi $ is equal to 2 for all $K$-types $\tau^h_p$
that satisfy $\bigl| h-2j\bigr|\leq 3p$ for some
$j\in \wo^1(\psi)^+_\n$. All other $K$-types do not occur in
$\Ffu^\psi_\n$.
\end{prop}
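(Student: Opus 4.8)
The plan is to establish Proposition~\ref{prop-dim2-nab} by the same strategy that proved Proposition~\ref{prop-dim} in the generic case, now combining the dimension-two base case for one-dimensional $K$-types with the propagation of dimension along shift operators and the key drop-prevention result of Lemma~\ref{lem-dimhp}. First I would record what the summary at the end of \S\ref{sect-Nnab1d} already gives: for the one-dimensional $K$-type $\tau^{2j}_0$ we have $\dim\Ffu^\psi_{\n;2j,0,0}=2$ precisely when $j\in\wo^1(\psi)_\n$, and $\dim=0$ otherwise. This handles $p=0$ completely and identifies the correct set of occurring base points. From such a base point one reaches all $K$-types $\tau^h_p$ with $|h-2j|\le 3p$ (that is, all of $\sect(j)$) by repeated application of the upward shift operators $\sh31$ and $\sh{-3}1$.

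Next I would prove the lower bound $\dim\Ffu^\psi_{\n;h,p,p}\ge 2$ for every $K$-type with $(h/3,p)\in\sect(j)$, $j\in\wo^1(\psi)_\n$. For this I invoke Proposition~\ref{prop-extr.na}: the families $\tilde\omega^{a,b}_\n(j,\nu)$ and $\tilde\upsilon^{a,b}_\n(j,\nu)$ are non-zero and holomorphic on $\CC\setminus\ZZ_{\le-1}$, and their determining components (the $W$- and $V$-Whittaker functions in Table~\ref{tab-mmc}) are linearly independent for all parameter values (Remark~(1) after Proposition~\ref{prop-ik}). Since we may take $(j,\nu)\in\wo(\psi)^+_\n$ with $\nu\in\ZZ_{\ge0}$ under integral parametrization, these two families furnish two linearly independent highest-weight vectors in $\Ffu^\psi_{\n;h,p,p}$, giving dimension at least two. (Where $\tau^h_p$ lies in the intersection of two sectors, Lemma~\ref{lem-VWrels} and Proposition~\ref{prop-ik} guarantee the two families from the two sectors are proportional, so the count stays exactly two rather than jumping.)

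For the upper bound I would run the descent argument: starting from an arbitrary $K$-type $\tau^h_p$ with $p\ge1$ in which $\dim\Ffu^\psi_{\n;h,p,p}$ might exceed $2$, I use Proposition~\ref{prop-kdso} to note that at least one downward shift operator $\sh{\pm3}{-1}$ is injective on $\Ffu^\psi_{\n;h,p,p}$ unless $(h/3,p)$ sits on a sector boundary where both kernels meet, and Lemma~\ref{lem-dimhp} to conclude that a dimension $>2$ at $(h,p)$ forces dimension $>2$ at one of the neighbouring lower $K$-types $\tau^{h\pm3}_{p-1}$. Iterating this descent, the dimension cannot decrease along the chosen path, so an inflated dimension would propagate all the way down to a one-dimensional $K$-type $\tau^{2j'}_0$, where the base case forces dimension exactly $0$ or $2$ --- a contradiction. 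Combined with the lower bound this pins the dimension at exactly $2$ throughout $\bigcup_{j\in\wo^1(\psi)_\n}\sect(j)$. Finally, Proposition~\ref{prop-sectors}~i) rules out any $K$-types outside this union, giving dimension $0$ elsewhere.

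The main obstacle is the bookkeeping at the sector intersections and boundaries, where $r_0(h)=\pm\e p$: there the roles of $\tilde\omega$, $\tilde\upsilon$ and $\tilde\mu$ and the dichotomy in Proposition~\ref{prop-ik}~i) between dimension-two and dimension-one kernels must be reconciled with the descent, and one must verify that the two candidate spanning families coming from adjacent sectors really coincide (via Lemma~\ref{lem-VWrels}) rather than giving an extra dimension. Most of the analytic work --- the non-vanishing and linear independence of the Whittaker-function families, and the crucial non-dropping statement --- is already packaged in Propositions \ref{prop-extr.na} and~\ref{prop-ik} and Lemma~\ref{lem-dimhp}, so the proof itself should be a short assembly of these ingredients, parallel to the proof of Proposition~\ref{prop-dim}.
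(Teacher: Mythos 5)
Your proposal is correct and follows essentially the same route as the paper: the $p=0$ base case from \S\ref{sect-Nnab1d}, the lower bound from the linearly independent families of Proposition~\ref{prop-extr.na} propagated by the injective upward shift operator of Proposition~\ref{prop-kuso}, the upper bound by descent using Lemma~\ref{lem-dimhp}, and exclusion of $K$-types outside the sectors. The only cosmetic difference is that the paper works with the boundary families $\tilde\omega^{p,0}_\n$, $\tilde\upsilon^{p,0}_\n$ and descends points outside the sectors to the horizontal axis, whereas you invoke the $\tilde x^{a,b}_\n$ notation and Proposition~\ref{prop-sectors}~i) directly; the content is the same.
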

\begin{proof}
By \S\ref{sect-Nnab1d} we have $\dim \Ffu_{\n;h,p,p}^\psi = 2$ for
$(h/3,p) = (2j/3,0)$ for all $j\in \wo^1(\psi)_n$.
Proposition~\ref{prop-extr.na} provides us with
$\tilde\omega^{p,0}_n(j,\nu)$ and $\tilde\upsilon^{p,0}_\n(j,\nu)$ in
$\Ffu^\psi_{\n;2j +3p,p,p}$ for each $p\geq 1$ and each
$j\in \wo^1(\psi)_\n$. These elements are linearly independent.
Similarly, the spaces $\Ffu^\psi_{\n;2j-3p,p,p}$ have dimension at
least two for $j\in \wo^1(\psi)_\n$. At least one of the upward shift
operators is injective, by ii) in Proposition~\ref{prop-kuso}. Hence
all spaces $\Ffu^\psi_{\n;h,p,p}$ indicated in the lemma have dimension
at least two.

For any point $(h/3,p)\in \sect(j)$ there is a path to the base point
$\bigl( 2j/3,0\bigr)$, corresponding to downward shift operators. Along
this path the dimension of $\Ffu^\psi_{\n;h,p,p}$ cannot decrease by
Lemma~\ref{lem-dimhp}. So all $K$-types mentioned in the lemma have
multiplicity exactly equal to~$2$.

Starting from points outside the sectors with base points $(2j'/3,0)$,
we obtain a path to a point $(h/3,0)$ on the horizontal axis, for which
$\dim \Ffu_{\n;h,0,0}^\psi=0$. This concludes the proof.
\end{proof}

\subsection{Special submodules}Inside $\Ffu_\n^\psi$ we have the special
submodules $\Wfu_\n^\psi$ and $\Mfu_\n^\psi$, defined by their behavior
as $t\uparrow\infty$ and $t\downarrow 0$ on $n\am(t)k$. See Definitions
\ref{defW[]} and~\ref{defM[]}. We define for $\psi\in \WOI$ subspaces
$\Wfu_\n^{\xi,\nu}$, $\Vfu_\n^{\xi,\nu}$ and $\Mfu^{\xi,\nu}$ of
$\Ffu_\n^\ps$, for $j\in \wo(\ps)_\n^+$. We show that $\Wfu^{\xi,\nu}$
and $\Vfu^{\xi,\nu}$ are $(\glie,K)$-modules, and that
$\Wfu^{\xi,\nu}\subset \Wfu^\ps_\n$ and
$\Mfu^{\xi,\nu}\subset \Mfu^\ps_\n$. We define
$\Vfu^\ps_\n \supset \Vfu^{\xi,\nu}_\n$, with properties similar to
$\Wfu_\n^\ps$.

\begin{defn}\label{def-VWMa}Let $\psi\in \WOI$. We define for
$(j,\nu)\in \wo(\psi)^+_\n$ the following $K$-modules
\il{Wfuxinu}{$\Wfu^{\xi,\nu}_\n$} \il{Mfuxinu}{$\Mfu^{\xi,\nu}_\n$}
\il{Vfuxinu}{$\Vfu^{\xi,\nu}_\n$}
\begin{align*} \Wfu^{\xi,\nu}_\n &= \sum_{p\geq 0} \sum_{a,b\geq 0}
\Bigl( U(\klie)\,
(\sh 31)^a (\sh{-3}1)^b \tilde\omega^{p,0}_\n(j,\nu)
+ U(\klie)\, (\sh 31)^a (\sh{-3}1)^b
\tilde\omega^{0,p}_\n(j,\nu)\Bigr)\,,
\displaybreak[0]
\\
\Vfu^{\xi,\nu}_\n &= \sum_{p\geq 0} \sum_{a,b\geq 0} \Bigl( U(\klie)\,
(\sh 31)^a (\sh{-3}1)^b \tilde\upsilon^{p,0}_\n(j,\nu)
+ U(\klie)\, (\sh 31)^a (\sh{-3}1)^b
\tilde\upsilon^{0,p}_\n(j,\nu)\Bigr)\,,
\displaybreak[0]
\\
\Mfu^{\xi,\nu}_\n &= \sum_{p\geq 0} \sum_{a,b\geq 0} \Bigl( U(\klie)\,
(\sh 31)^a (\sh{-3}1)^b \tilde\mu^{p,0}_\n(j,\nu)
+ U(\klie)\, (\sh 31)^a (\sh{-3}1)^b \tilde\mu^{0,p}_\n(j,\nu)\Bigr)\,.
\end{align*}
\end{defn}

\rmrk{Remarks}i) \ We use this complicated description, since in the
non-abelian case the upward shift operators are not always injective.
So $\omega^{a+p,b}_\n(j,\nu)$ may be zero in situations where
$(\sh31)^a (\sh{-3}1)^b \tilde \omega^{p,0}_\n(j,\nu)$ is non-zero.

ii) \ In the definition we speak of $K$-modules. We still have to show
that these spaces are invariant under the action of~$\glie$.

iii) \ The spaces $\Wfu_\n^{\xi,\nu}$ are contained in
$\Wfu_\n^{\ps[\xi,\nu]}$, and similarly for $\Mfu_\n^{\xi,\nu}$. Lemma
\ref{lem-mu-omups} implies that the modules $\Wfu^\psi_\n$ and
$\Mfu^\ps_\n$ may have non-zero intersection. This makes it useful to
use also the families $\tilde \upsilon_\n^{p,0}$ and
$\tilde \upsilon^{0,p}_\n$. We have not yet defined $\Vfu_\n^\psi$.

\begin{lem}\label{lem-Mxinumod}
The $(\klie,K)$-modules in Definition~\ref{def-VWMa} are
$(\glie,K)$-modules.
\end{lem}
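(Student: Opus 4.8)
It suffices to treat the three modules at once, writing $\X$ for any of $\Wfu^{\xi,\nu}_\n$, $\Vfu^{\xi,\nu}_\n$, $\Mfu^{\xi,\nu}_\n$ and $x$ for the corresponding family in $\{\omega,\upsilon,\mu\}$, since these differ only in the scalar factors recorded in Tables~\ref{tab-x-up} and~\ref{tab-x-dn}. By construction $\X$ is a $U(\klie)$-module, hence stable under $\klie$ and, as $K$ is connected, under $K$; so only $\glie$-invariance remains. By the corollary to Proposition~\ref{prop-shdef} one has $U(\glie)v=U(\klie)\,\{Pv\}$ for a highest weight vector $v$, with $P$ ranging over compositions of the four shift operators, and the highest weight space $\X_{h,p,p}$ is spanned by the generators $(\sh31)^a(\sh{-3}1)^b\tilde x^{p',0}_\n(j,\nu)$ and $(\sh31)^a(\sh{-3}1)^b\tilde x^{0,p'}_\n(j,\nu)$ landing on $\tau^h_p$, each of which is a highest weight vector by Proposition~\ref{prop-shdef}. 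Hence it is enough to show that each shift operator maps every such generator back into $\X$.

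The upward operators $\sh31$ and $\sh{-3}1$ present no difficulty: they commute (Proposition~\ref{prop-shdef}), so applying one to a generator only increases $a$ or $b$ and produces another generator. The substance is the two downward operators. On the seed families themselves the answer is recorded explicitly: Proposition~\ref{prop-extr.na}~ii) gives $\sh3{-1}\tilde x^{p,0}_\n=0$ and $\sh{-3}{-1}\tilde x^{0,p}_\n=0$, while Tables~\ref{tab-x-up} and~\ref{tab-x-dn} express $\sh{-3}{-1}\tilde x^{p,0}_\n$ and $\sh3{-1}\tilde x^{0,p}_\n$ as scalar multiples of the generators $\tilde x^{p-1,0}_\n$ and $\tilde x^{0,p-1}_\n$. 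Thus the seeds, which sit on the two boundary lines of $\sect(j_\xi)$, are carried to generators of the same family; the vanishing statements reflect that $\sh3{-1}$, respectively $\sh{-3}{-1}$, pushes a $K$-type toward the right, respectively left, edge of the sector and dies there, so no $K$-type outside $\sect(j_\xi)$ is ever produced.

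For an interior generator I would argue by induction on the depth $a+b$, the base case $a=b=0$ being the seed computation above. Writing such a generator as $\sh31 s'$ (or $\sh{-3}1 s'$) with $s'$ of depth $a+b-1$, I would commute the downward operator past the outer upward operator. The needed commutation relations on highest weight spaces follow from the descriptions in Table~\ref{tab-sho} together with the Lie-algebra relations among the $\Z_{ij}$; they reduce $\sh3{-1}(\sh31 s')$ to $\sh31(\sh3{-1}s')$, which lies in $\X$ by the inductive hypothesis and upward closure, plus correction terms that are $U(\klie)$-multiples of generators already in $\X$. The one genuine complication is that the $p$-dependent projection coefficients in the definition of the downward operators make $\sh3{-1}(\sh31 s')$ itself reappear among these corrections; isolating it and dividing by the resulting non-vanishing scalar then solves for $\sh3{-1}(\sh31 s')\in\X$, completing the induction.

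The main obstacle is precisely this interior bookkeeping. Because under integral parametrization the upward shift operators need not be injective (Proposition~\ref{prop-kuso}), the powers $(\sh31)^{a+p}(\sh{-3}1)^b\tilde x^{0,0}_\n$ may vanish where $(\sh31)^a(\sh{-3}1)^b\tilde x^{p,0}_\n$ does not, so the clean special-module formalism of \S\ref{sect-scm} is unavailable and one must carry the families $\tilde x^{p,0}_\n$, $\tilde x^{0,p}_\n$ and their normalising factors explicitly; these verifications are naturally delegated to \cite{Math}. I note that for $\Wfu^{\xi,\nu}_\n$ and $\Mfu^{\xi,\nu}_\n$ there is a conceptually cleaner route: exponential decay at $\infty$ and $\nu$-regular behavior at $0$ are preserved by the $\glie$-action (Proposition~\ref{prop-inv}), and combined with the confinement of $K$-types to $\sect(j_\xi)$ this already forces $\glie$-invariance; only $\Vfu^{\xi,\nu}_\n$, which has no such boundary characterization, requires the shift-operator computation in full.
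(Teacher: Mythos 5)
Your main argument is correct and is essentially the paper's: both proofs reduce the $\glie$-action to the four shift operators via Table~\ref{tab-sho} (the paper normal-orders a PBW expression $u\,\Z_{31}^{a}\Z_{23}^{b}\Z_{32}^{c}\Z_{13}^{d}$ into terms $u'(\sh31)^{a'}(\sh{-3}1)^{b'}(\sh{-3}{-1})^{d'}$ acting on $\tilde x^{p,0}_\n$, rather than running your induction on $a+b$, but the content is the same bookkeeping) and then invoke $\sh3{-1}\tilde x^{p,0}_\n=0$ and $\sh{-3}{-1}\tilde x^{p,0}_\n\in\CC\,\tilde x^{p-1,0}_\n$ from Propositions~\ref{prop-dsok1} and~\ref{prop-extr.na} and \eqref{ect-ups}, with the symmetric statements for $\tilde x^{0,p}_\n$. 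One caveat on your closing aside: Proposition~\ref{prop-inv} alone does not force $\glie$-invariance of $\Wfu^{\xi,\nu}_\n$ or $\Mfu^{\xi,\nu}_\n$, because at this point you do not yet know that these modules exhaust the exponentially decaying (resp.\ $\nu$-regular) part of each $K$-type in $\sect(j_\xi)$ --- that identification rests on the multiplicity-one results proved only afterwards in Lemma~\ref{lem-VW-nab} and Lemma~\ref{lem-Mfud1} --- so the shift-operator computation is in fact needed for all three families, not only for $\Vfu^{\xi,\nu}_\n$.
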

\begin{proof}We use $x^{p,0}$ and $x^{0,p}$ as a general notation. The
shift operators on these elements satisfy
$\sh31x^{p,0} \in \CC\, x^{p+1,0}$,
$\sh{-3}{-1}x^{p,0}\in \CC \,x^{p-1,0}$, $\sh3{-1}x^{p,0}=0$, and
analogously for $x^{0,p}$. See Propositions \ref{prop-shdef},
\ref{prop-dsok1}, and \eqref{ect-ups}. Each element of $U(\glie)$ can
be written as a linear combination of
$u \Z_{31}^a\Z_{23}^b \Z_{32}^c \Z_{13}^d$, with $u\in U(\klie)$, and
$a,b,c,d\in \ZZ_{\geq 0}$. Applied to $x^{p,0} $ this can be rewritten,
with the expressions in Table~\ref{tab-sho}. p~\pageref{tab-sho}, as a
linear combination of elements of the form
$u' (\sh 3 1)^{a'} (\sh {-}1)^{b'}  (\sh{-3}{-1})^{d'} x^{p,0}$, which
is in the space under consideration. The image of $x^{0,p}$ is handled
analogously.
\end{proof}

The $(\glie,K)$-modules $\Wfu^\ps_\n$ and $\Mfu^\ps_\n$ in Definitions
\ref{defW[]} and~\ref{defM[]}.
\begin{lem}Let $\ps\in \WOI$.
\be \sum_{(j,\nu)\in \wo(\ps)^+_\n} \Wfu^{\xi_j,\nu}_\n \subset
\Wfu^\ps_\n\,,\qquad \sum_{(j,\nu)\in \wo(\ps)^+_\n}
\Mfu^{\xi_j,\nu}_\n \subset \Mfu^\ps_\n\,. \ee
\end{lem}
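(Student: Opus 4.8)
The statement asserts two inclusions of $(\glie,K)$-modules. Each is of the same shape, so I would treat the $\Wfu$-inclusion in full detail and then remark that the $\Mfu$-inclusion is proved identically. Recall from Definition~\ref{defW[]} that $\Wfu^\ps_\n$ consists of all $F\in\Ffu^\ps_\n$ with exponential decay at $\infty$, and from Definition~\ref{defM[]} that $\Mfu^\ps_\n$ consists of linear combinations of $F\in\Ffu^\ps_\n$ having $\nu$-regular behavior at $0$ for some $(j,\nu)\in\wo(\ps)^+$. Since both $\Wfu^{\xi_j,\nu}_\n$ and $\Wfu^\ps_\n$ are already known to be $(\glie,K)$-submodules of $\Ffu^\ps_\n$ (the former by Lemma~\ref{lem-Mxinumod}, the latter by Definition~\ref{defW[]} together with Proposition~\ref{prop-inv}), the sum on the left is a submodule, and it suffices to verify the defining property on a spanning set.

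\textbf{First step: reduce to the generating families.} By Definition~\ref{def-VWMa} the module $\Wfu^{\xi_j,\nu}_\n$ is generated, as a sum of $U(\klie)$-modules, by the elements $(\sh31)^a(\sh{-3}1)^b\tilde\omega^{p,0}_\n(j,\nu)$ and $(\sh31)^a(\sh{-3}1)^b\tilde\omega^{0,p}_\n(j,\nu)$ for $a,b,p\geq0$. Proposition~\ref{prop-inv} tells us that exponential decay at $\infty$ is preserved under the action of $\glie$ and $K$; hence it is preserved under the shift operators and under $U(\klie)$. Therefore it is enough to show that the two base families $\tilde\omega^{p,0}_\n(j,\nu)$ and $\tilde\omega^{0,p}_\n(j,\nu)$ themselves have exponential decay at $\infty$ for every $p\geq0$ and every $(j,\nu)\in\wo(\ps)^+_\n$.

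\textbf{Second step: exponential decay of the base families.} This is precisely iii) in Proposition~\ref{prop-extr.na}, which asserts that $\tilde\omega^{p,0}_\n(j,\nu;n\am(t)k)$ and $\tilde\omega^{0,p}_\n(j,\nu;n\am(t)k)$ have exponential decay as $t\to\infty$. Concretely, their determining components are of the form $t^c\,W_{\k,s}(2\pi|\ell|t^2)$ by Table~\ref{tab-mmc}, and the asymptotics $W_{\k,s}(\tau)\sim \tau^\k e^{-\tau/2}$ in \eqref{Wae} give the decay; the remaining components are linear combinations of functions of the same type, by the contiguous relations used in the proof of Proposition~\ref{prop-extr.na}. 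Since $(j,\nu)\in\wo(\ps)^+_\n$ guarantees $m_0(j)\geq0$, the $K$-type $\tau^{2j}_0$ occurs in $\Ffu_\n$ and the families are genuinely defined. Thus every generator of $\Wfu^{\xi_j,\nu}_\n$ lies in $\Wfu^\ps_\n$, and the first inclusion follows.

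\textbf{The $\Mfu$-inclusion and the main obstacle.} For the second inclusion I would run the same argument with $\tilde\mu$ in place of $\tilde\omega$: the generators of $\Mfu^{\xi_j,\nu}_\n$ are built from $\tilde\mu^{p,0}_\n(j,\nu)$ and $\tilde\mu^{0,p}_\n(j,\nu)$, whose determining components are $t^c\,M_{\k,s}(2\pi|\ell|t^2)$ by Table~\ref{tab-mmc}. The expansion \eqref{Mkps} shows that $M_{\k,\nu/2}$ gives exactly $\nu$-regular behavior at $0$, so each generator has $\nu$-regular behavior at $0$ for the relevant $(j,\nu)\in\wo(\ps)^+$, placing it in $\Mfu^\ps_\n$ by Definition~\ref{defM[]}; preservation under $U(\klie)$ and the shift operators is again Proposition~\ref{prop-inv}. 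The only delicate point, and the one I expect to need the most care, is the exceptional behavior of the $\mu$-families: Lemma~\ref{lem-bfs} warns that $\nu\mapsto\mu^{a,b}_\n(j,\nu)$ may have first-order poles at $\nu\in\ZZ_{\leq-1}$. However, on $\wo(\ps)^+$ we have $\re\nu\geq0$, so these singular points are avoided, and the tilded families $\tilde\mu^{p,0}_\n$, $\tilde\mu^{0,p}_\n$ are holomorphic and non-zero on $\CC\setminus\ZZ_{\leq-1}$ by i) of Proposition~\ref{prop-extr.na}; this is what keeps the argument clean. I would state this observation explicitly to justify that the generators are well-defined and genuinely carry the required boundary behavior.
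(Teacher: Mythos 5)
Your treatment of the $\Wfu$-inclusion is correct and is essentially the paper's argument: the exponential decay of $\tilde\om^{p,0}_\n(j,\nu)$ and $\tilde\om^{0,p}_\n(j,\nu)$ is exactly iii) of Proposition~\ref{prop-extr.na}, and Proposition~\ref{prop-inv} propagates it through the shift operators and $U(\klie)$.

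The $\Mfu$-inclusion, however, has a genuine gap, and you have located the difficulty in the wrong place. The poles at $\nu\in\ZZ_{\leq -1}$ are indeed harmless because $\re\nu\geq 0$ on $\wo(\ps)^+_\n$, but that is not the delicate point. The delicate point is your inference from ``the determining component of $\tilde\mu^{p,0}_\n(j,\nu)$ is $t^c\,M_{\k,\nu/2}(2\pi|\ell|t^2)$'' to ``the generator has $\nu$-regular behavior at $0$''. Definition~\ref{def-grbb} requires \emph{every} component to be of the form $t^{2+\nu}h(t)$ with $h$ entire, and the non-determining components are recovered from the determining one through the kernel relations of Table~\ref{tab-krnab}, each of which divides by $t$; a power of $t$ may therefore be lost at every transition $r\mapsto r\pm 2$. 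The paper explicitly flags this as problematic and resolves it by a different route: the \emph{untilded} family $\nu\mapsto\mu^{p,0}_\n(j,\nu)$ is obtained from $\mu^{0,0}_\n(j,\nu)$ by upward shift operators, hence has $\nu$-regular behavior at $0$ by Proposition~\ref{prop-inv}; its components have the form $t^{\nu}h_r(t,\nu)$ with $h_r$ holomorphic jointly in $t$ and $\nu$, and at a parameter $\nu_0$ where the family vanishes one writes $h_r(t,\nu)=k_r(t,\nu)\,(\nu-\nu_0)^m$ with the order $m$ independent of $r$, so that the renormalized value $\tilde\mu^{p,0}_\n(j,\nu_0)$, whose components are $t^{\nu_0}k_r(t,\nu_0)$, still has $\nu_0$-regular behavior. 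Your argument as written controls only one component of each generating family; to close the gap you need either this holomorphy-in-$\nu$ argument or an explicit power count through the kernel recursion showing that the loss of at most one power of $t$ per transition never drops any component below the exponent $2+\nu$.
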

\begin{proof}Let $x$ stand for $\om$ or $\mu$, and $\X$ for $\Wfu$ or
$\Mfu$. Let $(j,\nu_0) \in \wo(\ps)_\n^+$.

The elements $x^{0,0}_\n(j,\nu)$ are in $\X^{\ps[j,\nu)]}_\n$ for all
$\nu$ in a neighborhood of $\nu_0$. This property is preserved by the
upward shift operators. However, they might give the result zero. In
Proposition~\ref{prop-extr.na} we form the elements $\tilde x^{p,0}$
and $\tilde x^{0,p}$ by dividing out such zeros. They are determined by
their minimal or maximal component, which has exponential decay for
$x=\om$ and $a$-regular behavior at $0$ for $x=\mu$, with
$a\geq 1+\min\bigl(p,m_0(j)\bigr)+\nu_0\geq 0$. The other components
are determined by the kernel relations in Table~\ref{tab-krnab},
p~\pageref{tab-krnab}. This clearly preserves exponential decay.

Application of the kernel relations in the case $x=\mu$ is problematic.
At each transition $r\mapsto r\pm 2$ we may loose a factor~$t$. Let us
look at the family $\nu\mapsto \mu^{p,0}(j,\nu)$, obtained by
differentiation; see~\eqref{xab}. It is $C^\infty$ in $g\in G$ and
$\nu$ in a neighborhood of $\nu_0$ and holomorphic in~$\nu$, and has
$\nu$-regular behavior at~$0$, according to Proposition~\ref{prop-inv}.
Its components have the form $t\mapsto t^\nu \, h_r(t,\nu)$ where $h_r$
extends holomorphically to $\CC$ times a neighborhood on~$\nu_0$. There
may be common zeros to be divided out in the recursion leading to
$\tilde \mu^{p,0}_\n(j,\nu)$. So we have
$h_r(t,\nu) = k_r(t,\nu)\, (\nu-\nu_0)^a$, with $a\in \ZZ_{\geq 0}$
depending on $p$, but not on~$r$. Then $\tilde \mu^{p,0}_\n(j,\nu_0)$
is a multiple of the function with components
$t\mapsto t^{\nu_0}\, k_r(t,\nu_0)$, and has $\nu_0$-regular behavior
at $0$. The same approach can be followed for
$\tilde \mu^{0,p}_\n(j,\nu_0)$.

Since $\Mfu^\ps_\n$ and $\Wfu^\ps_\n$ are $(\glie,K)$-modules, the lemma
follows.
\end{proof}

We define analogously the following
$(\glie,K)$-module.\ir{Vps}{\Vfu^\ps_\n}
\be \label{Vps}\Vfu^\ps_\n \= \sum_{(j,\nu)\in \wo(\ps)^+_\n}
\Vfu^{\xi_j,\nu}_\n\,.\ee

\begin{lem}\label{lem-VW-nab}Let $\ps\in \WOI$.
\begin{enumerate}
\item[i)]
$\Wfu_\n^\psi = \sum_{(j,\nu)\in \wo(\psi)_\n^+ }\Wfu_\n^{\xi_j,\nu}$.

\item[ii)] Let $(j,\nu), (j',\nu') \in \wo(\ps)_\n^+$. If the $K$-types
$\tau^h_p$ corresponds to a point
$(h/3,p) \in \sect(j) \cap \sect(j')$, then
$ \Wfu^{\xi,\nu}_{\n;h,p,p} = \Wfu^{\xi_{j'},\nu'}_{\n;h,p,p}$.
\end{enumerate}
Fix $(j,\nu)\in \wo(\ps)_\n^+$.
\begin{enumerate}

\item[iii)] The $K$-types $\tau^h_p$ occurring in $\Wfu^{\xi,\nu}_\n$
and in $\Vfu^{\xi,\nu}_\n$ correspond to the points
$(h/3,p) \in \sect(j)$, and have multiplicity one.

\item[iv)] We define for $a,b\in \ZZ_{\geq 0}$ the families
$\tilde\ups^{a,b}_\n$ and $\tilde \om^{a,b}$
by\ir{tuod}{\tilde\ups^{a,b}_\n} \il{tod}{$\tilde\om^{a,b}_\n$}
\be \label{tuod}
\tilde x^{a,b}_\n(j,\nu) \=
\begin{cases}
\bigl( \sh{-3}1\bigr)^b \tilde x^{a,0}(j,\nu)& \text{ if } \ell>0\,,\\
\bigl( \sh 3 1\bigr)^a \tilde x^{0,b}(j,\nu)&\text{ if }\ell<0\,,
\end{cases}
\ee
for $x=\ups$ or~$\om$.

Put $h=2j+3(a-b)$, $p=a+b$, The space $\Vfu^{\xi,\nu}_{\n;h,p,p}$ is
spanned by $\tilde \ups^{a,b}_\n(j_\xi,\nu)$, and
$\Wfu^{\xi,\nu}_{n;h,p,p}$ is spanned by
$\tilde\om^{a,b}_\n(j_\xi,\nu)$.

\item[v)] $\Wfu^{\xi,\nu}_\n \cap \Vfu_\n^{\xi,\nu}=\{0\}$.
\end{enumerate}
\end{lem}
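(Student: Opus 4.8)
The plan is to prove the five statements i)--v) of Lemma~\ref{lem-VW-nab} in order, reusing the structural tools already in place. For part~i), the inclusion $\sum_{(j,\nu)} \Wfu_\n^{\xi_j,\nu} \subset \Wfu_\n^\psi$ is the preceding lemma, so only the reverse inclusion needs work. I would argue that any highest weight vector $F \in \Wfu^\psi_{\n;h,p,p}$ lies in the span of the $\Wfu^{\xi_j,\nu}_\n$ by descending via injective downward shift operators (Proposition~\ref{prop-kdso}) to a minimal vector, which by Proposition~\ref{prop-extr.na}iii) must be a combination of the $\tilde\omega^{p',0}_\n$ and $\tilde\omega^{0,p'}_\n$ spanning the $\Wfu^{\xi_j,\nu}_\n$; since $\Wfu^\psi_\n$ has the exponential-decay characterization, every $K$-type it contains is reached this way. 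For part~iii), the $K$-types occurring follow from the sector description in Proposition~\ref{prop-dim2-nab} (each $\sect(j)$ with multiplicity contributions), and multiplicity one for $\Wfu^{\xi,\nu}_\n$ and $\Vfu^{\xi,\nu}_\n$ separately follows because each is a special module of type $[\ld_2(j_\xi,\nu);2j_\xi,0;\infty,\infty]$ with all $K$-types of multiplicity one, exactly as in the proof of Theorems~\ref{mnthm-ab-gp}--\ref{mnthm-nab-gp}; the injectivity needed comes from Proposition~\ref{prop-kuso}.

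For part~iv), I would use the fact (Proposition~\ref{prop-extr.na}) that $\tilde x^{p,0}_\n$ and $\tilde x^{0,p}_\n$ are nonzero everywhere on $\CC \setminus \ZZ_{\leq -1}$, so the mixed families $\tilde x^{a,b}_\n$ defined in~\eqref{tuod} are nonzero; the point is that with $\ell>0$ one builds the whole sector by applying $\sh{-3}1$ to the nonvanishing $\tilde x^{a,0}$, and with $\ell<0$ by applying $\sh 31$ to $\tilde x^{0,b}$, thereby circumventing the possible zeros of the upward shift operators in the opposite direction. Since each highest weight space $\Vfu^{\xi,\nu}_{\n;h,p,p}$ and $\Wfu^{\xi,\nu}_{\n;h,p,p}$ has dimension one by~iii), and these explicit families lie in it and are nonzero, they span it. For part~ii), I would take $(h/3,p)$ at the lowest point of $\sect(j) \cap \sect(j')$, where by Proposition~\ref{prop-ik}ii)a) the relevant $\tilde\omega$-families built from $(j,\nu)$ and from $(j',\nu')$ both equal $\kk^W_{\n;h,p}$ up to a scalar, hence $\Wfu^{\xi,\nu}_{\n;h,p,p}=\Wfu^{\xi',\nu'}_{\n;h,p,p}$ there; then I would propagate this equality up through the intersection region using the fact that the upward shift operators preserve proportionality relations $\tilde\omega^{a,b}_\n(j,\nu) \dis \tilde\omega^{a,b}_\n(j',\nu')$, exactly as in the proof of Lemma~\ref{lem-strab}ii).

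For part~v), the disjointness $\Wfu^{\xi,\nu}_\n \cap \Vfu^{\xi,\nu}_\n = \{0\}$, I would follow the template of Lemma~\ref{lem-WNdisj}: the intersection is a $(\glie,K)$-submodule; any nonzero element can be pushed down by injective downward shift operators to a minimal vector in some $\Ffu^\ps_{\n;2j,0,0}$ or in a $K_{\n;h,p}$ with $p\geq 1$, where I know explicit bases. At $p=0$ the $W$-Whittaker function (exponential decay) and the $V$-Whittaker function are linearly independent, and at $p\geq 1$ the functions $\kk^W_{\n;h,p}$ and $\kk^V_{\n;h,p}$ of Proposition~\ref{prop-ik} are linearly independent as well, so no nonzero vector lies in both, forcing the minimal vector and hence the original element to vanish.

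The main obstacle I expect is part~ii): unlike the abelian case, the upward shift operators in the non-abelian setting need not be injective, so the step propagating the equality $\Wfu^{\xi,\nu}_{\n;h,p,p}=\Wfu^{\xi',\nu'}_{\n;h,p,p}$ across the whole intersection $\sect(j)\cap\sect(j')$ requires the careful nonvanishing bookkeeping of the tilde-families rather than a direct invocation of injectivity. I would handle this by working with $\tilde\omega^{a,b}_\n$ defined via~\eqref{tuod}, whose components are controlled explicitly in Tables~\ref{tab-x-up} and~\ref{tab-x-dn}, and by comparing determining components through Lemma~\ref{lem-VWrels}, which supplies precisely the proportionalities needed at the overlapping $K$-types.
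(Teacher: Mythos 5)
There is a genuine gap in your part~iii), and it propagates into the logical ordering of the whole argument. You justify multiplicity one for $\Wfu^{\xi,\nu}_\n$ and $\Vfu^{\xi,\nu}_\n$ by treating each as a special module of type $\bigl[\ld_2(j_\xi,\nu);2j_\xi,0;\infty,\infty\bigr]$ and invoking the injectivity of the upward shift operators from Proposition~\ref{prop-kuso}. But the hypothesis of the lemma is $\ps\in\WOI$, and under integral parametrization Proposition~\ref{prop-kuso}~ii) explicitly allows $\sh{\pm3}1$ to have nontrivial kernel on $\Ffu^\ps_\n$ (Lemma~\ref{lem-usho-upsom}~ii) later pins down exactly where $\sh{\pm3}1$ vanishes on $\Wfu^{\xi,\nu}_\n$). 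This is the whole reason Definition~\ref{def-VWMa} sums over all $p$ instead of generating from the one-dimensional $K$-type: the modules are \emph{not} special modules generated by upward shifts from a minimal vector, so the argument you cite from the proof of Theorems~\ref{mnthm-ab-gp}--\ref{mnthm-nab-gp} does not apply. The correct route to multiplicity one is a dimension count that \emph{requires} part~v) first: each of $\Wfu^{\xi,\nu}_{\n;h,p,p}$ and $\Vfu^{\xi,\nu}_{\n;h,p,p}$ has dimension $\geq 1$ on $\sect(j)$ (via the nonvanishing families $\tilde x^{a,b}_\n$ of your part~iv)), they intersect trivially, and $\dim\Ffu^\ps_{\n;h,p,p}=2$ by Proposition~\ref{prop-dim2-nab}; hence each dimension is exactly~$1$. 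Since you place v) last, your iii) has no valid support as written.

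The same reordering collapses your laborious treatments of i) and ii). Once one knows $\dim\Wfu^{\xi,\nu}_{\n;h,p,p}=1$ and, by the same count, $\dim\Wfu^\ps_{\n;h,p,p}=1$ (because $\Vfu^{\xi,\nu}_{\n;h,p,p}$ meets the exponential-decay module trivially), the inclusion $\Wfu^{\xi,\nu}_{\n;h,p,p}\subseteq\Wfu^\ps_{\n;h,p,p}$ forces equality for every $(j,\nu)$ whose sector contains the $K$-type; parts~i) and~ii) follow immediately, with no descent to minimal vectors and no propagation of proportionalities through $\sect(j)\cap\sect(j')$. The paper establishes v) not by the Lemma~\ref{lem-WNdisj}-style descent you propose but directly from the asymptotics: the components of everything in $U(\glie)\,\Vfu^{\xi,\nu}_\n$ are combinations of $t^c\,V_{\k+k,\nu/2}(2\pi|\ell|t^2)$, and Lemma~\ref{lem-VI} shows such a nonzero combination can never be bounded, let alone exponentially decaying. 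Your descent argument for v) can be repaired (at a minimal vector one lands in $K_{\n;h,p}$ and compares $\kk^V_{\n;h,p}$ with $\kk^W_{\n;h,p}$ via Proposition~\ref{prop-ik} and the asymptotics \eqref{Wae}, \eqref{Vae}), but as the source of the disjointness it must come \emph{before} iii), not after. Your part~iv) is essentially the paper's argument and is fine.
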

\begin{proof}The kernel relations in Table~\ref{tab-krnab},
p~\pageref{tab-krnab}, and the contiguous relations in \eqref{crW1}
imply that the components of $\tilde \om^{p,0}_\n$ and $\tilde
\om^{0,p}_\n$ are linear combinations of functions $t^c
W_{\k+m,\nu/2}(2\pi|\ell|t^2)$ with $c,m\in \ZZ$. Application of the
upward and downward shift operators stay within the space of functions
in $\Ffu^\ps_\n$ with components of this form. The asymptotic behavior
\eqref{Wae} implies that these functions have exponential decay
at~$\infty$. Hence $ \Wfu_\n^{\xi,\nu}\subset \Wfu^\psi_\n$.

For the modules $\Vfu_\n^{\xi,\nu}$ we have similar descriptions, now
with $V$-Whittaker functions instead of $W$-Whittaker functions.
Lemma~\ref{lem-VI} implies that non-zero elements of $U(\glie)
\Vfu_\n^{j,\nu}$ cannot have exponential decay at~$\infty$. This
gives~v).

We get in particular $\dim \Wfu_{\n;h,p,p}^{\xi_j,\nu} \geq 1$ and $\dim
\Vfu_{\n;h,p,p}^{\xi_j,\nu} \geq 1$ for each $(h/3,p) \in \sect(j)$.
With Proposition~\ref{prop-dim2-nab} we conclude that these dimensions
are exactly equal to~$1$. This gives~iii). In this way we get also
$ \dim \Wfu^\ps_{\n;h,p,p}=1$ and $\dim \Vfu^\ps_{\n;h,p,p}=1$ for
$(h/3,p) \in \sect(j)$. Hence
$\Wfu^{\xi_j,\nu}_{\n;h,p,p}= \Wfu^\ps_{\n;h,p,p}$, and similarly for
$\Vfu$. This implies ii) and~i).

In Proposition~\ref{prop-extr.na} we constructed the non-zero families
$\tilde x^{a,0}_\n(j,\nu)$ and $\tilde x^{0,b}_\n(j,\nu)$ by dividing
out zeros of $x^{a,0}_\n(j,\nu)$ and $x^{0,b}_\n(j,\nu)$. The
injectivity of the shift operators $\sh{\mp 3}1$ for $\pm \ell>0$
(Proposition~\ref{prop-kuso}) leads to non-zero families $\tilde
x^{a,b}_\n(j,\nu)$, which span the corresponding highest weight spaces
by the multiplicity one of the $K$-types.
\end{proof}

This result gives immediately the following identifications, completing
ii)
in Proposition~\ref{prop-ik}.
\begin{prop}\label{prop-idkk}Let $\tau^h_p$, $K_{\n;h,p}$,
$\kk^V_{\n;h,p}$ and $\kk^W_{h,}$ be as in Proposition~\ref{prop-ik}.
\begin{enumerate}
\item [i)] $\kk^V_{\n;h,p}$ spans the space $\Vfu^\ps_{\n;h,p,p}\subset
K_{\n;h,p}$.
\item[ii)] If $r_0(h)>p$, then $\kk^W_{\n;h,p}$ spans the space
$\Wfu^\ps_{\n;h,p,p} \subset K_{\n;h,p}$.
\end{enumerate}
\end{prop}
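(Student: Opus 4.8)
The plan is to deduce both statements directly from the explicit identifications already obtained in Proposition~\ref{prop-ik} ii), combined with the multiplicity-one count of Lemma~\ref{lem-VW-nab}. The substantive work — that $\kk^V_{\n;h,p}$ and $\kk^W_{\n;h,p}$ lie in $K_{\n;h,p}$ and agree, up to a non-zero factor, with the generating families $\tilde\upsilon^{\cdot,\cdot}_\n$ and $\tilde\omega^{\cdot,\cdot}_\n$ — is done there; it remains only to locate these families inside the correct submodule and to invoke that the relevant highest weight spaces are one-dimensional. Throughout I write $\ddis$ for equality up to a non-zero scalar.

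For i) I would fix the $K$-type $\tau^h_p$, write $j_1=\frac12(h-3p)$, $j_2=\frac12(h+3p)$ and $\ps=[-h,p]\in\WOI$ as in Lemma~\ref{lem-kdso}, and take $\nu_1,\nu_2\in\ZZ_{\geq 0}$ with $(j_n,\nu_n)\in\wo(\ps)^+$. In each of the three cases a), b), c) of Proposition~\ref{prop-ik} ii), the vector $\kk^V_{\n;h,p}$ equals, up to a non-zero factor, $\tilde\upsilon^{p,0}_\n(j_1,\nu_1)$ or $\tilde\upsilon^{0,p}_\n(j_2,\nu_2)$; for whichever index $n$ occurs one has $m_0(j_n)\geq 0$, and $\nu_n\geq 0$ by Lemma~\ref{lem-kdso} iii), so $(j_n,\nu_n)\in\wo(\ps)^+_\n$. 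By Definition~\ref{def-VWMa} this family lies in $\Vfu^{\xi_{j_n},\nu_n}_\n\subset\Vfu^\ps_\n$, and it sits in $\Vfu^\ps_{\n;h,p,p}$ because both $\tilde\upsilon^{p,0}_\n(j_1,\nu_1)$ and $\tilde\upsilon^{0,p}_\n(j_2,\nu_2)$ have $K$-type $\tau^h_p$, as $2j_1+3p=h=2j_2-3p$. Lemma~\ref{lem-VW-nab} iii) gives $\dim\Vfu^\ps_{\n;h,p,p}=1$, so the non-zero vector $\kk^V_{\n;h,p}$ spans it; and since $\kk^V_{\n;h,p}\in K_{\n;h,p}$ by Proposition~\ref{prop-ik} i), the inclusion $\Vfu^\ps_{\n;h,p,p}\subset K_{\n;h,p}$ follows. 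The degenerate case $p=0$ is the one-dimensional $K$-type treated in \S\ref{sect-Nnab1d} and requires no shift operators.

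For ii) I would first note that the hypothesis $r_0(h)>p$ forces $\varepsilon=\sign(\ell)=-1$: for $\varepsilon=1$ the admissible range $\max(r_0(h),-p)\leq r\leq p$ would be empty and $\tau^h_p$ would not occur in $\Ffu_\n$. Using $r_0(h)=p+2m_0(j_1)=-p+2m_0(j_2)$ one reads off $m_0(j_1),m_0(j_2)\geq 0$, so Proposition~\ref{prop-ik} ii)a) applies and yields $\kk^W_{\n;h,p}\ddis\tilde\omega^{p,0}_\n(j_1,\nu_1)\ddis\tilde\omega^{0,p}_\n(j_2,\nu_2)$; moreover $r_0(h)>p=-\varepsilon p$, so the exceptional value flagged in the parenthetical of ii)a) is avoided and $\kk^W_{\n;h,p}$ is a genuine non-zero element of the two-dimensional space $K_{\n;h,p}$ (Proposition~\ref{prop-ik} i)a)). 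Exactly as in part i), $\tilde\omega^{p,0}_\n(j_1,\nu_1)$ belongs to $\Wfu^{\xi_{j_1},\nu_1}_\n\subset\Wfu^\ps_\n$ and lies in the one-dimensional space $\Wfu^\ps_{\n;h,p,p}$ (Lemma~\ref{lem-VW-nab} iii)); hence $\kk^W_{\n;h,p}$ spans $\Wfu^\ps_{\n;h,p,p}$, and $\Wfu^\ps_{\n;h,p,p}\subset K_{\n;h,p}$.

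The only real obstacle is the sign-and-range bookkeeping. I must confirm that the $(j_n,\nu_n)$ attached to the surviving family genuinely lies in $\wo(\ps)^+_\n$ — that is, both $m_0(j_n)\geq 0$ and the correct sign $\nu_n\geq 0$ hold, which is where Lemma~\ref{lem-kdso} iii) is essential — and, for ii), that the inequality $r_0(h)>p$ both pins us into case a) of Proposition~\ref{prop-ik} ii) and strictly avoids the boundary value $r_0(h)=-\varepsilon p$ at which $\kk^W_{\n;h,p}$ falls out of $K_{\n;h,p}$. Once these conditions are checked, the multiplicity-one statements of Proposition~\ref{prop-dim2-nab} and Lemma~\ref{lem-VW-nab} convert the non-vanishing of $\kk^V_{\n;h,p}$ and $\kk^W_{\n;h,p}$ into the spanning assertions with no further computation.
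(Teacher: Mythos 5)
Your argument is correct and follows essentially the same route as the paper: the paper derives Proposition~\ref{prop-idkk} as an immediate consequence of the multiplicity-one and spanning statements in Lemma~\ref{lem-VW-nab} combined with the identifications in Proposition~\ref{prop-ik}~ii), which is exactly what you do. The extra bookkeeping you supply — checking $(j_n,\nu_n)\in\wo(\ps)^+_\n$ via Lemma~\ref{lem-kdso}~iii), and that $r_0(h)>p$ forces $\e=-1$, lands you in case~a), and avoids the boundary value $r_0(h)=-\e p$ — is all accurate and merely makes explicit what the paper leaves implicit.
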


\rmrk{Remark}This result is analogous to Corollary~\ref{cor-jl+r-ab} in
the generic abelian case. Here we cannot formulate the result in terms
of the families $\ups^{a,b}_\n$ and $\om^{a,b}_\n$, since these may be
zero at the relevant parameter values.

\rmrk{Vanishing of shift operators}We turn to the determination of the
lines in the $(h/3,p)$-plane corresponding to $K$-types in
$\Wfu^{\xi,\nu}_\n$ and $\Vfu^{\xi,\nu}_\n$ on which one of the shift
operators vanish.

\begin{lem}\label{lem-usho-upsom}Let $\psi\in \WOI$, and let $(j,\nu)
\in \wo(\psi)_\n^+$.
\begin{enumerate}
\item[i)] The upward shift operators in $\Vfu^{\xi,\nu}_\n$ are
injective.
\item[ii)] Let $\bigl( \frac h3,p\bigr)\in \sect(j)$. The upward shift
operator
\[ \sh{\pm 3}1 : \Wfu_{\n;h,p,p}^{\xi,\nu} \rightarrow
\Wfu^{\xi,\nu}_{\n;h\pm 3,p+1,p+1}\]
is zero if and only if $\pm \ell>0$ and there is $j'\in \wo^1(\psi)
\setminus \wo^1(\psi)_\n$ such that $m_0(j')<0$, and $h-2j'\pm 3p \pm
6=0$.
\end{enumerate}
\end{lem}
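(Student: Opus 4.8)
The plan is to prove Lemma~\ref{lem-usho-upsom} by reducing everything to the explicit action of the upward shift operators on the families $\tilde x^{p,0}_\n$ and $\tilde x^{0,p}_\n$ recorded in Table~\ref{tab-x-up}, together with the identifications of $\Vfu^{\xi,\nu}_\n$ and $\Wfu^{\xi,\nu}_\n$ in iii)--iv) of Lemma~\ref{lem-VW-nab}. By iv) of that lemma each highest weight space $\Vfu^{\xi,\nu}_{\n;h,p,p}$ is spanned by a single element $\tilde\ups^{a,b}_\n(j_\xi,\nu)$ (with $h=2j+3(a-b)$, $p=a+b$), and similarly $\Wfu^{\xi,\nu}_{\n;h,p,p}$ is spanned by $\tilde\om^{a,b}_\n(j_\xi,\nu)$, where these are built from $\tilde\ups^{p,0}_\n$, $\tilde\ups^{0,p}_\n$ (resp.\ $\tilde\om$) via the injective operator $\sh{-3}1$ when $\ell>0$ and $\sh31$ when $\ell<0$, as in~\eqref{tuod}. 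Since the $K$-types have multiplicity one, the upward shift operator $\sh{\pm3}1$ vanishes on the highest weight space exactly when it sends the spanning vector to zero.

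For part i), I would argue that the $V$-Whittaker families $\tilde\ups^{p,0}_\n$ and $\tilde\ups^{0,p}_\n$ never produce a vanishing factor under $\sh31$, respectively $\sh{-3}1$. Inspecting the rows labelled $\tilde\upsilon^{p,0}_\n$ and $\tilde\upsilon^{0,p}_\n$ in Table~\ref{tab-x-up}, the relevant factor is identically $-1$ in the delicate range $p\geq m_0(j)$, and is a non-zero square-root factor $\pm i\sqrt{2\pi|\ell|}\sqrt{\,\cdot\,}$ in the range $p<m_0(j)$. Combined with the injectivity of the complementary operator ($\sh{-3}1$ for $\ell>0$, $\sh31$ for $\ell<0$) from ii) of Proposition~\ref{prop-kuso}, every composition $(\sh{-3}1)^b\sh31\tilde\ups^{a,0}_\n$ (or the mirror image) is non-zero. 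Hence no upward shift operator vanishes on $\Vfu^{\xi,\nu}_\n$, giving~i).

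For part ii), the $W$-Whittaker rows of Table~\ref{tab-x-up} are the source of possible vanishing. When $\pm\ell>0$ the spanning vector of $\Wfu^{\xi,\nu}_{\n;h,p,p}$ is (a shift of) $\tilde\om^{p,0}_\n$ with $\e=1$, resp.\ $\tilde\om^{0,p}_\n$ with $\e=-1$, and $\sh{\pm3}1$ acts on it by the factor $\bigl(p+1+\tfrac{j+\nu}2\bigr)\bigl(p+1+\tfrac{j-\nu}2\bigr)$, resp.\ $\bigl(p+1-\tfrac{j+\nu}2\bigr)\bigl(p+1+\tfrac{\nu-j}2\bigr)$, valid precisely for $p\geq m_0(j)$. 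Setting such a factor to zero gives $2j'=h\pm3p\pm6$ for $j'=\tfrac{3\z\nu+j}2$, $\z\in\{1,-1\}$, which is the Weyl-orbit relation~\eqref{relpm6} analysed in the proof of Proposition~\ref{prop-kuso}; that analysis forces $j'\in\wo^1(\psi)$, $j'\neq j$, with $m_0(j')<0$, i.e.\ $j'\notin\wo^1(\psi)_\n$. Conversely, under these conditions the factor does vanish. I would then note that for $p<m_0(j)$ the factor is the non-vanishing $\pm i\sqrt{2\pi|\ell|}\sqrt{m_0(j)-p}$, so no vanishing occurs there, and that when $\mp\ell>0$ the spanning vector is built using the injective operator, so again $\sh{\pm3}1$ cannot vanish. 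This matches exactly the stated criterion.

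The main obstacle I anticipate is bookkeeping the boundary case $p=m_0(j)$ and the transition between the two ranges in Table~\ref{tab-x-up}, together with verifying that the condition $h-2j'\pm3p\pm6=0$ derived from the quadratic factor is genuinely equivalent to $j'\in\wo^1(\psi)\setminus\wo^1(\psi)_\n$ rather than merely implied by it; this requires invoking the full discussion following~\eqref{relpm6} (the congruence $j\equiv j'\bmod2$ ruling out generic parametrization, and the inequality characterising $\wo^1(\psi)_\n$ via~\eqref{mucond}) to pin down that $j'$ lies in the forbidden region of Figure~\ref{fig-mucond}. The remaining steps are direct table inspection.
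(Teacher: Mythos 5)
Your proposal is correct and follows essentially the same route as the paper: both arguments reduce the question to the boundary families via the commutativity of the upward shift operators and the injectivity of the complementary one (Proposition~\ref{prop-kuso}~ii)), read off the vanishing factors from Table~\ref{tab-x-up}, and identify the roots of the quadratic factor with the Weyl-orbit element $j'$ satisfying $h-2j'\pm 3p\pm 6=0$ and $m_0(j')<0$. The paper phrases the reduction as propagating kernels along the line $h=2j'\mp 6\mp 3p$ and pins down $\nu$ at the intersection with the sector boundary via Lemma~\ref{lem-kdso}, whereas you act directly on the spanning vectors $\tilde x^{a,b}_\n$ of \eqref{tuod}; these are the same computation.
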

\rmrk{Remark} The number of such elements $j'$ is at most equal to~$2$.
We have in the notations of~\eqref{jnurels}:
\be \begin{array}{cc|c}
\e & j & \text{set of }j'\\ \hline
1 & j= l_l < j_+ & \{ j_+,j_r\}\\
& j=j_+ \neq j_r & \{j_r\}\\ \hline
-1 & j=j_r>j_+ & \{ j_l,j_+\}\\
& j_+ \neq j_l & \{j_l\}
\end{array}
\ee

\begin{proof}
Proposition~\ref{prop-kuso} shows that a non-zero kernel of
$\sh{\pm3}{-1}$ can occur if $\varepsilon=\sign(\ell)=\pm 1$ for
$K$-types on a line $h-2j'\pm 3p\pm 6=0$ with $j'\in \wo^1(\psi)$ such
that $m_0(j') <0$, which means $j'\in \wo^1(\psi)\setminus
\wo^1(\psi)_\n$. In the pictures these points are on the dashed lines
inside the sector.
\begin{center}
\grf{5}{use1}\qquad \grf{5}{usem1}
\end{center}

Let $\varepsilon=1$, hence $\sh{-3}1$ is injective. Consider a point
$(h/3,p)$ on the line $h=2j'-6-3p$, and functions $y_j \in
\Ffu^\psi_{\n;h-3j,p+j,p+j}$ related by $\sh {-3}1 y_j = y_{j+1}$.
Suppose that $\sh31 y_0 = 0$.
\begin{center}\grf{5}{ysh}\end{center}

The upward shift operators commute
(Proposition~\ref{prop-shdef}). So we have $\sh31 y_1=0$, and also
$\sh31 y_{-1}=0$ if $(h/3,p)$ is in the sector $\sect(j)$, and $y_{-1}$
corresponds to a point on the dashed line. This shows that the kernels
of $\sh 3 1$ on $K$-types corresponding to points on the dotted line
are related by the injective map $\sh {-3}1$.

This brings us to the intersection point $\bigl( \frac {h_0}3,p_0
\bigr)$ of the line $h=2j'-6-3p$ with the right boundary line $h=2j+3p$
of the sector $\sect(j)$. Table~\ref{tab-x-up}, p~\pageref{tab-x-up},
shows that $\sh3{-1} \tilde\upsilon_\n^{p_0,0}(j,\nu)$ is non-zero for
all $\nu$, and that $\sh3{-1} \tilde\omega_\n^{p_0,0}(j,\nu)=0$ for
$\nu = \pm \bigl(2p_0+2+j\bigr)$. So we have i) in the case
$\varepsilon=1$.

For ii) we consider the following picture:
\begin{center}\grf{6}{hp01}
\end{center}
One step up from $\bigl( \frac{h_0}3,p_0\bigr)$ is the point $\bigl(
\frac{h_1}3,p_1\bigr)$, which is the lowest point in $\sect(j) \cap
\sect(j')$. We can apply Lemma~\ref{lem-kdso} to this situation, with
$j$ in the role of $j_1$ and $j'$ in the role of~$j_2$. This gives
$\nu = \frac{|h_1+p_1|}2$.
\[ \frac{h_1+p_1}2 = \frac12\bigl( 2j + 4 p_1) = j + 2p_0+2\,.\]
So indeed $\sh31$ vanishes on $\Wfu_{\n;h_0,p_0,p_0}^{\xi,\nu}$, and
hence on all $K$-types corresponding to points in $\sect(j)$ on the
line $h=2j'-6-3p$. This gives ii) in the case $\varepsilon=1$.

The case of $\sh{-3}1$ goes analogously, now using $\e=-1$.
\end{proof}

\begin{lem}\label{lem-kdso-na}Let $\psi \in \WOI$, and let $\tau^h_p$ be
a $K$-type occurring in $\Ffu^\psi_\n$.
\begin{enumerate}
\item[i)] The downward shift operator $\sh{\pm 3}{-1}$ is zero on
$\Vfu_{\n;h,p,p}^\psi$ if $h=2j\pm 3p$ for some $j\in \wo^1(\psi)$, and
injective otherwise.
\item[ii)] The downward shift operator $\sh{\pm 3}{-1}$ is zero on
$\Wfu^\psi_{\n;h,p,p}$ if $\pm \ell<0$ and $h=2j'\pm 3p$ for some
$j'\in \wo^1(\psi)_\n$, and injective otherwise.
\end{enumerate}
\end{lem}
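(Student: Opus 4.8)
The plan is to reduce everything to the determining components of the families $\tilde x^{p,0}_\n(j,\nu)$ and $\tilde x^{0,p}_\n(j,\nu)$, exactly as was done for the upward shift operators in Lemma~\ref{lem-usho-upsom}. By Lemma~\ref{lem-VW-nab} iv), a highest weight vector in $\Vfu^{\xi,\nu}_{\n;h,p,p}$ is spanned by $\tilde\ups^{a,b}_\n(j_\xi,\nu)$ and one in $\Wfu^{\xi,\nu}_{\n;h,p,p}$ by $\tilde\om^{a,b}_\n(j_\xi,\nu)$, where $h=2j+3(a-b)$, $p=a+b$. So computing the downward shift operators on these families is the whole task. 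The starting data are in Table~\ref{tab-x-dn}, which gives $\sh{-3}{-1}$ on $\tilde x^{p,0}_\n$ and $\sh 3{-1}$ on $\tilde x^{0,p}_\n$, and the identically-zero relations $\sh3{-1}\tilde x^{p,0}_\n=0$, $\sh{-3}{-1}\tilde x^{0,p}_\n=0$ from Proposition~\ref{prop-extr.na} ii)a).

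For part i), concerning $\Vfu^{\xi,\nu}_\n$, I would argue as follows. First I handle the two boundary lines of $\sect(j)$: on the right boundary $h=2j+3p$ the vector is (up to factor) $\tilde\ups^{p,0}_\n(j,\nu)$, on which $\sh3{-1}$ vanishes identically; on the left boundary $h=2j-3p$ it is $\tilde\ups^{0,p}_\n(j,\nu)$, on which $\sh{-3}{-1}$ vanishes. This accounts for the ``$h=2j\pm3p$'' cases with $j'=j$. For the interior of the sector, where $\tilde\ups^{a,b}_\n$ with $a,b\geq 1$ lives, I must show both downward operators are injective unless the $K$-type sits on a boundary line of some \emph{other} sector $\sect(j')$ in the orbit. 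Here the key is that the potentially-vanishing factors in Table~\ref{tab-x-dn} for $\tilde\upsilon^{p,0}_\n$ and $\tilde\upsilon^{0,p}_\n$ are rational in $\nu$ with factors $\bigl(p+\tfrac{j\pm\nu}2\bigr)$; using Proposition~\ref{prop-kuso} and Lemma~\ref{lem-kdso}, vanishing of a downward operator forces $h\mp3p=2j''$ for some $j''\in\wo^1(\psi)$, i.e.\ a boundary of another sector. But the crucial point distinguishing $\Vfu$ from $\Wfu$ is that the $V$-Whittaker function is \emph{never} annihilated by the specialization that kills $W$ (cf.\ Lemma~\ref{lem-VWrels} and relation \eqref{MWV}), so the relevant factor of the downward-shift coefficient is genuinely nonzero at integral parameters; this is what gives injectivity ``otherwise'' in part i) with no $\pm\ell$ condition.

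For part ii), concerning $\Wfu^{\xi,\nu}_\n$, the computation is the same but now the $W$-Whittaker function \emph{can} be killed. From Table~\ref{tab-x-dn}, $\sh{-3}{-1}\tilde\omega^{p,0}_\n=-\tfrac{p}{p+1}\tilde\omega^{p-1,0}_\n$ (for $\varepsilon=1$, $p>m_0(j)$) and $\sh3{-1}\tilde\omega^{0,p}_\n=-\tfrac{p}{p+1}\tilde\omega^{0,p-1}_\n$ (for $\varepsilon=-1$, $p>m_0(j)$): these are nonzero, so along the $\varepsilon$-favorable direction the $W$-families behave like the $V$-families. The asymmetry comes from the opposite direction: I expect $\sh{-3}{-1}$ on $\tilde\omega^{0,p}_\n$ (for $\varepsilon=-1$), and symmetrically $\sh3{-1}$ on $\tilde\omega^{p,0}_\n$ (for $\varepsilon=1$), to vanish exactly on a boundary line $h=2j'\pm3p$ with $j'\in\wo^1(\psi)_\n$, because here the $W$-function admits the specialization \eqref{specWV}/\eqref{MWV} to a pure power of $t$ and the coefficient carries the vanishing factor. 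The sign condition $\pm\ell<0$ in the statement is precisely the pairing of the operator $\sh{\pm3}{-1}$ with the $\varepsilon=\mp1$ direction in which the $W$-family is ``maximal''; I would verify the exact line by invoking Lemma~\ref{lem-kdso} at the lowest point of $\sect(j)\cap\sect(j')$, as in the proof of Lemma~\ref{lem-usho-upsom}.

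The main obstacle will be bookkeeping the case distinctions $\varepsilon=\pm1$ against $p\lessgtr m_0(j)$ together with the requirement $j'\in\wo^1(\psi)_\n$ versus $j'\in\wo^1(\psi)\setminus\wo^1(\psi)_\n$: part ii) of Lemma~\ref{lem-usho-upsom} (upward operators) uses the \emph{non-arithmetic} $j'$, whereas the present part ii) uses the \emph{arithmetic} $j'$, and keeping straight why the downward direction flips this membership condition is the delicate point. The cleanest route is to note that a downward shift operator on $\Wfu^\psi_\n$ can vanish only at a $K$-type where $K_{\n;h,p}$ meets $\Wfu^\psi_{\n;h,p,p}$ nontrivially, and by Proposition~\ref{prop-idkk} ii) the $W$-spanned part of $K_{\n;h,p}$ is present precisely when $r_0(h)>p$, i.e.\ when $m_0$ of the relevant boundary index is $<0$ on one side and the opposite index lies in $\wo^1(\psi)_\n$; translating $r_0(h)>p$ through Table~\ref{tab-r0m0} yields exactly the stated $\pm\ell<0$ and $j'\in\wo^1(\psi)_\n$ conditions. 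I would therefore organize the proof of ii) around Proposition~\ref{prop-idkk} rather than recomputing coefficients, leaving only the identically-zero boundary cases and the injectivity elsewhere to the tables.
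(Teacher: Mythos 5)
Your skeleton is essentially the paper's: reduce to the one-dimensional highest weight spaces spanned by $\tilde\upsilon^{a,b}_\n(j,\nu)$ and $\tilde\omega^{a,b}_\n(j,\nu)$, obtain the vanishing of $\sh{\pm3}{-1}$ on the boundary lines $h=2j\pm 3p$ of the sectors with $m_0(j)\geq 0$ from Proposition~\ref{prop-extr.na}~ii)a), confine any further vanishing to lines $h\mp 3p=2j'$ via Proposition~\ref{prop-kdso} (you cite Proposition~\ref{prop-kuso}, which concerns the upward operators), and decide what happens on those lines from Table~\ref{tab-x-dn} together with Lemma~\ref{lem-kdso}. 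However, you have the $V$/$W$ dichotomy exactly backwards, and this reverses both conclusions. In the only regime where the three families behave differently ($\e=1$, $p>m_0(j)$ for $\sh{-3}{-1}\tilde x^{p,0}_\n$, and symmetrically $\e=-1$, $p>m_0(j)$ for $\sh{3}{-1}\tilde x^{0,p}_\n$), Table~\ref{tab-x-dn} gives the constant ratio $-p/(p+1)$ for $\tilde\omega$ --- never zero --- while the ratio for $\tilde\upsilon$ carries the factors $\bigl(p+\tfrac{j+\nu}2\bigr)\bigl(p+\tfrac{j-\nu}2\bigr)$, which vanish at $\nu=\pm(j+2p)$, exactly the value forced by Lemma~\ref{lem-kdso} when the $K$-type lies on the boundary line of a sector $\sect(j')$ with $m_0(j')=m_0(j)-p<0$. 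So on the boundary lines of the ``bad'' sectors it is $\Vfu^\ps_\n$ that loses injectivity and $\Wfu^\ps_\n$ that remains injective; that is precisely why part~i) ranges over all $j\in\wo^1(\psi)$ while part~ii) is restricted to $\wo^1(\psi)_\n$. Your argument (``the $V$-Whittaker function is never annihilated \dots\ this is what gives injectivity otherwise in part i)'') would interchange the two characterizations and prove the wrong statement in both parts. Relatedly, you misquote Proposition~\ref{prop-extr.na}: $\sh{-3}{-1}\tilde\omega^{0,p}_\n$ and $\sh{3}{-1}\tilde\omega^{p,0}_\n$ are identically zero for every $\nu$; the nontrivial computation concerns the transverse operators $\sh{3}{-1}\tilde x^{0,p}_\n$ and $\sh{-3}{-1}\tilde x^{p,0}_\n$.

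Your proposed ``cleanest route'' for part~ii) also rests on a false premise. The space $K_{\n;h,p}$ is by definition (Proposition~\ref{prop-ik}) the intersection of the kernels of \emph{both} downward shift operators, so the vanishing of a single $\sh{\pm3}{-1}$ on $\Wfu^\psi_{\n;h,p,p}$ does not force that space into $K_{\n;h,p}$: along the entire right boundary of a sector $\sh{3}{-1}$ vanishes while $\sh{-3}{-1}$ is generically injective there. Proposition~\ref{prop-idkk} therefore only sees the isolated $K$-types where two sector boundaries cross and cannot characterize the vanishing locus of one downward operator. Finally, even with the $V$/$W$ roles corrected, the anchor-point computation treats only the lowest point of the line in question; the paper completes the argument by propagating along the line $h\mp 3p=2j'$ using the injectivity of the other downward shift operator, restarting at each crossing with a further sector boundary (in analogy with the proof of Lemma~\ref{lem-usho-upsom}) --- a step your sketch omits.
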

\begin{proof}The spaces $\Wfu^\psi_{\n;h,p,p}$ and
$\Vfu^\psi_{\n;h,p,p}$ have dimension at most $1$, so injectivity and
vanishing are the only possibilities. By ii)a) in
Proposition~\ref{prop-extr.na} the operator $\sh{\pm 3}{-1}$ vanishes
on $K$-types corresponding to points on the line $h=2j\pm 3p$ if
$(j,\nu)\in \wo(\psi)^+_\n$.

Proposition~\ref{prop-kdso} shows that we also have to consider points
in $\sect(j)$ on lines $h\mp 3p = 2j'$ for $j' \in \wo^2(\psi)\setminus
\wo^1(\psi)_\n$. Let $\bigl( \frac h3,p\bigr)$ be the intersection
point of the lines $h\pm 3p = 2j$ and $h\mp 3p = 2 j'$.
\begin{center}
\grf{5}{dshe2}\quad\grf{5}{dshe1}
\end{center}

Table~\ref{tab-parms-na}, p~\pageref{tab-parms-na}, shows that $m_0(j) -
m_0(j') = - \frac13(j-j') \sign(\ell)$. Since $m_0(j')<0\leq m_0(j)$ we
need $\mp \ell>0$. Furthermore $\pm(j-j') = 3p$, and $m_0(j)
= m_0(j') +p < p$. We use this in the application of
Table~\ref{tab-x-dn}, p~\pageref{tab-x-dn}, with
$(j,\nu) \in \wo(\psi)_\n^+$. Lemma~\ref{lem-kdso} shows that $\nu =
\frac12 \bigl|h\mp p \bigr|= \bigl| j \mp 2p\bigr|$. We obtain
\bad
&\text{case +}&\sh3 {-1} \tilde \omega_\n^{0,p}(j,\nu) &\;\neq \; 0\,,&
\quad \sh 3 {-1}\tilde \upsilon_\n^{p,0}(j,\nu)&=0\,,\\
&\text{case --}& \sh{-3} {-1} \tilde\omega_\n^{0,p}(j,\nu) &\;\neq\;
0\,,& \sh{-3}{-1} \tilde\upsilon_n^{0,p}(j,\nu)&=0\,.
\ead

We can extend these properties of points upward along the line $h\mp
3p=2j'$ as long as the shift operator $\sh{\mp 3}{-1}$ is injective on
the corresponding $K$-types in $\Ffu^\psi_\n$, in analogy with the
approach in the proof of Lemma~\ref{lem-usho-upsom}. This injectivity
does not hold if we meet a line $h=2j''\mp 3p$ with another
$j''\in \wo^2(\psi)$.
\begin{center}\grf5{dshe2a}\end{center}
Since $j''$ and $j'$ are on different sides of $j$ we have $m_0(j'')\geq
m_0(j) \geq 0$, and $j''\in \wo^1(\psi)_\n$. There we can start the
reasoning again. This gives i) and~ii).
\end{proof}

\subsection{Modules with regular behavior at 0}\label{sect-Mfu}
The discussion in the previous subsections looks rather satisfactory,
except for the fact that $\Vfu_\n^\ps$ is not naturally defined. It
depends on the choice of the unusual $V$-Whittaker function $V_{\k,s}$
in \eqref{Vkps}, depending on a choice of a branch of the continuation
of $W_{\k,s}$. The function $M_{\k,s}$ is much more natural, it leads
to functions with $\nu$-regular behavior at~$0$; see
Definition~\ref{def-grbb}. It has the disadvantage of being
proportional to $W_{\k,s}$ or to $V_{\k,s}$ for some combinations of
the parameter values that are relevant in the non-abelian case.

In this subsection we establish results for $\Mfu^{\ps[j,\nu]}_\n$ under
integral parametrization. We restrict ourselves to values $\nu \in
\ZZ_{\geq 0}$. Propositions \ref{prop-kkM} and~\ref{prop-spk} consider
for $\Mfu^\ps_\n$ questions studied in Proposition~\ref{prop-ik},
concerning subspaces in $\Mfu^\ps_{\n;h,p,p}$ with $p\geq 1$ on which
both downward shift operators vanish. We extend in
Lemma~\ref{lem-Mfud1} the multiplicity one result in iii) in
Lemma~\ref{lem-VW-nab}. Proposition~\ref{prop-M-VW} determines the
$K$-types for which $\Mfu^\ps_{\n;h,p}$ coincides with
$\Vfu^\ps_{\n;h,p,p} $ or with $\Mfu^\ps_{\n;h,p,p}$.

\rmrk{Intersection of kernels of downward shift operators}
Proposition~\ref{prop-ik} describes the intersection of the kernel
$K_{\n;h,p}$ of both downward shift operators on a more-dimensional
$K$-type $ \tau^h_p$ in $\Ffu_\n^\ps$. It gives two types of
information: It tells how kernel elements arise as values of families
$x^{p,0}_\n$ and $x^{0,p}$ for $x=\tilde\ups $ or $\tilde \om$, and
moreover it gives an explicit expression of basis elements of
$K_{\n;h,p}$.

Here we consider the intersection of $K_{\n;h,p}$ with $\Mfu^\ps_\n$,
and determine which of the families $\tilde\mu^{p,0}_\n$ and
$\tilde\mu^{0,p}_\n$ have values in this intersection. In some cases an
explicit description of a basis of $K_{\n;h,p}\cap \Mfu_\n^\ps$ is
possible. We recall that $\Mfu^\ps_\n$ is the submodule of
$\Ffu^\ps_\n$ generated by $\nu$-regular behavior at~0 for some $\nu$
with $\re\nu\geq 0$. See Definitions \ref{def-grbb} and~\ref{defM[]}.

The point $(h/3,p)$ corresponds to the $K$-type~$\tau^h_p$ in the
intersection of the boundaries of the sectors $\sect(j_1)$ and
$\sect(j_2)$ for element $j_1,j_2 \in \wo(\ps)^+$, determined by
$h=2j_1+3p=2j_2-3p$ with $p\geq 1$. The description of $K_{\n;h,p} \cap
\Mfu^\ps_\n$ depends on the choices of the combination $(j_1,j_2)$. In
general there are three choices, depicted in Table~\ref{fig-comb},
corresponding to points $(j_1,\nu_1)$ and $(j_2,\nu_2)$ in the interior
of Weyl chambers in~Figure~\ref{fig-ip}, p~\pageref{fig-ip}. If
$(j_1,\nu_1)$ and $(j_2,\nu_2)$ are on walls of Weyl chambers, the
three combinations reduce to one, which we arbitrarily take under
combination~2.\il{comb}{combinations 1, 2, 3}
\begin{figure}[ht]
\begin{center}\grf{5}{ints123}\end{center}
\caption{Position of $(h/3,p)$ in the three combinations of $(j_1,j_2)$
in Proposition~\ref{prop-kkM}:\\
Combination 1: $j_1=j_l<j_2=j_+<j_r$, combination 2: $j_1 =j_l<
j_2=j_r$, combination 3: $j_l<j_1=j_+<j_2=j_r$. We use the conventions
in~\eqref{jnurels}. }\label{fig-comb}
\end{figure}

The values of $h=j_1+j_2$ and $p=\frac13(j_2-j_1)$ depend on the
combination. We use a subscript 1, 2, or 3 when needed. With
Lemma~\ref{lem-kdso} we can check that $h<-p$ for combination~1,
$|h|\leq p$ for combination~2, and $h>p$ for combination~3.

\begin{prop}\label{prop-kkM}We use the notations of
Proposition~\ref{prop-ik}. We
put\ir{kkMdef}{\kk^M_{\n;h,p}}\il{coM}{$c^M(r)$}
\badl{kkMdef} \kk^M_{\n;h,p} &\= \sum_r \th_{m(h,r)} c^M(r)\, t^{p+1} \,
M_{\k(r),|s(r)|}(2\pi |\ell|t^2) \, \Kph hprp\,,\\
\c^M(r) &\= - e^{\pi i(m(h,r)-\k(r))}\,\frac{\Gf \bigl(
\frac12+|s(r)|-\k(r)\bigr)}{\sqrt{m(h,r)!}\, \bigl( 2 |s(r)|
\bigr)!}\,,
\eadl
where the sum runs over $r\equiv p \bmod 2$, $|r|\leq p$, $m(h,r)\geq
0$.

The function $\kk^M_{\n;h,p}$ is an element of $K_{\n;h,p}$ that spans
the intersection $K_{\n;h,p} \cap \Mfu^\ps_\n$ if one of the following
conditions is satisfied:
\begin{enumerate}
\item[a)] $m_0(j_1)\geq 0$ and $m_0(j_2)\geq 0$, and one of both is
larger than~$p$.

Under these conditions,
$\kk^M_{\n;h,p}\dis \tilde\mu^{p,0}_\n(j_1,\nu_2)
\dis \tilde \mu^{0,p}_\n(j_2,\nu_2)$. If, in addition, $\e \bigl(
h-r_0(h) \bigr) \geq 0$, then $\kk^M_{\n;h,p} = \kk^V_{\n;h,p}$.
\item[b)] $\e=1$, $0\leq m_0(j_1) \leq p$, and $r_0(h) \leq h$.

Under these conditions, $\kk^M_{\n;h,p} = \kk^V_{\n;h,p}$, and
$\kk^M_{\n;h,p}\dis \tilde \mu^{p,0}_\n(j_1,\nu_1)$.
\item[c)] $\e=-1$, $0\leq m_0(j_2) \leq p$, and $r_0(h) \geq h$.

Under these conditions, $\kk^M_{\n;h,p}= \kk^V_{\n;h,p}$, and
$\kk^M_{\n;h,p} \dis\tilde\mu^{0,p}_\n(j_2,\nu_2)$.
\end{enumerate}
\end{prop}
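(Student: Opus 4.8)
The plan is to follow the template of the proof of Proposition~\ref{prop-ik}, replacing the $W$- and $V$-Whittaker functions throughout by the $M$-Whittaker function, and then to read off the identifications from the determining components. First I would check that $\kk^M_{\n;h,p}$ lies in $K_{\n;h,p}$. As in Proposition~\ref{prop-ik}, combining the two kernel relations in Table~\ref{tab-krnab} for a pair of neighboring components produces, for each $r$, the Whittaker differential equation \eqref{Wh-deq} with parameters $\k(r)$ and $s(r)$ as in \eqref{kapr}; since $M_{\k(r),|s(r)|}$ solves this equation, each summand in \eqref{kkMdef} has the correct radial type. The contiguous relations \eqref{crWd} and \eqref{crW1}, shifting parameters by integers and by $\tfrac12$, then translate the kernel relations into a recursion for the coefficients, solved by the explicit value $c^M(r)$ in \eqref{kkMdef}; this is a computation of the same shape as in \cite[\S21b]{Math} and I would carry it out with Mathematica. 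In the cases $|r_0(h)|\leq p$ (cases b and c) there is an extra first-order kernel relation for the component $f_{r_0(h)}$; there I would instead invoke \eqref{MWV}, which gives $M_{\k,s}\dis V_{\k,s}$ exactly when $\tfrac12+\k+s\in\ZZ_{\leq 0}$, to show that at the relevant parameter values $\kk^M_{\n;h,p}$ coincides with $\kk^V_{\n;h,p}$ and hence automatically satisfies every kernel relation by Proposition~\ref{prop-ik}.

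Second, to see $\kk^M_{\n;h,p}\in\Mfu^\ps_\n$, I would use the expansion \eqref{Mkps}: the function $t\mapsto t^{p+1}M_{\k(r),|s(r)|}(2\pi|\ell|t^2)$ equals $t^{p+1}(2\pi|\ell|t^2)^{1/2+|s(r)|}$ times an entire function of $t^2$, so every component of $\kk^M_{\n;h,p}$ has $\nu$-regular behavior at $0$ in the sense of Definition~\ref{def-grbb} for the value of $\nu$ pinned down by $s(r)$. With the relations of Lemma~\ref{lem-kdso} this value is $\nu_1$ or $\nu_2$, both in $\wo(\ps)^+$, so $\kk^M_{\n;h,p}\in\Mfu^\ps_\n$ by Definition~\ref{defM[]}.

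Third comes the spanning and proportionality. By Proposition~\ref{prop-ik}, $K_{\n;h,p}$ has dimension $1$ (spanned by $\kk^V_{\n;h,p}$) when $|r_0(h)|\leq p$ and dimension $2$ (spanned by $\kk^W_{\n;h,p},\kk^V_{\n;h,p}$) when $|r_0(h)|>p$. Since the $M$- and $W$-Whittaker functions are linearly independent for all parameters while only the $W$-type has exponential decay at $\infty$ by \eqref{Wae} (as used in Lemma~\ref{lem-VW-nab}), no nonzero element of $\Mfu^\ps_\n$ can carry a $\kk^W$-component; hence $K_{\n;h,p}\cap\Mfu^\ps_\n$ is one-dimensional and is spanned by $\kk^M_{\n;h,p}$ once the latter is seen to be nonzero (immediate from the coefficient formula in \eqref{kkMdef}). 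The identifications $\kk^M_{\n;h,p}\ddis\tilde\mu^{p,0}_\n(j_1,\nu_1)\ddis\tilde\mu^{0,p}_\n(j_2,\nu_2)$ then follow by comparing determining components: by Proposition~\ref{prop-extr.na} and Table~\ref{tab-mmc} the minimal component of $\tilde\mu^{p,0}_\n(j_1,\nu_1)$ and the maximal component of $\tilde\mu^{0,p}_\n(j_2,\nu_2)$ are $M$-Whittaker functions matching the corresponding extreme summand of \eqref{kkMdef} up to a scalar, using Lemma~\ref{lem-VWrels} and the parameter relations of Lemma~\ref{lem-kdso}. Finally, the identities $\kk^M_{\n;h,p}=\kk^V_{\n;h,p}$ under the sign conditions $\e(h-r_0(h))\geq 0$, $r_0(h)\leq h$, $r_0(h)\geq h$ are precisely the cases where \eqref{MWV} forces $M_{\k,s}\dis V_{\k,s}$ simultaneously in every summand, which I would confirm from \eqref{kapr} and the bookkeeping of Table~\ref{tab-parms-na}.

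The main obstacle will be the first step together with the case distinctions of the third: establishing that the single coefficient formula $c^M(r)$ in \eqref{kkMdef} satisfies the half-integer-shift recursion in all three combinations of $(j_1,j_2)$, and keeping straight, for each sign of $\e$ and each position of $r_0(h)$ relative to $\pm p$ and to $h$, whether the extra kernel relation is present and whether \eqref{MWV} applies. The underlying contiguous-relation identities are routine but numerous, so this is where computer verification is indispensable, exactly as in the proof of Proposition~\ref{prop-ik}.
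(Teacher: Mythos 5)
There is a genuine gap in your third step. You deduce that $K_{\n;h,p}\cap\Mfu^\ps_\n$ is one-dimensional from the claim that ``no nonzero element of $\Mfu^\ps_\n$ can carry a $\kk^W$-component''. That claim is false, and it contradicts the very proposition you are proving: by \eqref{MinWV} each component $M_{\k(r),|s(r)|}$ is a combination $A(r)V_{\k(r),|s(r)|}+B(r)W_{\k(r),|s(r)|}$ with $B(r)$ generically nonzero, so in case a) with $|r_0(h)|>p$ one gets $\kk^M_{\n;h,p}=\kk^V_{\n;h,p}+\bt\,\kk^W_{\n;h,p}$ with $\bt\neq 0$ unless the extra hypothesis $\e\bigl(h-r_0(h)\bigr)\geq 0$ holds --- which is precisely why that extra hypothesis is needed before concluding $\kk^M_{\n;h,p}=\kk^V_{\n;h,p}$. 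Membership in $\Mfu^\ps_\n$ is a condition at $t=0$, not the negation of exponential decay at $\infty$; Proposition~\ref{prop-M-VW} even exhibits $K$-types where $\Mfu^\ps_{\n;h,p}=\Wfu^\ps_{\n;h,p}$. The correct one-dimensionality argument, and the one the paper uses, is that a determining component of any element of $K_{\n;h,p}\cap\Mfu^\ps_\n$ solves the Whittaker equation with $\nu$-regular behavior at $0$, and that solution space is spanned by $M_{\k,|s|}$ alone; writing the element as $\al\,\kk^V_{\n;h,p}+\bt\,\kk^W_{\n;h,p}$ and applying \eqref{MinWV} componentwise then forces $c^M(r)=c^V(r)/A(r)$ and $\bt=c^V(r)B(r)/\bigl(c^W(r)A(r)\bigr)$, the whole content of the argument being the check that this ratio is independent of $r$ (and vanishes exactly under the sign conditions in a)--c)).

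Relatedly, your first step underestimates its own difficulty. Re-running the contiguous-relation recursion of Proposition~\ref{prop-ik} with $M$ in place of $W$ and $V$ is not ``a computation of the same shape'': $M_{\k,s}$ is not even in $s$, it can degenerate or be singular for $s\in\frac12\ZZ_{\leq -1}$, and $s(r)=(h-r)/4$ changes sign as $r$ crosses $h$, so the kernel relations propagate $M_{\k(r),s(r)}$ rather than $M_{\k(r),|s(r)|}$ and the recursion breaks at the sign change. The paper sidesteps this entirely: since $\kk^V_{\n;h,p}$ and $\kk^W_{\n;h,p}$ are already known to span $K_{\n;h,p}$, the element $\kk^M_{\n;h,p}$ is produced as a combination of them and its membership in $K_{\n;h,p}$ costs nothing. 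Your remaining steps (regularity at $0$ via \eqref{Mkps}, identification with $\tilde\mu^{p,0}_\n(j_1,\nu_1)$ and $\tilde\mu^{0,p}_\n(j_2,\nu_2)$ by comparison of determining components through Lemma~\ref{lem-VWrels}, and the use of \eqref{MWV} for $\kk^M_{\n;h,p}=\kk^V_{\n;h,p}$) do match the paper, but they all sit downstream of the two points above.
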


We prepare the proof by two lemmas. The first lemma gives elements that
we know to be in $K_{\n;h,p}\cap \Mfu^\ps_\n$ under some conditions.
These conditions can be formulated in terms of $m_0(j_1)$ and
$m_0(j_2)$. In particular, $m_0(j_1)\geq 0$ if and only if
$\Mfu^{\xi_1,\nu_1}_\n$ is non-zero, and similarly for $m_0(j_2)$. It
can also be formulated in terms of $r_0=r_0(h) = p-2\e m_0(j_1)=-p-2\e
m_0(j_2)$.

\begin{lem}\label{elm-kkM-mu}We use the notations indicated above.
\begin{enumerate}
\item[i)] The element $\tilde\mu^{p,0}_\n(j_1,\nu_1) $ is in $
K_{\n;h,p} \cap \Mfu^{\xi_1,\nu_1}_\n$ under the following conditions.
\[
\begin{array}{|c|ccc|}\hline
\text{comb.}& \multicolumn{3}{|c|}{\text{conditions}}
\\\hline
1&\e=1& m_0(j_1)\geq p& r_0\leq -p \\
& \e=-1& m_0(j_1)\geq 0 &r_0 \geq -p\\
\hline
2, \; 3& \e=\pm 1& m_0(j_1) \geq 0 & \e r_0 \leq p \\\hline
\end{array}
\]
\item[ii)] The element $\tilde\mu^{0,p}_\n(j_2,\nu_2) \in K_{\n;h,p}
\cap \Mfu^{\xi_2,\nu_2}_\n$ under the following conditions.
\[
\begin{array}{|c|ccc|}\hline
\text{comb.}& \multicolumn{3}{|c|}{\text{conditions}}
\\\hline
1, \; 2&\e=\pm 1& m_0(j_2) \geq 0 & \e r_0 \leq p \\ \hline
3 & \e=1 & m_0(j_2) \geq 0 & r_0 \leq p \\
&\e=-1 & m_0(j_2) \geq p & r_0 \geq p \\ \hline
\end{array}
\]
\end{enumerate}
\end{lem}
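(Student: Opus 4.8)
The plan is to verify each line of the two tables in Lemma~\ref{elm-kkM-mu} by combining the explicit determining components from Proposition~\ref{prop-extr.na} with the membership criteria for $K_{\n;h,p}$ and $\Mfu^\ps_\n$ that have already been established. The key observation is that $\tilde\mu^{p,0}_\n(j_1,\nu_1)$ automatically satisfies $\sh3{-1}\tilde\mu^{p,0}_\n(j_1,\nu_1)=0$ by ii)a) in Proposition~\ref{prop-extr.na} (valid for $\nu_1\not\in\ZZ_{\leq-1}$, which holds since $(j_1,\nu_1)\in\wo(\ps)^+$ means $\nu_1\geq0$). So to land in $K_{\n;h,p}$ we must additionally have $\sh{-3}{-1}\tilde\mu^{p,0}_\n(j_1,\nu_1)=0$, and this is governed by the entries of Table~\ref{tab-x-dn}. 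First I would read off from that table the precise vanishing condition for $\sh{-3}{-1}\tilde\mu^{p,0}_\n$: in the regime $p>m_0(j_1)$, $\e=1$, the factor is a non-zero multiple of $p+\frac{j_1-\nu_1}2$, whereas in the regimes with $p\leq m_0(j_1)$ the coefficient carries the factor $\bigl(p+\frac{j_1-\nu_1}2\bigr)\bigl(p+\frac{j_1+\nu_1}2\bigr)$. Using Lemma~\ref{lem-kdso}~iv), which pins down $\nu_1\pm j_1$ in terms of $p$, $\nu_l$, $\nu_+$ for each of the three combinations, one checks exactly when these factors vanish.

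Second I would translate each vanishing condition into the language of $m_0(j_1)$, $m_0(j_2)$, and $r_0(h)$ using the identities in Table~\ref{tab-parms-na} together with $r_0(h)=p-2\e m_0(j_1)=-p-2\e m_0(j_2)$ and $m_0(j_2)-m_0(j_1)=-\e p$. This dictionary is the bookkeeping engine of the proof: each of the six rows of the two tables is simply a different arithmetic sign-chase. For the membership in $\Mfu^\ps_\n$, I would invoke that $\tilde\mu^{p,0}_\n(j_1,\nu_1)$ has $\nu_1$-regular behavior at $0$ whenever its defining $M$-Whittaker component is holomorphic near $t=0$, which holds provided $m_0(j_1)\geq0$ so that $\tilde\mu^{p,0}_\n(j_1,\nu_1)$ is genuinely defined and non-zero by Proposition~\ref{prop-extr.na}~i); the expansion \eqref{Mkps} gives the $\nu$-regularity at $0$ of the $M$-Whittaker function. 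The condition $\nu_1\geq0$ places this inside the submodule $\Mfu^\ps_\n$ generated by $\nu$-regular behavior at $0$ for $(j_1,\nu_1)\in\wo(\ps)^+$, by Lemma~\ref{lem-bfs}~iii)--iv) and Definition~\ref{defM[]}.

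The statement for $\tilde\mu^{0,p}_\n(j_2,\nu_2)$ in ii) is handled by the mirror-image argument, now using $\sh{-3}{-1}\tilde\mu^{0,p}_\n=0$ from Proposition~\ref{prop-extr.na}~ii)a) together with the bottom half of Table~\ref{tab-x-dn} for the vanishing of $\sh3{-1}\tilde\mu^{0,p}_\n$, and $\nu_2$-regularity at $0$ for the $\Mfu^\ps_\n$-membership. The $\e\to-\e$, $j_1\leftrightarrow j_2$, $p\leftrightarrow -p$ symmetry between the two halves of the computation means almost no new work is needed once the first half is in place; the reversal of combinations $1$ and $3$ in the two tables is exactly what this symmetry predicts.

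The main obstacle will be the careful case analysis at the boundary values $m_0(j_1)=p$ and $r_0(h)=\pm p$, where the determining component of $\tilde\mu^{p,0}_\n$ switches between the two columns of Table~\ref{tab-mmc} (lowest component $f_{-p}$ versus $f_{r_0(2j+3p)}$) and where one of $m_0(j_1)$, $m_0(j_2)$ may drop below zero so that one family ceases to be defined. Pinning down the exact inequalities (strict versus non-strict, and the interplay with the condition that $\tau^h_p$ actually occurs in $\Ffu_\n$, i.e.\ $\max\bigl(m_0(j_1),m_0(j_2)\bigr)\geq0$) is where sign errors are most likely, and I would cross-check the delicate rows against the explicit coefficient formulas rather than relying on the table quotients alone.
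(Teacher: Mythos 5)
Your proposal is correct and follows essentially the same route as the paper's proof: one downward shift operator vanishes automatically by Proposition~\ref{prop-extr.na}~ii)a), the other is controlled by the factors $p+\frac{j\mp\nu}2$ in Table~\ref{tab-x-dn}, the values $2p+j_n\pm\nu_n$ are computed per combination via Lemma~\ref{lem-kdso}, and the result is translated into the $m_0$/$r_0$ conditions. The paper carries out exactly this computation (its displayed kernel criteria and combination table are the two intermediate tables you describe), so no further comment is needed.
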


\begin{proof}
The element $\tilde\mu_\n^{p,0}(j_1,\nu_1) \in
\Mfu^{\xi_1,\nu_1}_{\n;h,p,p}$ is in the kernel of the shift operator
$\sh 3{-1}$, and
$\tilde\mu_\n^{0,p}(j_2,\nu_2) \in \Mfu^{\xi_2,\nu_2}_{\n;h,p,p}$ is in
the kernel of the shift operator $\sh {-3}{-1}$. We use
Table~\ref{tab-x-dn} on p~\pageref{tab-x-dn} to determine the behavior
of the other downward shift operator on these functions.
\badl{ker-cond} \tilde\mu_\n^{p,0}(j_1,\nu_1)\in K_{\n;h,p}
&\Leftrightarrow 2p+j_1 \=
\begin{cases} \nu_1 & \text{ if }\e=1\text{ and }p> m_0(j_1)\,,\\
\pm \nu_1 & \text{ otherwise}\,;
\end{cases}\\
\tilde \mu_\n^{0,p}(j_2,\nu_2) \in K_{n;h,p}
&\leftrightarrow 2p-j_2 \=
\begin{cases}\nu_2 & \text{ if } \e=-1\text{ and } p>m_0(j_2)\,,\\
\pm \nu_2 & \text{ otherwise}\,.
\end{cases}
\eadl
Computations based on Lemma~\ref{lem-kdso} lead to the following table.
\be\label{cmbt}\begin{array}{|c|cc|cc|cc|}\hline
&j_1&j_2& 2p+j_1-\nu_1 & 2p+j_1+\nu_1
& 2p-j_2-\nu_2 & 2p-j_2+\nu_2
\\ \hline
1&j_l&j_+&-2\nu_l &0&0&2\nu_+
\\
2&j_l&j_r&0&\nu_+-j_+&0&\nu_++j_+
\\
3&j_+&j_r&0&2\nu_+&-2\nu_r&0
\\ \hline
\end{array}
\ee

For combinations 2 and 3 this implies that $\tilde
\mu^{p,0}_\n(j_1,\nu_1) \in K_{\n;h,p}$ if $m_0(j_1) \geq 0$. Under
combination~1 we have $\nu_l\geq 1$, and to have $\tilde
\mu^{p,0}_\n(j_1,\nu_1) \in K_{\n;h,p}$ we need an additional
condition: $\e=1$, or $m_0(j_1) \geq p$. This gives~i). For
$\tilde\mu^{0,p}(j_2,\nu_2)$ we proceed similarly, now using that
$\nu_r \geq 1$ under combination~3. This gives~ii).
\end{proof}

\begin{lem}\label{lem-kkM}In the notation introduced above, we define
$\kk^M_{\n;h,p}$ as in Proposition~\ref{prop-kkM}.
\begin{enumerate}
\item[i)] $\kk^M_{\n;h,p}$ spans the space
$K_{\n;h,p} \cap \Mfu^\ps_\n$.
\item[ii)] If $\e(h-r_0) \geq 0$, then $\kk^M_{\n;h,p}=\kk^V_{\n;h,p}$.
\end{enumerate}
\end{lem}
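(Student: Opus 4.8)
The plan is to treat $\kk^M_{\n;h,p}$ exactly as $\kk^V_{\n;h,p}$ and $\kk^W_{\n;h,p}$ were treated in Proposition~\ref{prop-ik}, the only genuinely new input being the behaviour of the $M$-Whittaker function at $0$. First I would show $\kk^M_{\n;h,p}\in K_{\n;h,p}$. Since $M_{\k(r),|s(r)|}$ solves the same Whittaker differential equation \eqref{Wh-deq} with parameters $\k(r)$ and $s(r)$ as $V_{\k(r),s(r)}$ and $W_{\k(r),s(r)}$ do, the component $t^{p+1}M_{\k(r),|s(r)|}(2\pi|\ell|t^2)$ of order $r$ has precisely the shape used in the proof of Proposition~\ref{prop-ik}. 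The kernel relations of Table~\ref{tab-krnab} relate neighbouring components, and through the contiguous relations \eqref{crWd}, \eqref{crW1} they impose a recursion on the coefficients; the coefficients $c^M(r)$ of \eqref{kkMdef} are defined to solve exactly that recursion (the analogue for the $M$-function of the computation in \cite[\S21b]{Math} producing \eqref{cWVexpl}). Hence $\kk^M_{\n;h,p}$ lies in the kernel of both downward shift operators, i.e.\ in $K_{\n;h,p}$. I would also record that $\kk^M_{\n;h,p}\neq 0$: the $\Gamma$-factor in $c^M(r)$ is finite because $\frac12+|s(r)|-\k(r)=1+m(h,r)+\e s(r)+|s(r)|\geq 1$ is never a non-positive integer, using $\e s(r)+|s(r)|\geq 0$, so the determining component does not vanish.

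Next I would check $\kk^M_{\n;h,p}\in\Mfu^\ps_\n$. By the expansion \eqref{Mkps} the function $M_{\k,\mu}(\tau)$ has leading term $\tau^{\mu+1/2}$ and equals $\tau^{\mu+1/2}$ times an even entire power series; consequently each component $t^{p+1}M_{\k(r),|s(r)|}(2\pi|\ell|t^2)$ equals $t^{2+\nu}$ times an entire function for a suitable integer $\nu$, so $\kk^M_{\n;h,p}$ has $\nu$-regular behaviour at $0$ in the sense of Definition~\ref{def-grbb}. The value of $\nu$ is read off from the determining component (the one of smallest exponent), and by the parameter formulas of Table~\ref{tab-parms-na} it equals the spectral parameter $\nu_1$ or $\nu_2$ of a point in $\wo(\ps)^+$; thus $\kk^M_{\n;h,p}\in\Mfu^\ps_\n$ by Definition~\ref{defM[]}.

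For the spanning assertion in i) I would argue that $K_{\n;h,p}\cap\Mfu^\ps_\n$ is at most one-dimensional. By Lemma~\ref{lem-dcpt} every $F\in K_{\n;h,p}$ is determined by its minimal (equivalently, maximal) component, which is a solution of the Whittaker differential equation; hence $F\mapsto(\text{determining component})$ embeds $K_{\n;h,p}$ into the two-dimensional solution space of \eqref{Wh-deq}. Among these solutions the requirement of $\nu$-regular behaviour at $0$ singles out, up to a scalar, the regular solution $\propto M_{\k,|s|}$: the complementary solution carries the exponent $\tau^{-|s|+1/2}$, or a logarithmic term when $2|s|\in\ZZ$, which is too singular to be $t^{2+\nu}$ times an entire function. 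Therefore $\dim\bigl(K_{\n;h,p}\cap\Mfu^\ps_\n\bigr)\leq 1$, and combined with $0\neq\kk^M_{\n;h,p}\in K_{\n;h,p}\cap\Mfu^\ps_\n$ this shows that $\kk^M_{\n;h,p}$ spans the intersection.

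For ii) I would translate the hypothesis $\e(h-r_0)\geq 0$, using $\k(r)=-m(h,r)-\e s(r)-\frac12$ and $s(r)=\frac{h-r}4$ from \eqref{kapr} together with Table~\ref{tab-parms-na}, into the degeneracy condition $\frac12+\k+|s|\in\ZZ_{\leq 0}$ for the determining component. By the Whittaker identity \eqref{MWV} this is exactly the case $M_{\k,|s|}\dis V_{\k,|s|}$; since $V_{\k,s}$ is even in $s$, the determining components of $\kk^M_{\n;h,p}$ and $\kk^V_{\n;h,p}$ are then proportional, and the $\Gamma$-normalisation built into $c^M(r)$ (against the $c^V(r)$ of \eqref{cWVexpl}) is chosen precisely so that they are equal. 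As both $\kk^M_{\n;h,p}$ and $\kk^V_{\n;h,p}$ belong to $K_{\n;h,p}$ and are determined by their now-equal determining component, Lemma~\ref{lem-dcpt} yields $\kk^M_{\n;h,p}=\kk^V_{\n;h,p}$.

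The main obstacle I expect lies in the bookkeeping for the spanning step and for ii): one must track which component is the determining one as $r_0(h)$ moves relative to $\pm p$ and as $\e$ changes sign, and handle the degenerate parameter values ($2|s|\in\ZZ$, and the coincidences $M\dis V$, $M\dis W$). These are the verifications carried out with the Whittaker relations \eqref{MWV} and the contiguous relations in \cite[\S21]{Math}, and the delicate point is that the $\nu$-regular solution is genuinely one-dimensional in every one of these configurations.
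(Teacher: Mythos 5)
Your architecture is genuinely different from the paper's. The paper never verifies directly that $\kk^M_{\n;h,p}\in K_{\n;h,p}$: it starts from an arbitrary $g\in K_{\n;h,p}\cap\Mfu^\ps_\n$, writes it as $\al\,\kk^V_{\n;h,p}+\bt\,\kk^W_{\n;h,p}$ using Proposition~\ref{prop-ik}~i), imposes componentwise that $g$ be a multiple of $t^{p+1}M_{\k(r),|s(r)|}$, and then uses the identity \eqref{MinWV}, $M=A(r)V+B(r)W$ with $A(r)\neq0$, to solve: $\al\neq0$ is forced, $c^M(r)=c^V(r)/A(r)$ drops out, and $\bt=c^V(r)B(r)/\bigl(c^W(r)A(r)\bigr)$ turns out to be independent of $r$, which gives existence and uniqueness simultaneously; part~ii) is exactly the case $B(r)=0$. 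Your route (direct verification that $\kk^M$ satisfies the kernel relations, plus a separate $\dim\leq1$ count via the determining component) is in principle viable and your regularity-at-$0$ and dimension arguments mirror what the paper does elsewhere (e.g.\ Lemma~\ref{lem-Mfud1}).

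However, your first step has a concrete gap. When $|r_0(h)|\leq p$ the kernel relations in Table~\ref{tab-krnab} contain, in addition to the two-term recursion you invoke, an \emph{extra first-order equation} on the single component $f_{r_0(h)}$; this is precisely why Proposition~\ref{prop-ik}~i)b) says $K_{\n;h,p}$ is then one-dimensional, spanned by $\kk^V_{\n;h,p}$ alone. The component $t^{p+1}M_{\k(r_0),|s(r_0)|}(2\pi|\ell|t^2)$ satisfies that extra relation only when $M_{\k(r_0),|s(r_0)|}\dis V_{\k(r_0),s(r_0)}$, i.e.\ (via \eqref{MWV}) only when $\frac12+\k+|s|\in\ZZ_{\leq0}$ there, which is the condition $\e(h-r_0)\geq0$ of part~ii) — and this condition is \emph{not} automatic when $|r_0(h)|\leq p$. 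So the assertion ``hence $\kk^M_{\n;h,p}$ lies in the kernel of both downward shift operators'' fails without a case analysis on $r_0(h)$ versus $\pm p$ and $h$, and with it your existence claim $0\neq\kk^M_{\n;h,p}\in K_{\n;h,p}\cap\Mfu^\ps_\n$. The paper's formulation avoids this because it only ever manipulates elements already known to lie in $K_{\n;h,p}$ and lets the factor $B(r)$ record exactly when the $W$-contribution survives. To repair your argument you would need to (a) check the one-component relation separately in the regime $|r_0(h)|\leq p$, reducing there to the $M\dis V$ degeneration, and (b) verify (rather than assert) that the coefficients $c^M(r)$ of \eqref{kkMdef} solve the recursion coming from Table~\ref{tab-krnab} together with the half-integer contiguous relations \eqref{crWh1}--\eqref{crWh2} — a computation the paper replaces by the single identity \eqref{MinWV} applied against the already-computed $c^V(r)$ and $c^W(r)$.
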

\begin{proof}
Suppose that $g \in K_{\n;h,p} \cap \Mfu^\ps_\n$. Then $g = \al
\kk^V_{\n;h,p} + \bt \kk^W_{\n;h,p}$, with $\bt=0$ if $|r_0(h)|\leq p$,
by i) in Proposition~\ref{prop-ik}. The component of order $r$ of $g$
has the form
\[ t^{p+1} \Bigl( \al C^V(r) \, V_{\k(r),s(r)} + \bt c^W(r)\,
W_{\k(r),s(r)} \Bigr)\,,\]
with the notations in \eqref{kapr} and~\eqref{cWVexpl}. Here $r$ runs
over $r\equiv p\bmod 2$, $|r|\leq p$ such that $m(h,r) \geq 0$. Each
component has to have $\nu$-regular behavior at~$0$ for $\re\nu\geq 0$,
and hence should be a multiple of $t^{p+1}\, M_{\k(r),|s(r)|}$. The
functions $W_{\k,s}$ and $V_{\k,s}$ are even in~$s$, so going over to
$|s(r)|\in \frac12\ZZ_{\geq 0}$ is the sensible thing to do.

By \eqref{MinWV} we have
\bad M_{\k(r),|s(r)|} &\= A(r)\, V_{\k(r),|s(r)|} + B(r) \,
W_{\k(r),|s(r)|} \,,\\
A(r) &\= \frac{ -e^{\pi i\k(r)}\, \Gf\bigl(1+2|s(r)|\bigr)} {\Gf\bigl(
\frac12+|s(r) |- \k(r)\bigr)}\,,\\
B(r) &\= \frac{-ie^{\pi i \bigl(\k(r) - |s(r)|\bigr)}\,
\Gf\bigl(1+2|s(r)|\bigr)} {\Gf\bigl( \frac12+|s(r)|+\k(r) \bigr)}\,.
\ead
Since $\frac12+|s(r)|-\k(r) \= 1 + |s(r)|+\e s(r) \geq 1$ for all $r$ in
the sum, we have $A(r)\neq 0$. The factor $B(r)$ may be zero for the
values that we use here.

For the assumed $g\in K_{\n;h,p}\cap \Nfu^\ps_\n$ we get coefficients
$c^M(r)$ such that for all relevant values of $r$
\be \al c^V(r) \= c^M(r) A(r)\,,\qquad \bt c^W(r) \= c^M(r) B(r)\,.\ee
The first relation implies that if $\al$ were zero, then all
coefficients $c(r)$ would vanish. So if the supposed $g$ exists as a
non-zero function, then we normalize it so that $\al=1$, and put
\be c^M(r) \= c^V(r)/A(r)\,.\ee
This leads to the relation
\be \bt \= \frac{c^V(r)}{c^W(r)} \, \frac{B(r)}{A(r)}\,, \ee
valid for all $r$ occurring in the sum. The factor $B(r)$ is the sole
factor that may vanish. So if $B(r)=0$ for one relevant value of~$r$,
then $B(r')=0$ for all $r'$ occurring in the sum, and $\bt=0$. In that
case the hypothetical function $g$ is a multiple of $\kk^V_{\n;h,p}$,
which we know explicitly.\smallskip

The factor $B(r)$ vanishes if and only if
\[\frac12+|s(r)|+\k(r) \= |s(r)|-\e s(r) - m(h,r)
\= |s(r)|- \e s(r)
-\frac\e 2(r-r_0)\in \ZZ_{\leq 0}\,. \]
For all $r$ with $m(h,r)\geq 0$ we have
\begin{align*} B(r) \= 0 &\Leftrightarrow \begin{cases} - \frac
\e2(r-r_0) \leq 0
&\text{ if } \e s(r) \geq 0\,,\\
\frac\e 2(r_0-h)\leq 0
&\text{ if } \e s(r) \leq 0\,;
\end{cases}\\
&\Leftrightarrow
\begin{cases} r\geq r_0\text{ and } r_0 \leq h &\text{ if } \e=1\,,\\
r\leq r_0 \text{ and } r_0 \geq h &\text{ if }\e=-1\,.
\end{cases}
\end{align*}
The condition $\e(r-r_0)\geq 0$ is just the condition $m(h,r) \geq 0$.

If $\e=1$ we have $\max(-p,r_0) \leq r \leq p$ for all $r$ relevant for
the sum. That rules out combination~3, and gives the condition $r_0
\leq h$ for the other combinations. For $\e=-1$ combination 1 cannot
occur, and the other cases go similarly. Hence we find $g \dis
\kk^V_{\n;h,p}$ under the following conditions.
\be
\begin{array}{|c|c|c|}\hline
& \e=1& \e=-1\\ \hline
1& r_0\leq h & \\
2& r_0\leq h & r_0\geq h \\
3 && r_0 \geq h \\
\hline
\end{array}\ee\medskip

The other possibility is that $\bt\neq 0$, and $B(r) \neq 0 $ for all
$r$ occurring in the sum. Then we should have $r_0>h$ if $\e=1$, and
$r_0< h$ if $\e=-1$. A computations gives
\[ \bt \= ie^{\pi i (m(h,r)-2|s(r)|)/2} \frac{ \Gf(|s(r)|+\e
s(r)+m(h,r))}{m(h,r)!\, \Gf(|s(r)|-\e s(r)-m(h,r))}\,.\]
Using that $\e(r_0-h)>-$ and $\e(r-r_0)\geq 0$ implies that $|s(r)|=-\e
s(r)$ we obtain
\be \bt \= \frac {i e^{-\pi i |h-r_0|/4}}{\bigl(
|h-r_0|/2-1\bigr)!}\,,\ee
which does not depend on~$r$.

Whether $\bt=0$ or not, we conclude that the hypothetical element $g \in
K_{\n;h,p} \cap \Mfu^\ps_\n$ is a multiple of $ \kk^V_{\n;h,p} + \bt
\kk^W_{\n;h,p}$, and has the expansion \eqref{kkMdef} with coefficients
$c^M(r) = c^V(r)/A(r)$. So the element $\kk^M_{\n;h,p} $ indeed spans
the space $ K_{\n;h,p} \cap \Mfu^\ps_\n$
\end{proof}

\begin{proof}[Proof of Proposition~\ref{prop-kkM}] By
Lemma~\ref{lem-kkM} we know that $\kk^M_{\n;h,p}$ spans the
intersection of $K_{\n;h,p}$ with $\Mfu^\ps_\n$ whenever it is defined.
If $|r_0|>p$ it is a linear combination of $\kk^V_{\n;h,p}$ and
$\kk^W_{\n;h,p}$. The identification of $\tilde\mu^{p,0}_\n(j_1,\nu_1)$
and $\tilde\mu^{0,p}_\n(j_2,\nu_2)$ then are automatic. They can be
confirmed by comparison of a determining component with help of ~i)
in Lemma~\ref{lem-VWrels}. The identification with $\kk^V_{\n;h,p}$
under an additional condition follows from ii) in Lemma~\ref{lem-kkM}.

By i) in Proposition~\ref{prop-ik} we should have $\kk^M_{\n;h,p} \dis
\kk^V_{\n;h,p}$ if $|r_0|\leq p$. Hence we need the condition
$\e(h-r_0)\geq 0$. Comparison of the determining component gives the
proportionality with $\tilde\mu^{p,0}_\n(j_1,\nu_1)$, respectively
$\tilde\mu^{0,p}_\n(j_2,\nu_2)$.
\end{proof}

\begin{lem}\label{lem-Mfud1}Let $\ps\in \WOI$. For each $K$-type
$\tau^h_p$
\be \dim \Mfu^\ps_{\n;h,p,p} \= \begin{cases}
1&\text{ if } (h/3,p) \in \sect(j) \text{ for some } j \in
\wo(\ps)^+_\n\,,\\
0&\text{ otherwise}\,.
\end{cases}
\ee

In particular, if the $K$-type $\tau^h_p$ occurs in both
$\Mfu^{\xi,\nu}_\n$ and $\Mfu^{\xi',\nu'}_\n$ for $(j_\xi,\nu)$,
$(j_{\xi'},\nu')\allowbreak \in \wo(\ps)_\n^+$, then
$\Mfu^{\xi,\nu}_{\n'h,p} = \Mfu^{\xi',\nu'}_{\n;h,p}$.
\end{lem}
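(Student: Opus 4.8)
The plan is to establish the dimension formula first; the coincidence assertion then drops out at once. Fix $\ps\in\WOI$ and a $K$-type $\tau^h_p$. If $(h/3,p)$ lies in no sector $\sect(j)$ with $j\in\wo^1(\ps)_\n$ (and $(j,\nu)\in\wo(\ps)^+_\n$), then by Proposition~\ref{prop-dim2-nab} the $K$-type does not occur in $\Ffu^\ps_\n$ at all, so $\dim\Mfu^\ps_{\n;h,p,p}=0$ and there is nothing to prove. I therefore assume $(h/3,p)\in\sect(j)$ for some such $j$, where Proposition~\ref{prop-dim2-nab} gives $\dim\Ffu^\ps_{\n;h,p,p}=2$; the goal is to show that $\Mfu^\ps_{\n;h,p,p}$ is a line in this plane.

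For the lower bound I would exhibit a nonzero element. Proposition~\ref{prop-extr.na} provides nonzero families $\tilde\mu^{p,0}_\n(j,\nu)$ and $\tilde\mu^{0,p}_\n(j,\nu)$ (holomorphic for $\nu\in\ZZ_{\ge0}$) sitting on the two boundary lines of $\sect(j)$. To reach an interior $K$-type I imitate the construction of $\tilde\om^{a,b}_\n$ and $\tilde\ups^{a,b}_\n$ in Lemma~\ref{lem-VW-nab}~iv) and set $\tilde\mu^{a,b}_\n(j,\nu)=(\sh{-3}1)^b\,\tilde\mu^{a,0}_\n(j,\nu)$ when $\ell>0$ and $(\sh31)^a\,\tilde\mu^{0,b}_\n(j,\nu)$ when $\ell<0$. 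By Proposition~\ref{prop-kuso}~ii) the upward operator used here ($\sh{-3}1$ for $\ell>0$, $\sh31$ for $\ell<0$) is injective on $\Ffu^\ps_\n$, so $\tilde\mu^{a,b}_\n(j,\nu)\neq0$; by Definition~\ref{def-VWMa} it lies in $\Mfu^{\xi,\nu}_\n$, which is contained in $\Mfu^\ps_\n$. Hence $\dim\Mfu^\ps_{\n;h,p,p}\ge1$.

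The substance is the upper bound $\dim\Mfu^\ps_{\n;h,p,p}\le1$, and this is where the main difficulty lies. In the generic abelian case (Lemma~\ref{lem-strab}) one gets away with $\Mfu\cap\Wfu=\{0\}$, but here $\Mfu^\ps_\n$ and $\Wfu^\ps_\n$ may overlap (Lemma~\ref{lem-mu-omups}), so no such splitting is available and the count must be extracted from a downward descent following the template of Lemma~\ref{lem-dimhp}. I would prove by induction on $p$ the statement ``$\dim\Mfu^\ps_{\n;h,p,p}\le1$'', the base case $p=0$ being settled in the last paragraph. For the step, suppose $\dim\Mfu^\ps_{\n;h,p,p}\ge2$ and pick $F$ independent of the generator $\tilde\mu^{a,b}_\n(j,\nu)$; set $F^\pm=\sh{\mp3}{-1}F\in\Mfu^\ps_\n$. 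By the inductive hypothesis the two lower spaces are at most one-dimensional, so if neither $F^\pm$ is nonzero in excess of the lower generator, a correction $F-c\,\tilde\mu^{a,b}_\n(j,\nu)$ is annihilated by both downward shift operators, i.e. lands in $K_{\n;h,p}\cap\Mfu^\ps_\n$. At a generic point of $\sect(j)$ this intersection vanishes, since by Proposition~\ref{prop-kdso} both downward operators can vanish only on the boundary lines; at an intersection point of two boundary lines Proposition~\ref{prop-kkM} bounds it by the single function $\kk^M_{\n;h,p}$. The separate configuration $|r_0(h)|>p$ is even more direct: there $K_{\n;h,p}$ is already two-dimensional (Proposition~\ref{prop-ik}~i)a)), so $\Mfu^\ps_{\n;h,p,p}=\Ffu^\ps_{\n;h,p,p}$ would force $K_{\n;h,p}\subset\Mfu^\ps_\n$, contradicting Proposition~\ref{prop-kkM}~a), whose hypotheses are exactly equivalent to $|r_0(h)|>p$ in this configuration. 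In every case $F$ is forced to be proportional to the generator, a contradiction.

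The descent terminates at the one-dimensional $K$-type $\tau^{2j}_0$, where the analysis of \S\ref{sect-Nnab1d} identifies $\Ffu^\ps_{\n;2j,0,0}$ with the two-dimensional solution space of the Whittaker equation and the requirement of $\nu$-regular behavior at $0$ with $\re\nu\ge0$ singles out the solution built from $M_{\k,\nu/2}$, giving $\dim\Mfu^\ps_{\n;2j,0,0}=1$. Combining the two bounds yields $\dim\Mfu^\ps_{\n;h,p,p}=1$ throughout the sectors. For the coincidence statement, if $\tau^h_p$ occurs in both $\Mfu^{\xi,\nu}_\n$ and $\Mfu^{\xi',\nu'}_\n$ with $(j_\xi,\nu),(j_{\xi'},\nu')\in\wo(\ps)^+_\n$, then the two nonzero highest-weight spaces $\Mfu^{\xi,\nu}_{\n;h,p}$ and $\Mfu^{\xi',\nu'}_{\n;h,p}$ both sit inside the one-dimensional $\Mfu^\ps_{\n;h,p}$, hence equal it and each other. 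I expect the genuine obstacle to be the bookkeeping in the descent step, namely verifying cleanly that the two downward shifts of a hypothetical second vector cannot both be absorbed into the lower generators without producing an element of the kernel intersection controlled by Propositions~\ref{prop-kdso} and~\ref{prop-kkM}; this is precisely the point where the computations of Lemma~\ref{lem-dimhp} must be reused.
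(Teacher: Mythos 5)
Your architecture --- a lower bound from the families $\tilde\mu^{p,0}_\n(j,\nu)$, $\tilde\mu^{0,p}_\n(j,\nu)$ pushed inward by the injective upward shift operator, followed by a descent along downward shift operators terminating at one-dimensional $K$-types or at the intersection $K$-types of Proposition~\ref{prop-kkM} --- is essentially the paper's, and the lower bound and the descent through $K$-types where at least one downward shift operator is injective on $\Ffu^\ps_{\n;h,p,p}$ are fine. The gap is in the terminal case, where both downward shift operators have nontrivial kernels. There your counting does not close: the inductive hypothesis bounds the image of $(\sh3{-1},\sh{-3}{-1})$ on $\Mfu^\ps_{\n;h,p,p}$ by $1$ (at such a $K$-type one of the two operators in fact vanishes identically on $\Ffu^\ps_{\n;h,p,p}$, which is spanned by $\tilde\om^{p,0}_\n$ and $\tilde\ups^{p,0}_\n$, both killed by it), and Proposition~\ref{prop-kkM} bounds the kernel $K_{\n;h,p}\cap\Mfu^\ps_\n$ by $1$; together these give only $\dim\Mfu^\ps_{\n;h,p,p}\le 2$, no better than Proposition~\ref{prop-dim2-nab}. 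Your ``correction'' step conceals this: a single scalar $c$ is asked to satisfy two conditions, namely $F^+=c\,\sh{-3}{-1}\tilde\mu^{a,b}_\n$ and $F^-=c\,\sh{3}{-1}\tilde\mu^{a,b}_\n$; and even where one of them is vacuous, the other cannot be met when $\sh{-3}{-1}$ kills the generator itself while $F^+\ne0$. That is not an edge case: by \eqref{cmbt} and Table~\ref{tab-x-dn}, in combinations 2 and 3 one has $2p+j_1=\nu_1$, so $\tilde\mu^{p,0}_\n(j_1,\nu_1)$ already lies in $K_{\n;h,p}$, and nothing in your argument excludes a second, independent element of $\Mfu^\ps_{\n;h,p,p}$ with nonzero image downstairs.

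What is needed at the terminal $K$-type is a direct one-dimensionality statement, and this is how the paper argues: since one downward shift operator vanishes on all of $\Ffu^\ps_{\n;h,p,p}$, every element is determined (Lemma~\ref{lem-dcpt}) by a single determining component, which is a power of $t$ times a solution of a Whittaker differential equation; the condition of $\nu$-regular behavior at $0$ cuts that two-dimensional solution space down to a line for each admissible $\nu$, and Proposition~\ref{prop-kkM} identifies these lines, for the different $(j,\nu)\in\wo(\ps)^+_\n$, as the single line $\CC\,\kk^M_{\n;h,p}$. With that replacement your induction closes, and the coincidence statement follows exactly as you say.
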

\begin{proof}
The spaces $\Mfu^{\xi,\nu}_{\n;h,p,p}$ with $(j_\xi,\nu) \in
\wo(\ps)_\n^+$ are non-trivial, see Definition~\ref{def-VWMa}.

The dimension of $\Mfu^\ps_{\n;h,p,p}$ does not change if we go to a
lower $K$-type by application of an injective downward shift operator.
A path given by successive applications of injective downward shift
operators can stop at a $K$-type for which both downward shift
operators have a non-trivial kernel. This may occur at a
one-dimensional $K$-type $\tau^{2j}_0$. Then the dimension of
$\Mfu^{\xi,\nu}_{\n;h,p,p}$ is one if $(j,\nu)\in \wo(\ps)^+_\n$, and
zero otherwise. The path may also stop at a $K$-type studied in
Proposition~\ref{prop-kkM}. That proposition implies that $\dim
\Mfu^\ps_{\n;h,p,p} \geq 1$. Moreover, $\dim\Mfu^\ps_{\n;h,p,p} \leq 2$
by Proposition~\ref{prop-dim2-nab}. Any element of $
\Mfu^\ps_{\n;h,p,p}$ should have components with $\nu$-regular behavior
at $0$. A determining component is a solution of a Whittaker
differential equation, which has only a one-dimensional space of
solutions with $\nu$-regular behavior at~$0$.
\end{proof}

With Lemma~\ref{lem-Mfud1} we know that, analogously to \eqref{tuod}, we
have \ir{tumu}{\tilde\mu^{a,b}_\n}
\be\label{tumu}
\tilde\mu^{a,b}_\n(j,\nu) \= \begin{cases}
\bigl( \sh{-3}1\bigr)^b\tilde \mu^{a,0}(j,\nu) &\text{ if }\ell>0\,,\\
\bigl( \sh 3 1 \bigr)^a \tilde\mu^{0,b}(j,\nu) &\text{ if } \ell<0\,,
\end{cases}
\ee
for $a,b\in \ZZ_{\geq 0}$ spanning $\Mfu^{\xi,\nu}_{\n;h,p,p}$ for
$h=2j+3(a-b)$, $p=a+b$.

\rmrk{Identifications}With Propositions \ref{prop-ik} and~\ref{prop-kkM}
we have explicit descriptions of elements $\tilde
x^{p,0}_\n(j_1,\nu_1)$ and $\tilde x^{0,p}_\n(j_2,\nu_2)$, with
$x=\ups$, $\om$, or $\mu$, if they happen to be in the kernel of both
downward shift operators. In the case that we call combination~2
($j_1=j_l$, $j_2=j_r$) there may be a third element to be considered,
like we did in Corollary~\ref{cor-jl+r-ab} in the abelian cases. There
we could use the notation $x^{a,b}_\bt(j_+,\nu_+)$. In the non-abelian
case, the upward shift operators are not always injective. We need to
use the construction in Proposition~\ref{prop-extr.na}.

\begin{prop}\label{prop-spk}Let $\ps\in \WOI$. We use the notations of
Proposition~\ref{prop-ik}, and the further notations introduced at the
start of this subsection (p~\pageref{fig-comb}). We consider
combination~2, with $j_1=j_l< j_+<j_2=j_r$, and take
$h=h_2=2j_1+3p_2=2j_2-3p_2$, $p=p_2=\frac13(j_r-j_l)$.

Assume that $m_0(j_+)\geq 0$.
\begin{itemize}
\item If the one-dimensional space $\Wfu^{\xi_+,\nu_+}_{\n;h,p,p}$ is
contained in $K_{\n;h,p,p}$, then it is spanned by $\kk^W_{\n;h,p}$.
\item If the one-dimensional space $\Vfu^{\xi_+,\nu_+}_{\n;h,p,p}$ is
contained in $K_{\n;h,p,p}$, then it is spanned by $\kk^V_{\n;h,p}$.
\item If the one-dimensional space $\Mfu^{\xi_+,\nu_+}_{\n;h,p,p}$ is
contained in $K_{\n;h,p,p}$, then it is spanned by $\kk^M_{\n;h,p}$.
\end{itemize}
\end{prop}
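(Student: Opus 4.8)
The plan is to exploit the fact that each of the three one-dimensional spaces $\Wfu^{\xi_+,\nu_+}_{\n;h,p,p}$, $\Vfu^{\xi_+,\nu_+}_{\n;h,p,p}$, $\Mfu^{\xi_+,\nu_+}_{\n;h,p,p}$ is distinguished by the type of Whittaker function appearing in the component functions of its generator, and to match this against the explicit spanning descriptions of $K_{\n;h,p}$ already at our disposal. Since $j_l<j_+<j_r$ and $h=2j_l+3p=2j_r-3p$, the point $(h/3,p)$ lies strictly inside $\sect(j_+)$ and on the right, resp.\ left, boundary of $\sect(j_l)$, resp.\ $\sect(j_r)$. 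Hence the three generators are the families $\tilde\om^{a,b}_\n(j_+,\nu_+)$, $\tilde\ups^{a,b}_\n(j_+,\nu_+)$, $\tilde\mu^{a,b}_\n(j_+,\nu_+)$ with $h=2j_++3(a-b)$, $p=a+b$, furnished by Proposition~\ref{prop-extr.na}, whose components are built from $W$-, $V$-, resp.\ $M$-Whittaker functions. On the other side, Proposition~\ref{prop-ik} gives $K_{\n;h,p}=\CC\,\kk^W_{\n;h,p}\oplus\CC\,\kk^V_{\n;h,p}$ when $|r_0(h)|>p$ and $K_{\n;h,p}=\CC\,\kk^V_{\n;h,p}$ when $|r_0(h)|\leq p$.

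The $\Mfu$-case is immediate: since $(j_+,\nu_+)\in\wo(\ps)^+_\n$ we have $\Mfu^{\xi_+,\nu_+}_\n\subset\Mfu^\ps_\n$, so the hypothesis places the one-dimensional space $\Mfu^{\xi_+,\nu_+}_{\n;h,p,p}$ inside $K_{\n;h,p}\cap\Mfu^\ps_\n$, and part~i) of Lemma~\ref{lem-kkM} shows this intersection is spanned by $\kk^M_{\n;h,p}$. For the $\Vfu$-case I would first note that, because $(h/3,p)\in\sect(j_+)$ with $j_+\in\wo^1(\ps)_\n$, Lemma~\ref{lem-VW-nab} gives $\dim\Vfu^{\xi_+,\nu_+}_{\n;h,p,p}=1$ together with $\Vfu^{\xi_+,\nu_+}_{\n;h,p,p}=\Vfu^\ps_{\n;h,p,p}$; part~i) of Proposition~\ref{prop-idkk} then identifies this common space with $\CC\,\kk^V_{\n;h,p}$, which already lies in $K_{\n;h,p}$, so the hypothesis is consistent and the conclusion follows.

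The $\Wfu$-case is the real content, and the asymptotic step in it is where I expect the main obstacle. Let $w$ span $\Wfu^{\xi_+,\nu_+}_{\n;h,p,p}$. By Proposition~\ref{prop-extr.na}~iii), equivalently by \eqref{Wae}, every component of $w$ has exponential decay as $t\uparrow\infty$. The hypothesis $w\in K_{\n;h,p}$ lets me write $w=\al\,\kk^W_{\n;h,p}+\bt\,\kk^V_{\n;h,p}$, with $\al=0$ forced when $|r_0(h)|\leq p$. The decisive point is that the components of $\kk^V_{\n;h,p}$ are $V$-Whittaker functions, which by \eqref{Vae} grow (see also Lemma~\ref{lem-VI}), whereas those of $\kk^W_{\n;h,p}$ decay by \eqref{Wae}; since $W_{\k,s}$ and $V_{\k,s}$ are linearly independent solutions with opposite exponential behaviour at $\infty$, exponential decay of each component of $w$ forces $\bt=0$. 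If $|r_0(h)|\leq p$ this would give $w=0$, contradicting one-dimensionality; hence necessarily $|r_0(h)|>p$, so that $\kk^W_{\n;h,p}\in K_{\n;h,p}$, and $w=\al\,\kk^W_{\n;h,p}$ is a non-zero multiple of $\kk^W_{\n;h,p}$.

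The hard part is precisely the clean separation of the two Whittaker solutions by their growth at infinity, which is exactly what is recorded in \eqref{Wae} and \eqref{Vae}; once that separation is invoked, the remaining dimension bookkeeping (matching the regime $|r_0(h)|>p$ with the presence of $\kk^W_{\n;h,p}$ in $K_{\n;h,p}$, as in the remarks following Proposition~\ref{prop-ik}) is routine. I would keep the three cases organized around this single principle: decay selects $\kk^W$, membership in $\Mfu^\ps_\n$ together with Lemma~\ref{lem-kkM} selects $\kk^M$, and the multiplicity-one identification of Lemma~\ref{lem-VW-nab} with Proposition~\ref{prop-idkk} selects $\kk^V$.
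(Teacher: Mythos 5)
Your argument is correct, and it reaches the conclusion by a partly different route from the paper. For $\Vfu$ and $\Mfu$ you do essentially what the paper does, only more directly: the paper first identifies $\X^{\xi_+,\nu_+}_{\n;h,p,p}$ with $\X^{\xi_l,\nu_l}_{\n;h,p,p}$ (if $\e=1$) or $\X^{\xi_r,\nu_r}_{\n;h,p,p}$ (if $\e=-1$) via the multiplicity-one statements in Lemma~\ref{lem-VW-nab} and Lemma~\ref{lem-Mfud1}, and then invokes the explicit identifications of Propositions~\ref{prop-ik} and~\ref{prop-kkM} for the boundary families; you short-circuit this by quoting Proposition~\ref{prop-idkk}~i) (for $\Vfu$) and Lemma~\ref{lem-kkM}~i) (for $\Mfu$), which already describe $\Vfu^\ps_{\n;h,p,p}$ and $K_{\n;h,p}\cap\Mfu^\ps_\n$ as $\CC\,\kk^V_{\n;h,p}$ and $\CC\,\kk^M_{\n;h,p}$. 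The genuine divergence is in the $\Wfu$ case: instead of routing through $j_l$ or $j_r$, you decompose the generator in the $\{\kk^W_{\n;h,p},\kk^V_{\n;h,p}\}$ basis of $K_{\n;h,p}$ and kill the $\kk^V$-coefficient by the growth/decay dichotomy \eqref{Wae} versus \eqref{Vae}, using that each component of $\kk^V_{\n;h,p}$ has a non-vanishing coefficient $c^V(r)$ and that the leading asymptotic constant of $V_{\k,s}$ never vanishes. This buys you two things the paper's proof does not make explicit: it avoids the $\e=\pm1$ case split entirely, and it shows as a byproduct that the hypothesis $\Wfu^{\xi_+,\nu_+}_{\n;h,p,p}\subset K_{\n;h,p}$ forces $|r_0(h)|>p$ (otherwise $K_{\n;h,p}=\CC\,\kk^V_{\n;h,p}$ would contain no non-zero decaying element). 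What the paper's uniform treatment buys in exchange is that a single identification argument covers all three letters $\X$ at once, at the cost of leaning on the earlier, somewhat heavier identifications of the boundary families.
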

\begin{proof}
We let $\X$ denote any of $\Wfu$, $\Vfu$, and $\Mfu$. We know that $\dim
\X^{\xi_+,\nu_+}_{\n;h,p,p} = 1$ by Lemma~\ref{lem-Mfud1}. If $\e=1$,
then $m_0(j_l) = m_0(j_+)+p_1$, and $\X^{\xi_l,\nu_l}_{\n;h,p,p} =
\X^{\xi_+,\nu_+}_{\n;h,p,p}$ by iv)
in Lemma~\ref{lem-VW-nab} and Lemma~\ref{lem-Mfud1}. Suppose that
$\Wfu^{\xi_+,\nu_+}_{\n;h,p,p}$ is contained in $K_{\n;h,p}$, then the
same holds for $\Wfu^{\xi_l,\nu_l}_{\n;h,p,p}$, and by
Proposition~\eqref{prop-kkM} we conclude that $\kk^X_{\n;h,p}$ spans
$\X^{\xi_l,\nu_l}_{\n;h,p,p} = \X^{\xi_l,\nu_l}_{\n'h,p,p}$ (with the
obvious notation $\kk^X_{\n;h,p}$. For $\e=-1$ we proceed similarly,
now by the identification $\X^{\xi_+,\nu_+}_{\n;h,p,p}=
\X^{\xi_r,\nu_r}_{\n;h,p,p}$.
\end{proof}

\begin{remark}The proof of Proposition~\ref{prop-spk} is based on a
rather unspecified identification of $\X^{\xi_l,\nu_l}_{\n;h,p,p} =
\X^{\xi_l,\nu_l}_{\n'h,p,p}$. It is not too hard to specify an element
of $\X^{\xi_+,\nu_+}_{\n;h,p,p}$ (in the notations used in the proof).
We discuss this for the case $\e=1$. We start with
$ x^{0,0}(j_+,\nu_+) \in \X^{0,0}_{\n;2j_+,0,0}$. The element
$(\sh 3 1)^{p_3} x^{0,0}_\n(j_+,\nu_+)$ may be zero.
Proposition~\ref{prop-extr.na} gives a non-zero element $\tilde
x^{p_3,0}_\n(j_+,\xi_+) \in \X^{\xi_+,\nu_+}_{\n;2j_++3 p_3,p_3,p_3}$,
by working with the family $x^{0,0}_\n(j_+,\nu)$, and dividing out
common zeros in $\nu$ whenever possible. For $\e=1$ the upward shift
operator $\sh{-3}1$ is injective, by Proposition~\ref{prop-kuso}. This
produces a non-zero element $(\sh{-3}1)^{p_1} \tilde
x^{p_3,0}_\n(j_+,\nu_+)$ spanning $\X^{\xi_+,\nu_+}_{\n;h,p,p}$.
\end{remark}

The Whittaker function $M_{\k,s}$ may be a multiple of the basis
solutions $W_{\k,s}$ and $V_{\k,s}$ of the Whittaker differential
equation; see~\eqref{MWV}. In the case of integral parametrization this
brings the need to get an overview of the $K$-types for which
$\Mfu^\ps_{\n;h,p,p}$ might be equal to $\Wfu^\ps_{\n;h,p,p}$ or to
$\Vfu^\ps_{\n;h,p,p}$.
\begin{prop}\label{prop-M-VW}Let $\ps\in \WOI$, with $(j_l,\nu_l),
(j_+,\nu_+), (j_r,\nu_r) \in \wo(\ps)^+$ according to the conventions
in~\ref{jnurels}.

Let $\tau^h_\n$ be a $K$-type occurring in $\Mfu^\ps_\n$.
\begin{enumerate}
\item[i)] The space $\Mfu^\ps_{\n;h,p} $ is equal to the space
$\Vfu^\ps_{\n;h,p}$ in the following cases:
\begin{enumerate}
\item[a)] $m_0(j_l)\geq 0$ and $m_0(j_r)\geq 0$.
\item[b)] $\e=1$, $m_0(j_r) <0$, and $(h/3,p) \in \sect(j_r)$.
\item[c)] $\e=-1$, $m_0(j_l) < 0$, and $(h/3,p)\in \sect(j_l)$.
\end{enumerate}
\item[ii)] The space $\Mfu^\ps_{\n;h,p}$ is equal to the space
$\Wfu^\ps_{\n;h,p}$ in the following cases:
\begin{enumerate}
\item[a)] $m_0(j_r) \leq m_0(j_+)<0$ and
$(h/3,p) \in \sect(j_l) \setminus \sect(j_+)$.
\item[b)] $m_0(j_l) \leq m_0(j_+)<0$ and $(h/3,p)  \in 
\sect(j_r) \setminus \sect(j_+)$.
\end{enumerate}
\end{enumerate}
\end{prop}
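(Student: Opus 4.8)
The plan is to reduce everything to a question about one-dimensional spaces and then to the known proportionalities among Whittaker functions. By Lemma~\ref{lem-Mfud1} and part~iii) of Lemma~\ref{lem-VW-nab}, each of $\Mfu^\ps_{\n;h,p}$, $\Vfu^\ps_{\n;h,p}$ and $\Wfu^\ps_{\n;h,p}$ is at most one-dimensional, and all three sit inside the two-dimensional space $\Ffu^\ps_{\n;h,p}$ (Proposition~\ref{prop-dim2-nab}). Hence an identity such as $\Mfu^\ps_{\n;h,p}=\Vfu^\ps_{\n;h,p}$ is equivalent to proportionality of spanning vectors of the highest weight spaces $\Mfu^\ps_{\n;h,p,p}$ and $\Vfu^\ps_{\n;h,p,p}$. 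Because all contributing modules $\Mfu^{\xi_j,\nu}_\n$ with $(h/3,p)\in\sect(j)$ agree on this $K$-type, it suffices to test the proportionality after fixing one convenient $j\in\wo^1(\ps)_\n$ with $(h/3,p)\in\sect(j)$; writing $h=2j+3(a-b)$, $p=a+b$, the spanning vectors are then $\tilde\mu^{a,b}_\n(j,\nu)$, $\tilde\upsilon^{a,b}_\n(j,\nu)$ and $\tilde\omega^{a,b}_\n(j,\nu)$.

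Next I would pass to the boundary families. By \eqref{tuod} and \eqref{tumu}, for $\ell>0$ each $\tilde x^{a,b}_\n$ is obtained from $\tilde x^{a,0}_\n$ by applying $(\sh{-3}1)^b$, and $\sh{-3}1$ is injective for $\ell>0$ by Proposition~\ref{prop-kuso} (the symmetric statement with $(\sh31)^a$ and $\ell<0$ holds likewise). Since the sectors are upward cones and $\sh{-3}1$ moves a $K$-type along a line parallel to the left boundary, a point lies in $\sect(j)\setminus\sect(j')$ exactly when its projection to the right boundary of $\sect(j)$ does; thus a proportionality of base vectors propagates through the injective shift operator to the given $K$-type. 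This reduces each assertion to the determining components listed in Table~\ref{tab-mmc}, which are Whittaker functions $M_{\k,s}$, $V_{\k,s}$, $W_{\k,s}$ with the same parameters. The proportionalities $M_{\k,s}\dis V_{\k,s}$ and $M_{\k,s}\dis W_{\k,s}$ are governed by \eqref{MWV} and are already packaged in Lemma~\ref{lem-mu-omups}: parts~i)/ii) give $\tilde\mu\dis\tilde\upsilon$ (hence $\Mfu=\Vfu$) and parts~iii)/iv) give $\tilde\mu\dis\tilde\omega$ (hence $\Mfu=\Wfu$).

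It then remains to match the hypotheses of the proposition to those of Lemma~\ref{lem-mu-omups}. The translation uses the affine relations \eqref{jnurels} together with the fact that $j\mapsto m_0(j)$ is affine of slope $-\tfrac\e3$; for $\ell>0$ this yields $m_0(j_l)=m_0(j_r)+\nu_+$ and $m_0(j_l)=m_0(j_+)+\nu_r$, with $j_l$ and $j_r$ interchanged when $\ell<0$. In case~i)a) the hypotheses $m_0(j_l),m_0(j_r)\geq 0$ force $\wo(\ps)_\n^+=\wo(\ps)^+$; taking $j=j_l$ when $\ell>0$ and using $-j_l+\nu_l=2\nu_+$ turns the binding inequality of Lemma~\ref{lem-mu-omups}i) into $\nu_+\leq\max(a,m_0(j_l))$, which holds since $m_0(j_l)=m_0(j_r)+\nu_+\geq\nu_+$. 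In case~ii)a), with $\ell>0$ one has $\wo^1(\ps)_\n=\{j_l\}$, and the condition of Lemma~\ref{lem-mu-omups}iii) becomes $\nu_r\geq 1+\max(a,m_0(j_l))$; here $a\leq\nu_r-1$ is exactly the statement that the base $K$-type lies outside $\sect(j_+)$, while $m_0(j_l)=m_0(j_+)+\nu_r\leq\nu_r-1$ follows from $m_0(j_+)\leq -1$. The sub-cases i)b), i)c) and ii)b) are handled by the mirror-image choice of representative and the $\ell<0$ versions of the same inequalities.

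The main obstacle I anticipate is precisely this bookkeeping. For each sub-case one must select the correct representative $(j,\nu)\in\wo(\ps)_\n^+$ for the $K$-types in the prescribed sector difference, check that the integrality-and-sign condition of \eqref{MWV} separating $M_{\k,s}$ from $V_{\k,s}$ (resp.\ $W_{\k,s}$) holds \emph{uniformly} across the whole region and not merely at the base $K$-type, and confirm that this proportionality propagates through the injective upward shift operator via the cone geometry. No single computation is deep, but keeping the several parametrizations and Weyl-chamber walls straight, and verifying that the inequalities produced by Lemma~\ref{lem-mu-omups} are exactly those forced by the monotonicity of $m_0$, is where the real care lies.
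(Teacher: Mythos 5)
Your proposal is correct and follows essentially the same route as the paper's proof: reduce to the one-dimensional highest weight spaces, apply Lemma~\ref{lem-mu-omups} to the determining components of the boundary families $\tilde x^{a,0}_\n$ (resp.\ $\tilde x^{0,b}_\n$), translate the hypotheses via the affine relations for $m_0$ and \eqref{jnurels}, and propagate along the injective upward shift operator. The only caveat is that i)b) and i)c) are not literally mirror images of i)a) (they need the threshold $a\geq a_0$ on the boundary of $\sect(j_l)$, resp.\ $\sect(j_r)$, to be identified with the entry into the other sector, exactly as you do for ii)a)), but your "binding inequality" already contains that computation.
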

\rmrk{Remarks}The $K$-types occurring in $\Mfu^\ps_\n$ correspond to the
points in the union of the sectors $\sect(j)$ with $j\in
\{j_l,j_+,j_r\}$ for which $m_0(j) \geq 0$. So in case i)a) we have
$\Mfu^\ps_\n = \Vfu^\ps_n$. In the cases not mentioned under i) and ii)
the space $\Mfu^\ps_{\n;h,p}$ is not equal to one of $\Vfu^\ps_{\n;h,p}$
or $\Wfu_{\n;h,p}^\ps$. In the pictures in
Subsection~\ref{sect-str-nab}, pp \pageref{fig-j3}--\pageref{fig-mj1a},
all possibilities are illustrated.

\begin{proof}
In the proof we will use many times that $j\mapsto m_0(j)$ is a strictly
decreasing function if $\e=1$ and a strictly increasing function if
$\e=1$. If we view the formula for $m_0$ as describing a function
on~$\RR$, then the derivative is $-\frac\e 3$.

The basis is Lemma~\ref{lem-mu-omups}. Part i) gives information for
$(h/3,p)$ on a boundary line of the sector $\sect(j)$. It suggests that
the quantity $Q(j,\nu) = 2m_0(j) + \e j-\nu$ is crucial. Let $\e=1$. If
$Q(j,\nu)\geq 0$, then i)a) in the lemma shows that $\Mfu_{\n;h,p}
= \Vfu_{\n;h,p}$ for all points $(h/3,p)$ on the right boundary of the
sector $\sect(j)$. For $\e=1$ the shift operator $\sh{-3}1$ is
injective by Proposition~\ref{prop-kuso}. Hence
$\Mfu_{\n;h,p} = \Vfu_{\n;h,p}$ for all $K$-types corresponding to
points $(h/3,p) \in \sect(j)$.

Still assuming that $\e=1$, let us suppose that $m_0(j) \geq 0$ for all
$j\in \{j_l,j_+,j_r\}$. For $(j_r,\nu_r)$ we know that $\nu_r \leq
j_r$. Hence $Q(j_r,\nu_r) \geq 2\cdot 0 + j_r-\nu_r\geq 0$.
Furthermore, the relations for $m_0$ and the relations in
Lemma~\ref{lem-kdso} imply that $Q(j_l,\nu_l) = Q(j_+,\nu_+) = 2
m_0(j_r)\geq 0$. A check for $Q(j_+,\nu_+)$ goes as follow:
\begin{align*}
2m_0(j_+) & + j_+-\nu_+ \= 2\bigl( m_0(j_r) - \frac13(j_+-j_r) \bigr) +
j_+ -\frac13(j_r-j_l)
\\
&\= 2m_0(j_r) + \frac13 j_++ \frac 13 j_r+ \frac13 j_l \= 2m_0(j_r)\,.
\end{align*}
(See \cite[23a]{Math} for further checks.)
Thus, we get $\Mfu^\ps_{\n;h,p} = \Vfu^\ps_{\n;h,p}$ for all $K$-types
corresponding to points in $\sect(j_r) \cup \sect(j_+) \cup
\sect(j_l)$. This gives i)a)
in the case of $\e=1$. The case of $\e=-1$ goes analogously, working
with the left boundary of a sector, and using the injectivity of
$\sh 3 1$. We have $m_0(j_l)=2m_0(j_l) - j_l -\nu_l\geq 0$, and check
that $Q(j_+,\nu_+) = Q(j_r,\nu_r) = 2m_0(j_l)$.\medskip

We turn to the case that at least one of the $m_0(j)$ is negative. For
$\e=1$, this means that $m_0(j_r)<0$, and $m_0(j_l) \geq 0$; otherwise
$\Mfu^\ps_\n=\{0\}$.

In this case, we have $Q(j_l,\nu_l) = Q(j_+,\nu_+)<0$, and we need to
take into account the role of $p$ in Lemma~\ref{lem-mu-omups}. For
points $(h/3,p)=(2j_l/3+a,a)$ on the right boundary of $\sect(j_l)$ we
have $\Mfu_{\n;h,p,p}^{\xi_l,\nu_l} = \Vfu_{\n;h,p,p}^{\xi_l,\nu_l}$ if
and only if $\nu_l-j_l \leq 2a$. The lowest of these points occurs for
$a_0=\frac{\nu_l-j_l}2$. We note with Lemma~\ref{lem-kdso} for this
lowest value that
\[ 2j_l+6 a_0 \= 2j_l+3\nu_l - 3 j_l \= -j_l+(j_r-j_+) = 2j_r\,.\]
Since $2j_l+3a_0=2j_r-3a_0$, the point $(h/3,p)$ is at the intersection
of the right boundary of $\sect(j_l) $ and the left boundary of the
sector $\sect(j_r)$. Taking into account that for $\e=1$ the shift
operator $\sh{-3}1$ is injective
(Proposition~\ref{prop-kuso}), we conclude that all points $(h/3,p)$ in
$\sect(j_l) \cap \sect(j_r)$ satisfy
$\Mfu_{\n;h,p}^{\xi_l,\nu_l} = \Vfu_{\n;h,p}^{\xi_l,\nu_l}$.

If $m_0(j_+)\geq 0$ we have also to apply the same reasoning (and an
analogous computation) to get $\Mfu_{\n;h,p}^{\xi_l,\nu_l} =
\Vfu_{\n;h,p}^{\xi_l,\nu_l}$ for $K$-types corresponding to points in
$\sect(j_+) \cap \sect(j_r)$. This gives i)b). For $\e=-1$ we proceed
analogously to get~i)c). (Computations in \cite[\S23b]{Math}.)
\medskip

We turn to ii) in the proposition, for $\e=1$. For the base point of the
sector $\sect(j_l)$ Lemma~\ref{lem-mu-omups} gives the relation
$\frac12(j_l+\nu_l )\leq -1 -m_0(j_l)$. Since $- \frac12(j_l+\nu_l) =
\frac13(j_+-j_l) =p_1\geq 0$, we get the relation $m_0(j_l) \leq
-1+p_1$, which implies $m_0(j_+) \leq -1$. Hence we can restrict our
attention to $\sect(j_l)$. For the points $(2j_l/3+a,a)$ on the right
boundary line of $\sect(j_l)$, Lemma~\ref{lem-mu-omups} gives the
condition $b+1 \leq - \frac{l_j+\nu_l}2 \= p_1$. So we get all points
that are not in the sector $\sect(j_+)$. By the injectivity of
$\sh{-3}1$ we conclude that $\Mfu^{\xi_l,\nu_l}_{\n;h,p} =
\Wfu^{\xi_l,\nu_l}_{\n;h,p}$ for all $K$-types corresponding to points
of $\sect(j_l) \setminus \sect(j_+)$. This gives ii)b). For $\e=-1$ we
obtain ii)a) in an analogous way.
(Computations in \cite[\S23c]{Math}.)
\end{proof}

\begin{remark}In Lemmas \ref{lem-usho-upsom} and~\ref{lem-kdso} we
determine the lines of $K$-types in $\Vfu^\ps_\n$ and $\Wfu^\ps_\n$ on
which shift operators vanish. We do not need to repeat that work for
$\Mfu^\ps_\n$. For $K$-types such that $\Mfu^\ps_{\n;h,p,p} =
\Vfu^\ps_{\n;h,p,p}$ we can use the results for $\Vfu^\ps_\n$, and
similarly for $K$-types where $\Mfu^\ps_\n$ and $\Wfu^\ps_\n$ agree. On
the other $K$-types a shift operator vanishes on $\Mfu^\ps_{\n;h,p,p}$
if and only if it vanishes on both $\Wfu^\ps_{\n;h,p,p}$ and
$\Vfu^\ps_{\n;h,p,p}$.
\end{remark}

\subsection{Structure results}\label{sect-str-nab}
In the non-abelian case the submodule structure of $\Wfu^\ps_\n$,
$\Vfu^\ps_\n$ and $\Mfu^\ps_\n$ depends strongly on the question for
which $j\in \wo^1(\ps)^+$ the condition $m_0(j)\geq 0$ is satisfied.
That leads to many combinations that we will consider in detail. First
we prove the last main theorem stated in the introduction.

\begin{proof}[Proof of Theorem~\ref{mnthm-nab-ip}]\label{prfD}
The description of the $K$-types occurring in the special Fourier term
modules is in iii) in Lemma~\ref{lem-VW-nab} (for $\Wfu$ and $\Vfu$)
and in Lemma~\ref{lem-Mfud1} (for $\Mfu$). Part~i) of the theorem also
states that these modules are reducible, and often non-isomorphic. This
becomes clear in the detailed discussion of the many cases later on in
this subsection.

For ii) we use ii) in Lemma~\ref{lem-VW-nab} and Lemma~\ref{lem-Mfud1}.
Part iii) follows from Proposition~\ref{prop-dim2-nab} and v) in
Lemma~\ref{lem-VW-nab}.

The statements in iv) and~v) are a reformulation of
Proposition~\ref{prop-M-VW}.
\end{proof}

\begin{remark}\label{rmk-inab}\emph{Irreducible submodules} The study of
the various possibilities of the subset $\wo(\ps)^+_\n$ of
$\wo(\ps)^+$ together with Lemmas \ref{lem-usho-upsom}
and~\ref{lem-kdso-na} lead to the list of irreducible submodules in
$\Wfu^\ps_\n$ in Table~\ref{tab-isoWna}. It turns out that
$\Vfu^\ps_\n$ has always a module of large discrete series type as its
unique irreducible submodule.
\end{remark}

\begin{table}[ht]{\small
\[ \begin{array}{|cccc|cl|c|}\hline
& \multicolumn{3}{c}{ m_0(j)} &\text{type} && \text{Fig.}\\ \hline
\ell&j_l & j_+ &j_r&\text{conditions}&\\ \hline
\neq 0& \geq 0&\geq 0 &\geq 0& \II_+(j_+,\nu_+) & \nu_+ \geq |j_+| &
\ref{fig-j3}, \ref{fig-j3a}\\
>0&&&&& m_0(j_+) \geq \frac12(\nu_+-j_+)
&\\
<0&&&&& m_0(j_+) \geq \frac12(\nu_++j_+)&\\ \hline
>0& \geq0&\geq0&<0& \FI_+(j_l,-\nu_l)
& 1\leq \nu_l \leq -j_l-2 &\ref{fig-pj2VW}
\\
>0&&&&& -\frac12(j_l+\nu_l) \leq m_0(j_l) < \frac12(\nu_l-j_l)&\\\hline
>0 & \geq 0 & \geq 0 & < 0 & \FI(j_l,j_l) &j_l=-\nu_l\leq -1
&\ref{fig-pj2b}\\
>0&&&&& 0 \leq m_0(j_l) < -j_l&\\\hline
<0 & < 0 & \geq 0 & \geq 0 &\IF_+(j_r,-\nu_r)& 1 \leq \nu_r \leq j_r-2
& \text{\ref{fig-mj2}}\\
<0&&&&&\frac12 (j_r-\nu_r) \leq m_0(j_r) < \frac12(j_r+\nu_r)&\\ \hline
<0 & <0 & \geq 0 & \geq 0 & \IF(j_r,-j_r) & j_r= \nu_r \geq 1 &
\text{\ref{fig-mj2b}}\\
<0&&&&& 0 \leq m_0(j_r) < j_r &\\ \hline
>0 & \geq 0 & <0 & <0 & \FI(j_l,\nu_l) & 1 \leq \nu_l \leq -j_l-2&
\text{\ref{fig-pj1VW}}\\
>0 &&&&& 0 \leq m_0(j_l)
<-\frac12(j_l+\nu_l)&\\ \hline
>0 & \geq 0 & <0 & <0 & \FI(j_l,0) & j_l \in 2 \ZZ_{\leq -1}&
\text{\ref{fig-pj1a}}\\
>0 &&&&& 0 \leq m_0(j_l) <-\frac12j_l&\\
\hline
<0 & <0 & <0 & \geq 0 & \IF(j_r,\nu_r) & 1 \leq \nu_r \leq j_r-2&
\text{\ref{fig-mj1}}\\
<0 &&&&& 0 \leq m_0(j_r) < \frac12(j_r-\nu_r)& \\ \hline
<0 & <0 & <0 & \geq 0 & \IF(j_r,0) & j_r\in 2\ZZ_{\geq 1} &
\text{\ref{fig-mj1a}}\\
<0 &&&&& 0 \leq m_0(j_r) < \frac12j_r&
\\\hline
\end{array}\]}
\caption[]{Isomorphism types of the sole irreducible submodule of
$\Wfu_\n^\ps$. The main spectral parameters $j_+$ and $\nu_+$ are in
$\ZZ$, and satisfy $\nu_+ \equiv j_+\bmod 2$, $\nu_+ \geq |j_+|$. They
determine $(j_l,\nu_l)$ and $(j_r,\nu_r)$ according
to~\eqref{jnurels}.\\
See \cite[\S24]{Math} for some computations. }\label{tab-isoWna}
\end{table}

Like in~\S\ref{sect-smps}, we consider the various cases in some detail.
There the cases were described by the spectral parameters $(j,\nu)$.
Here the set of $j\in \wo^1(\psi)$ for which $m_0(j) \geq 0$ determines
the cases, and we use the notations in \eqref{jnurels}. Since we
consider the modules $\Wfu_\n^\ps$, $\Vfu^\ps_\n$ and $\Mfu_\n^\ps$ for
a given $\ps\in \WOI$, we can use $(j_+,\nu_+)$ as the main spectral
parameters. By the letter `i' we indicate the irreducible submodule(s).

We depict the vanishing of shift operators with conventions similar to
those in \S\ref{sect-smps}. In most cases we get different pictures for
$\Vfu^\ps_\n$, $\Wfu^\ps_\n$ and $\Mfu^\ps_\n$. We indicate the
irreducible submodule by the letter `i'. For the pictures of
$\Mfu^\ps_\n$ we use the letters 'V' and 'W' to indicate the sets of
$K$-types in which $\Mfu^\ps_{\n;h,p}$ is equal to $\Vfu^\ps_{\n;h,p}$,
respectively $\Wfu^\ps_{\n;h,p}$.

\medskip\par
\subsubsection{}\emph{All $m_0(j)$ non-negative. }\label{sect-j3}
Let $m_0(j_l), m_0(j_+), m_0(j_r)\geq 0$.

\rmrk{Three different values $j_r<j_+<j_l$} According to
Lemma~\ref{lem-usho-upsom} the upward shift operators in $\Vfu^\psi_\n$
and in $\Wfu_\n^\psi$ are injective. Lemma~\ref{lem-kdso-na} implies
that the downward shift operators in $\Vfu^\psi_\n$ and $\Wfu_\n^\psi$
are zero on the boundaries of the three sectors, and injective
elsewhere. This leads to the configuration in Figure~\ref{fig-j3}.
\begin{figure}[htp]
\begin{center}\grf8{j3}\end{center}
\caption{Structure of $\Wfu^\psi_\n$, $\Vfu^\psi_\n=\Mfu^\psi_\n$ if
$\nu_+\geq|j_+|+2$, and all $m_0(j)\geq 0$. See \S\ref{sect-j3}.
}\label{fig-j3}
\end{figure}
Proposition~\ref{prop-M-VW} ii)a) implies that
$\Mfu_\n^\psi=\Vfu^\psi_\n$. So the figure describes this case as well.
This configuration is identical to the submodule structure in all
abelian cases with $j_l<j_+<j_r$. We saw the same submodule structure
in all generic abelian cases; see Figure~\ref{fig-strab},
p~\pageref{fig-strab}.

Each of the modules $\Wfu^\psi_\n$, $\Vfu^\psi_\n$, and $\Mfu^\psi_\n$
has an irreducible submodule of type $\II_+(j_+,\nu_+)$ with minimal
$K$-type satisfying $h_0=j_l+j_r=-j_+ $ and
$p_0=\frac13(j_r-j_l)=\nu_+$; we use \eqref{jjhp}.

\rmrk{Two coinciding $j$'s} This conclusion is also valid in the cases
that $j_+=\nu_+$ or $j_+=-\nu_+$. In those cases there are only two
sectors of $K$-types, and we get a configuration sketched in
Figure~\ref{fig-j3a}.
\begin{figure}[htp]
\begin{center}\grf8{j3a}\end{center}
\caption{Structure of $\Wfu^\psi_\n$, $\Vfu^\psi_\n$, and $\Mfu^\psi_\n$
if $\nu_+= |j_+|$, and all $m_0(j)\geq 0$. See \S\ref{sect-j3}.
}\label{fig-j3a}
\end{figure}

\medskip\par
\subsubsection{}\emph{Two $m_0(j)$ non-negative, $\ell>0$.
}\label{sect-pj2}
For $\ell>0$ we have $m_0(j_l) \geq m_0(j_+) \geq m_0(j_r)$. Now we
consider the situation that $m_0(j_r)<0$, and $m_0(j_l)\geq
m_0(j_+)\geq 0$.

\rmrk{No coinciding values of $j_l$, $j_+$ and $j_r$} Let $m_0(j_r) < 0
\leq m_0(j_+) < m_0(j_l)$. For $\Vfu_\n^\psi$ the position of the lines
where the shift operators are not injective does not depend on the
$m_0(j)$; the only difference is that the sector $\sect(j_r)$ does not
contribute $K$-types. This gives the configuration on the left in
Figure~\ref{fig-pj2VW}.
\begin{figure}[htp]
\begin{center}\grf{5}{pj2V}\quad \grf{5}{pj2W}\\
$\Vfu_\n^\psi$
\hspace{4.5cm} $\Wfu_\n^\psi$
\end{center}
\caption{Structure of $\Vfu_\n^\psi$ and $\Wfu_\n^\psi$ for $\ell>0$,
$\nu_+\geq |j_+|+2$, and $m_0(j_r)<0$, $m_0(j_l) > m_0(j_+) \geq 0$.
See \S\ref{sect-pj2}.} \label{fig-pj2VW}
\end{figure}

In the module $\Wfu_\n^\psi$ the downward shift operators vanish on the
$K$-types corresponding to points on the boundary lines of the sectors
$\sect(j_l)$ and $\sect(j_+)$. See ii)
in Lemma~\ref{lem-kdso-na}. Lemma~\ref{lem-usho-upsom} ii) gives
vanishing of $\sh 3 1$ on the line $h-2j_l+3p+6=0$. This gives the
picture on the right in Figure~\ref{fig-pj2VW}.

The module $\Vfu_\n^\ps$ has an irreducible submodule of type
$\II_+(j_+,\nu_+)$, with lowest $K$-type satisfying $h_0=-j_+$ and
$p_0 = \nu_+$. The module $\Wfu_\n^\psi$ has also one irreducible
submodule, of type $\FI_+$. In the picture we read off that
$h_0=j_l+j_+=-j_r$, $p_0=\frac13(j_+-j_l)=\nu_r$,
$A=\frac13\bigl( j_r-3-j_+)= \nu_l-1$, $B=\infty$, with~\eqref{jjhp}.
In Table~\ref{tab-isot-ps}, p~\pageref{tab-isot-ps}, we see that the
complete isomorphism type of the irreducible submodule of $\Wfu_\n^\ps$
is $\FI_+(j_l,-\nu_l)$.

Proposition~\ref{prop-M-VW} ii)b) states that $\Mfu^\psi_{\n;h,p,p}
= \Vfu_{n;h,p,p}^\psi$ for the $K$-types that correspond to $(h/3,p)
\in \sect(j_r)$. Comparison of the lines in Figure~\ref{fig-pj2VW} on
which the downward shift operators vanish in $\Vfu^\psi_\n$ and in
$\Wfu^\psi_\n$ gives the vanishing of the downward shift operators in
$\Mfu^\psi_\n$ as indicated in Figure~\ref{fig-pj2M}. The upward shift
operator does not vanish in $\Mfu^\psi_\n$, since the relevant
$K$-types of $\Mfu^\psi_\n$ have a component in both $\Vfu^\psi_\n$ and
$\Wfu^\psi_\n$.
\begin{figure}[htp]
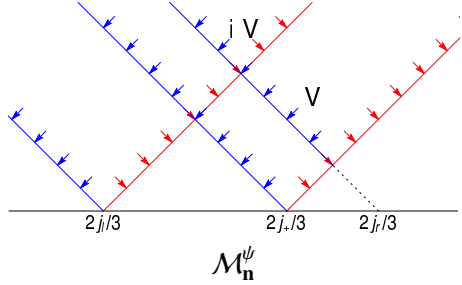

\begin{center}\grf6{pj2M}\\
$\Mfu_\n^\psi$\end{center}
\caption{Structure of $\Mfu^\psi_\n$ for $\ell>0$, $\nu_+\geq |j_+|+2$
and $m_0(j_r) < 0 \leq m_0(j_+)\leq m_0(j_l)$. See \S\ref{sect-pj2}.
}\label{fig-pj2M}
\end{figure}
The module $\Mfu^\psi_\n$ has one irreducible submodule, coinciding with
the irreducible submodule of $\Vfu_\n^\psi$ of type $\II_+(j_+,\nu_+)$.

\rmrk{Coinciding values $j_l=j_+$} The conclusions go through if
$j_l=j_+$, which happens for $\nu_+ = - j_+ \geq 1$. Then we obtain the
configuration in Figure~\ref{fig-pj2b}.
\begin{figure}[htp]
\begin{center}
\grf5{pj2bV}\quad\grf5{pj2bW}\\
$\Vfu_\n^\psi$
\hspace{4.5cm} $\Wfu_\n^\psi$\\
\grf5{pj2bM}\\
$\Mfu^\psi_\n$
\end{center}
\caption{Structure of $\Vfu_\n^\psi$, $\Wfu_\n^\psi$, and $\Mfu^\psi_\n$
for $\ell>0$, $\nu_+=-j_+\geq 1$, and $m_0(j_r)< 0 \leq m_0(j_+) =
m_0(j_l)$. See \S\ref{sect-pj2}. }\label{fig-pj2b}
\end{figure}
In this special case, $\nu_+=-j_+=\nu_l=-j_l\in \ZZ_{\geq 1}$, and the
irreducible submodules have the isomorphism types $\II_+(j_+,-j_+) $,
$h_0=-j_r=2j_+$, $p_0=j_+$; and $\FI(j_l,j_l) $ with $j_l\leq -1$. For
the latter type we note that $h_0=2j_+=2j_l$, $p_0=0$,
$A=\frac13(j_r+j_+)-1 = \nu_l-1 = |j_l|-1$, and consult
Table~\ref{tab-isot-ps}, p~\pageref{tab-isot-ps}.

\medskip\par
\subsubsection{}\emph{Two {$m_0(j)$ non-negative, $\ell<0$}.
}\label{sect-mj2}
For $\ell<0$ the function $m_0$ is strictly increasing, and $m_0(j_l) <
0 \leq m_0(j_+) \leq m_0(j_r)$. By a reasoning similar
to~\S\ref{sect-mj2} we find the configurations in Figures \ref{fig-mj2}
(for $j_+<j_r$)
and~\ref{fig-mj2b} (for $j_+=j_r$).
\begin{figure}[htp]
\begin{center}\grf{5}{mj2V}\quad \grf{5}{mj2W}\\
$\Vfu_\n^\psi$
\hspace{4.5cm} $\Wfu_\n^\psi$\\
\grf5{mj2M}\\
$\Mfu^\psi_\n$
\end{center}
\caption{Structure of $\Vfu_\n^\psi$, $\Wfu_\n^\psi$, and $\Mfu^\psi_\n$
for $\ell<0$, $\nu_+\geq |j_+|+2$, and $m_0(j_l)<0 \leq m_0(j_+) <
m_0(j_r)$. See \S\ref{sect-mj2}.} \label{fig-mj2}
\end{figure}
\begin{figure}[htp]
\begin{center}\grf{5}{mj2bV}\quad \grf{5}{mj2bW}\\
$\Vfu_\n^\psi$
\hspace{4.5cm} $\Wfu_\n^\psi$\\
\grf5{mj2bM}\\
$\Mfu^\psi_\n$
\end{center}
\caption{Structure of $\Vfu_\n^\psi$, $\Wfu_\n^\psi$, and $\Mfu^\psi_\n$
for $\ell<0$, $\nu_+=|j_+|$, and $m_0(j_l)<0 \leq m_0(j_+) = m_0(j_r)$.
See \S\ref{sect-mj2}.} \label{fig-mj2b}
\end{figure}

\rmrk{Three different $j$-values} For $\nu_+ \geq |j_+|+2$, we obtain an
irreducible module of type $\II_+(j_+,\nu_+)$ in the intersections of
$\Vfu^\psi_\n$ and $\Mfu^\psi_\n$, with $h_0=j_++j_r=-j_l $ and
$p_0=\nu_+ $, and in $\Wfu^\ps_\n$ an irreducible module of type
$\IF_+(j_r,-\nu_r)$ with $h_0= j_r+j_+=-j_l$,
$p_0 =\frac13(j_r-j_+)= \nu_l$, $A=\infty$, and $B=
\frac13(j_+-j_l) -1= \nu_r-1$.

\rmrk{Coinciding values $j_+=j_r$} In the situation of
Figure~\ref{fig-mj2b} we have $j_r=j_+=\nu_r-\nu_+$ with $j_r\in
\ZZ_{\geq 1}$. Then $\Vfu^\ps_\n$ has again an irreducible submodule of
type $\II_+(j_+,j_+)$. The irreducible submodule of $\Wfu_\n^\ps$ has
parameters $h_0=2j_+=2j_r$, $p_0=0$, $A=\infty$, and
$B=\frac13(j_+-j_l)-1 = \nu_r-1$, and has type $\IF(j_r,-j_r)$, with
$j_r\geq 1$.

\medskip\par
\subsubsection{}\emph{One $m_0(j)$ non-negative, $\ell>0$.
}\label{sect-pj1}
We consider $m_0(j_r)\leq m_0(j_+) <0 \leq m_0(j_l)$ for $\ell>0$.

\rmrk{Three different $j$-values} Lemma~\ref{lem-usho-upsom} gives lines
$h-2j_++3p+6=0$ and $h-2j_r+3p+6=0$ with $K$-types on which $\sh 3 1$
vanishes in $\Wfu_\n^\ps$, and Lemma~\ref{lem-kdso-na} gives the
vanishing of the downward shift operators on all boundary lines of
sectors for $\Vfu^\ps_\n$, and on the boundary lines of $\sect(j_l)$
for $\Wfu_\n^\ps$. For $j_l\neq j_+$, this leads to the configuration
in Figure~\ref{fig-pj1VW}.
\begin{figure}[htp]
\begin{center}
\grf{5}{pj1V} \quad \grf5{pj1W}\\
$\Vfu_\n^\ps$
\hspace{4.5cm} $\Wfu^\ps_\n$
\end{center}
\caption{Structure of $\Vfu^\ps_\n$ and $\Wfu^\ps_\n$ for $\ell>0$, $
m_0(j_l) \geq 0 > m_0(j_+) > m_0(j_r)$. See~\S\ref{sect-pj1}. }
\label{fig-pj1VW}
\end{figure}

The module $\Vfu_\n^\ps$ has an irreducible submodule of type
$\II_+(j_+,\nu_+)$, $h_0=-j_+$, $p_0=\nu_+$. The module $\Wfu_\n^\ps$
has an irreducible submodule of type $\FI(j_l,\nu_l)$, with
$h_0=-2j_l$, $p_0=0$, and $A=\nu_r-1$, with $1\leq \nu_l \leq -j_l-2$.

\begin{figure}[htp]
\begin{center}
\grf{5}{pj1M} \\
$\Mfu^\ps_\n$
\end{center}
\caption{Structure of $\Mfu^\ps_\n$ for $\ell>0$, $ m_0(j_l) \geq 0 >
m_0(j_+) > m_0(j_r)$. See~\S\ref{sect-pj1}. } \label{fig-pj1M}
\end{figure}
Proposition~\ref{prop-M-VW} ii)a) tells us that the irreducible
submodule of $\Wfu_\n^\ps$ is contained in $\Mfu_\n^\ps$, and by i)b)
in that proposition, the irreducible submodule of $\Vfu^\ps_\n$ is also
contained in~$\Mfu^\ps_\n$. This leads to the sketch of $\Mfu^\ps_\n$
in Figure~\ref{fig-pj1M}.

\rmrk{Coinciding values $j_+=j_r$} If $j_+=j_r$ we get the configuration
described in Figure~\ref{fig-pj1a}.
\begin{figure}[htp]
\begin{center}\grf{5}{pj1aV}\quad \grf{5}{pj1aW}\\
$\Vfu_\n^\psi$
\hspace{4.5cm} $\Wfu_\n^\psi$\\
\grf5{pj1aM}\\
$\Mfu^\psi_\n$
\end{center}
\caption{Structure of $\Vfu_\n^\psi$, $\Wfu_\n^\psi$, and $\Mfu^\psi_\n$
for $\ell>0$, $j_r=j_+$, and $m_0(j_r)= m_0(j_+) <0\leq m_0(j_l)$. See
\S\ref{sect-pj1}.} \label{fig-pj1a}
\end{figure}
The module $\Vfu^\ps_\n$ has an irreducible module of type
$\II_+(j_l,j_l)$, and $\Wfu^\ps_\n$ and $\Mfu^\ps_\n$ contain an
irreducible submodule of type $\FI(j_l,0)$ with parameters $h_0=2j_l$,
$p_0=0$, $A=\nu_+-1$, $B=\infty$, and the condition $j_l \leq -2$,
even.

\medskip\par
\subsubsection{}\emph{One $m_0(j)$ non-negative, $\ell<0$.
}\label{sect-mj1}
The last case is $\ell<0$, $m_0(j_l)\leq m_0(j_+)<0\leq m_0(j_r)$.
Analogously to \S\ref{sect-mj1} we get the situation in
Figures~\ref{fig-mj1} and~\ref{fig-mj1a}.

\begin{figure}[htp]
\begin{center}\grf{5}{mj1V}\quad \grf{5}{mj1W}\\
$\Vfu_\n^\psi$
\hspace{4.5cm} $\Wfu_\n^\psi$\\
\grf5{mj1M}\\
$\Mfu^\psi_\n$
\end{center}
\caption{Structure of $\Vfu_\n^\psi$, $\Wfu_\n^\psi$, and $\Mfu^\psi_\n$
for $\ell<0$, $\nu_+\geq |j_+|+2$ and $m_0(j_l)< m_0(j_+) < 0\leq
m_0(j_r)$. See \S\ref{sect-mj1}.} \label{fig-mj1}
\end{figure}
We find an irreducible submodule of type $\II_+(\nu_+,j_+)$ in
$\Vfu^\ps_\n\cap \Mfu^\ps_\n$, and an irreducible submodule of type
$\IF(j_r,\nu_r)$ in $\Wfu_\n^\ps\cap \Mfu_\n^\ps$.
\begin{figure}[htp]
\begin{center}\grf{5}{mj1aV}\quad \grf{5}{mj1aW}\\
$\Vfu_\n^\psi$
\hspace{4.5cm} $\Wfu_\n^\psi$\\
\grf5{mj1aM}\\
$\Mfu^\psi_\n$
\end{center}
\caption{Structure of $\Vfu_\n^\psi$, $\Wfu_\n^\psi$, and $\Mfu^\psi_\n$
for $\ell<0$, $\nu_+=-j_l$ and $m_0(j_l)= m_0(j_+)<0 \leq m_0(j_r)$.
See \S\ref{sect-mj1}. } \label{fig-mj1a}
\end{figure}
The latter module has $1\leq \nu_r\leq j_r-2$ in Figure~\ref{fig-mj1}
and $\nu_r=0$ in~Figure~\ref{fig-mj1a}.

\clearpage


\def\flnm{rFtm-III-unstr}


\section{Unitary structure}\label{sect-un}\markright{15. UNITARY
STRUCTURE}

A $(\glie,K)$-module $V$ is \emph{unitarizable} if there is a positive
definite sesquilinear invariant form on $V$. If $V$ is irreducible such
sesquilinear forms are unique up to a positive factor. Unitarizability
is a property of the isomorphism class of the module. For each
isomorphism class of irreducible modules, we will determine in this
section whether it is unitarizable.

\subsection{Invariant sesquilinear forms and unitarizability} We will
use \il{sqlf}{sesquilinear form}sesqui\-linear forms $(\cdot,\cdot)$,
which are complex linear in the first variable and conjugate complex
linear in the second variable. Such a form is positive definite if
$(x,x)\geq 0$ for all $x$ in the domain, and, furthermore, if $(x,x)=0$
implies $x=0$.

A sesquilinear form on a $\glie$-module $V$ is \il{invsf}{invariant
sesquilinear form}invariant if
\be \label{sesq}\bigl( \XX v,w\bigr) + \bigl( v,\bar\XX w
\bigr)\=0\qquad \text{for all }v,w\in V, \XX\in \glie_c\,.\ee
By $\bar \XX$ for $\XX\in \glie$ we denote the complex conjugate with
respect to the real Lie algebra $\glie\subset\glie_c$. If $V$ is a
$(\glie,K)$-module we have $\bigl( k v, k w\bigr) = (v,w)$ for all
$k\in K$. If $V$ is irreducible all invariant sesquilinear forms on it
are proportional. A \il{us}{unitarizable} $(\glie,K)$-module $V$ is
unitarizable if it allows a positive definite invariant sesqui\-linear
form on $V$. If $V$ is irreducible all positive definite sesquilinear
forms are related by a positive factor.

In \S\ref{sect-usps} and \S\ref{sect-usdst} we will prove the following
result:
\begin{thm}\label{thm-us}The following isomorphism classes of
irreducible $(\glie,K)$-mod\-ules are unitarizable.
\begin{itemize}
\item \emph{Principal series}
\begin{itemize}
  \item \emph{ Irreducible unitary principal series: }\ $\II(j,\nu)$
  with $\nu \in i\RR$ and $j\in \ZZ$; if $\nu =0$, then
  $j\in \{0\}\cup\left(1+2\ZZ\right)$.
  \item \emph{Complementary series: }\ $\II(j,\nu)$ with either
   $0<\nu<2$, $j=0$, or $0<\nu<1$ and $j\equiv 1\bmod 2$.
\end{itemize}
\item \emph{Discrete series types}
\begin{itemize}
  \item \emph{Large dst: }\ $\II_+(j_+,\nu_+)$ with
  $\nu_+ \equiv j_+\bmod 2$, $\nu_+\geq |j_+|$, $\nu_+\geq 1$.

  \item \emph{Holomorphic dst: }\ $\IF(j_r,\nu_r)$ with
$0\leq \nu_r \leq j_r-2$.

   \item \emph{Antiholomorphic dst: }\ $FI(j_l,\nu_l)$ with
$1\leq  \nu_l \leq -j_l$.

\end{itemize}
\item \emph{Langlands representations}
\begin{itemize}
  \item \emph{Thin representations:}\ $T^+_{-1} = \IF(1,-1)$,
  $T^-_{-1}= \FI(-1,-1)$, $T^+_k = \IF_+(2k+3,-1)$,
  $T^-_k = \FI_+(-2k-3,-1)$ with $k\in \ZZ_{\geq 0}$.\il{Tkpm}{$T^\pm_k$}

\item \emph{Trivial representation: }$\FF(0,-2)$.
\end{itemize}
\end{itemize}
\end{thm}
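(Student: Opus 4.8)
The plan is to exploit the multiplicity-one structure of the $K$-types that holds for every irreducible special module. Since each irreducible $(\glie,K)$-module $V$ in the list of \S\ref{sect-list-irr} is special with each $K$-type $\tau^h_p$ occurring with multiplicity one, any $K$-invariant sesquilinear form on $V$ restricts on each isotypic component $V_{h,p}$ to a scalar multiple $c(h,p)$ of a fixed positive-definite $K$-invariant inner product $\langle\cdot,\cdot\rangle_{h,p}$ (such a positive form exists because $K$ is compact). An invariant form is therefore encoded by one real scalar $c(h,p)$ per $K$-type, and $V$ is unitarizable exactly when the $c(h,p)$ can all be chosen of one sign. The invariance condition \eqref{sesq} is tested on the generators $\Z_{13},\Z_{31},\Z_{23},\Z_{32}$ of $\glie_c$, whose conjugations with respect to $\glie$ are $\overline{\Z_{13}}=\Z_{31}$, $\overline{\Z_{23}}=\Z_{32}$ and $\overline{\Z_{12}}=\Z_{21}$, as recorded in \eqref{IdL}. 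Hence for the form $\Z_{31}^\ast=-\Z_{13}$ and $\Z_{23}^\ast=-\Z_{32}$.

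First I would convert these adjunctions into a recursion for $c(h,p)$. For a highest weight vector $v\in V_{h,p,p}$ put $w=\sh31 v=\Z_{31}v\in V_{h+3,p+1,p+1}$. Then
\be (\sh31 v,\sh31 v)\=-(v,\Z_{13}w)\=-(v,\sh{-3}{-1}w)\,, \ee
because the correction term $\tfrac1{2(p+2)}\Z_{12}\Z_{23}w$ in the description of $\sh{-3}{-1}$ in Table~\ref{tab-sho} pairs to zero against $v$ (invariance with $\Z_{21}$ gives $(v,\Z_{12}u)=-(\Z_{21}v,u)=0$ since $v$ is highest weight). By multiplicity one, $\sh{-3}{-1}\sh31 v$ is a scalar multiple of $v$, namely the coefficient $\al_-$ of the special module, which is an explicit function of the Casimir eigenvalue $\ld_2(j,\nu)$ through Lemma~\ref{lem-mv}. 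This yields
\be \frac{c(h+3,p+1)}{c(h,p)}\=-\al_-\,\frac{\langle v,v\rangle_{h,p}}{\langle w,w\rangle_{h+3,p+1}}\,, \ee
a quantity whose sign is $-\sign(\al_-)$; an analogous formula holds along $\sh{-3}1$. Unitarizability is then equivalent to the existence of a single-signed solution $c(h,p)$ of these recursions over the whole $K$-type diagram of $V$, i.e.\ to all the relevant shift coefficients having signs compatible with one choice of sign at the minimal $K$-type.

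With this machinery the proof divides as in the two subsections. For the principal series (\S\ref{sect-usps}) it is cleaner to package the form through the intertwining operator $\ii_0(j,\nu):H^{\xi,\nu}_K\to H^{\xi,-\nu}_K$ of \eqref{ii0}. When $\nu\in i\RR$ one has $-\bar\nu=\nu$, so the natural invariant pairing of $H^{\xi,\nu}_K$ with $H^{\xi,-\bar\nu}_K=H^{\xi,\nu}_K$ is already a form on the module and is positive definite (the $L^2$ inner product), giving the unitary principal series; since $\nu=0$ yields an irreducible module only for $j\in\{0\}\cup(1+2\ZZ)$, that restriction appears. For real $\nu$ the form equals $c(h,p,r,\nu)$ times the fixed one, with $c(h,p,r,\nu)$ the explicit ratio of Gamma factors in \eqref{ii0}; positivity of all these factors over the occurring $K$-types cuts out precisely $0<\nu<2$ for $j=0$ and $0<\nu<1$ for odd $j$, the complementary series. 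For the discrete series types (\S\ref{sect-usdst}) the genuine discrete series are square integrable, hence automatically unitarizable; to treat the limits uniformly I would instead run the sign recursion above on the irreducible submodule of the relevant reducible $H^{\xi,\nu}_K$, checking that the coefficients $\al_\pm$ keep one sign throughout the sector $\sect(j)$. The thin representations $T^\pm_k$ (the $\nu=-1$ Langlands quotients) and the trivial representation $\FF(0,-2)$ are handled by the same computation.

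The main obstacle will be the sign bookkeeping at the boundary cases, namely the Langlands and thin representations. There the ratio $-\al_\pm$ can change sign as one crosses the interior lines of $\sect(j)$ where shift operators vanish, so one must verify that the form stays (semi)definite exactly on the surviving irreducible constituent and pin down the precise parameter windows. Establishing the endpoints of the complementary series, and showing that all other Langlands representations and the nontrivial finite-dimensional modules $\FF(j_+,\nu_+)$ fail positivity — through the sign of the same Gamma-factor and shift-coefficient products — is the delicate part, requiring the careful, partly computer-assisted, evaluations already prepared in the earlier sections.
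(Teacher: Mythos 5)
Your proposal is sound and, for the principal series, coincides with the paper's argument: the paper also builds the form as $(\cdot,\cdot)_\cmpl=(\cdot,\ii_0\,\cdot)_\uprs$ via \eqref{cmpl1} and reduces positivity to the sign of the Gamma-factor ratio $c(p,r,\nu)$ of \eqref{ii0}, isolating the elementary sign analysis in Lemma~\ref{lem-AB}; this yields exactly the stated windows $0<\nu<2$ ($j=0$) and $0<\nu<1$ ($j$ odd). Where you diverge is in the treatment of the remaining classes. The paper does \emph{not} invoke square integrability for the discrete series types; instead it renormalizes the same Gamma-factor coefficient by removing its zeros and poles at integral $\nu$ (the functions $c_a$, $c_c$, $c_f$, \dots\ attached to the rows of Table~\ref{tab-IFFI}), restricts to the $K$-types of the relevant irreducible submodule of $H^{\xi,\nu}_K$, and reads off positivity in closed form, so the global sign pattern over the whole $K$-type sector is visible at once. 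Your alternative — the ladder recursion $c(h{+}3,p{+}1)/c(h,p)=-\al_-\cdot(\text{positive})$ obtained from $\overline{\Z_{13}}=\Z_{31}$ in \eqref{IdL}, multiplicity one, and the vanishing of the $\Z_{12}\Z_{23}$ correction against a highest weight vector — is correct (and is initialized by Lemma~\ref{lem-mv}, propagated by Lemma~\ref{lem-iae}); the paper itself uses precisely this pairing of $\sh{-3}1$ against $\sh3{-1}$, with explicit factors $c_u$, $c_d$, as its secondary argument to exclude unitarizability of $\IF_+(j_r,-\nu_r)$ for $\nu_r\geq 2$. The trade-off is that your recursion must be iterated over the whole sector and its consistency across the two upward directions checked via the square relation \eqref{sqrel}, whereas the closed-form coefficients settle each class in one evaluation; conversely your square-integrability shortcut, while legitimate, imports the external identification of $\II_+(j,\nu)$, $\IF(j_r,\nu_r)$, $\FI(j_l,\nu_l)$ (for $\nu$ in the interior of the chambers) with genuine discrete series, which the paper asserts but does not rely on. In both versions the substance of the theorem lives in the case-by-case sign bookkeeping you correctly flag as the delicate part, and which the paper carries out (partly in the accompanying notebook) for each row of Table~\ref{tab-IFFI} and for $\FF(j_+,-\nu_+)$, where the factor $(-1)^{a+b}$ forces $A=B=0$ and hence singles out the trivial representation $\FF(0,-2)$.
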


\rmrk{Remarks} (1) \ These are the isomorphism classes of unitarizable
modules that Wallach gives in~\cite[\S7]{Wal76}. For the thin
representations we have also indicated the notations of that paper.

(2) \il{thrpru}{thin representation}The thin representations turn up in
the continuous cohomology of~$G$; \cite[Lemma 9.2]{Wal76}, and
Section~3 and Theorem~4, ii), in~\cite{Ish00}. See also \cite[Theorem
2.5]{VZ84}.

The term ``thin representation'' is used by Ishikawa \cite{Ish00}. The
$K$-types in the thin representations correspond to a single line in
the $\bigl( \frac h3,p \bigr)$-plane. There are discrete series
representation that have the same property; see Figure~\ref{fig-ps3a}
for an example.

\subsection{Principal series representations}
\label{sect-usps}One knows the positive definite ses\-qui\-lin\-ear forms on
principal series representations  in a much more general
context. See for instance Baldoni Silva and Barbasch \cite{BB83} for
$\RR$-rank one groups.

For $\SU(2,1)$ we have the following.
\begin{prop}\label{prop-pscs}The irreducible principal series
representation $H^{\xi,\nu}_K$ is unitarizable in precisely the
following cases:
\begin{enumerate}
\item[i)] \emph{Unitary principal series. } \il{ups}{unitary principal
series}For $\nu\in i \RR$ and $j\in \ZZ$, with the sesquilinear form
determined by \il{unprs}{$\bigl(\cdot,\cdot \bigr)_\uprs$}
\be \bigl( \kph hprq(\nu), \kph{h'}{p'}{r'}{q'}
(\nu)\bigr)_\uprs \= \dt_{p,p'}\,\dt_{r,r'}\, \dt_{q,q'} \, \bigl\|\Kph
h{p}{r}{q}\bigr\|^2_K\,. \ee
\item[ii)] \emph{Complementary series. } \il{cos}{complementary series}
For $\nu \in \RR$, with $0<|\nu|<2$ if $j=0$, and $|\nu|<1$ if
$j\equiv1\bmod 2$, with the sesquilinear form determined
by\ir{compls}{\bigl(\cdot,\cdot \bigr)_\cmpl}
\badl{compls}& \bigl( \kph h{p}{r}{q}(\nu), \kph{h'}{p'}{r'}{q'}
(\nu)\bigr)_\cmpl \\
&\qquad\=\dt_{p,p'}\,\dt_{r,r'}\, \dt_{q,q'} \, \frac{\Gf\bigl(
1+\frac{j-\nu+p+r}2\bigr) \, \Gf\bigl( 1+\frac{-j-\nu+p-r}2\bigr) }
{\Gf\bigl( 1+\frac{j+\nu+p+r}2\bigr) \, \Gf\bigl(
1+\frac{-j+\nu+p-r}2\bigr) }\, \bigl\|\Kph h{p}{r}{q}\bigr\|^2_K \,.
\eadl
\end{enumerate}
\end{prop}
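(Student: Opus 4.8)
The plan is to exploit multiplicity one of the $K$-types together with the conjugation relations in $\glie_c$. First I would record that complex conjugation with respect to the real form $\glie$ acts on the root vectors of Table~\ref{tab-rootv} by $\bar\Z_{31}=\Z_{13}$, $\bar\Z_{23}=\Z_{32}$, $\bar\Z_{12}=\Z_{21}$, while $\CK_i$ and $\WW_0$ are fixed. Since every $K$-type $\tau^h_p$ in $H^{\xi,\nu}_K$ occurs once (with $r=(h-2j)/3$ fixed and $|q|\le p$), any invariant sesquilinear form is automatically $K$-invariant, so by Schur's lemma it is on each isotypic block a scalar multiple $\lambda(h,p)$ of the $L^2(K)$-pairing $\dt_{q,q'}\|\Kph h{p}{r}{q}\|_K^2$. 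Thus the form is encoded by one scalar per $K$-type, the space of invariant forms is at most one-dimensional by irreducibility, and Hermiticity forces each $\lambda(h,p)\in\RR$.

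Next I would turn \eqref{sesq} into a recursion for the $\lambda(h,p)$. Pairing $\Z_{31}$ across adjacent blocks via $\bar\Z_{31}=\Z_{13}$ gives, on highest weight vectors $v=\kph h{p}{r}{p}(\nu)$ and $w=\kph{h+3}{p+1}{r+1}{p+1}(\nu)$, the identity $c_{+}(\nu)\,\lambda(h+3,p+1)\,\|w\|^2 = -\overline{c_{-}(\nu)}\,\lambda(h,p)\,\|v\|^2$, where $c_{+}$ and $c_{-}$ are the coefficients of $\sh31$ and $\sh{-3}{-1}$ from \eqref{shps}; the correction term distinguishing $\Z_{13}$ from $\sh{-3}{-1}$ raises past the top weight of $w$ and so vanishes. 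With $a:=4+h+2p-r$ one reads off $c_{+}\propto(a+2\nu)$ and $c_{-}\propto(2\nu-a)$ with positive prefactors, so $\lambda(h+3,p+1)/\lambda(h,p)$ is a positive multiple of $(a-2\bar\nu)/(a+2\nu)$; the $\Z_{23}$-edges give the same statement with $\tilde a:=4-h+2p+r$. Writing $h=2j+3(\al-\bt)$, $p=\al+\bt$ with $\al,\bt\in\ZZ_{\geq0}$ these become $a=4+2j+4\al$ and $\tilde a=4-2j+4\bt$.

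Two consequences follow. Reality of every ratio forces $(a-2\bar\nu)/(a+2\nu)\in\RR$ for all $a$, i.e.\ $\nu\in\RR\cup i\RR$; for other $\nu$ no invariant Hermitian form exists and the module is not unitarizable. For $\nu\in i\RR$ we have $\bar\nu=-\nu$, every ratio is positive, and one may take all $\lambda(h,p)$ equal, giving the $L^2(K)$-form of (i). For $\nu\in\RR$ the sign of each edge ratio is $\sign(a^2-4\nu^2)$, so positive definiteness is equivalent to $a^2>4\nu^2$ on every occurring edge, i.e.\ $\nu^2<\min a^2$. Here I would compute the minimum over the values $a\in 4+2j+4\ZZ_{\geq0}$ and $\tilde a\in 4-2j+4\ZZ_{\geq0}$: for $j=0$ the minimal modulus is $4$, giving $0<|\nu|<2$; for $j$ odd these values are $\equiv2\pmod4$, never $0$, of minimal modulus $2$, giving $0<|\nu|<1$; for even $j\neq0$ some edge has $a=0$ or $\tilde a=0$, whose ratio is $-4\nu^2<0$ for every $\nu\neq0$, excluding all complementary series. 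This matches the proposition exactly, with $\nu=0$ and $j\in\{0\}\cup(1+2\ZZ)$ as the limiting overlap, and irreducibility (uniqueness of the form up to a real scalar) upgrades the definiteness count to the ``only if'' direction.

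The most laborious remaining step is to pin the explicit normalizations, i.e.\ to verify that $(\cdot,\cdot)_\uprs$ and $(\cdot,\cdot)_\cmpl$ genuinely satisfy \eqref{sesq}. Both reduce to a single $K$-theoretic identity: the recursion for the Gamma-quotient in \eqref{compls} has ratio exactly $(a-2\nu)/(a+2\nu)$, so matching it against the ratio found above requires the norm relation $\|\Kph{h+3}{p+1}{r+1}{p+1}\|_K^2/\|\Kph h{p}{r}{p}\|_K^2=2(p+1)^2/\bigl((p+2)(2+p+r)\bigr)$ together with its $\Z_{23}$-analogue. This identity follows from \eqref{Kph-norm} and the explicit $\SU(2)$-norms, and the very same identity makes the constant form $\lambda\equiv1$ invariant when $\nu\in i\RR$. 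I expect this norm bookkeeping, and the care needed to isolate the single vanishing edge for even $j\neq0$, to be the main obstacle; the contiguous and norm computations I would carry out in the notebook \cite{Math}.
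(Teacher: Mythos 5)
Your proposal is correct, but it takes a genuinely different route from the paper. The paper quotes Lang and Wallach for the existence of $(\cdot,\cdot)_\uprs$ when $\re\nu=0$, gets necessity from the identification of $H^{\xi,\nu}_K$ with its conjugate dual $H^{\xi,-\bar\nu}_K$, builds the complementary series form as $(\ph_1,\ii_0\,\ph_2)_\uprs$ using the intertwining operator \eqref{ii0}, and settles positivity of the resulting Gamma quotient by the elementary Lemma~\ref{lem-AB} on ratios $\Gf(A+a-x)\Gf(B+b-x)/\bigl(\Gf(A+a+x)\Gf(B+b+x)\bigr)$ with $A=1+j/2$, $B=1-j/2$, $x=\nu/2$. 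You instead derive everything self-containedly from multiplicity one and the shift-operator recursion: this yields the necessity of $\nu\in\RR\cup i\RR$ directly, produces the edge ratios $(a-2\bar\nu)/(a+2\nu)$ with $a=4+2j+4\al$, $\tilde a=4-2j+4\bt$, and turns positive definiteness into $4\nu^2<\min a^2$ — which is exactly the condition of Lemma~\ref{lem-AB} in different clothing (minimal modulus $4$ for $j=0$, $2$ for $j$ odd, and a vanishing $a$ or $\tilde a$ for even $j\neq0$). Your normalization bookkeeping also checks out: by Schur orthogonality on $\SU(2)$ one has $\|\Kph{}p p q\|^2=\bigl(\tfrac{p-q}2\bigr)!\bigl(\tfrac{p+q}2\bigr)!/(p+1)!$, which combined with \eqref{Kph-norm} gives precisely $\|\Kph{h+3}{p+1}{r+1}{p+1}\|_K^2/\|\Kph h{p}{r}{p}\|_K^2=2(p+1)^2/\bigl((p+2)(2+p+r)\bigr)=A_-/A_+$, and the Gamma quotient in \eqref{compls} does have edge ratio $(a-2\nu)/(a+2\nu)$. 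What the paper's route buys is brevity (it reuses $\ii_0$ and the references); what yours buys is a self-contained proof of the ``precisely'', with the obstruction visible edge by edge. One small imprecision: the correction term $(2(p+2))^{-1}\Z_{12}\Z_{23}w$ does not vanish — $\Z_{23}w=\sh{-3}1w$ is generically nonzero and lands in the $K$-type $\tau^h_{p+2}$ — but it is orthogonal to $v\in V_{h,p,p}$ because it lies in a different $K$-isotypic component, so your identity $(v,\Z_{13}w)=(v,\sh{-3}{-1}w)$ still holds.
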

These sesquilinear forms are determined up to a positive factor. If
$\nu=0$ the form $\bigl( \cdot,\cdot\bigr)_\cmpl$ coincides with the
form $\bigl( \cdot,\cdot\bigr)_\uprs$.

\rmrk{Discussion}The existence and description of the sesquilinear form
of the unitary principal series follows, for instance, from Theorem 2
in \S2, Chap.~III of \cite{La74}. Take
$K_{\mathrm{there}}=\SU(2) \subset K$ and $P_{\mathrm{there}}=NAM$.
Wallach gives a discussion of the principal series representation for
$\SU(2,1)$ in Section 7 of~\cite{Wal76}.

More precisely, the existence of $\bigl( \cdot,\cdot \bigr)_\uprs$ as a
non-degenerate sesquilinear form on $H^{\xi,\nu}_K$ is equivalent to an
identification of $H^{\xi,\nu}_K$ with its conjugate dual, which is
$H^{\xi,-\bar \nu}_K$. So in i) we have to take $\re\nu=0$.

The family $\ii_0$ in~\eqref{ii0} gives an isomorphism
$H^{\xi,\nu} \rightarrow H^{\xi,-\nu}_K$ under general parametrization.
For $\nu \in \RR$ we get the sesquilinear form
\be\label{cmpl1} \bigl( \ph_1,\ph_2\bigr)_\cmpl \= \bigl( \ph_1,\ii_0\,
\ph_2\bigr)_\uprs\,.\ee
Here we check only that $\bigl( \cdot,\cdot \bigr)_\cmpl$ is positive
definite under the conditions in~ii).

\begin{lem}If $\nu\in \RR$ and $(j,\nu)$ corresponds to generic
parametrization the sesquilinear form $\bigl( \cdot,\cdot\bigr)_\cmpl$
is positive definite under the conditions in
Proposition~\ref{prop-pscs}, ii).
\end{lem}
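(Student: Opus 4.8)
The plan is to exploit that $(\cdot,\cdot)_\cmpl$ is diagonal in the basis $\{\kph h{p}{r}{q}(\nu)\}$. First I would note that for $\nu\in\RR$ the coefficient in \eqref{compls} is real, so the form is Hermitian, and being diagonal it is positive definite if and only if every diagonal coefficient is strictly positive. Since $\|\Kph h{p}{r}{q}\|^2_K>0$, the whole question reduces to the positivity of the quotient of Gamma factors
\[
c=\frac{\Gamma(1+\tfrac{j-\nu+p+r}2)\,\Gamma(1+\tfrac{-j-\nu+p-r}2)}{\Gamma(1+\tfrac{j+\nu+p+r}2)\,\Gamma(1+\tfrac{-j+\nu+p-r}2)}
\]
for every admissible triple, i.e. $h=2j+3r$, $r\equiv p\bmod2$, $|r|\le p$, $p\in\ZZ_{\ge0}$. (Using $h=2j+3r$ one checks that this $c$ is exactly the eigenvalue $c(h,p,r,\nu)$ of the intertwiner $\ii_0$ in \eqref{ii0}, consistent with \eqref{cmpl1}.)

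Next I would change variables to $a=\tfrac{p+r}2$ and $b=\tfrac{p-r}2$, which are non-negative integers, so that $c$ factors as $c=\rho(x_1)\,\rho(x_2)$ with
\[
\rho(x)=\frac{\Gamma(x-\tfrac\nu2)}{\Gamma(x+\tfrac\nu2)},\qquad x_1=1+a+\tfrac j2,\quad x_2=1+b-\tfrac j2 .
\]
Because replacing $\nu$ by $-\nu$ turns $\rho(x)$ into $\rho(x)^{-1}$, which has the same sign, I may assume $\nu\ge0$ throughout the sign analysis. The sign of $\rho(x)$ is dictated by the poles of $\Gamma$: since $\Gamma$ is positive on $(0,\infty)$ and changes sign exactly at each non-positive integer, $\rho(x)>0$ precisely when the open interval with endpoints $x-\tfrac\nu2$ and $x+\tfrac\nu2$ (of length $\nu$) contains an even number of non-positive integers. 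Generic parametrization guarantees that these endpoints are never themselves poles, so the coefficients are finite and nonzero.

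Then I would split along the hypothesis. If $j=0$, then $x_1=1+a\ge1$ and $x_2=1+b\ge1$, and $\nu<2$ gives $x_i-\tfrac\nu2>1-1=0$; hence both intervals lie in $(0,\infty)$, contain no non-positive integer, and both factors are positive, so $c>0$. If $j\equiv1\bmod2$, then both $x_1$ and $x_2$ are half-integers, and from $x_1+x_2=2+p$ together with the sign of $j$ one sees that the one carrying $+\tfrac{|j|}2$ is $\ge\tfrac32$, so its two arguments exceed $\tfrac32-\tfrac\nu2>1$ and that factor is positive as in the previous case. The remaining factor, centred at a half-integer, has an interval of length $\nu<1$, which therefore contains no integer at all — in particular no pole — so it is positive too, giving $c>0$.

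The step I expect to require the most care is the sign bookkeeping for the $j$-odd factor whose Gamma arguments may be negative: the essential observation is that these two arguments straddle a half-integer and, because $|\nu|<1$, remain inside a single interval between consecutive integers, so numerator and denominator share the sign of $\Gamma$ there. This is exactly where the sharp thresholds $|\nu|<2$ (for $j=0$) and $|\nu|<1$ (for $j$ odd) enter, since at the boundary values the interval would reach an integer and $\rho$ would vanish or reverse sign — matching the reducibility and unitarity boundary of the complementary series.
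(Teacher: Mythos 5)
Your proof is correct and follows essentially the same route as the paper: reduce to the sign of the Gamma-factor $c$, substitute $a=\tfrac{p+r}2$, $b=\tfrac{p-r}2$ so that $c$ factors into two quotients $\Gamma(y-\tfrac\nu2)/\Gamma(y+\tfrac\nu2)$ with $y=1\pm\tfrac j2+{}$a non-negative integer, and then check signs case by case in $j$. The only (cosmetic) difference is that the paper packages the sign analysis into a separate if-and-only-if statement (Lemma~\ref{lem-AB}) proved by a telescoping quotient $p(a+1,b)/p(a,b)$, whereas you argue directly from the sign changes of $\Gamma$ at its poles; both yield the same thresholds $|\nu|<2$ for $j=0$ and $|\nu|<1$ for $j$ odd.
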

\begin{proof}For given $j\in \ZZ$ the factor
\be\label{cprnu} c(p,r,\nu) \= \frac{\Gf\bigl( 1+\frac{j-\nu+p+r}2\bigr)
\, \Gf\bigl( 1+\frac{-j-\nu+p-r}2\bigr) } {\Gf\bigl(
1+\frac{j+\nu+p+r}2\bigr)
\, \Gf\bigl( 1+\frac{-j+\nu+p-r}2\bigr) }\ee
in \eqref{compls} should have the same sign for all $p\in\ZZ_{\geq 0}$
and $r\equiv p\bmod 2$, $|r|\leq p$.

Writing $A=1+j/2$, $B=1-j/2$, $a=\frac{p+r}2$ and $b=\frac{p-r}2$ we are
in the situation of Lemma~\ref{lem-AB} below. Hence if $j\in 2\ZZ$ we
need $j=0$, and $\frac\nu 2\in (-1,1)$, and if $j\equiv 1\bmod 2$ then
$|x| < \max\bigl( \frac12, 1-\frac{|j|}2 \bigr)= \frac12$.
\end{proof}

\begin{lem}\label{lem-AB}Let $x\in \RR$, $A, B \in \frac12 \ZZ$,
$A\equiv B \bmod 1$. Then
\[ p(a,b,x) \= \frac{\Gf(A+a-x)\, \Gf(B+b-x)} {\Gf(A+a +x)
\,\Gf(B+b+x)}\,>0\, \text{ for all }a,b\in \ZZ_{\geq 0}\]
if and only if
\begin{align*}
&A,B\in \ZZ_{\geq 1} \text{ and } |x| < \min(A,B)\,,\\
\text{or }&A,B\in \frac12+\ZZ \text{ and } |x| < \max\bigl(
\tfrac12,\min(A,B) \bigr)\,.
\end{align*}
\end{lem}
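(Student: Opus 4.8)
The plan is to reduce the two-variable positivity statement to a one-variable sign analysis. First I would factor the ratio as $p(a,b,x) = g(A+a,x)\,g(B+b,x)$, where $g(C,x) = \Gf(C-x)/\Gf(C+x)$. Since $g(C,-x) = g(C,x)^{-1}$, the sign of $p$ is even in $x$, so I may assume $x\ge 0$ and phrase the final conditions in terms of $|x|$. The value $x=0$ gives $p\equiv 1>0$ and lies in the interior of every interval claimed, so it needs no separate attention.

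Next, the key tool is the functional equation $\Gf(z+1)=z\,\Gf(z)$, which yields the clean recursion
\[ \frac{g(C+1,x)}{g(C,x)} = \frac{C-x}{C+x}\,. \]
Hence the sign of $g(A+a,x)$, as $a$ runs through $\ZZ_{\ge 0}$, changes exactly at an index $i$ where $(A+i-x)/(A+i+x)<0$, i.e. where $x>|A+i|$. I would argue that $p(a,b,x)>0$ for all $a,b$ forces the sign of $g(A+a,x)$ to be constant in $a$ and that of $g(B+b,x)$ constant in $b$: if $g(A+a_1,x)$ and $g(A+a_2,x)$ had opposite signs, then for large $b$ (where $g(B+b,x)>0$) the products $p(a_1,b,x)$ and $p(a_2,b,x)$ would have opposite signs. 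For large $a$ one has $x<A+a$ and $g(A+a,x)>0$, so a constant sign is necessarily $+1$; once both sign-sequences are constantly $+1$ and the $g$'s are finite and nonzero, the product is automatically positive. Constancy of the sign of $g(A+a,x)$ is equivalent to all factors $(A+i-x)/(A+i+x)$ being positive, i.e. to $x<M_A:=\min_{a\in\ZZ_{\ge 0}}|A+a|$; for $x\ge M_A$ either a sign flip or a pole/zero of some $g(A+a,x)$ occurs, in both cases violating positivity (or finiteness). Thus positivity for all $a,b$ is equivalent to $|x|<\min(M_A,M_B)$.

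Finally I would evaluate $M_A$ by the parity of $A$. Since the progression $A,A+1,A+2,\dots$ is increasing, for $A\in\ZZ$ one gets $M_A=A$ when $A\ge 1$ and $M_A=0$ otherwise (the value $0$ is attained), while for $A\in\tfrac12+\ZZ$ the progression never meets $0$ and its entry nearest $0$ has modulus $\tfrac12$ whenever $A\le-\tfrac12$, so $M_A=\max(\tfrac12,A)$. Substituting and using $A\equiv B\bmod 1$, the condition $|x|<\min(M_A,M_B)$ becomes $A,B\in\ZZ_{\ge 1}$ with $|x|<\min(A,B)$ in the integral case, and $A,B\in\tfrac12+\ZZ$ with $|x|<\max(\tfrac12,\min(A,B))$ in the half-integral case, exactly as claimed.

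I expect the main obstacle to be the careful bookkeeping of the sign of $\Gf$ at negative arguments together with the boundary behavior. In particular, for half-integer $C$ and half-integer $x$ the numerator and denominator of $g(C,x)$ can have simultaneous poles, at which $g$ takes a finite \emph{negative} value (for instance $g(-\tfrac32,\tfrac12)=\Gf(-2)/\Gf(-1)=-\tfrac12$); analyzing this case shows precisely why the inequalities must be strict and confirms that $x=M_A$ is genuinely excluded. To anchor the induction underlying the ``only if'' direction I would also check directly that $g(A,x)>0$ for $0\le x<M_A$ in both cases $A>0$ and $A<0$, since this starts the propagation that all $g(A+a,x)$ are finite, nonzero, and positive.
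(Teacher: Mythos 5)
Your argument is correct and follows essentially the same route as the paper's proof: both reduce positivity to the sign of the consecutive ratio $p(a+1,b,x)/p(a,b,x)=(A+a-x)/(A+a+x)$ via the functional equation of $\Gf$, then verify a base case and read off the threshold separately for $A,B\in\ZZ$ and $A,B\in\tfrac12+\ZZ$. Your additional care about simultaneous poles of numerator and denominator (e.g.\ $\Gf(-2)/\Gf(-1)$) is a useful refinement the paper glosses over, but it does not change the method.
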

\begin{proof}
First we consider $A,B\in \ZZ$. Then we need to have
$p(a+1,b)/p(a,b)= \frac{A+a-x}{A+a+x}>0$ for all $a,b\in \ZZ_{\geq 0}$.
If $A=0$ this does not hold for $a=0$. So we need $A\geq 1$. Then
$A+a>0$ for all $a\in \ZZ_{\geq 0}$, and $A+a+|x|>0$. Then we need also
$A+a-|x|>0$, hence $|x|< A$. If this condition is satisfied the
quotient $\Gf(A+a-x)/\Gf(A+a-x)$ is indeed positive.

Similarly we arrive at the condition $|x|<B$.\medskip

Now let $A,B\in \frac 12+\ZZ$. Then $A+a$ does not take the value $0$,
and from the requirement that $\Gf(A+a-x)/ \Gf(A+a+x)>0$ we arrive at
the condition that $|x| < |A+a|$ for all $a\geq 0$. If $A>0$ this leads
to the necessary condition $|x|<A$. If $A<0$ we take $a=-A-\frac12$ to
get the necessary condition $|x|<\frac12$. We check that
$\Gf(A+a-x)/ \Gf(A+a+x)$ is indeed positive if this condition holds.

For the other quotient we arrive at the condition
$|x|< \max(\frac12,B)$. Both conditions together give the condition in
the lemma.
\end{proof}

\subsection{Other irreducible modules}
\label{sect-usdst}Irreducible modules occur in principal series
representations $H^{\xi,\nu}_K$, as the whole of $H^{\xi,\nu}_K$ under
general para\-metrization, and as a genuine submodule under integral
parametrization. So for the types $\II_+$, $\IF$, $\FI$ and $\FF$ we
assume that $(j,\nu)\in \WOI$.

Equation \eqref{cmpl1} defines a sesquilinear form with help of the
meromorphic family $\ii_0$ of morphisms
$H^{\xi,\nu}_K \rightarrow H^{\xi,-\nu}_K$ of $(\glie,K)$-modules
defined in~\eqref{ii0}. At values $\nu=\nu_0$ in $\ZZ$ it need not be
an isomorphism. It may even have a singularity. Replacing $\ii_0$ by
$\al(\nu)\,\ii_0$ for a suitable analytic function~$\al$ we may remove
the singularity at~$\nu_0$. For an irreducible module, the resulting
sesquilinear form is unique up to a constant in $\CC^\ast$ if we work
with an irreducible module.

So the outcome of a check whether the form can be made positive definite
on the submodule of $H^{\xi,\nu}_K$ in which we are interested,
determines the unitarizability of this module. For $\nu=0$ we work with
$\bigl( \cdot,\cdot\bigr)_\uprs$ instead of
$\bigl( \cdot,\cdot)_\cmpl$.

\subsubsection{}\emph{Isomorphism types $\II_+$. }The isomorphism class
$\II_+(j,\nu)$ can be represented by a module $V \subset H^{\xi,\nu}_K$
with $(j,\nu)\in \ZZ^2$, $j\equiv\nu\bmod 2$ and $\nu\geq \max(|j|,1)$.
The module $V$ has parameters
$\bigl[ \ld_2(j,\nu); -j,\nu;\infty,\infty\bigr]$. See
Figure~\ref{fig-ps1}.

The basis vectors occurring in $V$ are $\kph h{p}{r}{p}(\nu)$ with
$p=\nu+a+b$, $h=-j+3(a-b)$ and $r=\frac13(h-2j) = a-b-j$ with
$a,b\in \ZZ_{\geq 0}$. The factor $c$ in~\eqref{cprnu} takes the value
\be \label{c-lds} c(p,r,\nu) \= \frac{ a!\, b! }{ (a+\nu)!\, (b+\nu)!
}\,. \ee
This is well defined for $a,b\in \ZZ_{\geq 0}$, and positive. So the
isomorphism class $\II_+(j,\nu)$ is unitarizable. See
\cite[\S25a]{Math} for computations.

\subsubsection{}\emph{Isomorphism types $\IF$ and $\FI$. }In
Table~\ref{tab-isot-ps}, p~\pageref {tab-isot-ps}, we listed the
isomorphism classes $\IF(j,\nu)$ and $\FI(j,\nu)$ and parameters of a
submodule of $H^{\xi,\nu}_K$ representing this class. From this table
we collect, and reformulate with \eqref{jnurels}, the information in
Table~\ref{tab-IFFI}.
\begin{table}[ht]
\[\renewcommand\arraystretch{1.3}
\begin{array}{|c|cc|cccc|}\hline
&\text{isomph.~class}&&h_0&p_0&A&B\\ \hline
\text{a}&\IF(j,\nu)&1\leq \nu \leq j-2&2j& 0&\infty&\frac{j-\nu}2-1\\
\text{b}&\IF(j,0) & j\in 2\ZZ_{\geq 1}&2j&0&\infty&\frac j2-1 \\
\text{c}&\IF_+(j,-\nu)& 1\leq \nu \leq j-2 &\frac{j+3\nu}2&
\frac{j-\nu}2-1&\infty& \nu-1 \\
\text{d}& \IF(j,-j)&j\geq 1 & 2j&0&\infty&j-1\\
\text{e}& \FI (j,\nu)& 1\leq \nu \leq -j-2&2j&0 &
-\frac{j+\nu}2-1 & \infty\\
\text{f}& \FI(j,0)& j\in 2\ZZ_{\leq -1}& 2j& 0 & -\frac j2-1 & \infty\\
\text{g}&\FI_+(j,-\nu) &1\leq \nu \leq |j|-2& \frac{j-3\nu}2&
-\frac{\nu+j}2&\nu-1&\infty\\
\text{h}&\FI(j,j)& j\leq -1 &2j& 0 & -j-1 & \infty
\\ \hline
\end{array}
\]
\caption{Isomorphism classes of types $\IF$ and $\FI$.}\label{tab-IFFI}.
\end{table}

\rmrk{Cases b and f} In cases b and f the irreducible module is
contained in $H^{\xi,0}_K$. By restriction of
$\bigl(\cdot,\cdot\bigr)_\uprs$ we get a positive definite sesquilinear
form.

\rmrk{Case a} We take
\be c_a(p,q,\nu') \=
\frac{\sin\pi\frac{\nu'+j}2}{\sin\pi\frac{\nu'-j}2}\, c(p,r,\nu')\,.\ee
With some relations for gamma functions and goniometrical functions this
meromorphic function in $\nu'$ can be written as
\be c_a(p,q,\nu') \= (-1)^{p-r}\, \frac{\Gf\bigl(
\frac{-\nu'-p+r+j}2\bigr)\, \Gf\bigl( 1+\frac{-\nu'+p+r+j}2\bigr)} {
\Gf\bigl(
\frac{\nu'-p+r+j}2\bigr)\,\Gf(1+\frac{\nu'+p+r+j}2\bigr)}\,.\ee
We write $p=p_0+a+b$, $h=h_0+3(a-b)$, and $r=\frac13(h_0-2j)+a-b$, and
obtain
\bad c_a(p,r,\nu)&\= \frac{ \bigl( a+\frac{j-\nu}2\bigr)!\; \bigl(
\frac{j-\nu}2 -b-1\bigr)! }{ \bigl( a+\frac{j+\nu}2\bigr)!\; \bigl(
\frac{j+\nu}2 -b-1\bigr)! }\\
&\=\frac{(1+a+B)!\;(B-b)!}{(B+a+\nu+1)!\;(\nu+B-b)!} \,. \ead
The factors depending on $a$ are positive for all $a\in \ZZ_{\geq 0}$;
for the factors with $b$ we need $0\leq b \leq \frac{j-\nu}2-1=B$. This
means that $c_a$ induces a positive definite sesquilinear form.

\rmrk{Case c}
\bad c_c(p,&r,\nu')\=\bigl(\sin \pi\tfrac{j-\nu'}2\bigr)^{-1} \,
c(p,r,\nu')\\
&\= (-1)^{(p-r)/2}\,\pi^{-1} \,\frac{\Gf\bigl(
1+\frac{-\nu'+p-r-j}2\bigr)\, \Gf\bigl(\frac{-\nu'-p+r+j}2\bigr)\,
\Gf\bigl(1+\frac{-\nu'+p+r+j}2\bigr)
}{ \Gf\bigl( 1+\frac{-\nu'+p+r+j}2\bigr)
}\,, \ead
which specializes to
\be c_c(p,r,-\nu) \= (-1)^{b+1} \, i^{j-\nu} \frac{ b!\; (B-b)!\;
\bigl(a+(\nu+j)/2\bigr)!}{ \pi \, \bigl( a+(j-\nu)/2\bigr))!}\,. \,.\ee
Only if $\nu=1$ this determines a positive definite sesquilinear form.

The irreducibility of the module implies that the sesquilinear form is
unique up to a multiple. Hence a positive definite sesquilinear form is
impossible if $\nu > 1$. Or, alternatively, suppose that $q$ is an
invariant sesquilinear form on the submodule of type $\IF_+(j,-\nu)$ of
$H^{\xi,\nu}_K$ with $\nu \geq 2$. Then it satisfies for
$\ph_0 = \kph{h_0}{p_0}_{r_0}{p_0}(-\nu)$, and
$\ph_1= \kph{h_0-4}{p_0+1}{r_0-1}{p_0+1}(-\nu)$ the relation
\[ q\bigl( \sh{-3}1 \ph_0, \ph_1 \bigr) + q\bigl(\ph_0,\sh 3 {-1} \ph_1
\bigr)= 0\,.\]
We use that $\sh {-3}1$ is given by $\Z_{23}$, and
$\overline{\Z_{23}}= \Z_{32}$. With Table~\ref{tab-sho},
p~\pageref{tab-sho}, and~\eqref{shps} we obtain explicit factors $c_u$
and $c_d$ such that
\be c_u \bigl( \ph_1,\ph_1\bigr) + c_d \bigl( \ph_0,\ph_0 \bigr) =
0\,.\ee
It turns out that the product $c_u c_d$ is positive for $\nu \geq 2$. So
there cannot be a positive definite invariant sesquilinear form.

\rmrk{The other cases} In \cite[\S25c]{Math} we handle the other cases
in a similar way.

\subsubsection{} \emph{Type $\FF$. }We use $\nu \leq |j|-2$ to describe
the isomorphism class $\FF(j,-\nu)$ with parameters
\[ \bigl[\ld_2(j,-\nu);2j,0;\tfrac{\nu-j}2-1,\frac{j+\nu}2-1 \bigr]\,.\]
See Figure~\ref{fig-ps7}.

With
\[ c_f(p,r,\nu)
\=\frac{c(p,r,\nu)}{\sin\pi\frac{j+\nu}2\;\sin\pi\frac{\nu-j}2 }\]
this leads to
\[ c_f(p,r,\nu) \= \frac{(-1)^{a+b}}{\pi^2}\, \bigl(
a+\tfrac{\nu+j}2\bigr)!\; \bigl( \tfrac{\nu-j}2-1-a\bigr)!\; \bigl(b+
\tfrac{\nu-j}2\bigr)!\; \bigl(\tfrac{\nu+j}2-1-b\bigr)!\,.\]
See \cite[\S25d]{Math}. This is well defined for
$0\leq a \leq \frac{\nu-j}2-1$ and $0\leq b \leq \frac{\nu+j}2-1$. The
factor $(-1)^{a+b}$ shows that the sesquilinear form is
positive-definite only if $A=B=0$, hence $j=0$ and $\nu=-2$. The
isomorphism class $\FF(0,-2)$ contains the trivial representation.


\def\flnm{rFtm-chap-IV}

\setcounter{tabold}{\arabic{table}}
\setcounter{figold}{\arabic{figure}}


\chapter{Fourier expansion of automorphic forms} \label{chap-4}
\setcounter{section}{15}\setcounter{table}{\arabic{tabold}}
\setcounter{figure}{\arabic{figold}}\markboth{IV. FOURIER EXPANSION OF
AUTOMORPHIC FORMS }{IV. FOURIER EXPANSION OF AUTOMORPHIC FORMS } The
first three chapters have given us explicit information concerning
Fourier term modules. Now we apply it to automorphic forms and their
Fourier expansion.

In Section~\ref{sect-af} we discuss the Fourier expansions of
automorphic forms. We concentrate on results that we need for our
paper~\cite{BM2}. The Poincar\'e series, for instance those
in~\cite{MW89}, are meromorphic families of automorphic forms with
exponential growth. Proposition \ref{prop-OmMuUps} gives results that
we need. In the completeness theorem the residues and values of
Poincar\'e series are related to the Fourier coefficients of individual
square integrable automorphic forms. Their Fourier expansion is
described in Propositions \ref{prop-i-pscs}--\ref{prop-i-thr}. These
expansions have the same structure as Ishikawa's Fourier expansions in
\cite{Ish99} and~\cite{Ish00}.

At each cusp, the Fourier expansion of an automorphic forms describes a
function on $G$ modulo a standard lattice in~$N$. In
Section~\ref{sect-Fs} we collect some general results concerning
eigenfunctions of $ZU(\glie)$ in $C^\infty(\Ld_\s \backslash G)_K$.


\def\flnm{rFtm-IV-af}
 

\section{Automorphic forms}\label{sect-af} \markright{16. AUTOMORPHIC
FORMS} We recall the definition of automorphic forms, and describe the
form of their Fourier terms. We consider families of automorphic forms
with moderate exponential growth, and Fourier expansions of square
integrable automorphic forms that generate irreducible
$(\glie,K)$-modules.\medskip

 In \S\ref{sect-cucond} we imposed the $\ZZ[i]$-con\-di\-tion on the
cusps on the cofinite discrete subgroups $\Gm$ that we consider. This
ensures that for any $f\in C^\infty(\Gm\backslash G)_K$ and each cusp
$\c$ of~$\Gm$ we have the translated function $f^\c(g) = f(g_\c g)$,
which is in $C^\infty(\Ld_{\s(\c)}\backslash G)_K$ for some standard
lattice $\Ld_{\s(\c)}$. Proposition~\ref{prop-absconv} gives the
absolute convergence of the Fourier expansion of $f^\c$. It suffices
to
 consider only representatives $\c $ in a
(finite) set of representatives of the $\Gm$-orbits of cusps.
\begin{defn}\label{def-gc}\hspace{.1cm}
\begin{enumerate}
\item[i)] We say that a function $h\in C^\infty(\Ld_\s\backslash G)_K$
satisfies:
\begin{itemize}
\item \emph{Exponential decay at $\infty$} if
$h\bigl( n \am(t)k\bigr) \= \oh(e^{-at^m})$ , as $t\uparrow\infty$, uniformly in $n$ and $k$ for some $a>0$ and some
$m\in \ZZ_{\geq 1}$.\il{ed1}{exponential decay}
 
 This was already
used in Definition~\ref{def-grbb}. Elements of
 $\Wfu_\Nfu^\ps$ provide
examples, with $n=1$ (linear exponential decay)
 in the abelian case,
\eqref{Kae}; and with $n=2$ (quadratic exponential
 decay) in the
non-abelian case, \eqref{Wae}.

\item \emph{Quick decay } if $h\bigl( n \am(t)k\bigr) \= \oh(t^{-a})$ as $t\uparrow\infty$, uniformly in $n$ and $k$, 
for all $a>0$. \il{qd}{quick decay}
 
\item \emph{Polynomial growth } if
$h\bigl( n \am(t)k\bigr) \= \oh(t^{a})$   as $t\uparrow\infty$, uniformly in $n$ and $k$, for some
$a>0$.\il{pg}{polynomial growth}
 
 Elements of the $N$-trivial Fourier
term modules $\Ffu_0^\ps$ satisfy
 this condition.
 
\item \emph{Exponential growth } if
$h\bigl( n \am(t)k\bigr) \= \oh(e^{at^m})$  as $t\uparrow\infty$, uniformly in $n$ and $k$, for some $a>0$ and some
$m\in \ZZ_{\geq 1}$.
 
 Elements of $\Mfu^\ps_\bt$ satisfy this
condition with $n=1$ (linear
 exponential growth). For elements of
$\Vfu^\ps_\n$ we need $n=2$
(quadratic exponential growth).\il{eg}{exponential growth}
\end{itemize}
\item[ii)]We call a function $f\in C^\infty(\Ld_\s\backslash G)_K$
\il{sqiinf}{square integrable at $\infty$}\emph{square integrable near
$\infty$} if
\[ \int_{n\in \Ld_\c\backslash N} \int_{t=t_0}^\infty \int_{k\in K}
\bigl|h\bigl(n\am(t)k\bigr)\bigr|^2 \; dn \;\frac{dt}{t^5}\; dk
\,<\;\infty\]
 for some $t_0>0$. (The measure
$dg  =  dn\, t^{-5}\, dt\, dk$ is a Haar
 measure on $G$.)

 If the
condition holds for some $t_0$, then it holds for all $t_0>0$.
Elements of $H^{\xi,\nu}_K$ satisfy this condition if and only if
$\re\nu<0$.
\end{enumerate}
\end{defn}

The use of $O$-statements in part i) of the definition shows that these growth conditions  are understood with ``at most'' added implicitly.

\begin{defn}\label{def-af}
 Let $\ps\in \WO$. An \il{af}{automorphic
form}\emph{automorphic form} on
 $\Gm\backslash G$ with character $\ps$
is a $K$-finite function
 $f\in C^\infty( \Gm\backslash G)_K$ that
satisfies $u f = \ps(u) f$ for
 all $u\in ZU(\glie)$, such that $f^\c $
has polynomial growth for each
 cusp~$\c$. By $\A(\Gm;\ps)$ we denote
the space of automorphic forms on
 $\Gm$ with character
$\ps$.\il{AGmps}{$\A(\Gm,\ps)$}
 
 The space of \il{cf}{cusp
form}\emph{cusp forms}
 $\Au0(\Gm;\ps) \subset \A(\Gm;\ps)$ is
determined by the condition that
 $\Four_0 f^\c=0$ for all
cusps~$\c$.\il{A0ps}{$\Au0(\Gm,\ps)$}
 
 By
$\Au{(2)}(\Gm,\ps)\subset \A(\Gm,\ps)$ we denote the subspace of
\il{sqiaf}{square integrable automorphic form}\il{afsqi}{automorphic
form, square integrable}\emph{square integrable automorphic forms},
determined by
$\int_{\Gm\backslash G} |f(g)|^2\, dg <\infty$.\il{A2ps}{$\Au{(2)}(\Gm,\ps)$}
\end{defn}
 
 In this definition we follow Harish-Chandra \cite[pp 7,
12--14]{HCh68},
 but working only with scalar-valued functions and
letting $\Gm$ act on
 the left. The spaces $\mathcal{A}(G/\Gm,\s,\ch)$
in \cite{HCh68} are
 spaces of automorphic forms of a given $K$-type.
Theorem 1,
 \cite[p~8]{HCh68}, states that the spaces
$\A(\Gm;\ps)_{h,p}$ with a
 fixed $K$-type have finite dimension.
 
Theorem~\ref{thm-qd} implies for each cusp form $f\in \Au0(\ps)$ that
$f^\c$ has quick decay for each cusp~$\c$.
 
\rmrk{Module structure} Proposition~\ref{prop-inv} shows that
exponential decay at~$\infty$ is preserved under the right action of
$\glie$ and $K$. To show that, we used the convolution representation
theorem of Harish Chandra, which describes eigenfunctions $f$ of
$ZU(\glie)$ as a convolution with a suitable smooth function $\al$
with
 compact support
\[ f(g) = \int_G f(g_1)\, \al( g^{-1}_1g) \, dg_1\,.\]
 The analysis in
the proof of Proposition~\ref{prop-inv} can be
 generalized, and gives
the following result.
\begin{prop}The spaces $\A(\Gm;\ps) \supset \Au0(\Gm;\ps)$ are
$(\glie,K)$-modules for the action by right translations and
differentiation.
\end{prop}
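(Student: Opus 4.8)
The plan is to check, property by property, that the defining conditions of $\A(\Gm;\ps)$ are stable under right translation by $K$ and right differentiation by $\glie$, and then to handle the extra condition cutting out $\Au0(\Gm;\ps)$ by an appeal to the intertwining property of $\Four_0$. Three of the four conditions are routine; the growth condition is the one requiring real work.

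First I would dispose of the easy preservations. Right translation and right differentiation commute with the left action of $\Gm$, hence map $C^\infty(\Gm\backslash G)$ to itself; and since $C^\infty(G)_K$ is already a $(\glie,K)$-module, $K$-finiteness is preserved and the two actions are mutually compatible. For the central character I would use that $ZU(\glie)$ is the centre of $U(\glie)$ and is $\mathrm{Ad}(G)$-invariant: for $u\in ZU(\glie)$ and $\XX\in\glie$ we have $u\,R(\XX)f = R(\XX)\,u f = \ps(u)\,R(\XX)f$, and because $\mathrm{Ad}(k)u=u$ the operator $R(u)$ commutes with the right translation $g\mapsto f(gk)$, giving $u\cdot f(\,\cdot\,k) = \ps(u)\,f(\,\cdot\,k)$. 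Thus both $R(\XX)f$ and every right $K$-translate of $f$ remain in the $\ps$-eigenspace of $ZU(\glie)$.

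The substantive point is polynomial growth. I would first observe that passing to the $\c$-translate commutes with both actions: $(R(\XX)f)^\c = R(\XX)(f^\c)$, and the $\c$-translate of $g\mapsto f(gk)$ is $g\mapsto f^\c(gk)$. For right $K$-translation this settles matters at once, since multiplying $n\am(t)k'$ on the right by $k\in K$ leaves the $N$- and $A$-components of the Iwasawa decomposition unchanged, so the bound $f^\c(n\am(t)k)=\oh(t^a)$, uniform in $n$ and $k$, is inherited verbatim. For $R(\XX)$ the argument is the generalization of the proof of Proposition~\ref{prop-inv} announced in the remark above: because $f^\c$ is an eigenfunction of $ZU(\glie)$, the convolution representation theorem of Harish Chandra lets me write $f^\c(g)=\int_G f^\c(gg_1^{-1})\,\al(g_1)\,dg_1$ with $\al\in C_c^\infty(G)$, whence $R(\XX)f^\c(g)=\int_G f^\c(gg_1^{-1})\,R(\XX)\al(g_1)\,dg_1$. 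As $g_1$ ranges over the compact support of $R(\XX)\al$, the $A$-part $\am(t_1)$ of $n\am(t)k\,g_1^{-1}$ satisfies $t/b\le t_1\le tb$ for a fixed $b>1$, so the polynomial estimate for $f^\c$ propagates to $R(\XX)f^\c$. This is the main obstacle, but it is essentially already carried out in Proposition~\ref{prop-inv}, where the very same mechanism preserved exponential decay; only the shape of the bound being propagated changes.

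Finally, for the cusp forms $\Au0(\Gm;\ps)$ I would invoke that $\Four_0$ (the case $\bt=0$ of Proposition~\ref{prop-Ftit}) is an intertwining operator of $(\glie,K)$-modules. Then $\Four_0\bigl((R(\XX)f)^\c\bigr)=\Four_0\bigl(R(\XX)f^\c\bigr)=R(\XX)\,\Four_0(f^\c)=0$, and likewise $\Four_0$ of the $\c$-translate of a right $K$-translate is the right $K$-translate of $\Four_0(f^\c)=0$; so the condition $\Four_0 f^\c=0$ at every cusp $\c$ survives both actions. Combined with the preceding paragraphs this exhibits $\A(\Gm;\ps)$ and its submodule $\Au0(\Gm;\ps)$ as $(\glie,K)$-modules.
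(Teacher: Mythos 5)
Your proof is correct and follows essentially the same route as the paper, which justifies the proposition by remarking that the convolution-representation argument from Proposition~\ref{prop-inv} (already noted there to preserve polynomial growth) generalizes, and which likewise relies on the intertwining property of $\Four_0$ from Proposition~\ref{prop-Ftit} for the cusp-form condition. The routine checks you add (smoothness, $K$-finiteness, the central character, and the commutation of $f\mapsto f^\c$ with the right action) are exactly the points the paper leaves implicit.
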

 
 Each Fourier term operator gives an intertwining operator
$\Four_\Nfu :\A(\Gm;\ps) \rightarrow \Ffu^\ps_\Nfu$. See
Proposition~\ref{prop-Ftit}. The integrals defining the Fourier term
operators preserve the various growth conditions in
Definition~\ref{def-gc}. Hence we have for all $f\in \A(\Gm;\ps)$:
\be \Four_\Nfu f \in \Wfu^\ps_\Nfu \qquad\text{ for all
$\Nfu\neq\Nfu_0$}\,. \ee

 The $N$-trivial Fourier term $\Four_0 f$ can be any element of
$\Ffu^\ps_0$ if $f\in \A(\Gm;\ps)$. We get for
$f\in \A^{(2)}(\Gm;\ps)$
\be \Four_0 f \in \bigoplus_{(j,\nu) \in \wo(\ps), \,\re\nu<0}
H^{\xi_j,\nu}\,.\ee
This implies that $\Au{(2)}(\Gm;\ps)$ is a $(\glie,K)$-module as well.

\rmrk{Examples} \il{Es}{Eisenstein series}\emph{Eisenstein series} are
  examples of automorphic forms on general semisimple Lie groups, see
  Langlands \cite{Lal76}, and Harish Chandra \cite{HCh68}. Sections 2--4
of~\cite{BKNPP10} discuss Eisenstein series on $\SU(2,1)$.

Eisenstein series in the domain of absolute convergence are given by
an
 infinite sum. Cusp forms tend to be more elusive. Reznikov
\cite[\S5]{Rez93} shows, on the basis of a Kuznetsov formula, that
there are infinitely many generic cusp forms with $K$-type $\tau^0_0$
on $\Gm\backslash \SU(2,1)$ for suitable discrete subgroups $\Gm$.
Generic means that the automorphic form has some non-zero Fourier
terms
 $\Four_\bt f^\c$ for some $\bt\neq 0$.
 
\rmrk{Moderate exponential growth} Instead of sums of elements of
principal series representations, one may form sums of elements of
$\Mfu^{\xi,\nu}_\bt$ for $\bt\in \ZZ[i]\setminus\{0\}$. That leads to
the \il{Ps}{Poincar\'e series}Poincar\'e series studied
in~\cite{MW89},
 actually for all Lie groups with real rank one. The
functions on
 $\SU(2,1)$ that one obtains in this way are in the
following space,
 which is larger than $\A(\Gm;\ps)$.

\begin{defn}\label{def-afmeg}Let $\ps\in \WO$. An automorphic form with
\il{meaf}{moderate exponential growth, automorphic form
with}\il{egm}{exponential growth, moderate}\il{afme}{automorphic form
with moderate exponential growth}\emph{moderate exponential growth} on
$\Gm\backslash G$ with character $\ps$ is a $K$-finite function
$f\in C^\infty(\Gm\backslash G)_K$ that satisfies $u \, f= \ps(y) f$
for all $u\in ZU(\glie)$, and for which there is for each cusp $\c$ a
finite set of Fourier terms orders $E(\c)$ such that
$f^\c - \sum_{\Nfu\in E(\c)} \Four_\Nfu f^\c$ has polynomial growth.
 
We denote the space of such automorphic forms by
$\Au\me(\Gm;\ps)$.\il{Aume}{$\Au\me(\Gm;\ps)$}
\end{defn}
 
 Since the Fourier term operators are intertwining
operators
(Proposition~\ref{prop-Ftit}), the space $\Au!(\Gm;\ps)$ is a
$(\glie,K)$-module.

\rmrk{Remarks}
(1)
 With the use of Poincar\'e series like those in~\cite{MW89} one can
show
 that the spaces $\Au!(\ps)_{h,p,q}$ have infinite dimension.
 
(2)
 On $\SL_2(\RR)$ automorphic forms with moderate exponential growth
turn
 up, often under the name ``weak Maass forms''. These automorphic
forms
 are the harmonic completions of mock modular forms. See the
overview~\cite{Za07}.

(3) The example in \eqref{exU} shows that there are elements of
$C^\infty(\Ld_\s\backslash G)_K^\ps$ that have infinitely many Fourier
terms with exponential growth. Estimate~\eqref{uest0} enables us to
apply the approach of \cite[Lemma 2.1]{MW89} to build a Poincar\'e
series that satisfies the properties of an automorphic form, but has
infinitely many exponentially increasing Fourier terms. For
$\SL_2(\RR)$, a space of automorphic forms of this type occurs in
Theorem C in \cite{BLZ15}, which shows that the space of all
$\Gm$-invariant eigenfunctions of the hyperbolic Laplace operator is
in
 a bijective correspondence with a mixed parabolic cohomology
group.
 
\subsection{Families}\label{sect-famaf}
\begin{defn}\label{def-holfam} Let $U$ be a connected open set
in~$\CC$,
 let $j\in \ZZ$ and let $\tau^h_p$ be a $K$-type such that
$|h-2j|\leq 3p$.
 
 A \il{hfaf}{holomorphic family of automorphic
forms}holomorphic family
 of automorphic forms for $(j,h,p)$ on $U$ is
an element
 $f\in C^\infty( U\times G)$ such that $g\mapsto f(\nu,g)$
is an element
 of $\Au!\bigl(\Gm,\ps[j,\nu]\bigr)_{h,p,p}$ for each
$\nu \in U$, and
 such that $\nu \mapsto f(\nu,g)$ is holomorphic on
$U$ for each
 $g\in G$.
 
 A \il{mfaf}{meromorphic family of
automorphic forms}meromorphic family
 of automorphic forms has the
form
 $(\nu,g) \mapsto \frac1{\ph(\nu)} \, f(\nu,g)$ where $f$ is a
holomorphic family on $U$ and $\ph$ is a non-zero holomorphic function
on~$U$.
\end{defn}
 
\rmrk{Remarks}(1) The restriction to automorphic forms with moderate
exponential growth is non-essential. Prescribing the first spectral
parameter $j$, the $K$-type $\tau^h_p$, and the highest weight $p$ in
the $K$-type is practical. One obtains more general families as a
$U(\klie)$-linear combination of families of this type.

(2) Meromorphically continued Eisenstein series are examples.

\rmrk{Fourier terms}The Fourier term operators are given by integration
over compact sets. Hence if $f$ is a holomorphic family of automorphic
forms, then $\nu \mapsto \Four_\Nfu f(\nu)$ is a holomorphic family of
elements of the modules $\Ffu^{\ps[j,\nu]}_\Nfu$.
 
 If $f$ is a
meromorphic family of automorphic forms on~$U$ for
 $(j,h,p)$, then all
its Fourier terms can be written in the form
\be \label{mFt} \Four_\Nfu f \= \begin{cases}
 d_0(\nu) \kph h p r
p(\nu) + c_0(\nu) \kph h p r p (-\nu) &\text{ if
 }\Nfu=\Nfu_0\,,\\
d_\Nfu(\nu) \mu^{a,b}_\Nfu(j,\nu) + c_\Nfu(\nu)
\om^{a,b}_\Nfu(j,\nu)&\text{ otherwise}\,,
\end{cases}\ee
with meromorphic functions $c_\Nfu$ and $d_\Nfu$ on $U$. We take
$r=\frac13(h-2j)$, and $a,b$ such that $h=2j+3(a-b)$, $p=a+b$, and use
the families of Fourier terms in \eqref{HK-def} and \eqref{xab}.
 
 We
define families of Fourier terms analogously to
Definition~\ref{def-holfam}; we need only replace
$\Au!(\Gm;\ps)_{h,p,p}$ by
$\Ffu^{\ps[j,nu]}_{\Nfu;h,p,p}$.\il{hfFt}{holomorphic family of
Fourier
 terms}\il{mfFt}{meromorphic family of Fourier terms}
 
 If $f$
is a holomorphic family of automorphic forms, then the
 coefficients
$c_\Nfu$ and $d_\Nfu$ are holomorphic on
 $U \setminus \ZZ$. The
families used in \eqref{mFt} do not always form
 a basis of
$\Ffu^{\ps[j,\nu]}_{\Nfu;h,p,p}$. That may cause
 singularities of the
coefficients that are not due to singularities of
 the family~$f$. When
dealing with meromorphic families this is often no
 problem. However if
we are interested in values or residues at integral
 points it is
better to use an adapted basis in the case that
$\Nfu\neq \Nfu_0$, for instance the basis in the following
proposition.
\begin{prop}\label{prop-OmMuUps}
 Let $(j,h,p)$ be as above, and let
$\Nfu=\Nfu_\bt$,
 $\bt\in \ZZ[i]\setminus \{0\}$, or
$\Nfu=\Nfu_{\ell,c,d}$ with
 $m_0(j) \geq 0$.
 
\begin{itemize}
\item[i)]\il{Mu}{$\Mu_{\Nfu;h,p}$}\il{Om}{$\Om_{\Nfu;h,p}$}
There is a holomorphic family $\nu\mapsto \Om_{\Nfu;h,p}(j,\nu)$ of
Fourier terms such that for each $\nu \in \CC$ we have $
\CC\,\Om_{\Nfu;h,p}(j,\nu)\= \Wfu^{\xi_j,\nu}_{\Nfu;h,p,p}$.

There is also a meromorphic family $\nu\mapsto \Mu_{\Nfu;h,p}(j,\nu)$
with at most first order singularities in $\ZZ_{\leq -1}$ such that
$ \Mu_{\Nfu;h,p}(j,\nu)$ spans $\Mfu^{\xi_j,\nu}_{\Nfu;h,p,p}$ if
$\nu \not\in \ZZ_{\leq -1}$. At $\nu\in \ZZ_{\leq -1}$, the values and
residues are elements of $\Ffu^{\ps[j,\nu]}_{\Nfu;h,p,p}$.

\item[ii)]The elements $\Om_{\Nfu;h,p}(j,\nu)$ and
$\Mu_{\Nfu;h,p}(j,\nu)$ form a basis of
$\Ffu^{\ps[j,\nu]}_{\Nfu;h,p,p}$, except if $\pm \ell >0\,,$
$\pm j \leq -1\,,$ $\nu\geq 0$, $\nu \equiv j\bmod 2$,
$ 0\leq m_0(j) < \frac{|j|-\nu}2\,,$ $ h \pm 3p < \mp 3\nu - j\,.$
{\rm(For given $(j,h,p)$ and $m_0(j)\geq 0$ this describes a finite
set
 of values of $\nu$.)}

\item[iii)] If $\Nfu=\Nfu_{\ell,c,d}$, then there is a holomorphic
family of Fourier terms
\il{Ups}{$\Ups_{\Nfu;h,p}$}$\nu \mapsto \Ups_{\Nfu;h,p}(j,\nu)$ on $U$
such that for each $\nu \in U$
\[ \CC\, \Ups_{\Nfu;h,p}(j,\nu) \= \Vfu^{\ps[j,\nu]}_{\Nfu;h,p,p}\,,\]
and such that $\Ups_{\Nfu;h,p}(j,\nu)$ and $\Om_{\Nfu;h,p}(j,\nu)$
form
 a basis of $\Ffu^{\ps[j,\nu]}_{\Nfu;h,p,p}$.
\end{itemize}
\end{prop}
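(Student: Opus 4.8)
The plan is to exhibit the three families directly as the fixed-$K$-type families from Proposition~\ref{prop-extr.na}, evaluated at the pair $(a,b)$ matching the prescribed $K$-type. Since $|h-2j|\leq 3p$, the integer $r=\frac13(h-2j)$ satisfies $|r|\leq p$, and I set $a=\frac{p+r}2$, $b=\frac{p-r}2$, so that $h=2j+3(a-b)$ and $p=a+b$. In the abelian case, where the upward shift operators are injective by Proposition~\ref{prop-uso-ga}, I take $\Om_{\bt;h,p}(j,\nu)=\om^{a,b}_\bt(j,\nu)$ and $\Mu_{\bt;h,p}(j,\nu)=\mu^{a,b}_\bt(j,\nu)$ from \eqref{xab}. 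In the non-abelian case I set $\Om_{\n;h,p}(j,\nu)=\tilde\om^{a,b}_\n(j,\nu)$, $\Mu_{\n;h,p}(j,\nu)=\tilde\mu^{a,b}_\n(j,\nu)$ and $\Ups_{\n;h,p}(j,\nu)=\tilde\ups^{a,b}_\n(j,\nu)$, using \eqref{tuod} and \eqref{tumu}, precisely because the tilded families already have the $\nu$-zeros of the untilded ones divided out.

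For i), the holomorphy and non-vanishing of $\Om$ follow from its determining component. By Proposition~\ref{prop-extr.na} and Table~\ref{tab-mmc}, the minimal or maximal component of $\Om$ is a power of $t$ times a $W$-Whittaker function $W_{\k,\nu/2}$ (a $K$-Bessel function in the abelian case); these are entire in $\nu$ and nonvanishing by their asymptotics \eqref{Wae} (resp.\ \eqref{Kae}), and the remaining components are obtained through the kernel relations of Table~\ref{tab-krnab} (resp.\ Table~\ref{tab-krab}), whose coefficients are independent of $\nu$, so no $\nu$-poles are introduced and the family stays nonzero. Together with injectivity of the surviving upward shift operator (Proposition~\ref{prop-kuso}) this produces a nonzero entire family spanning the one-dimensional space $\Wfu^{\xi_j,\nu}_{\Nfu;h,p,p}$ (Lemma~\ref{lem-VW-nab}(iii), Lemma~\ref{lem-strab}(i) and Proposition~\ref{prop-dim2-nab}). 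For $\Mu$ the determining component is instead an $M$-Whittaker function $M_{\k,\nu/2}$ (an $I$-Bessel function in the abelian case); the $I$-Bessel case is entire, while $M_{\k,\nu/2}$ has at most first-order poles at $\nu\in\ZZ_{\leq-1}$ by \eqref{Mkps}, whence $\Mu$ is meromorphic with at most first-order poles there. Because $\Mu(j,\nu)\in\Ffu^{\ps[j,\nu]}_{\Nfu;h,p,p}$ for all nearby $\nu$ and the eigenvalue equations $C\,F=\ld_2(j,\nu)F$, $\Dt_3F=\ld_3(j,\nu)F$ depend holomorphically on $\nu$, each Laurent coefficient at such a point, in particular the residue and the constant term, is again an element of $\Ffu^{\ps[j,\nu]}_{\Nfu;h,p,p}$; spanning of $\Mfu^{\xi_j,\nu}_{\Nfu;h,p,p}$ away from $\ZZ_{\leq-1}$ again comes from the one-dimensionality in Lemma~\ref{lem-Mfud1} (resp.\ Lemma~\ref{lem-strab}).

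For ii), since $\dim\Ffu^{\ps[j,\nu]}_{\Nfu;h,p,p}\leq 2$, the pair $\{\Om,\Mu\}$ is a basis exactly when it is linearly independent, i.e.\ when $\Mfu^{\xi_j,\nu}_{\Nfu;h,p,p}\neq\Wfu^{\xi_j,\nu}_{\Nfu;h,p,p}$. In the abelian case these spaces are spanned by the independent $I$- and $K$-Bessel functions, so by Lemma~\ref{lem-strab}(i) they never coincide and no exception arises; this is why the exceptional set is purely non-abelian. In the non-abelian case the coincidence $\Mu\dis\Om$ is governed by the degeneration $M_{\k,\nu/2}\dis W_{\k,\nu/2}$ of \eqref{MWV}, which is the content of Lemma~\ref{lem-mu-omups}(iii),(iv). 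Translating those parameter inequalities with $h+3p=2j+6a$ and $h-3p=2j-6b$, together with $\nu\equiv j\bmod 2$, $\nu\geq 0$ and the relations \eqref{jnurels}, reproduces the stated conditions $\pm\ell>0$, $\pm j\leq-1$, $0\leq m_0(j)<\frac{|j|-\nu}2$ and $h\pm3p<\mp3\nu-j$; the complementary degeneration $M\dis V$ of \eqref{MWV} (Lemma~\ref{lem-mu-omups}(i),(ii)) yields $\Mu\dis\Ups$ rather than $\Mu\dis\Om$ and hence does not obstruct the basis property. Part iii) is then the cleanest: $\Ups=\tilde\ups^{a,b}_\n$ is entire and nonzero in $\nu$ (the $V$-Whittaker function is entire and even), it spans the one-dimensional space $\Vfu^{\ps[j,\nu]}_{\Nfu;h,p,p}$ by Lemma~\ref{lem-VW-nab}(iv), and it is linearly independent of $\Om$ for every $\nu$ because $\Vfu^{\xi_j,\nu}_\n\cap\Wfu^{\xi_j,\nu}_\n=\{0\}$ by Lemma~\ref{lem-VW-nab}(v), so $\{\Ups,\Om\}$ is a basis with no exceptions. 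The main obstacle will be part ii): pinning down the finite exceptional set requires carefully matching the single Whittaker coincidence relation \eqref{MWV} against both the $W$- and the $V$-degenerations and converting the spectral inequalities of Lemma~\ref{lem-mu-omups} into the $(h,p)$-form stated here, while verifying that the $M\dis V$ cases are correctly excluded.
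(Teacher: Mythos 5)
Your construction coincides with the paper's: the same families $\om^{a,b}_\bt,\mu^{a,b}_\bt$ in the abelian case and the tilded families $\tilde\om^{a,b}_\n,\tilde\mu^{a,b}_\n,\tilde\ups^{a,b}_\n$ from Proposition~\ref{prop-extr.na} and \eqref{tuod}, \eqref{tumu} in the non-abelian case, with spanning from the multiplicity-one results (Lemmas~\ref{lem-strab}, \ref{lem-VW-nab}, \ref{lem-Mfud1}). The only cosmetic difference is in part~ii), where the paper reads off the exceptional set from the packaged Proposition~\ref{prop-M-VW}(ii) and Table~\ref{tab-isoWna} while you derive it one level down from Lemma~\ref{lem-mu-omups}(iii),(iv) (which is what Proposition~\ref{prop-M-VW} is proved from, modulo the propagation from the sector boundary via injective upward shift operators); this is essentially the same argument.
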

 
 The property in i) fixes the families up to multiplication
by a
 holomorphic function on $U$ that has no zeros. In the proofs in
\S\ref{sect-invef} convenient choices are specified. In many cases
these families are holomorphic on all of~$\CC$.
 
 The exception in ii)
of the proposition forces us to consider the family
 $\Ups_{\n;h,p}$ as
well. In the non-abelian case the family
 $\Mu_{\Nfu;h,p}$ is a linear
combination of $\Ups_{\Nfu;h,p}$ and
 $\Om_{\Nfu;h,p}$ with
coefficients that are holomorphic on~$U$. The
 exact form of this
relation depends on the choice of the families.
 
 The families of
Fourier terms in the proposition, and also the families
 used in
\eqref{mFt}, were defined by repeated application of shift
 operators,
and it is hard to give explicit formulas, except in special
 cases.
 
\subsection{Square integrability}
\label{sect-sqiaf}
 The space $L^2( \Gm\backslash G)$ is a unitary
representation of~$G$ for
 the action by right translation. It has a
closed subspace
 $L^{2,\mathrm{discr}}(\Gm\backslash G)$ generated by
all irreducible
 subspaces. The $K$-finite vectors in each of these
irreducible
 components form an irreducible $(\glie,K)$-module in
$\Au{(2)}(\Gm;\ps)$ for some character $\ps$ of $ZU(\glie)$. These
submodules inherit a unitary structure from $L^2(\Gm\backslash G)$.
Each such modules is generated by its subspace of minimal $K$-type
$\tau^{h_0}_{p_0}$. The finite-dimensional space
$\Au{(2)}(\ps)_{h_0,p_0}$ spans a $(\glie,K)$-submodule of
$\Au{(2)}(\ps)$ that is the direct sum of a finite number of
irreducible $(\glie,K)$-modules in one of the isomorphism classes in
Theorem~\ref{thm-us}; see also Proposition~\ref{prop-Ktp}. For given
non-zero $f\in \Au{(2)}(\ps)_{h_0,p_0,p_0}$, the Fourier expansions of
the automorphic forms in $U(\glie) f$ are determined by the Fourier
expansions of~$f$.
 
 In our next results, we discuss the Fourier
expansions of $f^\c$ for
 square integrable automorphic forms
$f\in \Au{(2)}(\Gm;\ps)_{h_0,p_0,p_0}$ for all isomorphism classes of
unitarizable irreducible $(\glie,K)$-modules. The Fourier terms can be
expressed in explicitly given functions. For $\Nfu\neq \Nfu_0$ these
functions are non-zero multiples of $\Om_{\Nfu;h_0,p_0}$ in
Proposition~\ref{prop-OmMuUps}. We formulate the result in four
propositions, in which we combine isomorphism classes for which the
Fourier expansion has a similar structure. See \S\ref{sect-list-irr}
and Theorem~\ref{thm-us} for the list of isomorphism classes.

\begin{prop}\label{prop-i-pscs}{\rm Unitary irreducible principal
series
 and complementary series.} Isomorphism type $\II(j,\nu)$ ,
where
 $(j,\nu)$ satisfies $\nu\in i\RR$, $j\in \ZZ$,
$j\not\in 2\ZZ_{\neq 0}$
 if $\nu=0$; or satisfies
$\nu\in (-2,2) \setminus \{0\}$, $j=0$, or
$\nu\in(-1,1)\setminus \{0\}$, $j\equiv 1 \bmod 2$. The minimal
$K$-type is $\tau^{2j}_0$.
 
 If
$f\in \Au{(2)}\bigl( \ps[j,\nu]\bigr)_{2j,0,0}$ generates a
$(\glie,K)$-module of type $\II(j,\nu)$, then each $f^\c$ has a
pointwise absolutely convergent Fourier expansion of the following
form.
\begin{align*}
 f^\c &\= a_\c(0) \, \kph{2j}000\bigl(
-|\nu|\bigr)\qquad\bigl(\text{only
 for the complementary
series}\bigr)\\
&\quad\hbox{} + \sum_{\bt\in \ZZ[i]\setminus\{0\}} a_\c(\bt)
\,\om^{0,0}_\bt(j,\nu)
+ \sum_{(\ell,c,d)} a_\c(\ell,c,d) \, \om^{0,0}_{\ell,c,d}(j,\nu)\,.
\end{align*}
 The summation in the non-abelian term is over
$\ell\in \frac{\s(\c)}2\ZZ_{\neq 0}$, $c\bmod 2\ell$, and
$d\in 1+2\ZZ$
 such that
$m_0(j)= \frac{\sign(\ell)}6(d-2j)-\frac12 \in \ZZ_{\geq 0}$.
 
 The
basis functions are given in \eqref{HK-def}, \eqref{om00ab} and
\eqref{om00nab}.
\end{prop}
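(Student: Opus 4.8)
The plan is to run the square integrable automorphic form through its Fourier expansion at each cusp and pin down every Fourier term inside the single $K$-type that carries $f$. First I would fix a cusp $\c$ and pass to $f^\c\in C^\infty(\Ld_{\s(\c)}\backslash G)_K$, which by Proposition~\ref{prop-absconv} has an absolutely convergent expansion $f^\c=\Four_0 f^\c+\sum_{\bt\neq 0}\Four_\bt f^\c+\sum_{\ell,c,d}\Four_{\ell,c,d}f^\c$. Since $g\mapsto f(g_\c g)$ commutes with the right action of $K$, the function $f^\c$ again lies in the single $K$-type $\tau^{2j}_0$, and because the Fourier term operators are intertwining (Proposition~\ref{prop-Ftit}) each summand $\Four_\Nfu f^\c$ lies in the (one-dimensional, hence highest weight) space $\Ffu^{\ps}_{\Nfu;2j,0,0}$ with $\ps=\ps[j,\nu]$. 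Under the generic parametrization attached to the irreducible type $\II(j,\nu)$, Proposition~\ref{prop-dim} together with the summary at the end of \S\ref{sect-Nnab1d} shows that this space has dimension $2$ whenever $\tau^{2j}_0$ occurs, and that it is then spanned by $\mu^{0,0}_\Nfu(j,\nu)$ and $\om^{0,0}_\Nfu(j,\nu)$ from the one-dimensional $K$-type analysis.

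For the terms with $\Nfu\neq\Nfu_0$ I would invoke that $f$, being an automorphic form, has $\Four_\Nfu f^\c\in\Wfu^\ps_\Nfu$, i.e.\ exponential decay as $t\uparrow\infty$ (the discussion following Definition~\ref{def-af}). In the two-dimensional space $\Ffu^\ps_{\Nfu;2j,0,0}$ only the line through $\om^{0,0}_\Nfu(j,\nu)$ consists of functions with exponential decay, since $\mu^{0,0}_\Nfu(j,\nu)$ is built from $I_\nu$, respectively $M_{\k,\nu/2}$, which grow (see \eqref{mu00a}, \eqref{mu00nab}), whereas $\om^{0,0}_\Nfu$ uses $K_\nu$, respectively $W_{\k,\nu/2}$. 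Hence $\Four_\bt f^\c=a_\c(\bt)\,\om^{0,0}_\bt(j,\nu)$ and $\Four_{\ell,c,d}f^\c=a_\c(\ell,c,d)\,\om^{0,0}_{\ell,c,d}(j,\nu)$ for scalars $a_\c$. In the non-abelian case the $K$-type $\tau^{2j}_0$ occurs in $\Ffu_{\ell,c,d}$ at all only when $m_0(j)=\frac{\sign(\ell)}6(d-2j)-\frac12\in\ZZ_{\geq 0}$, which accounts for the stated range of the non-abelian summation and forces all other non-abelian terms to vanish.

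For the $N$-trivial term I would use square integrability. The space $\Ffu^\ps_{0;2j,0,0}$ is spanned by $\kph{2j}000(\nu)$ and $\kph{2j}000(-\nu)$, whose component functions behave like $t^{2\pm\nu}$ at $\infty$; by the criterion recorded in Definition~\ref{def-gc} an element of $H^{\xi,\nu'}_K$ is square integrable near $\infty$ exactly when $\re\nu'<0$, and the $\nu=0$ logarithmic solution of Proposition~\ref{prop-iLog}, behaving like $t^2\log t$, is never square integrable. For $\nu\in i\RR$ (including $\nu=0$) neither $\pm\nu$ has negative real part, so $\Four_0 f^\c=0$, matching the absence of an $N$-trivial term in the unitary case. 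For the complementary series $\nu$ is real and non-zero, so exactly $-|\nu|$ has negative real part, giving $\Four_0 f^\c=a_\c(0)\,\kph{2j}000(-|\nu|)$. Collecting the three contributions and invoking the absolute convergence from Proposition~\ref{prop-absconv} yields the asserted expansion.

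The routine identifications of basis functions and the decay/growth dichotomy are all in place from the earlier sections; the one point demanding care is the $N$-trivial term, where I must correctly read off which elements of $\Ffu^\ps_{0;2j,0,0}$ (including the logarithmic solution at $\nu=0$) are square integrable near the cusp, and confirm that square integrability of $f$ over $\Gm\backslash G$ indeed forces square integrability of each $f^\c$ near $\infty$ so that the selection $\re\nu'<0$ applies. This is exactly the content of the statement $\Four_0 f\in\bigoplus_{(j,\nu)\in\wo(\ps),\,\re\nu<0}H^{\xi_j,\nu}$ recorded for $f\in\Au{(2)}(\Gm;\ps)$, which I would apply at each cusp.
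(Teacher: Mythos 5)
Your proposal is correct and follows essentially the same route as the paper's (combined) proof: the $N$-trivial term is selected by the square-integrability criterion $\re\nu'<0$ applied to the exponents $\pm\nu$ (so it survives only for the complementary series), the remaining Fourier terms are forced into $\Wfu^\ps_\Nfu$ and hence onto the line spanned by $\om^{0,0}_\Nfu(j,\nu)$ in the two-dimensional space $\Ffu^\ps_{\Nfu;2j,0,0}$, and the non-abelian range is exactly the condition $m_0(j)\in\ZZ_{\geq 0}$ for the $K$-type $\tau^{2j}_0$ to occur. You merely spell out in more detail the dimension count and the decay dichotomy between the $I_\nu/M_{\k,s}$ and $K_\nu/W_{\k,s}$ basis vectors, which the paper compresses into citations of its earlier structural results.
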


\begin{prop}\label{prop-i-hadst}{\rm Holomorphic and antiholomorphic
discrete series type.} Isomorphism types $\IF(j,\nu)$,
$\nu \equiv j \bmod 2$, $j\in \ZZ_{\geq 2}$, $0\leq \nu\leq j-2$, and
$\FI(j,\nu)$ $\nu \equiv j \bmod 2$ $j\in \ZZ_{\leq -2}$,
$0\leq \nu \leq -j-2$. The minimal $K$-type is $\tau^{2j}_0$.
 
 If
$f\in \Au{(2)}\bigl( \ps[j,\nu]\bigr)_{2j,0,0}$ generates a
$(\glie,K)$-module of one of these types, then each $f^\c$ has a
pointwise absolutely convergent Fourier expansion of the following
form.
\[ f^\c \= \sum_\n a_\c(\ell,c,d) \, \om_{\ell,c,d}^{0,0}(j,\nu)\,,\]
with the basis function in~\eqref{om00nab}. The summation is over
$(\ell,c,d)$ such that
\bad
&\ell\in \frac{\s(\c)}2\ZZ_{\leq -1}\,,&& c\bmod 2\ell\,,
&&0\leq m_0(j) < \frac{j-\nu}2\,,&
&\text{ for }\IF(j,\nu)\,,\\
&\ell\in \frac{\s(\c)}2 \ZZ_{\geq 1}\,,&&c\bmod 2\ell\,,
&& 0\leq m_0(j) < - \frac{j+\nu}2\,,&
&\text{ for }\FI(j,\nu)\,.
\ead
\end{prop}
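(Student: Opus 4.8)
The plan is to push each Fourier term operator $\Four_\Nfu$ through the irreducible module $U(\glie)f\cong\IF(j,\nu)$ (respectively $\FI(j,\nu)$) and read off its image from the structure theorems of Chapter~III. Since $f$ lies in the one-dimensional minimal $K$-type $\tau^{2j}_0$ and $\Four_\Nfu$ is an intertwining operator (Proposition~\ref{prop-Ftit}), each $\Four_\Nfu f^\c$ lies in the highest weight space $\Ffu^\ps_{\Nfu;2j,0,0}$, and $U(\glie)\Four_\Nfu f^\c=\Four_\Nfu\bigl(U(\glie)f\bigr)$ is the image of an irreducible module under a $(\glie,K)$-morphism, hence is either $\{0\}$ or isomorphic to $\IF(j,\nu)$. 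Moreover, $f$ being an automorphic form of polynomial growth, we have $\Four_\Nfu f^\c\in\Wfu^\ps_\Nfu$ for every $\Nfu\neq\Nfu_0$. So the whole argument reduces to deciding, for each $\Nfu$, whether $\Wfu^\ps_\Nfu$ (or $\Ffu^\ps_0$) admits a submodule isomorphic to $\IF(j,\nu)$, and, when it does, naming the generator of its minimal $K$-type.

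First I would dispose of the $N$-trivial and abelian terms. Square integrability gives $\Four_0 f^\c\in\bigoplus_{(j',\nu')\in\wo(\ps),\,\re\nu'<0}H^{\xi_{j'},\nu'}$; were this nonzero, its span would embed the irreducible $\IF(j,\nu)$ into some summand with $\re\nu'<0$. But by the unique embedding recorded in Table~\ref{tab-isot-ps}, the holomorphic type $\IF(j,\nu)=\IF(j_r,\nu_r)$ embeds only into $H^{\xi_r,\nu_r}$ with $\nu_r\geq0$, and the antiholomorphic type $\FI(j_l,\nu_l)$ only into $H^{\xi_l,\nu_l}$ with $\nu_l\geq0$; this contradicts $\re\nu'<0$, so $\Four_0 f^\c=0$. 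For $\bt\neq0$ we have $\Four_\bt f^\c\in\Wfu^\ps_\bt$, and Theorem~\ref{mnthm-ab-ip} (see the opening remark of Section~\ref{sect-abip}) shows that under integral parametrization the only irreducible submodules of $\Wfu^\ps_\bt$ are of large discrete series type $\II_+$. Since $\IF(j,\nu)$ is not of this type, $U(\glie)\Four_\bt f^\c$ cannot be isomorphic to $\IF(j,\nu)$ and must vanish.

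The substance lies in the non-abelian terms. Here $\Four_\n f^\c\in\Wfu^\ps_{\n;2j,0,0}$, and if it is nonzero then $\IF(j,\nu)$ is the unique irreducible submodule of $\Wfu^\ps_\n$. That submodule is classified by Remark~\ref{rmk-inab} and Table~\ref{tab-isoWna}: for $\n=(\ell,c,d)$ it is of type $\IF(j_r,\nu_r)$ exactly when $\ell<0$ and $0\leq m_0(j_r)<\tfrac12(j_r-\nu_r)$, and of type $\FI(j_l,\nu_l)$ exactly when $\ell>0$ and $0\leq m_0(j_l)<-\tfrac12(j_l+\nu_l)$. Using the identities $\tfrac12(j_r-\nu_r)=\tfrac12(\nu_+-j_+)=\nu_l$ and $-\tfrac12(j_l+\nu_l)=\nu_r$ coming from \eqref{jnurels}, these translate, in the spectral parameters $(j,\nu)$ of the statement, into $0\leq m_0(j)<\tfrac{j-\nu}2$ with $\ell<0$ for $\IF$, and $0\leq m_0(j)<-\tfrac{j+\nu}2$ with $\ell>0$ for $\FI$; the residue class $c$ remains free modulo $2\ell$ because $m_0(j)$ does not involve $c$. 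For such $\n$ the one-dimensional space $\Wfu^\ps_{\n;2j,0,0}$ is spanned by $\om^{0,0}_{\ell,c,d}(j,\nu)$ from \eqref{om00nab} and Lemma~\ref{lem-bfs}, so $\Four_\n f^\c=a_\c(\ell,c,d)\,\om^{0,0}_{\ell,c,d}(j,\nu)$ for a scalar $a_\c(\ell,c,d)$; for all other $\n$ the argument forces $\Four_\n f^\c=0$.

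Assembling these three computations into the Fourier expansion \eqref{Fe1} of $f^\c$ and invoking Proposition~\ref{prop-absconv} for pointwise absolute convergence yields the stated formula. The main obstacle is not a single estimate but the bookkeeping in the non-abelian step: one must read the ``unique irreducible submodule'' of $\Wfu^\ps_\n$ off Table~\ref{tab-isoWna} correctly across all sign regimes, and check that its isomorphism type, expressed through $m_0(j)$ and the interior/wall position of $(j,\nu)$, matches precisely the half-open range $0\le m_0(j)<\tfrac{j\mp\nu}2$ (including the limit cases $\nu=0$, where the module is a limit of discrete series but the same embedding and vanishing arguments apply unchanged).
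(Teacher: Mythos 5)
Your proposal is correct and follows essentially the same route as the paper's proof of Propositions \ref{prop-i-pscs}--\ref{prop-i-thr}: the $N$-trivial term vanishes because $\IF$ and $\FI$ are not represented in any $H^{\xi,\nu}_K$ with $\re\nu<0$, the generic abelian terms vanish because those modules only admit irreducible submodules on which both upward shift operators are injective (the paper cites Proposition~\ref{prop-uso-ga} directly where you route through Theorem~\ref{mnthm-ab-ip}~iv), an equivalent appeal), and the non-abelian conditions on $\ell$ and $m_0(j)$ are read off Table~\ref{tab-isoWna} exactly as in the paper. Your extra bookkeeping with \eqref{jnurels} is harmless but unnecessary, since the table already states the bounds in the form $0\leq m_0(j_r)<\tfrac12(j_r-\nu_r)$ and $0\leq m_0(j_l)<-\tfrac12(j_l+\nu_l)$.
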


\begin{prop}\label{prop-i-ldst} {\rm Large discrete series type.}
Isomorphism type $\II_+(j,\nu)$, $\nu\equiv j\bmod 2$, $\nu\geq |j|$,
$\nu \neq 0$. The minimal $K$-type is $\tau^{-j}_\nu$.
 
 If
$f\in \Au{(2)}\bigl( \ps[j,\nu]\bigr)_{-j,\nu,\nu}$ generates a
$(\glie,K)$-module of type $\II_+(j,\nu)$, then each $f^\c$ has a
pointwise absolutely convergent Fourier expansion of the following
form.
\begin{align*}
 f^\c &\= \sum_{\bt\in \ZZ[i]\setminus\{0\}} a_\c(\bt)\,
\kk^K_{\bt;-j,\nu}
+ \sum_{(\ell,c,d)} a_\c(\ell,c,d) \, \kk^W_{\ell,c,d;-j,\nu}\,,
\end{align*}
 with basis functions as in \eqref{expl-kdso} and
\eqref{kkVW}. In the
 non-abelian term the summation is over
$(\ell,c,d)$ such that
 $\ell \in \frac{\s(\c)}2\ZZ_{\neq 0}$,
$c\bmod 2\ell$, and
$m_0(j) \geq \frac12\bigl(\nu- j\sign(\ell) \bigr)$.

\end{prop}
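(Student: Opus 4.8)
The plan is to expand $f^\c$ by means of the Fourier expansion~\eqref{Fe1}, which converges absolutely by Proposition~\ref{prop-absconv}, and then to identify each Fourier term individually by exploiting that every $\Four_\Nfu$ is an intertwining operator (Proposition~\ref{prop-Ftit}). Writing $V = U(\glie) f^\c$ and $\e = \sign(\ell)$, I first observe that left translation by $g_\c$ is intertwining, so $V \cong \II_+(j,\nu)$ and $f^\c$ is a \emph{minimal vector}: a highest weight vector in the minimal $K$-type $\tau^{-j}_\nu$ on which both downward shift operators vanish (this is the bottom $K$-type of the special module of \S\ref{sect-lds}). Consequently each $\Four_\Nfu f^\c$ lies in $\Ffu^\ps_{\Nfu;-j,\nu,\nu}$ and is again annihilated by $\sh3{-1}$ and $\sh{-3}{-1}$, hence is a minimal vector, so it lies in the intersection-of-kernels space studied in Propositions~\ref{prop-nsefa} and~\ref{prop-ik}, evaluated at $(h,p) = (-j,\nu)$ with $p = \nu \geq 1$ since $\nu \geq |j|$ and $\nu \neq 0$. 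Note $\ps = \ps[-h,p] = \ps[j,\nu]$, matching part~ii) of those propositions, and by Lemma~\ref{lem-kdso} the set $\wo^1(\ps)$ equals $\{j_l, j_+, j_r\}$ with $j_+ = j$ in the conventions of~\eqref{jnurels}.

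Next I would show that the $N$-trivial term vanishes, explaining its absence. Since $V$ is irreducible, $\Four_0|_V$ is either zero or injective. If it were injective, $\II_+(j,\nu)$ would embed as a submodule of the image; but square integrability of $f$ places $\Four_0 f^\c$ in $\bigoplus_{(j',\nu')\in\wo(\ps),\ \re\nu'<0} H^{\xi_{j'},\nu'}_K$, which would force $\II_+(j,\nu)$ to be a submodule of some principal series module with $\re\nu'<0$. Table~\ref{tab-isot-ps} shows that $\II_+(j,\nu)$ occurs as a submodule only in $H^{\xi_+,\nu_+}_K$, $H^{\xi_r,\nu_r}_K$ and $H^{\xi_l,\nu_l}_K$, all with non-negative second parameter, a contradiction; hence $\Four_0 f^\c = 0$.

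For the abelian terms, each $\Four_\bt f^\c$ with $\bt\neq 0$ lies in $\Wfu^\ps_\bt$, because non-trivial Fourier terms of a function of polynomial growth have exponential decay. Being also a minimal vector, it lies in $K_{\bt;-j,\nu}$, which by Proposition~\ref{prop-nsefa} is spanned by $\kk^I_{\bt;-j,\nu}$ and $\kk^K_{\bt;-j,\nu}$; as the $I$-Bessel family grows exponentially and the $K$-Bessel family decays, exponential decay forces $\Four_\bt f^\c = a_\c(\bt)\,\kk^K_{\bt;-j,\nu}$ (and $\bt$ ranges over $\ZZ[i]$ by~\eqref{bas0}). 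The non-abelian terms are handled identically: $\Four_\n f^\c$ is a minimal vector in $\Wfu^\ps_\n$, hence lies in $K_{\n;-j,\nu}$, spanned by $\kk^W_{\n;-j,\nu}$ and possibly $\kk^V_{\n;-j,\nu}$ (Proposition~\ref{prop-ik}); since $V$-Whittaker functions do not decay while $W$-Whittaker functions do, $\Four_\n f^\c$ is a multiple $a_\c(\ell,c,d)\,\kk^W_{\n;-j,\nu}$.

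It remains to justify the summation range, i.e. that $\Four_\n f^\c = 0$ unless $m_0(j) \geq \tfrac12(\nu - j\,\sign(\ell))$. If $\Four_\n f^\c \neq 0$, then $\Four_\n(V) \subset \Wfu^\ps_\n$ is an irreducible submodule of type $\II_+(j,\nu)$; by Table~\ref{tab-isoWna} the unique irreducible submodule of $\Wfu^\ps_\n$ has type $\II_+(j_+,\nu_+)$ precisely when $m_0(j_l), m_0(j_+), m_0(j_r) \geq 0$, and a different type in every other configuration, so all three must be non-negative. Finally I would record the elementary computation, using $m_0(j') = \tfrac{\e}{6}(d - 2j') - \tfrac12$ and the monotonicity of $j'\mapsto m_0(j')$ (decreasing for $\e=1$, increasing for $\e=-1$), that simultaneous non-negativity of all three is equivalent to the single inequality $m_0(j)\geq \tfrac12(\nu - j\e)$ of the proposition; assembling the three families of terms then yields the stated expansion. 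The main obstacle is exactly this last coordination step: aligning the growth dichotomy ($\kk^W$ versus $\kk^V$, and $\kk^K$ versus $\kk^I$) with the submodule-structure criterion of Table~\ref{tab-isoWna} through the extreme sectors $\sect(j_l)$ and $\sect(j_r)$, whereas the convergence and intertwining ingredients are already available.
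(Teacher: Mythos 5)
Your argument is correct and follows essentially the same route as the paper's proof: the $N$-trivial term vanishes because $\II_+(j,\nu)$ is only represented in principal series modules with $\re\nu\geq 0$ (Table~\ref{tab-isot-ps}), polynomial growth/square integrability places the remaining terms in $\Wfu^\ps_\Nfu$ where the minimal-vector property singles out $\kk^K_{\bt;-j,\nu}$ and $\kk^W_{\n;-j,\nu}$ via Propositions~\ref{prop-nsefa} and~\ref{prop-ik}, and the summation condition comes from the first row of Table~\ref{tab-isoWna}. You merely spell out more explicitly than the paper the irreducibility argument for $\Four_0|_V$ and the equivalence of ``all three $m_0\geq 0$'' with the single inequality $m_0(j)\geq\tfrac12(\nu-j\,\sign(\ell))$.
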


\begin{prop}\label{prop-i-thr} {\rm Thin representations.} Isomorphism
types
\[ \begin{array}{rlc}
 \IF(1,-1)
& =T^+_{-1} &
\\
 \IF_+(2k+3,-1)& =T^+_k & k \in \ZZ_{\geq 0}
\\
 \FI(-1,-1)&= T^-_{-1} &
\\
 \FI_+(-2k-3,-1)&= T^-_k & k \in \ZZ_{\geq 0}
\end{array}\]
 with minimal $K$-type $\tau^{\pm(2k+3)}_{k+1}$ for
$T^\pm_k$.
 
 If $f\in \Au{(2)}\bigl( \ps[j,\nu]\bigr)_{-j,\nu,\nu}$
generates a
 $(\glie,K)$-module of one of these types, then each $f^\c$
has a
 pointwise absolutely convergent Fourier expansion of the
following
 form.
\begin{align*}
 f^\c &\= a_\c(0) \,
\kph{\pm(k+3)}{k+1}{\mp(k+1)}{k+1}(-1)
+ \sum_{(\ell,c,d)} a_\c(\ell,c,d)\, \kk^W_{\ell,c,d;\pm(k+3),k+1}\,,
\end{align*}
 with basis functions as in \eqref{HK-def}
and~\eqref{kkVW}. In the
 non-abelian term the summation is over
$\ell \in \mp \frac{\s(\c)}2\ZZ_{\geq 0}$, $c\bmod 2\ell$,
$m_0(j) = k+1$.
 
 The expression in~\eqref{kkVW} leads to
\badl{kkth-expl} &\kk^W_{\ell,c,d;\pm(k+3),k+1}\bigl( n \am(t)
k_1\bigr)\= \sum_{r\equiv k+1, \, r\equiv k+1(2)} i^{(k+1\mp r)/2}
(2\pi|\ell|)^{(-k-1\pm r)/4}
\\
&\quad\hbox{} \cdot
\sqrt{\frac{k+1\mp r}2 !} \;\Th_{\ell,c}\bigl( h_{\ell,(k+1\mp
r)/2};n\bigr)\, t^{(k+3\pm r)/2}\, e^{-\pi |\ell|t^2}\;
\Kph{\pm(k+3)}{k+1}r{k+1}(k_1)\,.
\eadl
\end{prop}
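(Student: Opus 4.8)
The plan is to exploit that $V=U(\glie)f^\c$ is an \emph{irreducible} $(\glie,K)$-module of a known thin type, together with the fact that every Fourier term operator is an intertwining operator (Proposition~\ref{prop-Ftit}). Write $h_0=\pm(k+3)$, $p_0=k+1$ for the minimal $K$-type $\tau^{h_0}_{p_0}$ of $T^\pm_k$, in which $f$ is a highest weight vector; one checks directly from \eqref{jnurels} and Table~\ref{tab-isot-ps} that $T^\pm_k$ has all its $K$-types on a single ray ($B=\nu_r-1=0$, resp.\ $A=0$), so it is generated by $\tau^{h_0}_{p_0}$. Since $f^\c=\Four_0 f^\c+\sum_\bt\Four_\bt f^\c+\sum_\n\Four_\n f^\c$ and each $\Four_\Nfu f^\c$ lies in the same $K$-type $\tau^{h_0}_{p_0}$, it suffices to identify each Fourier term of $f^\c$ in this one $K$-type. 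Because $\Four_\Nfu$ is intertwining, $U(\glie)\Four_\Nfu f^\c=\Four_\Nfu V$ is a homomorphic image of the irreducible $V$, hence $0$ or an irreducible submodule of $\Ffu^\ps_\Nfu$ isomorphic to $T^\pm_k$; square integrability of $f$ forces $\Four_\Nfu f^\c\in\Wfu^\ps_\Nfu$ for $\Nfu\neq\Nfu_0$ and $\Four_0 f^\c\in\bigoplus_{\re\nu<0}H^{\xi_j,\nu}_K$.

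For the \emph{abelian} terms, the only irreducible submodules of $\Wfu^\ps_\bt$ ($\bt\neq0$) are of large discrete series type $\II_+$, by Theorem~\ref{mnthm-ab-ip} and the opening remarks of Section~\ref{sect-abip}. Since $T^\pm_k$ is of type $\IF_+$ or $\FI_+$, not $\II_+$, the image must vanish, so $\Four_\bt f^\c=0$; this is exactly why no abelian sum appears. For the \emph{$N$-trivial} term, a thin type is again not of type $\II_+$, so by the unique embedding property (Table~\ref{tab-isot-ps}) it embeds into a single principal series module, namely $H^{\xi_r,-1}_K$ (resp.\ $H^{\xi_l,-1}_K$), whose highest weight space in $\tau^{h_0}_{p_0}$ is one-dimensional and spanned by $\kph{\pm(k+3)}{k+1}{\mp(k+1)}{k+1}(-1)$; here $\re(-1)<0$ is consistent with square integrability. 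Hence $\Four_0 f^\c=a_\c(0)\,\kph{\pm(k+3)}{k+1}{\mp(k+1)}{k+1}(-1)$.

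For the \emph{non-abelian} terms, $\Four_\n f^\c$ is nonzero precisely when $\Wfu^\ps_\n$ admits an irreducible submodule isomorphic to $T^\pm_k$; by Table~\ref{tab-isoWna} this forces $\pm\ell<0$ and $m_0(j)=k+1$, which is exactly the stated range of summation. For such $\n$ the minimal $K$-type sits at the intersection of two sectors: one computes $h_0-3p_0=2j_+$ and $h_0+3p_0=2j_r$, so Lemma~\ref{lem-kdso} applies with $j_1=j_+$, $j_2=j_r$. Since $m_0(j_l)<0$ gives $r_0(h_0)>p_0$, Propositions~\ref{prop-ik} and~\ref{prop-idkk} show that the highest weight space $\Wfu^\ps_{\n;h_0,p_0,p_0}$ is one-dimensional and spanned by $\kk^W_{\n;h_0,p_0}$ (using that multiplicity is one by Proposition~\ref{prop-dim2-nab}). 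Therefore $\Four_\n f^\c=a_\c(\n)\,\kk^W_{\n;h_0,p_0}$.

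It remains to obtain the closed form \eqref{kkth-expl}: substituting the present parameters ($\nu=-1$, $s(r)=(h-r)/4$, and $\k(r)$ as in Proposition~\ref{prop-ik}) into \eqref{kkVW} places each component in the degenerate range where the specialization \eqref{specWV} turns $W_{\k(r),s(r)}$ into the elementary function $t^{c}e^{-\pi|\ell|t^2}$; collecting the coefficients $c^W(r)$ yields \eqref{kkth-expl}. Pointwise absolute convergence of the whole expansion is inherited from Proposition~\ref{prop-absconv} applied to $f^\c\in C^\infty(\Ld_{\s(\c)}\backslash G)_K$. The main obstacle will be the non-abelian identification: matching the exact ranges of $\n$ (the sign of $\ell$ and the value $m_0(j)=k+1$) against Table~\ref{tab-isoWna}, verifying that the minimal $K$-type of the thin module really lands at the sector-intersection point where Proposition~\ref{prop-idkk} makes $\kk^W$ the spanning function (in particular the inequality $r_0(h_0)>p_0$, equivalently $m_0(j_l)<0$), and then reducing the Whittaker function to its elementary form. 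Everything else is a bookkeeping application of the intertwining property together with the already-established classification of irreducible submodules of $\Wfu^\ps_\Nfu$.
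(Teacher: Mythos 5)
Your overall architecture agrees with the paper's: the abelian terms vanish because the thin types are not of type $\II$ or $\II_+$ (the paper gets this from injectivity of the upward shift operators in $\Ffu_\bt$, Proposition~\ref{prop-uso-ga}, which is the same fact), the $N$-trivial term is forced into $H^{\xi_r,-1}_K$ resp.\ $H^{\xi_l,-1}_K$ by square integrability ($\re\nu<0$) and unique embedding, and the admissible $\n$ come from Table~\ref{tab-isoWna}. The problem is in your identification of the non-abelian term with $\kk^W_{\n;h_0,p_0}$. You assert that $m_0(j_l)<0$ gives $r_0(h_0)>p_0$ and then invoke Proposition~\ref{prop-idkk}~ii). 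Both halves fail. First, the point $(h_0/3,p_0)$ lies on $\partial\sect(j_+)\cap\partial\sect(j_r)$ for $T^+_k$ (and on $\partial\sect(j_l)\cap\partial\sect(j_+)$ for $T^-_k$), so $r_0(h_0)$ is controlled by the two $m_0$'s at those sectors, not by $m_0(j_l)$; moreover for $T^-_k$ it is $m_0(j_r)$, not $m_0(j_l)$, that is negative. Second, and decisively, one computes $m_0(j_r)=k+1=p_0$ and $m_0(j_+)=0$ for $T^+_k$, hence $r_0(h_0)=-p_0-2\e\, m_0(j_r)=p_0$ \emph{exactly} (the paper's proof records $r_0=\pm(k+1)$). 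Thus $|r_0(h_0)|=p_0$, which is precisely the borderline excluded by the hypothesis $r_0(h)>p$ of Proposition~\ref{prop-idkk}~ii), and it is the degenerate configuration (one $m_0$ equal to $p$, the other to $0$) flagged in the remarks after Proposition~\ref{prop-ik}, where $K_{\n;h,p}$ is one-dimensional and spanned by $\kk^V_{\n;h,p}$ while $\kk^W_{\n;h,p}$ is stated not to lie in it. So your chain ``minimal vector $\Rightarrow$ element of $K_{\n;h_0,p_0}$ $\Rightarrow$ multiple of $\kk^W$'' breaks at the second arrow, and you flagged exactly this verification as your main obstacle.

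The paper does not route the identification through $K_{\n;h,p}$ at all. It uses only that $\Four_\n f^\c$ lies in $\Wfu^\ps_{\n;h_0,p_0,p_0}$, that this space is one-dimensional (Lemma~\ref{lem-VW-nab}~iii)), spanned by $\tilde\om^{0,p_0}_\n(j_r,\nu_r)$ resp.\ $\tilde\om^{p_0,0}_\n(j_l,\nu_l)$, and then computes that spanning vector explicitly: with $\k(r)=\frac{-k-1\pm r}4$ and $s(r)=\frac{\pm k\pm3-r}4$ one has $\k(r)\pm s(r)=\frac12$ for every component, so each $W$-Whittaker component degenerates via \eqref{specWV} to $\tau^{\k(r)}e^{-\tau/2}$ and the sum collapses to \eqref{kkth-expl}. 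To repair your argument you must either treat the equality case $|r_0(h_0)|=p_0$ separately (it cannot be subsumed under Proposition~\ref{prop-idkk}) or bypass $K_{\n;h,p}$ as the paper does.
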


\rmrk{Trivial representation}For type $\FF(0,-2)$, only the $N$-trivial
Fourier term can be non-zero. It is a multiple of
$\kph 0 0 0 0(-2) = 1$.
 
\begin{proof}[Proofs of Propositions
\ref{prop-i-pscs}--\ref{prop-i-thr}] The conditions on the spectral
parameters $(j,\nu)$ are in Theorem~\ref{thm-us}. The $K$-types of
minimal dimension can be found in Table~\ref{tab-isot-ps},
p~\pageref{tab-isot-ps}.
 
 For the $N$-trivial Fourier terms we need
that the isomorphism type is
 represented in a principal series
representation with $\re\nu<0$. This
 occurs only in the complementary
series and the thin representations.
 For the other Fourier term orders
$\Nfu$, the square integrability
 implies that we have an element of
$\Wfu^\ps_\Nfu$.
 
 In generic abelian Fourier term modules the upward
shift operators are
 injective, by Proposition~\ref{prop-uso-ga}. Hence
only the isomorphism
 types $\II$ and $\II_+$ may have generic abelian
Fourier terms.
 
 The non-abelian Fourier terms can occur under
conditions on $m_0(j)$.
 For the irreducible principal series this is
just the condition
 $m_0(j) \in \ZZ_{\geq 0}$. For the other
isomorphism types we consult
 Table~\ref{tab-isoWna},
p~\pageref{tab-isoWna}.
 
 For the thin representations we can go
further. With the notations of
 Proposition~\ref{prop-ik} and the
relations in
 Table~\ref{tab-parms-na}, p~\pageref{tab-parms-na}, we
have the
 following results for~$T^\pm_k$:
\bad \e &\= \sign(\ell) \= \mp 1&\quad j&\= \pm(2k+3)\,,\\
 h&\= \pm
(k+3) \,, & m_0(j) &\= k+1\,,\\
 \k(r) &\= \frac{-k-1\pm r}4\,,& s(r)
&\= \frac{ \pm k \pm 3-r}4\,.
\ead
So $\k(r) \pm s(r) = \frac12$. This means that the $W$-Whittaker
functions in~\eqref{kkVW} can be expressed in terms of exponential
functions, with~\eqref{specWV}. We have $r_0 = \pm(k+1)$, so the sum
in
 \eqref{kkVW} ranges over $r\equiv k+1\bmod 2$, $|r|\leq k+1$.
Carrying
 out the computation gives~\eqref{kkth-expl}.
\end{proof}

\rmrk{Remarks} (1) We see that for the discrete series types and for
the
 irreducible unitary principal series the $N$-trivial Fourier term
has
 to vanish. This is in accordance with a more general result of
Wallach
 \cite[Theorem 4.3]{Wal84}, stating that tempered
representations occur
 in $L^2(\Gm\backslash G)$ as spaces of cusp
forms.
 
(2) The square integrable automorphic forms of holomorphic discrete
series type on $\SU(2,1)$ have their counterpart in the automorphic
forms of holomorphic and antiholomorphic discrete series type on
$\SL_2(\RR)$, although there the Fourier expansion has of course only
abelian Fourier terms. New in the comparison with square integrable
automorphic forms on $\SL_2(\RR)$ is the appearance of the large
discrete series type and the thin representations. The large discrete
series type allows non-zero abelian as well as non-abelian Fourier
terms. The thin representations are infinite-dimensional Langlands
representations that allow a unitary structure. They have non-abelian
Fourier terms and possibly an $N$-trivial Fourier term.
 
\rmrk{Ishikawa's Fourier expansions} Theorem 5.3.1 in \cite{Ish99}
gives
 Fourier expansions of automorphic forms with at most polynomial
growth.
 The expansions i-1) and ii)
 in that theorem have the same
overall structure as the expansions that
 we obtained. A difference is
caused by the use, in \cite{Ish99}, of
 unscaled Hermite functions in
the construction of the theta-functions,
 and the corresponding
different expression of the parameter $\k$ in the
 non-abelian terms.

 Ishikawa's expansions in i-2) and i-3) for the holomorphic and
antiholomorphic discrete series do not concern a vector of minimal
$K$-type in the $(\glie,K)$-module, but concern a corner type (of
higher dimension) on which one upward and one downward shift operator
vanish. These expansions are similar to the expansions in our
Propositions \ref{prop-nsefa} and~\ref{prop-ik}.

 In Proposition~4
of the later paper \cite{Ish00} Ishikawa gives modified
 versions of
these expansions, and also expansions for thin
 representations that
agree with the results in
 Proposition~\ref{prop-i-thr}.


\def\flnm{rFtm-IV-Fs}


\section{Invariant eigenfunctions} \label{sect-Fs} \markright{17.
INVARIANT EIGENFUNCTIONS}\label{sect-invef}

In the previous sections we studied the Fourier expansions of the
translated functions $f^\c\in C^\infty(\Ld_{\s(\c)}\backslash G)_K^\ps$
for automorphic forms~$f$. In this section we discuss general
$\Ld_\s$-invariant functions.

We mention a general result for semisimple Lie groups that explains why
the condition of polynomial growth plays a role in the definition of
automorphic forms. We give the proofs of the propositions in
\S\ref{sect-sqiaf}. Finally we give two examples of elements of
$C^\infty(\Ld_\s\backslash G)_K^\ps$.\medskip

\begin{thm}\label{thm-qd}Let $\ps$ be a character of $ZU(\glie)$. If an
element $f\in C^\infty(\Ld_\s\backslash G)^\ps_K$ has polynomial growth
and $\Four_0 f = 0$, then $f$ has quick decay.
\end{thm}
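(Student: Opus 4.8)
The plan is to read the decay off the Fourier expansion \eqref{Fe1}. The hypothesis $\Four_0 f=0$ deletes the one term that need not decay, leaving $f(n\am(t)k)=\sum_{\bt\neq0}\Four_\bt f(n\am(t)k)+\sum_{\ell,c}\Four_{\ell,c}f(n\am(t)k)$, a sum converging absolutely and uniformly on compacta together with its $U(\nlie)$-derivatives by Proposition~\ref{prop-absconv}. First I would note that every surviving term lies in the exponentially decaying submodule $\Wfu^\ps_\Nfu$: the Fourier term operators, being integrations over the compact $\Ld_\s\backslash N$, preserve polynomial growth (compare the remark after Proposition~\ref{prop-inv}), while in the decompositions of Theorems~\ref{mnthm-ab-ip} and~\ref{mnthm-nab-ip} the complement of $\Wfu^\ps_\Nfu$ inside $\Ffu^\ps_\Nfu$ consists of functions of genuine (linear or quadratic) exponential growth. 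Hence the polynomial growth of $f$ forces $\Four_\Nfu f\in\Wfu^\ps_\Nfu$ for all $\Nfu\neq\Nfu_0$, exactly as in the module-structure discussion of Section~\ref{sect-af}.

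Next I would use the explicit shape of these terms. By Lemma~\ref{lem-bfs} and $K$-finiteness, $\Four_\bt f$ is a finite combination of basis functions $\om^{a,b}_\bt(j,\nu)$ with components $t^c\,K_{\nu+i}(2\pi|\bt|t)$, $i\in\ZZ_{\geq0}$, and $\Four_{\ell,c,d}f$ a finite combination of functions with components $t^c\,W_{\k+i,\nu/2}(2\pi|\ell|t^2)$. The asymptotics \eqref{Kae} and \eqref{Wae} then give, for $t$ large, $|\Four_\bt f(n\am(t)k)|\ll t^{N_0}\,e^{-2\pi|\bt|t}$ and $|\Four_{\ell,c,d}f(n\am(t)k)|\ll t^{N_0}\,e^{-\pi|\ell|t^2}$, uniformly in $n$ (since $|\ch_\bt(n)|=1$ and the theta factors are bounded on the compact nilmanifold $\Ld_\s\backslash N$) and in $k$ (only finitely many $K$-types occur and the polynomial functions $\Kph hprq$ are bounded on the compact group $K$). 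The decisive point is that the exponential rate grows with the Fourier index, linearly in $|\bt|$ in the abelian case and with $|\ell|t^2$ in the non-abelian case.

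To control the constants multiplying these exponentials I would bound the Fourier coefficients at a single large radius $t=T_0$ by the elliptic operator $L=\XX_0^2+\XX_1^2+\XX_2^2$ of Proposition~\ref{prop-absconv}. For each $M$ the function $(1-L)^Mf$ again has polynomial growth and is continuous, hence bounded on the compact slice $\{n\am(T_0)k\}$, while $(1-L)^M$ scales the $\ch_\bt$-coefficient by $(1+4\pi^2|\bt|^2)^M$ and the $\Th_{\ell,c}(h_{\ell,m})$-coefficient by $(1+(4+8m+\pi|\ell|)|\ell|)^M$. Integrating $(1-L)^Mf$ against the conjugate basis functions as in \eqref{Frbtdef} and dividing by these eigenvalues yields $|f_\bt(\am(T_0)k)|\ll_M(1+|\bt|^2)^{-M}$ and $|f_{\ell,c,m}(\am(T_0)k)|\ll_M(1+m)^{-M}(1+|\ell|)^{-M}$, uniformly in $k$.

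Finally I would transport these reference bounds to general $t\geq T_0$ by comparing each Bessel, resp.\ Whittaker, factor with its value at $T_0$—for $T_0$ chosen large the ratio is at most a fixed power of $t$ times the gain $e^{-2\pi|\bt|(t-T_0)}$, resp.\ $e^{-\pi|\ell|(t^2-T_0^2)}$—and then sum. For the abelian part $\sum_{\bt\neq0}(1+|\bt|^2)^{-M}e^{-2\pi|\bt|(t-T_0)}\ll e^{-2\pi(t-T_0)}$, the innermost shell $|\bt|=1$ dominating, so the abelian contribution is $\ll t^{N_0}e^{-2\pi(t-T_0)}$; the non-abelian sum is bounded even more strongly by a multiple of $t^{N_0}e^{-\pi|\ell_0|(t^2-T_0^2)}$. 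Both are $\oh(t^{-a})$ for every $a>0$, uniformly in $n$ and $k$, which is quick decay in the sense of Definition~\ref{def-gc}. I expect the main obstacle to be this last interlocking step: one must combine the index-decay coming from ellipticity at the single radius $T_0$ with the $t$-decay coming from the special-function asymptotics, disentangling the finitely many Bessel or Whittaker functions of differing order that make up one Fourier term, and check that every estimate is genuinely uniform in $k$ and in the Fourier index. Once the two one-parameter bounds are in hand, the summation itself is routine.
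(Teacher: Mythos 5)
Your proposal takes a completely different route from the paper: the paper does not prove Theorem~\ref{thm-qd} internally at all, but quotes Harish-Chandra's general result for semisimple Lie groups (Lemma 10 and Lemma 20 of \cite{HCh66}, via the spaces $\mathcal L_F(\ld)$). What you are attempting is the direct ``read the decay off the Fourier expansion'' argument, and the authors explicitly flag its status in the remark following the theorem: for $\SL_2(\RR)$ exponential decay does follow from a convergent Fourier series with exponentially decreasing terms, but ``we do not know whether the same can be shown for $\SU(2,1)$.'' Your abelian analysis is essentially the $\SL_2(\RR)$ argument and is sound: only finitely many Bessel orders $\nu+k$ occur, the index $\bt$ enters only through the argument $2\pi|\bt|t$, so \eqref{Kae} is uniform in $\bt$ once $t\geq T_0$, the elliptic bound at the slice $t=T_0$ controls the coefficients, and the sum over $\bt$ converges to something of exponential decay.

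The gap is in the non-abelian part, and it is exactly the point the authors are worried about. For a fixed $K$-type of $f$, the expansion \eqref{Frlcdef} runs over \emph{all} $m\in\ZZ_{\geq 0}$, and by \S\ref{sect-Nnab1d} the component of $\Four_{\ell,c,d}f$ is $t\,W_{\k,\nu/2}(2\pi|\ell|t^2)$ with $\k=-m-\tfrac12(j\sign(\ell)+1)\to-\infty$ as $m\to\infty$. So the Fourier index enters not only through the argument $2\pi|\ell|t^2$ but through the Whittaker \emph{parameter}, and your claim that only ``finitely many Bessel or Whittaker functions of differing order'' occur is false here. The asymptotic expansion \eqref{Wae} is an expansion for $\tau\to\infty$ with $\k$ fixed; its correction terms involve $\bigl(s^2-(\k-1/2)^2\bigr)/\tau$, so it is usable only when $\tau\gg\k^2$, i.e.\ $|\ell|t^2\gg m^2$, which fails at any fixed $t$ once $m$ is large. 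Consequently both halves of your ``transport'' step break down for the non-abelian terms: you need a \emph{lower} bound on $W_{\k,\nu/2}(2\pi|\ell|T_0^2)$, uniform in $\k\to-\infty$, to convert the coefficient bound at $T_0$ into a bound on the expansion coefficient, and an \emph{upper} bound on the ratio $W_{\k,\nu/2}(2\pi|\ell|t^2)/W_{\k,\nu/2}(2\pi|\ell|T_0^2)$ uniform in $\k$ and $\ell$; neither is supplied by \eqref{Wae} or anywhere in the paper, and such uniform two-sided Whittaker estimates are a genuinely nontrivial input (for $\k\to-\infty$ at moderate $\tau$ the function is in a different asymptotic regime). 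Until you prove such uniform bounds, the interchange of the $m$-decay of the coefficients with the $t$-decay of the special functions is not justified, and the argument does not close. Either supply those uniform Whittaker estimates explicitly, or do what the paper does and invoke Harish-Chandra's Lemma 10.
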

Harish Chandra \cite{HCh66} gives this result for a general semisimple
Lie group. One may also consult Gan \cite[p 84--89]{Gan}. This result
is a consequence of Lemma 10, \cite[p 11]{HCh66}. The proof is in~\S7,
see specially Lemma~20. Harish Chandra uses several spaces of functions
$\mathcal{L}_F(\ld)$ depending on a linear form $\ld $ on $\alie$,
which has dimension $1$ for $\SU(2,1)$. Going through the definitions
in \cite[Ch I, \S3]{HCh66}, we identify
$\mathcal{L}_\emptyset(\ld_0) =\mathcal{L}$ as a space containing those
$f\in C^\infty(\Ld_\s\backslash G)_K$ for which $u f$ has polynomial
growth of an order specified by $\ld$ for each $u\in U(\glie)$, and
$\mathcal{L}_{\al}(\ld_0)= \mathcal{L}$ as a space containing the
elements of $C^\infty(\Ld_\s\backslash G)_K$ for which all $u f$ have
quick decay. (The latter space does not depend on $\ld_0$.) Lemma 10
then tells that the map $ f\mapsto  f -  \Four_0 f$ sends
$\mathcal{L}(\ld)$ to~$\mathcal{L}$.\medskip

\rmrk{Remark} Definition~\ref{def-af} imposes the condition of
polynomial growth on automorphic forms. Theorem~\ref{thm-qd} then
implies that cusp forms have quick decay. The use of the condition of
polynomial growth in Definition~\ref{def-afmeg} ensures that the growth
of an automorphic form with moderate exponential growth $f$ can be read
off from the Fourier expansions of the functions $f^\c$, modulo a
contribution with quick decay.

For $\SL_2(\RR)$ we do not need Theorem~\ref{thm-qd}. There, exponential
decay holds for any convergent Fourier series with exponentially
decreasing terms. We do not know whether the same can be shown for
$\SU(2,1)$.
\medskip

\begin{proof}[Proof of Proposition~\ref{prop-OmMuUps}]In Chapters
\ref{chap-2} and~\ref{chap-3} we have discussed various families of
bases for Fourier term modules, which are holomorphic in the spectral
parameter $\nu$. There is no best  choice of a basis. We specify
  choices satisfying the requirements in Proposition~\ref{prop-OmMuUps}.

We restrict ourselves to highest weight families of type $(j,h,p)$ with
a $K$-type $\tau^h_p$ and $j\in \ZZ$ such that
$\bigl|h-2j\bigr|\leq p$.

\rmrk{$N$-trivial case}(Not included in the statement of the proposition.)   In this case the choice of the families
$\kph h p r p (\nu)$ and $\kph h p r p(-\nu)$ with $r=\frac13(h-2j)$ is
comfortable in most cases. Near $\nu=0$ it is better to use
$\kph h p r p(\nu)$ and $\ldph h p r p(\nu)$. Explicit descriptions in
\eqref{HK-def}, \eqref{ldphg}, and \eqref{ldph!=0}.

\rmrk{Other Fourier term orders} Now let $\Nfu \neq \Nfu_0$. If $p=0$ we
have $h=2j$, and the obvious explicit choices are
$\Om_{\Nfu;2j,0}(j,\nu) = \om^{0,0}_\Nfu(j,\nu)$,
$\Mu_{\Nfu;2j,0}(j,\nu) = \mu^{0,0}_\Nfu(j,\nu)$,
$\Ups_{\n;2j,0}(j,\nu) = \ups^{0,0}_\n(j,\nu)$ as in
\eqref{mu00a}--\eqref{om00nab}, \eqref{upsab}.

For any $K$-type $\tau^h_p$ in $\sect(j)$ there are unique
$a,b\in \ZZ_{\geq 0}$ such that $h=2j+3(a-b)$, $p=a+b$. Then
\eqref{xab} and \eqref{upsab} give holomorphic families
$\mu^{a,b}_\Nfu(j,\nu)$, $\om^{a,b}_\Nfu(j,\nu)$, and
$\ups^{a,b}_\n(j,\nu)$. In the generic abelian case the upward shift
operators are injective. Then the families $\om^{a,b}_\bt(j,\nu)$ and
$\mu^{a,b}_\bt(j,\nu)$ have no zeros, and can be taken as
$\Om_{\bt;h,p}(j,\nu)$ and $\Mu_{\bt;h,p}(j,\nu)$.
Lemma~\ref{lem-strab} shows the desired properties in i) and ii) of
Proposition~\ref{prop-OmMuUps}.

Actually, this works as well for the families $\ups^{a,b}_\n(j,\nu)$ in
the non-abelian case. (See Lemma~\ref{lem-usho-upsom}.)
The families $\mu^{a,b}_\n(j,\nu)$ and $\om^{a,b}_\n(j,\nu)$ really can
have zeros. In Proposition~\ref{prop-extr.na} we form families
$\tilde x^{a,0}_\n$ and $x^{0,b}_\n$ by dividing out zeros and
normalizing the maximal or minimal component. In \eqref{tuod} and
\eqref{tumu} we extend this to families $\tilde x^{a,b}_\n(j,\nu)$ for
$x=\om$, $\mu$, or $\ups$, to get families that span the corresponding
subspace of $\Ffu^{\ps[j,\nu]}_{\n;h,p,p}$. We use the dimension
results in Lemmas \ref{lem-VW-nab} and~\ref{lem-Mfud1}. Taking
$\Om_{\n;h,p}(j,\nu)= \tilde \om^{a,b}_\n(j,\nu) $,
$\Mu_{\n;h,p}(j,\nu)= \tilde \mu^{a,b}_\n(j,\nu)$ and
$\Ups_{\n;h,p,}(j,\nu)= \ups^{a,b}_\n(j,\nu)$ we obtain families
satisfying i) and iii) in Proposition~\ref{prop-OmMuUps}.

For the exceptions in ii) in Proposition~\ref{prop-OmMuUps} we use ii)
of Proposition~\ref{prop-M-VW}. It gives
$\Mfu_{\n;h,p}^{\ps[j,\nu]} = \Wfu_{\n;h,p}^{\ps[j,\nu]}$ in the
following cases:
\bad \ell&>0\,,&(j,\nu) &\=(j_l,\nu_l)\,,&m_0(j_+)&<0 \leq m_0(j) \,,&
h+3p &< 2j_+\,,
\\
\ell&<0\,,& (j,\nu) &\=(j_r,\nu_r) \,,&m_0(j_+)&< 0 \leq m_0(j) \,,&
h-3p& >2j_+\,,
\ead
with the conventions in~\eqref{jnurels}. This describes for $\ell>0$ the
$K$-types in the isomorphism class $\FI(j,\nu)$ (antiholomorphic
discrete series type), and for $\ell<0$ the $K$-types in the class
$\IF(j,\nu)$ (holomorphic discrete series type). See Figures
\ref{fig-pj1VW}--\ref{fig-mj1a}. In Table~\ref{tab-isoWna} we see that
$0 \leq m_0(j) <\frac12\bigl( |j|-\nu\bigr)$ and
$-\sign(\ell) j \in \ZZ_{\geq 1}$. This leads to the formulation of the
exceptional cases in ii) of Proposition~\ref{prop-OmMuUps}.\end{proof}

\rmrk{Explicit expressions}The families have explicit expressions if
$p=0$; \eqref{mu00a}--\eqref{om00nab}, \eqref{upsab}. Furthermore, the
families are proportional to explicit sums at values of $\nu$ for which the intersection of the kernels of both downward shift
operators is non-zero. See Propositions \ref{prop-nsefa}, \ref{prop-ik},
\ref{prop-idkk}, and \ref{prop-kkM}, and Corollary~\ref{cor-jl+r-ab}.

\subsection{Two examples}\label{sect-exmp}We give two simple examples of
absolute convergent Fourier series representing elements of
$C^\infty(\Ld_\s\backslash G)^\ps_K$. Example $h$ shows that with
suitable choices of the parameters one can relate elements of
$C^\infty(\Ld_\s\backslash G)^\ps_K$ to $\Ld_\s$-invariant holomorphic
functions on the symmetric space $\X=G/K$. Example $u$ illustrates that
there are more functions in $C^\infty(\Ld_\s\backslash G)^\ps_K$ than
those that satisfy the condition of moderate exponential growth.

\rmrk{Example $h$}We pick $w\in \ZZ_{\geq 2}$, and for each
$\ell\in \frac \s 2\ZZ_{\geq 1}$ a coefficient $c_\ell$, such that
$c_\ell=\oh(\ell^A)$ for some $A>0$, and a triple
$\n(\ell)= (\ell,0,3-2w)$. Then the series
\be\label{exH} h \= \sum_{\ell>0} c_\ell \om^{0,0}_{\n(\ell)}(-w,w-2)\ee
defines an element of $C^\infty(\Ld_\s\backslash G)^{\ps[-w,w-2]}_K$.
Explicitly, we have
\begin{align*} h\bigl( n \am(t) k\bigr) &\= \sum_{\ell>0} c_\ell\,
\Th_{\ell,0}\bigl( h_{\ell,0} ; n) \, t\, W_{(1-w)/2, \pm
w/2\mp1}(2\pi|\ell|t^2) \,\Kph{-2w}000(k)\\
&\= t^w \sum_{\ell>0}(2\pi\ell)^{(w-1)/2} c_\ell \,
\Th_{\ell,0}(h_{\ell,0};n) \, e^{-\pi\ell t^2} \,\Kph{-2w}000(k)\,.
\end{align*}
If we divide by the common factor $t^w$ and express $n\am(t)$ in
$(z,u) = n \am(t)\,(i,0) \in \X$, and write out the theta functions, we
get a holomorphic function on the symmetric space:
\be H(z,u) \= \pi^{-1/2}(2\pi)^{w/2} \sum_\ell \ell^{(2w-1)/4}\, c_\ell
\, e^{\pi \ell (i z+u^2)} \sum_{k\in \ZZ} e^{-2\pi \ell(u+k)^2}\,. \ee

We note that the \emph{holomorphic} function $H$ arises from the minimal
vectors $\om^{0,0}_{\n(\ell)}(-w,w-2)$ in Fourier term modules of
\emph{antiholomorphic} discrete series type $\FI(-w,w-2)$.

\rmrk{Example $u$}Here we use abelian Fourier terms in
$\Mfu^{ \xi_j,\nu}_\bt(j,j)$ with fixed $j\in \ZZ_{\geq 3}$. We take
positive coefficients $c_m = e^{-m^2}$ for $m\in \ZZ_{\geq 1}$, and
$c_\bt= 0$ for all other $\bt\in \ZZ[i]\setminus \{0\}$.
\be \label{exU} u \= \sum_{m \geq 1} c_m \, \mu^{0,0}_m(j,j)\,.\ee
Explicitly
\[ u\bigl( n \am(t) k \bigr) \= \ch_m(n)\, \Kph{2j}000(k) \sum_{m\geq 1}
e^{-m^2}\, t^2 \, I_j(2\pi m t) \,.\]
The dependence on $n\in N$ and $k\in K$ is by a non-zero factor. For the
absolute convergence we use \eqref{Iae} to get a bound by
$\sum_{m\geq 1} (2\pi m)^{-1} e^{-m^2}\, t^{2-m}\, e^{2\pi m t} <\infty$.
All terms with $m t \geq t_0$ for a suitable $t_0>0$ are positive.
Hence for $m\geq t_0$ the term has growth larger than
$c\, t^{2-m}\, e^{2\pi m t}$. This implies that $u\bigl( \am(t) \bigr)$
has more than finitely many exponentially growing Fourier terms. This
is an example of an element of $\I_\s(\ps)$ that has no moderate
exponential growth.

In Remark (3) to Definition~\ref{def-afmeg} we used the function $u$ to
form a Poincar\'e series. For this purpose we determine a bound for
$u\bigl( \am(t)\bigr)$ as $t\downarrow 0$. Let $t \leq 1$. From the
series in \eqref{Inu} we get:
\badl{ue1} \sum_{1\leq m \leq 1/2\pi t} &e^{-m^2} t^2 I_j(2\pi m t)
\;\ll_j\; \sum_{1\leq m \leq 1/2\pi t} m^j t^{2+j} \, e^{-m^2}\\
& \;\ll\; t^{2+j}\int_{x=0}^\infty x^j e^{-x^2}\, dx \=
\oh_j(t^{2+j})\,.
\eadl
For the remaining part we use the estimate at~$\infty$.
\badl{ue2} \sum_{ m \geq 1/2\pi t} &e^{-m^2} t^2 I_j(2\pi m t)
\;\ll_j\; \sum_{m\geq 1/2\pi t} t^{3/2} e^{-m^2+\oh(1)} m^{-1/2} \\
&\;\ll\; t^{3/2} \int_{x=1/(2\pi t)}^\infty e^{-x^2}\, x^{-1/2}\,
\frac{dx}{x^{1/2}} \;\ll\; t^3 e^{-1/(2\pi t)^2}\,.
\eadl
This is much smaller than the estimate~\eqref{ue1}, and we get
\be\label{uest0}
u\bigl( n \am(t) k \bigr) \= \oh\bigl(t^{2+j}\bigr)\qquad(t\downarrow
0)\,.\ee\medskip


\def\flnm{rFtm-appendix}

\clearpage 
\chapter*{Appendix}\markboth{APPENDIX}{APPENDIX}

\setcounter{section}{0}
\renewcommand\thesection{\Alph{section}}


\def\flnm{rFtm-spf}


\section{Special functions}\label{app-spf}\markright{A. SPECIAL
FUNCTIONS}

In the description of Fourier term modules we use modified Bessel
functions and Whittaker functions. Here we collect some facts
concerning these special functions.

\subsection{Modified Bessel functions}\label{sect-mBf}
The \il{mBde}{modified Bessel differential equation}modified Bessel
differential equation is
\be \label{mBd} x^2 \, j''(x) + x\, j'(x)
- (x^2+\nu^2) j(x) \=0\,, \ee
for functions $j$ on $(0,\infty)$. See, eg, \cite{Wats}.

The exponents near $t=0$ are $\nu$ and $-\nu$. The exponent $\nu$ leads
to the following modified Bessel function\il{mBfs}{modified Bessel
function} \ir{Inu}{I_\nu}
\be \label{Inu} I_\nu(x)
= \sum_{m\geq0} \frac{(x/2)^{\nu+2m}} {m!\; \Gamma(\nu+m+1)}\,. \ee
So $I_\nu(x) = (x/2)^\nu \, h(x)$, where $h$ is the restriction of an
entire function with value $1$ at $x=0$. For $\nu\in \CC\setminus \ZZ$
the functions $I_\nu$ and $I_{-\nu}$ span the solution space. This is
not the case if $\nu \in \ZZ$:
\be \label{Ievint} I_n(x) \= I_{-n}(x) \qquad \text{ for }n\in \ZZ\,.\ee

The solution \ir{Knu}{K_\nu}
\be\label{Knu} K_\nu(x)
\= \frac\pi 2 \,\frac{I_{-\nu}(x)-I_\nu(x)} {\sin\pi\nu}\ee
extends holomorphically to a function of $\nu \in \CC.$ It satisfies
$K_{-\nu}=K_\nu.$ It is linearly independent of $I_\nu$. This
independence can be seen in the behavior near $x=0$. The expansion of
$I_\nu(x)$ near zero starts with a non-zero multiple of $x^\nu$, or a
multiple of $x^{-\nu}$ if $\nu \in \ZZ_{\leq -1}$. The expansion of
$K_\nu(x)$ has always non-zero multiples of $x^\nu$ and of $x^{-\nu}$
if $\nu \not\in \ZZ$, and a logarithmic term if $\nu \in \ZZ$.

The linear independence is also visible in the asymptotic behavior as
$x\uparrow\infty$. The function $K_\nu$ is characterized by its
exponential decay as $x\uparrow\infty;$ in fact it has an asymptotic
expansion
\badl{Kae} K_\nu(x) & \sim \sqrt{\frac \pi{2x}} \, e^{-x} \sum_{m\geq 0}
\frac{(-1)^m\, \bigl(\frac12-\nu\bigr)_m\, \bigl( \frac12+\nu\bigr)_m}
{m!\;(2x)^m}\,, \eadl
whereas
\be\label{Iae} I_\nu(x) \;\sim\; \frac{e^x}{\sqrt{2\pi x}} \sum_{m\geq
0} \frac{\bigl( \frac12-\nu\bigr)_m\, \bigl( \frac12+\nu\bigr)_m}{
m!\;(2x)^m} \,. \ee
See \cite[7.23]{Wats}.

\rmrk{Contiguous relations} Section 3.71 in \cite{Wats} gives relations
for $K_\nu$ and $I_\nu$:\il{cr1}{contiguous relations}
\badl{cr1} K_{\nu-1}(x) - K_{\nu+1}(x)
&\= -\frac{2\nu}x \, K_\nu(x)\,,& I_{\nu-1}(x) - I_{\nu+1}(x)
&\= \frac{2\nu}x \, I_\nu(x)\,,\\
K_{\nu-1}(x) + K_{\nu+1}(x)
&\= - 2 K_\nu'(x)\,,\\
I_{\nu-1}(x) + I_{\nu+1}(x)
&\= 2 I_\nu'(x)\,. \eadl
See \cite[\S A1]{Math}.

\subsection{Whittaker functions}\label{sect-Whit}

The \il{Wde}{Whittaker differential equation}Whittaker differential
equation for functions on $(0,\infty)$ is
\be\label{Wh-deq} y''(\tau) = \Bigl( \frac14-\frac\kappa \tau
+ \frac{s^2-1/4}{\tau^2}\Bigr)
y(\tau)\,.\ee
It has parameters $\kappa,s\in \CC.$ See eg \cite[(1.6.2)]{Sla}.

The exponents at $\tau=0$ are $\frac12+s$ and $\frac12-s$. The exponent
$\frac12+s$ leads to the solution \ir{Mkps}{M_{\k,s}}
\be\label{Mkps} M_{\kappa,s}(\tau) = \tau^{s+1/2}\, e^{-\tau/2}\,
\sum_{n\geq 0} \frac{ \bigl(\frac12+s-\kappa\bigr)_n}
   {\bigl(1+2s\bigr)_n}\, \frac{\tau^n}{n!}\,.\ee
It is of the form $\tau \mapsto \tau^{s+1/2}\, h(\tau)$ with an entire
function $h$ with value $1$ at~$0$. If $s\in \CC\setminus \frac12\ZZ$
the functions $M_{\k,s}$ and $M_{\k,-s}$ span the solution space. At
values $s\in \frac12\ZZ_{\leq -1}$ the function $M_{\k,s}$ may have a
first order singularity.
If a singularity occurs at $\nu=-\nu_0\in \ZZ_{\leq -1}$, then the
residue is
\be \label{Mres}\frac{ (-1)^{\nu_0-1} \, \bigl( \frac{1-\nu_0}2-\k\bigr)_{\nu_0} }
{2\; \nu_0!\;(\nu_0-1)!}\, M_{\nu_0/2,\k}\,.\ee

The solution \ir{Wkps}{W_{\k,s}} given for $s\not\in \frac12\ZZ$ by
\be\label{Wkps} W_{\kappa,s}(\tau)
= \frac \pi{\sin2\pi s} \sum_\pm \frac{\mp M_{\kappa,\pm s}(\tau)}
{\Gamma(1/2\mp s-\kappa)
\,\Gamma(1\pm 2s) }\ee
extends as a holomorphic function of $s$, and satisfies
$W_{\k,-s}=W_{\k,s}$. This solution is characterized by its exponential
decay as $\tau \uparrow\infty$.

It is convenient to have another solution that is invariant under
$s\leftrightarrow -s$. We make the choice to use\ir{Vkps}{V_{\k,s}}
\badl{Vkps} V_{\kappa,s}(\tau) &\= W_{-\kappa,s}(-\tau)\\
&\qquad \text{(this implies a choice of a branch)}\\
&\= \frac{\pi i}{\sin2\pi s}\sum_\pm \frac{\pm e^{\pm \pi i s} \,
M_{\kappa,\pm s}(\tau)}{\Gamma(1/2\mp s+\kappa)
\,\Gamma(1\pm 2s)}\,.
\eadl
Unlike $M_{\k,s}$ and $W_{\k,s}$, this is not a commonly used notation.
The expression in \eqref{Vkps} gives $V_{\k,s}$ as a meromorphic linear
combination of $M_{\k,s}$ and $M_{\k,s}$ and even in~$s$. In \cite[\S
A2e]{Math} we carry out a check that it is actually holomorphic in~$s$.

The functions $W_{\k,s}$ and $V_{\k,s}$ form a basis of the solution
space for all $\pm s\in \CC$. We have the following meromorphic
relation with $M_{\k,s}$. (See \cite[\S A2a]{Math}.)
\badl{MinWV} M_{\kappa,s}(\tau) &\= e^{\pi i \kappa}\,\Gamma(1+2s)\,
\biggl( \frac{-i\,e^{-\pi i s}}{\Gamma(1/2+s+\kappa)}
W_{\kappa,s}(\tau)
\\
&\quad\hbox{} - \frac 1{\Gamma(1/2+s-\kappa)} \, V_{\kappa,s}(\tau)
\biggr)\,.\eadl

\rmrk{Exponential decay and growth} We have as $\tau\uparrow\infty$
\begin{align}
\nonumber W_{\kappa,s}(\tau) &\;\sim\; \tau^\kappa \, e^{-\tau/2} \Bigl(
1 +\frac{s^2 -(\k-1/2)^2}\tau\\
\label{Wae}
&\qquad\frac{ \bigl(s^2 -(\k-1/2)^2\bigr)(s^2-(\k-3/2)^2\bigr)
}{8\tau^2}
+ \cdots\Bigr)
\,,
\displaybreak[0]\\
\nonumber
V_{\kappa,s}(\tau)&\;\sim\;
- e^{-\pi i\kappa} \, \tau^{-\kappa} \, e^{\tau/2} \Bigl(
1+\frac{(\k+1/2)^2-s^2}\tau\\
\label{Vae}
&\qquad\hbox{} + \frac{\bigl((\k+1/2)^2-s^2\bigr)\bigl((\k+3/2)^2-s^2
\bigr)}{8\tau^2} + \cdots
\Bigr)\,. \end{align}
We use (4.2.22) in \cite{Sla} for $W_{\kappa,s}$, and
(4.1.21)
to get the asymptotic behavior of $V_{\kappa,s}.$ Check in \cite[\S
A2b]{Math}.

The families $W_{\k,s}$ and $V_{\k,s}$ are linearly independent for all
choices of the parameters.

\begin{lem}\label{lem-VI}Let $f$ be a linear combination of functions
$t\mapsto t^{1+c}\, V_{\k+k,s}(2\pi u t^2)$, where $c$ runs over a
finite subset of $\ZZ_{\geq 0}$ and $k$ over a finite subset of $\ZZ$.
The quantities $u>0$, $\k \in \RR$ and $s\in \CC$ are fixed.

If $f(t) = \oh(1)$ as $t\uparrow \infty$, then it is zero.
\end{lem}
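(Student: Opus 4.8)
The plan is to isolate the dominant exponential growth of each summand and show it cannot cancel. By the asymptotic expansion \eqref{Vae}, for each admissible $k$ one has $V_{\k+k,s}(2\pi u t^2)\sim -e^{-\pi i(\k+k)}(2\pi u t^2)^{-(\k+k)}e^{\pi u t^2}$ as $t\uparrow\infty$, so every term $t^{1+c}V_{\k+k,s}(2\pi u t^2)$ grows like $e^{\pi u t^2}$ times a power of $t$. Hence $e^{-\pi u t^2}f(t)$ possesses an asymptotic expansion in powers of $t$, and the hypothesis $f=\oh(1)$ forces $e^{-\pi u t^2}f(t)=\oh(t^{-N})$ for every $N$. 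Consequently this whole asymptotic expansion vanishes, and the lemma reduces to showing that a nonzero linear combination always has a nonzero dominant asymptotic part.

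The main obstacle is a resonance phenomenon: the summands indexed by $(c,k)$ and by $(c+2,k+1)$ produce the same ladder of powers of $t$ (both of the form $t^{1+c-2\k-2k-2n}$), so comparing leading coefficients is not enough—one must exclude cancellation to all orders. To deal with this I would first cut down the number of Whittaker parameters. Using the contiguous relations \eqref{crWd} and \eqref{crW1} (together with the half-integer shifts they generate) one expresses every $V_{\k+k,s}$, $k\in\ZZ$, as $p_k(\tau)V_{\k,s}(\tau)+q_k(\tau)V_{\k,s}'(\tau)$ with $p_k,q_k$ rational in $\tau$; since $t^{1+c}$ and $\tau=2\pi u t^2$ are rational in $t$, this yields $f(t)=C(t)\,V_{\k,s}(2\pi u t^2)+D(t)\,V_{\k,s}'(2\pi u t^2)$ for suitable $C,D\in\CC(t)$.

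To finish, I would write $f(t)=\bigl(C(t)+D(t)\Lambda(2\pi u t^2)\bigr)\,V_{\k,s}(2\pi u t^2)$, where $\Lambda=V_{\k,s}'/V_{\k,s}$; this is legitimate for large $t$, where $V_{\k,s}(2\pi u t^2)$ is nonzero and of size $e^{\pi u t^2}$. Since $f=\oh(1)$, the bracket must be super-exponentially small, so its asymptotic expansion—a power series in $1/t$ built from the rational functions $C,D$ and the expansion of $\Lambda$ coming from \eqref{Vae}—vanishes identically. If the asymptotic series $\widehat\Lambda$ of $V_{\k,s}'/V_{\k,s}$ is not a rational function of $\tau$, this forces $D=0$ and then $C=0$, whence $f=0$. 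The only remaining case is $\widehat\Lambda$ rational, which happens exactly when the ${}_2F_0$-type series in \eqref{Vae} terminates; then $V_{\k,s}(\tau)=e^{\tau/2}\tau^{-\k}R_0(\tau)$ with $R_0$ rational, so $f(t)=e^{\pi u t^2}(2\pi u t^2)^{-\k}R(t)$ with $R\in\CC(t)$, and boundedness forces $R\equiv0$, i.e.\ $f=0$. The delicate point, and the one I would spend the most care on, is justifying that ``vanishing asymptotic expansion'' genuinely eliminates the combination—that is, controlling the resonances—which is precisely why the reduction to the single pair $\{V_{\k,s},V_{\k,s}'\}$ and the dichotomy on the rationality of $\widehat\Lambda$ form the crux of the argument.
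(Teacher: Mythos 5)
Your overall plan (peel off the factor $e^{\pi u t^2}$, argue that the resulting asymptotic expansion must vanish identically, and then exclude cancellation) is reasonable, and you correctly identify \eqref{Vae} and the contiguous relations as the tools. But the step that actually does the work is asserted rather than proved. First, the reduction of every $V_{\k+k,s}$ to $p_k(\tau)V_{\k,s}+q_k(\tau)V_{\k,s}'$ requires inverting the contiguous relations in the \emph{upward} direction, i.e.\ dividing by $s^2-(\k+k+\tfrac12)^2$ (see \eqref{crWd} and \eqref{crW1}); this factor vanishes precisely at the degenerate parameters of \eqref{specWV}, so the reduction fails there as stated (it can be patched by taking the two top-most indices as the basis, since the three-term recurrence goes downward with polynomial coefficients, but you do not address this). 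Second, and more seriously, the dichotomy hinges on the claim that $\widehat\Lambda$, the asymptotic series of $V_{\k,s}'/V_{\k,s}$, is rational \emph{exactly} when the series in \eqref{Vae} terminates. The implication ``terminating $\Rightarrow$ rational'' is clear, but the converse is not: a formal series $\widehat S$ with divergent coefficients can in principle have a rational logarithmic derivative, and ruling this out for the specific Whittaker series requires an argument (for instance, showing that a rational $\rho=\widehat\Lambda$ would satisfy the Riccati equation $\rho'+\rho^2=\frac14-\frac\k\tau+\frac{s^2-1/4}{\tau^2}$ and classifying its rational solutions with the prescribed expansion $\tfrac12-\tfrac\k\tau+\cdots$). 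Without that, the case split is not exhaustive, and this is exactly where the ``resonance'' you worry about is supposed to be defeated. A further small point: in the rational branch you need $V_{\k,s}$ to be \emph{exactly} elementary, not merely asymptotically so; this is true in the terminating case but a priori $\widehat\Lambda$ only controls the expansion, and $V_{\k,s}$ could differ from $e^{\int\rho}$ by an exponentially small solution, which would change the conclusion $f=CV+DV'=(\text{rational})\cdot e^{\pi u t^2}\tau^{-\k}$.

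The paper avoids all of this by normalizing in the opposite direction. Instead of compressing the sum to two basis functions with \emph{rational} coefficients, it uses the recurrence $(\tau-2\kappa)V_{\kappa,s}=(s^2-(\kappa+\tfrac12)^2)V_{\kappa+1,s}-V_{\kappa-1,s}$ from \eqref{crW1} to absorb the powers $t^{1+c}$ (via $\tau=2\pi u t^2$) into a combination $\sum_j c_j\,V_{\k+j,s}(2\pi u t^2)$ with \emph{constant} coefficients, at the price of more shifts of $\k$. Then \eqref{Vae} gives each summand the leading behavior $-e^{-\pi i(\k+j)}\tau^{-\k-j}e^{\tau/2}$, and distinct $j$ produce distinct leading powers; the minimal $j$ with $c_j\neq0$ contributes the strictly dominant power, so $f=\oh(1)$ forces that $c_j=0$, and one inducts. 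Since at each stage only the leading coefficient of a single summand is compared, no cancellation among subleading terms can interfere, and the rationality dichotomy never arises. If you want to salvage your route, the cleanest fix is to adopt this constant-coefficient normalization before comparing asymptotics.
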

\begin{proof}The contiguous relations in \eqref{crW1} allow us to
express $V_{\k,s}$ as a linear combination of $V_{\k_1,s}$ with
$\k_1$ running over $\k-1, \k,\k+1$. Using this repeatedly we arrive at
a finite sum
\[ \sum_j c_j \, V_{\k+j,s}(2\pi u t^2) \= \oh(1)\,.\]
Let $j_c$ be the minimal value of $j$ for which $c_j\neq 0$. The
estimate \eqref{Vae} implies that $c_{j_c}$ is zero. Proceeding in this
way we arrive at $f=0$.
\end{proof}

\rmrk{Linear dependence} If $s\not\in \frac12\ZZ_{\leq -1}$ we obtain
from~\eqref{MinWV}:
\badl{MWV} M_{\k,s} \in \CC\, W_{\k,s} &\Leftrightarrow \frac12-\k+s\in
\ZZ_{\leq 0} \,,\\
M_{\k,s}\in \CC\, V_{\k,s}&\Leftrightarrow \frac12+\k+s\in \ZZ_{\leq 0}
\,. \eadl

For $s_0\in \CC\setminus \ZZ_{\leq -1}$ the function $M_{\k,s_0}(\tau)$
spans the space of solutions of the Whittaker differential equation
with parameters $\k $ and $s_0$ that are of the form $\tau^{s_0+1/2}$
times an entire function.

\rmrk{Behavior at zero} Let $\re s_0\in \frac12 \ZZ_{\leq 0}$ and
$\frac12+s_0+\k \in \ZZ_{\geq 1}$. Then the leading term in the
expansion of $ W_{\k,s_0}(\tau)$ as $\tau\downarrow0$ is a non-zero
multiple of $\tau^{s_0+1/2}$ if $s_0>0$, and a non-zero multiple of
$\tau^{1/2}\log\tau$ if $s_0=\frac12$. The expansion of
$V_{\k,s_0}(\tau)$ as $\tau\downarrow 0$ has the same properties under
the conditions $\re s_0\in \frac12\ZZ_{\geq 0}$ and
$\frac12+s_0-\k \in\ZZ_{\geq 1}$.

\rmrk{Specializations}For special combinations of the parameters these
Whittaker functions have expressions in simpler functions. (See
\cite[\S A2c]{Math}.)
\badl{specWV} W_{\kappa,\pm (\kappa-1/2)}(\tau) &\= \tau^\kappa\,
e^{-\tau/2} = M_{\kappa,\kappa-1/2}(\tau)\,,\\
V_{\kappa,\pm
(\kappa+1/2)}(\tau) &\=
- e^{-\pi i \kappa}\, \tau^{-\kappa} e^{\tau/2} \\
& \=
-e^{-\pi i \kappa} M_{\kappa,-\kappa-1/2}(\tau)
\,. \eadl
The relations with $M_{\kappa,\kappa\pm 1/2}$ are valid as holomorphic
functions in $\kappa.$ The function $M_{\kappa,s}$ may have a
singularity as a function of $(\k,s)$ at these points.

\rmrk{Contiguous relations}\il{cr2}{contiguous relations} We will need
several of the relations in Section 2.5 of \cite{Sla}. See also
\cite[\S A2d]{Math}.
\badl{crWd} W_{\kappa,s}'(\tau)&\= \Bigl( \frac12-\frac\kappa\tau\Bigr)
W_{\kappa,s}(\tau)
  - \frac 1 \tau W_{\kappa+1,s}(\tau)\,,
  \\
V_{\kappa,s}'(\tau) & \= \Bigl( \frac12-\frac\kappa\tau\Bigr)
V_{\kappa,s}(\tau)
+\frac{(\kappa+1/2)^2-s^2}\tau V_{\kappa+1,s}(\tau)\,,
  \\
M_{\kappa,s}'(\tau) &\= \bigl( \frac12-\frac\kappa\tau\Bigr)
M_{\kappa,s}(\tau)
+\bigl(\frac12+\kappa+s\bigr)\tau^{-1} M_{\kappa+1,s}(\tau)\,;
\eadl

\badl{crW1}
(\tau-2\kappa) W_{\kappa, s}(\tau)&\= W_{\kappa+1,s}(\tau)
+\bigl(
(\kappa-1/2)^2-s^2) W_{\kappa-1,s}(\tau)\,,
  \\
(\tau-2\kappa) V_{\kappa,s}(\tau)&\=
(s^2-(\kappa+1/2)^2)V_{\kappa+1,s}(\tau)
- V_{\kappa-1,s}(\tau)\,,
  \\
(\tau-2\kappa)\, M_{\kappa,s}(\tau)
&\= \bigl(\frac12-\kappa+s\bigr)
M_{\kappa-1,s}(\tau)
-\bigl(\frac12+\kappa+s)
M_{\kappa+1,s}(\tau)\,;\eadl

\badl{crWh1} W_{\kappa+1/2,s}(\tau) &\=
(s-\kappa) W_{\kappa-1/2,s}(\tau)
+\tau^{1/2}\, W_{\kappa,s-1/2}(\tau)\,,
  \\
V_{\kappa+1/2,s}(\tau) & \=
(\kappa+s)^{-1} V_{\kappa-1/2,s}(\tau)
-\frac i{\kappa+s} \tau^{1/2} V_{\kappa,s-1/2}(\tau)\,,
  \\
(s+\kappa)\, M_{\kappa+1/2,s}(\tau)
&\= (\kappa-s)\, M_{\kappa-1/2,s}(\tau)
+2 s \tau^{1/2} \, M_{\kappa,s-1/2}(\tau)\,;\eadl
\badl{crWh2} W_{\kappa+1/2,s}(\tau)
&\= -(\kappa+s)\, W_{\kappa-1/2,s}(\tau)
+\tau^{1/2}\, W_{\kappa,s+1/2}(\tau)\,,
  \\
V_{\kappa+1/2,s}(\tau) &\= \frac1{\kappa-s} V_{\kappa-1/2,s}(\tau)
+\frac i{s-\kappa}\,\tau^{1/2} V_{\kappa,s+1/2}(\tau)\,,
  \\
(2s+1)\, M_{\kappa+1/2,s}(\tau)
&\=
(2s+1)\, M_{\kappa-1/2,s}(\tau)
-\tau^{1/2}\, M_{\kappa,s+1/2}(\tau)\,.
\eadl

\newpage 

\def\flnm{rFtm-lists}

\section{Irreducible submodules of special Fourier term
modules}\label{app-tabs}
\markright{B. IRREDUCIBLE SUBMODULES}

Table~\ref{tab-isot-ps}, p~\pageref{tab-isot-ps}, lists the irreducible
$(\glie,K)$-modules under the condition of integral parametrization
that occur in principal series modules. In Table~\ref{tab-irr-Ftm}
below, we add the occurrences in other special Fourier term modules.
(The occurrences in non-abelian Fourier term modules have conditions on
the quantities $m_0(j)$ that we do not give in this table.)

\newcommand\tl[2]{$ #1 $&$ #2 $&}
\newcommand\ml[2]{\multicolumn{1}{|r}{in}&$ #1 $& #2 }

\begin{center}
\begin{longtable}{|cll|}
\caption{Irreducible modules}\label{tab-irr-Ftm}\\\hline
\textrm{type}& \textrm{parameters} & \textrm{reference} \\ \hline
\hline
\endfirsthead \multicolumn{3}{c} {\tablename\ \thetable\ --
\textit{Continued from previous page} }\\\hline
\textrm{type}& \textrm{parameters} & \textrm{reference}\\
\hline
\endhead \tl{\II_+(j_+,\nu_+)}{\nu_+\geq |j_+|+2}\\
\ml{H^{\xi_+,\nu_+}_K} { Fig.~\ref{fig-ps1}}\\
\ml{H^{\xi_r,\nu_r}_K}{ Fig.~\ref{fig-ps3}} \\
\ml{H^{\xi_l,\nu_l}_K}{ Fig.~\ref{fig-ps11}}\\
\ml{ \Wfu^{\psi[j_+,\nu_+]}_\bt \text{ and } \Mfu^{\psi[j,\nu]}_\bt}
{Thm.~\ref{mnthm-ab-ip} iv)}\\
\ml{ \Mfu^{\psi[j_+,\nu_+]}_\n }{Rem.~\ref{rmk-inab}}\\
\ml{ \Wfu^{\psi[j_+,\nu_+]}_\n }{Fig.~\ref{fig-j3}}
\\\hline
\tl{\II_+(j_+,j_+) }{ j_+= \nu_+\in \ZZ_{\geq 1}}\\
\ml{H^{\xi_+,j_+}_K}{ Fig.~\ref{fig-ps2}} \\
\ml{H^{\xi_l,-0}_K}{ Fig.~\ref{fig-ps10}}\\
\ml{ \Wfu^{\psi[j_+,\nu_+]}_\bt \text{ and } \Mfu^{\psi[j_+,\nu_+]}_\bt}
{Thm.~\ref{mnthm-ab-ip} iv)}\\
\ml{ \Mfu^{\psi[j_+,\nu_+]}_\n }{Rem.~\ref{rmk-inab}}\\
\ml{ \Wfu^{\psi[j_+,\nu_+]}_\n }{Fig.~\ref{fig-j3a}}
\\\hline
\tl{\II_+(j_+,-j_+) }{ j_+=-\nu_+\in \ZZ_{\geq 0}}\\
\ml{H^{\xi_r,0}_K, }{ Fig.~\ref{fig-ps4}} \\
\ml{H^{\xi_l,-j_l}_K}{ Fig.~\ref{fig-ps12}}\\
\ml{ \Wfu^{\psi[j_+,\nu_+]}_\bt \text{ and } \Mfu^{\psi[j_+,\nu_+]}_\bt}
{Thm.~\ref{mnthm-ab-ip} iv)}\\
\ml{ \Mfu^{\psi[j_+,\nu_+]}_\n }{Rem.~\ref{rmk-inab}}\\
\ml{ \Wfu^{\psi[j_+,\nu_+]}_\n }{Fig.~\ref{fig-j3a}}
\\\hline
\tl{\IF(j_r,\nu_r) }{ 1\leq \nu_r \leq j_r-2}\\
\ml{H^{\xi_r,\nu_r}_K}{ Fig.~\ref{fig-ps3}}\\
\ml{ \Wfu_\n^{\psi[\xi_+,\nu_+]} }{ Fig.~\ref{fig-mj1}}
\\ \hline
\tl{\IF(j_r,0) }{ j_r\in 2\ZZ_{\geq 0}}\\
\ml{H^{\xi_r,0}_K}{ Fig.~\ref{fig-ps4}} \\
\ml{\Wfu^{\psi[\xi_+,\nu_+]}_\n}{Fig.~\ref{fig-mj1a}}
\\ \hline
\tl{\IF_+( j_r,-\nu_r) }{ 1\leq \nu_r\leq j_r-2}\\
\ml{H^{\xi_r,-\nu_r}_K}{ Fig.~\ref{fig-ps5}} \\
\ml{\Wfu_\n^{\psi[j_+,\nu_+]}}{Fig.~\ref{fig-mj2}}\\\hline
\newpage
\tl{\IF(j_r,-j_r)}{ j_r\in \ZZ_{\geq 1} }\\
\ml{H^{\xi_r,-\nu_r}_K}{ Fig.~\ref{fig-ps6}}\\
\ml{\Wfu_\n^{\psi[j_+,\nu_+]}} {Fig.~\ref{fig-mj2b}} \\ \hline
\tl{\FI(j_l,\nu_l) }{ 1\leq \nu_l \leq -j_l-2 } \\
\ml{H^{\xi_l,\nu_l}_K}{ Fig.~\ref{fig-ps11}}\\
\ml{\Wfu_\n^{\psi[j_+,\nu_+]}}{Fig.~\ref{fig-pj1VW}}
\\ \hline
\tl{\FI(j_l,0) }{j_l = -2 j_r=-j_+\in 2\Z_{\leq 0}}\\
\ml{H^{\xi_l,-0}_K}{ Fig.~\ref{fig-ps10}}\\
\ml{\Wfu_\n^{\psi[j_+,\nu_+]}}{Fig.~\ref{fig-pj1a}}
\\ \hline
\tl{\FI_+(j_l,-\nu_l)}{j_l+2 \leq -\nu_l -1}\\
\ml{H^{\xi_l,-\nu_l}_K}{ Fig.~\ref{fig-ps9}}\\
\ml{ \Wfu^{\psi[j_+,\nu_+]}_\n }{Fig.~\ref{fig-pj2VW}}
\\ \hline
\tl{\FI(j_l,j_l)}{j_l \in \ZZ_{\leq -1}}\\
\ml{H^{\xi_l,-j_l}_K}{ Fig.~\ref{fig-ps8}}\\
\ml{ \Wfu^{\psi[j_+,\nu_+]} _\n} {Fig.~\ref{fig-pj2b}}
\\ \hline
\tl{\FF(j_+,-\nu_+)}{\nu_+ \geq |j_+|+2}\\
\ml{H^{\xi_+,-\nu_+}_K}{ Fig.~\ref{fig-ps7}}
\\ \hline
\end{longtable}
\end{center}


\def\flnm{rFtm-ds}


\section{Some discrete subgroups}\label{app-ds}\markright{C. SOME DISCRETE
SUBGROUPS}

In this note we study the $(\glie,K)$-mod\-ules realized in spaces of
Fourier terms of elements of $C^\infty(\Ld_\s\backslash G)_K$ for a
standard lattice $\Ld_\s\subset N$; see Definition~\ref{def-stlatt}.
Although all lattices in $N$ are isomorphic to some standard lattices,
this forms a genuine restriction as soon as we want to apply it to
modules occurring in Fourier expansions of functions in
$C^\infty(\Gm\backslash G)$ for a cofinite discrete group
$\Gm\subset G$. The reason is that the isomorphism of lattices $\Ld$
and $\Ld'$ in $N$ does not lead, in general, to morphisms of
$(\glie,K)$-modules
$C^\infty( \Ld\backslash G)_K \rightarrow C^\infty(\Ld'\backslash G)_K$.
By imposing the $\ZZ[i]$-condition on the cusps in
Definition~\ref{cucond} we restrict ourselves to discrete groups $\Gm$
for which we have to deal only with isomorphisms between lattices in
unipotent groups $N_\c$ given by conjugation by elements of~$G$. This
leads to the \il{Zccs}{$\ZZ[i]$-condition on the
cusps}\emph{$\ZZ[i]$-condition on the cusps} in
Definition~\ref{cucond}.

For a given cusp, the group $\Gm\cap N_\c=g_c\Ld_{\s(\c)} g_\c^{-1}$ is
normalized by $\Gm \cap P_\c$. The group
$(\Gm\cap N_\c) \bigm\backslash (\Gm\cap P_\c)$ is isomorphic to a
subgroup $U_\c$ of the groups of $12$-th roots of unity. This gives
linear relations between generic abelian Fourier terms $\Four_\bt $ and
$\Four_{\z^3\bt}$ with $\z\in U_\c$. The relations between non-abelian
Fourier terms involve all $\Four_{\ell,c,d}$ with $c\bmod 2\ell$ for
given $\ell\in \frac \s 2\ZZ_{\neq 0}$ and $d\in 1+2\ZZ$.

\rmrk{Two examples}We look closer at the two examples in
\S\ref{sect-cucond}. The group
\il{Gm0}{$\Gm_0$}$\Gm_0 = G \cap \SL_3\bigl( \ZZ[i]\bigr)$ is by
definition a subgroup of the standard realization $G$ of $\SU(2,1)$. We
checked in \S\ref{sect-cucond} with use of~\eqref{Ndef} that
$\Gm_0\cap N = h(i+1) \Ld_4 h(i+1)^{-1}$.

The example of Francsics and Lax \cite{FL}, conjugated into $G$, is
\ir{GmFL}{\Gm_\fl}
\be\label{GmFL} \Gm_\fl := U_\fl \SL_3\bigl(\ZZ[i]\bigr)U_\fl^{-1} \cap
G\,.\ee
Since
\be\label{GmflN} U_\fl^{-1} \nm(b,r) U_\fl = \begin{pmatrix}1& i\sqrt 2
b& -2 r - i |b|^2\\
0&1&-\sqrt 2\bar b\\
0&0&1
\end{pmatrix}
\ee
is integral if and only if $b\in \frac{1+i}{\sqrt 2}\ZZ[i]$ and
$r\in \frac12\ZZ$, we have
\[\Gm_\fl\cap N = \mm(e^{\pi i/4}) \Ld_4 \mm(e^{\pi i/4})^{-1}\,.\]
The fundamental domain in~\cite{FL} shows that $\Gm_\fl$ has only one
$\Gm_\fl$-orbit of cusps. So $\Gm_\fl$ satisfies the $\ZZ[i]$-condition
on the cusps.

To see that $\Gm_0$ satisfies the $\ZZ[i]$-condition on the cusps as
well, it suffices to prove the following result.
\begin{prop}The cusps of $\Gm_0$ form one $\Gm_0$-orbit represented by~$\infty$.
\end{prop}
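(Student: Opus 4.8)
The plan is to translate the statement about cusps into a transitivity statement about isotropic lines for the hermitian form $I_{2,1}$, and then to establish that transitivity by a Euclidean reduction over $\ZZ[i]$. First I would set up the dictionary between cusps and rational isotropic lines. A point of $\partial\X$ is the same as an isotropic line $\CC v\subset\CC^3$ for the form $(x,y)=\bar y^t I_{2,1} x$ (so $v=(v_1,v_2,v_3)$ is isotropic iff $|v_1|^2+|v_2|^2=|v_3|^2$), and the parabolic subgroup fixing that boundary point is exactly the stabilizer of the line. A direct computation shows that $e=(1,0,1)^t$ is isotropic and is fixed up to scalars by $N$, $A$ and $M$: indeed $\nm(b,r)e=e$, $\am(t)e=t\,e$, and $\mm(\z)e=\z\,e$. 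Hence $P_\infty=NAM$ is the stabilizer of $\CC e$, so $\infty$ corresponds to the $\QQ(i)$-rational isotropic line $\CC e$. Since, as noted in \S\ref{sect-cucond}, $\Gm_0\cap N=\hm(1+i)\Ld_4\hm(1+i)^{-1}$ is a lattice in $N$, the point $\infty$ is a cusp of $\Gm_0$.

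Next I would show that every cusp of $\Gm_0$ corresponds to an isotropic line defined over $\QQ(i)$, i.e.\ spanned by a primitive vector of $\ZZ[i]^3$. If $\c$ is a cusp, then by definition $N_\c\cap\Gm_0$ is a lattice in the Heisenberg group $N_\c$; a Zariski-dense subgroup of $N_\c$ forces the unipotent radical $N_\c$, hence its normalizer $P_\c$ and the line it fixes, to be defined over $\QQ(i)$. Thus the cusps are precisely the $\QQ(i)$-rational isotropic lines, and since $\Gm_0$ permutes the cusps, the proposition reduces to the arithmetic assertion that $\Gm_0=\SU(2,1)\cap\SL_3(\ZZ[i])$ acts transitively on primitive isotropic vectors of $\ZZ[i]^3$ modulo the units $\{\pm1,\pm i\}$ (equivalently, on rational isotropic lines).

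For the transitivity I would argue by descent on $|v_3|^2=|v_1|^2+|v_2|^2$ for a primitive isotropic $v=(v_1,v_2,v_3)$, noting that $v_3\neq0$ for every nonzero isotropic vector. The available tools in $\Gm_0$ are the integral unipotent group $\Gm_0\cap N$ (with $b\in(1+i)\ZZ[i]$, $r\in\ZZ$), the opposite unipotent group $\wm(\Gm_0\cap N)\wm^{-1}$ fixing the line $\CC(1,0,-1)^t$, the Weyl element $\wm$, and the full integral diagonal torus $\{\,\mathrm{diag}(\alpha,\beta,\delta):\alpha,\beta,\delta\in\{\pm1,\pm i\},\ \alpha\beta\delta=1\,\}\subset K\cap\Gm_0$. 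Applying an opposite unipotent $\wm\nm(\beta,s)\wm^{-1}$ replaces the middle coordinate $v_2$ by $v_2-\bar\beta(v_1+v_3)$ and alters $v_1,v_3$ in a controlled way, so that a Euclidean division in $\ZZ[i]$—available because $\ZZ[i]$ has class number one—lets me strictly decrease $|v_3|$ whenever $|v_3|>1$. Iterating reaches $|v_3|=1$, where the isotropy condition forces $v$ to be, up to a unit, either $(1,0,1)^t$ or $(0,1,1)^t$; the diagonal torus (for instance $\mathrm{diag}(i,-i,1)=\km(1,i,0)$ and its analogues) then moves any such vector onto $\CC e$. Combined with the earlier steps, every cusp is $\Gm_0$-equivalent to $\infty$.

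The main obstacle is this reduction step: one must organize the Euclidean descent so that every matrix used genuinely lies in $\Gm_0$, which is delicate because the integral unipotent subgroup is constrained to $b\in(1+i)\ZZ[i]$ rather than all of $\ZZ[i]$, and because we work in $\SU$ rather than $\GL$, so the determinant and unit bookkeeping must be tracked at each stage. This is precisely where class number one of $\ZZ[i]$ is indispensable. A cleaner but less elementary alternative would be to invoke a Witt-type theorem for the unimodular hermitian $\ZZ[i]$-lattice $(\ZZ[i]^3,I_{2,1})$, whose genus consists of a single class, yielding transitivity of $\Gm_0$ on primitive isotropic vectors directly.
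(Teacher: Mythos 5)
Your route is genuinely different from the paper's. The paper stays on the symmetric space: it exhibits an explicit Siegel set (a box in the $(u,\re z)$-coordinates with $\im z-|u|^2\geq\tau_0$) together with six explicit elements of $\Gm_0$, one for each piece of a partition of the low-height region, each of which multiplies $\tau=\im z-|u|^2$ by a factor bounded below by some $\theta>1$; since $\infty$ is the only boundary point in the closure of that Siegel set, one-cuspedness follows, and the required inequalities reduce to a finite explicit computation. You instead translate the statement into transitivity of $\Gm_0$ on primitive isotropic vectors of $\ZZ[i]^3$ modulo units. That translation is sound: $P_\infty=NAM$ is indeed the stabilizer of the isotropic line $\CC(1,0,1)^t$, a lattice in $N_\c$ forces $N_\c$ and its fixed line to be $\QQ(i)$-rational, and your endgame at $|v_3|=1$ is correct (isotropy forces $v$ to be a unit multiple of $(u,0,1)^t$ or $(0,u,1)^t$, and the integral torus together with $\km(1,0,1)\in\Gm_0$ moves these onto $\CC(1,0,1)^t$).

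The gap is the descent step, which is where the whole proof lives, and the mechanism you describe fails on explicit examples. Take the primitive isotropic vector $v=(2,1,2+i)^t$, with $|v_3|^2=5$. Here $v_1+v_3=4+i$ and $v_2=1$, so $|v_2|<|v_1+v_3|$ already and the Euclidean division you invoke selects $\bar\beta=0$: the opposite-unipotent step stalls even though $|v_3|>1$. (This $v$ is in fact equivalent to $\infty$ --- the translation $\nm(-(1+i),-1)\in\Gm_0\cap N$ sends it to $(0,-i,i)^t$ --- but that uses the $N$-side and succeeds only because $|v_1-v_3|=1$, not by the mechanism you propose.) Structurally, making $v_2-\bar\beta(v_1+v_3)$ small does not control the new third coordinate, which equals $v_3-\beta v_2+\bigl(|\beta|^2/2-is\bigr)(v_1+v_3)$ and carries terms quadratic in $\beta$ whose real part is forced to be the nonnegative integer $|\beta|^2/2$; moreover, since $\beta$ is confined to $(1+i)\ZZ[i]$, whose covering radius in $\CC$ is $1$ rather than $1/\sqrt2$, even the division itself yields only a non-strict inequality. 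The natural quantity to descend on is $|(v,e)|=|v_1-v_3|$, which vanishes exactly on $\CC(1,0,1)^t$, and organizing the interleaving of $\Gm_0\cap N$, $\wm$, the opposite unipotent and the torus so that this strictly decreases is precisely the continued-fraction-type algorithm that has to be written out; until it is, your argument is a plan rather than a proof. The Witt-theorem alternative you mention would close the gap, but only by importing the local-global machinery for hermitian $\ZZ[i]$-lattices, which is a different proof again.
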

\begin{proof}[Sketch of a proof] We show that if $\tau_0>0$ is
sufficiently small the set
\newcommand\Sie{\mathfrak S}
\badl{Sie} \Sie &\= \Bigl\{ (z,u)\in \X\;:\; u= \tfrac{1+i}2\xi +
\tfrac{1-i}2\eta\,,\xi,\eta\in [0,1]\,,\\
&\qquad\quad \re z \in [-1,1], \; \im z-|u|^2 \geq \tau_0 \Bigr\}
\eadl
is a Siegel domain for $\Gm_0$, i.e., $\X = \Gm\, \Sie$. Since $\infty $
is the sole element of the closure of $\Sie$ intersecting $\partial\X$,
this implies that all cusps of $\Gm_0$ are equivalent to~$\infty$.

By left multiplication by elements of $\Gm_0 \cap N$ we can bring each
$(z,u)\in \X$ into the form $(z',u')$ with $\re z'\in [-1,1]$ and
$u= \frac{1+i}2 \xi+\frac{1-i}2\eta$ with $\xi,\eta\in [-1,1]$, without
a change of $\tau=\im z-|u|^2$. By application of
\[ \mm(i) = \begin{pmatrix}i&0&0\\
0&-1&0\\
0&0&i\end{pmatrix} \in \Gm_0\cap M,\]
we bring $(z,u)$ into the set
\be S \= \Bigl\{ (z,u)\in \X\;:\; u= \tfrac{1+i}2\xi +
\tfrac{1-i}2\eta\,,\xi,\eta\in [0,1],\, \re z\in [-1,1]\Bigr\}\ee
without changing the value of $\tau=\im(z) - |u|^2$.

Next we use suitable elements $\gm\in\Gm_0$ such that for $(z,u)\in S$
with $\tau(z,u) < \tau_0$ the image $(z',u')=\gm^{-1}(z,u)$ satisfies
$\tau'=\im(z')-|u'|^2 > \th \tau$ for a fixed number $\th$ slightly
larger than~$1$. Repeating this process we arrive in~$\Sie$.

The choice of the elements $\gm$ and of suitable values of $\tau_0$
takes some work. We partition the part of $S$ with $\tau<\tau_0$ in the
following way, with $\frac12\leq \eta_0\leq 1$ and $0\leq \xi_0\leq 1$.
\[\setlength\unitlength{1cm}
\begin{picture}(5,5)(0,-2.5)
\put(.8,1.7){$u$}
\put(-.2,0){$0$}
\put(5.1,0){$1$}
\put(2.7,-2.5){$(1-i)/2$}
\put(2.7,2.3){$(1+i)/2$}
\put(1.5,.4){$A_0$}
\put(3.75,.4){$A_1$}
\put(3.1,0){$\eta_0$}
\thicklines
\put(3,0){\circle*{.1}}
\put(3,-2){\line(0,1){4}}
\put(0,0){\line(1,1){2.5}}
\put(0,0){\line(1,-1){2.5}}
\put(5,0){\line(-1,1){2.5}}
\put(5,0){\line(-1,-1){2.5}}
\end{picture}
\qquad\qquad\qquad
\begin{picture}(1,5)(-.5,-2.5)
\put(-1.7,1.7){$x=\re z$}
\put(.1,2.5){$1$}
\put(.1,0){$0$}
\put(.1,-2.5){$-1$}
\put(-.6,0){$B_0$}
\put(.2,2){$B_+$}
\put(.2,-2.1){$B_-$}
\put(.2,1.5){$\xi_0$}
\put(.2,-1.7){$-\xi_0$}
\thicklines
\put(0,1.6){\circle*{.1}}
\put(0,-1.6){\circle*{.1}}
\put(0,2.5){\circle*{.1}}
\put(0,-2.5){\circle*{.1}}
\put(0,-2.5){\line(0,1){5}}
\end{picture}
\]
So $A_0$ is determined by $\re u\leq \eta_0$, and $A_1$ by
$\re u\geq 1-\eta_0$. Further, $B_0=[-\xi_0,\xi_0]$, $B_+ = [\xi_0,1]$,
and $B_-=[-1,-\xi_0]$. We write $z=x+iy$.

We take, for each of the six resulting parts of the set $S$, an element
$\gm \in \Gm$ such that the cusp $\gm \,\infty$ is near to that part.
For instance for $u\in A_0$ and $z\in B_0$ we are near to the cusp
$(0,0)=\wm\infty$, with
\[ \wm\=\wm^{-1}\= \begin{pmatrix}-1&0&0\\0&-1&0\\0&0&1\end{pmatrix} :
(z,u)\mapsto \Bigl( \frac{-1}z,\frac{-iu}z\Bigr)\,.\]
For such $(z,u)$
\[ |z|^2\= x^2+\bigl(\tau+|u|^2\bigr)^2 \leq
\xi_0^2+\bigl(\tau_0+m(\eta_0)\bigr)^2\,, \]
with
\be m(\eta_0) \= \eta_0^2+(1-\eta_0)^2\,.\ee
(In the set in the $u$plane depicted above the point in $A_0$ with
maximal distance to $0$ is $\bigl( \eta_0,1-\eta_0)$.)
This leads to
\be \tau' \geq \tau\bigm/ \bigl( \xi_0^2+(\tau_0+m(\eta_0)^2\bigr)\,.\ee
Hence $ (\xi_0^2+(\tau_0+m(\eta_0)^2)^{-1}$ is a lower bound for
$\eta_0$.

In \cite[6b]{Math} we carry out a similar computation in five more
cases:
\begin{align*}
&u\in A_0,\, z\in B_+& \gm&\=
\begin{pmatrix}-1&0&0\\0&-i&0\\0&0&-i\end{pmatrix}:& \infty&\mapsto
(1,0)\,,
\displaybreak[0]\\
&u\in A_0 ,\,z\in B_-&
\gm&\=\begin{pmatrix}i&0&0\\0&-i&0\\0&0&1\end{pmatrix}:& \infty
&\mapsto (-1.0)\,,\displaybreak[0]\\
&u\in A_1,\, z\in B_0&\gm&\= \begin{pmatrix} 0&-i&0\\
-i&0&0\\0&0&1
\end{pmatrix}:& \infty&\mapsto (i,1)\,,
\displaybreak[0]\\
&u\in A_1,\,z\in B_+& \gm&\=
\begin{pmatrix}1&1-i&-1-i\\
1-i&1&-1-i\\
1+i&1+i&1-2i\end{pmatrix}:& \infty &\mapsto (i+1,1)\,,
\displaybreak[0]\\
&u\in A_1,\, z\in B_-& \gm&\= \begin{pmatrix}0&-1-i&i\\
1-i&1&-1-i\\
i&-1+i&2
\end{pmatrix}: & \infty &\mapsto (i-1,1)\,.
\end{align*}

In all cases we consider $(z',u')=\gm^{-1}(z,u)$ and check that
$ \tau'= \im(z')-|u'|$ has the form
\be \tau' \= \frac\tau{|D(z,u)|^2}\ee
with a polynomial $D$ of degree one. In the first case, carried out
above, we have $D(z,u)=z$. The polynomial $D$ vanishes at the point of
$\partial\X$ indicated for each case.

Then we give an estimate of $|D(z,u)|^2$ by a quantity depending on the
parameters $\tau_0$, $\xi_0$ and $\eta_0$ that is valid for all $(z,u)$
with $\im(z)-|u|^2=\tau$ for $0<\tau<\tau_0$ for which $z $ and $u$ are
in the region $B_\ast$ and $A_\ast$ indicated above for each case.

Having obtained these six upper bounds we look by trial and error for
values for $\tau_0$, $\xi_0$ and $\eta_0$ for which all six quantities
have a value in $[0,1)$. Since that search succeeds easily, there is a
factor $\eta>1$ such that $\tau'>\eta \tau$.

Iterating $(z,u) \mapsto (z',u')$ a finite number of times, we arrive at
$\tau(z',u')\geq \tau_0$. This means that for the value of $\tau_0$
that we found the set $\Sie$ in~\eqref{Sie} is indeed a Siegel domain
for $\Gm\backslash \X$. This implies that $\Gm$ has only on orbit of
cusps.
\end{proof}

\rmrk{Other number fields}Instead of the ring of integers $\ZZ[i]$ in
$\QQ(i)$ we may try the ring of integers in other number fields. For
example, let us take
\be \Gm \= \SL_3\bigl( \sqrt{-2}\bigr) \cap G\,.\ee
In \eqref{Ndef} we see that $\Gm\cap N$ is generated by $\nm(2,0),$
$\nm\bigl( i\sqrt{-2},0\bigr)$ and $\nm(0,\sqrt 2).$ The automorphism
$A\in \Aut(N)$ associated to
$\begin{pmatrix}2 & 0\\0&\sqrt 2\end{pmatrix}
\in \GL_2(\RR),$ as in \eqref{oautN}, satisfies $ A \Ld_4 = \Gm\cap N.$
This automorphism cannot be obtained by conjugation by an element of
$NAM.$



\def\flnm{rFtm-lit}

\iflitnum
\newcommand\bibit[4]{
\bibitem {#1}#2: {\em #3;\/ } #4}
\newcommand\bibitq[4]{
\bibitem {#1}#2: {\em #3\/ } #4}
\else
\newcommand\bibit[4]{
\bibitem[#1] {#1}#2: {\em #3;\/ } #4.}
\newcommand\bibitq[4]{
\bibitem[#1] {#1}#2: {\em #3\/ } #4.}
\fi
\newcommand\bibitnot[4]{} 

{\raggedright
}



\def\flnm{rFtm-ind}

\newpage


\markboth{INDEX}{INDEX}

\newcommand\ind[2]{\item #1\quad\ #2}
\renewcommand\il[1]{\pageref{i-#1}}
\renewcommand\ir[1]{\pageref{#1}}{}

\section*{Index}
\begin{multicols}{2}
\raggedright
\begin{trivlist}\footnotesize
\ind{antiholomorphic discrete series type}{\il{adst}}
\ind{automorphic form}{\il{af}}
\ind{---, square integrable}{\il{afsqi}}
\ind{--- with moderate exponential growth}{\il{afme}}
\ind{automorphism group of $N$}{\il{aN}}
\indexspace
\ind{big cell}{\il{bc}}
\ind{boundary of symmetric space}{\il{bdss}}
\ind{Bruhat decomposition}{\il{Bd}}
\indexspace
\ind{Cartan subalgebra}{\il{Ca}}
\ind{Casimir element}{\il{Cas-K}, \il{Cas} }
\ind{characters of $M\subset K$}{\il{chM}}
\ind{cohomological representation}{\il{cohrepr}}
\ind{combinations 1, 2 and 3}{\il{comb}}
\ind{complementary series}{\il{cs}, \il{cos}}
\ind{component function}{\il{comp}, \il{co} }
\ind{composition diagram}{\il{cd}}
\ind{$\ZZ[i]$-condition on the cusps}{\il{cuc}, \il{Zccs}}
\ind{contiguous relations}{\il{cr1}, \il{cr2}}
\ind{cusp}{\il{cu}}
\ind{cusp form}{\il{cf}}
\indexspace
\ind{determining component}{\il{dc}}
\ind{discrete series}{\il{ds}}
\ind{downward shift operator}{\il{dso}, \il{dso1} }
\indexspace
\ind{eigenfunction equations}{\il{efeq}}
\ind{Eisenstein series}{\il{Es}}
\ind{evaluation at zero}{\il{eval0}}
\ind{exponential decay}{\il{ed}, \il{ed1}}
\ind{exponential growth}{\il{eg}}
\ind{---, moderate}{\il{egm}}
\ind{exponentials}{\ir{LieKexp}, \il{expnlie} }
\indexspace
\ind{Fourier expansion}{\il{Fe}, \ir{Fe1}}
\ind{---, absolute convergence}{\il{Fsac}}
\ind{Fourier term module}{\il{Ftmo}. \il{Ftm}}
\ind{---, abelian}{\il{Ftma}}
\ind{---, generic abelian}{\il{Ftmgab}}
\ind{---, large}{\il{Ftml}}
\ind{---, $N$-trivial}{\il{Ftm-Ntri}, \il{Ntefm}}
\ind{---, non-abelian}{\il{Ftmn}, \il{Ftmnab}}
\ind{Fourier term operator}{\il{Fto}, \il{Fto1} }
\indexspace
\ind{generic abelian Fourier term module}{\il{gabef}, \il{gabFtm} }
\ind{generic parametrization}{\il{gp}}
\indexspace
\ind{Haar measure}{\il{HmK}, \il{Hm2} }
\ind{Heisenberg group}{\il{Hg}}
\ind{Hermite polynomial}{\il{Hpol}}
\ind{highest weight space in a $K$-type}{\il{hwsp}}
\ind{holomorphic discrete series type}{\il{hdst}}
\ind{holomorphis family of}{}
\ind{--- automorphic forms}{\il{hfaf}}
\ind{--- Fourier terms}{\il{hfFt}}
\indexspace
\ind{integral parametrization}{\il{ip}}
\ind{interior differentiation}{\il{idiff}}
\ind{invariant sesquilinear form}{\il{invsf}}
\ind{Iwasawa decomposition}{\il{Iwd}, \il{Id1}}
\indexspace
\ind{kernel relations}{\il{krel}, \il{krn} }
\ind{Kunze-Stein operator}{\il{KSo}}
\indexspace
\ind{Langlands repesentation}{\il{Llr}}
\ind{large discrete series type}{\il{ldst-intro}, \il{ldst}}
\ind{large Fourier term module}{\il{lFtm}}
\ind{large abelian Fourier term module}{\il{labFtm}}
\ind{large non-abelian Fourier term module}{\il{nablFtm}}
\ind{lattice}{\il{latt}}
\ind{left differentiation}{\il{ld}}
\ind{Lie algebra}{\il{La}, \il{Lie1} }
\ind{limits of discrete series}{\il{lds}}
\ind{logarithmic module}{\il{lm}, \il{ls1} }
\indexspace
\ind{maximal compact subgroup}{\il{mcs}}
\ind{maximal component}{\il{maxc}}
\ind{meromorphic family of}{}
\ind{--- automorphic forms}{\il{mfaf}}
\ind{--- Fourier terms}{\il{mfFt}}
\ind{metaplectic parameter}{\il{mpprm-i}, \il{mtplprm}}
\ind{minimal component}{\il{minc}}
\ind{minimal vector}{\il{mv}}
\ind{moderate exponential growth}{}
\ind{---, automorphic form with}{\il{meaf}}
\ind{modified Bessel differential equation}{\ir{mBd0}, \il{mBde} }
\ind{modified Bessel function}{\il{mBfs}}
\ind{$(\glie,K)$-module}{\il{gKm}}
\ind{multiple parametrization}{\il{mp}}
\ind{multiplication relations for functions on $K$}{\il{mlt}}
\indexspace
\ind{non-abelian Fourier term module}{\il{nabef}}
\ind{norm in $L^2(K)$}{\ir{Kph-norm}}
\ind{normalized Hermite function}{\il{nHf}}
\indexspace
\ind{parameter set of a special cyclic module}{\il{tp}}
\ind{Poincar\'e series}{\il{Ps}}
\ind{polynomial growth}{\il{polgr0}, \il{pg}}
\ind{positive Weyl chamber}{\il{pWch}}
\ind{principal series}{\il{pssum}, \il{ps} }
\ind{---, irreducible}{\il{pdsi}}
\ind{---, unitary irreducible}{\il{ps-ui}}
\indexspace
\ind{quick decay}{\il{qd}}
\indexspace
\ind{realization of $\SU(2,1)$}{\il{remn}}
\ind{$\nu$-regular behavior at $0$}{\il{regbeh}}
\ind{right differentiation}{\il{rd}}
\indexspace
\ind{Schr\"odinger representation}{\il{Schrep}}
\ind{Schwartz function}{\il{Sf}}
\ind{sector of lattice points in $(h/3,p)$-plane}{\il{slp}}
\ind{sesquilinear form}{\il{sqlf}}
\ind{shift operator}{\il{sho}, \il{shob} }
\ind{---, downward}{\il{sowd}}
\ind{---, in abelian case}{\il{shopab}}
\ind{---, in non-abelian case}{\il{shopnab}}
\ind{---, upward}{\il{souw}}
\ind{shift parameter for theta functions}{\il{c-intro}}
\ind{simple parametrization}{\il{sp}}
\ind{simple positive roots}{\il{spr}}
\ind{special module}{\il{spcm}}
\ind{spectral parameters}{\il{spprm}}
\ind{split torus}{\il{sto}}
\ind{square integrable at~$\infty$}{\il{sqiinf}}
\ind{square integrable automorphic form}{\il{sqiaf}}
\ind{standard lattice}{\il{stl}}
\ind{Stone-von Neumann representation}{\il{StvNr}}
\ind{subquotient theorem}{\il{sqth}}
\ind{symmetric space}{\il{syms}}
\indexspace
\ind{theta function}{\il{Thf}}
\ind{thin representation}{\il{thrpru}}
\ind{type of irreducible representation}{\il{tpirr}}
\indexspace
\ind{unique embedding in principal series}{\il{ueb}}
\ind{unipotent subgroup}{\il{upsg}}
\ind{unitary principal series}{\il{ups}}
\ind{unitarizable}{\il{us}}
\ind{universal enveloping algebra}{\il{uea}}
\ind{upper half-plane model of symmetric space}{\il{uhpm}}
\ind{upward shift operator}{\il{uwso}, \il{uwso1} }
\indexspace
\ind{walls of positive chamber}{\il{walls}}
\ind{weight in a $K$-type}{\il{Kwt}}
\ind{weight space in a $K$-type}{\il{wsp}}
\ind{Weil restriction}{\il{Wr}}
\ind{Weyl group}{\il{wg}}
\ind{Whittaker differential equation}{\ir{Whd0}, \il{Wde} }
\end{trivlist}
\end{multicols}


\section*{List of notations}\markright{LIST OF NOTATIONS}
\renewcommand\ind[2]{\item $#1$\quad\ #2}
\begin{multicols}{3}
\raggedright
\begin{trivlist}\footnotesize
\ind{\A(\Gm;\ps)}{\il{AGmps}}
\ind{\Au0(\Gm;\ps)}{\il{A0ps}}
\ind{\Au{(2)}(\Gm;\ps)}{\il{A2ps}}
\ind{\Au\me(\Gm;\ps))}{\il{Aume}}
\ind{\aut(N)}{\il{AutN}}
\ind{A\subset G}{\il{Adef}}
\ind{\alie}{\il{alie}}
\ind{\am(t)\in A}{\ir{Adef}}
\indexspace
\indexspace
\ind{\CK_i\in \klie}{\il{CKi}}
\ind{C\in ZU(\glie)}{\ir{tab-casdt}}
\ind{C_K}{\ir{CasK}}
\ind{C^\infty(G)_K}{\il{CiGK}}
\ind{C^\infty(\Ld_\s\backslash G)_K}{\il{CLGK}}
\ind{C^\infty(\Ld_\s\backslash G)_K^\ps}{\il{CiLdGps}}
\ind{c \text{ shift parameter}}{\il{c-theta}}
\ind{c^M(r)}{\il{coM}}
\ind{c^V(r), \; c^W(r)}{\ir{cWVexpl}}
\indexspace
\ind{d \in 1+2\ZZ \text{ metaplectic param.}}{\il{d-inftyo}, \ir{mudef}}
\indexspace
\ind{E_0}{\ir{ev0}}
\indexspace
\ind{\Ffu_\bt}{\il{Ffubt}}
\ind{\Ffu_{\ell,c}}{\il{Ffuellc}}
\ind{\Ffu_{\ell,c,d}=\Ffu_\n}{\il{Fn}}
\ind{\Ffu_\bt^\ps,\; \Ffu_\n^\ps}{\il{FfuNfu-intro}, \il{Ffups}}
\ind{\Four_\bt}{\ir{Frbtdef}}
\ind{\Four_{\ell,c}}{\ir{Frlcdef}}
\ind{\Four_{\ell,c,d}=\Four_\n}{\ir{Fourmu}}
\ind{\FF,\, \FI}{\il{FFFI}}
\ind{f^\c}{\il{fc}}
\ind{f_{\ell,c,m}}{\il{fellcm}}
\ind{f_\bt}{\il{fbt}}
\indexspace
\ind{G=\SU(2,1)}{\ir{Gdef}}
\ind{\glie}{\il{glie}}
\ind{\glie_c}{\il{gliec}}
\ind{g_\c}{\il{gc}}
\indexspace
\ind{\HH_i}{\ir{HHi}}
\ind{\HH_r}{\il{HHr}}
\ind{H^{\xi,\nu}_K}{\il{HK-intro}, \ir{HK-def}}
\ind{\hK}{\il{hK1}}
\ind{\hm(c)}{\ir{hmdef}}
\ind{h_{\ell,m}}{\ir{hlmdef}}
\indexspace
\ind{I_\nu}{\ir{Inu}}
\ind{I_{2,1}}{\ir{I21}}
\ind{\II,\; \IF}{\il{IIIF}}
\ind{\ii_0}{\ir{ii0}}
\indexspace
\ind{j=j_\xi}{\il{j}}
\ind{j_+, \; j_r,\; j_l}{\il{jprl}}
\indexspace
\ind{K \subset G}{\il{Kdef}, \il{Kdefa} }
\ind{K_0\subset K}{\ir{K0def}}
\ind{K_\nu}{\ir{Knu}}
\ind{ K_{0;h,p}, K_{\bt;h,p},\; K_{\n;h,p}}{\il{K0hp}, \il{Khp},
\il{Khpn} }
\ind{\klie}{\il{klie}, \il{klie1} }
\ind{\klie_c}{\il{kliec}}
\ind{\kk^I_{\bt;h,p},\; \kk^K_{\bt;h,p}}{\ir{expl-kdso}}
\ind{\kk^W_{\n;h,p},\; \kk^V_{\n;h,p}}{\ir{kkVW}}
\ind{\kk^M_{\n;h,p}}{\ir{kkMdef}}
\ind{\km(\eta,\al,\bt)\in K}{\ir{Kdef}}
\ind{dk}{\il{dk}}
\indexspace
\ind{\Lfu_0^{\xi,\nu}}{\il{Lfu0}}
\ind{\tilde\Lfu_0^{j,0}}{\ir{tLfuip}}
\ind{L,\; L^+}{\ir{Ldef}}
\ind{L(\XX)}{\ir{Ldiff}}
\ind{L_h}{\il{Lh}}
\ind{\ell\text{ parametr.~char.~of }Z(N)}{\il{ell-intro}, \il{ell}}
\indexspace
\ind{\Mfu_\Nfu^\ps}{\il{Mfudef}, 
}
\ind{\Mfu_\Nfu^{\xi,\nu}}{\il{Mfu-intro}, \il{MFUximudef},
\il{Mfudefia}, \il{Mfuxinu} }
\ind{M\subset K\subset G}{\il{Mdef}, \il{M1} }
\ind{M(\XX)f}{ \ir{Mdiff1}}
\ind{M_{\k,s}}{\ir{Mkps}}
\ind{\mm(\z)\in M}{\il{mztdef}}
\ind{m(h,r)}{\ir{mhrdef}, \il{mhrn} }
\ind{m_0=m_0(j)}{\il{m0}}
\indexspace
\ind{N}{\il{Ndef}}
\ind{N_\c}{\il{Nc}}
\ind{\Norm_P(\Ld_\s)}{\il{NP}}
\ind{\nlie}{\il{nlie}, \il{nlie1} }
\ind{\n=(\ell,c,d)}{\il{n-intr}, \il{n}}
\ind{\nm(x+iy,r)=\nm(x,y,r)\in N}{\ir{Ndef}}
\ind{dn}{\il{dn}}
\indexspace
\ind{\wo(\ps), \wo^1(\ps)}{\il{Wops}, \ir{wo}}
\ind{\wo(\ps)_\n,\; \wo^1(\ps)_\n}{\il{won}}
\ind{\wo(\ps)^+}{\ir{Wops+}}
\ind{ \wo(\ps)^+_\n}{\ir{Wops+n}, \ir{WO+}}
\indexspace
\ind{P_c}{\il{Pc}}
\indexspace
\ind{R(\XX) f = \XX f}{\ir{Rdiff}, \ir{Rdiff1} }
\ind{r_0(h)}{\il{r0def}, \il{r0n} }
\indexspace
\ind{\Schw(\RR)}{\il{Schsp}}
\ind{\SU(2,1)}{\il{su21}}
\ind{S_{\!\Nfu}(\eta)}{\il{Scho}}
\ind{\sect(j)}{\il{sect}}
\ind{\Ws1,\; \Ws2}{\ir{S12}}
\ind{\sh{\pm 3}{\pm1}}{\il{sopS}}
\indexspace
\ind{T^+_k,\; T^-_k}{\il{Tkpm}}
\indexspace
\ind{U(\glie)}{\il{Uglie}}
\indexspace
\ind{\Vfu_\n^\ps}{\ir{Vps}}
\ind{\Vfu^{\xi,\nu}_\n}{\il{Vfu-intro}, \il{Vfuxinu} }
\ind{V_{\k,s}}{\ir{Vkps}}
\ind{v(a,b)\text{ in Subsection~\ref{sect-scm}}}{\il{vab}}
\indexspace
\ind{\WO}{\il{WO}}
\ind{\WOS,\,\WOM,\, \cdots}{\il{WSMGI}}
\ind{\Wfu_\Nfu^\ps}{\ir{Wfudef}, 
}
\ind{\Wfu_\Nfu^{\xi,\nu}}{\il{Wfu-intro}, \il{WFUximudef},
\il{Wfudefia}, \il{Wfuxinu}, }
\ind{\WW_0,\,\WW_1,\,\WW_2\in \klie}{\il{WW}}
\ind{W}{\il{W}}
\ind{W_{\k,s}}{\ir{Wkps}}
\ind{\wm\in G}{\il{wmdef}}
\indexspace
\ind{\X}{\ir{Xdef}}
\ind{\XX_0,\,\XX_1,\,\XX_2\in \nlie}{\ir{XX012}}
\indexspace
\ind{\Z_{ij}\in \glie_c}{\il{Z12}, \il{tab-rootv} }
\ind{ZU(\glie)}{\il{ZUg}}
\ind{Z(N)}{\il{ZN}}
\indexspace
\indexspace
\ind{\al_1,\;\al_2}{\ir{al12}}
\indexspace
\ind{\Gm_0}{\il{Gmst}, \il{Gm0}}
\ind{\Gm_\fl}{\ir{GmFL}}
\indexspace
\ind{\Dt_3\in ZU(\glie)}{\il{Dt3}}
\ind{\dt(\cdot)}{\il{dtt}}
\indexspace
\ind{\e=\sign(\ell)}{\ir{epsdtt}, \il{eps}}
\indexspace
\ind{\Th_{\ell,c}}{\ir{Thdef}}
\ind{\th_m}{\il{thm}, \il{th-m}}
\indexspace
\ind{\k_0=\k_0(j)}{\il{kap0}}
\indexspace
\ind{\Ld_\s}{\il{Ldsg}}
\ind{\ld_2(j,\nu),\; \ld_3(j,\nu)}{\ir{ld23}}
\ind{\ldph h{p}{r}{q}}{\ir{ldphg}, \il{ldph}}
\indexspace
\ind{\Mu_{\Nfu;h,p}}{\il{Mu}}
\ind{\mu^{a,b}_\Nfu(j,\nu)}{\ir{mu00a}, \il{muab}}
\ind{\tilde \mu^{p,0}_\n(j,\nu),\; \tilde\mu^{p,0}_\n(j,\nu),\;
\tilde\mu^{a,b}_\n}{\il{tmu}, \ir{tumu}}
\indexspace
\ind{\nu\text{ spectral parameter}}{\il{ld23}}
\ind{\nu_+,\; \nu_r,\; \nu_l}{\il{nuprl}}
\indexspace
\ind{\xi=\xi_j}{\il{xi}}
\indexspace
\ind{\pi_\ld}{\ir{pild}}
\ind{d\pi_\ld}{\ir{dpild}}
\indexspace
\ind{\s(\c)}{\il{sgc}}
\indexspace
\ind{\tau_p}{\il{taup}}
\ind{\tau^h_p}{\il{tauhp-intro}, \ir{tauhp}}
\ind{\tau^h_{r,p}}{\il{tauhrp}}
\indexspace
\ind{\Ups_{\Nfu;h,p}}{\il{Ups}}
\ind{\ups^{a,b}_\n(j,\nu)}{\ir{upsab}}
\ind{\tilde \ups^{p,0}_\n(j,\nu),\; \tilde\ups^{p,0}_\n(j,\nu), \;
\tilde\ups^{a,b}_\n(j,\nu)}{\il{tups}, \ir{tuod}}
\indexspace
\ind{\Phi^{p}_{r,q}}{\ir{PhiSU}}
\ind{\Kph h{p}{r}{p}}{\ir{Kphd}, \ir{Phi-exmp}}
\ind{\kph h{p}{r}{q}}{\il{kph}}
\indexspace
\ind{\ch_\bt}{\ir{chbt}}
\indexspace
\ind{\ps[\xi,\nu],\;\ps[j,\nu]}{\il{jnu}}
\indexspace
\ind{\Om_{\Nfu;h,p}}{\il{Om}}
\ind{\om^{a,b}_\Nfu(j,\nu)}{\ir{om00ab}, \il{omab}}
\ind{\tilde \om^{p,0}_\n(j,\nu),\; \tilde\om^{p,0}_\n(j,\nu),\;
\tilde\om^{a,b}_\n(j,\nu) }{\il{tom}, \il{tod}}
\indexspace
\indexspace
\ind{\infty\in \partial\X}{\il{iuhp}}
\ind{(\cdot,\cdot)_\uprs}{\il{unprs}}
\ind{(\cdot,\cdot)_{\cmpl}}{\ir{compls}}
\ind{\dis}{\il{.=}}
\ind{\sum_r}{\ir{sumr}, \il{sumrnot}}
\ind{V_{h,p} \text{ subspace with $K$-type $\tau^h_p$}}{\il{Ktp-intro},
\il{Vhp}}
\ind{V_{h,p,q}\subset V_{h,p} \text{ subspace with weight }
q}{\il{Vhpq}}
\end{trivlist}
\end{multicols}

\end{document}